\DeclareMathOperator {\sgn} {sgn}
\DeclareMathOperator {\supp} {supp}
\DeclareMathOperator {\Id} {Id} 
\DeclareMathOperator {\Diag} {Diag} 
\DeclareMathOperator {\Rea} {Re} 
\DeclareMathOperator {\Img} {Im} 
\DeclareMathOperator {\Res} {Res} 
\DeclareMathOperator {\Diff} {Diff} 
\DeclareMathOperator {\WF} {WF}
 \newcommand{\RR}{\mathbb{R}}  
 \newcommand{\ZZ}{\mathbf{Z}}  
 \newcommand{\QQ}{\mathbf{Q}}
  \newcommand{\sfS}{\mathsf{S}}
 \renewcommand{\SS}{\mathbf{S}}  
   \newtheorem{theorem}    {Theorem}       [section]
    \newtheorem{lemma}        {Lemma} [section]
    \newtheorem{corollary}  [theorem]       {Corollary}
    \theoremstyle{definition}
    \theoremstyle{definition}
    \newtheorem{remark} [theorem]    {Remark}
\newcommand{\nl}{\newline}
\title{A simple diffractive boundary value problem on an asymptotically anti-de 
Sitter space.}
\author{Ha Pham}
\address{}  
\curraddr{Stanford University} 
\email{hnpham@math.stanford.edu}
\thanks{This work is partially supported by the National Science Foundation via grant DMS-0801226} 
\begin{document}
\maketitle{}
\begin{abstract}
In this paper, we study the propagation of singularities (in the sense of $\mathcal{C}^{\infty}$ wave front set) of the solution of a model case initial-boundary value problem with glancing rays for a concave domain on an asymptotically anti-de Sitter manifold. The main result addresses the diffractive problem and establishes that there is no propagation of singularities into the shadow for the solution, i.e. the diffractive result for codimension-1 smooth boundary holds in this setting. The approach adopted is motivated by the work done for a conformally related diffractive model problem by Friedlander, in which an explicit solution was constructed using the Airy function. This work was later generalized by Melrose and by Taylor, via the method of parametrix construction. Our setting is a simple case of asympotically anti-de Sitter spaces, which are Lorentzian manifolds modeled on anti-de Sitter space at infinity but whose boundary are not totally geodesic (unlike the exact anti-de Sitter space). Most technical difficulties of the problem reduce to studying and constructing a global resolvent for a semiclassical ODE on $\RR^+$, which at one end is a b-operator (in the sense of Melrose) while having a scattering behavior at infinity. We use different techniques near $0$ and infinity to analyze the local problems: near infinity we use local resolvent bounds and near zero we build a local semiclassical parametrix. After this step, the `gluing' method by Datchev-Vasy serves to combine these local estimates to get the norm bound for the global resolvent. 

\end{abstract}

\tableofcontents
\newpage

\section{Introduction}
 In this paper, we study the propagation of singularities (in the sense of $\mathcal{C}^{\infty}$ wave front set) for the solution of an initial-boundary value problem in the presence of glancing rays on an asymptotically anti-de Sitter (AdS) manifold whose boundary is not totally geodesic\footnote{Totally geodesics: all of the geodesics of the manifold through points on the boundary are also geodesics in the boundary}.  Asympotically AdS spaces are Lorentzian manifolds modeled on anti-de Sitter space at infinity. \newline Singularities of solutions of linear partial differential equations are described in terms of wave front sets of distributions. In the boundaryless setting, a special case of a result by H\"ormander \cite{Hor01} gives that for $P$ a linear differential operator with real principal symbol $p$, the solution $u$ of $Pu = 0$ has its wavefront set contained in the characteristic set $p^{-1}(0)$ and invariant under the flow generated by Hamiltonian vector field $H_p$. In the presence of boundary, the rays containing wavefront set which hit the boundary transversally are reflected according to Snell's law, i.e. with energy and tangential momentum conserved (see figure \ref{transref}). See \cite{Hor02} for result on codimension-1 boundaries and e.g \cite{Andras01} for higher codimension boundaries.
  \begin{figure}
  \centering
  \includegraphics[scale=0.50]{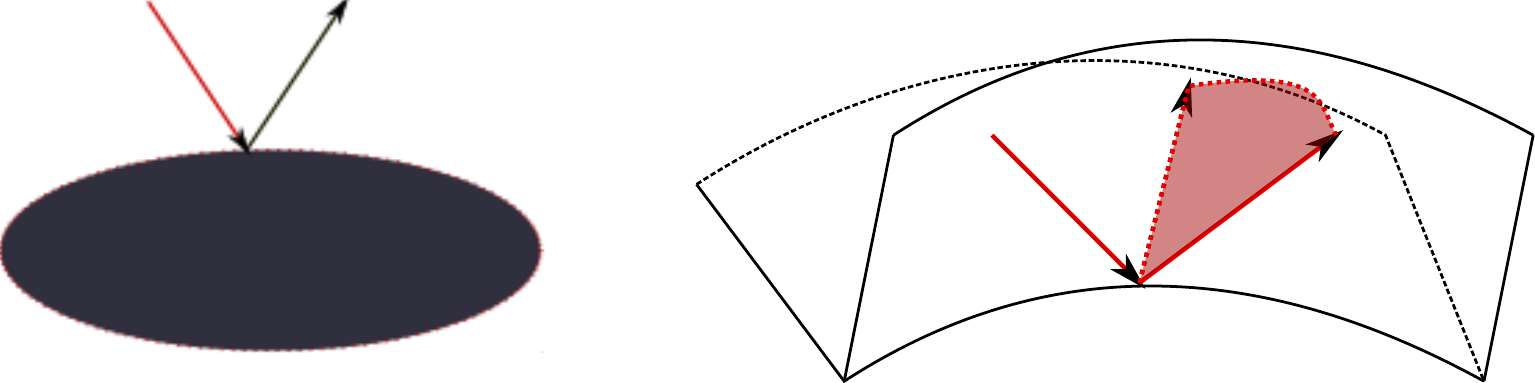}
  \caption{The rays containing wavefront set which hit the boundary transversally are reflected according to Snell's law i.e. with energy and tangential momentum conserved. \nl Single incident ray hitting an edge (corner) generates a whole cone of reflected rays with angle given by the angle between the incident ray and the edge.}
 \label{transref}
  \end{figure}
  The situation becomes more complicated in the presence of boundary and glancing rays (i.e. tangency of the bicharacteristics). For codimension-1 smooth boundary, one has theorems of propagation of singularities along generalized broken bicharacteristics (GBB)\footnote{ This definition of GBB distinguishes tangential rays according to their second order of tangency the boundary. If $p$ is the principal symbol of the operator and $\phi$ the defining function of the boundary, tangential rays are categorized(i.e where $p=0, \phi=0$ and $H_p \phi = 0$) according to the sign of $H_p^2 \phi$, see for e.g. definition 24.3.7 in \cite{Hor02}} that include the diffractive result, by Melrose \cite{Melrose01}, Melrose-Sj\"ostrand (\cite{Mel-Jos01} and \cite{Mel-Jos02}) and Taylor \cite{Taylor01}. Their results rule out the propagation of singularities into the `shadow region', the bicharacteristically inaccessible region; in fact at a diffractive point, the ray carrying wavefront set does not stick to the boundary (see figure \ref{diffobj}). This is not the case if we consider analytic singularities\footnote{Gevrey-3 is the borderline case between the behavior associated with $\mathcal{C}^{\infty}$ singularities and that with analytic singularities, which is Gevrey-1.}, a result given by Lebeau \cite{Lebeau01}. 
 \begin{figure}
  \centering
    \includegraphics[scale=0.75]{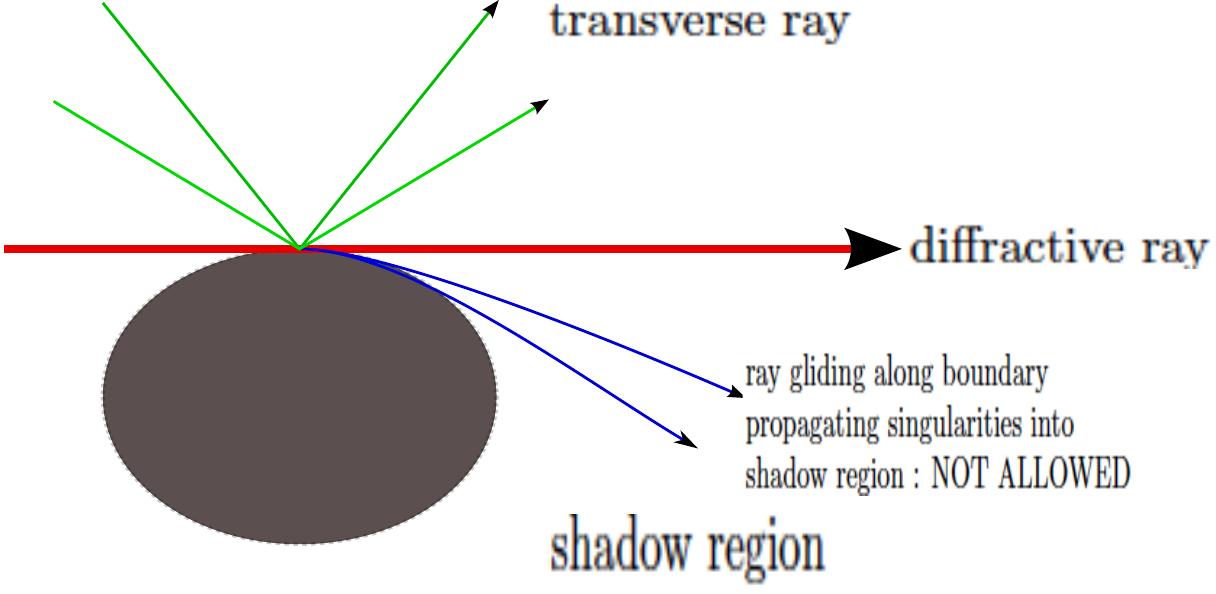}
    \caption{Diffractive phenomenon in the case of tangency to a convex obstacle}
    \label{diffobj}
 \end{figure}
For settings with higher codimension boundaries, in the presence of tangency,  we have general theorems (i.e without diffractive improvement) of propagation of singularities along GBB for corners by Vasy \cite{Andras01} and for edges by Melrose-Wunsch-Vasy \cite{Melrose-Wu-Va} using a relatively permissive\footnote{This definition of GBB does not consider higher order of tangency of tangential rays at higher codimension boundary. The same definition was used for the analogous work done for analytic singularities by Lebeau \cite{Lebeau01}. See section 5 in \cite{Andras01} for more details on the definition and discussion of GBB} notion of GBB. The diffractive problem, one aspect of which studies the phenomenon of propagation of singularities into the `shadow region', remains open for higher codimension smooth settings. \newline The interest in solving the diffractive problem on edges provides a motivation to study the diffractive problem on asymptotically AdS space, since asymptotically AdS space can be considered analogous to a `reduced problem' on an edge but with `negative dimension'. In order to see this, we consider a simple example of an edge which is the product of a sector with an interval: \begin{center} \includegraphics{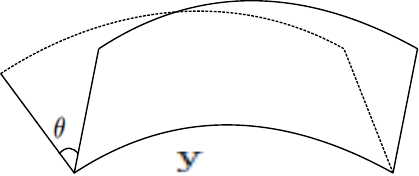}\end{center} 
Introduce polar coordinates $(r,\theta)$ on the sector, and let $y$ be the variable parametrizing the edge. In $(r,\theta,y)$ coordinates the volume form on this space is : 
  $$\sim r^{\text{codimension of edge} -1 } dr d\theta dy$$
  When the codimension of the edge is 1, i.e. there is no $\theta$, we get back the smooth boundary setting. However, when the codimension is higher than 1, even if there is no dependence on $\theta$ (e.g when the coefficients of the metric are independent of $\theta$ and we solve the problem on fixed Fourier modes), the diffractive problem is quite different from the smooth boundary setting\footnote{This is because of the term $r^{\text{codimension of edge} -1 }$ in the volume form, which in turn affects the first order term in the Laplace-Beltrami operator. This is seen as follows:
  $$\text{dVol}_g = \sqrt{\lvert \det g\rvert } dz ,\  \text{with}\  z = (r, \theta, y) ,\ \text{and} $$
  $$\Delta = \dfrac{1}{\sqrt{\lvert \det g\rvert}} \partial_{z_i} \big(\sqrt{\lvert \det g\rvert} G^{ij} \partial_{z_j}\big)
   = G^{ij} \partial_{z_i} \partial_{z_j} + ( \partial_{z_i} G^{ij}) \partial_{z_j} + \dfrac{(\partial_{z_i} \lvert \det g\rvert ) }{2\lvert \det g\rvert} G_{ij} \partial_{z_j}$$ }. On the other hand, the volume form on asymptotically anti-de Sitter space is 
  $$\sim  r^{-(\text{total dimension of the space})} dr dy$$
 Viewed in this way, asymptotically anti-de Sitter space is analogous to a `reduced'\footnote{See remark \ref{generalization} for a direction to generalize the problem which removes the `reduced' condition} problem on edges with `negative dimension'. In particular, the volume form for our model operator on $\RR^+_x \times \RR_y^n$ equipped with an asymptotically anti-de Sitter metric (\ref{modelMetric}) is $x^{-(n+1)} (1+ x)^{-1/2} \,dx\, dy$, where $n+1$ is the dimension of the space. \newline
  Since we already have the general theorem (without diffractive improvements) for propagation of singularities by Vasy \cite{Andras02} on asymptotically AdS spaces for both hyperbolic and glancing cases, we can study the diffractive problem for this setting. The metric for our model case is:
   \begin{equation}\label{modelMetric}
   \text{on}  \ \mathbb{R}^+_x \times \mathbb{R}^n_y :  g = \dfrac{-dx^2+ (1+x)^{-1} dy_n^2 - \sum_{j=1}^{n-1} dy_j^2}{x^2}  \end{equation}
For convenience, we work with a modification of the Klein-Gordon operator by a first order derivative 
$$P := \Box_g + \dfrac{x}{2(x+1)} x\partial_x + \lambda .$$
This modification does not change the problem in an essential way\footnote{This is because the added term does not affect the normal operator and the 0-principal symbol. This also means the bicharacteristics remain unchanged. Hence the results (propagation of singularities and well-posedness) obtained for the Klein-Gordon operator $\Box_g+\lambda$ by Vasy in \cite{Andras02} still hold true for $P$}. The diffractive condition for $P$ is satisfied with $H_{\hat{p}}^2 x  > 0$ where $\hat{p}$ is the principal symbol for conformal operator $\hat{P}$ ie $\hat{p} =-\xi^2  + [ (1+x)  \theta_n^2 - \lvert \theta'\rvert^2] $.\newline
By Lemma \ref{ExistenceNBdy}, we have existence and uniqueness\footnote{See also the proof of  Lemma \ref{ExistenceNBdy} for the motivation of the form of the `boundary condition' (*). The term $g$ is given in terms of $\delta(y)$ by expression (\ref{solveInseries}). For definition of $H^{-1,\alpha}_{0,b,\text{loc}}, \alpha \in \RR$ see definition (5.7) in \cite{Andras02}} for the solution to: 
\begin{equation}\label{originalProblem}\begin{cases} Pu = 0   \\
 \supp u \subset \{ y_n \geq 0\} \\
  u =  x^{s_-} \delta(y) +  x^{s_-+1} g(x,y) + h(x,y) ; & g(x,y)\in \mathcal{C}^{\infty}(\RR_x, \mathcal{D}'_y); \supp g \subset\{y_n=0\} \\ & h\in H^{1,\alpha}_{0,b,\text{loc}}; \text{for some}\ \alpha\in \RR
 \end{cases} \end{equation}
Here $ s_{\pm}(\lambda) = \tfrac{n}{2} \pm \sqrt{\tfrac{n^2}{4} - \lambda}$ are the indicial roots of the normal operator for $P$. The main goal of the paper is to study the singularity structure of the solution to (\ref{originalProblem}) and draw conclusions about the presence of singularities in the `shadow region'.\nl
The approach used to study problem (\ref{originalProblem}) is motivated by the work done for a conformally related diffractive model problem by Friedlander \cite{Fried01}, in which an explicit solution is constructed using the Airy function. The result is later greatly generalized by Melrose using a parametrix construction in \cite{Melrose01} and \cite{Melrose02} and by Taylor \cite{Taylor01}. After taking the Fourier transform in $y$ and denoting by $\theta$ the dual variable of $y$, we need to construct a polyhomogeous conormal solution (modulo a smooth function in $\dot{\mathcal{C}}^{\infty}(\mathbb{R}^{n+1}_+)$) which satisfies the following boundary condition at $x = 0$:
\begin{equation}\label{originalProAfterF}
\begin{cases}
\hat{L} \hat{u} \in \dot{\mathcal{C}}^{\infty}(\mathbb{R}^{n+1}_+) \\
x^{-s_-} \hat{u} \ |_{x=0} = 1   & s_{\pm}(\lambda) = \dfrac{n}{2} \pm \sqrt{\dfrac{n^2}{4} - \lambda} \\
\hat{u} \in \exp(i\phi_{\text{in}})  \mathcal{L}_{ph}(C) & \phi_{\text{in}} = -\tfrac{2}{3} \theta_n^{-1} \big[ (1 +x - \lvert\hat{\theta}'\rvert^2)^{3/2} - (1 - \lvert\hat{\theta}'\rvert^2)^{3/2}\big] \text{sgn} \theta_n
\end{cases} \end{equation}
Here $\mathcal{L}_{ph}(C)$ is the set of polyhomogeneous conormal functions on some blown-up space $C$ of $\overline{\mathbb{R}}_{\theta}^n \times [0,1)_x$. The chosen oscillatory behavior is modeled on that possessed by the specific solution (\ref{fundamentalAiry}) Friedlander worked with in \cite{Fried01}.\nl
 The difficulties for our problem start with the fact that (\ref{originalProblem}) under the Fourier transform does not have a well-understood solution as \cite{Fried01} does in terms of Airy functions. However, following the same change of variable as in \cite{Fried01}, the problem is reduced to studying the following semiclassical ODE on $\RR^+$:
 $$ \mathbf{Q}  = h^2(z \partial_z)^2   + h^2(\lambda-\dfrac{n^2}{4})  + z^3+ z^2 .$$  
  This family of semiclassical ODE is of regular singular type, hence a `b'-operator\footnote{Totally characteristic operators or `b-operators' in the sense of Melrose \cite{Melrose04} are differential operators generated by $b$ vector fields $\mathcal{V}_b$, which is the set of all smooth vector fields tangent to all boundaries. See subsection \ref{adsPropsing} } at the zero end and scattering\footnote{in sense of Melrose scattering calculus \cite{Melrose03}} behavior at the infinity one.

    We use different techniques near zero and infinity to analyze the local problems: near infinity we use local resolvent bounds given by Vasy-Zworski \cite{Andras-Zworksi}, and near zero we build a local semiclassical parametrix. 
   The next technical difficulties arise from the fact that the operator $\mathbf{Q}$ is not semiclassical elliptic, which prevents us from simply patching the two above local resolvents by a partition of unity, which otherwise results in an error that is not trivial semiclassically. To overcome this problem, we adopt the method of Datchev-Vasy \cite{Andras-Kiril} to get a global approximate resolvent with an $\mathsf{O}(h^{\infty})$ error. \newline
   The remaining technical details of the problem involve constructing a local resolvent near zero for $\mathbf{Q}$. For this we first need to construct a certain blown-up space of $[0,1)_z\times [0,1)_{z'} \times [0,1)_h$. The usual method (see e.g. \cite{M-SB-AV}) involves resolving the $b$-singularity at the corner of $\{z = z' = 0\}$ and the semiclassical singularity. In our case, we incur non-uniformity in the behavior of the normal operator on the semiclassical front face as we approach the b front face (i.e. as $z' \rightarrow 0$). This gives motivation to do a blow-up at $\mathsf{Z}_h \rightarrow \infty$ and $z' \rightarrow 0$, where $\mathsf{Z} = \tfrac{z}{z'}$ is the variable associated with the b-blowup and $\mathsf{Z}_h = \tfrac{\mathsf{Z}-1}{h}$ the variable associated with the blow-up along the lifted diagonal in the semiclassical face. The blown-up space we will work on results from the same idea of singularity resolution but with a different order of blowing up; namely, after the b-blowup we blow up the intersection of the b-front face with semiclassical face, then blow up the intersection of the lifted diagonal and the semiclassical face. There will be two additional blow-ups to desingularize the flow and to create a transition region from the scattering behavior to the boundary behavior prescribed by the indicial roots. We then construct a polyhomogeous cornomal function $U$ on this blown-up space so that    
     $$\mathbf{Q} U  - \text{SK}_{\text{Id}} \in    h^{\infty} z^{\infty} (z')^{\sqrt{\tfrac{n^2}{4}-\lambda}  }\mathcal{C}^{\infty};\ \ \text{SK}_{\text{Id}} \ \text{schwartz kernel of Id}$$
   After removing singularities at the lifted diagonal, the remaining difficulty  involves choosing the order in which one will remove singularities of the error to achieve the above effect.\newline
      Once constructed, a parametrix provides among other things information on the properties of the fundamental solution, which enables one to make very precise statements on the propagation of singularities, in particular in the `shadow regions' (see Corollary \ref{mainCor2}). We state below the main results of the paper, whose proofs are contained in subsection \ref{proofOfmainResult}.      
      \begin{theorem}\label{maintheorem}[Wavefront set statement]\indent\par\noindent
   Let $U$ be the exact solution to to (\ref{originalProblem}) on $\RR^+_x \times \RR^n_y$  then 
  $$\text{WF}\,(U ) \subset \Sigma $$
  where
    $$\Sigma = \Big\{(x,y,\xi,\eta) : \xi = \partial_x \left(y\cdot\theta - \phi \right), \eta = d_y \left(y\cdot\theta-\phi\right) , d_{\theta}\left(y\cdot\theta -\phi\right)  = 0 , x > 0 , \lvert\theta_n\rvert \geq \lvert\theta'\rvert > 0 \Big\}.$$
    with $\phi := \tfrac{2}{3}( Z^{3/2} - Z_0^{3/2})\sgn\theta_n$, $\theta = (\theta', \theta_n)$ being the dual variable of $y$ under the Fourier transform.


  \end{theorem}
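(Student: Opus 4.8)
The plan is to realize the exact solution $U$, modulo a $\mathcal C^\infty$ error, as a classical oscillatory integral in $\theta$ with phase $y\cdot\theta-\phi$ and an amplitude that is a symbol supported in the glancing cone $\Gamma:=\{\lvert\theta_n\rvert\ge\lvert\theta'\rvert>0\}$, and then to read off $\Sigma$ as the stationary set of that phase. First I would reduce to the parametrix: by Lemma~\ref{ExistenceNBdy} the solution $U$ of (\ref{originalProblem}) exists and is unique, and the construction above produces a parametrix $U_0$ — the inverse Fourier transform in $y$ of the polyhomogeneous conormal $\hat u$ solving (\ref{originalProAfterF}) — whose support and leading asymptotics at $x=0$ agree with those prescribed in (\ref{originalProblem}) and with $PU_0\in\dot{\mathcal C}^\infty(\RR^{n+1}_+)$. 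Then $v:=U-U_0$ solves a problem of the form (\ref{originalProblem}) with right-hand side in $\dot{\mathcal C}^\infty$ and trivial leading coefficient, so the well-posedness of Lemma~\ref{ExistenceNBdy} together with the regularity and propagation results of Vasy~\cite{Andras02} give $v\in\mathcal C^\infty$; hence $\WF(U)=\WF(U_0)$ and it suffices to analyze $U_0$.

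Next I would unwind the structure of $\hat u$ furnished by the construction and split the $\theta$-integral into three regions. On the shadow cone $\{\lvert\theta'\rvert>\lvert\theta_n\rvert\}$ the terms $(1-\lvert\hat\theta'\rvert^2)^{3/2}$ entering the prescribed oscillatory factor become genuinely complex, the phase acquires an imaginary part of the sign forcing exponential decay, and $\hat u$ is $\mathsf O(e^{-c\lvert\theta\rvert})$ uniformly for $x$ in compact subsets of $(0,\infty)$; this piece, together with the contribution of bounded $\theta$ (where $\hat u$ is smooth in $(x,y)$ for $x>0$), contributes a $\mathcal C^\infty$ function to $U_0$. On the glancing cone $\Gamma$, and away from the glancing boundary $\{\lvert\theta'\rvert=\lvert\theta_n\rvert\}$, resolving the polyhomogeneous conormal structure on $C$ exhibits $\hat u$, for $x$ in compact subsets of $(0,\infty)$, as $e^{-i\phi}$ times a classical symbol in $\theta$, with $\phi$ the phase of the statement — real, smooth, positively homogeneous of degree one in $\theta$ there, and vanishing at $x=0$ (so that $Z=Z_0$ there). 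Thus, modulo $\mathcal C^\infty$,
\[
 U_0(x,y)\ \equiv\ \int_{\RR^n} e^{i\left(y\cdot\theta-\phi(x,\theta)\right)}\,a(x,\theta)\,\chi(\theta)\,d\theta,
\]
with $a$ a classical symbol in $\theta$ and $\chi$ a smooth conic cutoff to $\Gamma$.

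The wavefront set of such an oscillatory integral is then computed by the standard theorem (cf.\ \cite{Hor01}): $\WF(U_0)$ is contained in the set of $(x,y,\xi,\eta)$ for which there exists $\theta\ne0$ in $\supp\chi$ with $d_\theta\left(y\cdot\theta-\phi\right)=0$, $\xi=\partial_x\left(y\cdot\theta-\phi\right)$ and $\eta=d_y\left(y\cdot\theta-\phi\right)$. Since $\phi$ is independent of $y$ one has $d_y\left(y\cdot\theta-\phi\right)=\theta$, the vanishing of $d_\theta$ reads $y=d_\theta\phi$, and $\supp\chi\subset\Gamma$ together with $x>0$ imposes precisely the constraints $\lvert\theta_n\rvert\ge\lvert\theta'\rvert>0$; this is exactly $\Sigma$. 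Equivalently, for a point $(x_0,y_0,\xi_0,\eta_0)\notin\Sigma$ the total phase $y\cdot\theta-\phi(x,\theta)-x\cdot\xi-y\cdot\eta$ has non-vanishing differential on the (conically cut-off) support of the amplitude, and repeated integration by parts in $(x,y,\theta)$ yields rapid decay of $\widehat{\psi U_0}$ in a conic neighborhood of $(\xi_0,\eta_0)$, for $\psi$ supported near $(x_0,y_0)$.

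The main obstacle is the glancing boundary $\{\lvert\theta'\rvert=\lvert\theta_n\rvert\}$, i.e.\ $Z_0=0$: there $d_\theta^2\phi$ blows up and $\hat u$ is \emph{not} of the form $e^{-i\phi}\times(\text{symbol})$ — near its turning point Friedlander's solution is a genuine Airy function, and the several blow-ups in $C$ are arranged precisely so that $\hat u$ remains polyhomogeneous conormal across the corresponding face. One must verify that this transition contribution is still microlocalized in $\Sigma$. This can be done either by inserting the Airy normal form supplied by the parametrix — writing the local contribution as a superposition of rescaled Airy functions and invoking the known wavefront set of the resulting fold (Airy-type Lagrangian) distribution — or by a non-stationary-phase argument in the directions transverse to the conormal along the glancing face, using that $\Sigma$ as defined is already the closed conic set $\{(x,d_\theta\phi,-\partial_x\phi,\theta)\}$ and hence already contains the caustic. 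Carrying out this passage — from the abstract description $\hat u\in e^{i\phi_{\text{in}}}\mathcal L_{ph}(C)$ to honest symbol and oscillatory-integral estimates, uniformly up to $\{\lvert\theta'\rvert=\lvert\theta_n\rvert\}$ and up to $\theta=\infty$ — is the technical heart of the argument; the remainder is routine non-stationary-phase bookkeeping.
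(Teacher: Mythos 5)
Your overall strategy is the same as the paper's (pass to the Fourier-side approximate solution, analyze the resulting oscillatory integrals, then transfer to the exact solution via Vasy's well-posedness), but two points need attention, one of which is a genuine gap. First, the reduction step is mis-stated: the parametrix $U_0=\mathcal F^{-1}(\hat u_x)$ does \emph{not} have support in $\{y_n\ge 0\}$, so $v=U-U_0$ does not solve a problem of the form (\ref{originalProblem}) and the uniqueness/regularity of Lemma~\ref{ExistenceNBdy} cannot be invoked directly, nor does one get $v\in\mathcal C^\infty$ globally. What is true (and what the paper proves as Theorem~\ref{conormality}) is that $\mathsf U$ is conormal, $x^{s_-}\mathcal C^\infty_{x,y}+x^{s_+}\mathcal C^\infty_{x,y}$, on $y_n<0$; the paper then cuts off with $\chi(y_n)$ supported in $y_n\le -\tfrac12$, observes that $[P,\chi(y_n)]\mathsf U$ is conormal because it lives in $y_n<0$, and applies Theorem~\ref{AdsBdyOrig} to $W=U-(1-\chi(y_n))\mathsf U$ to conclude only that $W$ is conormal, hence smooth on $x>0$ — which suffices since $\Sigma$ lives over $x>0$. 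Your version needs this cutoff and the $y_n<0$ conormality statement to go through.

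The genuine gap is at glancing. You correctly identify that near $\lvert\theta'\rvert=\lvert\theta_n\rvert$ the representation ``$e^{-i\phi}\times$ classical symbol'' fails, but you then defer the entire issue (``the technical heart'') to one of two unexecuted strategies; that deferred step is precisely what the theorem requires, since the points of $\Sigma$ over glancing rays are the delicate ones. The paper's mechanism is concrete and different from both of your suggestions: it partitions the front face $\mathbb F_C$ into regions depending on parameters $\delta_1,\delta_2$, proves on each region symbol estimates of type $(\tfrac13,0)$ (not classical symbols: the bounds are $\lvert\theta\rvert^{-k/3+\cdots}$, reflecting the $\lvert\theta_n\rvert^{2/3}$ scaling of $Z_0$) with phase $\tilde\phi=\tfrac23 Z^{3/2}\sgn\theta_n$ near glancing and $\phi=\tfrac23(Z^{3/2}-Z_0^{3/2})\sgn\theta_n$ in the hyperbolic region, obtaining $\WF(\mathsf U_\delta)\subset\Sigma(\delta)\cup\Sigma(\delta_1,\delta)\cup\Sigma(\delta_2,\delta)$; the key final step is that $\bigcap_{\Delta}\Sigma(\delta_1,\delta)=\bigcap_{\Delta}\Sigma(\delta_2,\delta)$ is contained in the boundary of $\Sigma(\delta)$, because all first derivatives of $\phi-\tilde\phi=\tfrac23 Z_0^{3/2}\sgn\theta_n$ vanish at $\lvert\theta_n\rvert=\lvert\theta'\rvert$, so letting $\delta_1,\delta_2\to 0$ collapses the glancing contributions into $\Sigma(\delta)$. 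Without an argument playing this role (or a genuinely carried-out Airy/fold-Lagrangian analysis uniform up to the glancing cone and up to $\theta=\infty$), your proof does not establish the containment $\WF(U)\subset\Sigma$ at glancing, and so does not prove the theorem.
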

 
  \begin{theorem}[Conormality result 1]\label{resultOnCorClass1}\
  
  On $y_n> 0$, if we are close enough to the boundary $\{x=0\}$, then $U\in x^{s_+}  \mathcal{C}^{\infty}_{x,y}$. 
  \end{theorem}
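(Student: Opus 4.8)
The plan is to deduce the claim from the wavefront set bound of Theorem \ref{maintheorem} together with the indicial structure of $P$ at $x=0$; the only step needing real work is converting interior smoothness into conormality up to the boundary. First I would reduce to the component $h$. On $\{y_n>0\}$ the two summands $x^{s_-}\delta(y)$ and $x^{s_-+1}g(x,y)$ in the decomposition (\ref{originalProblem}) vanish identically, being supported in $\{y_n=0\}$; hence $U=h$ there, with $h\in H^{1,\alpha}_{0,b,\text{loc}}$ and $PU=0$. So it suffices to prove that, over $\{y_n>0\}$ and close enough to $\{x=0\}$, $h$ is polyhomogeneous conormal at $x=0$ with leading power $x^{s_+}$ and integer steps, i.e. $h\in x^{s_+}\mathcal{C}^\infty_{x,y}$.

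Next I would show that a neighbourhood of $\{x=0\}$ over $\{y_n>0\}$ lies in the shadow. By Theorem \ref{maintheorem}, $\WF(U)\subset\Sigma$, and the base points of $\Sigma$ are the $(x,d_\theta\phi)$ with $x>0$, $|\theta_n|\ge|\theta'|>0$. As $\phi$ is homogeneous of degree one in $\theta$ and rotationally symmetric in $\theta'$, the map $\theta\mapsto d_\theta\phi$ is degree zero and depends only on $x$ and on $w:=1-|\theta'|^2/\theta_n^2\in[0,1)$. From the explicit $\phi$ one finds that the $y_n$-component of $d_\theta\phi$ is a constant times
\[
 \big[(x+w)^{3/2}-w^{3/2}\big]+(1-w)\big[(x+w)^{1/2}-w^{1/2}\big],
\]
and each bracket is $\le x^{1/2}$ for $0\le x,w\le 1$; the $y'$-component is $\mathsf{O}(x^{1/2})$ similarly. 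Hence, uniformly over the cone $|\theta_n|\ge|\theta'|>0$, $\pi(\Sigma)\cap\{0<x<\epsilon\}\subset\{|y_n|\le C\epsilon^{1/2}\}$. So for each $c>0$ there is $\epsilon>0$ (with $\epsilon\to0$ as $c\to0$) such that no point of $\Sigma$ lies over $\{0<x<\epsilon\}\times\{y_n\ge c\}$, and Theorem \ref{maintheorem} then forces $U$ to be $\mathcal{C}^\infty$ on this interior set — it is in the shadow, which is a parabolic neighbourhood $\{x\lesssim y_n^2\}$ of $\{x=0\}$. The borderline here is the glancing rays $w\to0$, where one must use $(x+w)^{1/2}-w^{1/2}\le x^{1/2}$ rather than the linearization $x/(2w^{1/2})$.

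Finally I would pass from interior smoothness to the boundary asymptotics. On the shadow region $h$ solves $Ph=0$, lies in $H^{1,\alpha}_{0,b,\text{loc}}$, and has empty $\mathcal{C}^\infty$ wavefront set. Since $P$ is a $0$-differential operator whose normal operator has indicial roots $s_\pm$ (the modifying first-order term carries a factor $x$ and so does not change it), a normal-operator argument together with the propagation-of-conormality / module-regularity results available in this setting promotes $h$, over the shadow region, to a polyhomogeneous conormal function at $x=0$ with index set in $(s_-+\mathbb{N}_0)\cup(s_++\mathbb{N}_0)$. The $s_-$-branch is excluded because $h\in H^{1,\alpha}_{0,b,\text{loc}}$ with the weight from Lemma \ref{ExistenceNBdy}, which is compatible with the $x^{s_+}$ but not the $x^{s_-}$ asymptotics; and on the $s_+$-branch no logarithms arise, since the indicial polynomial vanishes only at $s_\pm$ and $s_++k$ equals neither for $k\ge1$ (as $s_+>s_-$), so the formal solution $\sum_{k\ge0}x^{s_++k}a_k(y)$ exists and is asymptotic to $h$. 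This gives $U=h\in x^{s_+}\mathcal{C}^\infty_{x,y}$ near $\{x=0\}$ over $\{y_n>0\}$. (One could instead read this off the parametrix constructed in the main body: over the shadow region the blow-ups entering $C$ are inactive, and the polyhomogeneous $U$ built there restricts to a polyhomogeneous function of $(x,y)$ whose index family at $\{x=0\}$ is generated by $s_+$.)

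The step I expect to be the main obstacle is the last one: rigorously upgrading the $0$-Sobolev plus interior-smooth information on $h$ to genuine polyhomogeneity at $x=0$ with precisely the asserted index set — in particular checking that the weight $\alpha$ from Lemma \ref{ExistenceNBdy} removes the $x^{s_-}$ term rather than leaving a weaker conormal remainder. A secondary subtlety is the uniformity near the glancing rays $w\to0$ in the shadow estimate, where the bound $d_\theta\phi=\mathsf{O}(x^{1/2})$ (rather than $\mathsf{O}(x)$) is exactly what makes the shadow a full neighbourhood of $\{x=0\}$.
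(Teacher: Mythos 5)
Your reduction to $h$ and your step 2 are fine: the computation that on $\Sigma$ one has $y_n=\mathsf{O}(x^{1/2})$ uniformly over $\lvert\theta_n\rvert\geq\lvert\theta'\rvert>0$ does show, via Theorem \ref{maintheorem}, that $U$ is $\mathcal{C}^{\infty}$ on the open parabolic region $\{y_n\geq c,\ 0<x<\epsilon(c)\}$, and this matches the ``$y_n>\beta$, $x<\alpha^2$'' smallness that appears in the paper's own lemmas. The genuine gap is your step 3. Interior smoothness (the wavefront set is defined only over $x>0$) gives no uniform control of derivatives as $x\to 0$, and the only a priori boundary information you have is $h\in H^{1,\alpha}_{0,b,\text{loc}}$ for one fixed, possibly negative $\alpha$ --- far short of the conormality ($H^{1,\infty}_{0,b}$, iterated $x\partial_x,\partial_y$ regularity) needed to run an indicial/normal-operator expansion and peel off the $x^{s_++k}$ terms. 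There is no ``propagation-of-conormality / module-regularity'' theorem in this setting that you can invoke to bridge this: Vasy's general result propagates b-regularity along GBB, and the GBB relevant over $\{y_n>0,\ x=0\}$ include precisely the glancing rays that have travelled along the boundary from the glancing point over $y=0$, so the absence of singularities (and the upgrade to polyhomogeneity with index $s_+$) on that region is exactly the diffractive improvement being proved --- invoking it assumes the conclusion. Note also that in the paper $H^{1,\infty}_{0,b}$ regularity (Corollary \ref{resultOnCorClass2}) is a \emph{consequence} of Theorem \ref{resultOnCorClass1}, so the logical order you need is reversed relative to what is available.

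The paper closes this gap not by a soft argument but by the parametrix machinery: Theorem \ref{conormality} (proved region by region in section \ref{SingComp}) shows the explicitly constructed approximate solution $\mathsf{U}=\mathcal{F}^{-1}(\hat u_x)$ lies in $x^{s_-}\mathcal{C}^{\infty}_{x,y}+x^{s_+}\mathcal{C}^{\infty}_{x,y}$ on $y_n>0$ near $x=0$; then one writes $U=(1-\chi(y_n))\mathsf{U}+W$, where $P\big((1-\chi)\mathsf{U}\big)$ differs from $P\mathsf{U}$ by a commutator supported in $y_n<0$ where $\mathsf{U}$ is conormal, and uses Vasy's well-posedness theorem with conormal forcing to conclude that $W$ is conormal; finally the $x^{s_-}$ branch is excluded by the $H^{1,\alpha}_{0,b}$ membership of $U$ away from $y_n=0$ (this last step of yours does coincide with the paper's). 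So your outline is missing the essential input --- the conormality of the constructed $\mathsf{U}$ plus the well-posedness estimate for the correction term --- and the step you flagged as the ``main obstacle'' cannot be repaired by the tools you cite.
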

   As an immediate consequence of Theorem \ref{resultOnCorClass1} we have
  \begin{corollary}[Conormality result 2]\label{resultOnCorClass2}\indent\par\noindent
 On $y_n> 0$, if we are close enough to the boundary $\{x=0\}$, then $U$ is conormal relative to $H^1_0$, i.e. $U \in H^{1,\infty}_{0,b}$.  (See subsection \ref{adsPropsing} for definition).
  \end{corollary}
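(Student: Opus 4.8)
The plan is to derive this directly from Theorem \ref{resultOnCorClass1}, which already supplies the precise structure of $U$ near the boundary: on the region $y_n > 0$ with $x$ small, $U = x^{s_+} v$ for some $v \in \mathcal{C}^{\infty}_{x,y}$. Since the assertion is local near $\{x=0\}\cap\{y_n>0\}$, I would fix a cutoff $\chi$ supported in that region and work with $\chi U = \chi\, x^{s_+} v$; it then suffices to show that any function of the form $x^{s_+} w$, with $w$ smooth and compactly supported near a boundary point with $y_n > 0$, lies in $H^{1,\infty}_{0,b}$.

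First I would check the base membership $x^{s_+} w \in H^1_0$. Recall that $H^1_0$ is the $0$-Sobolev space built from the $0$-vector fields $x\partial_x,\ x\partial_{y_j}$ and an $L^2$ norm taken with respect to a $0$-density, which here is $x^{-(n+1)}(1+x)^{-1/2}\,dx\,dy$ (the factor $(1+x)^{-1/2}$ is bounded above and below near $x=0$, so it is irrelevant for the local question). Near $x=0$ the condition $x^{s_+} w \in L^2_0$ reduces to finiteness of $\int_0^{\epsilon} x^{2s_+} x^{-(n+1)}\,dx$, i.e. to $2s_+ > n$, which holds since $s_+ = \tfrac{n}{2} + \sqrt{\tfrac{n^2}{4}-\lambda} \ge \tfrac{n}{2}$, with strict inequality in the range of $\lambda$ under consideration; this is exactly the place where the good indicial root $s_+$, rather than $s_-$, is used. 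Applying a $0$-vector field gives $x\partial_x(x^{s_+}w) = x^{s_+}(s_+ w + x\partial_x w)$ and $x\partial_{y_j}(x^{s_+}w) = x^{s_+}(x\partial_{y_j}w)$, both again of the form $x^{s_+}\cdot(\text{smooth})$ (in fact no worse), hence in $L^2_0$ by the same computation. So $x^{s_+} w \in H^1_0$.

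For the b-regularity I would iterate the same observation. The b-vector fields tangent to $\{x=0\}$ are spanned by $x\partial_x$ and the $\partial_{y_j}$, and one checks $x\partial_x(x^{s_+}w) = x^{s_+}(s_+ w + x\partial_x w)$, $\partial_{y_j}(x^{s_+}w) = x^{s_+}\partial_{y_j}w$; in each case the result is $x^{s_+}$ times a smooth compactly supported function. Therefore, for any finite product $V_1\cdots V_N$ of such b-vector fields, $V_1\cdots V_N(x^{s_+}w) = x^{s_+} w_N$ with $w_N$ smooth and compactly supported, and by the previous paragraph $x^{s_+}w_N \in H^1_0$. Since $N$ is arbitrary, $x^{s_+}w \in H^{1,\infty}_{0,b}$; applying this with $w = \chi v$ yields $\chi U \in H^{1,\infty}_{0,b}$, which is the claim.

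There is no serious obstacle here: the content of the corollary resides entirely in Theorem \ref{resultOnCorClass1}. The only two points requiring a line of care are (i) confirming that the leading exponent $s_+$ is large enough to land in $H^1_0$ with respect to the AdS $0$-density — this is precisely why the statement fails for the $x^{s_-}$ term prescribed in the data of (\ref{originalProblem}) and holds for $U$ only because Theorem \ref{resultOnCorClass1} upgrades the behavior to $x^{s_+}$ on $y_n>0$ near $\{x=0\}$ — and (ii) keeping track of the localization, since $\mathcal{C}^{\infty}_{x,y}$ near $\{x=0\}$ carries no information away from that region, so the conclusion must be phrased with a cutoff, exactly as in the statement "close enough to the boundary".
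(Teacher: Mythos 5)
Your argument is correct and takes essentially the paper's route: the corollary is deduced from Theorem \ref{resultOnCorClass1}, the only real content being that $x^{s_+}$ times a smooth, compactly supported function near $\{x=0\}$ lies in $H^{1,\infty}_{0,b}$, which you verify via $2s_+>n$ (using $\lambda<\tfrac{n^2}{4}$) and the stability of the form $x^{s_+}\cdot\mathcal{C}^{\infty}$ under $x\partial_x$ and $\partial_{y_j}$. The paper treats this membership as immediate (its written proof just re-invokes the boundary condition to exclude the $x^{s_-}$ term), so your explicit weight computation simply fills in what is left implicit there.
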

    As a result of Theorem \ref{maintheorem} and Corollary \ref{resultOnCorClass2}, we have:
  \begin{corollary}\label{mainCor2}
  There are no singularities\footnote{The type of regularity measured at the boundary is $H^{1,\infty}_{0,b}$. See subsection (\ref{adsPropsing}) for definitions and for further discussion} along the boundary or in the shadow region, i.e. figure \ref{noshadowprop} holds for $U$. 
  \end{corollary}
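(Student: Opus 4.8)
The statement is a geometric repackaging of Theorem~\ref{maintheorem} and Corollary~\ref{resultOnCorClass2}: the plan is to extract the interior information from the former and the boundary information from the latter, and to check that together they cover the shadow region in figure~\ref{noshadowprop}.

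For the interior part, I would start from Theorem~\ref{maintheorem}, which places $\WF(U)$ over $\{x>0\}$ inside the conic set $\Sigma$. This $\Sigma$ is the flowout Lagrangian generated by the phase $(y,\theta)\mapsto y\cdot\theta-\phi(x,\theta)$, with $\phi=\tfrac23(Z^{3/2}-Z_0^{3/2})\sgn\theta_n$, cut down by the stationarity relation $d_\theta(y\cdot\theta-\phi)=0$ and by the glancing constraint $\lvert\theta_n\rvert\ge\lvert\theta'\rvert>0$. The role of that constraint is that it is exactly the region in which the quantities entering $\phi$ are nonnegative, so that $\phi$ is real and the associated oscillatory integral is genuinely singular; outside it the phase picks up an imaginary part and contributes only a $\mathcal{C}^\infty$ term. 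I would then project $\Sigma$ to the base and verify --- this is the same elementary computation with an explicit Airy-type phase carried out in \cite{Fried01}, \cite{Melrose01}, \cite{Taylor01} for the conformally related model --- that $\pi(\Sigma)\cap\{x>0\}$ consists precisely of the points swept out by the incoming glancing ray and its reflection, and in particular is disjoint from the open shadow region. Hence $U$ is smooth on the part of the shadow with $x>0$.

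For the boundary part, I would invoke Corollary~\ref{resultOnCorClass2}: on $\{y_n>0\}$ and sufficiently close to $\{x=0\}$ one has $U\in H^{1,\infty}_{0,b}$, i.e.\ $U$ is conormal of all orders relative to $H^1_0$, so $U$ has no b-wavefront set there. This is exactly the sense, recorded in the footnote to the corollary, in which there is ``no singularity along the boundary'', and since the trace of the shadow on $\{x=0\}$ lies in $\{y_n>0\}$ arbitrarily near $\{x=0\}$ it also takes care of the boundary rim of the shadow. Combining the two steps, the shadow region carries no singularities of $U$, which is the assertion of figure~\ref{noshadowprop}. The only real content is the projection computation in Step~1 --- showing, from the explicit form of $\phi$ together with $\lvert\theta_n\rvert\ge\lvert\theta'\rvert>0$, that no point of $\Sigma$ lies above the shadow; the rest is immediate from the two cited results. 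A minor point to watch is the compatibility of the two regularity notions --- $\mathcal{C}^\infty$ wavefront set for $x>0$ and $H^{1,\infty}_{0,b}$ b-regularity at $x=0$ --- so that ``no singularities in the shadow'' is a single coherent statement uniformly up to the boundary; here one uses that $H^{1,\infty}_{0,b}$ near $\{x=0\}$ already entails interior smoothness.
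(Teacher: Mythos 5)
Your proposal matches the paper's approach: the corollary is stated in the paper as an immediate consequence of Theorem~\ref{maintheorem} and Corollary~\ref{resultOnCorClass2}, with no separate proof given, and you correctly identify those two results as the interior and boundary ingredients respectively. Your fleshing out of the ``only real content'' --- the projection of $\Sigma$ to the base and the verification that it misses the shadow --- is consistent with the paper's reliance on Friedlander's description of the bicharacteristics (the paper explicitly defers this to pages 146--148 of \cite{Fried01}), so this is the same route made more explicit rather than a different one.
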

  \begin{figure}
\centering
   \includegraphics[scale=0.7]{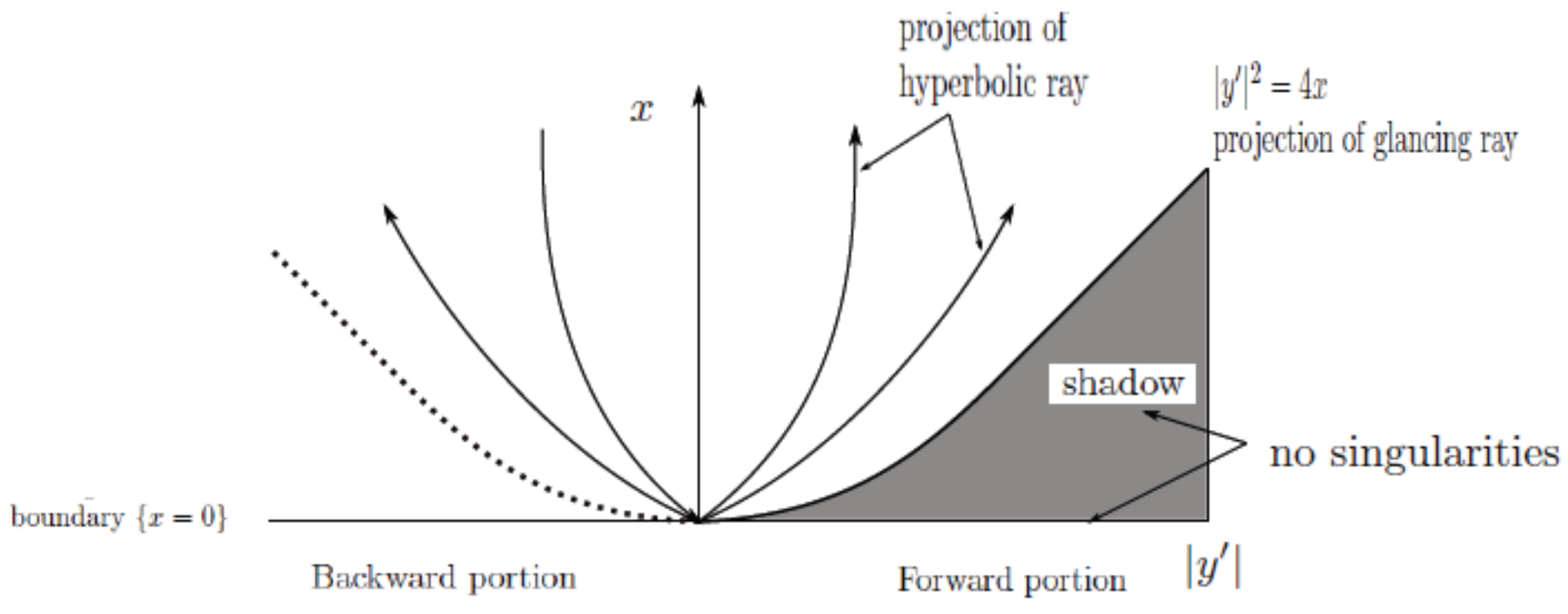}
\caption{Singulartities are neither propagated into the shadow nor along the boundary}
\label{noshadowprop}
  \end{figure}
  The paper is broadly organized into five parts. 
  \begin{enumerate}
  \item Part 1 states the model problem together with the history, background and motivations of the problem. 
    \item The second part of the paper (section \ref{reductionSml}) shows the reduction of the original problem to a semiclassical ODE on $\RR^+$. To motivate the technical details that are required in solving the problem, section \ref{mainIdeaApprox} gives an outline of the construction of an approximate solution to (\ref{originalProAfterF}), i.e. the associated problem obtained after Fourier transform. The technical tools we will need are a local semiclassical parametrix construction and a global resolvent. 
  \item The third part is the technical heart of the paper where we carry out the construction of a local semiclassical parametrix near zero (in section \ref{ParametrixConstruction})) and the construction of a global resolvent (in section \ref{globalResEst}). 
    \item With all the provided technical ingredients, in the fourth part, in section \ref{detailofCon} we fill out the details of section \ref{mainIdeaApprox} and construct the approximate solution to (\ref{originalProAfterF}). 
  \item In the last part (section \ref{SingComp}) we study the wavefront set and conormality behavior at the boundary of the solution to the original problem (\ref{originalProblem}) and prove Theorem \ref{maintheorem}, Corollary \ref{resultOnCorClass1} and Corollary \ref{resultOnCorClass2} .
  \end{enumerate}
   
   \begin{remark}\label{generalization}[Open problems]\
   
  Since asymptotically AdS can be considered a `reduced' problem on edge (as explain in the section), one can generalize the current model problem (\ref{originalProblem}) to one that is conformal to the problem on edges by adding the angular variable $\omega$. Specifically on $\RR^+_x \times \RR^n_y \times \SS_{\omega}$  where $y = (y_1, \ldots, y_n)$ we consider the following metric, which we call `fibred'-AdS:
$$g = \dfrac{-dx^2+ (1+x)^{-1} dy_n^2 - \sum_{j=1}^{n-1} dy_j^2}{x^2}  - d\omega^2$$
$$-\Box_g = (1+x)x^2 \partial_{y_n}^2 - x^2 \sum_{j=1}^{n-1} \partial_{y_j}^2 - x^2 \partial_x^2 + (n-1) x\partial_x + \dfrac{x}{2(x+1)} x\partial_x - \Delta_{\omega}$$
We can study the diffractive problem for 
$$x^2 \partial_x^2 - (n-1)x\partial_x  + \Delta_{\omega}  + x^2 [ (1+x)  \theta_n^2 - \lvert \theta'\rvert^2]  + \lambda$$
Another direction to obtain new problems is to remove the translation invariance in the $y$ variable of (\ref{originalProblem}). In the new problem, the coefficients will be dependent both on $x$ and $y$ so we cannot use the Fourier transform to reduce to a family of ODEs. This direction is analogous to the work done by Melrose (\cite{Melrose01}, \cite{Melrose02}) to generalize the model diffractive problem of Friedlander in \cite{Fried01}.
 \end{remark}
  
  \textbf{ACKNOWLEDGEMENT:}  The author would like to thank her advisor Andr\'as Vasy, without whose thorough, patient and dedicated guidance the realization of this project would not have been possible. 

\section{The model diffractive problem by Friedlander in `layered medium'}\label{FriedProblem}

In this section, we briefly visit the model problem studied by Friedlander which were the motivation for the techniques used in our case. In \cite{Fried01}, Friedlander studied the wavefront set of the solution to the following initial-boundary value problem:
 \begin{theorem}[ Friedlander]
On  $\RR^+_x \times \RR^n_{y}$ consider $u$ the solution to 
  \begin{equation}\label{FriedOp}
\begin{cases}
   Pu = (1 + x) \dfrac{\partial^2 u}{\partial y_n^2} -  \dfrac{\partial^2 u}{\partial y_1^2} \ldots  \dfrac{\partial^2 u}{\partial y_{n-1}^2} - \dfrac{\partial^2 u}{\partial x^2} = 0\\
   u|_{x=0} = f \\
    u = 0 \   ;\  y_n < 0
    \end{cases}
    \end{equation}
    then $\WF (U )$ is contained in the union of forward bicharacteristic strips that emanate from the conormal bundle of the origin over the domain $\{x\geq 0\}$. 
 \end{theorem}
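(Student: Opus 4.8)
The plan is to solve (\ref{FriedOp}) explicitly by Fourier transform in $y$, exactly as Friedlander does, and then read off the wavefront set from the resulting oscillatory representation. First I would take the partial Fourier transform in $y = (y', y_n)$, writing $\theta = (\theta', \theta_n)$ for the dual variable; the equation $Pu = 0$ becomes the ODE
\begin{equation*}
\partial_x^2 \hat u + \bigl[(1+x)\theta_n^2 - |\theta'|^2\bigr]\hat u = 0 ,
\end{equation*}
which after the affine change of variable $x \mapsto \zeta = \theta_n^{-2/3}\bigl(|\theta'|^2 - (1+x)\theta_n^2\bigr)$ (valid for $\theta_n \ne 0$) is precisely the Airy equation $\hat u'' = \zeta \hat u$. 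Thus $\hat u$ is a linear combination of $\operatorname{Ai}$ and $\operatorname{Bi}$ in the variable $\zeta$. The support condition $u = 0$ for $y_n < 0$ together with the requirement that the solution be temperate as $x \to \infty$ (so that $\hat u$ should not blow up, i.e. one wants the recessive Airy solution) pins down the combination: one takes $\hat u(x,\theta) = c(\theta)\operatorname{Ai}(\zeta)$ with $c(\theta)$ chosen so that $\hat u|_{x=0} = \hat f(\theta)$, i.e. $c(\theta) = \hat f(\theta)/\operatorname{Ai}(\zeta_0)$ where $\zeta_0 = \theta_n^{-2/3}(|\theta'|^2 - \theta_n^2)$. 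One must keep track of the branch for $\theta_n < 0$ by a conjugation/sign argument; this is where the $\operatorname{sgn}\theta_n$ factors in (\ref{originalProAfterF}) originate.

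Next I would insert the known asymptotics of the Airy function. In the region $\zeta > 0$ (the shadow, where $(1+x)\theta_n^2 < |\theta'|^2$), $\operatorname{Ai}(\zeta)$ decays like $\zeta^{-1/4}e^{-\frac23\zeta^{3/2}}$, so there $\hat u$ is exponentially small relative to its boundary data and contributes nothing to the wavefront set over $x > 0$. In the oscillatory region $\zeta < 0$, $\operatorname{Ai}(\zeta) \sim |\zeta|^{-1/4}\sin\bigl(\frac23|\zeta|^{3/2} + \frac\pi4\bigr)$, which I would write as a sum of two exponentials $e^{\pm i\frac23|\zeta|^{3/2}}$ times a symbol; undoing the Fourier transform then represents $u$ (up to smoother terms and the ratio by $\operatorname{Ai}(\zeta_0)$) as a superposition of oscillatory integrals with phase $y\cdot\theta \mp \frac23 |\zeta|^{3/2}(\theta)$. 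A stationary phase / wavefront-set-of-an-oscillatory-integral computation (H\"ormander) then gives that $\WF(u)$ lies in the set of $(x,y,\xi,\eta)$ for which there is a critical $\theta$ with $\xi = \partial_x\Phi$, $\eta = \partial_y\Phi = \theta$, $\partial_\theta\Phi = 0$, where $\Phi$ is the phase; identifying this set with the flow-out of $N^*\{0\}$ under the Hamiltonian of $p = \xi^2 + (1+x)\theta_n^2 - |\theta'|^2$ is a direct check, and one verifies these are exactly the forward bicharacteristic strips emanating from the conormal bundle of the origin.

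The main obstacle is not the Airy ODE but the handling of the \emph{glancing set} $\zeta_0 = 0$, i.e. $|\theta'| = |\theta_n|$: there the denominator $\operatorname{Ai}(\zeta_0)$ is $\operatorname{Ai}(0) \ne 0$ so no pole occurs, but the asymptotic expansions of $\operatorname{Ai}$ used above break down uniformly, and the phase $\frac23|\zeta|^{3/2}$ develops the characteristic Airy-type degeneracy. One must therefore use uniform Airy asymptotics (a parameter-dependent stationary phase with an Airy model) rather than the crude oscillatory/exponential dichotomy, and show that the contributions from a neighborhood of the glancing directions still do not reach into the shadow region over $x > 0$. Equally, one must check that dividing by $\operatorname{Ai}(\zeta_0)$, whose real zeros occur only for $\zeta_0 < 0$ (i.e. in the genuinely oscillatory, non-glancing regime), does not introduce spurious singularities — these zeros correspond to the resonances/reflected rays and are accounted for by the geometry. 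Carrying out these uniform estimates carefully, and packaging the error terms into $\dot{\mathcal C}^\infty$, is the technical core; everything else is the standard translation between an oscillatory integral and its wavefront set.
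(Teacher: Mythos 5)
Your proposal follows essentially the same route the paper attributes to Friedlander: Fourier transform in $y$, reduction to the Airy equation by the change of variable, the explicit solution $\hat u = \hat f\,\operatorname{Ai}(\zeta)/\operatorname{Ai}(\zeta_0)$, and then Airy asymptotics plus the standard oscillatory-integral/wavefront-set calculus, so there is nothing conceptually different to compare. Two small corrections: the change of variable must carry the factor $\theta_n^{-4/3}$, i.e. $\zeta=\theta_n^{-4/3}\lvert\theta'\rvert^2-(1+x)\theta_n^{2/3}$, since your $\theta_n^{-2/3}$ scaling does not reduce the ODE exactly to $\partial_\zeta^2 F=\zeta F$; and the selection of $\operatorname{Ai}$ (and the harmlessness of dividing by $\operatorname{Ai}(\zeta_0)$, whose real zeros lie on the negative axis) is not obtained from temperateness on the real axis, where both Airy solutions merely oscillate as $x\to\infty$, but from the forward support condition implemented as a Fourier--Laplace transform in $y_n$, i.e. holomorphy in a half-plane of $\theta_n$, which is exactly how the paper states the solution is recovered.
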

 Note that he does not explicitly make a statement about singularities at the boundary. However from other results (e.g Melrose-Sj\"ostrand \cite{Mel-Jos01}, Taylor \cite{Taylor01}) we know that there are no singularities at the boundary. Hence the following figure \ref{noshadowprop} holds. 
%
   Friedlander's method was to construct the fundamental forward solution $K_x$ to
  \begin{equation}\label{forwardfundsoln}
   \begin{cases} PK_x  = 0  & x >  0 \\ K_x\ |_{x=0} = \delta  \\ K_x = 0 & y_n <0    \end{cases}
   \end{equation} 
 and study its wavefront set. To construct $K_x$, he first takes the Fourier transform in $y$ and looks for solution satisfying:  \begin{equation}\label{Airyboundary}
  \begin{cases} \tfrac{\partial^2 \hat{K}_x}{\partial x^2} = \big[\lvert\theta'\rvert^2 - (1 + x) \theta_n^2 \big] \hat{K}_x  \\
  \hat{K}\ |_{x=0} = 1\end{cases}
  \end{equation}
$\theta$ is the dual variable to $y$. He then uses the following change of variable, which is valid as a smooth change of variables for $\theta_n \neq 0$
  \begin{equation}\label{firstcv}
  x\mapsto \zeta =\theta_n^{-4/3} \lvert\theta'\rvert^2 - (1 + x)\theta_n^{2/3}
  \end{equation}
to get the Airy to equation 
 \begin{equation}
      \big( \partial_{\zeta}^2  -\zeta \big) F = 0  ;\ \ \hat{K}_x = F(\zeta)\end{equation}
    The Airy equation has as solution the Airy function with the properties:
  $\text{Ai}(\zeta)$ is an entire function, whose zeros are on the negative real axis
  $$\text{Ai}(\zeta) = \exp(-\tfrac{2}{3}\zeta^{3/2}) \Phi(\zeta)$$
  $$\Phi(\zeta) \sim \sum_k a_k \zeta^{-\tfrac{1}{4} - \tfrac{3}{2}k}; \ \text{uniformly as}\  \zeta\rightarrow \infty \ \text{in} \ \{\zeta : \lvert\arg\zeta\rvert < \pi - \epsilon, 0 < \epsilon < \pi\}$$
  The unique solution to (\ref{forwardfundsoln}) is the inverse Fourier-Laplace transform of 
  \begin{equation}\label{fundamentalAiry}
  \hat{K}_x(\theta) = \dfrac{\text{Ai}(\zeta)}{\text{Ai}(\zeta_0)};  \   \zeta_0  = \zeta |_{x=0} = \theta_n^{-4/3} \lvert\theta'\rvert^2 - \theta_n^{2/3} 
  \end{equation}
From here, after undoing the Fourier transform, it is standard to draw conclusion on the behavior of singularities of the actual solution in terms of its wavefront set.
Melrose generalizes this work tremendously by using a parametrix construction and generalized $K_x$ to a class of Fourier-Airy integral operators in \cite{Melrose01},\cite{Melrose02}. 

\section{Asymptotically anti-de Sitter space }\label{ads}
\subsection{Description of actual anti-de Sitter space}
  As a justification of terminology we include a brief discussion (borrowed from Vasy's paper \cite{Andras02}) of actual anti-de Sitter space to show the form of the AdS metric in a collar neighborhood of the boundary. The form of the AdS metric is generalized in the definition of asymptotically AdS space in the following subsection.\nl
   In $\RR^{n+1}_z, z = (z_1, \ldots, z_{n+1})$ consider the hyperboloid :
  $$z_1^2 + \ldots + z_{n-1}^2 - z_n^2 - z_{n+1}^2 = -1$$
   living in the ambient space $\RR^{n+1}_z$ which is equipped with the pseudo-Riemanian metric of signature $(2,n-1)$ given by 
 $$-dz_1^2 - \ldots - dz_{n-1}^2 + dz_n^2 + d z_{n+1}^2$$
 Since $z_n^2 + z_{n+1}^2 > 0$, we can introduce polar coordinates $(R,\theta)$ in the two variables $z_n$ and $z_{n+1}$.  Then the hyperboloid is of the form: 
  $$z_1^2 + \ldots+ z_{n-1}^2 - R^2 = -1$$ inside $\RR^{n-1} \times \RR^+_R \times S^1_{\theta}$
 and thus can be identified with $\RR^{n-1} \times \mathbb{S}^1_{\theta}$. \nl
 Next we compatify $\RR^{n-1}$ to a ball $\overline{\mathbb{B}^{n-1}} $ using the inverse polar coordinates $(x,\omega)$ with $x = r^{-1}$, in order to study the form of the metric close to the boundary. Denote by 
  $$X = \overline{\mathbb{B}^{n-1}}\times \mathbb{S}^1$$
   A collar neigborhood of $\partial X$ is identified with 
  $$[0,1)_x \times \mathbb{S}_{\omega}^{n-2} \times \mathbb{S}^1_{\theta},$$ where the Lorentzian metric takes the form
  $$ g = \dfrac{ -(1+x^2)^{-1} dx^2   + \big[ -d\omega^2 + (1 + x^2)d\theta^2 \big]}{x^2}$$
The boundary is time-like and $-d\omega^2 + (1 + x^2)d\theta^2 $ is lorentzian of signature $(1,n-2)$.
  \begin{center}
  \includegraphics[scale=3]{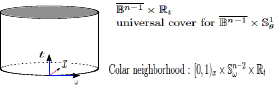}
  \end{center}
  Note AdS space is totally geodesic i.e. all of the geodesics of the manifold through points on the boundary are also geodesics in the boundary, hence we cannot talk about `shadow region'. However, this property does not hold in general for arbitrary asymptotically AdS spaces, whose definition is given in the next section.

\subsection{Definition of asymptotically anti-de Sitter space } 
These are Lorentzian manifolds modeled on anti-de Sitter space at infinity. In more details, they are manifolds with boundary $X^n, n\geq 2$, such that the interior $X^o$ is equipped with a pseudo-Riemannian metric $g$ of signature $(1, n-1)$ which near the boundary $X$ has the form 
 $$g = \dfrac{-dx^2 +h }{x^2},$$
 here $h$ is a smooth symmetric 2-cotensor on $X$ with respect to some product decomposition of $X$ near $Y$, $X = Y \times [0,\epsilon)_x$; in addition, $h|_Y$ is a Lorentzian with signature $(1,n-2)$.\newline
 
 \subsection{Well-posedness results}
  A special case of the well-posedness results for solution of the Klein-Gordon equation on asymptotically AdS space by Vasy in \cite{Andras02},
  \begin{theorem}\label{AdsBdyOrig}[Vasy]
  Suppose $f \in H^{-1,m+1}_{0,b,\text{loc}} (X), m'\leq m \in \RR$  and $\supp f \subset \{y_n \geq 0\}$. $\lambda < \tfrac{n^2}{4}$. Then there exists a unique $U \in H^{1,m'}_{0,loc}$  which in fact lies in $H^{1,m}_{0,b,\text{loc}}(X)$ with 
  $$(\Box_g + \lambda)U = f , \supp U \subset \{y_n \geq 0\}$$
  For all compact $K \subset X$ there exists a compact $K' \subset X$ and a constant $C > 0$ such that
  $$\lVert U \rVert_{H^{1,m}_0 (K)} \leq C \lVert f\rVert_{H^{-1,m+1}_{0,b} (K')}$$
  \end{theorem}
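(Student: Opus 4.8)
The statement is a hyperbolic well-posedness result on a manifold with timelike boundary in the degenerate ``0-geometry'' of conformally compact type, so the plan is the standard one for such problems: first establish energy estimates adapted to the 0-structure that produce a solution of finite b-regularity, and then propagate b-regularity by a commutator (module-regularity) argument. From the outset one should keep track of the hypothesis $\lambda < \tfrac{n^2}{4}$: it is exactly what makes the indicial roots $s_\pm = \tfrac n2 \pm \sqrt{\tfrac{n^2}{4}-\lambda}$ of the normal operator real, hence what makes $H^1_0$ the correct ``form domain'' at $\{x=0\}$ and what gives the favorable boundary contribution in the energy identity (this is the Breitenlohner--Freedman-type bound). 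Away from $\partial X$ the operator $\Box_g + \lambda$ is strictly hyperbolic with $y_n$ (coming from the Lorentzian factor $h|_Y$) as a time function, so there the assertion is the classical Cauchy-problem statement, with ``initial data'' at $\{y_n = 0\}$ furnished by the support condition; all the genuine work is in a collar of the boundary.

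Near $\partial X$, in product coordinates $X = Y\times[0,\eps)_x$, the operator $\Box_g+\lambda$ is a 0-differential operator of the form $-(x\partial_x)^2 + x\partial_x(\text{first order}) + x^2(\text{boundary d'Alembertian}) + \dots$, with indicial polynomial $s(s-n)+\lambda$. The energy method I would use: take the time function $t := y_n$, contract the (0-)stress-energy tensor of $\Box_g + \lambda$ against a suitable future-timelike multiplier, and integrate the resulting identity over slabs $\{0 \le y_n \le T\}$ intersected with a compact set. Two boundary terms appear---one on the ``Cauchy slice'' $\{y_n = T\}$, which is the energy to be estimated, and one on $\{x = 0\}$; the point is that for $U \in H^1_0$ and $\lambda < \tfrac{n^2}{4}$ the latter either vanishes or has the good sign, because $H^1_0$-membership is incompatible with the slowly decaying ($x^{s_-}$) ``non-normalizable'' behavior. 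This yields the a priori bound $\|U\|_{H^{1,m'}_0(K)} \lesssim \|f\|_{H^{-1,m'+1}_{0,b}(K')}$ at finite regularity; feeding it (and the analogous estimate for the backward adjoint problem, which is essentially the same operator solved backward in time) into a Hahn--Banach/duality argument produces a solution $U$ of this regularity, and uniqueness follows by running the same estimate on the difference of two solutions, whose source vanishes and whose data at $\{y_n=0\}$ is trivial by the support condition.

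To upgrade from $H^{1,m'}_0$ to $H^{1,m}_{0,b}$ I would iterate a b-commutator argument. Near the boundary $\mathcal{V}_b$ is spanned by $x\partial_x$ and $\partial_{y_1},\dots,\partial_{y_n}$, and because $\mathcal{V}_0 = x\mathcal{V}_b$ the commutator of any $V \in \mathcal{V}_b$ with $\Box_g + \lambda$ is again a 0-differential operator of order $\le 2$ with coefficients bounded up to $x = 0$. Hence applying the energy estimate of the previous step to $V^\alpha U$ for $|\alpha| \le m$ and absorbing the commutator terms---which involve strictly fewer b-derivatives of $U$---into the right-hand side propagates b-regularity one order at a time, the supports being tracked as in the statement. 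This is precisely the module-regularity structure of \cite{Andras02}, which one invokes for the details.

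The main obstacle is the $\{x=0\}$ boundary term in the 0-adapted energy identity: one must check that, in the degenerate 0-metric, the multiplier can be arranged so that this term is controlled (vanishes or has the right sign) exactly when $\lambda < \tfrac{n^2}{4}$, and that in the b-regularity step the commutator estimates stay uniform as $x\to 0$ rather than picking up inverse powers of $x$. Once those two points are secured, the interior hyperbolic estimate, the duality argument for existence, and the inductive bookkeeping of supports are all routine.
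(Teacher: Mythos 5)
You should first note that this paper contains no proof of the statement at all: it is quoted verbatim as a theorem of Vasy from \cite{Andras02}, so the only meaningful comparison is with Vasy's own argument there. In outline you are on the same track as that argument: energy estimates adapted to the 0-structure giving a solution of finite regularity (with existence via duality and uniqueness by running the estimate on the difference), followed by propagation of b-regularity by commuting elements of $\Diff_b$ through $\Box_g+\lambda$ --- the module-regularity mechanism --- with the supports tracked by the time function $y_n$.

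The gap is that the two items you defer at the end as ``the main obstacle'' are precisely the substantive content of the theorem, and your sketch also mislocates how $\lambda<\tfrac{n^2}{4}$ enters. For $U\in H^1_0$ there is no problematic flux term at $\{x=0\}$ in the weak formulation to begin with, so the hypothesis cannot be doing its work through ``the boundary term vanishes or has a good sign because $H^1_0$ is incompatible with the $x^{s_-}$ asymptotics.'' The number $\tfrac{n^2}{4}$ is the sharp constant in the Hardy--Poincar\'e inequality relating $\lVert u\rVert^2$ and $\lVert x\partial_x u\rVert^2$ with respect to the 0-volume form $x^{-(n+1)}(1+x)^{-1/2}\,dx\,dy$; the condition $\lambda<\tfrac{n^2}{4}$ (the Breitenlohner--Freedman bound you name) is exactly what lets the zeroth-order term in the energy identity be absorbed so that the spatial part of the energy quadratic form stays positive definite on $H^1_0$. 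Without proving that inequality and the resulting coercivity, the a priori estimate $\lVert U\rVert_{H^{1,m'}_0(K)}\lesssim\lVert f\rVert_{H^{-1,m'+1}_{0,b}(K')}$ is asserted rather than established. Likewise, the b-regularity step requires the actual commutator computation --- that for $V\in\mathcal{V}_b$ one has $[\Box_g+\lambda,V]\in\Diff_b\Diff^1_0$ (not merely ``a 0-differential operator of order $\le 2$''), with coefficients uniform up to $x=0$, so that the commutator terms can be re-absorbed with one fewer b-derivative; this is the module-regularity lemma of \cite{Andras02}, which you invoke but do not verify. As written, then, your proposal is a correct outline of Vasy's proof with the two decisive analytic inputs left unproved, and with the role of $\lambda<\tfrac{n^2}{4}$ attributed to a boundary sign condition rather than to the sharp Hardy constant where it actually lives.
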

   \textbf{Notation} : For definition of $H^{-1,\alpha}_{0,b,\text{loc}}, \alpha \in \RR$ see page 2 in \cite{Andras02}. 
 
 \begin{theorem}[Vasy]
$X$ asymptotically anti-de Sitter space of dimension $n$.  $(\Box_g + \lambda)u = f$ where $f\in \dot{\mathcal{C}}^{\infty}(X)$ then 
$$ u = x^{s_+(\lambda)} v , v\in \mathcal{C}^{\infty}(X), s_{\pm}(\lambda) = \tfrac{n-1}{2} \pm \sqrt{\tfrac{(n-1)^2}{4} - \lambda}$$
Note $s_{\pm}(\lambda)$ are the indicial roots of $\Box_g + \lambda$ .
 \end{theorem}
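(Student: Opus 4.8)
The plan is to reduce the boundary value problem $(\Box_g + \lambda)u = f$ with $f \in \dot{\mathcal{C}}^\infty(X)$ to an analysis of the indicial (normal) operator near the boundary $\{x = 0\}$, using the explicit collar form $g = (-dx^2 + h)/x^2$. First I would compute $\Box_g + \lambda$ in the collar coordinates: since the metric is $0$-metric in Melrose's sense, $\Box_g$ is a $0$-differential operator, i.e. a polynomial in $x\partial_x$ and $x\partial_{y}$ with smooth coefficients. Extracting the part that survives at $x = 0$ (the normal operator), one finds that modulo $x$ times a $0$-differential operator, $\Box_g + \lambda$ acts on a power $x^s$ by the indicial polynomial $s^2 - (n-1)s + \lambda$ (up to sign conventions), whose roots are exactly $s_\pm(\lambda) = \tfrac{n-1}{2} \pm \sqrt{\tfrac{(n-1)^2}{4} - \lambda}$. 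This identifies the two candidate leading exponents.

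Next I would use the well-posedness result, Theorem \ref{AdsBdyOrig} (Vasy): since $f \in \dot{\mathcal{C}}^\infty(X) \subset H^{-1, m+1}_{0,b,\text{loc}}$ for every $m$, the solution $u$ lies in $H^{1,m}_{0,b,\text{loc}}(X)$ for all $m$, i.e. $u$ is conormal with respect to the boundary relative to the $0$-Sobolev space $H^1_0$. Such conormality, combined with the fact that $u \in H^{1,m'}_0$ (which forces the leading behavior to be the growing-at-worst-allowed root, namely $s_+$, rather than $s_-$), is the structural input that lets one run an iterative (formal power series) construction of an asymptotic expansion: write $u \sim x^{s_+}(v_0 + x v_1 + x^2 v_2 + \cdots)$, plug into $(\Box_g + \lambda)u = f$, and solve order by order. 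At each order $k \geq 1$ the coefficient $v_k$ is determined by applying the inverse of the indicial polynomial evaluated at $s_+ + k$; this is invertible precisely because $s_+ + k$ is never equal to $s_-$ (as $\Re(s_+ + k) > \Re s_-$ for $k \geq 1$ when $\lambda < \tfrac{(n-1)^2}{4}$, and one handles the borderline/complex cases separately), so no logarithmic terms appear. Borel summation then produces $v \in \mathcal{C}^\infty(X)$ with $x^{s_+} v$ solving the equation modulo $\dot{\mathcal{C}}^\infty$.

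Finally I would close the argument: the difference between $u$ and the constructed $x^{s_+}v$ satisfies $(\Box_g + \lambda)w \in \dot{\mathcal{C}}^\infty(X)$ and $w$ vanishes to infinite order at $x = 0$ relative to $x^{s_+} H^1_0$; one then invokes uniqueness in Theorem \ref{AdsBdyOrig} (applied to $w$ itself, whose data is now trivial to infinite order, hence in the right spaces with arbitrarily good decay) together with interior elliptic-hyperbolic regularity to conclude $w \in \dot{\mathcal{C}}^\infty(X)$, and absorb it into $x^{s_+} v$ by adjusting $v$. Hence $u = x^{s_+} v$ with $v \in \mathcal{C}^\infty(X)$.

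I expect the main obstacle to be the step establishing that the full expansion has \emph{no} $x^{s_-}$-type or logarithmic contamination — that is, rigorously upgrading the $H^{1,m}_{0,b}$ conormality and the a priori membership in $H^{1,m'}_0$ to the clean statement that the leading exponent is $s_+$ and that the indicial operator is invertible at every subsequent order. This requires care with the mapping properties of the normal operator on the relevant $b$-Sobolev / conormal spaces, and with the exceptional parameter values of $\lambda$ where $s_+ - s_- \in \mathbb{Z}$ (where one must check by hand that the resonance does not actually produce a log term because of the specific structure of the lower-order part of $\Box_g$). The rest — the collar computation, the formal series, and the final uniqueness-plus-regularity wrap-up — is routine given the tools already cited.
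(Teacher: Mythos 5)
First, note that the paper itself contains no proof of this statement: it is quoted verbatim as a theorem of Vasy from \cite{Andras02}, so there is no internal argument to compare yours against. Judged on its own terms, your outline follows the expected strategy (collar computation of the 0-operator, identification of the indicial polynomial $s^2-(n-1)s+\lambda$ with roots $s_\pm$, conormal regularity from the well-posedness framework, and an order-by-order expansion based at $s_+$), and that is indeed the general shape of the argument in Vasy's paper. One small remark: your worry about the resonant case $s_+-s_-\in\mathbb{Z}$ is unnecessary here, since the expansion is based at the \emph{larger} root; the indicial polynomial is evaluated only at $s_++k$, $k\geq 1$, which never hits $s_\pm$, so no logarithms can arise. (Resonances of that kind matter only for expansions involving both roots, as in Lemma \ref{AdsBdy} of this paper.)

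The genuine gap is in your closing step. You construct $x^{s_+}v$ by Borel summation so that $(\Box_g+\lambda)(x^{s_+}v)-f$ vanishes to infinite order, and then you want to conclude that $w=u-x^{s_+}v$ lies in $\dot{\mathcal{C}}^\infty(X)$ by "invoking uniqueness in Theorem \ref{AdsBdyOrig}." But uniqueness and the estimates of that theorem live in the scale $H^{1,m}_{0,b,\text{loc}}$, whose second index measures b-regularity (iterated $x\partial_x$, $\partial_y$ derivatives), \emph{not} decay in $x$; so knowing $Pw$ is rapidly vanishing and $w$ is the unique forward solution gives you conormality of $w$ of every b-order, but by itself no gain in the weight, hence no infinite-order vanishing of $w$ and no way to absorb it into $x^{s_+}v$. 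The missing ingredient — which is the actual analytic content of the theorem — is an iterative weight-improvement argument: write $\Box_g+\lambda=Q_1+x^2Q_2$ with $Q_1$ the regular-singular (normal) operator, start from the a priori weight furnished by membership in $H^1_{0,\text{loc}}$ (this is also what rules out an $x^{s_-}$ leading term), and invert $Q_1$ on conormal spaces, using nonvanishing of the indicial polynomial off $s_\pm$, to upgrade $w\in x^{s}(\text{conormal})$ to $w\in x^{s+1}(\text{conormal})$ step by step. Without this, the passage from "conormal relative to $H^1_0$" to the clean asymptotic expansion $u=x^{s_+}v$, $v\in\mathcal{C}^\infty$, is asserted rather than proved — you correctly flag it as the main obstacle, but the proposal as written does not close it. (You also implicitly assume interior smoothness of $u$; as quoted, the statement suppresses the hypotheses under which this holds, so any complete proof must either assume or establish it.)
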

 
 \begin{remark}\label{behaviorAtBdy}
 In Friedlander's diffractive problem, the fundamental solution can be considered as a smooth function in $x$ having value in distributions of $y$ i.e. $\mathcal{C}^{\infty}(\RR^+_x, \mathcal{D}'_y)$. However as indicated by the well-posedness results by Vasy quoted above, the solutions we will work with are not smooth up to the boundary and will have boundary behavior prescribed by the indicial roots $s_{\pm}(\lambda)$. This will affect how one formulates the initial boundary value problem for our model operator (see section \ref{wellposedness}, in particular Lemma \ref{AdsBdy} and Lemma \ref{ExistenceNBdy}). 
 \end{remark}
 
\subsection{Propagation of singularities}\label{adsPropsing} 
 \begin{theorem}[Vasy]
 On asymptotically AdS space, we have the propagation of singularities along `generalized broken bicharacteristics' (GBB) of $\hat{g}$ where $\hat{g} = x^2 g$.
 \end{theorem}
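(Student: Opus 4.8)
The plan is to prove this by a positive commutator argument carried out in the $0$-calculus of Mazzeo--Melrose, following the strategy of Melrose--Sj\"ostrand \cite{Mel-Jos01} for boundary propagation and its corner/edge refinements in \cite{Andras01}, \cite{Melrose-Wu-Va}. Since $g = \hat g/x^2$ with $\hat g = x^2 g$ smooth and nondegenerate up to $Y = \partial X$, the operator $\Box_g + \lambda$ is a $0$-differential operator; its $0$-principal symbol on ${}^{0}T^{*}X$ is the dual $0$-metric form, which in the fibre coordinates dual to $(\tfrac{dx}{x},\tfrac{dy}{x})$ is exactly $\hat p$ (the first-order term and $\lambda$ being of lower $0$-order), so its characteristic set is $\Sigma = \hat p^{-1}(0)$. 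Because $Y$ is time-like for $\hat g$, $\Sigma$ meets $\{x=0\}$, and the correct phase space there is the \emph{compressed} $0$-cotangent bundle $\dot T^{*}_{0}X$, obtained by collapsing over $Y$ the normal momentum direction; a GBB is then a continuous curve in the characteristic set of $\dot T^{*}_{0}X$ which over $X^{o}$ is an integral curve of $H_{\hat p}$ and at $Y$ satisfies the Melrose--Sj\"ostrand reflection condition. The goal is to show that, away from $\WF(f)$, the set $\WF_{0,b}(u)$ where $u$ fails to be microlocally in $H^{1,m}_{0,b}$ is a union of maximally extended GBB.

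First I would fix the microlocal machinery: the small $0$-calculus $\Psi^{*}_{0}(X)$ and its enlargement $\Psi^{*}_{0,b}(X)$ carrying the extra order $m$ of b-regularity at $\{x=0\}$, their principal symbols, composition, adjoints, and boundedness on $H^{s,m}_{0,b}$, together with the wavefront set $\WF_{0,b}$ and a microlocal elliptic regularity statement; and I would write out the normal operator of $\Box_g+\lambda$ at $x=0$ and record the indicial roots $s_\pm$, which govern the legitimacy of the integrations by parts below. The argument then proceeds by microlocal location. At elliptic points, including elliptic points of $Y$, microlocal ellipticity gives $u \in H^{1,\infty}_{0,b}$; over the interior one has H\"ormander's theorem along $H_{\hat p}$; what remains is the boundary.

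The heart of the matter is the commutator estimate near $Y$, and I expect the \emph{glancing} region to be the main obstacle. At a hyperbolic boundary point one builds a self-adjoint $A \in \Psi^{0}_{0,b}(X)$ with symbol concentrated near the incoming and reflected rays so that $H_{\hat p}$ applied to the symbol of $A^{*}A$ has a definite sign microlocally, modulo a term supported where regularity is already known and a lower-order remainder; pairing $i[\Box_g+\lambda,A^{*}A]u$ with $u$, integrating by parts (the boundary term controlled by $u \in H^{1}_{0}$ and the form of the $0$-metric), and regularizing/iterating in $m$ propagates $H^{1,m}_{0,b}$-regularity across the reflection. At a glancing point $H_{\hat p}$ is tangent to $\{x=0\}$, the naive commutant fails, and one must use a commutant adapted to the contact: a weight assembled from $x$, from a function increasing along the flow, and from the first-order tangency defect $H_{\hat p}x$, as in \cite{Mel-Jos01} and, in the degenerate-metric setting, the edge constructions of \cite{Melrose-Wu-Va}. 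The geometric fact $H_{\hat p}^{2}x > 0$ valid in our model (and, after the conformal rescaling, in general asymptotically AdS spaces) guarantees the bicharacteristic leaves a boundary neighbourhood on both sides of a glancing point, which is exactly what lets one absorb the indefinite pieces of the commutator; at gliding points the GBB may run inside $Y$ and one additionally invokes the propagation theorem for the induced boundary operator with symbol built from $\hat h = h|_{Y}$. With the permissive notion of GBB one does not here extract the diffractive improvement (a ray leaving the boundary at a diffractive point); that refinement is the subject of the present paper, cf.\ Theorem \ref{maintheorem}.

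Finally these local estimates propagate $H^{1,m}_{0,b}$-regularity a fixed positive distance in the flow parameter in both directions, so the subset of any given GBB on which $u$ is microlocally $H^{1,m}_{0,b}$ is open; a standard limiting argument shows it is also closed; hence it is the whole GBB once nonempty there. Combined with the a priori global regularity $u \in H^{1,m'}_{0,\text{loc}}$ from Theorem \ref{AdsBdyOrig}, this yields the assertion. The bulk of the effort is the glancing commutator near $x=0$: one must produce an element of $\Psi^{*}_{0,b}$ that is at once positive enough to close the estimate and compatible both with the $0$-structure in $x$ (the extra b-order $m$) and with the second-order contact of the flow with $Y$, and one must check that the normal operator of $\Box_g+\lambda$ reduces the local problem exactly to the flat boundary model for which the Melrose--Sj\"ostrand glancing estimate is available.
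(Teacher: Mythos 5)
This statement is quoted in the paper as a background result of Vasy, with no proof given here; the reader is referred to \cite{Andras02}. So there is no in-paper argument to compare with, and your proposal has to be judged on its own terms as a reconstruction of Vasy's proof. In broad strokes your plan is the right one (microlocal positive commutator estimates measuring $H^{1,m}_{0,b}$-regularity, a division into elliptic, hyperbolic and glancing boundary points, an open-closed argument along GBB), but two concrete steps as you state them would not go through.

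First, your closing step — ``check that the normal operator of $\Box_g+\lambda$ reduces the local problem exactly to the flat boundary model for which the Melrose--Sj\"ostrand glancing estimate is available'' — is not a reduction that exists. The normal operator of $\Box_g+\lambda$ at $x=0$ is the regular-singular ODE $(x\partial_x)^2-(n-1)x\partial_x+\lambda$-type operator whose indicial roots are $s_\pm(\lambda)$; it encodes the boundary asymptotics (and hence which integrations by parts are legitimate in the $H^1_0$-based pairing), but it does not turn the degenerate problem into the classical smooth-boundary wave equation, and the Melrose--Sj\"ostrand glancing parametrix/estimate is formulated for the classical Dirichlet/Neumann problem in the ordinary Sobolev scale, not for $H^{1,m}_{0,b}$ with the $x^{-2}$-degenerate metric. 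In \cite{Andras02} the glancing (and hyperbolic) estimates are proved directly in the degenerate setting by constructing the commutants from scratch, following the scheme of \cite{Andras01} and \cite{Melrose-Wu-Va}, rather than by quoting \cite{Mel-Jos01}; without that direct construction your argument has a hole exactly at the step you yourself identify as the heart of the matter. Second, your assertion that $H_{\hat p}^{2}x>0$ holds ``in general asymptotically AdS spaces'' is false: that sign condition is the \emph{diffractive} hypothesis, valid for the model metric of this paper but not for a general asymptotically AdS metric, where glancing points of gliding type (and higher-order tangency) occur. The general propagation theorem you are trying to prove must therefore use the permissive notion of GBB and a commutant construction that does not rely on any sign of $H_{\hat p}^{2}x$ — this is precisely why the cited theorem carries no diffractive improvement, and why the present paper's Theorem \ref{maintheorem} is a separate result for the model case. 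If you repair these two points (prove the boundary commutator estimates intrinsically in the $0$/b framework, and drop the global sign assumption in favour of the corner/edge-style treatment of tangential points), the remaining outline — elliptic regularity, interior H\"ormander propagation, iteration in $m$ with regularization, and the open-closed argument combined with Theorem \ref{AdsBdyOrig} — is consistent with the strategy of the cited proof.
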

 See \cite{Andras02} for more description of the GBB\footnotemark[4].\nl
 In contrast with the boundaryless settings, the type of regularities one measures here is not  $\mathcal{C}^{\infty}$ but `b'-regularity conormal to the boundary (as explained next). This is as expected from the lack of smoothness for the solution of the Klein-Gordon at the boundary (as noted in remark (\ref{behaviorAtBdy})). Instead of $\mathcal{C}^{\infty}$, the `nice' functions are $H^{1,\infty}_{0,b}$ with:
  $$u\in H^{1,\infty}_{0,b} \Leftrightarrow u \in H^1_0(X)\  \text{and}\  Qu \in H^1_0(X) \ \forall Q\in \Diff_b(X) \ \text{of any order}  $$
Here,
 $$u \in H^k_0, \text{if } \ L u \in L^2(X, x^{-(n+1)}\, d\hat{g})  \ \ \forall L  \in \Diff^k_0(X)$$
  $\Diff_b$ is the set of differential operators generated by $\mathcal{V}_b(X)$, which in coordinates have the form
  $$ a x\partial_x + \sum b_j \partial_{y_j} : \text{ smooth vector fields tangent at the boundary} $$
  $\Diff_0$ is the set of differential operators generated by $\mathcal{V}_0(X)$, which in coordinates have the form
  $$ a x\partial_x + \sum b_j (x\partial_{y_j}) : \text{ smooth vector fields vanishing at the boundary} $$

\section{Statement of our model diffractive problem on an asymptotically anti-de Sitter space}\label{statement}
 
On $\RR^+_x \times \RR^n_y $  where $y = (y_1, \ldots, y_n)$ for our model problem, we consider the metric
$$ g = \dfrac{-dx^2+ (1+x)^{-1} dy_n^2 - \sum_{j=1}^{n-1} dy_j^2}{x^2} $$
For such a metric, $J = \lvert \det g\rvert^{1/2} = x^{-(n+1)} (1 + x)^{-1/2}$. We consider the Beltrami-Laplace operator associated with $g$
$$-\Box_g = J^{-1} \partial_i (J G^{ij} \partial_j) = (1+x)x^2 \partial_{y_n}^2 - x^2 \sum_{j=1}^{n-1} \partial_{y_j}^2 - x^2 \partial_x^2 + (n-1) x\partial_x + \dfrac{x}{2(x+1)} x\partial_x $$
For simpler computation in the model case, we will modify $\Box_g$ by a first-order tangential derivative. We will work with the operator: 
\begin{equation}
\begin{aligned}
P &:= \Box_g + \dfrac{x}{2(x+1)} x\partial_x + \lambda \\
 &= x^2 \partial_x^2 - (n-1)x\partial_x    - (1+x) x^2 \partial_{y_n}^2 + x^2 \sum_{j=1}^{n-1} \partial_{y_j}^2 + \lambda
 \end{aligned}
\end{equation}
As noted in the introduction section, this modification does not change the problem in an essential way.\newline
Taking the Fourier transform in $y$ we obtain the operator $\hat{L}$. Denote by $\theta = (\theta_1, \ldots, \theta_n)$ to be the dual variables to $y$ under the Fourier transform.
\begin{equation}\label{mainopFourier}
\begin{aligned}
\hat{L} &:= x^2 \partial_x^2 - (n-1)x\partial_x    + x^2 [ (1+x)  \theta_n^2 - \lvert \theta'\rvert^2]  + \lambda \\
&= x^2 \big[\partial_x^2 - (n-1)\dfrac{\partial_x}{x}    + [ (1+x)  \theta_n^2 - \lvert \theta'\rvert^2]  + x^{-2} \lambda\big]
\end{aligned}
\end{equation}
The principal symbol of $x^{-2} \hat{L}$ is 
$$\hat{l}(x,\xi,\theta) = -\xi^2 + [ (1+x)  \theta_n^2 - \lvert \theta'\rvert^2]  $$
The diffractive condition is satisfied since $H_{\hat{l}}^2 x  > 0$\footnote{On the characteristic set, $\lvert\theta_n\rvert = \lvert\theta'\rvert$ so $\theta_n > 0$. $H_{\hat{l}} x = \partial_{\xi} p = -2\xi$. $H_{\hat{l}}^2 x = 2\partial_x \hat{l} = 2  \theta_n^2$. $(x=0,y=0,\xi=0,\theta)$ with $\lvert\theta_n\rvert = \lvert\theta'\rvert > 0$ satisfies the diffractive condition: $ H_{\hat{l}} x = \xi = 0$ and $H_{\hat{l}}^2 x = \theta_n^2  > 0$ }. Lemma \ref{ExistenceNBdy} gives existence and uniqueness to the following boundary problem:
 $$\begin{cases} Pu = 0 \\
 \supp u \subset \{ y_n \geq 0\}\\
 u \in  x^{s_-} \delta(y) +  x^{s_-+1} g(x,y) + h(x,y) ; &g \in \mathcal{C}^{\infty}(\RR_x, \mathcal{D}'_y); \supp g \subset \{y_n =0\} \\  & h \in H^{1,\alpha}_{0,b,\text{loc}}, \ \text{for some} \ \alpha \in \RR
  \end{cases} $$
  Our main goal is to study the singularity structure of this solution in terms of wavefront set and investigate the presence of singularities in the `shadow region'. In order to do this, after taking the Fourier transform in $y$, we need to construct a polyhomogeous conormal solution modulo smooth function in $\dot{\mathcal{C}}^{\infty}(\RR^{n+1}_+) $ which satisfies certain the boundary condition at $x = 0$ as follows:
\begin{equation}
\begin{cases}
\hat{L} \hat{u} \in \dot{\mathcal{C}}^{\infty}(\RR^{n+1}_+) \\
x^{-s_-} \hat{u} \ |_{x=0} = 1   & s_{\pm}(\lambda) = \dfrac{n}{2} \pm \sqrt{\dfrac{n^2}{4} - \lambda}\\
\hat{u} \in \exp(-i\phi_{\text{in}})  \mathcal{L}_{ph}(C) & \phi_{\text{in}} = \tfrac{2}{3} \theta_n^{-1} \big[ (1 +x - \lvert\hat{\theta}'\rvert^2)^{3/2} - (1 - \lvert\hat{\theta}'\rvert^2)^{3/2}\big] \sgn\theta_n
\end{cases} 
\end{equation}
 $\mathcal{L}_{ph}(C)$ is the set of polyhomogeneous conormal functions on some blown-up space $C$ of $\overline{\RR}_{\theta} \times [0,1)_x$. $ s_{\pm}(\lambda)$ come from the indicial roots of the operator and prescribes the asymptotic behavior of solution of the Klein Gordon on asymptotically AdS space (Vasy \cite{Andras02}). In our case, the dimension is $n+1$. This construction is carried out in section \ref{detailofCon} and the singuralities are studied in section \ref{SingComp} and \ref{mainresultcomp}.\nl
 \textbf{The bicharacteristics} : the bicharacteristics are the same as those for the conformal operator  $x^2P$, which is the operator considered by Friedlander \cite{Fried01} (up to leading order). We will use Friedlander's description of bicharacteristics (see page 146-148 of \cite{Fried01}).

\subsection{Geometry of singularity structure}
There are similarities in the geometry of phase space $T^*( \RR^+_x \times \RR^n_y )$ between our model problem and Friedlander's operator (\ref{FriedOp}) in \cite{Fried01}. Our operator $P$  in (\ref{originalProblem}) is a 0-differential operator\footnote{0-differential operators are operators generated by $\mathcal{V}_0(X)$ - smooth vector fields vanishing at the boundary. $\mathcal{V}_0(X)$ is also the set of all smooth sections of a vector bundle called $^0TX$, whose dual bundle is $^0T^*X$. The quotient of $^0T^*X$ under the dilation action of $\RR^+$ gives the 0-cosphere bundle.} so its geometry lives in the 0-cosphere bundle\footnotemark[5]. On the other hand, there is a natural identification between 0-cosphere bundle and the natural cosphere bundle. Up to an identification of the cotangent bundle near the boundary, the principal symbol for our model problem (\ref{originalProblem}) and that for Friedlander's problem \cite{Fried01} are both $-\xi^2 + q(x,y,\theta)$. Both problems will have the same geometry in the cotangent/cosphere bundle. In particular, as shown in figure \ref{Lagrangian1}, away from glancing, singularities are carried by a Lagrangian. The boundaries of the flow are the same in both cases: what happens at the boundary of the Lagrangian comes from the glancing point at the boundary of the domain from which it emanates (see figure \ref{propsing}). However, the analytic objects that show up at the boundary will be different in each case. In Friedlander's case, this is associated to $\hat{K}_x$ in (\ref{fundamentalAiry}), while in our case, we will have to construct a family of $u_{\text{asym-Ads}}$ that plays the same role as $\hat{K}_x$.\newline
       \begin{figure}
  \includegraphics[scale=1.5]{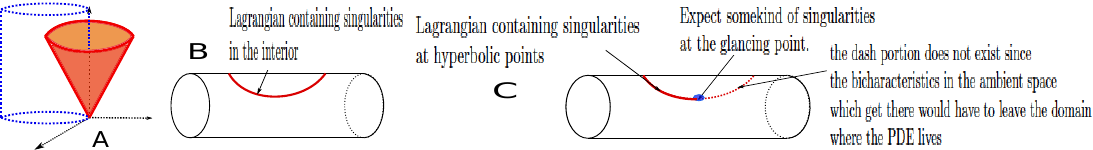}
 \caption{\small In figure A, the problem has no boundary; initial data has singularities contained in the conormal bundle of the origin (the dotted cylinder is for visual comparison to the case with boundary). Figure B shows the structure of the flow out from the conormal bundle of the diagonal. Figure C shows the flow out in the situation with boundary, where away from glancing, singularities are still carried by a Lagrangian while at glancing the Lagrangian has a boundary.}
  \label{Lagrangian1}
 \end{figure}
 \begin{figure}
 \includegraphics[scale=1.5]{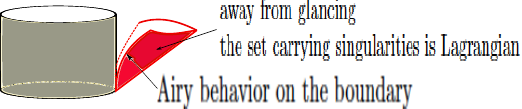}
 \caption{\small Sideview of the problem with boundary (the PDE lives outside of the cylinder) and initial data having singularities contained in the conormal bundle at the origin}
 \label{propsing}
 \end{figure}

 \section{Various results on well-posedness and formulation of boundary condition problem on asymptotically AdS space}\label{wellposedness}\
In this section we establish the existence and uniqueness for solution to (\ref{originalProblem}). The proof also explains the appearance of indicial roots $s_-(\lambda)$ in the behavior of the solution at the boundary as stated in (\ref{originalProblem}).
  \begin{lemma}\label{AdsBdy}
 \begin{enumerate}\indent\par\noindent
\item  For $g\in \mathcal{D}'_y$ there exists $\tilde{u}_{\pm} \in x^{s_{\pm}} \mathcal{C}^{\infty}(\RR_+, \mathcal{D}'_y)$ satisfying for some $\alpha \in \RR$:
  $$\begin{cases}  P\tilde{u}_{\pm} \in H^{-1,\alpha}_{0,b}  \\ \tilde{u}_{\pm} \in  x^{s_{\pm}} g_{\pm} + x^{s_{\pm} + 1} \mathcal{C}^{\infty}(\RR^+_x, \mathcal{D}'_y) + H^{1,\alpha}_{0,b}  \end{cases}$$
 \item 
 For $g\in \mathcal{D}'_y$ there exists $\tilde{u}_{\pm} \in x^{s_{\pm}} \mathcal{C}^{\infty}(\RR_+, \mathcal{D}'_y)$ satisfying
  $$\begin{cases}  P\tilde{u}_{\pm} \in \dot{\mathcal{C}}^{\infty} (\RR_x , \mathcal{D}'_y) \\ \tilde{u}_{\pm} \in  x^{s_{\pm}} g_{\pm} + x^{s_{\pm} + 1} \mathcal{C}^{\infty}(\RR^+_x, \mathcal{D}'_y)  \end{cases};\ \ \dot{\mathcal{C}}^{\infty} (\RR_x , \mathcal{D}'_y) = x^{\infty} \mathcal{C}^{\infty}(\RR^+_x, \mathcal{D}'_y)$$
  \end{enumerate}
 \end{lemma}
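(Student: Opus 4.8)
The plan is to solve both parts of the lemma by constructing a formal power series solution in $x$ with coefficients valued in $\mathcal{D}'_y$, using the indicial structure of $P$ at $x=0$ to fix the leading behavior, and then to handle the remainder by invoking the well-posedness Theorem \ref{AdsBdyOrig} of Vasy. First I would write $P$ in the form $P = (x\partial_x)^2 - n(x\partial_x) + \lambda + x\,E_1 + x^2\,E_2$, where $E_1, E_2$ are differential operators in $x\partial_x$ and $\partial_{y_j}$ with coefficients smooth (indeed polynomial) in $x$; this is immediate from the displayed formula for $P$, since $x^2\partial_x^2 = (x\partial_x)^2 - x\partial_x$ and the $y_n$-term carries a $(1+x)$ factor. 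The indicial polynomial is $I(s) = s^2 - ns + \lambda$, whose roots are exactly $s_\pm(\lambda) = \tfrac n2 \pm \sqrt{\tfrac{n^2}{4}-\lambda}$. For a datum $g \in \mathcal{D}'_y$, set $\tilde u_\pm^{(N)} = \sum_{k=0}^{N} x^{s_\pm + k} a_k(y)$ with $a_0 = g$; substituting into $Pu = 0$ and collecting the coefficient of $x^{s_\pm + k}$ gives a recursion $I(s_\pm + k)\,a_k = (\text{explicit differential operator in } y \text{ applied to } a_0,\dots,a_{k-1})$. Since $I(s_\pm + k) = k\bigl(k \pm 2\sqrt{\tfrac{n^2}{4}-\lambda}\bigr) \neq 0$ for $k \geq 1$ (using $\lambda < \tfrac{n^2}{4}$, so the square root is real and positive, and noting $s_- + k = s_+$ never occurs for integer $k\geq 1$ unless $2\sqrt{\tfrac{n^2}{4}-\lambda}$ is a positive integer, a case one treats by the standard logarithmic modification — but here one only needs $a_k \in \mathcal{D}'_y$, which still goes through), each $a_k$ is determined uniquely in $\mathcal{D}'_y$.

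Having the formal series, I would use Borel summation in $x$ (asymptotic summation of the $\mathcal{D}'_y$-valued coefficients, which works since $\mathcal{D}'_y$ is a complete locally convex space and one sums on each seminorm) to produce an honest $\tilde u_\pm \in x^{s_\pm}\mathcal{C}^\infty(\RR_+, \mathcal{D}'_y)$ with $\tilde u_\pm - x^{s_\pm} g \in x^{s_\pm+1}\mathcal{C}^\infty(\RR^+_x,\mathcal{D}'_y)$ and $P\tilde u_\pm \in \dot{\mathcal C}^\infty(\RR_x,\mathcal{D}'_y) = x^\infty\mathcal{C}^\infty(\RR^+_x,\mathcal{D}'_y)$. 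This already proves part (2) directly, with no correction term needed. For part (1), one has a choice: either accept $P\tilde u_\pm \in x^\infty\mathcal{C}^\infty \subset H^{-1,\alpha}_{0,b}$ (for any $\alpha$, locally) and take the $H^{1,\alpha}_{0,b}$ term to be zero — in which case part (1) is weaker than part (2) — or, more to the point of the stated form, one truncates the series at finite order $N$ chosen so that $x^{s_\pm + N}\mathcal{C}^\infty \subset H^{1,\alpha}_{0,b}$ and $P\tilde u_\pm^{(N)} = P\tilde u_\pm^{(N)}$ has the residual $x^{s_\pm+N}(\text{smooth})$ lying in $H^{-1,\alpha}_{0,b}$; then $\tilde u_\pm^{(N)}$ plus an $H^{1,\alpha}_{0,b}$ error (absorbed into the last slot) works. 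The inclusions $x^\beta\mathcal{C}^\infty(\RR^+_x,\mathcal{D}'_y) \hookrightarrow H^{s,\alpha}_{0,b,\mathrm{loc}}$ for $\beta$ large relative to $\alpha$ are routine once one unwinds the definitions of the $0$-Sobolev and $b$-Sobolev spaces recalled in subsection \ref{adsPropsing}; the weight $x^{-(n+1)}$ in the measure $x^{-(n+1)}d\hat g$ is what dictates how large $\beta$ must be, and this is exactly why $s_\pm$ (rather than $0$) is the natural leading exponent.

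The main obstacle — or rather the main point requiring care — is the interaction between the indicial roots and the $y$-differentiations in the recursion: the operators $E_1, E_2$ contain $x^2\partial_{y_j}^2$ and $(1+x)x^2\partial_{y_n}^2$, so each step of the recursion applies two $y$-derivatives, and one must confirm that the resulting $a_k$ genuinely remain distributions (they do, since $\mathcal{D}'_y$ is closed under differentiation) and that the asymptotic sum converges in the relevant topology (one sums seminorm-by-seminorm, choosing the Borel cutoff scale depending on the seminorm, a standard maneuver). A secondary point is the resonance case $s_+ - s_- \in \ZZ_{>0}$: when $2\sqrt{\tfrac{n^2}{4}-\lambda}$ is a positive integer, the recursion for the $s_-$-series may be obstructed at $k = s_+ - s_-$; the standard fix is to allow a $\log x$ term there, which does not affect membership in $x^{s_-}\mathcal{C}^\infty(\RR_+,\mathcal{D}'_y)$ up to an arbitrarily small loss in the exponent, or alternatively to observe that for the purposes of this lemma one may perturb $\lambda$ or simply note the obstruction term can be carried into the error space. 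I expect the bulk of the write-up to be the bookkeeping of these exponents against the definitions of $H^{1,\alpha}_{0,b}$ and $H^{-1,\alpha}_{0,b}$, not any conceptual difficulty.
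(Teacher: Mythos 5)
Your proposal is correct and follows essentially the same route as the paper: split $P$ into its indicial part $(x\partial_x)^2 - n\,x\partial_x + \lambda$ plus $x^2$ times a tangential operator, solve the resulting recursion at the exponents $s_\pm + k$, truncate at a finite order (with the remainder absorbed into the weighted Sobolev space) for part (1), and asymptotically sum the full series for part (2). The only real difference is that the paper reduces general $g \in \mathcal{D}'_y$ to continuous data via $(\Delta_y+1)^m\varphi = \delta(y)$ (using that $(\Delta_y+1)^m$ commutes with $P$) before checking membership of the error in $H^{-1,\alpha}_{0,b}$, whereas you work directly with distribution-valued coefficients and invoke the finite-order structure of tempered distributions at the end; both are legitimate since the lemma only asks for some $\alpha$ depending on $g$, and your explicit flagging of the resonance case $2\sqrt{\tfrac{n^2}{4}-\lambda} \in \ZZ_{>0}$ is, if anything, more careful than the paper, which tacitly assumes the nonresonant case.
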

 \begin{proof}
  \begin{remark}\label{sobolev}
   By a special case of definition of $H^{k,-m}_{0,b}, m > 0, m\in \RR$ in \cite{Andras02}, if $v = \sum Q_j v_j$ where $v_j \in H^{k,0}_{0,b} (X), Q_j \in \text{Diff}^m_b(X),  H^{k,0}_{0,b}= H^k_0$ then $v \in H^{k,-m}_{0,b}$. (\emph{For general definition of $H^{k,m}_{0,b}, m\in \RR$ see definition (5.7) in \cite{Andras02}}). Hence $\varphi_j \in H^{0,-\alpha_j}_{0,b}$ for some $\alpha_j > 0, \alpha_j \in \RR$.
  \end{remark}
  \begin{remark}
 $g\in \mathcal{D}'_y$ the space of tempered distributions ie $g$ is a finite sum of derivatives (with constant coefficients) in $y$ of continuous functions. And derivatives in $y$ commute with $P$. Hence the proof for arbitrary $g$ is completely analogous to that for $g = \delta(y)$.
\end{remark}
   \begin{enumerate}
%
 \item Consider $\varphi_0 \in \mathcal{C}^0_y$. We consider $\varphi_j \in \mathcal{D}'_y$ being $b$ derivatives of $\varphi_0$.  With such $\varphi_j$-s, consider $\tilde{u}$ of the form:
   $$  \tilde{u}  =  x^{s_-} \varphi_0 +  \sum_{k=1}^{ k_0} x^{s_- + k} \varphi_k(y)$$
   Write $P$ as
  $$P = Q_1 + x^2 Q_2 $$
  $$\text{where}\ \ Q_1 = (x\partial_x)^2 -nx\partial_x + \lambda ; \ \ Q_2 = - (1+x) x^2 \partial_{y_n}^2 + x^2 \sum_{j=1}^{n-1} \partial_{y_j}^2$$
  Note that $Q_1$ has indicial roots $s_{\pm}$ with $s_{\pm}(\lambda) = \tfrac{n}{2} \pm \sqrt{\tfrac{n^2}{4} - \lambda}$ then $P\tilde{u}$ is of the form
   $$P\tilde{u} =   x^{s_- + 1} (2s_-+1) \varphi_1  +  \sum_{k=2}^{k_0-1} x^{s_- + k}  \big[    (2s_- + k)k \varphi_k  + Q_2 \varphi_{k-2} \big]  + x^{s_- + k_0+1 } Q_2 \varphi_{k_0-1}   $$
 With $k_0$ large enough then the last term  $ x^{s_- + k_0+1 } Q_2 \varphi_{k_0} \in H^{0,\alpha}_{0,b,\text{loc}}$ for some $\alpha \in \RR$  \footnote{$k_0$ is large enough to cancel out the weight in the volume form $L^2(x^{-(n+1)} d\hat{g})$. With this choice of $k_0$, the term in consideration is then some b-derivatives of a function in $L^2_0$. Note that $Q_2$ consists of only b derivatives (in fact only derivatives in $y$). By remark \ref{sobolev}, it is in $H^{1,\alpha}_{0,b}$ for some $\alpha \in \RR$}. If we only care of the decay in $x$ then this term is in $x^{s_- + k_0+1 } \mathcal{C}^{\infty}(\RR_x, \mathcal{D}'_y)$. 
  \nl We can solve for $\varphi_j$ with $j = 1, \ldots, k_0 - 1$ so that $P\tilde{u} \in  H^{0,\alpha}_{0,b,\text{loc}}$ by requiring
 \begin{equation}\label{solveInseries}
 \varphi_1 = 0 ; \ \text{For}\  j = 2, \ldots, k_0 - 1:  \varphi_k = \dfrac{-1}{  (2s_- + k)k}  Q_2 \varphi_{k-2}
 \end{equation}
Hence we have constructed $\tilde{u}$ satisfying
 $$\begin{cases} \tilde{u} = x^{s_-} \varphi(y) + x^{s_-+1} \sum_{k=0}^{k_0-2} x^k \varphi_k  ; \ \varphi_k \ \text{being b-derivatives in $y$ of $\varphi$} \\ 
P\tilde{u} \in H^{-1, \alpha}_{0,b,\text{loc}} \end{cases}$$
 Since $H^{0, \alpha}_{0,b,\text{loc}} \subset H^{-1, \alpha}_{0,b,\text{loc}}$, $P\tilde{u} \in H^{-1, \alpha}_{0,b,\text{loc}}$. 
 \item   Let  $\varphi$ is such that $(\Delta_y + 1)^m \varphi = \delta(y)$ for some $m$. We have $(\Delta_y + 1)^m$ commutes with $P$. Also $\supp P\tilde{u}  \subset \{y_n \geq 0\}$. Define
   $$u_{\text{bdy}} = (\Delta_y + 1)^m \tilde{u} $$
then $u_{\text{bdy}} $ satisfies for some $\tilde{\alpha} \in \RR$: 
   $$\begin{cases} Pu_{\text{bdy}}  \in H^{-1,\tilde{\alpha}}_{0,b,\text{loc}}   \\  u_{\text{bdy}}  \in  x^{s_-} (\Delta_y + 1)^m \varphi +  x^{s_-+1} \sum_{k=0}^{k_0-1} x^k (\Delta_y + 1)^m \tilde{\varphi}_k    
  \end{cases} $$ 
    where $\varphi_k $ are derivatives $b$ in $y$ of $\varphi$ and thus in $\mathcal{D}'_y$. Thus we obtain the first statement of the lemma.
         
   \item Instead of stopping at $k_0$ as in (\ref{solveInseries}) we can solve in series and ask that 
  $$
 \varphi_1 = 0 ; \ \text{For}\  j = 2, \ldots, :  \varphi_k = \dfrac{-1}{  (2s_- + k)k}  Q_2 \varphi_{k-2}$$
We then take an asymptotic summation to get $u_{\text{bdy};\infty}$, which satisfies:
   $$\begin{cases} Pu_{\text{bdy};\infty}  \in x^{\infty} \mathcal{C}^{\infty} (\RR_x , \mathcal{D}'_y)  \\ u_{\text{bdy};\infty}   \in  x^{s_-} (\Delta_y + 1)^m \varphi +  x^{s_-+1} \mathcal{C}^{\infty} (\RR_x , \mathcal{D}'_y)  \\
 \end{cases} $$ 
 
 \item The same proof goes through if we replace $s_- $ by $s_+$.
  \end{enumerate}
 
\end{proof}

  As a corollary, we can formulate a boundary problem for our model operator:
\begin{lemma}\label{ExistenceNBdy}
We have the existence and uniqueness for the solution to 
 $$\begin{cases} Pu = 0 \\
 \supp u \subset \{ y_n \geq 0\}\\
 u \in  x^{s_-} \delta(y) +  x^{s_-+1} g(x,y) + h(x,y) ; &g \in \mathcal{C}^{\infty}(\RR_x, \mathcal{D}'_y); \supp g \subset \{y_n =0\} \\  & h \in H^{1,\alpha}_{0,b,\text{loc}}, \ \text{for some} \ \alpha \in \RR
  \end{cases} $$
\end{lemma}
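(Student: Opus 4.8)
The plan is to build the solution as a distinguished ``boundary layer'' carrying the prescribed singularity $x^{s_-}\delta(y)$ modulo an error vanishing to infinite order at $x=0$, plus a correction supplied by Vasy's well-posedness theorem. For existence, apply the asymptotic-summation statement of Lemma~\ref{AdsBdy} (second part, with indicial root $s_-$ and $\delta(y)$ as prescribed leading coefficient) to produce $u_{\text{bdy};\infty}\in x^{s_-}\delta(y)+x^{s_-+1}\mathcal{C}^{\infty}(\RR_x,\mathcal{D}'_y)$ with $Pu_{\text{bdy};\infty}\in\dot{\mathcal{C}}^{\infty}(\RR_x,\mathcal{D}'_y)$. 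Since $u_{\text{bdy};\infty}$ is assembled from $\delta(y)$ and its $y$-derivatives via the recursion (\ref{solveInseries}), it is supported in $\{y=0\}$, hence so is $f_0:=-Pu_{\text{bdy};\infty}$; in particular $\supp f_0\subset\{y_n\ge 0\}$. As $f_0$ vanishes to infinite order at $x=0$ and has finite distributional order in $y$, Remark~\ref{sobolev} puts $f_0\in H^{-1,m+1}_{0,b,\text{loc}}$ for $m$ sufficiently negative. The well-posedness Theorem~\ref{AdsBdyOrig} applies to $P$ in place of $\Box_g+\lambda$, since the added first-order term changes neither the normal operator nor the $0$-principal symbol; so there is $v\in H^{1,m}_{0,b,\text{loc}}$ with $Pv=f_0$ and $\supp v\subset\{y_n\ge 0\}$. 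Then $u:=u_{\text{bdy};\infty}+v$ satisfies $Pu=0$, $\supp u\subset\{y_n\ge 0\}$, and is of the required form, with $g$ the $x^{s_-+1}$-tail of $u_{\text{bdy};\infty}$ (a series of $y$-derivatives of $\delta$, supported in $\{y_n=0\}$) and $h=v\in H^{1,m}_{0,b,\text{loc}}$.

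For uniqueness, let $u_1,u_2$ be two solutions and set $w:=u_1-u_2$, so $Pw=0$, $\supp w\subset\{y_n\ge 0\}$, and $w=x^{s_-+1}G+H$ with $G=g_1-g_2\in\mathcal{C}^{\infty}(\RR_x,\mathcal{D}'_y)$ supported in $\{y_n=0\}$ and $H=h_1-h_2\in H^{1,\alpha}_{0,b,\text{loc}}$ for some $\alpha$. It suffices to show $w$ lies in the uniqueness class $H^{1,m'}_{0,\text{loc}}$ of Theorem~\ref{AdsBdyOrig}, for then, $Pw=0$ with $\supp w\subset\{y_n\ge 0\}$ and right-hand side $0$ forces $w=0$, i.e.\ $u_1=u_2$. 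The only obstruction is the term $x^{s_-+1}G$, and only through the Taylor coefficients $G_j:=\tfrac1{j!}\partial_x^jG|_{x=0}$ with exponent $s_-+1+j\le\tfrac n2$, of which there are only finitely many (none at all when $\sqrt{n^2/4-\lambda}<1$). I would remove these using the Fuchsian structure: write $P=Q_1+x^2Q_2$ with $Q_1=(x\partial_x)^2-nx\partial_x+\lambda$ (indicial polynomial $I(s)=s^2-ns+\lambda$, roots $s_\pm$) and $Q_2$ tangential in $y$, and compare in $Pw=0$ the coefficients of the lowest powers of $x$. Using that $PH\in H^{-1}_{0,\text{loc}}$ is too regular at $x=0$ to supply the $x^{s_-+1+j}$-singular, $\{y_n=0\}$-supported contributions for these $j$, one obtains $I(s_-+1+j)\,G_j=0$; and $I(s_-+1+j)=(1+j)(1+j-2\sqrt{n^2/4-\lambda})$ is nonzero — in fact strictly negative — precisely over this range of $j$, so $G_j=0$ there. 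Hence $x^{s_-+1}G$ vanishes to order strictly above $\tfrac n2$ at $x=0$ and, being of finite distributional order in $y$, lies in $H^{1,\alpha'}_{0,b,\text{loc}}$ for some $\alpha'$ by Remark~\ref{sobolev}. Therefore $w\in H^{1,\min(\alpha,\alpha')}_{0,b,\text{loc}}\subset H^{1,m'}_{0,\text{loc}}$, as needed.

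The existence half is essentially bookkeeping once Lemma~\ref{AdsBdy} and Theorem~\ref{AdsBdyOrig} are granted; the substance is in the uniqueness step. The point I expect to require care is making the ``order-by-order matching'' rigorous, i.e.\ verifying that no element of $PH\subset H^{-1,\alpha}_{0,b,\text{loc}}$ can carry a component $x^{s_-+1+j}\psi$ with $\psi\ne0$ a distribution supported in $\{y_n=0\}$ for the finitely many exponents in question, and tracking how the $b$-weight degrades as these coefficients are peeled off. (One convenient feature is that over the range of $j$ that must be treated one stays strictly below the would-be resonant offset $2\sqrt{n^2/4-\lambda}$, so the usual Fuchsian resonances do not intervene.) An alternative route, bypassing the termwise argument, would be to apply Vasy's energy/pairing estimate for $P$ directly to $w$, combining $\supp w\subset\{y_n\ge 0\}$ with a propagation estimate from the end $x\to\infty$.
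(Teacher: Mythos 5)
Your proposal is correct and follows essentially the same route as the paper: existence by taking the boundary-layer expansion of Lemma~\ref{AdsBdy} (you use the asymptotically summed $u_{\text{bdy};\infty}$ where the paper uses the finite truncation $u_{\text{bdy}}$ — an inessential variant) and correcting the error by Vasy's well-posedness Theorem~\ref{AdsBdyOrig}, and uniqueness by reducing the difference of two solutions to the $H^{1,\alpha}_{0,b,\text{loc}}$ class and invoking the uniqueness part of that theorem. Your indicial-root argument showing the finitely many obstructing Taylor coefficients of $G=g_1-g_2$ vanish is a fleshed-out version of the paper's one-line assertion that $g$ is uniquely determined by the recursion \eqref{solveInseries}, and the delicate point you flag (that $PH$ cannot contribute $\{y_n=0\}$-supported terms at those low powers of $x$) is implicit, not addressed, in the paper's own proof as well.
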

  
  \begin{proof}
 Denote by $f = Pu_{\text{bdy}}$ where $u_{\text{bdy}} $ is as constructed in the proof for Lemma \ref{AdsBdy}. For some $\alpha\in \RR$, $f \in H^{-1,\alpha}_{0,b}$. By Theorem \ref{AdsBdyOrig} (Vasy), we have the existence and uniqueness of $\tilde{\mathsf{U}} \in H^{1,\tilde{\alpha}}_{0,b,\text{loc}}$ for some $\tilde{\alpha}\in \RR$ with $\tilde{\mathsf{U}}$ satisfying:
 $$\begin{cases} P\tilde{\mathsf{U}} = f \\ \supp \tilde{\mathsf{U}}\subset \{ y_n \geq 0\} \end{cases}$$ 
 Lets remind ourselves of the properties of $u_{\text{bdy}}$ from the proof of Lemma \ref{AdsBdy}
  $$u_{\text{bdy}} = (\Delta_y+1)^m \tilde{u}; \  \tilde{u} = x^{s_-} \varphi(y) + x^{s_-+1} \sum_{k=0}^{k_0-1} x^k \varphi_k $$
 where $ \varphi_k $ are derivatives in $y$ of $\varphi$ and  $\varphi$ is such that $(\Delta_y + 1)^m \varphi = \delta(y)$ for some $m$. Hence $(\Delta_y+1)^m \varphi_k$ are also derivatives of $\delta(y)$. Thus $\supp u_{\text{bdy}} \subset \{y_n= 0\}$. 
 Let $u$ be defined by 
  $$u := u_{\text{bdy}} - \tilde{\mathsf{U}}$$ 
 then $u$ as constructed solves :
  $$\begin{cases} Pu = 0 \\
 u \in  x^{s_-} \delta(y) +  x^{s_-+1} \mathcal{C}^{\infty}(\RR_x, \mathcal{D}'_y) +H^{1,\alpha}_{0,b,\text{loc}} \\
 \supp u \subset \{ y_n \geq 0 \} \end{cases} $$
 Uniqueness follows from the fact the $g$ is uniquely determined as shown in the proof for Lemma \ref{AdsBdy}. There, we construct $g$ as a Taylor series in $x$ with coefficients given explicitly in terms of $\delta(y)$ by \eqref{solveInseries}. 
\end{proof}

\section{Reduction to the semiclassical problem on $\RR^+$}\label{reductionSml}
 
 As described in section \ref{FriedProblem}, in \cite{Fried01} Friedlander studies a diffractive problem conformally related to ours (\ref{originalProblem}). After the Fourier transform in the tangential variables, he uses a change of variable (\ref{firstcv}) to get the Airy equation. Although we will not get the Airy equation, using the same change of variable, we show in this section how our model problem is reduced to the following semiclassical ODE on $\RR^+$ of the form:
$$ \mathbf{Q}  = h^2(z \partial_z)^2   + h^2(\lambda-\dfrac{n^2}{4})  + z^3+ z^2 .$$ 
This family is at one end of regular singular type (hence a b-operator in the sense of Melrose) while at infinity has a scattering behavior.\nl
\textbf{Step 1:} Follow the first step of Friedlander and use the change of variable $$x\mapsto  Z = - \lvert\theta_n\rvert^{-4/3} \lvert\theta'\rvert^2  + (1+x) \lvert\theta_n\rvert^{2/3} \in \RR$$
Also denote by
\begin{equation}\label{defOfZ0}
Z_0 = - \lvert\theta_n\rvert^{-4/3} \lvert\theta'\rvert^2  +  \lvert\theta_n\rvert^{2/3}\in \RR \Rightarrow Z_0 = Z|_{x=0}
\end{equation}

 Denote
 $$\tilde{\QQ} = x^{-n/2} \hat{L} x^{n/2} = (x \partial_x)^2   + x^2 [ (1+x)  \theta_n^2 - \lvert \theta'\rvert^2]  + \lambda - \dfrac{n^2}{4}$$
 In terms of $Z, Z_0$ , $\tilde{\QQ}$ becomes
   \begin{equation}\label{op2}
   \tilde{\QQ} =  (Z-Z_0)^2\partial_Z^2 + (Z-Z_0) \partial_Z + \lambda - \tfrac{n^2}{4}  + Z(Z-Z_0)^2 
  \end{equation}
  
   \textbf{Geometric interpretation of $Z_0$:} 
  $Z_0 = \lvert\theta_n\rvert^{2/3} \big( 1 - \tfrac{\lvert\theta'\rvert^2}{\lvert\theta_n\rvert^2}\big)$ : the nonhomogeneous blowing up of the corner in phase space at glancing $\lvert\theta_n\rvert = \lvert\theta'\rvert$ and infinity $\lvert\theta_n\rvert^{-1}$ (see figure \ref{compactification2}). The functions in terms $x$ and $\theta$ live on the blowup space B as given in figure \ref{compactification2}.
\begin{figure}
\centering
 \includegraphics[scale=0.80]{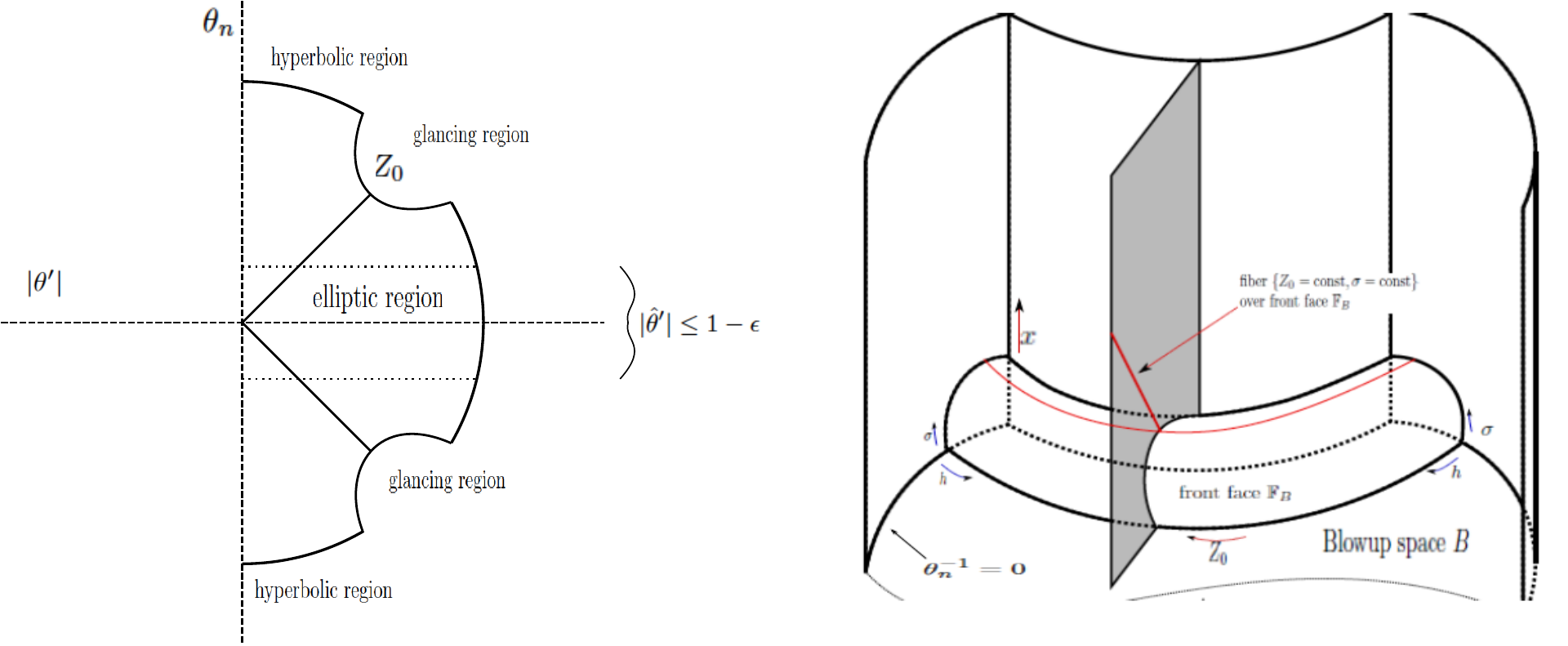}
 \caption{$Z_0$ is obtained by blowing up the corner in phase space at glancing $\lvert\theta_n\rvert = \lvert\theta'\rvert$ and infinity $\lvert\theta\rvert^{-1} = 0$. Figure on the left show the base of the blowup space B. Figure on the right shows the portion of the blow-up space B corresponding to $\theta > 0$. Both portions for $\theta > 0$ and $\theta< 0$ are similar.}
 \label{compactification2}
 \end{figure}

\textbf{Step 2: } With $S = Z-Z_0$ 
\begin{equation}
\tilde{\QQ} =  S^2\partial_S^2 + S\partial_S + \lambda - \tfrac{n^2}{4}  +S^3 + Z_0 S^2 ; 
\end{equation}
This is an family of ODE in  $\sfS \in [0,\infty)$ with parameter $Z_0 \in (-\infty, \infty)$. \newline
\textbf{Goal:} Need to know the behavior of the solution at $\mathsf{S} =0$ and $\mathsf{S}^{-1} = 0$ for bounded $\lvert Z_0 \rvert$, uniformly as $\mathsf{Z}_0 \rightarrow \infty$ and uniformly as $\mathsf{Z}_0 \rightarrow -\infty$. 

 \textbf{Step 3: } We rewrite the problem as $\mathsf{Z}_0 \rightarrow \infty$ and $\mathsf{Z}_0 \rightarrow -\infty$ as semiclassical problem using the following variables $z, h$. 
 
    \begin{equation}\label{definitionOfz}
  z := \chi_0 (Z_0) ( Z-Z_0) + \chi_+(Z_0)  \big( \dfrac{Z-Z_0}{Z_0}\big) + \chi_-(Z_0)  \big( \dfrac{Z-Z_0}{-Z_0}\big)
  \end{equation}
  where $\chi_0 , \chi_+, \chi_-$ be smooth function on $\RR$ such that for some $\tilde{\delta}_2 > 0$, 
  $$\begin{cases}  \chi_0 \equiv 1 \ \text{on}\ \lvert Z_0\rvert \leq \tilde{\delta}_2\\
  \supp \chi_0 \subset \{\lvert Z_0\rvert \leq 2\tilde{\delta}_2\} \end{cases} ;\ \ 
  \begin{cases} \chi_+ \equiv 1 \ \text{on}\ Z_0\geq2\tilde{\delta}_2\\
  \supp \chi_+ \subset \{ Z_0 \geq \tilde{\delta}_2\} \end{cases} ;\ \  \begin{cases} \chi_-\equiv 1 \ \text{on}\ Z_0\leq-2\tilde{\delta}_2\\
  \supp \chi_- \subset \{ Z_0 \leq -\delta_2\} \end{cases}$$
   On the two more interesting regions,
 $$z = \begin{cases}  =    x ( 1 - \lvert \hat{\theta}'\rvert^2)^{-1}      & Z_0 \geq 2\tilde{\delta}_2\\
  =    x (  \lvert \hat{\theta}'\rvert^2 - 1)^{-1}   & Z_0 \leq -2\tilde{\delta}_2
 \end{cases}.$$

   Similarly define 
 \begin{equation}\label{defOfh}
 h := \chi_0 (Z_0) Z_0  + \chi_+(Z_0) Z_0^{-3/2} + \chi_-(Z_0) (-Z_0)^{3/2}
 \end{equation}
On the two more interesting regions,
 $$ h =   \begin{cases} Z_0^{-3/2} & Z_0 > 2\tilde{\delta}_2 \\ (-Z_0)^{-3/2} & Z_0 < -2\tilde{\delta}_2 \end{cases}.$$
We will also use
   \begin{equation}\label{definitionSig}
  \sigma := \chi_0 (Z_0) ( Z-Z_0)^{3/2} + \chi_+(Z_0)  \big( \dfrac{Z-Z_0}{Z_0}\big)^{3/2} + \chi_-(Z_0)  \big( \dfrac{Z-Z_0}{-Z_0}\big)^{3/2}
  \end{equation}
    Thus, on the two more interesting regions, $\sigma = \begin{cases}    x^{3/2} ( 1 - \lvert \hat{\theta}'\rvert^2)^{-3/2}    & Z_0 \geq 2\tilde{\delta}_2\\
   x^{3/2} (  \lvert \hat{\theta}'\rvert^2 - 1)^{-3/2}   & Z_0 \leq -2\tilde{\delta}_2
 \end{cases}.$
 
   The space the solution lives on blowup space C, see figure \ref{blowupspaceCnB}.
 \begin{figure}
\centering
   \includegraphics[scale=0.80]{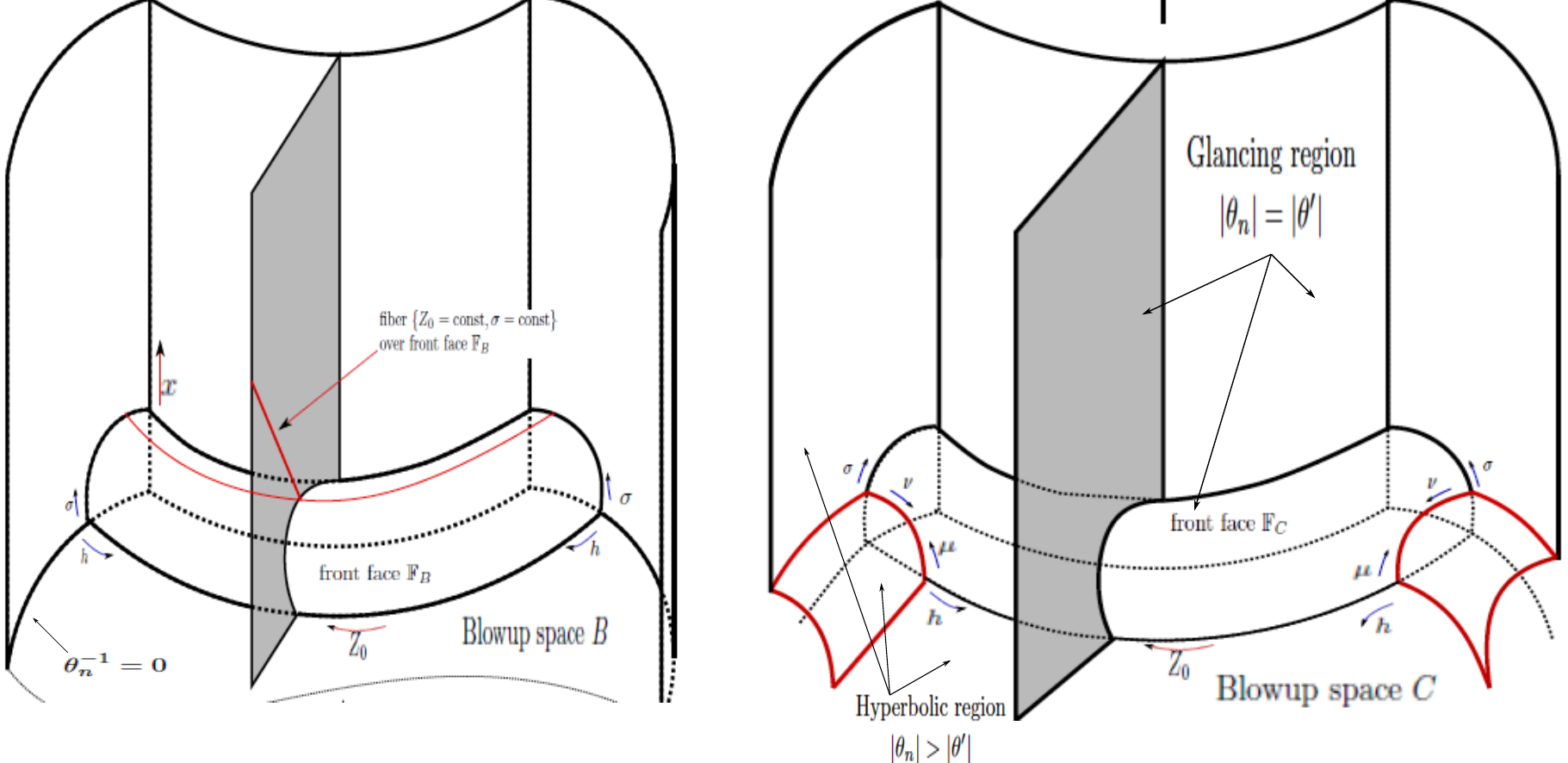}
  \caption{The pictures show the portion of blowup space C and  blown-up space B for $\theta_n > 0$ with the fibration structure $\{Z_0 = \text{const} , \sigma = \text{const}  \}$ over front face $\mathbb{F}_B$. This fibration structure will be used to extend the solution on $\mathbb{F}_B$ into the interior of the space. ($z =\sigma^{2/3}$) \nl The other portion for $\lvert \theta_n\rvert < 0$ is similar. \nl The finite region is labeled by $\mathbb{I} = \{\lvert \theta\rvert \leq 2\}$. For picture of the base (in terms of $\lvert\theta\rvert$) see figure \ref{compactification2} }
  \label{blowupspaceCnB}
   \end{figure}
   \begin{remark}[On the blow-up at $z$ and $h=0$ : transition region]
    In the finite regime, the boundary behavior (at $z =0$) is transcribed by the indicial roots depending on $\lambda$; while the boundary behavior (i.e. at $z =0$) of the solution of the transport equations is independent of $\lambda$. Hence a transition region is needed between the oscillatory behavior in the infinite regime and the regular singular behavior (with no oscillation) in the finite one. See figure \ref{transitionarea}.
    \begin{figure}
 \centering
   \includegraphics[scale=0.6]{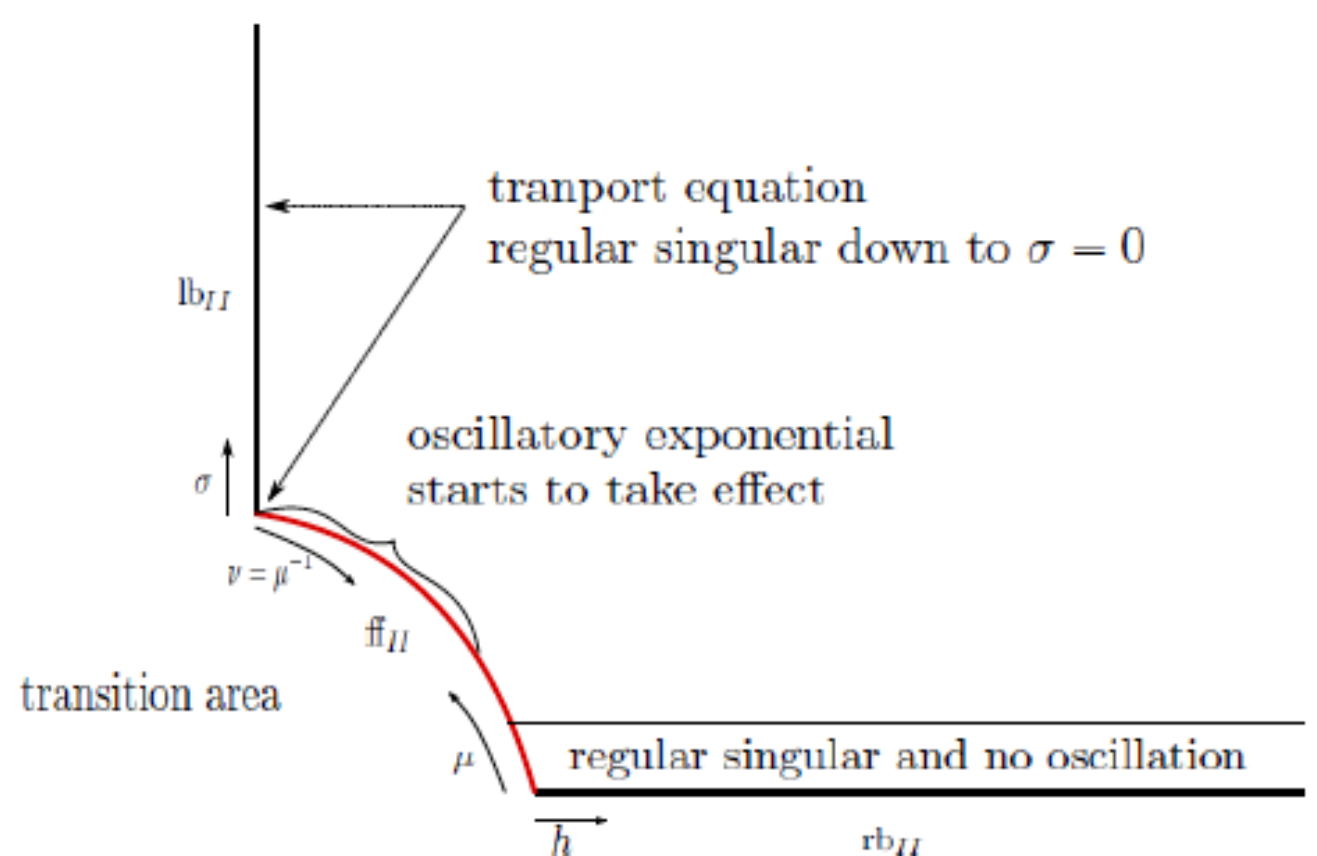}
   \caption{The blowup is associated to $z = 0, h=0$. The projective coordinates are $t = \tfrac{z}{h}, h$ (In the picture $t = \mu^{2/3} , \sigma = z^{2/3}$. See (\ref{definitionOfz}) and (\ref{defOfh}) for definition of $z$ and $h$ )}
   \label{transitionarea}
   \end{figure}
     \end{remark}
We will need to use the following variables associated to these blowup. The boundary defining function for $\mathbb{F}_C^{(1)}$ 
 \begin{equation}\label{defOft}
\rho_{ \mathbb{F}_C^{(1)}} = t := \chi_0 (Z_0) ( Z-Z_0) + \chi_+(Z_0)  \big( \dfrac{Z-Z_0}{Z_0}\big)Z_0^{3/2} + \chi_-(Z_0)  \big( \dfrac{Z-Z_0}{-Z_0}\big)(-Z_0)^{3/2}
 \end{equation}

\textbf{Properties of $\tilde{\QQ}$ in terms in $z,h$}\newline
Consider the region on which $Z_0 > 2\delta_2$. On this region
 $$z := (Z-Z_0) Z_0^{-1} ; \ \ h: = Z_0^{-3/2}$$
 In these coordinates $\tilde{\QQ}$ has the form:
  \begin{equation}
  \begin{aligned} \tilde{\QQ}_+ &=  h^{-2} \Big[ h^2(z \partial_z)^2  +  (\lambda  - \tfrac{n^2}{4}) h^2 +  z^2 + z^3  \Big] \\
  &\stackrel{ \mathsf{t} =z^{-3/2}}{=} \mathsf{t}^{-2} \Big[ h^2(\mathsf{t}^2 \partial_{\mathsf{t}})^2 + \mathsf{t}^{2/3}V^+ + \tfrac{4}{9}    \Big]; 
  \ V^+ =  - h^2 \mathsf{t}^{7/3} \partial_{\mathsf{t}} + \tfrac{4}{9} (\lambda  - \tfrac{n^2}{4}) h^2\mathsf{t}^{4/3} + \tfrac{4}{9}    ; 
  \end{aligned}
  \end{equation}
  This is a family of semiclassical ODE with `b'\footnote{Totally characteristic operators or `b-operators' in the sense of Melrose \cite{Melrose04} are differential operators generated by $b$ vector fields $\mathcal{V}_b$, which is the set of all smooth vector fields tangent to all boundaries. See subsection \ref{adsPropsing} } / regular singular behavior at one end (at $z = 0$) and scattering\footnote{in sense of Melrose scattering calculus \cite{Melrose03}} behavior at infinity.
  
\begin{remark}
In terms of $h,z$ the solution used by Friedlander on this region has the form
$$ \hat{K}_x = \dfrac{\text{Ai}(\zeta)}{\text{Ai}(\zeta_0)}  \sim \exp\Big(-\tfrac{2}{3}i h^{-1} \big[(z + 1)^{3/2} -1 \big]\Big) (z+1)^{-1/2} (1 + f) ;  f\in \mathcal{C}^{\infty}(h,z)$$
This is the model of phase function we will use for the semiclassical local construction and the construction of local approximate solution (for infinity region $\lvert\theta\rvert \geq 1$).
\end{remark}

 As for $Z_0 < 0$ we have another semiclassical problem however this time much simpler. 
 \begin{equation}\label{smlBnSc}
\begin{aligned}
\tilde{\QQ}_- &= 
h^{-2} \Big[ h^2(z \partial_z)^2  +  (\lambda  - \tfrac{n^2}{4}) h^2 -  z^2 + z^3   \Big]\\
  &\stackrel{ \mathsf{t} = z^{-3/2}}{=} \mathsf{t}^{-2} \Big[ h^2(\mathsf{t}^2 \partial_{\mathsf{t}})^2 + \mathsf{t}^{2/3} V^- + \tfrac{4}{9}    \Big]; 
  \ V^- =  - h^2 a\mathsf{t}^{7/3} \partial_{\mathsf{t}} + \tfrac{4}{9} (\lambda  - \tfrac{n^2}{4}) h^2\mathsf{t}^{4/3} - \tfrac{4}{9}    ; a = 1
  \end{aligned}
  \end{equation}
   \textbf{Semiclassical b- symbol} : $-\xi^2 + z^3 - z^2$. On $z \geq 0$ the characteristic set has an isolated point at $z=0$. \nl\textbf{Local behavior at infinity}  : as in the case $Z_0 > 0$ and we can use the result of Vasy - Zworski to obtain semiclassical local resolvent estimate. 
\begin{remark}
The `phase function' of $\hat{K}_x = \tfrac{\text{Ai}(\zeta)}{\text{Ai}(\zeta_0)}$ has the form 
$$\exp\big( -\tfrac{2}{3} (\zeta^{3/2} - \zeta^{-3/2}_0) \big)  = 
\begin{cases}
\exp\big( -i -\tfrac{2}{3} h^{-1} (z - 1)^{3/2} \sgn\theta_n\big) \exp(-h^{-1})  & z  > 1\\
\exp\big(-\tfrac{2}{3}h^{-1} \big[ 1 - (1 - z^{2/3})^{3/2} \big]\big) & z < 1
\end{cases}$$ 
\end{remark}

\section{Construction of approximate solution on the blownup space $C$ : Main idea}\label{mainIdeaApprox}

As motivated in section \ref{statement}, we need to construct a polyhomogeous conormal solution modulo smooth function in $\dot{\mathcal{C}}^{\infty}(\RR^{n+1}_+) $ which satisfies the following boundary condition at $x = 0$:
$$\begin{cases}\hat{L} \hat{u} \in \dot{\mathcal{C}}^{\infty}(\RR^{n+1}_+) \\
x^{-s_-} \hat{u} \ |_{x=0} = 1   & s_{\pm}(\lambda) = \dfrac{n}{2} \pm \sqrt{\dfrac{n^2}{4} - \lambda}\\
\hat{u} \in \exp(-i\phi_{\text{in}})  \mathcal{L}_{ph}(C) & \phi_{\text{in}} = \tfrac{2}{3} \theta_n^{-1} \big[ (1 +x - \lvert\hat{\theta}'\rvert^2)^{3/2} - (1 - \lvert\hat{\theta}'\rvert^2)^{3/2}\big] \sgn\theta_n
\end{cases} $$
We show in section \ref{reductionSml} that this problem has a reduction to a family of semiclassical ODE on $\RR^+$, living on the front face $\mathbb{F}_C$ ( see figure \ref{ffFC(version1)} ) of the blownup space $C$ where the solution in terms of $(x,\theta)$ lives (see also figure \ref{compactification2} and \ref{blowupspaceCnB}). The details of the construction are in section \ref{detailofCon}. In this section, we list the main ideas in order to motivate the technical tools involved: a global resolvent and a local parametrix.\nl
We construct an exact solution for $\hat{L} \hat{u} = 0$ near infinity ($\lvert\theta\rvert \geq 1$) and cut this off away from $\lvert\theta\rvert \leq1$, which now gives an approximate solution to (\ref{originalProAfterF}). To achieve this, one needs an exact polyhomogeous cornomal solution on $\mathbb{F}_C$ (see figure \ref{blowupspaceCnB}) i.e. with no error since this function will be extended into the interior of blowup space $C$ using the fibration structure over front face $\mathbb{F}_B$. This gives a solution with the required asymptotic behavior at infinity where $\lvert\theta\rvert \geq 1$. 
 
 Partition the front face $\mathbb{F}_C$ into $\mathbb{F}_{C,0}$, $\mathbb{F}_{C,+} =  \mathbb{F}_{C,+}^{\infty} \cup \mathbb{F}_{C,+}^0$ and $\mathbb{F}_{C,-} =  \mathbb{F}_{C,-}^0 \cup  \mathbb{F}_{C,-}^{\infty} $ as shown in figure \ref{ffFC(version1)}. 
\begin{figure}
 \centering
  \includegraphics[scale=0.7]{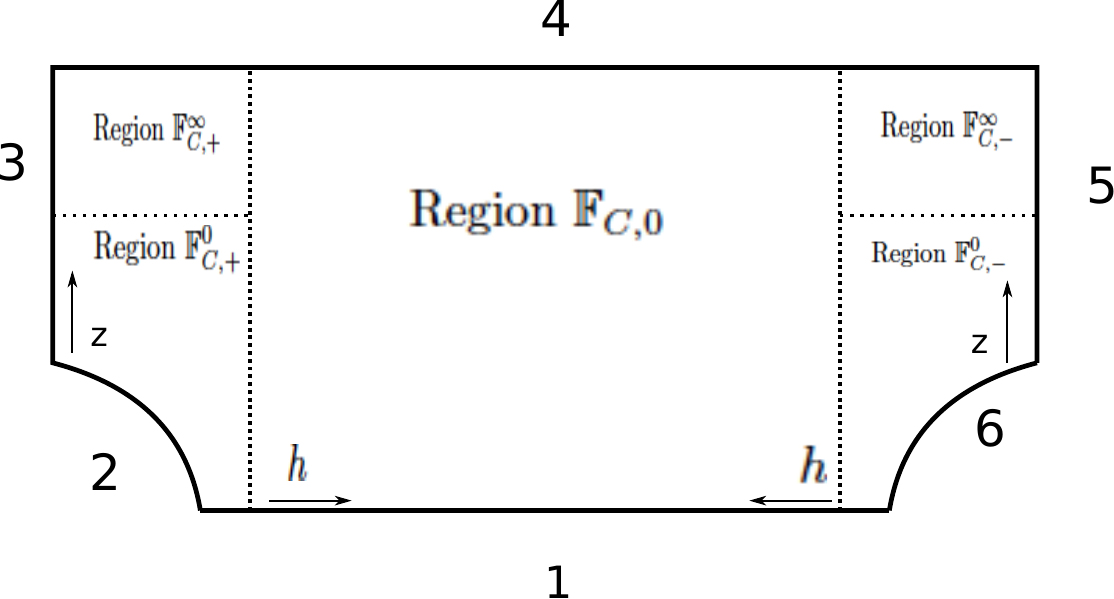}
  \caption{Front face $\mathbb{F}_C$ of blowup space $C$ (see figure \ref{blowupspaceCnB}). This represents the resolution of the infinite regime $\lvert\theta\rvert \geq 1$ at glancing behavior $\lvert\theta_n\rvert = \lvert\theta'\rvert$. The number $i$ indicates the boundary face $\mathbb{F}_C^{(i)}$ for $i=1,\ldots,6$.\nl 
   $\mathbb{F}_{C,+}^{0,0}$ denotes the neighborhood in region $\mathbb{F}_{C,+}^0$ close to the corner of $t = 0, h=0$ where $t$ is the projective blow-up with $t = \tfrac{z}{h}$. $\mathbb{F}_{C,+}^{0,\infty}$ is the neighborhood close to $z = 0, t^{-1} = 0$. Similar notation for $\mathbb{F}_{C,-}^{0,\infty}$ and $\mathbb{F}_{C,-}^{0,0}$ for the side of $\mathbb{F}_C$ where $Z_0 < 0$. Denote by $\mathbb{F}_C^0$ the region of $\mathbb{F}_C$ away from boundary face $\mathbb{F}_C^{(4)}$.\nl
  See (\ref{defOft}) for the definition of $\rho_{\mathbb{F}_C^{(1)}}$ and (\ref{definitionOfz}) for the definition of $z$.}
  \label{ffFC(version1)}
  \end{figure}
  This construction is carried out in two steps:\nl\
  
 \textbf{STEP 1}:
 First we construct an approximate solution that is polyhomogeous conormal to all boundaries of this face, with an error vanishing to infinite order at all boundaries of $\mathbb{F}_C$. This is simpler for the interior where the fibers are not singular. In the region where $\lvert Z_0\rvert\rightarrow 0$, we have the option of constructing directly a polyhomogeneous conormal approximate solution by first constructing it on region $\mathbb{F}_{C,+}^0, \mathbb{F}_{C,-}^0$ then solving along the global transport equations away to infinity. In fact, as we  see very soon that since we will have to build a local parametrix for this region anyway (ie for $\sigma \sim 0$), we will  use this option to obtain the approximate solution from the parametrix by taking the limit as $z' \rightarrow 0$ of the parametrix (which is a function of $h,z$ and $z'$) after some rescaling. After this, we will solve the error along the global transport equations away to infinity. Also see remark (\ref{difference}) for further discussion on the difference between the two chosen methods in dealing with region near $z = 0$ and near $z\rightarrow \infty$. \nl Denoted by $\hat{U}_{\mathbb{F}_C}$ the constructed function in this manner. $\QQ \hat{U}_{\mathbb{F}_C} = E_h$ vanishes to infinite order at all boundary faces of $\mathbb{F}_C$.
 \begin{remark}[On the difference between the methods used for construction of approximate solution near $z = 0$ and that near $z\rightarrow \infty$]\label{difference}
 We only construct a local parametrix for the local region $z\sim 0$ and not for the infinite region (see remark (\ref{motivationResolvent}) for further explication). Because of this, we can read off readily information of the local solution from the parametrix, while having to work out directly the asymptotics of the approximate solution for the infinite region.
 \end{remark}

\textbf{STEP 2 :} 
  In this step, we solve the error $E_h$ exactly by using a global resolvent on $\mathbb{F}_C$. Denoted by $\hat{\mathsf{U}}_{\mathbf{F}_C}$ the exact solution on $\mathbb{F}_C$. \nl We use the gluing method by Datchev-Vasy \cite{Andras-Kiril} to obtain the global resolvent $G$ on $\mathbb{F}_C$ with a  polynomial bounded in $h$ weighted $L^2$ norm. $\hat{U}_{\mathbb{F}_C}$ and $\hat{\mathsf{U}}_{\mathbb{F}_C}$ are then different by
 $$ \hat{U}_{\mathbb{F}_C} - \hat{\mathsf{U}}_{\mathbb{F}_C} =  G E_h$$
where the mapping property of $G$ given by Lemma \ref{globalresolvent}. $G$ maps $\dot{\mathcal{C}}^{\infty}( \mathbb{F}_C)$ into the space of oscillatory functions which have the same asymptotic behavior as $\hat{U}_{\mathbb{F}_C}$ and $GE_h = \mathsf{O}(h^{\infty})$. 
As a result of this, $\hat{\mathsf{U}}_{\mathbb{F}_C}$ has the same asymptotic properties as $\hat{U}_{\mathbb{F}_C}$ does at infinite $\lvert\theta\rvert$. 
 \begin{remark}\label{motivationResolvent}[Ingredients and methods to obtain the global resolvent] Since our operator is not semiclassical elliptic, one cannot simply patch together \emph{by partition of unity} results from the local region around zero and the infinite region. To overcome this problem, we use the gluing method by Datchev-Vasy \cite{Andras-Kiril}. In order to apply this microlocal gluing method, on each region, one either needs a local resolvent that is polynomially bounded in $h$ or at least a local parametrix with a semiclassically trivial error.\nl For the infinity region, we have the semiclassical local resolvent estimate by Vasy - Zworski \cite{Andras-Zworksi} for asymptotically euclidean scattering settings. On the other hand, the local problem near $z = 0$ is more degenerate due to the $b$-geometry of the bicharacteristics as shown in figure \ref{singularBichar} and we do not have results on resolvent estimate for this region. Hence we will have to construct a local parametrix for the region close to zero. In fact, this situation gives rise to a new research direction: to obtain semiclassical resolvent bound directly without a parametrix construction. 
   \begin{figure}
 \centering
 \includegraphics[scale=1.2]{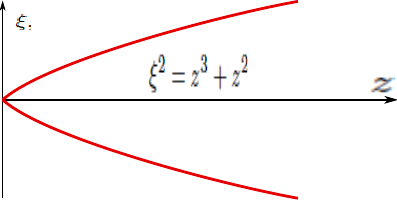}
 \caption{The semiclassical b- symbol for $\tilde{\QQ}$ is $-\xi^2 + z^3 + z^2$. The semiclassical characteristic set is singular at $z = 0$.}
 \label{singularBichar}
\end{figure}

 \end{remark}

\section{Some preparations needed for the semiclassical parametrix construction}
 Recall the family of ODEs on $\RR^+$ obtained from the model problem $P$ through various changes of variable in section \ref{reductionSml}, which we denoted by $\QQ $ where $\QQ = h^2\tilde{\QQ}$, 
 $$\QQ_+ = h^2(z \partial_z)^2  + h^2(\lambda - \dfrac{n^2}{4})+ z^3+ z^2   $$
 $$ \QQ_- = h^2(z \partial_z)^2 + h^2(\lambda - \dfrac{n^2}{4})+ z^3-  z^2   $$
$\QQ_+$ corresponds to $Z_0 > 0$ and $\QQ_-$ to $Z_0 < 0$. For definition of $Z_0$, $h$ and $z$ see (\ref{defOfZ0}), (\ref{defOfh}) and  (\ref{definitionOfz}). 

\subsection{Phase function}
In terms of $h,z$ the solution used by Friedlander on this region has the form
$$ \hat{K}_x = \dfrac{\text{Ai}(\zeta)}{\text{Ai}(\zeta_0)}  \sim \exp\Big(-\tfrac{2}{3}i h^{-1} \big[(z + 1)^{3/2} -1 \big]\Big) (z+1)^{-1/2} (1 + f) ;  f\in \mathcal{C}^{\infty}(h,z)$$
This is the phase function we will use
$$\varphi_{\text{in}} =   h^{-1}\lvert \varphi (z) - \varphi(z')\rvert; \ \ \varphi(z) = \tfrac{2}{3}\big[ (z+1)^{3/2} -1\big] \sgn\theta_n$$
This function can also be obtained directly from solving the eikonal equation for $\QQ$
 $$ h^2 (\mathsf{Z}\partial_{\mathsf{Z}})^2 + h^2 (\lambda-\dfrac{n^2}{4})   + (\mathsf{Z}\rho)^3+ (\mathsf{Z}\rho)^2 $$
this is in b blow-up coordinates : $Z = \tfrac{z}{z'} , \rho = z' , h$.\nl
\textbf{Consider the conjugation of} $\QQ_b$ \textbf{by} $e^{i\tfrac{\varphi}{h}}$ :\
First we compute :
$$\mathsf{Z} \partial_{\mathsf{Z}} \left(e^{i\tfrac{\varphi}{h}} u \right)
 =   ih^{-1} e^{i\tfrac{\varphi}{h}} u \mathsf{Z} \partial_{\mathsf{Z}} \varphi +  e^{i\tfrac{\varphi}{h}} \mathsf{Z} \partial_{\mathsf{Z}} u    $$
 $$(\mathsf{Z} \partial_{\mathsf{Z}})^2 \left(e^{i\tfrac{\varphi}{h}} u \right)
=  - h^{-2} e^{i\tfrac{\varphi}{h}} u (\mathsf{Z} \partial_{\mathsf{Z}} \varphi )^2 
+  ih^{-1} e^{i\tfrac{\varphi}{h}} u (\mathsf{Z}  \partial_{\mathsf{Z}})^2 \varphi
+ 2 ih^{-1} e^{i\tfrac{\varphi}{h}} (\mathsf{Z}  \partial_{\mathsf{Z}} u)(\mathsf{Z}  \partial_{\mathsf{Z}} \varphi)
+ e^{i\tfrac{\varphi}{h}} (\mathsf{Z} \partial_{\mathsf{Z}})^2 u  $$
The eikonal equation comes from coefficients of $h^0$ :
$$ u \left[ -(\mathsf{Z} \partial_{\mathsf{Z}} \varphi )^2 +  (\mathsf{Z}\rho)^3+ (\mathsf{Z}\rho)^2\right] = 0$$
$$\Rightarrow \partial_{\mathsf{Z}} \varphi = \pm\left[  \mathsf{Z} \rho^3+ \rho^2\right]^{1/2}$$
$$\Rightarrow \varphi = \pm\int \left[  \tilde{\mathsf{Z}} \rho^3+ \rho^2\right]^{1/2} \, d\tilde{\mathsf{Z}} + C(\rho)$$
$$\Rightarrow \varphi (\mathsf{Z},\rho)= \pm \dfrac{2}{3} \dfrac{ \left[  \mathsf{Z} \rho^3+ \rho^2\right]^{3/2}}{\rho^3} + C(\rho)$$
We can check again that 
$$\varphi (\mathsf{Z},\rho)=\pm \dfrac{2}{3} \dfrac{ \left[  \mathsf{Z} \rho^3+ \rho^2\right]^{3/2}}{\rho^3} + C(\rho) = \pm \dfrac{2}{3}\left[\mathsf{Z} \rho + 1\right]^{3/2} + C(\rho) $$
solves $-(\mathsf{Z} \partial_{\mathsf{Z}} \varphi )^2 +  (\mathsf{Z}\rho)^3+ (\mathsf{Z}\rho)^2 = 0$

\textbf{In semiclassical blow-up coordinates} :  $Z_h = \tfrac{\mathsf{Z} -1}{h},h,\rho$ then $\mathsf{Z} = h Z_h + 1$ . In this coordinate the phase function is
\begin{align*}
 h^{-1} \left[\pm\dfrac{2}{3}\left[(h Z_h \rho +\rho + 1\right]^{3/2} + C(\rho)\right]
&= h^{-1} \left[\pm \dfrac{2}{3}(\rho+1)^{3/2} \left( 1 + hZ_h \dfrac{\rho}{\rho+1}\right)^{3/2} + C(\rho)\right]\\
&= \dfrac{2}{3}(\rho+1)^{3/2} h^{-1} \left[\pm  \left( 1 + hZ_h \dfrac{\rho}{\rho+1}\right)^{3/2} + \dfrac{3}{2} C(\rho)(\rho+1)^{-3/2}\right]
\end{align*}
which we need to be equal to $\pm \rho  Z_h $ when restricted to $h=0$
$$\lim_{h\rightarrow 0}  \dfrac{2}{3}(\rho+1)^{3/2} h^{-1} \left[ \pm \left( 1 + hZ_h \dfrac{\rho}{\rho+1}\right)^{3/2} + \dfrac{3}{2} C(\rho)(\rho+1)^{-3/2}\right]
=\pm Z_h ( \rho^3 + \rho^2)^{1/2}$$
Hence we need 
$$\dfrac{3}{2} C(\rho) (\rho+1)^{-3/2 } = \mp1 \Rightarrow C(\rho) =\mp \dfrac{2}{3}(\rho+1)^{3/2}$$
In that case then each fixed $Z_h$ and $\rho$ the limit is
\begin{align*}
&\lim_{h\rightarrow 0}  \dfrac{2}{3}(\rho+1)^{3/2} h^{-1} \left[  \pm\left( 1 + hZ_h \dfrac{\rho}{\rho+1}\right)^{3/2} \mp \dfrac{3}{2} C(\rho)(\rho+1)^{-3/2}\right]\\
&= \pm\dfrac{1}{3} (\rho+1)^{3/2} 3 Z_h \dfrac{\rho}{\rho+1} = \pm Z_h \rho(\rho+1)^{1/2} = \pm Z_h ( \rho^3 + \rho^2)^{1/2}
\end{align*}

The phase function should be symmetric in $z,z'$ and singular across the diagonal. Hence we choose to work with \textbf{the following phase function}, which is shown in various coordinates   :
\begin{itemize}
\item In coordinates on the double space crossed with $h$ : $z,z' , h$
$$\varphi_{\text{in}} = \dfrac{2}{3}h^{-1} \big| (z+1)^{3/2} - (z'+1)^{3/2} \big|$$
The phase $-\varphi_{\text{in}}$ satisfies the eikonal equation 
$$-(z\partial_z)^2 + z^3 + z^3$$   for the operator
$$\QQ(h,\lambda) = z^{-\tfrac{n}{2}} \left(\dfrac{9}{4}z^{\tfrac{1}{3}}Q_1\right) z^{\tfrac{n}{2}}=   h^2(z \partial_z)^2   + h^2(\lambda-\dfrac{n^2}{4}) + z^3+ z^2 $$

\item  In b blow-up coordinates $\mathsf{Z}=\tfrac{z}{z'}, \rho = z' ,h$ 
$$\varphi_{\text{in}} = \dfrac{2}{3}h^{-1}\big| (\mathsf{Z}\rho + 1)^{3/2} - (\rho+1)^{3/2} \big| $$
\item In b semiclassical blow-up $Z_h = \tfrac{\mathsf{Z}-1}{h},\rho=z',h$
$$\varphi_{\text{in}} = \dfrac{2}{3}h^{-1}\big| (hZ_h\rho + 1+\rho)^{3/2} - (\rho+1)^{3/2} \big| $$

\end{itemize}

\subsubsection{Computation to show that the phase function $\varphi$ is smooth up to $\mathfrak{B}_1$ in $\tilde{\mathfrak{M}}_{b,h}$}\

Coordinates associated to the various blow-ups in this region
  \begin{itemize}
  \item Away from the semiclassical face $\mathcal{A}$, on $\mathfrak{M}$, the projective coordinates are 
$$\tilde{\mu}' = \tfrac{z'}{h}, h  , \mathsf{Z}$$
Away from the b front face, the projective coordinates on $\mathfrak{M}$ are : 
$$ \tilde{\nu}' = \tfrac{h}{z'}, z' , \mathsf{Z}$$  
  \item On $\mathfrak{M}$, away from the semiclassical face $\mathcal{A}$, the projective coordinates are 
  $$\tilde{\mu}  = \tfrac{z}{h}, h  , \mathsf{Z}'$$
  while away from the b front face, the projective coordinates are 
  $$ \tilde{\nu}  = \tfrac{h}{z}, z , \mathsf{Z}'$$
\end{itemize}

\textbf{Away from the left boundary} $\mathcal{L}$ :
 The coordinates in this region are 
 $$ \tilde{\mu}' = \dfrac{z'}{h}, h , \mathsf{Z} $$ 
 in which  the phase function takes the form :
 \begin{align*}
 \varphi &= \dfrac{2}{3} h^{-1} \left[  - ( \mathsf{Z} \tilde{\mu}' h + 1)^{3/2} + (\tilde{\mu}' h + 1)^{3/2}\right]\\
 &= \dfrac{2}{3}h^{-1} \dfrac{\left[3\tilde{\mu}'h + 3(\tilde{\mu}')^2h^2 + (\tilde{\mu}')^3 h^3\right] - \left[3\mathsf{Z}\tilde{\mu}'h + 3\mathsf{Z}^3(\tilde{\mu}')^2h^2 + \mathsf{Z}^3 (\tilde{\mu}')^3 h^3 \right]}{  ( \mathsf{Z} \tilde{\mu}' h + 1)^{3/2} + (\tilde{\mu}' h + 1)^{3/2}}\\
 &= \dfrac{2}{3} \dfrac{\left[3\tilde{\mu}' + 3(\tilde{\mu}')^2h + (\tilde{\mu}')^3 h^2\right] - \left[3\mathsf{Z}\tilde{\mu}' + 3\mathsf{Z}^3(\tilde{\mu}')^2h + \mathsf{Z}^3 (\tilde{\mu}')^3 h^2 \right]}{  ( \mathsf{Z} \tilde{\mu}' h + 1)^{3/2} + (\tilde{\mu}' h + 1)^{3/2}}
 \end{align*}
 since $\mathsf{Z} , \tilde{\mu}', h \geq 0$  the above expression is smooth in $\tilde{\mu}' , h, \mathsf{Z}$.
 
\textbf{Away from the right boundary} $\mathcal{R}$ :
 The coordinates on this region are
 $$ \tilde{\mu}  = \dfrac{z }{h} , h , \mathsf{Z}' $$
in which the phase function takes the form:
 \begin{align*}
 \varphi &= \dfrac{2}{3}h^{-1} \left[(z' + 1)^{3/2} - (z+1)^{3/2} \right]= \dfrac{2}{3} h^{-1} \left[(z \mathsf{Z}' + 1)^{3/2} - (z + 1)^{3/2} \right]\\
 &= \dfrac{2}{3}h^{-1} \left[(\tilde{\mu} h \mathsf{Z}' + 1)^{3/2} - (\tilde{\mu}h + 1)^{3/2} \right]\\
 &= \dfrac{2}{3} h^{-1} \dfrac{(\tilde{\mu} h\mathsf{Z}')^3 + 3(\tilde{\mu} h\mathsf{Z}')^2 + 3  \tilde{\mu} h\mathsf{Z}' - \left[(\tilde{\mu} h)^3 + 3(\tilde{\mu} h)^2 + 3  \tilde{\mu} h\right] }{(\tilde{\mu} h \mathsf{Z}' + 1)^{3/2} + (\tilde{\mu} h + 1)^{3/2} }\\
 &= \dfrac{2}{3} \dfrac{(\tilde{\mu} \mathsf{Z}')^3 h^2+ 3(\tilde{\mu} \mathsf{Z}')^2h +   \tilde{\mu} \mathsf{Z}' - \left[(\tilde{\mu} )^3 h^2+ 3(\tilde{\mu} )^2h +   \tilde{\mu} \right] }{(\tilde{\mu} h \mathsf{Z}' + 1)^{3/2} + (\tilde{\mu} h + 1)^{3/2} }
 \end{align*}
since $\mathsf{Z}' , \tilde{\mu} , h \geq 0$  the above expression is smooth in $\tilde{\mu} , h, \mathsf{Z}'$.

\begin{remark}
The phase function is smooth up to $\mathfrak{B}_1$ but blows up at the semiclassical face. 
\end{remark}

\subsection{The blow-up space}\label{blowupSpace}\

  \begin{figure}
  \includegraphics[scale=0.80]{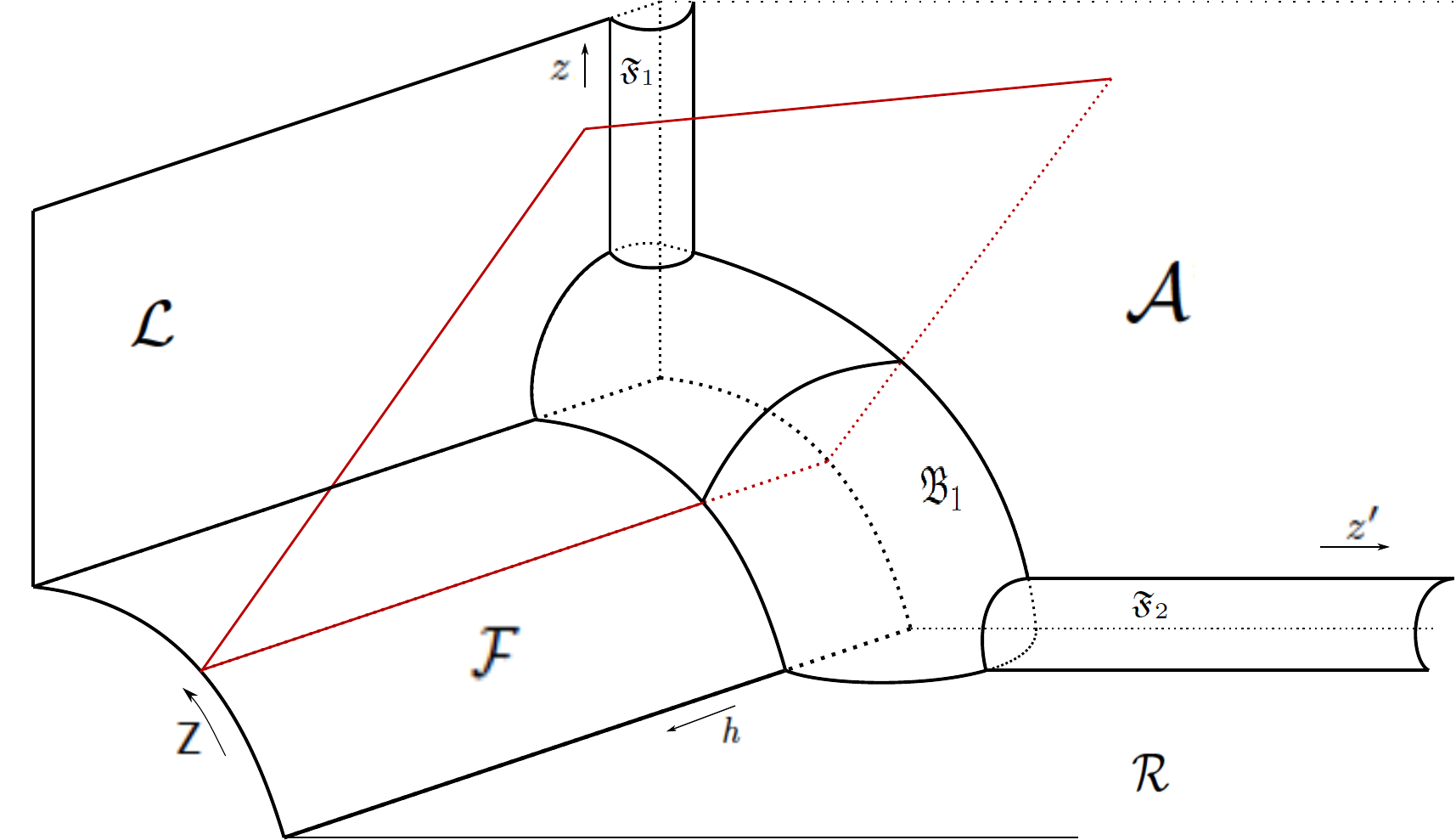}
  \caption{Blown-up space $\mathfrak{M}_{b,h}$ at the $Z_0 > 0$ end \label{frakM}}
  \end{figure}

  \begin{figure}
  \includegraphics[scale=0.80]{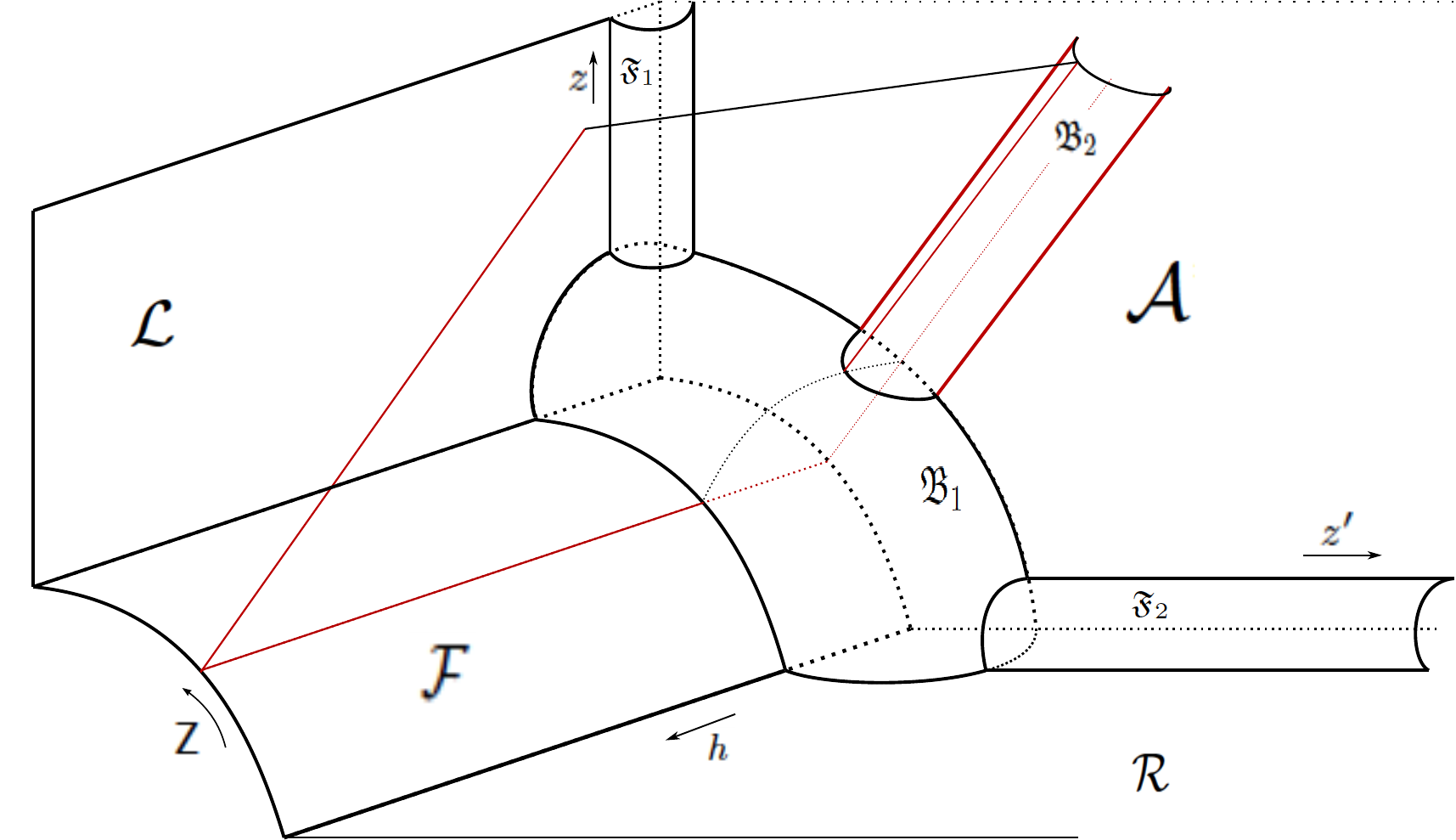}
  \caption{Blown-up space $\tilde{\mathfrak{M}}_{b,h}$ at the $Z_0 > 0$ end\label{tidfrakM}}
  \end{figure}
 
 Denote by $X = [0,1)$. We have two semiclassical problems when $Z_0 \rightarrow \infty$ and $Z_0 \rightarrow -\infty$ ie when $h\rightarrow 0$ (defined by (\ref{defOfh})). Start with the space $X\times_b X \times [-\infty, \infty]_{Z_0}$, where $X\times_b X$ is the b-blown up space which is obtained from $X\times X$ by blowing up the corner  $\partial_X \times \partial_X$ ( which in coordinates is $\{z=z'=0\}$) in $X\times X$. In this section, we will analyse separately each end but the parametrix will be built all at once on the whole blow-up space. Locally each end can be considered as $X\times_b X \times [0,\infty)_h$.
 

  \textbf{Notation} : Denote 
  \begin{itemize}
  \item by $\mathcal{L}$ the lift to $X\times_b X \times [-\infty, \infty]_{Z_0}$ of $\{z' = 0\}$, by $\mathcal{R}$ of $\{z = 0\}$, 
  \item by the semiclassical face $\mathcal{A}$ the lift of $\{ Z_0^{-1} = 0, Z_0 > 0\}$ ( = $\{h=0\}$ on the $Z_0 > 0$ end), 
     \item by the semiclassical face $\tilde{\mathcal{A}}$ the lift of $\{ Z_0^{-1} = 0, Z_0 < 0\}$ ( = $\{h=0\}$ on the $Z_0 < 0$ end), 
   \item  by $\Diag_{b,h}$ the lift of the diagonal $\{z = z'\}$, 
  \item and by $\mathcal{F}$ the resulting blown-up b - front face.
 \end{itemize}
  \textbf{The usual approach:} In order to resolve the singularity of the phase function at the semiclassical face we have to do these following two blow-ups: the intersection of the b front face with the semiclassical face, and the intersection of the lifted diagonal with the semiclassical face. The two operations however do not commute with each other; different spaces result, depending on the order in which the blow-ups is carried out. For example in \cite{M-SB-AV}, the authors blow-up the intersection of the lifted diagonal and the semiclassical face in $X\times_b X \times [0,\infty)_h$, whose resulting front face is called the semiclassical front face $\mathcal{S}$. The operator in \cite{M-SB-AV} has uniform property on $\mathcal{S}$ all the way down to the intersection of $\mathcal{S}$ and $\mathcal{F}$. However in our situation, if taking this approach, we have degeneracy as we approach $\mathcal{F}$ along $\mathcal{S}$. This  degeneracy requires further blow up at the intersection of $\mathcal{S}$ and $\mathcal{F}$.\newline
  In particular, the normal operator on the semiclassical front face has the form for $Z_0 > 0$, 
 $$ \partial_{\mathsf{Z}_h}^2 + (z')^3 + (z')^2  ;\ \  \mathsf{Z}_h = \tfrac{\mathsf{Z}-1}{h} ;\ \mathsf{Z}=\tfrac{z}{z'}$$
We have degeneracy of the potential and the behavior is not uniform as we approach the b front face (i.e. as $z' \rightarrow 0$) along the semiclassical  face. Denote by $\lambda = (z')^3 + (z')^2$. At arbitrary level $\lambda \neq 0$ solution has the asymptotic form:
$$\exp(-i \sqrt{\lambda} \lvert \mathsf{Z}_h\rvert ) \  \mathcal{C}^{\infty}.$$
This gives motivation to do a blow-up at $\mathsf{Z}_h \rightarrow \infty$ and $\lambda \rightarrow 0$ i.e. $z' \rightarrow 0$. We also have degeneracy when $Z_0 < 0$ with the  normal operator on the semiclassical front face of the form:
   $$ \partial_{\mathsf{Z}_h}^2 + (z')^3 - (z')^2  ;\ \  \mathsf{Z}_h = \tfrac{\mathsf{Z}-1}{h};\ \ \mathsf{Z}=\tfrac{z}{z'}$$

  \textbf{The blow-up space in our setting (see figure \ref{tidfrakM}):} The blow-up space we will use is obtained from these following blow-ups. For both ends $Z_0 > 0$ and $Z_0 < 0$, we will first blow up the intersection of $\mathcal{F}$ and $\mathcal{A}$, and the intersection of $\mathcal{F}$ and $\tilde{\mathcal{A}}$ in $X\times_b X\times [-\infty,\infty]_{Z_0}$, whose resulting new front face is denoted by $\mathfrak{B}_1$ and $\tilde{\mathfrak{B}}_1$ respectively. 
  Next on this new space, we will blow-up the intersection of the lifted diagonal and the semiclassical face $\mathcal{A}$ and similarly the intersection of the lifted diagonal and the other semiclassical face $\tilde{\mathcal{A}}$; denote respectively by $\mathfrak{B}_2$ and $\tilde{\mathfrak{B}}_2$ the resulting blown-up front faces.  Call our newly constructed blown-up space $\mathfrak{M}$. (See figure \ref{frakM}) \nl
  We will need to carry out two additional blow-ups on $\mathfrak{M}$. On $Z_0 < 0$, the problem is elliptic there (\emph{so singularities do not get propagated}), these following blow-ups are more of formal nature. However for $Z_0 > 0$, due to the hyperbolicity of problem (\emph{hence propagation of singlarities}), these procedures will be necessary. To desingularize the flow on $\mathfrak{B}_1$, we will blow-up the corner intersection $\mathcal{L}\cap \mathcal{A}$, denoted the resulting front face by $\mathfrak{F}_1$. On the other hand, the transport equation along semiclassical face $\mathcal{A}$ does not give rise to the boundary asymptotic at $\mathcal{R}$ dependent on the indicial roots. Thus, we will blow up the corner intersection at $\mathcal{R}\cap \mathcal{A}$ and denote by $\mathfrak{F}_2$ the resulting front face. $\mathfrak{F}_2$ serves as the transition region from the scattering behavior to the boundary behavior prescribed by indicial roots by one solving the Bessel equation there. See the computation at the end of the subsection for more details.
  \begin{figure}
\centering 
 \includegraphics[scale=0.90]{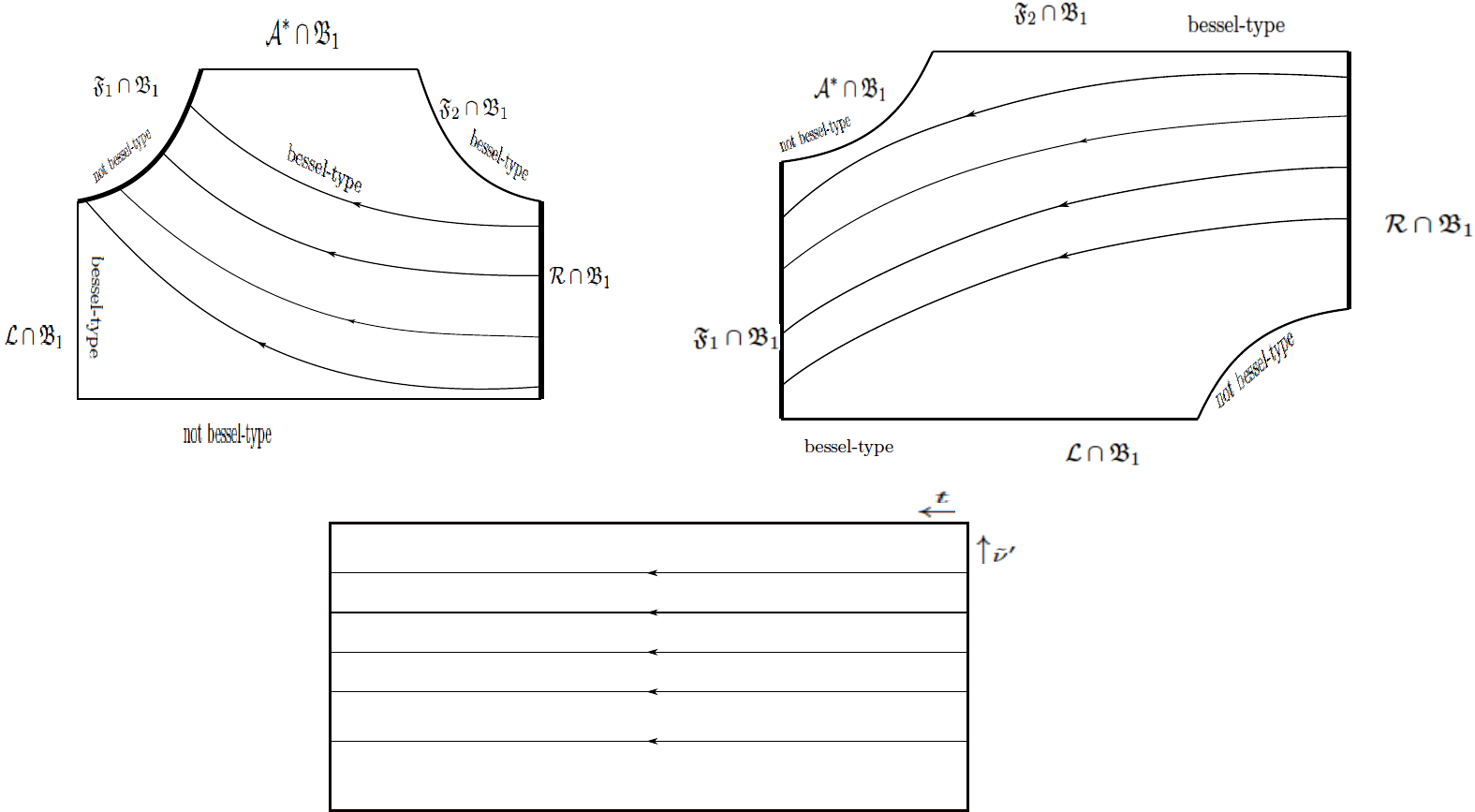}
 \caption{We have the Bessel type ODE in $\tilde{t}$ along the flow $\{\tilde{\nu}' =\text{const}\}$. $\mathfrak{B}_1$ is a blown-up version of the product structure shown on the right $\RR^+_{\tilde{t}} \times \RR^+_{\tilde{\nu}'}$. The label `bessel-type' and `not bessel-type' denote the behavior of the limiting operator on the corresponding boundary sides of $\mathfrak{B}_1$\label{ffb1} }
  \end{figure}
 For $Z_0 < 0$, denote by the $\tilde{\mathfrak{F}}_1$ and $\tilde{\mathfrak{F}}_2$ the front faces resulting from the same blow-up procedures. Denote by $\tilde{\mathfrak{M}}_{b,h}$ the resulting space (see figure \ref{tidfrakM}). Denote by $\mathfrak{M}_{b,h}$ the space obtained from $\tilde{\mathfrak{M}}_{b,h}$ by blowing down $\mathfrak{B}_2$ and $\tilde{\mathfrak{B}}_2$.  
  
  \textbf{Computation related to the blowup of $\mathfrak{F}_1$ and $\mathfrak{F}_2$:} \newline
  The equation in $z$ variable:
  $$ h^2(z\partial_z)^2 + h^2( \lambda - \dfrac{n^2}{4}) + z^3 + z^2$$
 In projective blow-up coordinates on $X^2_{b,h}$, $\mathsf{Z} , \tilde{\mu} = \tfrac{z'}{h} , h$ so $z' = \tilde{\mu} h$,  the operator has the form: 
    $$ h^2( \mathsf{Z} \partial_{\mathsf{Z}})^2 + h^2 (\lambda - \dfrac{n^2}{4}) + (\mathsf{Z} \rho)^3 + (\mathsf{Z}\rho)^2$$
 In coordinates $\tilde{\mu}', \mathsf{Z}, z'$ on $\tilde{M}_2$ (away from the left boundary $\mathfrak{L}$) the operator becomes:
    $$h^2( \mathsf{Z} \partial_{\mathsf{Z}})^2 + h^2 (\lambda - \dfrac{n^2}{4}) + (\mathsf{Z} \tilde{\mu}' h)^3 + (\mathsf{Z}\tilde{\mu}'h)^2 = h^2 \left[( \mathsf{Z} \partial_{\mathsf{Z}})^2 + (\lambda - \dfrac{n^2}{4}) + h (\mathsf{Z}\tilde{\mu}')^3 + (\mathsf{Z} \tilde{\mu}')^2  \right]$$
    This a family of ODE varying smoothly in $h$. In the neighborhood where these coordinates are valid, the boundary face $\mathfrak{B}_1$ can be described by $\{ h = 0\}$. Hence when restricted to $h=0$ i.e. the front face $\mathfrak{B}_1$, we get $$ (\mathsf{Z} \partial_{\mathsf{Z}})^2 + (\lambda - \dfrac{n^2}{4})  + (\mathsf{Z} \tilde{\mu}')^2$$
 The flow on which we have the ODE is along $\{\tilde{\mu}' = \text{constant}\}$. This flow becomes singular as one approaches the corner where $\mathfrak{B}_1$ meets  $\mathcal{L}$ and semiclassical face $\mathcal{A}$. To desingularize the flow, we are going to blow up the edge which is the intersection $\mathcal{L} \cap \mathcal{A}$ in $\mathfrak{M}$. In addition to this, we need to resolve the behavior at $\mathsf{Z} = 0 $ and $\tilde{\nu}' =(\tilde{\mu}')^{-1} = 0$. The projective coordinates to describe this blow-up:
   $$ \tilde{t} = \dfrac{\mathsf{Z}}{\tilde{\nu}'}, \tilde{\nu}' , z'$$ 
 For the computation for $Z_0$, we just need to change the sign in front of $z^2$ ie the operator on $\tilde{\mathfrak{B}}_1$ is 
 $$ (\mathsf{Z} \partial_{\mathsf{Z}})^2 + (\lambda - \dfrac{n^2}{4})  - (\mathsf{Z} \tilde{\mu}')^2.$$

\subsection{Operator space }
   We follow closely the definition used in \cite{M-SB-AV}, however the resulting operator spaces are not the same since we are not on the same blow-up spaces\footnote{See explanation in subsection (\ref{blowupSpace})}. Denote by $\Psi^m_{0,h}$ the space of operators $P$ whose kernel 
   $$K_P(z,z',h) \lvert dg (z')\rvert$$
   lifts to $\tilde{\mathfrak{M}}_{b,h}$ a cornomal distribution of order $m$ to the lifted diagonal and vanishes to infinite order at all faces, except the b front face $\mathcal{F}$, up to which it is smooth (with values in conormal distributions) and the semiclassical front faces $\mathfrak{B}_1$ and $\mathfrak{B}_2$, up to which it is $h^{-1} \mathcal{C}^{\infty}$ . Similarly behavior for $\tilde{\mathfrak{B}}_1$ and $\tilde{\mathfrak{B}}_2$.

\section{Semiclassical left parametrix construction for $\QQ(h,\lambda)$ for $0 \leq z ,z'  \leq 1$}\label{ParametrixConstruction}
As motivated in section \ref{mainIdeaApprox} (see in particular remark (\ref{motivationResolvent})), we need to build a local parametrix for $\tilde{\QQ}$. For the parametrix construction, we will work with $\QQ$ where $\QQ = h^2\tilde{\QQ}$, so $\QQ$ is of the form:
 $$\QQ_+ = h^2(z \partial_z)^2  + h^2(\lambda - \dfrac{n^2}{4})+ z^3+ z^2   $$
 $$ \QQ_- = h^2(z \partial_z)^2 + h^2(\lambda - \dfrac{n^2}{4})+ z^3-  z^2   $$
$\QQ_+$ corresponds to $Z_0 > 0$ and $\QQ_-$ to $Z_0 < 0$. For definition of $Z_0$, $h$ and $z$ see (\ref{defOfZ0}), (\ref{defOfh}) and  (\ref{definitionOfz}). This family of semiclassical operators $\tilde{\QQ}$ is obtained from our original operator (\ref{originalProblem}) by a fourier transform and several changes of variable (in section \ref{reductionSml} in particular equation (\ref{smlBnSc})). As motivated in section \ref{mainIdeaApprox}, we use the local parametrix for two purposes. First, we obtain from the parametrix an local (near zero) exact solution for $\QQ u = 0$. The second (in fact main) purpose of the parametrix is obtained a global resolvent in order to solve exactly semiclassically trivial errors.  \nl
\textbf{The main method of the construction is as follows:} We construct $U$ on $\tilde{\mathfrak{M}}_{b,h}$ such that  
     $$\QQ U  - \text{SK}_{\text{Id}} \in    h^{\infty} z^{\infty} (z')^{\sqrt{\tfrac{n^2}{4}-\lambda}  -\tilde{\gamma}_{\mathfrak{B}_1} +2}$$
     where $ \text{SK}_{\text{Id}} $ denotes the schwartz kernel of the identity operator $\Id$. We will be removing error  the boundary faces of $\tilde{\mathfrak{M}}_{b,h}$ in the following order: 
     \begin{enumerate}
\item \textbf{Step 1:} We first remove diagonal singularities by extending the lifted diagonal past the boundary faces $\tilde{\mathfrak{M}}_{b,h}$ at both ends $Z_0 > 0$ and $Z_0 < 0$ and use transversal elliptic construction.
 \item \textbf{Step 2:} next we remove error at the semiclassical front faces $\mathfrak{B}_2$ and $\tilde{\mathfrak{B}}_2$. The normal operator in local coordinates has the form
 $$    \mathcal{N}_{\mathfrak{B}_2}(\QQ_h)=  (z')^2 \left[ \partial_{\mathsf{Z}_h}^2+z'+1 \right],$$
 this is the Euclidean laplacian with spectral parameter $z' + 1$. Note the uniform behavior down to $\mathfrak{B}_1$\footnote{This is the reason why we choose to work with our current blowup space instead of one used in e.g \cite{M-SB-AV}. See subsection \ref{blowupSpace} for further discussion}. For $Z_0 < 0$
  $$    \mathcal{N}_{\tilde{\mathfrak{B}}_2}(\QQ_h)=  (z')^2 \left[ \partial_{\mathsf{Z}_h}^2+z'-1 \right]. $$
 After this step we can blow-down $\mathfrak{B}_2$ and $\tilde{\mathfrak{B}}_2$ and work on $\mathfrak{M}_{b,h}$.
 
 \begin{remark}
At the $Z_0 < 0$, we have an elliptic problem. Unlike when $Z_0 > 0$, for $Z_0< 0$, we will not have singularities propagated from $\tilde{\mathfrak{B}_2}$ onto $\tilde{\mathcal{A}}$, thus steps 3, 4, 5 and 6b are only needed for $Z_0 > 0$.
\end{remark}
\begin{remark}\label{prepForB1}[\textbf{\emph{Providing the motivation for the next steps from step 3 to 5}}]
  Step 3, 4 and 5 served to blow down $\mathfrak{B}_1$ and $\tilde{\mathfrak{B}}_1$ to a product structure. In more details, on $\mathfrak{B}_1$, $\QQ_b$ restricts to :  \emph{(see step 4 for computation)}
 $$(\tilde{t}\partial_{\tilde{t}})^2 + (\lambda - \tfrac{n^2}{4}) +\tilde{t}^2$$  which thus can be considered to live on the product space $B_1 = \RR^+_t \times \RR^+_{\tilde{\nu}'}$ (so $B_1$ is the 2-dimensional product structure \emph{represented } in figure (\ref{ffb1}) by the rectangle). On the other hand, the error we need to solve away (ie the inhomogeneous part) lives on \emph{(i.e is nice' function on)} $\mathfrak{B}_1$. We will replace this error by something that behaves trivially at $\mathcal{A}\cap \mathfrak{B}_1$ and $\mathcal{F}\cap \mathfrak{B}_1$ in order to be work with an error that resides $B_1$. As for the $Z_0 < 0$ end ie on $\tilde{\mathfrak{B}}_1$, the normal operator is
 $$(\tilde{t}\partial_{\tilde{t}})^2 + (\lambda - \tfrac{n^2}{4}) -\tilde{t}^2$$ 
 As noted by the previous remark due to the ellipticity of the problem we automatically have infinite vanishing at $\tilde{\mathcal{A}} \cap \mathfrak{B}_1$ after step 2.
  \end{remark}

    \item  \textbf{Step 3:} for this step we remove error at the semiclassical front face $\mathcal{A}$ and $\mathfrak{F}_2$. The process is carried out in 3 smaller steps:
\begin{itemize}
\item Step 3a: Singularities are propagated out from $\mathfrak{B}_2$ onto the semiclassical face $\mathcal{A}$. We remove the error at $\mathcal{A}$ along the Lagrangian $\Gamma_1$ associated to the phase function $-\varphi_{\text{in}}$ where
$$\varphi_{\text{in}} = \tfrac{2}{3} h^{-1} \big| (z'+1)^{3/2} - (z+1)^{3/2}\big|$$ 
 by  solving the transport equations with initial condition at the diagonal.     \begin{figure}
 \includegraphics{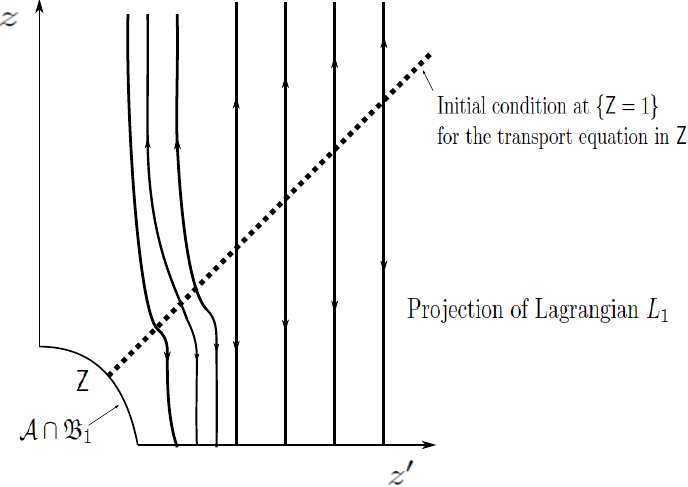}
 \caption{ Lagrangian $\Gamma_1$ associated with phase function $\varphi_{\text{in}}$. Note that the integral curves never hit the left boundary $\mathcal{L}$ \label{faceAL1}}
  \end{figure}
       Note that the integral curves never hit the left boundary $\mathcal{L}$, see figure (\ref{faceAL1}). In the region close to $\mathcal{A} \cap \mathfrak{F}_2$ we have $z < z'$  so $\varphi_{\text{in}}$ takes the form:
  $$\varphi_{\text{in}} = \tfrac{2}{3} h^{-1}\left[(z'+1)^{3/2} - (z+1)^{3/2}\right]$$
 
   \item  Step 3b: Next, we correct the error on $\mathfrak{F}_2$ by solving inhomogeneous Bessel equation.  Although this introduces error on $\mathcal{A}$, we solve the inhomogeneous Bessel for specific solutions that does not undo what we have achieved on $\mathcal{A}$, ie a solution that satisfies the boundary asymptotic at $\mathcal{R}\cap \mathfrak{F}_2$ (\emph{dependent on $\lambda$}) and certain prescribed asymptotic behavior at infinity. As a result, the singularities of the solution lie on a different Lagrangian which we denote by $\Gamma_2$, and not the incoming Lagrangian $\Gamma_1$. The phase function associated to $\Gamma_2$ is $-\varphi_{\text{out}}$ where
  $$\varphi_{\text{out}} =  \tfrac{2}{3} h^{-1} \left[(z'+1)^{3/2} + (z+1)^{3/2}-2\right]  $$

\item Step 3c: In the third step, we correct the resulting error from the above step on the semiclassical face $\mathcal{A}$ but this time along the reflected lagrangian $\Gamma_2$ in phase space over the corner $\mathcal{A} \cap \mathfrak{F}_2$. The initial condition is now at $\mathcal{R}\cap \mathcal{A}$, see figure \ref{faceAL2}.
  \begin{figure}
  \centering
  \includegraphics{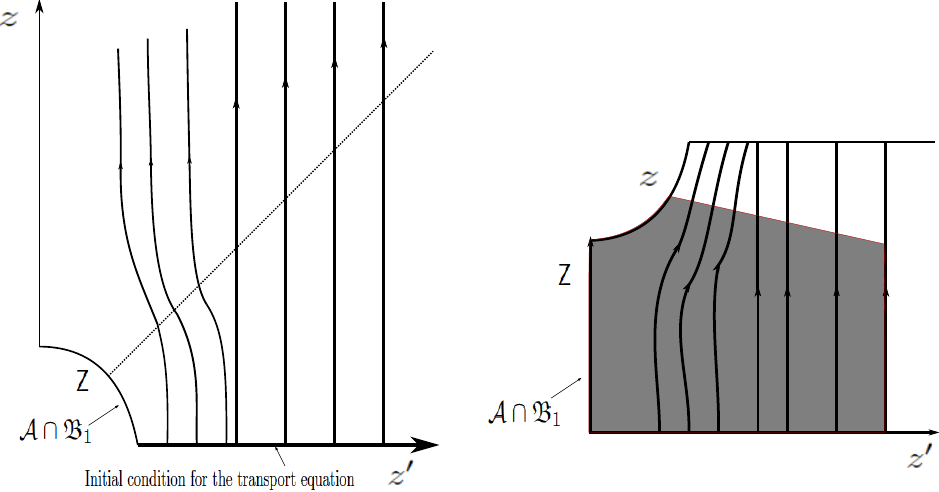}
  \caption{Figure showing lagrangian $\Gamma_2$ associated with face function $\varphi_{\text{out}}$; the shaded region represents $\{z<1, z' <1\}$}
  \label{faceAL2}
  \end{figure}

\end{itemize}

\item \textbf{Step 4:} We remove error at $\mathcal{F}$. The normal operator on $\mathcal{F}$ is : 
    $$ N_{\mathcal{F}}(\QQ) =  h^2 \big[  (\mathsf{Z}\partial_{\mathsf{Z}})^2 + (\lambda - \dfrac{n^2}{4})\big]$$
On $\mathcal{F}$, we use Mellin transform to construct solution with wanted asymptotic behavior at $\mathcal{R}\cap \mathcal{F}$ and $\mathcal{L} \cap \mathcal{F}$.

\item \textbf{Step 5:} By remark (\ref{prepForB1}), we are now ready to remove error on $\mathfrak{B}_1$ and $\tilde{\mathfrak{B}}_1$.
 $$N_{\mathfrak{B}_1}(\QQ) = (\tilde{t} \partial_{\tilde{t}})^2 +   (\lambda - \dfrac{n^2}{4})  + \tilde{t}^2 :\  \textbf{Bessel type ODE}$$
 $$N_{\tilde{\mathfrak{B}}_1}(\QQ) = (\tilde{t} \partial_{\tilde{t}})^2 +   (\lambda - \dfrac{n^2}{4})  - \tilde{t}^2 :\  \textbf{Bessel type ODE}$$
 $\mathfrak{B}_1$ and $\tilde{\mathfrak{B}}_1$ are blown-up versions of the product structure  $\RR^+_{\tilde{t}} \times \RR^+_{\tilde{\nu}'}$ shown on the right of figure \ref{ffb1}.

\item \textbf{Step 6}: At the $Z_0 > 0$ end, we remove error at at $\mathfrak{F}_1$ that is introduced there from the process of error removal at $\mathfrak{B}_1$ in the previous step.
\item \textbf{Step 7:} Finally, we remove error at $\mathcal{R}$.
 
\end{enumerate}

 In the following subsections, we expand in details the various steps of the construction.
\subsection{Step 1 : Removing error at the lifted diagonal}\

  In the first step we construct $U_0 \in \Psi^{-2}_{0,h}$ such that $ E_0 = \QQ(h,\lambda)  U_0(h,\lambda) - \delta(z-z')$ has no singularity at the lifted diagonal $\Diag_{b,h}$ with $U_0$ supported in a neighborhood of $\Diag_{b,h}$ in $\tilde{\mathfrak{M}}_{b,h}$ that only intersects the boundaries of $\tilde{\mathfrak{M}}_{b,h}$ at $\mathfrak{B}_1, \mathfrak{B}_2, \tilde{\mathfrak{B}}_1, \tilde{\mathfrak{B}}_2$ and $\mathcal{F}$.
 \begin{equation}
\QQ(h,\lambda) = h^2(z \partial_z)^2   + h^2(\lambda-\dfrac{n^2}{4}) + z^3+ z^2  
 \end{equation}
This is elliptic in the normal bundle of the interior of the lift $\Diag_{b,h}$ (of the diagonal) to $\tilde{\mathfrak{M}}_{b,h}$. We would like to carry this elliptic construction uniformly up to the b-front face $\mathcal{F}$, the blown-up front face $\mathfrak{B}_1, \mathfrak{B}_2, \tilde{\mathfrak{B}}_1$ and $\tilde{\mathfrak{B}}_2$. The diagonal $\Diag_{b,h}$ meets these five boundary faces transversally, and hence can be extended to a neighborhood of $\mathcal{F}, \mathfrak{B}_1, \mathfrak{B}_2, \tilde{\mathfrak{B}}_1$ and $\tilde{\mathfrak{B}}_2$ in the double space of $\tilde{\mathfrak{M}}_{b,h}$. We want to extend $\QQ_{b,h}$ to this neighborhood (in the double space) and apply the standard theory of \emph{conormal distributions to an embedded submanifold without boundary} (which in this case is the extension of the diagonal $\Diag_{b,h}$). In order to do this, we need to show that $\QQ$ is transversally elliptic or is at least a multiple of a transversally elliptic operator down to these five boundary faces. 
\begin{itemize}
\item \textbf{b-front face} $\mathcal{F}$ : Denote the coordinate on the left factor of $X$ by $z$ while the coordinate on the right factor will be $z'$ . Define the projective coordinates
$$\rho = z', \mathsf{Z} = \dfrac{z}{z'}, h $$
which hold away from the left face. The b - front face is given by $\mathcal{F} = \{\rho = 0\}$ and the lift of the diagonal is $\Diag_{b,h} =\{\mathsf{Z}=1 \}$. Since $z\partial_z $ lifts to $ \mathsf{Z}\partial_{\mathsf{Z}}$, the lift of $\QQ(h,\lambda)$ under the b blow-down map $\beta_b$ is equal to 
\begin{equation}
\QQ_b (h,\lambda) = \beta^* \QQ(h,\lambda) = h^2 (\mathsf{Z}\partial_{\mathsf{Z}})^2 + h^2 (\lambda-\dfrac{n^2}{4}) + (\mathsf{Z}\rho)^3+ (\mathsf{Z}\rho)^2
\end{equation}
The operator is transversally elliptic in a neighborhood of $\{\mathsf{Z}=1\}$ away from $h = 0$. The restriction of the lift of $\QQ(h,\lambda)$ to the front face $\mathcal{F} = 0 = \{\rho =0\}$ is given by 
$$ N_{\mathcal{F}} \left( \QQ_b(h,\lambda)\right) = h^2 \left[ (\mathsf{Z}\partial_{\mathsf{Z}})^2 + (\lambda-\dfrac{n^2}{4})  \right]$$

\item \textbf{The blow-up front face} $\mathfrak{B}_1$ : now we blow-up the intersection of b front face and semiclassical face $\{h=0\}$ in $X^2_b$. In coordinates
$$\tilde{\mu}' = \dfrac{z'}{h} , \mathsf{Z}, h$$
the operator takes the form
 \begin{equation}
 \begin{aligned}
 &h^2( \mathsf{Z} \partial_{\mathsf{Z}})^2 + h^2 (\lambda - \dfrac{n^2}{4}) + (\mathsf{Z} \tilde{\mu}' h)^3 + (\mathsf{Z}\tilde{\mu}'h)^2 \\
 = \ & h^2 \left[( \mathsf{Z} \partial_{\mathsf{Z}})^2 + (\lambda - \dfrac{n^2}{4}) + h (\mathsf{Z}\tilde{\mu}')^3 + (\mathsf{Z} \tilde{\mu}')^2  \right]
 \end{aligned}
 \end{equation}
The operator in the bracket restricts to $\mathfrak{B}_1$ face (given by by $\{h=0\}$) is of the form:
$$ \left[( \mathsf{Z} \partial_{\mathsf{Z}})^2 + (\lambda - \dfrac{n^2}{4}) + (\mathsf{Z} \tilde{\mu}')^2  \right]$$
Hence $\QQ_{b,h}$ is a factor of an operator which is elliptic in a neighborhood of $\{\mathsf{Z}=1\}$ down to $h=0$.
 
\item \textbf{The blown-up face} $\mathfrak{B}_2$  : the front face comes from  blowing up the intersection of the lifted $\Diag_{b,\mathfrak{B}} $ and semiclassical face $\{\tilde{\nu}' = 0\}$. Define the projective coordinates by
$$ \mathsf{Z}_h = \dfrac{\mathsf{Z} - 1}{\tilde{\nu}'}, \tilde{\nu}' , z'$$
The lift of $\QQ_{b,h}$ to $\mathfrak{M}_1$ in a neighborhood of $\mathfrak{B}_2$:
\begin{equation}
\begin{aligned}
 \rho^2 \left[\left((1 + \tilde{\nu}' \mathsf{Z}_h)\partial_{\mathsf{Z}_h} \right)^2+ (\tilde{\nu}')^2 (\lambda-\dfrac{n^2}{4}) + (\mathsf{Z}_h \tilde{\nu}' + 1)^3\rho+ (\mathsf{Z}_h \tilde{\nu}' +1)^2 \right]
 \end{aligned}
 \end{equation}
This operator when restricted to $\mathfrak{B}_2$ (defined by $\{\tilde{\nu}' = 0\} $) gives the normal operator 
$$\mathcal{N}_{\mathfrak{B}_2}(\QQ_h)=  \rho^2 \left[ \partial_{\mathsf{Z}_h}^2+\rho+1 \right]$$
which is transversally elliptic down to $\mathfrak{B}_2$ near $\tilde{\nu}' = 0$.

\item \textbf{The blow-up front face} $\tilde{\mathfrak{B}}_1$ : In the neighborhood of this face we use coordinates
$$\tilde{\mu}' = \dfrac{z'}{h} , \mathsf{Z}, h$$
in which, the operator takes the form
 \begin{equation}
 h^2 \left[( \mathsf{Z} \partial_{\mathsf{Z}})^2 + (\lambda - \dfrac{n^2}{4}) + h (\mathsf{Z}\tilde{\mu}')^3 - (\mathsf{Z} \tilde{\mu}')^2  \right]
 \end{equation}
The operator in the bracket restricts to $\tilde{\mathfrak{B}}_1$ face, which is  given by by $\{h=0\}$, is :
$$ \left[( \mathsf{Z} \partial_{\mathsf{Z}})^2 + (\lambda - \dfrac{n^2}{4}) - (\mathsf{Z} \tilde{\mu}')^2  \right]$$
Hence $\QQ_{b,h}$ is a factor of an operator which is elliptic in a neighborhood of $\{\mathsf{Z}=1\}$ down to $h=0$.

\item \textbf{The blown-up face} $\tilde{\mathfrak{B}}_2$: The front face comes from the blow-up of the intersection of the lifted $\Diag_{b,\mathfrak{B}} $ and semiclassical face $\tilde{\mathcal{A}}$. We use the projective coordinates: $ \mathsf{Z}_h = \tfrac{\mathsf{Z} - 1}{\tilde{\nu}'}, \tilde{\nu}' , z'$, in which the lift has the form:
\begin{equation}
\begin{aligned}
 \QQ_h &= \beta_h^* \QQ (h,\lambda)  \\
 &= \rho^2 \left[\left((1 + \tilde{\nu}' \mathsf{Z}_h)\partial_{\mathsf{Z}_h} \right)^2+ (\tilde{\nu}')^2 (\lambda-\dfrac{n^2}{4}) + (\mathsf{Z}_h \tilde{\nu}' + 1)^3\rho- (\mathsf{Z}_h \tilde{\nu}' +1)^2 \right]
 \end{aligned}
 \end{equation}
 $\beta_h^* \QQ (h,\lambda)$ restricted to $\mathfrak{B}_2$ ( defined by $\{\tilde{\nu}' = 0\} $) then gives the normal operator 
$$\mathcal{N}_{\tilde{\mathfrak{B}}_2}(\QQ_h)=  \rho^2 \left[ \partial_{\mathsf{Z}_h}^2+\rho-1 \right]$$
which is transversally elliptic down to $\tilde{\mathfrak{B}}_2$  near $\tilde{\nu}' = 0$.

\end{itemize}
 
\subsubsection{\textbf{Properties of newly constructed function and the resulting error}}
  From the verification above, one can find distribution $U_0$ with the following properties
  \begin{itemize}
  \item The support of $U_0$ is close to $\Diag_{b,h}$ and which intersects only the boundary faces $\mathcal{F},\mathfrak{B}_1, \mathfrak{B}_2, \tilde{\mathfrak{B}}_1$ and $\tilde{\mathfrak{B}}_2$ 
  \item $U_0$ is cornormal of degree $-2$ to $\Diag_{b,h}$ 
  \item $\QQ_{b,h} U_0 - \Id = E_0$, where $\supp E_0$ is close to $\Diag_{b,h}$ and intersects only the boundary faces $\mathcal{F},\mathfrak{B}_1$ and $\mathfrak{B}_2$. Moreover, $E_0$ is smooth across $\Diag_{b,h}$ and down to $\mathcal{F}$  (\emph{with values in conormal distribution to the lifted diagonal}).
  \end{itemize}
  $$ U_0 \in \rho_{\mathfrak{B}_1}^{-1} \rho_{\tilde{\mathfrak{B}}_1}^{-1} \rho_{\tilde{\mathfrak{B}}_2}^{-1}\Psi^{-2}_{b,h}$$
  $$E_0 \in \rho_{\mathfrak{B}_1}^0 \rho_{\mathfrak{B}_2}^{-1} \rho_{\tilde{\mathfrak{B}}_1}^0 \rho_{\tilde{\mathfrak{B}}_2}^{-1} \mathcal{C}^{\infty}(\tilde{\mathfrak{M}}_{b,h}).$$

\subsection{Step 2 : remove the error at the semiclassical front face $\mathfrak{B}_2$ and $\tilde{\mathfrak{B}}_2$.}\

 We will find a distribution $U_1, U_{1,\tilde{\mathfrak{B}}_2}$ such that $U_1$ is supported near $\mathfrak{B}_2$, and $U_{1,\tilde{\mathfrak{B}}_2}$ is supported near $\tilde{\mathfrak{B}}_2$ such that 
 $$  \QQ_h (U_0 + U_1 + U_{1,\tilde{\mathfrak{B}}_2} ) = E_1$$ where 
 $$U_1 = e^{-i\varphi_{\text{in}}} \tilde{U}_1 ,\ \  \tilde{U}_1 \in \rho_{\mathfrak{B}_1}^{\gamma_{\mathfrak{B}_1}} \rho_{\mathfrak{B}_2}^{-1}\rho_{\mathcal{A}}^{\gamma_{\mathcal{A}} } \mathcal{C}^{\infty}(\tilde{\mathfrak{M}}_{b,h});\ \  U_{1,\tilde{\mathfrak{B}}_2} \in \rho_{\tilde{\mathfrak{B}}_1}^{\gamma_{\tilde{\mathfrak{B}}_1}} \rho_{\tilde{\mathfrak{B}}_2}^{-1}\rho_{\tilde{\mathcal{A}}}^{\infty}\mathcal{C}^{\infty}(\tilde{\mathfrak{M}}_{b,h})$$
 $$E_1 =   e^{-i\varphi_{\text{in}}}  F_1 + F_{1,\tilde{\mathfrak{B}}_2}$$
 $$F_1 \in \rho_{\mathfrak{B}_1}^{\gamma_{\mathfrak{B}_1}+2} \rho_{\mathfrak{B}_2}^{\infty} \rho_{\mathcal{A}}^{\gamma_{\mathcal{A}}+1 } \mathcal{C}^{\infty}(\tilde{\mathfrak{M}}_{b,h}) ;\ \ F_{1,\tilde{\mathfrak{B}}_2} \in \rho_{\tilde{\mathfrak{B}}_1}^{\gamma_{\tilde{\mathfrak{B}}_1}+2} \rho_{\tilde{\mathfrak{B}}_2}^{\infty} \rho_{\tilde{\mathcal{A}}}^{\infty} \mathcal{C}^{\infty}(\tilde{\mathfrak{M}}_{b,h}) $$
 $$ \gamma_{\tilde{\mathfrak{B}}_1}= \gamma_{\mathfrak{B}_1} = -2 ; \ \gamma_{\mathcal{A}} = -1$$
  with $\beta_h^* K_{F_1}$ supported away from $\mathcal{L}$ and $\mathcal{R}$ and close to $\mathfrak{B}_2$, whereas  $F_{1,\tilde{\mathfrak{B}}_2}$ is supported away from $\mathfrak{B}_2, \mathcal{L}$ and $\mathcal{R}$. 
   
\subsubsection{Preparation ingredients for $\mathfrak{B}_2$}\

 \textbf{Fiber structure near the front face} $\mathfrak{B}_2$\nl
  In the current neighborhood we are away from $\mathcal{R}$ and $\mathcal{L}$. The coordinates in use
    $$\mathsf{Z}_h =\dfrac{\mathsf{Z}-1}{\tilde{\nu}'} , \tilde{\nu}', z'$$
    $$\rho_{\mathcal{A}} = \mathsf{Z}_h^{-1} ;\  \rho_{\mathfrak{B}_2} = \tilde{\nu}' $$
    
  \textbf{Expansion of a smooth function around} $\mathfrak{B}_2$: If $v \in \mathcal{C}^{\infty}(\mathfrak{M}_{b,h})$ expand $v$ in Taylor series around $\mathcal{S}$ up to order $N$
  \begin{equation}\label{smoothfnc}
  v = \sum_{k\leq N} \rho^k_{\mathfrak{B}_2} v_k' + v' , v_k' \in \mathcal{C}^{\infty}(\mathsf{Z}_h,z) , v' \in \rho_{\mathfrak{B}_2}^{N+1} \mathcal{C}^{\infty}(\mathfrak{M}_{b,h})
  \end{equation}
 
  \textbf{Corresponding change to the operator in order to act on the expression of the form (\ref{smoothfnc})} : We have $\QQ$ lifts 
\begin{align*}
 \QQ_h &= \beta_h^* \QQ (h,\lambda)  = \rho^2 \left[\left((1 + \tilde{\nu}' \mathsf{Z}_h)\partial_{\mathsf{Z}_h} \right)^2+ (\tilde{\nu}')^2 (\lambda-\dfrac{n^2}{4}) + (\mathsf{Z}_h \tilde{\nu}' + 1)^3\rho+ (\mathsf{Z}_h \tilde{\nu}' +1)^2 \right]\\
 &= \mathcal{N}_{\mathfrak{B}_2}(\QQ_h)
 + \rho^2 \Big[  2\tilde{\nu}'\mathsf{Z}_h \partial_{\mathsf{Z}_h}^2 + (\tilde{\nu}')^2 \mathsf{Z}_h^2 \partial_{\mathsf{Z}_h}^2
 + \tilde{\nu}'(1 + \tilde{\nu}'\mathsf{Z}_h) \partial_{\mathsf{Z}_h}  + (\tilde{\nu}')^2 (\lambda-\dfrac{n^2}{4})   \\
 &+(3\mathsf{Z}_h \tilde{\nu}' + 3 \mathsf{Z}_h^2(\tilde{\nu}')^2 + \mathsf{Z}_h^3(\tilde{\nu}')^3) \rho + (2\mathsf{Z}_h \tilde{\nu}' + \mathsf{Z}_h^2 (\tilde{\nu}')^2)\Big]
 \end{align*}
 where 
 $$\mathcal{N}_{\mathfrak{B}_2}(\QQ_h)=  \rho^2 \left[ \partial_{\mathsf{Z}_h}^2+\rho+1 \right]$$
   
\textbf{The resolvent for} $\partial_{\mathsf{Z}_h}^2+\rho+1$ \nl
 Since the semiclassical normal operator is the Euclidean laplacian on $\RR^{m+1}$ ($m=0$ \emph{in our current case}) with a family of spectral parameter $1 + \rho$ (in the case there is no $\omega$) uniformly down to $\rho =0$ . 
 $$N_{\mathfrak{B}_2}(\QQ_h) = \partial_{\mathsf{Z}_h}^2 + \rho+ 1$$
 The asymptotic expansion of the outgoing solution for $N(\QQ_h) u = f$ where $f$ is smooth of compact support 
 $$ u\sim e^{-i(\rho + 1)^{1/2} \lvert \mathsf{Z}_h\rvert}  c_0 + O(\lvert \mathsf{Z}_h\rvert^{\tfrac{1}{2}})$$
 
  In addition,  for $g$ smooth in $\mathsf{Z}_h$ and $z'$, $g$ of compact support in $\mathsf{Z}_h$ :
  $$\QQ(h,\lambda)(\tilde{\nu}')^kg = (\tilde{\nu}')^k N_{\mathfrak{B}_2}(\QQ_h) g + \tilde{\nu}^{k+1} \tilde{\QQ}_h g$$ 
  where $\tilde{\QQ}_h$ is a differential operator in $\partial_{Z_h}$ with polynomial coefficients of degree $\leq  3$ in $Z_h$. Denote by $R_0$ the resolvent to $ N_{\mathfrak{B}_2}(\QQ_h)$. We can apply Lemma 5.2 in the paper by S.B - M - V, to apply $R_0$ to an expression of the form $L_3 R_0 g$, where $L_3$ is a differential operator with polynomial coefficients of degree $\leq 3$. In our specific case with the dimension of the space is $1$ and the coefficient of the differential operators can be up to degree $2$, we have
 \begin{lemma}\label{lemma5.2}[Lemma 5.2 in S.B-M-V]
 Suppose $M_j, j = 1, \ldots, N$ are different operators with polynomial coefficients of degree $3$ on $\RR$. Then
 $$R_0 M_1 R_0 M_2 \ldots R_0 M_N R_0 : \mathcal{C}^{\infty}_c(\RR) \rightarrow \mathcal{C}^{-\infty}(\RR)$$
 has an analytic extension from $\Im( \sigma) < 0 $ to $\Re \sigma > 0, \Im \sigma \in \RR$ so with $\sigma = \sqrt{\rho+1}$ we have
 $$R_0 M_1 R_0 M_2 \ldots R_0 M_N R_0 : \mathcal{C}^{\infty}_c(\RR_{\mathsf{x}}) \rightarrow
 e^{-i\sqrt{\rho+1} \lvert \mathsf{x}\rvert} \lvert \mathsf{x}\rvert^{3N} $$
 
 \end{lemma}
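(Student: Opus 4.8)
This is essentially a restatement of [M-SB-AV, Lemma 5.2] specialized to dimension $1$ and spectral parameter $\rho+1\ge 1$, and the plan is to reduce everything to the explicit one-dimensional resolvent kernel and then propagate its mapping properties through the composition by induction on $N$. Write $\sigma=\sqrt{\rho+1}$. The outgoing resolvent $R_0=R_0(\sigma)$ of $\partial_{\mathsf{x}}^2+\sigma^2$, selected by the $e^{-i\sigma\lvert\mathsf{x}\rvert}$ radiation condition, has Schwartz kernel $c_\sigma\, e^{-i\sigma\lvert\mathsf{x}-\mathsf{y}\rvert}$ with $c_\sigma$ a constant multiple of $\sigma^{-1}$. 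This kernel is holomorphic in $\sigma$ on $\CC\setminus\{0\}$; in particular it continues from $\{\Im\sigma<0\}$, where it is bounded on $L^2$, across the real axis into $\{\Re\sigma>0\}$. Thus the analytic-continuation assertion will follow once we show that each of the iterated integrals defining $R_0M_1R_0\cdots M_NR_0$ converges (in a suitable weighted sense) for $\Re\sigma>0$, which is a byproduct of the mapping bound established below.

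First I would prove the model estimate for a single factor: \emph{if $a$ is a symbol of order $m$ on $\RR$ (i.e.\ $\lvert\partial_{\mathsf{x}}^k a\rvert\lesssim\langle\mathsf{x}\rangle^{m-k}$), then $R_0\big(e^{-i\sigma\lvert\mathsf{x}\rvert}a\big)=e^{-i\sigma\lvert\mathsf{x}\rvert}\,b$ with $b$ a symbol of order $m+1$, with bounds uniform for $\sigma$ in compact subsets of $\{\Re\sigma>0\}$} (here $\sigma\ge1$, so uniformity is automatic and the factors of $\sigma^{-1}$ are harmless). The quickest route is to conjugate by the phase: on $\{\mathsf{x}>0\}$ put $u=e^{-i\sigma\mathsf{x}}b$, so that $(\partial_{\mathsf{x}}^2+\sigma^2)u=e^{-i\sigma\mathsf{x}}(b''-2i\sigma b')$; inverting the transport operator $b\mapsto b''-2i\sigma b'$ by a Neumann series in $\partial_{\mathsf{x}}/\sigma$ gives $b'=\tfrac{i}{2\sigma}a+(\text{lower order})$, hence $b=\tfrac{i}{2\sigma}\int_0^{\mathsf{x}}a+\cdots$, a symbol of order $m+1$; the analogous computation on $\{\mathsf{x}<0\}$ together with the explicit kernel (which picks out the outgoing particular solution and fixes the matching at $0$) gives the claim globally. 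Concretely the one order is lost only in the ``forward cone'' $0<\mathsf{y}<\mathsf{x}$, where $\lvert\mathsf{x}-\mathsf{y}\rvert+\lvert\mathsf{y}\rvert=\mathsf{x}$ so the two oscillations collapse to $e^{-i\sigma\mathsf{x}}$ and one picks up $\int_0^{\mathsf{x}}a$; in the remaining regions the residual oscillation in $\mathsf{y}$ renders those contributions of lower order after integration by parts. Equivalently, one observes that the conjugated kernel $e^{i\sigma\lvert\mathsf{x}\rvert}K_{R_0}(\mathsf{x},\mathsf{y})e^{-i\sigma\lvert\mathsf{y}\rvert}$ lifts to a polyhomogeneous conormal function on the radial compactification of $\RR^2_{\mathsf{x},\mathsf{y}}$, conormal to the diagonal, and reads off the mapping property from the pushforward theorem.

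With the single-factor estimate in hand I would induct on $N$. Each $M_j=\sum_{k\le 2}p_{j,k}(\mathsf{x})\partial_{\mathsf{x}}^k$ with $\deg p_{j,k}\le 3$ sends $e^{-i\sigma\lvert\mathsf{x}\rvert}\cdot S^m$ into $e^{-i\sigma\lvert\mathsf{x}\rvert}\cdot S^{m+3}+(\text{terms supported at }0)$: derivatives landing on the phase produce harmless powers of $\sigma$ (plus $\delta$-terms at the origin, which are compactly supported and hence do not worsen the order after the next $R_0$), while derivatives landing on the symbol only lower its order, so the resonant part gains at most $\deg M_j=3$; each $R_0$ then gains one more order by the model estimate. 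Starting from $g\in\mathcal{C}^\infty_c$, after the rightmost $R_0$ we are in $e^{-i\sigma\lvert\mathsf{x}\rvert}\cdot S^0$, and composing the remaining $N$ blocks yields membership in $e^{-i\sigma\lvert\mathsf{x}\rvert}\cdot S^{K}$ with $K$ linear in $N$; a careful bookkeeping of which part of each $M_j$ is genuinely resonant gives the stated $\lvert\mathsf{x}\rvert^{3N}$. The symbol bounds along the way make all intermediate integrals absolutely (weighted-)convergent for $\Re\sigma>0$, which is exactly what justifies both the composition itself and its analytic continuation from $\{\Im\sigma<0\}$. The main obstacle is precisely this bookkeeping together with verifying the phase coherence at each stage — that the product of oscillations really collapses to the single factor $e^{-i\sigma\lvert\mathsf{x}\rvert}$ on the forward cone (so one stays within $e^{-i\sigma\lvert\mathsf{x}\rvert}$ times symbols and pays exactly one order per $R_0$), while the ``backward'' contributions genuinely remain lower order after integration by parts; keeping the estimates uniform in $\sigma$ down to $\sigma=0$ is what one must watch in the general statement, but in our situation $\sigma=\sqrt{\rho+1}\ge1$ removes that difficulty.
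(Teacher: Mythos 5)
The paper does not prove this statement — it is quoted verbatim (with $\sigma=\sqrt{\rho+1}$, $n=1$) as Lemma~5.2 of \cite{M-SB-AV}, so there is no ``paper proof'' against which to compare your argument line by line. I can only assess whether your sketch is a viable route to the result.

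The overall strategy you propose — use the explicit one–dimensional outgoing kernel $c_\sigma e^{-i\sigma\lvert\mathsf{x}-\mathsf{y}\rvert}$, prove a single-factor mapping estimate by conjugating $\partial_{\mathsf{x}}^2+\sigma^2$ by the phase to expose the transport operator $b\mapsto b''-2i\sigma b'$, and then iterate — is the right idea, and the forward-cone observation ($\lvert\mathsf{x}-\mathsf{y}\rvert+\lvert\mathsf{y}\rvert=\mathsf{x}$ for $0<\mathsf{y}<\mathsf{x}$, which collapses the double oscillation and gains exactly one power of $\lvert\mathsf{x}\rvert$) is the mechanism that the proof in \cite{M-SB-AV} (in its $\RR^n$ form) also hinges on. The remark that $\sigma=\sqrt{\rho+1}\ge 1$ removes the uniformity-as-$\sigma\to0$ issue is correct and worth making explicit for this specialization.

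The one genuine gap is the final exponent, and you half-acknowledge it yourself: your own bookkeeping, taken literally, gives $\lvert\mathsf{x}\rvert^{4N}$ (or $4N+1$), not $\lvert\mathsf{x}\rvert^{3N}$. With the rightmost $R_0 g$ of order $0$ in $e^{-i\sigma\lvert\mathsf{x}\rvert}S^\bullet$, each $M_j$ gains at most $3$ orders (from the degree-$3$ coefficient in front of $\partial^0$) and each of the remaining $N$ factors of $R_0$ gains another $1$ by your model estimate, so the output is $e^{-i\sigma\lvert\mathsf{x}\rvert}S^{4N}$. You attribute the drop to $3N$ to ``careful bookkeeping of which part of each $M_j$ is genuinely resonant,'' but you never say what that bookkeeping is. In fact it comes from structure you suppressed when you wrote $M_j=\sum_{k\le 2}p_{j,k}\partial^k$ with $\deg p_{j,k}\le 3$: in the operators $\tilde{\QQ}_h$ actually fed into this lemma (see the expansion of $\QQ_h$ near $\mathfrak{B}_2$ in the paper), the coefficient of $\partial^k$ has degree $\le 2-k$ in $\mathsf{Z}_h$ except for the genuinely degree-$3$ potential term, which carries an extra factor of the front-face variable $\rho$ and therefore does not contribute at the leading order of the $\tilde{\nu}'$-expansion in which the lemma is applied. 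The effective per-$M_j$ gain is thus $2$, and $2+1=3$ per $M_jR_0$ block yields the stated $\lvert\mathsf{x}\rvert^{3N}$. Without spelling out (or at least citing) this finer filtration, the exponent claim in your proof does not follow from the steps you give.
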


\subsubsection{Construction for $\mathfrak{B}_2$}\

Now we can start the process of eliminating the error on the semiclassical front face,
    \begin{itemize}
    \item We expand $E_0$ as in (\ref{smoothfnc}) 
    $$E_0 \stackrel{\text{taylor expand}}{\sim} (\tilde{\nu}')^{-1} \sum (\tilde{\nu}')^k \rho^{-2} E_{0,k}; \ \ E_{0,k} \in \mathcal{C}^{\infty}(\mathsf{Z}_h, z)$$
    note that each $E_{0,k}$ is compactly supported in $\mathsf{Z}_h$
 \item Firstly we construct
 $$\tilde{U}_{1,0} = -\rho^{-4}R_0 E_{0,0}$$
 From the Lemma \ref{lemma5.2}, we have $\tilde{U}_{1,0}$ is of the asymptotic form : 
 $$e^{-i (\rho + 1)^{1/2} \lvert \mathsf{Z}_h\rvert}  c_0 + O(\lvert \mathsf{Z}_h\rvert^{\tfrac{1}{2}})$$
 The exponent part $e^{-i (\rho+ 1)^{1/2} \lvert \mathsf{Z}_h\rvert }$ is the restriction to $h = 0 $ of 
 $$\exp\Big(-i\dfrac{2}{3}(\tilde{\nu}' z')^{-1}\big|(\tilde{\nu}' \mathsf{Z}_hz' + 1+z' )^{3/2} - (z'+1)^{3/2}\big| \Big)$$ 
 Denote by $U_{1,0} = - e^{-i\varphi_{\text{in}}+i (\rho + 1)^{1/2} \lvert \mathsf{Z}_h\rvert} R_0 E_{0,0}$ where 
 $$\varphi_{\text{in}} = \dfrac{2}{3}(\tilde{\nu}' z')^{-1}\Big|(\tilde{\nu}' \mathsf{Z}_hz' + 1+z' )^{3/2} - (z' +1)^{3/2} \Big|$$
We consider  the difference between the phase function $\varphi_{\text{in}}$ and the pulled back of the $e^{-i (\rho+ 1)^{1/2} \lvert \mathsf{Z}_h\rvert }$ using the product structure near the semiclassical front face $\mathfrak{B}_2$.

 The prep computations \emph{included at the end of the subsection} gives: 
\begin{align*}
\rho^{4}\Big[\QQ_h  (\tilde{\nu}')^{-1} U_{1,0}  + \rho^{-4} E_{0,0} \Big] & = -N_{\mathfrak{B}_2}(\QQ) e^{-i\varphi_{\text{in}} + i (\rho + 1)^{1/2} \lvert \mathsf{Z}_h\rvert } R_0 E_{0,0} - \tilde{\nu}'  \tilde{\QQ}_{b,h} e^{-i\varphi_{\text{in}}+i (\rho + 1)^{1/2} \lvert\mathsf{Z}_h\rvert} R_0 E_{0,0}\\
&=\big[ 1- e^{-i\varphi_{\text{in}}+i (\rho+ 1)^{1/2} \lvert \mathsf{Z}_h\rvert} \big]E_{0,0} - 2 \left(\partial_{\mathsf{Z}_h} e^{-i\varphi_{\text{in}}+i(\rho + 1)^{1/2} \lvert \mathsf{Z}_h\rvert}\right)\partial_{\mathsf{Z}_h} R_0 E_{0,0}\\
& - \left( \partial_{\mathsf{Z}_h}^2e^{-i\varphi_{\text{in}} +i (\rho + 1)^{1/2} \lvert \mathsf{Z}_h\rvert} \right) R_0 E_0 - \tilde{\nu}'  \tilde{\QQ}_{b,h} \rho^{-2}e^{-i\varphi_{\text{in}}+i (\rho + 1)^{1/2} \lvert \mathsf{Z}_h\rvert} R_0 E_{0,0}
\end{align*}
 which means that we have replaced the error $z^0(\tilde{\nu}')^{-1} E_{0,0}$ by an error $E_{0,1}$ of the form $z^0\mathsf{O}((\tilde{\nu}')^0)$. 
\item Continue in this way to construct
$$\tilde{U}_{1,1} =  - R_0 \left(\rho^{-4} E_{0,1} +  \QQ_h  (\tilde{\nu}')^{-1} G_{1,0}  + E_{0,0}\right) $$
 Then define $U_{1,1} =  e^{-i\varphi_{\text{in}}+i (\rho + 1)^{1/2} \lvert \mathsf{Z}_h\rvert}  \tilde{U}_{1,1}$ so that  $\tilde{U}_{1,1} $ is equal to the restriction of $U_{1,1}$ on $\mathfrak{B}_2$.
And in the same manner we construct $U_{1,k}$. Note the procedure of applying $R_0$ to the new resulting error  is assured by Lemma \ref{lemma5.2} in (\cite{M-SB-AV}). \item Thus we have constructed :
$$U_{1,k} = e^{-i\varphi_{\text{in}}} \lvert \mathsf{Z}_h\rvert^{-1+3k} U'_{1,k} , U'_{1,k} \in \mathcal{C}^{\infty}(\ldots)$$
with $\varphi_{\text{in}} = \dfrac{2}{3}(\tilde{\nu}' z')^{-1} \Big|(\tilde{\nu}' \mathsf{Z}_h\rho + 1 + \rho)^{3/2} - (\rho + 1)^{3/2} \Big|$
$$\text{and} :\ \ \QQ_h \sum_{k=0}^{\infty} (\tilde{\nu}')^{k-1} U_{1,k} \sim \sum_{k=0}^{\infty} (\tilde{\nu}')^{k-1} E_{0,k}$$
where the statement means the equality of coefficients. 

\item Borel summation to obtain $U_1$
$$U_1 \stackrel{\text{borel sum}}{\sim} (\tilde{\nu}')^{-1} \sum_j (\tilde{\nu}')^j U_{1,j}$$
  to get $U_1$, and we can assume $U_1$ to be supported in a small neighborhood of $\rho_{\mathfrak{B}_2}$  is small. 
  \end{itemize}
  
\textbf{PREP COMPUTATIONS}
\begin{align*}
&-\rho^2 N_{\mathcal{S}} (\QQ_h) U_{1,0}\\
&= N_{\mathcal{S}} (\QQ_h) e^{-i\varphi_{\text{in}}+i (\rho + 1)^{1/2} \lvert \mathsf{Z}_h\rvert } R_0 E_{0,0}\\
 &=  e^{-i\varphi_{\text{in}}+i (\rho + 1)^{1/2}\lvert \mathsf{Z}_h\rvert}  N_{\mathcal{S}} (\QQ_h)R_0 E_{0,0} + \big[N_{\mathcal{S}} (\QQ_h) , e^{-i\varphi_{\text{in}}+i (\rho + 1)^{1/2} \lvert \mathsf{Z}_h\rvert}  \big]R_0 E_{0,0}\\
&= e^{-i\varphi_{\text{in}}+i (\rho + 1)^{1/2} \lvert \mathsf{Z}_h\rvert}E_0 +  [\partial_{\mathsf{Z}_h}^2 , e^{-i\varphi_{\text{in}}+i (\rho + 1)^{1/2} \lvert \mathsf{Z}_h\rvert}\big] R_0 E_{0,0}\\
&= e^{-i\varphi_{\text{in}}+i (\rho + 1)^{1/2} \lvert \mathsf{Z}_h\rvert} E_0 + 2 \left(\partial_{\mathsf{Z}_h} e^{-i\varphi_{\text{in}}+i(\rho + 1)^{1/2} \lvert \mathsf{Z}_h\rvert}\right)\partial_{\mathsf{Z}_h} R_0 E_0 + \left( \partial_{\mathsf{Z}_h}^2e^{-i \varphi_{\text{in}}+i (\rho + 1)^{1/2} \lvert \mathsf{Z}_h\rvert} \right) R_0 E_0
\end{align*}

Away from the singular set of the phase function, compute:
\begin{align*}
\lim_{\tilde{\nu}'\rightarrow 0} (\tilde{\nu}')^{-1}\left[ e^{-i\varphi_{\text{in}}+i (\rho + 1)^{1/2} \lvert \mathsf{Z}_h\rvert} - 1\right]
&= -\lim_{h\rightarrow 0} e^{-i\varphi_{\text{in}}+i (\rho + 1)^{1/2} \lvert \mathsf{Z}_h\rvert} \partial_{\tilde{\nu}'} \varphi_{\text{in}} \\
&\stackrel{\text{by several application of l'hopital's rule}}{=} -\dfrac{1}{2}(1 + z')^{-1/2} (\mathsf{Z}_h \rho)^2
\end{align*}

\begin{align*}
\partial_{\mathsf{Z}_h} e^{-i\varphi_{\text{in}} +i(\rho + 1)^{1/2} \lvert \mathsf{Z}_h\rvert}
&= - \sgn(\mathsf{Z}_h) e^{-i\varphi_{\text{in}} +i(\rho + 1)^{1/2} \lvert \mathsf{Z}_h\rvert} \left[i(\tilde{\nu}'\mathsf{Z}_h\rho + 1+\rho)^{1/2}\rho -i(\rho + 1)^{1/2}\right] \\
\partial_{\mathsf{Z}_h}^2 e^{-i \varphi_{\text{in}} + i(\rho + 1)^{1/2} \lvert \mathsf{Z}_h\rvert }
&= -\sgn(\mathsf{Z}_h) e^{i\varphi_{\text{in}} -i(\rho + 1)^{1/2} \lvert \mathsf{Z}_h\rvert} \left[i(\tilde{\nu}'\mathsf{Z}_h\rho + 1+\rho)^{1/2}\rho -i(\rho + 1)^{1/2}\right]^2\\
&- \sgn(\mathsf{Z}_h) e^{i\varphi_{\text{in}} -i(\rho + 1)^{1/2} \lvert \mathsf{Z}_h\rvert} i\tfrac{1}{2} (\tilde{\nu}'\mathsf{Z}_h\rho + 1+\rho)^{-1/2}\tilde{\nu}'\rho
\end{align*}

Since 
$$ \lim_{\tilde{\nu}'\rightarrow  0} \dfrac{i(\tilde{\nu}'\mathsf{Z}_h\rho + 1+\rho)^{1/2}\rho -i(\rho + 1)^{1/2}}{\tilde{\nu}'} = i\dfrac{1}{2} (1+\rho)^{-1/2} \mathsf{Z}_h \rho $$  
  we have 
  $$\partial_{\mathsf{Z}_h} e^{-i\varphi_{\text{in}}+i(\rho + 1)^{1/2} \lvert \mathsf{Z}_h\rvert} ,  \partial_{\mathsf{Z}_h}^2e^{-i\varphi_{\text{in}}+i (\rho + 1)^{1/2} \lvert \mathsf{Z}_h\rvert}  = \mathsf{O}(\tilde{\nu}')$$

\subsubsection{Construction for $\tilde{\mathfrak{B}}_2$}\

\textbf{Fiber structure near the front face} $\tilde{\mathfrak{B}}_2$\newline
  In the current neighborhood we are away from $\mathcal{R}$ and $\mathcal{L}$. The coordinates in use
    $$\mathsf{Z}_h =\dfrac{\mathsf{Z}-1}{\tilde{\nu}'} , \tilde{\nu}' = \dfrac{h}{z'}, z'$$
    $$\rho_{\tilde{\mathcal{A}}} = \mathsf{Z}_h^{-1} ;\  \rho_{\tilde{\mathfrak{B}}_2} = \tilde{\nu}' $$
The process is much simpler since $\mathcal{N}_{\tilde{\mathfrak{B}}_2}(\QQ_h)$ is now an elliptic operator
 $$\QQ(h,\lambda)(\tilde{\nu}')^kg = (\tilde{\nu}')^k \mathcal{N}_{\tilde{\mathfrak{B}}_2}(\QQ_h) g + \tilde{\nu}^{k+1} \tilde{\QQ}_{-,h} g$$ 
 $$\mathcal{N}_{\tilde{\mathfrak{B}}_2}(\QQ_h)=  \rho^2 \left[ \partial_{\mathsf{Z}_h}^2+\rho-1 \right]$$
 By Fourier transform we can construct the solution $u \in \mathcal{S}$ (rapidly decay at infinity) of $\mathcal{N}_{\tilde{\mathfrak{B}}_2}(\QQ_h) u = f$ where $f$ is smooth of compact support. Denote by $R_{0,\tilde{\mathfrak{B}}_2}$ the resolvent that produces such rapidly decaying solutions.

We work with the portion of $E_0$ close to $\tilde{\mathfrak{B}}_2$, denoted by $E_{0,\tilde{\mathfrak{B}}_2}$. We expand $E_{0,\tilde{\mathfrak{B}}_2}$ at $\tilde{\mathfrak{B}}_2$: 
    $$E_{0,\tilde{\mathfrak{B}}_2} \stackrel{\text{taylor expand}}{\sim} (\tilde{\nu}')^{-1} \sum (\tilde{\nu}')^k \rho^{-2} E_{0,\tilde{\mathfrak{B}}_2;k}; \ \ E_{0,\tilde{\mathfrak{B}}_2;k} \in \mathcal{C}^{\infty}(\mathsf{Z}_h, z)$$
    note that each $E_{0,\tilde{\mathfrak{B}}_2;k}$ is compactly supported in $\mathsf{Z}_h$. Let 
 $$U_{1,\tilde{\mathfrak{B}}_2; k} = -\rho^{-4}R_{0,\tilde{\mathfrak{B}}_2}\big( E_{0,\tilde{\mathfrak{B}}_2;k} +\tilde{Q}_{-,h} U_{1,\tilde{\mathfrak{B}}_2; k-1}  \big) \in \mathcal{S}(\RR_{\mathsf{Z}_h})$$
 Thus we have 
$$ \QQ_h \sum_{k=0}^{\infty} (\tilde{\nu}')^{k-1} U_{1,\tilde{\mathfrak{B}}_2; k} \sim \sum_{k=0}^{\infty} (\tilde{\nu}')^{k-1} E_{0,\tilde{\mathfrak{B}}_2;k} $$
where the statement means the equality of coefficients. 
 Borel summation to obtain $U_{1,\tilde{\mathfrak{B}}_2}$
$$U_{1,\tilde{\mathfrak{B}}_2} \stackrel{\text{borel sum}}{\sim} (\tilde{\nu}')^{-1} \sum_j (\tilde{\nu}')^j U_{1,\tilde{\mathfrak{B}}_2;j}$$
   we can assume $U_{1,\tilde{\mathfrak{B}}_2}$ to be supported in a small neighborhood of $\rho_{\tilde{\mathfrak{B}}_2}$  is small.

\

\subsection{Step 3 : removing error at $\mathcal{A}$ and $\mathfrak{F}_2$ }\

   Removing error at $\mathcal{A}$ and $\mathfrak{F}_2$ will be carried out in 3 smaller steps. Firstly, on $\mathfrak{M}_{b,h}$ we remove the error at the semiclassical face $\mathcal{A}$ along the Lagrangian $\Gamma_1$ associated to the phase function: 
$$\varphi_{\text{in}} = \tfrac{2}{3} h^{-1} \big| (z'+1)^{3/2} - (z+1)^{3/2}\big|$$ 
 by  solving the transport equations. See the following figure (\ref{faceAL1}).\nl Secondly, we correct the error on $\mathfrak{F}_2$ by solving inhomogeneous Bessels for specific solutions that have the required asymptotic type at the $\mathcal{R}\cap \mathfrak{F}_2$ while resulting in an error that does not undo what we have achieved on $\mathcal{A}$. Although this introduces error on $\mathcal{A}$, the resulting error will have singularities on a different Lagrangian which we denote by $\Gamma_2$, and not $\Gamma_1$. In order for this to happen, we require that the solution to the inhomogeneous Bessel equation satisfy a fixed type of oscillatory behavior at $\mathcal{A} \cap \mathfrak{F}_2$. In short, before correcting error on $\mathfrak{F}_2$, singularities lie on the lagrangian $\Gamma_1$ associated with the following phase function $-\varphi_{\text{in}}$:
   $$\varphi_{\text{in}} = \tfrac{2}{3} h^{-1} \Big| (z'+1)^{3/2} - (z+1)^{3/2}\Big|$$ 
  In the region close to $\mathcal{A} \cap \mathfrak{F}_2$ we have $z < z'$  so $\varphi_{\text{in}}$ takes the form:
  $$\varphi_{\text{in}} = \tfrac{2}{3} h^{-1}\left[(z'+1)^{3/2} - (z+1)^{3/2}\right]$$
  After we have constructed the solution that satisfies the above stated requirements, the error resulting will have singularities along Lagrangian $\Gamma_2$, which is associated with the following phase function $-\varphi_{\text{out}}$: (\emph{see the remark at the beginning of step 3b for reasons as to why $\varphi_{\text{out}}$ should have the following form})
  $$\varphi_{\text{out}} =  \tfrac{2}{3} h^{-1} \left[(z'+1)^{3/2} + (z+1)^{3/2}-2\right]  $$
  In the third step, we correct the resulting error from the above step on the semiclassical face $\mathcal{A}$ but this time along the reflected lagrangian $\Gamma_2$ in phase space over the corner $\mathcal{A} \cap \mathfrak{F}_2$. See figure \ref{faceAL2}.

\subsubsection{Some prep computations}

  For matter of convenience we write out the form the operator in several coordinates in the neighborhood of $\mathfrak{F}_1$:
  \begin{align*}
   &\text{In}\  \tilde{\nu}' = \tfrac{h}{z'}, z', \mathsf{Z} : &&  (z' \tilde{\nu}')^2 (\mathsf{Z} \partial_{\mathsf{Z}})^2 + (z' \tilde{\nu}')^2  (\lambda - \dfrac{n^2}{4}) + (\mathsf{Z}z')^3 + (\mathsf{Z} z')^2\\
 &  &&= (z')^2 \left[(\tilde{\nu}')^2 (\mathsf{Z} \partial_{\mathsf{Z}})^2 + (\tilde{\nu}')^2  (\lambda - \dfrac{n^2}{4}) + \mathsf{Z}^3 z' + \mathsf{Z}^2 \right]\\
 &\text{In}\  \tilde{t}, \tilde{\nu}' , z' : && (z')^2 \left[(\tilde{\nu}')^2 (\tilde{t} \partial_{\tilde{t}})^2 + (\tilde{\nu}')^2  (\lambda - \dfrac{n^2}{4}) + (\tilde{t} \tilde{\nu}')^3 z' + (\tilde{t} \tilde{\nu}')^2 \right]\\
 &   &&= (z' \tilde{\nu}')^2 \left[ (\tilde{t} \partial_{\tilde{t}})^2 +   (\lambda - \dfrac{n^2}{4}) + \tilde{t}^2  \tilde{\nu}' z' + \tilde{t}^2 \right]
   \end{align*}
In this coordinates, $\mathfrak{B}_1$ can locally be described as $\{ z' =0\}$ where as $ \mathcal{A}$ locally is  $\{\tilde{t}^{-1} =0\}$. On $\mathfrak{B}_1$, the operator in the bracket is a Bessel equation of order $(\tfrac{n^2}{4}-\lambda)^{1/2}$:
$$ (\tilde{t} \partial_{\tilde{t}})^2 +   (\lambda - \dfrac{n^2}{4})  + \tilde{t}^2$$
 The projective coordinates associated to the above blow-up ie close to $\mathfrak{F}_2$:
 $$\tilde{\tilde{\mu}} = \dfrac{\tilde{\nu}}{\mathsf{Z}'}, \mathsf{Z}' , z$$
  Note that away from $\mathcal{R}$ the flow $\tilde{\nu}' = $ constant looks like
$ \tilde{\nu}' = \dfrac{h}{z'} = \dfrac{h}{z\mathsf{Z}'} = \dfrac{\tilde{\nu}}{\mathsf{Z}'} = \tilde{\tilde{\mu}} = \text{constant}$. The forms of the ODE in various coordinates in this neighborhood:
  \begin{align*}
  &\text{In}\ \mathsf{Z}' , z, h : && h^2(\mathsf{Z}'\partial_{\tilde{\mathsf{Z}'}} - z\partial_z)^2 + h^2( \lambda - \dfrac{n^2}{4}) + z^3 + z^2\\
&\text{In}\ \tilde{\nu} = \dfrac{h}{z}, \mathsf{Z}', z : &&
   (z\tilde{\nu})^2(\mathsf{Z}'\partial_{\tilde{\mathsf{Z}'}} +\tilde{\nu}\partial_{\tilde{\nu}} - z\partial_z)^2 + (z\tilde{\nu})^2( \lambda - \dfrac{n^2}{4}) + z^3 + z^2 \\
&\text{In}\  \tilde{\tilde{\mu}} = \dfrac{\tilde{\nu}}{\mathsf{Z}'}, \mathsf{Z}' , z : &&
  (z\mathsf{Z}' \tilde{\tilde{\mu}})^2(\mathsf{Z}'\partial_{\mathsf{Z}'} - z\partial_z)^2 + (z\mathsf{Z}' \tilde{\tilde{\mu}})^2( \lambda - \dfrac{n^2}{4}) + z^3 + z^2\\
&\text{In}\ \tilde{\tilde{\nu}}  = \dfrac{\mathsf{Z}'}{\tilde{\nu}}, \tilde{\nu}, z : &&
 (z\tilde{\nu})^2(\tilde{\nu}\partial_{\tilde{\nu}} - z\partial_z)^2 + (z\tilde{\nu})^2( \lambda - \dfrac{n^2}{4}) + z^3 + z^2
 \end{align*}

\subsubsection{Step 3a}\
  We can assume that error has error close to $\mathcal{A}$ ie has supported in $\{\tilde{\tilde{\nu}}' = \tilde{t}^{-1} < \epsilon \}$. We use the product structure close to $\mathcal{A}$
  $$\RR^+_{\rho_{\mathcal{A}}} \times \mathcal{A} $$
  $$\rho_{\mathcal{A}} = \tilde{\tilde{\nu}}' \langle \mathsf{Z}\rangle$$

  On $\mathfrak{M}_{b,h}$ the coordinates close to $\mathcal{A}$ and closer to the corner $\mathfrak{F}_2 \cap \mathcal{A}$ :
  $$\tilde{\tilde{\nu}}' = \dfrac{\tilde{\nu}'}{\mathsf{Z}}  ( = \tilde{t}^{-1}), \mathsf{Z} , z'$$
We compute the form of the operator $\QQ_b$ when conjugated by 
$$\exp\big(-i\tfrac{2}{3} (\tilde{\tilde{\nu}}' \mathsf{Z} z')^{-1}\big|(\mathsf{Z}z'+1)^{3/2} - (z'+1)^{3/2} \big|\big)$$
\begin{itemize}
\item on $\mathsf{Z} < 1$
\begin{align*}
 &(\tilde{\tilde{\nu}}' \mathsf{Z} z' )^2 \left(\mathsf{Z} \partial_{\mathsf{Z}} - \tilde{\tilde{\nu}} \partial_{\tilde{\tilde{\nu}}}\right)^2   + (\tilde{\tilde{\nu}}' \mathsf{Z} z' )^2 (\lambda-\dfrac{n^2}{4}) - 2 i\tilde{\tilde{\nu}}' \mathsf{Z} z'(\mathsf{Z} z' +1)^{1/2} \mathsf{Z} z'  \left(\mathsf{Z} \partial_{\mathsf{Z}} - \tilde{\tilde{\nu}} \partial_{\tilde{\tilde{\nu}}}\right)\\
& -i\tilde{\tilde{\nu}}' \mathsf{Z} z'  (\mathsf{Z} z' +1)^{1/2} \mathsf{Z}z' 
 -i\tilde{\tilde{\nu}}' \mathsf{Z} z'  \dfrac{1}{2}(\mathsf{Z} z' +1)^{-1/2} (\mathsf{Z} z')^2  \\
 =\ &\tilde{\tilde{\nu}}'( \mathsf{Z} z' )^2 \Big[  \tilde{\tilde{\nu}}'   \left(\mathsf{Z} \partial_{\mathsf{Z}} - \tilde{\tilde{\nu}} \partial_{\tilde{\tilde{\nu}}}\right)^2   + \tilde{\tilde{\nu}}' (\lambda-\dfrac{n^2}{4}) - 2 i(\mathsf{Z} z' +1)^{1/2} \left(\mathsf{Z} \partial_{\mathsf{Z}} - \tilde{\tilde{\nu}} \partial_{\tilde{\tilde{\nu}}}\right) \\
 & -i (\mathsf{Z} z' +1)^{1/2} 
 -i\mathsf{Z} z'  \dfrac{1}{2}(\mathsf{Z} z' +1)^{-1/2} \Big]
 \end{align*}
 \item and on $\mathsf{Z} > 1$
  \begin{align*}
   &\ \tilde{\tilde{\nu}}'( \mathsf{Z} z' )^2 \Big[  \tilde{\tilde{\nu}}'   \left(\mathsf{Z} \partial_{\mathsf{Z}} - \tilde{\tilde{\nu}} \partial_{\tilde{\tilde{\nu}}}\right)^2   + \tilde{\tilde{\nu}}' (\lambda-\dfrac{n^2}{4}) + 2 i(\mathsf{Z} z' +1)^{1/2} \left(\mathsf{Z} \partial_{\mathsf{Z}} - \tilde{\tilde{\nu}} \partial_{\tilde{\tilde{\nu}}}\right) \\
 & +i (\mathsf{Z} z' +1)^{1/2} 
 +i\mathsf{Z} z'  \dfrac{1}{2}(\mathsf{Z} z' +1)^{-1/2} \Big]
 \end{align*}
 \end{itemize}
  To get the transport equations we conjugate the operator by $\exp(-i\varphi_{\text{in}})$. Denote by 
  $$ \QQ_b = \rho_{\mathcal{A}} (z')^2 \dfrac{\mathsf{Z}}{\langle \mathsf{Z}\rangle}\mathsf{Z} \mathbf{P}_{b,h}$$
Using the above computation of the conjugated form of $\QQ_b$ we have:
\begin{equation}
\begin{aligned}
\mathbf{P}_{b,h} \rho_{\mathcal{A}}^k g = & \ \ \mathbf{P}_{b,h} (\tilde{\tilde{\nu}}' \langle\mathsf{Z}\rangle)^k g \\
\stackrel{\mathsf{Z} < 1}{=} & \ \rho_{\mathcal{A}}^{k+1}\langle \mathsf{Z}\rangle^{-1} \Big[  \left( \mathsf{Z}\partial_{\mathsf{Z}}+ k (\tfrac{\mathsf{Z}}{\langle \mathsf{Z}\rangle})^2 - k \right)^2 +  (\lambda - \dfrac{n^2}{4} )  \Big] g \\
&\  + \rho_{\mathcal{A}}^k\Big[   - 2 i(\mathsf{Z} z' +1)^{1/2}  \left( \mathsf{Z}\partial_{\mathsf{Z}}+ k (\tfrac{\mathsf{Z}}{\langle \mathsf{Z}\rangle})^2 - k \right)
- i (\mathsf{Z} z' +1)^{1/2} - i\mathsf{Z} z'  \dfrac{1}{2}(\mathsf{Z} z' +1)^{-1/2}\Big] g
\end{aligned}
\end{equation}

\begin{equation}
\begin{aligned}
\mathbf{P}_{b,h} \rho_{\mathcal{A}}^k g \stackrel{\mathsf{Z} > 1}{=} & \ \rho_{\mathcal{A}}^{k+1}\langle \mathsf{Z}\rangle^{-1} \Big[  \left( \mathsf{Z}\partial_{\mathsf{Z}}+ k (\tfrac{\mathsf{Z}}{\langle \mathsf{Z}\rangle})^2 - k \right)^2 +  (\lambda - \dfrac{n^2}{4} )  \Big] g \\
&\  + \rho_{\mathcal{A}}^k\Big[    2 i(\mathsf{Z} z' +1)^{1/2}  \left( \mathsf{Z}\partial_{\mathsf{Z}}+ k (\tfrac{\mathsf{Z}}{\langle \mathsf{Z}\rangle})^2 - k \right)
+ i (\mathsf{Z} z' +1)^{1/2} + i\mathsf{Z} z'  \dfrac{1}{2}(\mathsf{Z} z' +1)^{-1/2}\Big] g
\end{aligned}
\end{equation}

In coordinates $\tilde{\tilde{\nu}}' = \tfrac{\tilde{\nu}'}{\mathsf{Z}} (= \tilde{t}^{-1}) , \mathsf{Z} , z'$ the semiclassical face $\mathcal{A}$ is defined by $\{\tilde{\tilde{\nu}}' = 0 \}$. $E_1$ can be considered as being of the form
$$E_1 = \exp(- i\varphi_{\text{in}}) F_1 ;\  F_1 \in \rho_{\mathcal{A}}^{\gamma_{\mathcal{A}}+1} \mathcal{C}^{\infty} (\mathfrak{M}_{b,h}); \ \gamma_{\mathcal{A}} = -1 $$
Consider the asymptotic expansion at the boundary face $\tilde{\tilde{\nu}}' = 0$ of the form
$$F_1 \sim \rho_{\mathcal{A}}^{\gamma_{\mathcal{A}}+1} \sum_{j=0}^{\infty} \rho_{\mathcal{A}}^j F_{1,j} , F_{1,j} \in \rho_{\mathfrak{B}_1}^{\gamma_{\mathfrak{B}_1}+2 } \mathcal{C}^{\infty}_{\mathsf{Z},  z'} ; \ \ \gamma_{\mathfrak{B}_1} = -2; \ \gamma_{\mathcal{A}} = -1 $$

The construction of an approximate solution $U_2$ so that 
$$U_2 = \exp(-i\varphi_{\text{in}}) \tilde{U}_2; \ \ U_2 \ \text{is cornormal and polyhomogeneous on}\  \mathcal{A}_2$$
$$\QQ_b(h,\lambda) e^{-i\varphi_{\text{in}}} U_2 - e^{-i\varphi_{\text{in}}} F_1 = e^{-i\varphi_{\text{in}}}R_3 ; \ \ R_3 \in  \rho_{\mathcal{A}}^{\infty} \mathcal{C}^{\infty}_{\mathsf{Z},z'} $$
On the region $\mathsf{Z} < 1$, the integral curves hit the boundary $\mathcal{R} \cap \mathcal{A}$ hence we can solve the regular singular ODE and solve each transport equation for $\tilde{U}_{2,j}$ uniformly down to the $\mathfrak{B}_1 \cap \mathcal{A}$. On the other hand, in the region $\mathsf{Z} > 1$, the integral curves do not hit $\mathcal{L}\cap \mathcal{A}$ and the fiber becomes singular. On this region, to find polyhomogeneous approximate solution we 
 follows exactly the same methods as in the step 3c. \textbf{In fact the transport equations in the two situations have the exactly same form; } so we refer the readers to step 3c for the motivations and details. Here we will still write out the transport equations. After constructing $\tilde{U}_{2,j}$, $\tilde{U}_2$ will then be constructed using Borel summation of these $\tilde{U}_{2,j}$.

\begin{enumerate}
\item \textbf{FIRST TRANSPORT EQUATION}: comes from coefficient on $\rho_{\mathcal{A}}^{ \gamma_{\mathcal{A}}+1}$, where $\gamma_{\mathcal{A}} = -1$. 
We solve for $\tilde{U}_{2,0}$ on $\mathsf{Z} < 1$ as the unique solution satisfying the Cauchy problem :
\begin{equation}\label{fitlag1a}
\begin{cases}
\mathsf{Z} < 1; \  \ - 2 i(\mathsf{Z} z' +1)^{1/2} \mathsf{Z} \dfrac{\mathsf{Z} }{\langle\mathsf{Z}\rangle}(z' )^2\left[  \mathsf{Z}\partial_{\mathsf{Z}}+ (  \gamma_{\mathcal{A}}+0) (\tfrac{\mathsf{Z}}{\langle \mathsf{Z}\rangle})^2 - ( \gamma_{\mathcal{A}}+0) +\dfrac{1}{2}
 + \dfrac{\mathsf{Z} z' }{4(\mathsf{Z} z' +1)}  \right]  \tilde{U}_{2,0} \\
 \ \ \ \ \ \ \ \ =  - F_{1,0}\\
U_{4,0} |_{\mathsf{Z} = 1} = -0 : \ \text{IC}  
\end{cases}
\end{equation}
Since $F_{1,j}$ are of compact support around $\mathsf{Z} = 1$, the ODE is regular singular, $\tilde{U}_{2,0}$ will behave like $\tilde{U}_{2,0} \sim  \gamma_{\mathcal{A}} - \tfrac{1}{2} $ at $\mathsf{Z} = 0$.\

For region $\mathsf{Z} > 1$ we use method of step 3c to obtain $\tilde{U}_{2,0} $  \emph{(see details of approximate solution of the first transport equation in step 3c)}
 \begin{equation}\label{fitlag1b}
\begin{cases}
\mathsf{Z} > 1; \ \  2 i(\mathsf{Z} z' +1)^{1/2} \mathsf{Z} \dfrac{\mathsf{Z} }{\langle\mathsf{Z}\rangle}(z' )^2\left[  \mathsf{Z}\partial_{\mathsf{Z}}+ ( \gamma_{\mathcal{A}}+0) (\tfrac{\mathsf{Z}}{\langle \mathsf{Z}\rangle})^2 - ( \gamma_{\mathcal{A}}+0) +\dfrac{1}{2}
 + \dfrac{\mathsf{Z} z' }{4(\mathsf{Z} z' +1)}  \right]  \tilde{U}_{2,0} \\
 \ \ \ \ \ \ \ \ =  - F_{1,0}\\
\tilde{U}_{2,0} |_{\mathsf{Z} = 1} = -0 : \ \text{IC}  
\end{cases}
\end{equation}
For this region we obtain $\tilde{U}_{2,0}$ polyhomogeneous conormal at the corner of $\mathfrak{B}_1\cap\mathfrak{F}_1$
$$ \tilde{U}_{2,0} \in  \rho_{\mathfrak{F}_2}^{\infty} \rho_{\mathfrak{B}_1}^{\alpha_{\mathfrak{B}_1}} \rho_{\mathfrak{F}_1}^{\alpha_{\mathfrak{F}_1}}  $$
 $$\text{where}\ \ \alpha_{\mathfrak{F}_1} =  \alpha_{\mathfrak{B}_1} +\tfrac{1}{2}+1 ; \  \alpha_{\mathfrak{B}_1 } = \gamma_{\mathfrak{B}_1} +2 - 2 ; \gamma_{\mathfrak{B}_1} = -2$$

\item \textbf{HIGHER TRANSPORT EQUATION} :   comes from coefficient on $\rho_{\mathcal{A}}^{1+\gamma_{\mathcal{A}}+k}$:

For region $\mathsf{Z}  < 1$
\begin{equation}\label{hitlag2a}
\begin{aligned}
 &- 2 i(\mathsf{Z} z' +1)^{1/2} \mathsf{Z} \dfrac{\mathsf{Z} }{\langle\mathsf{Z}\rangle}(z' )^2\left[  \mathsf{Z}\partial_{\mathsf{Z}}+ ( \gamma_{\mathcal{A}}+k) (\tfrac{\mathsf{Z}}{\langle \mathsf{Z}\rangle})^2 - ( \gamma_{\mathcal{A}}+k) +\dfrac{1}{2}
 + \dfrac{\mathsf{Z} z' }{4(\mathsf{Z} z' +1)}  \right]  \tilde{U}_{2,k}  \\
 &+ \big( \dfrac{\mathsf{Z} }{\langle\mathsf{Z}\rangle}z' \big)^2 \Big[  \left( \mathsf{Z}\partial_{\mathsf{Z}}+ (\gamma_{\mathcal{A}}+k-1) (\tfrac{\mathsf{Z}}{\langle \mathsf{Z}\rangle})^2 - (\gamma_{\mathcal{A}}+k-1) \right)^2 +  (\lambda - \dfrac{n^2}{4} )  \Big] \tilde{U}_{2,k-1} = - F_{1,k}
\end{aligned}
\end{equation}
The asymptotic behavior of $\tilde{U}_{2,k}$ at $ \mathsf{Z} = 0$ which is $\mathsf{Z}^{\gamma_{\mathcal{A}} - \tfrac{1}{2}}$. Hence on the region $\mathsf{Z} < 1$
$$\tilde{U}_{2,k} \in (z')^{\alpha_{\mathfrak{B}_1}} \rho_{\mathcal{A}}^{-2+k} \mathsf{Z}^{-2 - \tfrac{1}{2}}\mathcal{C}^{\infty}(\mathsf{Z})$$

For $\mathsf{Z}  >  1$
\begin{equation}\label{hitlag2b}
\begin{aligned}
 &2 i(\mathsf{Z} z' +1)^{1/2} \mathsf{Z} \dfrac{\mathsf{Z} }{\langle\mathsf{Z}\rangle}(z' )^2\left[  \mathsf{Z}\partial_{\mathsf{Z}}+ ( \gamma_{\mathcal{A}}+k) (\tfrac{\mathsf{Z}}{\langle \mathsf{Z}\rangle})^2 - ( \gamma_{\mathcal{A}}+k) +\dfrac{1}{2}
 + \dfrac{\mathsf{Z} z' }{4(\mathsf{Z} z' +1)}  \right]  \tilde{U}_{2,k}  \\
 &+\big( \dfrac{\mathsf{Z} }{\langle\mathsf{Z}\rangle}z' \big)^2 \Big[  \left( \mathsf{Z}\partial_{\mathsf{Z}}+ ( \gamma_{\mathcal{A}}+k-1) (\tfrac{\mathsf{Z}}{\langle \mathsf{Z}\rangle})^2 - ( \gamma_{\mathcal{A}}+k-1) \right)^2 +  (\lambda - \dfrac{n^2}{4} )  \Big] \tilde{U}_{2,k-1} = - F_{1,k}
\end{aligned}
\end{equation}
using method in step 3c we construct
$$ \tilde{U}_{4,k} \in  \rho_{\mathfrak{F}_2}^{\infty} \rho_{\mathfrak{B}_1}^{\alpha_{\mathfrak{B}_1}+k} \rho_{\mathfrak{F}_1}^{\alpha_{\mathfrak{F}_1}+k}  $$
 $$\text{where}\ \ \alpha_{\mathfrak{F}_1} =  \alpha_{\mathfrak{B}_1} +\tfrac{1}{2}+1 ; \  \alpha_{\mathfrak{B}_1 } = \gamma_{\mathfrak{B}_1} +2 - 2; \ \gamma_{\mathfrak{B}_1} = -2$$

\end{enumerate}

 Take borel summation to construct $\tilde{U}_2$
 $$\tilde{U}_2\stackrel{\text{borel sum}}{\sim} \rho_{\mathcal{A}}^{\gamma_{\mathcal{A}}}\sum \rho_{\mathcal{A}}^k U_{2,k}$$
  $$ \Rightarrow \tilde{U}_2 \in \rho_{\mathcal{A}}^{\gamma_{\mathcal{A}}} \rho_{\mathfrak{F}_2}^{\gamma_{\mathfrak{F}_2} } \rho_{\mathfrak{B}_1}^{\alpha_{\mathfrak{B}_1}}  \rho_{\mathfrak{F}_1}^{\alpha_{\mathfrak{F}_1}} \mathcal{C}^{\infty}(\mathfrak{M}_{b,h})$$
   $$\text{where}\ \ \alpha_{\mathfrak{F}_1} =  \alpha_{\mathfrak{B}_1} +\tfrac{1}{2}+1 ; \  \alpha_{\mathfrak{B}_1 } = \gamma_{\mathfrak{B}_1} +2 - 2; \ \gamma_{\mathfrak{B}_1} = -2;\ \gamma_{\mathcal{A}} = -1 ; \gamma_{\mathfrak{F}_2} = \gamma_{\mathcal{A}}-\tfrac{1}{2}$$

 Define
 \begin{equation} 
 U_2 = \exp(-i\varphi_{\text{in}}) \tilde{U}_2
 \end{equation}
we can take $U_2$ to be supported near $\mathcal{A}$ by taking small cut off in $\rho_{\mathcal{A}}$

\textbf{The error after this stage close to $\mathcal{A}$ :} Define 
\begin{equation}\label{errorstep3a}
\begin{aligned}
 &E_2  = \QQ U_2 + E_1\\
\Rightarrow &\ \ E_2 = \exp(-i\varphi_{\text{in}}) F_2;\ \ \ F_2 \in   \rho_{\mathcal{A}}^{\infty} \rho_{\mathfrak{F}_2}^{ \gamma_{\mathfrak{F}_2}+1+2} \rho_{\mathfrak{B}_1}^{\alpha_{\mathfrak{B}_1}+2}  \rho_{\mathfrak{F}_1}^{\infty} \\
   \text{where}&\ \  \  \alpha_{\mathfrak{B}_1 } = \gamma_{\mathfrak{B}_1} +2 - 2; \ \gamma_{\mathfrak{B}_1} = -2 ; \ \gamma_{\mathfrak{F}_2} = \gamma_{\mathcal{A}}-\tfrac{1}{2}
      \end{aligned}
   \end{equation}

 \subsubsection{Step 3b : remove error on $\mathfrak{F}_2$}\
 
 \textbf{PREP INGREDIENTS}
\begin{itemize}
\item \emph{Product structure}:  We use the following product  structure in a neighborhood of $\mathfrak{F}_2$
 $$\RR^+_{\rho_{\mathfrak{F}_2} }\times \mathfrak{F}_2 \ \  ; \   \mathfrak{F}_2 = \overline{\RR^+_{\tilde{t}}} \times [0,1]_z$$
  
 \item   \emph{A global defining function} for $\mathfrak{F}_2$  can be chosen as
$$\rho_{\mathfrak{F}_2} = \tilde{\nu}' \langle \tilde{t}\rangle$$
In the neigborhood close to $\mathfrak{F}_2$ away from $\mathcal{A}$ $\langle \tilde{t}\rangle \sim 1$ thus $\tilde{\nu}'$ acts as the defining function in this neigborhood.  Since $\tilde{t} = \tfrac{\mathsf{Z}}{\tilde{\nu}'}$, away from $\mathcal{R}$
$$\rho_{\mathfrak{F}_2} = \mathsf{Z}(1 + (\tilde{t}^{-1})^2)^{1/2}$$
thus $\mathsf{Z}$ acts as the defining function.
  
 \item \emph{Form of the ODE :} The ODE in coordinates $\tilde{t} = \tfrac{\mathsf{Z}}{\tilde{\nu}'} , \tilde{\nu}' , z'$ has the following form:
  $$\QQ_{b,h} = (z')^2\rho_{\mathfrak{F}_2}^2\langle \tilde{t}\rangle^{-2} \left[(\tilde{t} \partial_{\tilde{t}})^2 +   (\lambda - \dfrac{n^2}{4}) + \tilde{t} \dfrac{\tilde{t}}{\langle \tilde{t}\rangle}  \rho_{\mathfrak{F}_2} z' + \tilde{t}^2\right]
   = (z')^2\rho_{\mathfrak{F}_2}^2\langle \tilde{t}\rangle^{-2}  \mathbf{P}_{\mathfrak{F}_2} $$
  
  \item  \emph{PHASE FUNCTION}:  In the current region $\varphi_{\text{in}}$ has the form
  \begin{align*}
  \varphi_{\text{in}} &= \dfrac{2}{3} (z' \tilde{\nu}')^{-1} \left[(z'+1)^{3/2} - (\tilde{t} \tilde{\nu}'z' + 1)^{3/2}\right] \\
 & =  \dfrac{2}{3} (z' \tilde{\nu}')^{-1} \left[   \left( (z'+1)^{3/2} - 1 \right) -\left( (\tilde{t} \tilde{\nu}'z' + 1)^{3/2} - 1 \right) \right]\\
  &= \dfrac{2}{3} (z' \tilde{\nu}')^{-1}    \left[ (z'+1)^{3/2} - 1 \right] -  \dfrac{2}{3} (z' \tilde{\nu}')^{-1}\left[ (\tilde{t} \tilde{\nu}'z' + 1)^{3/2} - 1 \right]
  \end{align*}
  
  One can see that the second term behaves like $-\tilde{t}$ by writing out its taylor series (\emph{note we are working in} $\tilde{t} \tilde{\nu}'z' << 1$)
 \begin{equation}\label{behaviorint}
  -\tfrac{2}{3} (z' \tilde{\nu}')^{-1} \left[ 1+ \tfrac{3}{2} \tilde{t} \tilde{\nu}'z' + \ldots  -1    \right] = - \tilde{t} \left[ \ldots \right]
  \end{equation}
 \textbf{In order to isolate this behavior, we will work with the conjugated version of} $\QQ$ by 
 $$\exp\big(-i\tfrac{2}{3} (z' \tilde{\nu}')^{-1}    \left[ (z'+1)^{3/2} - 1 \right]\big)$$ \textbf{however, since this commutes with $\QQ_{b,h}$, the conjugation leaves the operator unchanged}.\nl
  As indicated at the beginning of step 3, in order not to undo the effect we have achieved so far in step 3a, we will construct solution whose error has singularities along lagrangian $\mathcal{L}_2$ and thus has to behave like $\exp(-i\tilde{t})$ close to $\mathcal{A}\cap \mathfrak{B}_2$, which is opposite of the behavior $\sim \exp(i\tilde{t})$ as shown in (\ref{behaviorint}) associated to langrangian $\mathcal{L}_1$. Thus in order to achieve this effect (\emph{note that any choice of phase function has to satisfy the eikonal equation}), the phase function associated the outgoing lagrangian $\mathcal{L}_2$ has to be $-\varphi_{\text{out}}$ :
 \begin{align*}
 \varphi_{\text{out}} &=  \dfrac{2}{3} (z' \tilde{\nu}')^{-1}    \left[ (z'+1)^{3/2} - 1 \right] +  \dfrac{2}{3} (z' \tilde{\nu}')^{-1}\left[ (\tilde{t} \tilde{\nu}'z' + 1)^{3/2} - 1 \right] \\
& =  \dfrac{2}{3} (z' \tilde{\nu}')^{-1}  \left[  (z'+1)^{3/2} + (\tilde{t} \tilde{\nu}'z' + 1)^{3/2}  - 2\right] 
\end{align*}
 \end{itemize}
   
\textbf{CONSTRUCTION}\

 \emph{Properties of the error we are working with} :  Write $E_2 = E_2^1 + E_2^2$, where $E_2^i$ is supported closer to $\mathfrak{F}_i$. We are working with the portion of $E_2$ that is close to $\mathfrak{F}_2$ ie $E_2^2$. For matter of convenience, we will restate the properties (\ref{errorstep3a}) of $E_2$ in terms of the coordinate in region close to $\mathfrak{F}_2$ ie in coordinates $\tilde{t}, \tilde{\nu}'$, and $z'$. 
\begin{itemize}
\item  $E_2^2$ is supported away from $\tilde{t} = 0$
\item $E_2^2 \in \rho_{\mathfrak{F}_2}^{\gamma_{\mathfrak{F}_2}+1+2} \rho_{\mathfrak{B}_1}^{\alpha_{\mathfrak{B}_1}+2}$.
\item $E_2^2$ vanishes rapidly as $\tilde{t} \rightarrow \infty$.
\end{itemize}

  Following to observation made on the choice of the phase function, to start step 3b, we first rewrite the error from step 3b as
  $$E_2^2 = \exp(-\tfrac{2}{3} (z'\tilde{\nu}')^{-1}\left[(z'+1)^{3/2}-1\right])\tilde{E}_2$$
   Expand $\tilde{E}_2$ around boundary face $\mathfrak{F}_2$ \emph{whose defining function is given by} using the global face function $\rho_{\mathfrak{F}_2}$ and take into account the stated properties of $E_2$ at the end of step 3a :
  $$\tilde{E}_2 \sim \rho_{\mathfrak{F}_2}^{\gamma_{\mathfrak{F}_2}+1+2} (z')^{\alpha_{\mathfrak{B}_1}+2}\sum_k \rho_{\mathfrak{F}_2}^k \tilde{E}_{2,k};\ \text{where} \  \tilde{E}_{2,k} \in \mathcal{C}^{\infty}(z, \mathcal{S}^+(\tilde{t}))$$
  where  $\mathcal{C}^{\infty}(z', \mathcal{S}^+(\tilde{t}))$ comprises of functions $g$ that are smooth in $z$ and $\tilde{t}$,  is supported away from $\tilde{t} = 0$ and rapidly decreases as $\tilde{t} \rightarrow \infty$, together with $\supp g \in \RR^+_{z'}\times \RR^+_{\tilde{t}}$.\nl
 
  \textbf{We solve the following ODEs for $\tilde{U}_{3,k}$ :}
 $$ \left[(\tilde{t} \partial_{\tilde{t}})^2 +   (\lambda - \dfrac{n^2}{4}) + \tilde{t}^2  \tilde{\nu}' z' + \tilde{t}^2\right](\rho_{\mathfrak{F}_2})^{ \alpha_{\mathfrak{F}_2}} (z')^{ \alpha_{\mathfrak{B}_1}}\sum_k (\rho_{\mathfrak{F}_2})^k \tilde{U}_{3,k} =  (\rho_{\mathfrak{F}_2})^{ \gamma_{\mathfrak{F}_2}+1}  (z')^{\alpha_{\mathfrak{B}_1}}\langle \tilde{t}\rangle^2\sum_k (\rho_{\mathfrak{F}_2})^k \tilde{E}_{2,k}$$
 $$ \alpha_{\mathfrak{F}_2} = \gamma_{\mathfrak{F}_2} +1 $$
  Solving in series for (by matching coefficients of $\rho_{\mathfrak{F}_2}$) $\tilde{U}_{3,k}$ amounts to conjugating the operator for $\rho_{\mathfrak{F}_2}^k$ \emph{(see step 5 for this approach)}. However the operator if conjugated only powers of $\tilde{\nu}'$ has a simpler form and in fact gives a simple operator of the type Bessel, thus we rewrite $\rho_{\mathfrak{F}_2}$ in the above expression with $\rho_{\mathfrak{F}_2} = \tilde{\nu}' \langle \tilde{t}\rangle$:
   \begin{equation}
   \begin{aligned}
  &  \left[(\tilde{t} \partial_{\tilde{t}})^2 +   (\lambda - \dfrac{n^2}{4}) + \tilde{t}^2  \tilde{\nu}' z' + \tilde{t}^2\right](\tilde{\nu}')^{ \alpha_{\mathfrak{F}_2}}  (z')^{\alpha_{\mathfrak{B}_1}}\sum_k (\tilde{\nu}')^k\langle \tilde{t}\rangle^{k + \alpha_{\mathfrak{F}_2}}  \tilde{U}_{3,k} \\
  &=  (\tilde{\nu}')^{ \alpha_{\mathfrak{F}_2}}  (z')^{\alpha_{\mathfrak{B}_1}}\sum_k (\tilde{\nu}')^k \langle \tilde{t}\rangle^{k+ \alpha_{\mathfrak{F}_2}+2}   \tilde{E}_{2,k}
  \end{aligned}
  \end{equation}
  Solving in series by \emph{equating the coefficients of} $\tilde{\nu}'$, we start with the following inhomogeneous Bessel of order $\alpha = \sqrt{-\lambda + \tfrac{n^2}{4}}$ 
  $$ \left[(\tilde{t} \partial_{\tilde{t}})^2 +   (\lambda - \dfrac{n^2}{4}) +  \tilde{t}^2\right]  \langle \tilde{t}\rangle^{ \alpha_{\mathfrak{F}_2}}   (z')^{\alpha_{\mathfrak{B}_1}} \tilde{U}_{3,0} =  (z')^{\alpha_{\mathfrak{B}_1}}\langle \tilde{t}\rangle^{\alpha_{\mathfrak{F}_2}}  \langle \tilde{t}\rangle^2 \tilde{E}_{2,0}$$
     for which we construct a solution that satisfies : 
  \begin{align*}
 & \text{At}\ \tilde{t} \sim 0 : && \langle \tilde{t}\rangle^{ \alpha_{\mathfrak{F}_2}}   (z')^{\alpha_{\mathfrak{B}_1}} \tilde{U}_{3,0} \in \tilde{t}^{\alpha} (z')^{\alpha_{\mathfrak{B}_1}}\mathcal{C}^{\infty}(\tilde{t}, z');\ \ \alpha = \sqrt{-\lambda + \tfrac{n^2}{4}}\\
&\text{As} \ \tilde{t} \rightarrow\infty : && \langle \tilde{t}\rangle^{ \alpha_{\mathfrak{F}_2}}  (z')^{\alpha_{\mathfrak{B}_1}} \tilde{U}_{3,0}\in \exp(-i\tilde{t}) \tilde{t}^{-1/2} (z')^{\alpha_{\mathfrak{B}_1}} S^0_{\tilde{t}} ;\\
& &&\text{where}\  S^0_{\tilde{t}}\  \text{comprises of smooth functions behaving as a symbol of order 0 at infinity}  
  \end{align*}
by choosing
  $$\langle \tilde{t}\rangle^{\alpha_{\mathfrak{F}_2}}   (z')^{\alpha_{\mathfrak{B}_1}}\tilde{U}_{3,0} :=  H^{(2)}_{\alpha}\int_t^c \dfrac{H^{(1)}  E_{2,0}}{W} \,ds + H^{(1)}_{\alpha}\int_t^{\infty} \dfrac{H^{(2)}  E_{2,0}}{W} \,ds$$
 where $H^{(i)}_{\alpha}$ being the Hankel functions (\emph{they are two linearly independent solutions to the homomgeneous Bessel equation of order} $\alpha = \sqrt{-\lambda + \tfrac{n^2}{4}}$)  and $W$ being the Wronskian:
  $$W\{H^{(1)}_{\alpha}, H^{(2)}_{\alpha}\} = -\dfrac{4i}{\pi t}$$
 $c$ is chosen so that $ \tilde{t}^{\alpha} U_{3,0} |_{\tilde{t} = 0} = 0$ \emph{(This is possible, since at} $\tilde{t}\sim 0$, $H^{(i)} \sim \tilde{t}^{\alpha} \mathcal{C}^{\infty}(\tilde{t}) +  \tilde{t}^{-\alpha} \mathcal{C}^{\infty}(\tilde{t})$). Since the second terms rapidly decreasing as $\tilde{t}\rightarrow \infty$, $\tilde{U}_{3,0}$ satisfies the required oscillatory behavior .
  
Define 
\begin{equation}
\bar{U}_{3,0} = e^{-i \phi+ i\tilde{t}} \tilde{U}_{3,0};\ \ \phi = i \tfrac{2}{3}\dfrac{1}{z' \tilde{\nu}'} \left((\tilde{t}\tilde{\nu}' z' + 1)^{3/2}  - 1\right)\end{equation}
then by the prep calculation \emph{placed at the end of the subsection} we have $\bar{U}_{3,0} |_{z'=0} = \tilde{U}_{3,0}$. In addition we also have : 
$$ \left[(\tilde{t} \partial_{\tilde{t}})^2 +   (\lambda - \dfrac{n^2}{4}) + \tilde{t}^2  \tilde{\nu}' z' + \tilde{t}^2\right] (\tilde{\nu}')^{\alpha_{\mathfrak{F}_2}}\langle \tilde{t}\rangle^{\alpha_{\mathfrak{F}_2}}   (z')^{\alpha_{\mathfrak{B}_1}}\bar{U}_{3,0} $$
$$+ (\tilde{\nu}')^{\alpha_{\mathfrak{F}_2}} \langle \tilde{t}\rangle^{ \alpha_{\mathfrak{F}_2}}   (z')^{\alpha_{\mathfrak{B}_1}}\langle \tilde{t}\rangle^2 \tilde{E}_{2,0} =  \mathsf{O}\big(\rho_{\mathfrak{F}_2}^{\alpha_{\mathfrak{F}_2}+1}\big)$$
 
  We next solve for $\bar{U}_{3,k}$ for $k > 0$ in a similar manner, ie solve for $\tilde{U}_{3,k}$
 \begin{equation}
 \begin{aligned}
 &\left[(\tilde{t} \partial_{\tilde{t}})^2 +   (\lambda - \dfrac{n^2}{4}) + \tilde{t}^2  \tilde{\nu}' z' + \tilde{t}^2\right] \langle \tilde{t}\rangle^{ k +\alpha_{\mathfrak{F}_2}}   (z')^{\alpha_{\mathfrak{B}_1}}\tilde{U}_{3,k}\\
 & = -  \langle \tilde{t}\rangle^{k +\alpha_{\mathfrak{F}_2}}   (z')^{\alpha_{\mathfrak{B}_1}}\tilde{E}_{2,k} - \rho_{\mathfrak{F}_2}^{-1} \Big(\mathbf{P}_{\mathfrak{F}_2} \langle \tilde{t}\rangle^{ k -1+\alpha_{\mathfrak{F}_2}}  (z')^{\alpha_{\mathfrak{B}_1}}\bar{U}_{3,k-1}  + \langle\tilde{t}\rangle^{ k -1+\alpha_{\mathfrak{F}_2}} \langle \tilde{t}\rangle^2 \tilde{E}_{2,k-1}     \Big)\Big|_{\rho_{\mathfrak{F}_2 } = 0}
 \end{aligned}
 \end{equation}
 Properties of $\tilde{U}_{3,k}$
\begin{align*}
&\tilde{t} = 0 :&&  \bar{U}_{3,k} \in \tilde{t}^{\sqrt{\tfrac{n^2}{4} -\lambda}} \mathcal{C}^{\infty}(\tilde{t})\\
&\tilde{t} \sim \infty : && \tilde{U}_{3,k} \in \exp(-i\tilde{t}) \tilde{t}^{-1/2} \langle \tilde{t}\rangle^{-( k+\alpha_{\mathfrak{F}_2})} \mathcal{C}^{\infty}(\tilde{t})
\end{align*}
 Define 
 \begin{equation}
\bar{U}_{3,k} = e^{-i \phi+ i\tilde{t}} \tilde{U}_{3,k} \\
\end{equation}
Finally we construct $\bar{U}_3$ by borel sum in $\rho_{\mathfrak{F}_2}$ 
 \begin{equation}
 \bar{U}_3 \stackrel{\text{borel sum in }}{\sim}  \rho_{\mathfrak{F}_2}^{\alpha_{\mathfrak{F}_2}}   (z')^{\alpha_{\mathfrak{B}_1}} \sum_k \rho_{\mathfrak{F}_2}^k   \bar{U}_{3,k} 
 \end{equation}
 
 Multiply back by the factor $\tfrac{2}{3} (z' \tilde{\nu}')^{-1}    \left[ (z'+1)^{3/2} - 1 \right]$ \emph{that we have implicitly factored out at the beginning of the construction (view the remarks in the prep ingredients subsection)}, we obtain 
 \begin{equation}
 \begin{aligned}
 &U_3 =  \exp\big(-i\dfrac{2}{3} (z' \tilde{\nu}')^{-1}    \left[ (z'+1)^{3/2} - 1 \right]\big)\bar{U}_3 = \exp(-i\varphi_{\text{out}})\tilde{U}_3\\
 &\varphi_{\text{out}} = \dfrac{2}{3} (z' \tilde{\nu}')^{-1}  \left[  (z'+1)^{3/2} + (\tilde{t} \tilde{\nu}'z' + 1)^{3/2}  - 2\right] \\
 &\tilde{U}_3 \in \rho_{\mathfrak{B}_1}^{ \alpha_{\mathfrak{B}_1}} \rho_{\mathcal{R}}^{\sqrt{\tfrac{n^2}{4}-\lambda}} \rho_{\mathcal{A}}^{\tfrac{1}{2}+ \alpha_{\mathfrak{F}_2}}\rho_{\mathfrak{F}_2}^{\alpha_{\mathfrak{F}_2}} \mathcal{C}^{\infty}(\mathfrak{M}_{b,h})
 \end{aligned}
 \end{equation}

 \textbf{Prep computations used in the construction}
 $$\lim_{\tilde{\nu}'\rightarrow 0} \dfrac{2}{3} \dfrac{(\tilde{t} \tilde{\nu}' z'+1)^{3/2} - 1}{\tilde{\nu}' z'} = \tilde{t}$$
 $$\Rightarrow \lim_{\tilde{\nu}' \rightarrow 0} \exp\big( \dfrac{2}{3} \dfrac{(\tilde{t} \tilde{\nu}' z'+1)^{3/2} - 1}{\tilde{\nu}' z'} - t\big) - 1  = 0$$
 
 \begin{align*}
 \tilde{t}\partial_{t} \left[  \dfrac{2}{3}\dfrac{1}{z' \tilde{\nu}'} \left((\tilde{t}\tilde{\nu}' z' + 1)^{3/2}  - 1\right)- \tilde{t}\right]
&= \tilde{t}\left[ (\tilde{t} \tilde{\nu}' z' +1)^{1/2} - 1 \right]\\
(\tilde{t}\partial_{t})^2 \left[  \dfrac{2}{3}\dfrac{1}{z' \tilde{\nu}'}  \left((\tilde{t}\tilde{\nu}' z' + 1)^{3/2} -1 \right) - \tilde{t}\right] &= \tilde{t}\left[(\tilde{t} \tilde{\nu}' z' +1)^{1/2} - 1 \right] +\dfrac{1}{2} \dfrac{\tilde{t^2}\tilde{\nu}' z'} {(\tilde{t} \tilde{\nu}' z' +1)^{1/2}}\\
&= \dfrac{\tilde{t}^2 \tilde{\nu}' z' }{ 1 +  (\tilde{t} \tilde{\nu}' z' +1)^{1/2} } +\dfrac{1}{2} \dfrac{\tilde{t^2}\tilde{\nu}' z'} {(\tilde{t} \tilde{\nu}' z' +1)^{1/2}}
 \end{align*}
 \begin{align*}
&\big[ (\tilde{t} \partial_{\tilde{t}})^2 +   (\lambda - \dfrac{n^2}{4})  + \tilde{t}^2 + \tilde{t}^2\tilde{\nu}' z' \big] \langle \tilde{t}\rangle^{ \alpha_{\mathfrak{F}_2}}   (z')^{\alpha_{\mathfrak{B}_1}}\bar{U}_{3,0}\\
=&\big[ (\tilde{t} \partial_{\tilde{t}})^2 +   (\lambda - \dfrac{n^2}{4})  + \tilde{t}^2 + \tilde{t}^2\tilde{\nu}' z' \big] e^{-i\phi + i\tilde{t}} \langle \tilde{t}\rangle^{ \alpha_{\mathfrak{F}_2}}   (z')^{\alpha_{\mathfrak{B}_1}} \tilde{U}_{3,0}\\
=& e^{-i\phi + i\tilde{t}} \big[ (\tilde{t} \partial_{\tilde{t}})^2 +   (\lambda - \dfrac{n^2}{4})  + \tilde{t}^2 \big] \langle \tilde{t}\rangle^{ \alpha_{\mathfrak{F}_2}}   (z')^{\alpha_{\mathfrak{B}_1}} \tilde{U}_{3,0}\\
 &+[(\tilde{t} \partial_{\tilde{t}})^2, e^{-i\phi + i\tilde{t}} ]\langle \tilde{t}\rangle^{ \alpha_{\mathfrak{F}_2}}   (z')^{\alpha_{\mathfrak{B}_1}} \tilde{U}_{3,0} + \tilde{t}^2\tilde{\nu}' z'  \langle \tilde{t}\rangle^{ \alpha_{\mathfrak{F}_2}}   (z')^{\alpha_{\mathfrak{B}_1}}\bar{U}_{3,0}  
 \end{align*}
 
 \textbf{Properties of constructed distrubtion and the resulting error}
  Denote by 
  $$E_3 = \QQ_b U_3 + E_2 $$
$$  \varphi_{\text{out}} = \dfrac{2}{3} (z' \tilde{\nu}')^{-1}  \left[  (z'+1)^{3/2} + (\tilde{t} \tilde{\nu}'z' + 1)^{3/2}  - 2\right]$$
  \begin{align*}
  &U_3 = \exp(-i\varphi_{\text{out}} ) \tilde{U}_3 \ \ 
 &\tilde{U}_3 \in \rho_{\mathfrak{B}_1}^{ \alpha_{\mathfrak{B}_1}} \rho_{\mathcal{R}}^{\sqrt{\tfrac{n^2}{4}-\lambda}} \rho_{\mathcal{A}}^{\tfrac{1}{2}+ \alpha_{\mathfrak{F}_2}}\rho_{\mathfrak{F}_2}^{\alpha_{\mathfrak{F}_2}} \mathcal{C}^{\infty}(\mathfrak{M}_{b,h})\\
 &E_3 = \exp(-i\varphi_{\text{out}}) F_3\ \ 
 &F_3 \in \rho_{\mathfrak{B}_1}^{ \alpha_{\mathfrak{B}_1}+2} \rho_{\mathcal{R}}^{\sqrt{\tfrac{n^2}{4}-\lambda}+1} \rho_{\mathcal{A}}^{\tfrac{1}{2}+ \alpha_{\mathfrak{F}_2}+2} \rho_{\mathfrak{F}_2}^{\infty}S^1_0(z' , \tilde{t}) \end{align*}

\subsubsection{STEP 3C}\

    Step 3b introduces error back on $\mathcal{A}$ however this time along a different Lagrangian associated to the $\varphi_{\text{out}}$, which we will proceed to remove in this step. \nl 
   \textbf{The product structure} We use the product structure close to $\mathcal{A}$
  $$\RR^+_{\rho_{\mathcal{A}}} \times \mathcal{A} $$
  \textbf{The `global' defining function for face} $\mathcal{A}$
  $$\rho_{\mathcal{A}} = \tilde{\tilde{\nu}}' \langle \mathsf{Z}\rangle$$
\textbf{Reminder of the phase functions we are using:} 
$$\varphi_{\text{in}} = \dfrac{2}{3}h^{-1} \Big| (z'+1)^{3/2} - (z+1)^{3/2}\Big|$$
 $$\text{In the current region} \ z' > z, \ \varphi_{\text{in}} =  \dfrac{2}{3}h^{-1} \left[ (z'+1)^{3/2} - (z+1)^{3/2}\right]$$
  $$\varphi_{\text{out}} =  \dfrac{2}{3}h^{-1} \left[ (z'+1)^{3/2} +(z+1)^{3/2}-2\right]$$
  \textbf{Prep computations:} 
  $$\mathsf{Z}\partial_{\mathsf{Z}} \varphi_{out}
= (\mathsf{Z} z' +1)^{1/2} \mathsf{Z} z'   
$$
$$(\mathsf{Z}\partial_{\mathsf{Z}})^2 \varphi_{out} 
=  (\mathsf{Z} z' +1)^{1/2} \mathsf{Z}z' 
 + \tfrac{1}{2}(\mathsf{Z} z' +1)^{-1/2} (\mathsf{Z} z')^2  $$
 
\textbf{The conjugated operator by the outgoing oscillatory part}. To get the transport equations to solve for the 'symbols' we conjugate the operator $\QQ_b$ by 
$$ \exp( -i\varphi_{\text{out}}) = \exp\big(-i\tfrac{2}{3} (\tilde{\tilde{\nu}}' \mathsf{Z} z')^{-1}\big[(\mathsf{Z}z'+1)^{3/2} + (z'+1)^{3/2} -2 \big]\big)$$
\begin{equation}\label{conjugateop3c}
\begin{aligned}
 &(\tilde{\tilde{\nu}}' \mathsf{Z} z' )^2 \left(\mathsf{Z} \partial_{\mathsf{Z}} - \tilde{\tilde{\nu}} \partial_{\tilde{\tilde{\nu}}}\right)^2   + (\tilde{\tilde{\nu}}' \mathsf{Z} z' )^2 (\lambda-\dfrac{n^2}{4}) - 2 i\tilde{\tilde{\nu}}' \mathsf{Z} z'(\mathsf{Z} z' +1)^{1/2} \mathsf{Z} z'  \left(\mathsf{Z} \partial_{\mathsf{Z}} - \tilde{\tilde{\nu}} \partial_{\tilde{\tilde{\nu}}}\right)\\
& -i\tilde{\tilde{\nu}}' \mathsf{Z} z'  (\mathsf{Z} z' +1)^{1/2} \mathsf{Z}z' 
 -i\tilde{\tilde{\nu}}' \mathsf{Z} z'  \dfrac{1}{2}(\mathsf{Z} z' +1)^{-1/2} (\mathsf{Z} z')^2  \\
 =\ &\tilde{\tilde{\nu}}'( \mathsf{Z} z' )^2 \Big[  \tilde{\tilde{\nu}}'   \left(\mathsf{Z} \partial_{\mathsf{Z}} - \tilde{\tilde{\nu}} \partial_{\tilde{\tilde{\nu}}}\right)^2   + \tilde{\tilde{\nu}}' (\lambda-\dfrac{n^2}{4}) - 2 i(\mathsf{Z} z' +1)^{1/2} \left(\mathsf{Z} \partial_{\mathsf{Z}} - \tilde{\tilde{\nu}} \partial_{\tilde{\tilde{\nu}}}\right) \\
 & -i (\mathsf{Z} z' +1)^{1/2} 
 - i\mathsf{Z} z'  \dfrac{1}{2}(\mathsf{Z} z' +1)^{-1/2} \Big]
 \end{aligned}
 \end{equation}

Solving in series for the coefficients of $\rho_{\mathcal{A}}$ amounts to further conjugation of the above operator (\ref{conjugateop3c}) by powers of $\rho_{\mathcal{A}}$.\nl
%
Denote by 
\begin{equation}
\mathbf{P}_{b,h} =  \tilde{\tilde{\nu}}'   \left(\mathsf{Z} \partial_{\mathsf{Z}} - \tilde{\tilde{\nu}} \partial_{\tilde{\tilde{\nu}}}\right)^2   + \tilde{\tilde{\nu}}' (\lambda-\dfrac{n^2}{4}) - 2 i(\mathsf{Z} z' +1)^{1/2} \left(\mathsf{Z} \partial_{\mathsf{Z}} - \tilde{\tilde{\nu}} \partial_{\tilde{\tilde{\nu}}}\right) 
 -i (\mathsf{Z} z' +1)^{1/2} 
 - i\mathsf{Z} z'  \dfrac{1}{2}(\mathsf{Z} z' +1)^{-1/2}
 \end{equation}
\begin{equation}\label{tocomputetransport}
\begin{aligned}
\mathbf{P}_{b,h} (\rho_{\mathcal{A}})^k g = & \ \mathbf{P}_{b,h} (\tilde{\tilde{\nu}}' \langle\mathsf{Z}\rangle)^k g \\
= & \ \rho_{\mathcal{A}}^{k+1}\langle \mathsf{Z}\rangle^{-1} \Big[  \left( \mathsf{Z}\partial_{\mathsf{Z}}+ k (\tfrac{\mathsf{Z}}{\langle \mathsf{Z}\rangle})^2 - k \right)^2 +  (\lambda - \dfrac{n^2}{4} )  \Big] g \\
&\  + \rho_{\mathcal{A}}^k\Big[  - 2 i(\mathsf{Z} z' +1)^{1/2}  \left( \mathsf{Z}\partial_{\mathsf{Z}}+ k (\tfrac{\mathsf{Z}}{\langle \mathsf{Z}\rangle})^2 - k \right)
-i (\mathsf{Z} z' +1)^{1/2} - i\mathsf{Z} z'  \dfrac{1}{2}(\mathsf{Z} z' +1)^{-1/2}\Big] g
\end{aligned}
\end{equation}
\textbf{The error from step 3b} : Write $E_3 = E_3^{\mathcal{A}} + E_3^{\mathcal{R}}$ with $E_3^{\mathcal{A}}, E_3^{\mathcal{R}}$ supported closer to $\mathcal{A}$ and $\mathcal{R}$ correspondingly. For this step, we will only need to work with $E_3^{\mathcal{A}}$. Write
 $$E^{\mathcal{A}}_3 = \exp( -i\varphi_{\text{out}}) F_3 ;\ \ 
F_3 \in \rho_{\mathfrak{B}_1}^{\alpha_{\mathfrak{B}_1}+2} \rho_{\mathcal{R}}^{\infty} \rho_{\mathcal{A}}^{\tfrac{1}{2}+\alpha_{\mathfrak{F}_2}+2} \rho_{\mathfrak{F}_2}^{\infty}S^1_0(z' , \tilde{t}) $$
 Expand $F_3$ at $\mathcal{A}$.
$$F_3 \stackrel{\text{taylor expand}}{\sim}  \rho_{\mathcal{A}}^{\tfrac{1}{2}+\alpha_{\mathfrak{F}_2}+2} \rho_{\mathfrak{B}_1}^{\alpha_{\mathfrak{B}_1}+2} \sum_k  \rho_{\mathcal{A}}^k F_{3,k} ;\ \ F_{3,k} \in \rho_{\mathfrak{F}_2}^{\infty} \mathcal{C}^{\infty}(\mathcal{A})$$

From the above computation (\ref{tocomputetransport}) we can write the transport equations satisfied by the `symbols' $U_{4,k}$ :


\begin{enumerate}
\item \textbf{FIRST TRANSPORT EQUATION} comes from coefficient on $\rho_{\mathcal{A}}^{1+ \beta_{\mathcal{A}}}$:
\begin{equation}\label{fitlag2}
\begin{cases}
 -2 i(\mathsf{Z} z' +1)^{1/2} \dfrac{( \mathsf{Z} z' )^2}{\langle\mathsf{Z}\rangle}\left[  \mathsf{Z}\partial_{\mathsf{Z}}+ (\beta_{\mathcal{A}}+0) (\tfrac{\mathsf{Z}}{\langle \mathsf{Z}\rangle})^2 - (\beta_{\mathcal{A}}+0) +\dfrac{1}{2}
 + \dfrac{\mathsf{Z} z' }{4(\mathsf{Z} z' +1)}  \right]  U^0_{4,0} =  -F_{3,0}\\
U_{4,0} |_{\mathsf{Z} = 0} = -0 : \ \text{IC}  \ \  \textbf{double check the IC}
\end{cases}
\end{equation}
We have denoted by $\beta_{\mathcal{A}}$ the leading power of $\rho_{\mathcal{A}}$, ie $\beta_{\mathcal{A}} = \tfrac{1}{2 }+ \alpha_{\mathfrak{F}_2} + 1$ 

Denote by $\mathsf{P}_0$ the operator 
$$\mathsf{P}_0 = -2 i(\mathsf{Z} z' +1)^{1/2}  \dfrac{( \mathsf{Z} z' )^2}{\langle\mathsf{Z}\rangle}\left[  \mathsf{Z}\partial_{\mathsf{Z}}+ (\beta_{\mathcal{A}}+0) (\dfrac{\mathsf{Z}}{\langle \mathsf{Z}\rangle})^2 - (\beta_{\mathcal{A}}+0) +\dfrac{1}{2}
 + \dfrac{\mathsf{Z} z' }{4(\mathsf{Z} z' +1)}  \right] $$
 $P U^0_{4,0} + F_{3,0} = f_0$ which is nontrivial at $\mathfrak{B}_1$ and $\mathcal{L}$.

\begin{remark} \ $\tfrac{\mathsf{Z}}{\mathsf{Z}} = \tfrac{z}{(z^2 + (z')^2)^{1/2}}$. Hence $\mathsf{P}_0$ does not live on the blow down of $\mathcal{A}$, which is the product space $\RR^+_z \times \RR^+_{z'}$.\nl \textbf{The short coming of just solving along each regular fiber:  }
If one were to carry out solving just along each regular fiber, we will get a solution that does not live on the product space $\overline{\RR^+}_{\mathsf{Z}} \times [0,1]_{z'}$ \emph{; ( the solution lives on $\RR^+_{\mathsf{Z}} \times [0,1]_{z'}$ but does not have uniform behavior as $\mathsf{Z}\rightarrow\infty$ and $z' \rightarrow 0$)}. When considered on $\mathcal{A}$, we have the existence of the solution constructed this way on every compact subset of $\mathcal{A}$ away from $\mathcal{A} \setminus ( \mathfrak{F}_1 \cup \mathfrak{B}_1)$ and we do not know the asymptotic behavior of the solution at the corner $\mathfrak{B}_1 \cap \mathfrak{F}_1$ in $\mathcal{A}$ and along $\mathfrak{F}_1$\nl
   \textbf{New method to construct polyhomogeneous solution to (\ref{fitlag2}): } 
 Because of the singular fiber $( \mathfrak{B}_1\cap \mathcal{A} ) \cup (\mathfrak{F}_1\cap \mathcal{A})$, it will involve more work than just solving along each fiber. What we set out to do is : \emph{(the following procedure is  completely in face $\mathcal{A}$)}
\begin{enumerate}
\item Find a polyhomongeous function $U_{4,0}^{\mathfrak{F}_1} + U_{4,0}^{\mathfrak{B}_1}$ on $\mathcal{A}$ so that 
$$f_2 = \mathsf{P}_0( U_{4,0}^{\mathfrak{F}_1} + U_{4,0}^{\mathfrak{B}_1}) \in \rho_{\mathcal{L}\cap \mathcal{A}}^{\infty} \rho_{\mathcal{A}\cap \mathfrak{B}_1}^{\infty}  $$
\emph{(we will further break this into two steps : first remove the error at $\mathfrak{B}_1$ then at $\mathcal{L}$ then)}
\item Now we can blowdown $\mathfrak{B}_1\cap \mathcal{A}$ in  $\mathcal{A}$ and consider $f_3$ to live on the compactified product space $[0,1]_z \times [0,1]_{z'}$. 
\end{enumerate}
  \end{remark}

\
\

\textbf{DETAILS OF THE CONSTRUCTION :}
\begin{itemize}
  \item \textbf{Step 1a : }    We will use as  the 'global' defining function for $\mathfrak{B}_1$ in $\mathcal{A}$
 $$\rho_{\mathfrak{B}_1\cap \mathcal{A}} = z' \langle \mathsf{Z}\rangle$$
 Since we are working close to $\mathfrak{B}_1$ we will only need to consider $F_{3,0}$ multiplied with a cut-off close $\mathfrak{B}_1$ (in $\mathcal{A}$), denoted by $F_{3,0}^1$. Write $F_{3,0} = F_{3,0}^1 + F_{3,0}^2$. Expand $F_{3,0}^1$ at $\mathcal{B}_1 \cap \mathcal{A}$ 
 $$F_{3,0}^1 \sim \rho_{\mathfrak{B}_1\cap \mathcal{A}}^{\alpha_{\mathfrak{B}_1}+2} \sum_j \rho_{\mathfrak{B}_1\cap \mathcal{A}} ^j F^1_{3,0; j}$$
 Denote by $\beta_{\mathfrak{B}_1} $ the leading power of $\rho_{\mathfrak{B}_1}$ in the constructed solution $U_{4,0}$.\nl We want to solve for 
 $$U_{4,0}^{\mathfrak{B}_1} \sim \rho_{\mathcal{A}\cap \mathfrak{B}_1}^{\beta_{\mathfrak{B}_1}} \sum_j  \rho_{\mathcal{A}\cap \mathfrak{B}_1}^j U^1_{4,0j} \ \ \ \text{such that}\ \ \mathsf{P} U_{4,0}^{\mathfrak{B}_1} \in \rho_{\mathfrak{B}_1}^{\infty} ; \ \ \beta_{\mathfrak{B}_1} =    \alpha_{\mathfrak{B}_1} +2 - 2 $$
 Solving in series amounts to conjugating $\mathsf{P}_0$ by powers of $\rho_{\mathfrak{B}_1 \cap \mathcal{A}}$ which is the same as conjugating by powers of $\langle \mathsf{Z}\rangle$ since power of $z'$ commutes with the operator. By direct computation we get :
\begin{align*}
\rho_{\mathfrak{B}_1 \cap \mathcal{A}}^{-j-\beta_{\mathfrak{B}_1}} \mathsf{P}_0  \rho_{\mathfrak{B}_1 \cap \mathcal{A}}^{j+\beta_{\mathfrak{B}_1}}
 = &\ -2 i(\dfrac{\mathsf{Z}}{\langle \mathsf{Z}\rangle} \rho_{\mathfrak{B}_1} +1)^{1/2}  \dfrac{\mathsf{Z}^2}{\langle \mathsf{Z}\rangle^2}\langle \mathsf{Z}\rangle^{-1}\rho_{\mathfrak{B}_1}^2 \Big[\mathsf{Z}\partial_{Z} + (\beta_{\mathfrak{B}_1}+j) (\dfrac{\mathsf{Z}}{\langle\mathsf{Z}\rangle})^2\\
 &\ \ + (\beta_{\mathcal{A}}+0) (\dfrac{\mathsf{Z}}{\langle \mathsf{Z}\rangle})^2 - (\beta_{\mathcal{A}}+0) +\dfrac{1}{2} +\dfrac{1}{4}
 - \dfrac{1 }{4(\tfrac{\mathsf{Z}}{\langle\mathsf{Z}\rangle} \rho_{\mathfrak{B}_1\cap \mathcal{A}}+1)}  \Big] 
 \end{align*}
Since $\tfrac{\mathsf{Z}}{\langle\mathsf{Z}\rangle}$ is a bounded smooth global function and have value less than 1, we can expand in series $\tfrac{1 }{(\tfrac{\mathsf{Z}}{\langle\mathsf{Z}\rangle} \rho_{\mathfrak{B}_1\cap \mathcal{A}}+1)}
= 1 - \tfrac{\mathsf{Z}}{\langle\mathsf{Z}\rangle} \rho_{\mathfrak{B}_1\cap \mathcal{A}} + \ldots $
Thus $U_{4,0; 0}^{\mathfrak{B}_1}$ and $U_{4,0;j}^{\mathfrak{B}_1}$ have to satisfy
\begin{equation}
\begin{aligned}
&  2 i \left[\mathsf{Z}\partial_{Z} + (0+\beta_{\mathfrak{B}_1}) (\dfrac{\mathsf{Z}}{\langle\mathsf{Z}\rangle})^2+ (\beta_{\mathcal{A}}+0) (\dfrac{\mathsf{Z}}{\langle \mathsf{Z}\rangle})^2 - (\beta_{\mathcal{A}}+0) +\dfrac{1}{2}   \right] U^{\mathfrak{B}_1}_{4,0} = Z^{-2}F^1_{3,0;0} \\
&  2 i \left[\mathsf{Z}\partial_{Z} + (j+\beta_{\mathfrak{B}_1}) (\dfrac{\mathsf{Z}}{\langle\mathsf{Z}\rangle})^2+ (\beta_{\mathcal{A}}+0) (\dfrac{\mathsf{Z}}{\langle \mathsf{Z}\rangle})^2 - (\beta_{\mathcal{A}}+0) +\dfrac{1}{2}   \right] U^{\mathfrak{B}_1}_{4,j} \\
&\                       + \dfrac{1}{4} \sum_{l=0}^{j-1} c_j U^1_{4,l} = -\sum_{l=0}^j c_l Z^{-2} F^1_{3,0,l}\  + ;\  c_l\ \text{known constants}
\end{aligned}
\end{equation}
Use Borel sum to define from $U_{4,0}^{\mathfrak{B}_1}$
$$U_{4,0}^{\mathfrak{B}_1} (\rho_{\mathcal{A}\cap \mathfrak{B}_1}, \mathsf{Z}) \stackrel{\text{borel sum}}{\sim} \rho_{\mathcal{A}\cap \mathfrak{B}_1}^{\beta_{\mathfrak{B}_1}} \sum_j (\rho_{\mathcal{A}\cap \mathfrak{B}_1})^j U_{4,0; j}^{\mathfrak{B}_1}(\mathsf{Z})$$
Denote $ f_1 = \mathsf{P}_0  U_{4,0}^{\mathfrak{B}_1}  +   F^1_{3,0}$

\emph{\textbf{Behaviors of constructed function and resulting error :}}
The behavior of $F^1_{3,0;j}$ at $\mathsf{Z} = 0$ (ie at $\mathfrak{F}_2$) dictates that of $U^{\mathfrak{B}_1}_{4,j}$. Note $F^1_{3,0;j}$ has support in $\mathsf{Z}  < \epsilon'$ for some $\epsilon'$ and vanishes to infinite order at $\mathfrak{F}_2$. Thus 
\begin{align*}
U_{4,0}^{\mathfrak{B}_1} \in \rho_{\mathfrak{B}_1}^{ \beta_{\mathfrak{B}_1}}  \rho_{\mathfrak{F}_2}^{\infty} \rho_{\mathfrak{F}_1}^{\beta_{\mathfrak{B}_1} +\tfrac{1}{2}} \\
f_1 \in \rho_{\mathcal{A}\cap \mathfrak{B}_1}^{\infty} \rho_{\mathfrak{F}_2}^{\infty}     \rho_{\mathfrak{F}_1}^{\beta_{\mathfrak{B}_1} +\tfrac{1}{2}+1+1}   
\end{align*}

\item \textbf{Step 1b :} Now we remove the error at $\mathfrak{F}_1$ in $\mathcal{A}$. Write $f_1 = f^1_1 + f_1^2$ where $f_1^1$ is supported away from $\mathfrak{F}_2$. Expand $f_1$ at $\mathfrak{F}_1\cap \mathcal{A}$.
$$f_1^1 \sim \rho_{\mathfrak{F}_1}^{\beta_{\mathfrak{B}_1} +\tfrac{1}{2}+1+1} \sum_p \rho_{\mathfrak{F}_1}^p f^1_{1,p}(z) $$
We would like to find 
$$U_{4,0}^{\mathfrak{F}_1} \sim\rho_{\mathfrak{F}_1}^{\beta{\mathfrak{F}_1}} \sum_m \rho_{\mathfrak{F}_1}^m U^2_{4,m} \ \ \text{so that } \mathsf{P}_0 U_{4,0}^{\mathfrak{F}_1} + f^1_1 \in \rho_{\mathcal{A}\cap \mathfrak{F}_1}^{\infty} \rho_{\mathcal{A}\cap \mathfrak{B}_1}^{\infty} $$
In this case $\beta_{\mathfrak{F}_1} =  \beta_{\mathfrak{B}_1} +\tfrac{1}{2}+1$

In coordinates $z, \mathsf{Z}'$ , $\mathsf{P}_0$ is equal to
$$\mathsf{P}_0 = 2 i(z+1)^{1/2} z^2\dfrac{\mathsf{Z}'}{\langle\mathsf{Z}'\rangle}\left[  z\partial_z - \mathsf{Z}'\partial_{\mathsf{Z}'}+ (\beta_{\mathcal{A}}+0) \langle \mathsf{Z}'\rangle^{-2} - (\beta_{\mathcal{A}}+0) +\dfrac{1}{2}
 + \dfrac{z }{4(z+1)}  \right]  $$

\textbf{Prep computation}
$$\mathsf{P}_0  (\mathsf{Z}')^m g(z)=
z\partial_z - m+ (\beta_{\mathcal{A}}+0) \langle \mathsf{Z}'\rangle^{-2} - (\beta_{\mathcal{A}}+0) +\dfrac{1}{2}
 + \dfrac{z }{4(z+1)} $$
 so $U_{4,0;0}^{\mathfrak{F}_1}$ and $U_{4,0;p}^{\mathfrak{F}_1}$,  have to satisfy correspondingly :
$$ 2 i(z+1)^{1/2} z^2\left[  z\partial_z - (\beta_{\mathfrak{F}_1}+0) + (\beta_{\mathcal{A}}+0)  - (\beta_{\mathcal{A}}+0) +\dfrac{1}{2}+ \dfrac{z }{4(z+1)}  \right]  U^{\mathfrak{F}_1}_{4,0;0} = - f^1_{1,0}(z)$$
$$ 2 i(z+1)^{1/2} z^2\left[  z\partial_z - (\beta_{\mathfrak{F}_1}+m) + (\beta_{\mathcal{A}}+0)  - (\beta_{\mathcal{A}}+0) +\dfrac{1}{2}+ \dfrac{z }{4(z+1)}  \right]  U^2_{4,0;m} $$
$$+ 2 i(z+1)^{1/2} z^2\sum_{p=0}^{j-1} d_p U^2_{4,0;p} = -f^1_{1,j}$$
Use Borel sum to  define $U_{4,0;0}^{\mathfrak{F}_1}$
$$U_{4,0;0}^{\mathfrak{F}_1} \stackrel{\text{borel sum}}{\sim}  (\mathsf{Z}')^{\beta_{\mathfrak{F}_1}} \sum_m  (\mathsf{Z}')^m U^{\mathfrak{F}_1}_{4,0;m}$$
Denote $f_2 =  \mathsf{P}_0  U_{4,0;0}^{\mathfrak{F}_1} + f_1^1 $ 

\emph{\textbf{Properties of constructed function and resulting error} :}
Since we are dealing with regular singular ODE, we can choose solution $U_{4,0;m}^{\mathfrak{F}_1}$ whose behavior $z = 0$ is determined by that of $f_{1,m}$, which vanishes to infinite order in $\mathfrak{F}_1\cap\mathcal{A}$ at $z = 0$. Also we can assume $U^{\mathfrak{F}_1}_{4,m}$ are supported away from $\mathfrak{F}_2$.
$$U^{\mathfrak{F}_1}_{4,0;m} \in z^{\infty} (\mathsf{Z}')^{\beta_{\mathfrak{F}_1}}$$
$$\Rightarrow U^{\mathfrak{F}_1}_{4,0} \in \rho_{\mathfrak{B}_1}^{\infty} \rho_{\mathfrak{F}_1}^{\beta_{\mathfrak{F}_1} }   ; \ \ \text{supported away from $\mathfrak{F}_2$}$$
$$f_2 \in  \rho_{\mathfrak{B}_1}^{\infty} \rho_{\mathfrak{F}_1}^{\infty}   ; \ \ \text{supported away from $\mathfrak{F}_2$}$$
From the properties inherited by $f_2$, we can consider $f_2$ to live on the compactified product space $[0,1]_z \times [0,1]_{z'}$. In addition $f_2\in z^{\infty} (z')^{\infty}$ vanishes rapidly as $z' \rightarrow 0$ uniformly as  $z \rightarrow 0$ since
we are away from $\mathsf{Z} = 0$ and $   ( z' \langle \mathsf{Z}\rangle )^{\infty} (\mathsf{Z}^{-1} )^{\infty}$ .

\item  \textbf{Step 2} : Since we have been able to blow down and consider the error on the product space, now we solve for $\mathsf{P}_0$ for $U_{4,0}^{\mathfrak{F}_2}$ along each regular fiber \emph{(parametrized by $z'$)}. The ODE in $z,z'$ is
$$2i (z + 1)^{1/2} \dfrac{z^2 (z')^2}{(z^2+(z')^2)^{1/2}} \Big[ z\partial_z + (\beta_{\mathcal{A}}+0) \dfrac{z}{(z^2+(z')^2)^{1/2}} + \dfrac{1}{2} + \dfrac{z}{4(z+1)}\Big]$$
the second coefficient is not quite 'nice'. However this is the conjugated version of a nicer operator, ie we will instead work with:
$$2i (z + 1)^{1/2}\dfrac{z^2 (z')^2}{(z^2+(z')^2)^{1/2}} \Big[ z\partial_z  + \dfrac{1}{2} + \dfrac{z}{4(z+1)}\Big]  
\big(1 + (\dfrac{z}{z'})^2 \big)^{\tfrac{\beta_{\mathcal{A}}}{2}} U_{4,0}^{\mathfrak{F}_2} $$
$$=\big(1 + (\dfrac{z}{z'})^2 \big)^{\tfrac{\beta_{\mathcal{A}}}{2}} \big( f_2 + f_1^2+  F^2_{3,0}\big)$$
$F^2_{3,0}$ is supported away from $\mathfrak{B}_1$ and has the following property at $\mathfrak{F}_2$ : $F^2_{3,0} \in \rho_{\mathfrak{F}_2}^{\infty} $. $f_2$ also vanishes to infinite order at $z = 0$. Same reasoning as for $f_2$, we have $f_1^2$ supported away from $\mathfrak{F}_1$ and vanishes to infinite order at $\mathfrak{B}_1$ and $\mathfrak{F}_2$ hence on the space $[0,1]_z\times [0,1]_{z'}$ it is in $z^{\infty}(z')^{\infty}$. In short, the RHS has the property : $z^{\infty} (z')^{\infty}$. Thus we can solve for a unique $(1 + (\dfrac{z}{z'})^2)^{\beta_{\mathcal{A}}/2} U_{4,0}^{\mathfrak{F}_2} $ solution which satisfies
 $$    (1 + (\dfrac{z}{z'})^2)^{\beta_{\mathcal{A}}/2} U_{4,0}^{\mathfrak{F}_2}  \sim z^{\infty} (z')^{\infty}$$

\end{itemize}
\textbf{Conclusion : }  We have constructed a polyhomogeneous function on $\mathcal{A}$ 
$$U_{4,0} := U^{\mathfrak{B}_1}_{4,0} + U^{\mathfrak{F}_1}_{4,0} + U^{\mathfrak{F}_2}_{4,0}$$
$$\text{with}\  \mathsf{P}_0 U_{4,0} = -F_{3,0}$$
The behavior of $U_{4,0}$ on $\mathcal{A}$ is determined by the first two terms, more specifically:
$$ U_{4,0} \in    \rho_{\mathfrak{B}_1}^{ \beta_{\mathfrak{B}_1}}  \rho_{\mathfrak{F}_2}^{\infty} \rho_{\mathfrak{F}_1}^{\beta_{\mathfrak{B}_1} +\tfrac{1}{2}} + \rho_{\mathfrak{B}_1}^{\infty} \rho_{\mathfrak{F}_1}^{\beta_{\mathfrak{F}_1} } \rho_{\mathfrak{F}_2}^{\infty}
 $$
 $$\Rightarrow U_{4,0} \in  \rho_{\mathfrak{F}_2}^{\infty} \rho_{\mathfrak{B}_1}^{\beta_{\mathfrak{B}_1}} \rho_{\mathfrak{F}_1}^{\beta_{\mathfrak{F}_1}+\tfrac{1}{2}}  $$
 $$\text{where}\ \ \beta_{\mathfrak{F}_1} =  \beta_{\mathfrak{B}_1} +\tfrac{1}{2}+1 ; \beta_{\mathfrak{B}_1}  = \alpha_{\mathfrak{B}_1} +2  - 2 ; \alpha_{\mathfrak{B}_1} = \gamma_{\mathfrak{B}_1} +2 - 2 ; \gamma_{\mathfrak{B}_1} = -2$$

\item \textbf{HIGHER TRANSPORT EQUATIONS} :   come from coefficients of $\rho_{\mathcal{A}}^{\beta_{\mathcal{A}}+k}$:
\begin{equation}\label{hitlag2}
\begin{aligned}
 &2 i(\mathsf{Z} z' +1)^{1/2} \dfrac{( \mathsf{Z} z' )^2}{\langle \mathsf{Z}\rangle}\left[  \mathsf{Z}\partial_{\mathsf{Z}}+ (\beta_{\mathcal{A}}+k) (\tfrac{\mathsf{Z}}{\langle \mathsf{Z}\rangle})^2 - (\beta_{\mathcal{A}}+k) +\dfrac{1}{2}
 + \dfrac{\mathsf{Z} z' }{4(\mathsf{Z} z' +1)}  \right]  U_{4,k}  \\
 &+ \dfrac{\mathsf{Z}^2}{\langle \mathsf{Z}\rangle^2}(z')^2 \Big[  \left( \mathsf{Z}\partial_{\mathsf{Z}}+ (\beta_{\mathcal{A}}+k-1) (\tfrac{\mathsf{Z}}{\langle \mathsf{Z}\rangle})^2 - (\beta_{\mathcal{A}}+k-1)\right)^2 +  (\lambda - \dfrac{n^2}{4} )  \Big] U_{4,k-1} = -F_{3,k}
\end{aligned}
\end{equation}
Using the exactly same method we can construct a polyhomogeneous conormal solution $U_{4,k-1}$ on $\mathcal{A}$ with the property
$$ U_{4,k-1} \in \rho_{\mathfrak{F}_2}^{\infty} \rho_{\mathfrak{B}_1}^{\beta_{\mathfrak{B}_1} + k-1} \rho_{\mathfrak{F}_1}^{\beta_{\mathfrak{F}_1}+k-1}$$
 $$\text{where}\ \ \beta_{\mathfrak{F}_1} =  \beta_{\mathfrak{B}_1} +\tfrac{1}{2}+1 ; \beta_{\mathfrak{B}_1}  = \alpha_{\mathfrak{B}_1} +2 - 2 ; \alpha_{\mathfrak{B}_1} = \gamma_{\mathfrak{B}_1} +2 - 2 ; \gamma_{\mathfrak{B}_1} = -2$$

\end{enumerate}

 Now we can construct $\tilde{U}_4$ on $\mathfrak{M}_{b,h}$ by taking the Borel sum
$$\tilde{U}_4 \stackrel{\text{borel sum}}{\sim} \rho_{\mathcal{A}}^{\beta_{\mathcal{A}}}\sum_k (\rho_{\mathcal{A}})^k U_{4,k} \Rightarrow \tilde{U}_4 \in \rho_{\mathcal{A}}^{\beta_{\mathcal{A}}} \rho_{\mathfrak{F}_2}^{\infty} \rho_{\mathfrak{B}_1}^{\beta_{\mathfrak{B}_1} } \rho_{\mathfrak{F}_1}^{\beta_{\mathfrak{F}_1}}$$
$$\beta_{\mathcal{A}} = \tfrac{1}{2}+\alpha_{\mathfrak{F}_2} +1$$
Next define 
$$U_4 = \exp(i\varphi_{\text{out}}) \tilde{U}_4; \  E_4 = \QQ U_4 + E_3 $$ 
 $$\Rightarrow E_4 = \exp(i\varphi_{\text{out}})F_4; \ \ F_4 \in \rho_{\mathcal{A}}^{\infty} \rho_{\mathfrak{F}_2}^{\infty} \rho_{\mathfrak{B}_1}^{\beta_{\mathfrak{B}_1}+2 } \rho_{\mathfrak{F}_1}^{\infty}$$
 $$\text{where}\ \ \beta_{\mathfrak{F}_1} =  \beta_{\mathfrak{B}_1} +\tfrac{1}{2}+1 ; \beta_{\mathfrak{B}_1}  = \alpha_{\mathfrak{B}_1}+2 - 2 ; \alpha_{\mathfrak{B}_1} = \gamma_{\mathfrak{B}_1} +2 - 2 ; \gamma_{\mathfrak{B}_1} = -2$$

\subsubsection{\textbf{Summary of properties of resulting error close to} $\mathcal{A}$ \textbf{from step 3a - 3c}}\
 
    The error produced from the above steps : 
   $$   E_3^{\mathcal{R}}  \in \rho_{\mathfrak{B}_1}^{\alpha_{\mathfrak{B}_1}+2} \rho_{\mathcal{R}}^{\sqrt{\tfrac{n^2}{4}-\lambda}+1} \rho_{\mathcal{A}}^{\infty} \rho_{\mathfrak{F}_2}^{\infty}S^1_0(z' , \tilde{t}) $$
   $$   \text{where} \ \   \alpha_{\mathfrak{B}_1 } = \gamma_{\mathfrak{B}_1} +2 - 2; \ \gamma_{\mathfrak{B}_1} = -2  $$

      \begin{remark}
    Note that all of the above-mentioned errors vanish to infinite order at $\mathcal{A}$, hence can be considered to live on the blown down space, ie their portion close to $\mathfrak{B}_1$ can be consider to live on the product space
    $$[0,\delta)_{\rho_{\mathfrak{B}_1}} \times \mathsf{B}_1$$
    where $\mathsf{B}_1$ is the blowndown version of $\mathfrak{B}_1$ where we blow-down $\mathcal{A}\cap \mathfrak{B}_1$ in $\mathfrak{B}_1$. In fact we will proceed to remove the error at b- front face $\mathcal{F}$ to be able to consider the resulting error to live on $B_1 = \RR^+_{\tilde{t}} \times \RR^+_{\tilde{\nu}'}$.
       \end{remark}


\subsection{Step 4 : remove the error at the b front face $\mathcal{F}$. }\

   We will only work with the portion of the error supported close the $\mathcal{F}$, which in this coordinate in supported away from $\mathsf{Z} = 0$ and $\mathsf{Z}^{-1} = 0$. This error comes from after step 1. Write the error as 
   $$\tilde{E}_1 = \tilde{E}_1^{\mathfrak{B}_1} + \tilde{E}_1^{\tilde{\mathfrak{B}}_1} + \tilde{E}_1^{\text{interior}}$$
   where the superscripts indicate the neighborhood of support. Each of the terms is dealt with in the same way. We write out the details in dealing with $\tilde{E}_{1,\mathfrak{B}_1}$ i.e the term which is supported close to $\mathfrak{B}_1$.
   
   \textbf{Product structure near} $\mathcal{F}$ : $$\mathcal{F}_{h,\mathsf{Z}} \times \RR^+_{\rho_{\mathcal{F}}},$$ where $\rho_{\mathcal{F}}$ is a choice of the global defining function for b blown-up front face $\mathcal{F}$ (\emph{global in the sense the function is valid at both $\mathcal{R}$ and $\mathcal{L}$ end}).
   For a choice of global defining function, we use a function such that in neighborhood away from $\mathcal{L}$, $\rho_{\mathcal{F}}$ has the form
 $\rho_{\mathcal{F}} = \tilde{\mu}' \langle \mathsf{Z}\rangle $
   and in coordinates  valid away from right boundary $\mathcal{R}$, $\rho_{\mathcal{F}} = \tilde{\mu} \langle \mathsf{Z}'\rangle$.\newline
   
  \textbf{ A choice of  coordinate away from} $\mathcal{L}$ : 
   $$ \tilde{\mu}'  ( = \dfrac{z'}{h}) , \mathsf{Z}, h$$
   while away from $\mathcal{R}$ we use
    $$ \tilde{\mu}\  ( =  \dfrac{z}{h}),  \mathsf{Z}' = \mathsf{Z}^{-1}, h$$
    The form of the operator the coordinates in these coordinates
   $$ h^2 \big[  (\mathsf{Z}\partial_{\mathsf{Z}})^2 + (\lambda - \dfrac{n^2}{4}) + h(\mathsf{Z}\tilde{\mu}')^3 + (\mathsf{Z}\tilde{\mu}')^2\big]$$
   
    \textbf{Prep Computation :} Since any power of $\tilde{\mu}'$ commutes with $\QQ_b$, conjugation $\QQ_b$ by $\rho_{\mathcal{F}}^j$ amounts to conjuation of $\QQ_b$ by $\langle \mathsf{Z}\rangle$ :
  \begin{align*}
 &(\rho_{\mathcal{F}})^{-j} \QQ_b (\rho_{\mathcal{F}})^j\\
 = &\  h^2 \langle \mathsf{Z}\rangle^{-j} 
\big[  (\mathsf{Z}\partial_{\mathsf{Z}})^2 + (\lambda - \dfrac{n^2}{4}) + h(\mathsf{Z}\tilde{\mu}')^3 + (\mathsf{Z}\tilde{\mu}')^2\big] \langle \mathsf{Z}\rangle^j \\
   = &\ h^2 \big[(\mathsf{Z}\partial_{\mathsf{Z}})^2 
    + 2 j\langle \mathsf{Z}\rangle^{-2} \mathsf{Z}^2 (\mathsf{Z}\partial_{\mathsf{Z}}) + 2j \mathsf{Z}^2 \langle \mathsf{Z}\rangle^{-2} + j (j-2) \langle \mathsf{Z}\rangle^{-4} \mathsf{Z}^4 +  (\lambda - \dfrac{n^2}{4}) + h(\mathsf{Z}\tilde{\mu}')^3 + (\mathsf{Z}\tilde{\mu}')^2 \big] \\
    = &\ h^2 \Big[(\mathsf{Z}\partial_{\mathsf{Z}})^2 
    + 2 j(\dfrac{\mathsf{Z}}{\langle \mathsf{Z}\rangle}\big)^2  (\mathsf{Z}\partial_{\mathsf{Z}}) + 2j(\dfrac{\mathsf{Z}}{\langle \mathsf{Z}\rangle}\big)^2  + j (j-2) (\dfrac{\mathsf{Z}}{\langle \mathsf{Z}\rangle}\big)^4  +  (\lambda - \dfrac{n^2}{4}) + h(\dfrac{\mathsf{Z}}{\langle \mathsf{Z}\rangle}\big)^3 \rho_{\mathcal{F}}^3 + (\dfrac{\mathsf{Z}}{\langle \mathsf{Z}\rangle}\big)^2 \rho_{\mathcal{F}}^2 \Big] 
\end{align*}
   Note that $\tfrac{\mathsf{Z}}{\langle \mathsf{Z}\rangle }$ is a global function on  each compactified fiber of $\mathcal{F}$.
   
   \textbf{Construction:} We looked for $U_5^{\mathfrak{B}_1}$ of the form $U_5^{\mathfrak{B}_1} \sim  \rho_{\mathcal{F}}^0 \sum_j \rho_{\mathcal{F}}^j U_{5j}^{\mathfrak{B}_1}$ and supported close to $\mathcal{F}$ such that
$$\QQ_b U_5^{\mathfrak{B}_1} + \tilde{E}_1^{\mathfrak{B}_1} \in \rho_{\mathcal{F}}^{\infty}\rho_{\mathfrak{B}_1}^0 \rho_{\mathcal{R}}^{\sqrt{\tfrac{n^2}{4} - \lambda}+1} \rho_{\mathcal{L}}^{\sqrt{\tfrac{n^2}{4} - \lambda}+1}  \mathcal{C}^{\infty}(\mathfrak{M}_{b,h})$$
         Expand $ \tilde{E}_1^{\mathfrak{B}_1}$ at the b front face $\mathcal{F}$:
   $$ \tilde{E}_1^{\mathfrak{B}_1} \stackrel{\text{taylor expand}}{\sim}  \rho_{\mathcal{F}}^0 \sum_j \rho_{\mathcal{F}}^j \tilde{E}_{1,j}^{\mathfrak{B}_1} ;\ \ \tilde{E}_{1,j}^{\mathfrak{B}_1} \in h^0 \mathcal{C}^{\infty}(\mathsf{Z}, z')$$
 Solving in series and using the prep computation, we obtain the ODE-s which $U_{5,j}^{\mathfrak{B}_1}$ need to satisfy. Note that these $U_{5,j}^{\mathfrak{B}_1}$ are global solutions living on each fiber $\{h = \text{constant}\}$ on $\mathcal{F}$ as follows : 
  \begin{equation}\label{Mellin1}
    h^2 \Big[(\mathsf{Z}\partial_{\mathsf{Z}})^2 
    + 2(\gamma_{\mathcal{F}}+0)(\dfrac{\mathsf{Z}}{\langle \mathsf{Z}\rangle}\big)^2  (\mathsf{Z}\partial_{\mathsf{Z}}) + 2(\gamma_{\mathcal{F}}+0)(\dfrac{\mathsf{Z}}{\langle \mathsf{Z}\rangle}\big)^2  + (\gamma_{\mathcal{F}}+0)(\gamma_{\mathcal{F}}+0-2) (\dfrac{\mathsf{Z}}{\langle \mathsf{Z}\rangle}\big)^4  +  (\lambda - \dfrac{n^2}{4})  \Big] U_{5,0}^{\mathfrak{B}_1} 
   = -\tilde{E}_{1,0}^{\mathfrak{B}_1}  ;\ \   \gamma_{\mathcal{F}} = 0
   \end{equation}
   \begin{equation}
   \begin{aligned}
    &h^2 \Big[(\mathsf{Z}\partial_{\mathsf{Z}})^2 
    + 2 (\gamma_{\mathcal{F}}+j)(\dfrac{\mathsf{Z}}{\langle \mathsf{Z}\rangle}\big)^2  (\mathsf{Z}\partial_{\mathsf{Z}}) + 2(\gamma_{\mathcal{F}}+j)(\dfrac{\mathsf{Z}}{\langle \mathsf{Z}\rangle}\big)^2  + (\gamma_{\mathcal{F}}+j)(\gamma_{\mathcal{F}}+j-2) (\dfrac{\mathsf{Z}}{\langle \mathsf{Z}\rangle}\big)^4  +  (\lambda - \dfrac{n^2}{4})  \Big] U_{5,j}^{\mathfrak{B}_1} \\
    &= -\tilde{E}_{1,j}^{\mathfrak{B}_1} -  h^3(\dfrac{\mathsf{Z}}{\langle \mathsf{Z}\rangle}\big)^3 U_{5,j-3}^{\mathfrak{B}_1} - h^2(\dfrac{\mathsf{Z}}{\langle \mathsf{Z}\rangle}\big)^2 U_{5,j-2}^{\mathfrak{B}_1}
    \end{aligned}
    \end{equation}
    
   However for simpler computation, instead of solving directly for $U_{5,j}^{\mathfrak{B}_1}$ we will look solve for $ \langle \mathsf{Z}\rangle^{\gamma_{\mathcal{F}}+j} U_{5,j}^{\mathfrak{B}_1}$ which in turn satisfy simpler ODE in $\mathsf{Z}$.
    $$h^2 \Big[(\mathsf{Z}\partial_{\mathsf{Z}})^2 
   +  (\lambda - \dfrac{n^2}{4})  \Big] \langle \mathsf{Z}\rangle^{\gamma_{\mathcal{F}}+0} U_{5,0}^{\mathfrak{B}_1} 
    = -\langle \mathsf{Z}\rangle^{\gamma_{\mathcal{F}}+0} \tilde{E}_{1,0}^{\mathfrak{B}_1} $$
  $$h^2 \Big[(\mathsf{Z}\partial_{\mathsf{Z}})^2 
 +  (\lambda - \dfrac{n^2}{4})  \Big] U_{5,j}^{\mathfrak{B}_1} = -\langle \mathsf{Z}\rangle^{\gamma_{\mathcal{F}}+j}\tilde{E}_{1,j}^{\mathfrak{B}_1} -  h^3\mathsf{Z}^3   \langle \mathsf{Z}\rangle^{\gamma_{\mathcal{F}}+j-3} U_{5,j-3}^{\mathfrak{B}_1} - h^2\mathsf{Z}^2 \langle \mathsf{Z}\rangle^{\gamma_{\mathcal{F}}+j-2}U_{5,j-2}^{\mathfrak{B}_1}$$
  \begin{itemize}
  \item   We will revoke the facts about Mellin transform from remark (\ref{Mellin}) (\emph{at the end of the section}) to obtain specific global solution for $\langle \mathsf{Z}\rangle^{\gamma_{\mathcal{F}}+0}U_{5,0}^{\mathfrak{B}_1}$ that solves
   $$\begin{cases}  h^2 \Big[(\mathsf{Z}\partial_{\mathsf{Z}})^2 
     +  (\lambda - \dfrac{n^2}{4})  \Big] \langle \mathsf{Z}\rangle^{\gamma_{\mathcal{F}}+0}U_{5,0}^{\mathfrak{B}_1} = -\langle \mathsf{Z}\rangle^{\gamma_{\mathcal{F}}+0}\tilde{E}_{1,0}^{\mathfrak{B}_1} ;\ \ \gamma_{\mathcal{F}} = 0\\
     \mathsf{Z}\sim 0 :\ \  \langle \mathsf{Z}\rangle^{\gamma_{\mathcal{F}}+0} U_{5,0}^{\mathfrak{B}_1} =  \mathsf{Z}^{\sqrt{\tfrac{n^2}{4}-\lambda}} h^{-2} \mathcal{C}^{\infty}(\mathcal{F})  \\
      \mathsf{Z}\rightarrow\infty:   \langle \mathsf{Z}\rangle^{\gamma_{\mathcal{F}}+0} U_{5,0}^{\mathfrak{B}_1} = \mathsf{Z}^{-\sqrt{\tfrac{n^2}{4}-\lambda}} h^{-2} \mathcal{C}^{\infty}(\mathcal{F}) \end{cases}$$
   \emph{In this case we do not have poles contribution from the inhomongeous term } since $\tilde{E}_{1,0}$ has trivial behavior at both ends.  
   
 \item   $\langle \mathsf{Z}\rangle^{\gamma_{\mathcal{F}}+1} U_{5,1}^{\mathfrak{B}_1}$ satisfies the same equation with $\langle \mathsf{Z}\rangle^{\gamma_{\mathcal{F}}+0}\tilde{E}_{1,0}$ switched to  $\langle \mathsf{Z}\rangle^{\gamma_{\mathcal{F}}+1}\tilde{E}_{1,1}$
  
\item   To deal with higher order equation for $U_{5,j}$, we consider the ODE that $U_{5,2}$ which has to satisfy. 
   \begin{equation}\label{higherMellineqn}
   \begin{cases}
   h^2 \Big[(\mathsf{Z}\partial_{\mathsf{Z}})^2 
 +  (\lambda - \dfrac{n^2}{4})  \Big] \langle \mathsf{Z}\rangle^{\gamma_{\mathcal{F}}+2}U^{\mathfrak{B}_1}_{5,2} = -\langle \mathsf{Z}\rangle^{\gamma_{\mathcal{F}}+2}\tilde{E}^{\mathfrak{B}_1}_{1,2} - h^2\mathsf{Z}^2 \langle \mathsf{Z}\rangle^{\gamma_{\mathcal{F}}+2-2}U^{\mathfrak{B}_1}_{5,0}\\
    \mathsf{Z}\sim 0 :\ \  \langle \mathsf{Z}\rangle^{\gamma_{\mathcal{F}}+0} U_{5,0}^{\mathfrak{B}_1} =  \mathsf{Z}^{\sqrt{\tfrac{n^2}{4}-\lambda}} h^{-2} \mathcal{C}^{\infty}(\mathcal{F})  \\
      \mathsf{Z}\rightarrow\infty:   \langle \mathsf{Z}\rangle^{\gamma_{\mathcal{F}}+0} U_{5,0}^{\mathfrak{B}_1} = \mathsf{Z}^{-\sqrt{\tfrac{n^2}{4}-\lambda}} h^{-2} \mathcal{C}^{\infty}(\mathcal{F})  \ (**)
    \end{cases}
    \end{equation}
   Also use remark (\ref{Mellin}) but now keep in mind that the right hand side has nontrivial behavior at both ends. With the same notation used in the remark (\ref{Mellin}) we have
 $\beta = \sqrt{\alpha}+2$ while $\gamma = \sqrt{\alpha} - 2 $ so using the Mellin transform we can get a global solution
   $$ \langle \mathsf{Z}\rangle^{?+2}U^{\mathfrak{B}_1}_{5,2}\sim \mathsf{Z}^{\sqrt{\alpha}} + \mathsf{Z}^{ \sqrt{\alpha}+2}      \sim    \mathsf{Z}^{\sqrt{\alpha}}             $$
   $$\langle \mathsf{Z}\rangle^{?+2}U^{\mathfrak{B}_1}_{5,2} \sim \mathsf{Z}^{-\sqrt{\alpha}} + \mathsf{Z}^{-\sqrt{\alpha} + 2}  \sim \mathsf{Z}^{-\sqrt{\alpha} + 2} $$

 More generally for $U_{5,j}$
 $$ \begin{cases}
    &h^2 \Big[(\mathsf{Z}\partial_{\mathsf{Z}})^2  +  (\lambda - \dfrac{n^2}{4})  \Big] \mathsf{Z}^{j+\gamma_{\mathcal{F}}}U^{\mathfrak{B}_1}_{5,j} = -\langle \mathsf{Z}\rangle^{\gamma_{\mathcal{F}}+j}\tilde{E}_{1,j} -  h^3\mathsf{Z}^3   \langle \mathsf{Z}\rangle^{\gamma_{\mathcal{F}}+j-3} U^{\mathfrak{B}_1}_{5,j-3} - h^2\mathsf{Z}^2 \langle \mathsf{Z}\rangle^{\gamma_{\mathcal{F}}+j-2}U^{\mathfrak{B}_1}_{5,j-2}\\
          &\mathsf{Z} \sim 0  :\ \ \ \mathsf{Z}^{j+\gamma_{\mathcal{F}}}U^{\mathfrak{B}_1}_{5,j} = \mathsf{Z}^{\sqrt{\tfrac{n^2}{4}-\lambda} +j} h^{-2} \mathcal{C}^{\infty}(\mathcal{F})   (*)   \\  &\mathsf{Z} \rightarrow \infty :\ \ \ \mathsf{Z}^{j+\gamma_{\mathcal{F}}} U^{\mathfrak{B}_1}_{5,j}  = \mathsf{Z}^{-\sqrt{\tfrac{n^2}{4}-\lambda} + j}  h^{-2} \mathcal{C}^{\infty}(\mathcal{F})  (**) \end{cases}$$
 With the same notation used in the remark (\ref{Mellin}), for this case, $\beta = \sqrt{\alpha} +2 $ while $\gamma = \sqrt{\alpha} - j $ so  using the Mellin transform we can get a global solution $U^{\mathfrak{B}_1}_{5,2}$ with 
 \begin{align*}
  \mathsf{Z} \sim 0 \ :&\ \ \langle \mathsf{Z}\rangle^{\gamma_{\mathcal{F}}+j}U^{\mathfrak{B}_1}_{5,j} \sim    h^{-2}\mathsf{Z}^{\sqrt{\alpha}}             \mathcal{C}^{\infty}(\mathcal{F}) \\
  \mathsf{Z} \rightarrow \infty \ &:\ \ \langle \mathsf{Z}\rangle^{\gamma_{\mathcal{F}}+j}U^{\mathfrak{B}_1}_{5,j}  \sim h^{-2}\mathsf{Z}^{-\sqrt{\alpha} + j}  \mathcal{C}^{\infty}(\mathcal{F}) 
  \end{align*}
  \end{itemize}

  Now take   $$  U_5^{\mathfrak{B}_1} \stackrel{\text{borel sum}}{\sim} \sum_j (\rho_{\mathcal{F}})^j U_{5,j}^{\mathfrak{B}_1}$$
   and thus $U_5^{\mathfrak{B}_1}$ inherits the asymptotic properties :
    \begin{align*}
   & \text{At}\  \mathcal{R} \cap \mathcal{F}\  \text{ie where}\ \mathsf{Z} = 0 
  :  U_5^{\mathfrak{B}_1} \in h^{-2}  \mathsf{Z}^{\sqrt{\tfrac{n^2}{4} - \lambda}} \mathcal{C}^{\infty}(\rho_{\mathcal{F}}, \mathsf{Z} , h) \\
   & \text{At}\  \mathcal{L} \cap \mathcal{F}\  \text{ie where}\ \mathsf{Z}^{-1} = 0 
  :  U_5^{\mathfrak{B}_1} \in h^{-2}  \mathsf{Z}^{-\sqrt{\tfrac{n^2}{4} - \lambda}} \mathcal{C}^{\infty}(\rho_{\mathcal{F}}, \mathsf{Z} , h) \\
   \end{align*}    
   With superscript indicating the support, the error resulting from this step has the properties
   \begin{align*}
   & \text{At}\  \mathcal{R} \cap \mathcal{F}\  \text{ie where}\ \mathsf{Z} = 0 
  :  E_6^{\mathfrak{B}_1} \in h^0  \mathsf{Z}^{\sqrt{\tfrac{n^2}{4} - \lambda}+1} \mathcal{C}^{\infty}(\rho_{\mathcal{F}}, \mathsf{Z} , h) \\
   & \text{At}\  \mathcal{L} \cap \mathcal{F}\  \text{ie where}\ \mathsf{Z}^{-1} = 0 
  :  E_6^{\mathfrak{B}_1} \in h^0  \mathsf{Z}^{-\sqrt{\tfrac{n^2}{4} - \lambda}-1} \mathcal{C}^{\infty}(\rho_{\mathcal{F}}, \mathsf{Z} , h) \\
   \end{align*}    

 \textbf{Similarly, to take care of $\tilde{E}_1^{\tilde{\mathfrak{B}}_1}$}  we use $\rho_{\mathcal{F}} = \tilde{\mu}' \langle \mathsf{Z}\rangle $
   and in coordinates  valid away from right boundary $\mathcal{R}$, $\rho_{\mathcal{F}} = \tilde{\mu} \langle \mathsf{Z}'\rangle$.\newline
   A choice of  coordinate away from $\mathcal{L}$ is $ \tilde{\mu}'  ( = \dfrac{z'}{h}) , \mathsf{Z}, h$ while away from $\mathcal{R}$ we use $ \tilde{\mu}\  ( =  \dfrac{z}{h}),  \mathsf{Z}' = \mathsf{Z}^{-1}, h$. The form of the operator the coordinates in these coordinates
   $$ h^2 \big[  (\mathsf{Z}\partial_{\mathsf{Z}})^2 + (\lambda - \dfrac{n^2}{4}) + h(\mathsf{Z}\tilde{\mu}')^3 - (\mathsf{Z}\tilde{\mu}')^2\big]$$
As for $\tilde{E}_1^{\text{interior}}$, we use coordinates . (Note $S = Z - Z_0$) 
$$ \mathsf{Z} = \dfrac{S}{S'} , S' , Z_0$$
the defining function for $\mathcal{F}$ is $\rho_{\mathcal{F}} = S' \langle \mathsf{Z}\rangle $ while the operator has the form
$$ (\mathsf{Z}\partial_{\mathsf{Z}})^2 + \lambda - \dfrac{n^2}{4} + (S'\mathsf{Z})^3 + Z_0 (\mathsf{Z} S')^2$$
By the same method, we construct $U_5^{\tilde{\mathfrak{B}}_1}$ and  $U_5^{\text{interior}}$. See below for their properties.

\subsubsection{\textbf{Summarizing the properties of newly constructed function and the resulting error}}
With superscript indicating the support
$$U_5 = U_5^{\mathfrak{B}_1} + U_5^{\tilde{\mathfrak{B}}_1} + U_5^{\text{interior}}$$
$$ E_6 = E_5^{\mathfrak{B}_1} + E_5^{\tilde{\mathfrak{B}}_1}  = \QQ \big( U_5^{\mathfrak{B}_1} + U_5^{\tilde{\mathfrak{B}}_1} + U_5^{\text{interior}}\big) $$
 \begin{equation}
 \begin{aligned}
&  U_5^{\mathfrak{B}_1} \in \rho_{\mathcal{F}}^{\gamma_{\mathcal{F}}}\rho_{\mathfrak{B}_1}^{-2} \rho_{\mathcal{R}}^{\sqrt{\tfrac{n^2}{4} - \lambda}} \rho_{\mathcal{L}}^{\sqrt{\tfrac{n^2}{4} - \lambda}}  \mathcal{C}^{\infty}(\mathfrak{M}_{b,h}); \ \ \gamma_{\mathcal{F}}  = 0\\
 &E_6^{\mathfrak{B}_1} \in \rho_{\mathcal{F}}^{\infty}\rho_{\mathfrak{B}_1}^0 \rho_{\mathcal{R}}^{\sqrt{\tfrac{n^2}{4} - \lambda}} \rho_{\mathcal{L}}^{\sqrt{\tfrac{n^2}{4} - \lambda}}  \mathcal{C}^{\infty}(\mathfrak{M}_{b,h}); \\
 &  U_5^{\tilde{\mathfrak{B}}_1} \in \rho_{\mathcal{F}}^{\gamma_{\mathcal{F}}}\rho_{\tilde{\mathfrak{B}}_1}^{-2} \rho_{\mathcal{R}}^{\sqrt{\tfrac{n^2}{4} - \lambda}} \rho_{\mathcal{L}}^{\sqrt{\tfrac{n^2}{4} - \lambda}}  \mathcal{C}^{\infty}(\mathfrak{M}_{b,h}); \\
 &E_6^{\tilde{\mathfrak{B}}_1} \in \rho_{\mathcal{F}}^{\infty}\rho_{\tilde{\mathfrak{B}}_1}^0 \rho_{\mathcal{R}}^{\sqrt{\tfrac{n^2}{4} - \lambda}} \rho_{\mathcal{L}}^{\sqrt{\tfrac{n^2}{4} - \lambda}}  \mathcal{C}^{\infty}(\mathfrak{M}_{b,h}); \\
& U_5^{\text{interior}} \in \rho_{\mathcal{F}}^{\gamma_{\mathcal{F}}} \rho_{\mathcal{R}}^{\sqrt{\tfrac{n^2}{4} - \lambda}} \rho_{\mathcal{L}}^{\sqrt{\tfrac{n^2}{4} - \lambda}}  \mathcal{C}^{\infty}(\mathfrak{M}_{b,h}) ; \ \ \supp U_5^{\text{interior}} \ \text{only intersects} \ \mathcal{F}, \mathcal{R}, \mathcal{L}
  \end{aligned}
 \end{equation}

   \
   \

 
 \begin{remark}\label{Mellin}[on solving regular singular ODE using Mellin transform]\
 
 Firstly we state the definition and some properties of the Mellin transform. For $g \in \mathcal{C}^{\infty}_0(\RR^+)$, the Mellin transform of $g$ is defined as
 $$Mg (s) = \int_0^{\infty} g(\mathsf{x})  \mathsf{x}^{is} \dfrac{d\mathsf{x}}{\mathsf{x}} $$
 $$s = \Rea s + i \Img s; \  M : \mathsf{x}^{\delta} L^2 (\RR^+; d\mathsf{x}) \stackrel{\text{isomorphism}}{\rightarrow} L^2 \big( \{ \Img s = \delta -\tfrac{1}{2}\} ; d\Rea s  \big)$$
 $$M(\mathsf{x}\partial_{\mathsf{x}} g) = -i s Mg $$
 The inverse Mellin transform is given by integrating along a horizontal line, provided this integral exists
 $$s = \Rea s + i \Img s; \ M^{-1} ( \mathsf{g} (s)) = \int_{\Img s = \text{constant}} \mathsf{g} (s) \mathsf{x}^{-is} \, d\Rea s $$
 So in particular 
 $$M^{-1} : L^2 \big( \{ \Img s = 0\} ; d\Rea s  \big) \rightarrow \mathsf{x}^{\tfrac{1}{2}} L^2 (\RR^+; dx)$$
 For our purpose, we will apply the Mellin transform to solve the following ODE on the half line $[0,\infty)_{\mathsf{x}}$ with $f \in \mathcal{S}_{[0,\infty)}$, where $\mathcal{S}_{[0,\infty)}$ has support away from $0$ and vanishes rapidly at infinity.
 \begin{equation}\label{eqMellin}
  \left[(\mathsf{x} \partial_{\mathsf{x}})^2 - \alpha  \right] u = f ;\ \alpha > 0 
  \end{equation}
 Take the Mellin transform on both sides
 $$ \big((-is)^2 -\alpha \big)Mu = Mf \Rightarrow Mu  = \dfrac{-Mf}{s^2 + \alpha} $$
 and we obtain a specific solution for the ODE (\ref{eqMellin})
 $$ \Rightarrow u(s) = M^{-1}_{\Img s = 0} \Big(\dfrac{-Mf}{s^2 + \alpha} \Big)
  = \int_{\Img s = 0} \mathsf{x}^{-is}  \dfrac{-Mf}{s^2 + \alpha}\, d \Rea s $$
 Note that the RHS has poles at $\pm i \sqrt{\alpha}$, $\alpha > 0$. In order to know what the behavior of $u$ is at $\mathsf{x} = 0$ and $\mathsf{x} \rightarrow \infty$, we deform the contour of integration. In this case since we assume $f$ is `nice', only the poles $\pm\sqrt{\lvert \alpha\rvert}$ will affect the asymptotic behavior of $u$. In greater details, to read off the asymptotic property when $\mathsf{x} \rightarrow 0$, deform the contour of integration above $\Img s  = 0$, ie denote by $\tilde{u}$ by
 $$\tilde{u} = M^{-1}_{\Img s = c, c >0 } \Big(\dfrac{-Mf}{s^2 + \alpha} \Big)  = \int_{\Img s = c > 0} \mathsf{x}^{-is}  \dfrac{-Mf}{s^2 + \alpha}\, d \Rea s $$
 then at $\mathsf{x} \sim 0 $, $\tilde{u} \sim \mathsf{x}^{c}$ where $c  > 0$ since $\mathsf{x}^{-i s } = \mathsf{x}^{-i \Rea s} \mathsf{x}^{\Img s}$. Also we have 
 $$ u = \tilde{u} + \Res|_{s = i\sqrt{\alpha}} \Big(\dfrac{\mathsf{x}^{-is}(-M f)} {s^2 + \alpha} \Big); \dfrac{\mathsf{x}^{-is} (-M f)} {s^2 + \alpha} = \dfrac{\mathsf{x}^{-is} (-M f)} {(s-i\sqrt{\alpha})(s+i\sqrt{\alpha})}$$
 For $c >\sqrt{\alpha}$ 
 $$u = \tilde{u} +  i\dfrac{1}{2\sqrt{\alpha}}  \mathsf{x}^{\sqrt{\alpha}} (M f)(i\sqrt{\alpha}) $$
   $\tilde{u} \sim \mathsf{x}^c, c > 0$. Since the above is true for all $c >0$, take $c$ arbitrarily large so we can make $\tilde{u}$ vanishes to arbitrary order at $\mathsf{x} = 0$. Hence at $\mathsf{x} \sim 0$ the asymptotic behavior of $u$ is determined by the second term ie 
   $$\text{At}\ \mathsf{x} \sim 0 :\ u \sim \mathsf{x}^{\sqrt{\alpha}}$$
   Similarly, to determine the behavior of $u$ as $\mathsf{x}\rightarrow \infty$, we deform the contour to arbitrarily negative end of the imaginary axis ie denote by $\tilde{\tilde{u}}$
   $$\tilde{\tilde{u}} = M^{-1}_{\Img s = -\tilde{c} , \tilde{c} > 0} \Big(\dfrac{(-M f)}{s^2 + \alpha} \Big)  = \int_{\Img s = -\tilde{c}, \tilde{c} > 0} \mathsf{x}^{-is}  \dfrac{(-M f)}{s^2 + \alpha}\, d \Rea s $$ 
   $\tilde{\tilde{u}} \sim \mathsf{x}^{-\tilde{c}} , \tilde{c} > 0$ . Since the above is true for all $\tilde{c} >0$, take $\tilde{c}$ arbitrarily large ie we can make $\tilde{u}$ vanishes to arbitrary order at $\mathsf{x}^{-1} = 0$. Hence at $\mathsf{x} \rightarrow  \infty $ the asymptotic behavior of $u$ is determined by the second term ie 
    $$\text{At}\ \mathsf{x} \rightarrow  \infty :\ u \sim \mathsf{x}^{-\sqrt{\alpha}}$$

 Thus we have found a specific solution that solves 
 \begin{equation}\label{slnMellin}
 \begin{cases}
  \Big((\mathsf{x} \partial_{\mathsf{x}})^2 - \alpha  \Big) u = f ;\ \alpha > 0 , f \in \mathcal{S}_{[0,\infty)}\\
\mathsf{x} \sim 0 : u  = \mathsf{x}^{\sqrt{\alpha}}  \mathcal{C}^{\infty}\ \
\mathsf{x} \rightarrow \infty : u =   \mathsf{x}^{-\sqrt{\alpha}} \mathcal{C}^{\infty}
 \end{cases}
 \end{equation}
\textbf{This presents another way} to view  $(\mathsf{x} \partial_{\mathsf{x}})^2 - \alpha$ which as a hyperbolic problem is a second order ODE. Instead we can view it as an global elliptic PDE and to obtain a specific solution, we impose boundary condition, which in this case comprises of conditions at $\{\mathsf{Z}=0\}$ and at $\{\mathsf{Z}^{-1} = 0\}$ (\emph{in fact, due to the kernel being symmetric we will impose the same Dirichlet boundary at both ends}).
  
  \textbf{In the case where the inhomogeneous part $f$ has asymptotic at $\mathsf{x} \sim 0$ and $\mathsf{x}^{-1} \sim 0$} . These exponent will contribute to the poles. In the case there are just simple poles then we use the same exact technique to read off the behavior of the $u$ at $\mathsf{x}\sim 0$ and $\mathsf{x}^{-1} \sim 0$, but now the difference of $u$ and $\tilde{u}$ will be a sum of residues. Heuristic computation to see the contribution of the inhomogeneous part to the poles : if $f = f_1 + f_2 $ where $f_1\sim \mathsf{x}^{\beta}$ with $\beta \neq \pm{\sqrt{\alpha}}$  and $f_1$ decays rapidly at $\infty$, while  $f_2\sim \mathsf{x}^{-\gamma}$ with $\gamma \neq \pm{\sqrt{\alpha}}$  and $f_1$ vanishes to infinite order at $\mathsf{x} =0$, $\gamma  >0$
  $$M f \sim \int_0^c \mathsf{x}^{\beta} \mathsf{x}^{is -1 } \, d\mathsf{x} + \int_c^{\infty} \mathsf{x}^{-\gamma} \mathsf{x}^{is -1 } \, d\mathsf{x} 
   \sim \dfrac{c^{\beta +is}}{\beta +is}  +  \dfrac{c^{-\gamma +is}}{-\gamma +is}  $$
   For $ Mu = \dfrac{-Mf}{s^2+\alpha}$, the RHS will now have three poles : $\pm{\sqrt{\alpha}} , i\beta, -i\gamma$. The poles contributing to the behavior of $u$ at $\mathsf{x} \sim 0$ : $i\sqrt{\gamma}, i\beta$ ie $u \sim \mathsf{x}^{\sqrt{\gamma}} + \mathsf{x}^{\beta}$ . On the other hand, the poles contributing to the behavior of $u$ at $\mathsf{x}^{-1}  \sim 0$ : $-i\sqrt{\alpha}, -i\gamma$ ie $u \sim \mathsf{x}^{-\sqrt{\alpha}} + \mathsf{x}^{-\gamma}$ .

  \end{remark}

\subsection{Step 5 :  remove the error at $\mathfrak{B}_1$ and $\tilde{\mathfrak{B}}_1$ }


\subsubsection{Prep computation}\

  The projective coordinates away from $\mathcal{A}$  are
   $$ \tilde{t} = \dfrac{\mathsf{Z}}{\tilde{\nu}'}, \tilde{\nu}' , z'$$
   Defining function for $\mathfrak{B}_1$ : 
   $$\rho_{\mathfrak{B}_1} =  z' g_{\mathfrak{B}_1}$$
   where $g_{\mathfrak{B}_1}$ has the property
   \begin{itemize}
   \item Close $\mathfrak{F}_2$ then $g_{\mathfrak{B}_1}\in  \mathcal{C}^{\infty}(\mathfrak{M}_{b,h})$.
 \item Close to $\mathfrak{F}_1$ and $\mathcal{L}$ : $g_{\mathfrak{B}_1} \in \mathsf{Z} \mathcal{C}^{\infty}(\mathfrak{M}_{b,h})$ ; $\mathsf{Z} = \tfrac{z}{z'}$.
   \item  Close to $\mathcal{R}$ and $\mathcal{F}$ : $g_{\mathfrak{B}_1} \in \tilde{\nu}' (\mathfrak{M}_{b,h})$ ; $\tilde{\nu}' = \tfrac{h}{z'}$
   \item Close to $\mathcal{L}$ and $\mathcal{F}$ : $g_{\mathfrak{B}_1} \in \mathsf{Z} \tilde{\nu} (\mathfrak{M}_{b,h})$ ; $\mathsf{Z} = \tfrac{z}{z'} ; \tilde{\nu} = \tfrac{h}{z}$.
   \end{itemize}
    
  In these coordinates $\tilde{t}, \tilde{\nu}'$ and $z'$  $\QQ_b$ has the form : 
     \begin{align*}
  & (z')^2 \left[(\tilde{\nu}')^2 (\tilde{t} \partial_{\tilde{t}})^2 + (\tilde{\nu}')^2  (\lambda - \dfrac{n^2}{4}) + (\tilde{t} \tilde{\nu}')^3 z' + (\tilde{t} \tilde{\nu}')^2 \right]\\
 & = (z' \tilde{\nu}')^2 \left[ (\tilde{t} \partial_{\tilde{t}})^2 +   (\lambda - \dfrac{n^2}{4}) + \tilde{t}^3  \tilde{\nu}' z' + \tilde{t}^2 \right]
     \end{align*}
    the operator in the bracket on $\mathfrak{B}_1$ is a Bessel equation of order $(\dfrac{n^2}{4}-\lambda)^{1/2}$.

\subsubsection{\textbf{CONSTRUCTION}}
The error results from step 3, step 4 and step 5, which we will denote by $E_{\mathfrak{B}_1}$. 
 \begin{equation}\label{erroronB1}
 \begin{aligned}
 E_{\mathfrak{B}_1} =   E^{\mathcal{R}}_3  +   E_6^{\mathfrak{B}_1} \\
 E_3^{\mathcal{R}}  \in  \rho_{\mathfrak{B}_1}^{\alpha_{\mathfrak{B}_1}+2} \rho_{\mathcal{R}}^{\sqrt{\tfrac{n^2}{4}-\lambda}+1} \rho_{\mathcal{A}}^{\infty} \rho_{\mathfrak{F}_2}^{\infty}S^1_0(z' , \tilde{t}) \\
 E_6^{\mathfrak{B}_1} \in \rho_{\mathcal{F}}^{\infty}\rho_{\mathfrak{B}_1}^0 \rho_{\mathcal{R}}^{\sqrt{\tfrac{n^2}{4} - \lambda}+1} \rho_{\mathcal{L}}^{\sqrt{\tfrac{n^2}{4} - \lambda}+1}  \mathcal{C}^{\infty}(\mathfrak{M}_{b,h})
   \end{aligned}
    \end{equation}
 \textbf{Support:}  $E_6^{\mathfrak{B}_1}$ is supported away from $\mathcal{A}$, while $E_3^{\mathcal{R}}$ is supported near $\mathcal{R}$ and away from $\mathfrak{F}_1$ and $\mathcal{A}$. Since $E_{\mathfrak{B}_1} \in \rho_{\mathcal{A}}^{\infty}\rho_{\mathcal{F}}^{\infty} \rho_{\mathfrak{F}_1}^{\infty} \mathcal{C}^{\infty}(\mathfrak{M}_{b,h})$, we can consider $E_{\mathfrak{B}_1}$ to live on $B_1$,  the blown-down space from $\mathfrak{B}_1$ obtained by blowing down $\mathcal{A}\cap \mathfrak{B}_1$ and $\mathcal{F}\cap \mathfrak{B}_1$ (  where $B_1$ is the 2-dimensional product structure \emph{represented } in figure (\ref{ffb1}) by the rectangle). See figure (\ref{ffb1}). \newline 
   Expand $E_{\mathfrak{B}_1} $ at $\mathfrak{B}_1$
 $$E_{\mathfrak{B}_1} \stackrel{\text{taylor expand}}{\sim} \rho_{\mathfrak{B}_1}^{\tilde{\gamma}_{\mathfrak{B}_1}} \sum_j \rho_{\mathfrak{B}_1}^j E_{\mathfrak{B}_1; j};\ \ \tilde{\gamma}_{\mathfrak{B}_1} = \min ( \alpha_{\mathfrak{B}_1} +2, 0 ) = 0 $$
 \textbf{Properties of} $E_{\mathfrak{B}_1,j}$ \textbf{on} $\mathfrak{B}_1$ from (\ref{erroronB1}): they vanish to infinite order at all boundary faces except at $\mathcal{L} \cap \mathfrak{B}_1$ and $\mathcal{R}\cap \mathfrak{B}_1$ where they have exponent $\sqrt{\tfrac{n^2}{4} - \lambda}$.\newline
  We solve for $U_6^{\mathfrak{B}_1}$ of the form
   $$U_6^{\mathfrak{B}_1} \sim  \rho_{\mathfrak{B}_1}^{\tilde{\gamma}_{\mathfrak{B}_1}-2} \sum_j \rho_{\mathfrak{B}_1}^j U_{6; j};\ \ \tilde{\gamma}_{\mathfrak{B}_1} = 0  $$
    However for simpler computation, in stead of conjugating by power of $\rho_{\mathfrak{B}_1}$, we conjugate by power of $z'\langle \tilde{\nu}'\rangle$, which commutes with the operator. \nl
   
  $U_{6,0} $ \textbf{has to satisfy:}   
$$ \left[  (\tilde{t} \partial_{\tilde{t}})^2 +   (\lambda - \dfrac{n^2}{4})  + \tilde{t}^2\right]g_{\mathfrak{B}_1}^{\tilde{\gamma}_{\mathfrak{B}_1} -2} U_{6,0}=  - g_{\mathfrak{B}_1}^{\tilde{\gamma}_{\mathfrak{B}_1} } ( \tilde{\nu}')^{-2} \mathsf{E}_{5,0}$$
    On the product structure $B_1 = \RR^+_{\tilde{t}} \times \RR^+_{\tilde{\nu}'}$, properties of $g_{\mathfrak{B}_1}^{\tilde{\gamma}_{\mathfrak{B}_1} -2} ( \tilde{\nu}')^{-2} \mathsf{E}_{5,0}$ at the more interesting corners:
    \begin{itemize}
    \item Away from $\mathcal{F}, \mathcal{R}$ and $\mathfrak{F}_2$, $g_{\mathfrak{B}_1} \sim \mathsf{Z} = \tilde{t} \tilde{\nu}'$. However since $\mathsf{E}_{5,0}$ vanishes rapidly at $\tilde{t}$, in the corner of $\tilde{t}^{-1} = 0$ and $\tilde{\nu}^{-1} = 0$  $g_{\mathfrak{B}_1}^{\tilde{\gamma}_{\mathfrak{B}_1} } ( \tilde{\nu}')^{-2} \mathsf{E}_{5,0} \in (\tilde{\nu}')^{-\sqrt{\tfrac{n^2}{4} -\lambda}+\tilde{\gamma}_{\mathfrak{B}_1}-2}\mathcal{S}_{\tilde{t}}$, where $\mathcal{S}_{\tilde{t}}$ denotes functions vanishing rapidly in $\tilde{t}^{-1}= 0 $ and $\tilde{t} = 0$.
    \item Away from $\mathfrak{F}_1, \mathcal{A}, \mathfrak{F}_2, \mathcal{R}$, $g \sim Z\tilde{\nu} = \tilde{\nu}'$. 
   In the corner of $\tilde{t} = 0 , (\tilde{\nu}')^{-1} = 0$, $g_{\mathfrak{B}_1}^{\tilde{\gamma}_{\mathfrak{B}_1} } ( \tilde{\nu}')^{-2} \mathsf{E}_{5,0} \in (\tilde{\nu}')^{-\sqrt{\tfrac{n^2}{4} -\lambda} +\tilde{\gamma}_{\mathfrak{B}_1}-2}\mathcal{S}_{\tilde{t}}$.
       \end{itemize}

    along each flow we can use the Hankel functions (\emph{solutions to the Bessel equation}) to obtain the specific solution to the inhomogeneous equation.

     Furthermore, since the inhomogeneous part behaves in a specific way \emph{(using the first two listed properties of $E_{\mathfrak{B}_1;j}$)} we can uniquely solve for $\big(\langle \tilde{t}\tilde{\nu}'\rangle\langle\tilde{\nu}'\rangle\big)^{\tilde{\gamma}_{\mathfrak{B}_1} -2} U_{6,0}(\tilde{t},\tilde{\nu})$ with the following properties: 
\begin{itemize}
\item For $\lambda - \tfrac{n^2}{4} \notin \ZZ$ : 
at $\tilde{t}\sim 0$, $ g_{\mathfrak{B}_1}^{\tilde{\gamma}_{\mathfrak{B}_1} -2} U_{6,0} \stackrel{\text{asymptotic}}{\sim} \tilde{t}^{\sqrt{\tfrac{n^2}{4} - \lambda}}  (\tilde{\nu}')^{-\sqrt{\tfrac{n^2}{4} -\lambda} +\tilde{\gamma}_{\mathfrak{B}_1}-2}L^{\infty}$
 \item As $\tilde{t} \rightarrow \infty$,  $g_{\mathfrak{B}_1}^{\tilde{\gamma}_{\mathfrak{B}_1} -2} U_{6,0}\stackrel{\text{asymp}}{\sim}  \exp^{-i\tilde{t}} \tilde{t}^{-1/2} (\tilde{\nu}')^{-\sqrt{\tfrac{n^2}{4} -\lambda} +\tilde{\gamma}_{\mathfrak{B}_1}-2} S^0(\tilde{t})_{\tilde{\nu}'}$; 
  where $S^0(\tilde{t})_{\tilde{\nu}'}$ is symbol in $\tilde{t}$ with smooth parameter in $\tilde{\nu'}$.
 \end{itemize}
 
Let $\chi$ be a smooth compactly supported function with $\supp \chi(\mathsf{t}) \subset \{\lvert \mathsf{t} \rvert \leq 1\}$ and $\chi(t) \equiv 1 $ for $\lvert \mathsf{t} \rvert\leq \tfrac{1}{2}$. Define $U_{6,0}$ by
\begin{equation}\label{pullback0}
U_{6,0} = \big(1 -  \chi(\tilde{t}\tilde{\nu}') \big) \exp^{i\phi - i\tilde{t}+i\tfrac{1}{\tilde{\nu}'}} \tilde{U}_{2,0} + \chi(\tilde{t}\tilde{\nu}')   \tilde{U}_{2,0}
\end{equation}
ie we modify $\tilde{U}_{2,0}$ (in fact the pull back of $\tilde{U}_{2,0}$ from $\mathcal{B}_1$ by the projection $\pi : \mathcal{B}_1 \times \RR^+_{\rho_{\mathcal{B}_1}} \rightarrow  \mathcal{B}_1$) on the region $\{\tilde{t}\tilde{\nu}' > 1\}$ to have oscillatory part given by the phase function we have chosen ie $\varphi_{\text{in}}$.

Consider the resulting error at this stage
\begin{align*}
&\big[ (\tilde{t} \partial_{\tilde{t}})^2 +   (\lambda - \dfrac{n^2}{4})  + \tilde{t}^2 + \tilde{t}^2\tilde{\nu}' z' \big] \rho_{\mathfrak{B}_1}^{\tilde{\gamma}_{\mathfrak{B}_1} -2}U_{6,0}\\
= &\big[ (\tilde{t} \partial_{\tilde{t}})^2 +   (\lambda - \dfrac{n^2}{4})  + \tilde{t}^2 + \tilde{t}^2\tilde{\nu}' z' \big] \rho_{\mathfrak{B}_1}^{\tilde{\gamma}_{\mathfrak{B}_1} -2}\Big(\big(1 -  \chi(\tilde{t}\tilde{\nu}') ) e^{i\phi - i\tilde{t}+i\tfrac{1}{\tilde{\nu}'}} \tilde{U}_{6,0} + \chi(\tilde{t}\tilde{\nu}')   \tilde{U}_{6,0}\Big)\\
= & \ (1 -  \chi(\tilde{t}\tilde{\nu}') ) \big( e^{i\phi - i\tilde{t}+i\tfrac{1}{\tilde{\nu}'}}  - 1\big) \big[ (\tilde{t} \partial_{\tilde{t}})^2 +   (\lambda - \dfrac{n^2}{4})  + \tilde{t}^2 \big]  \rho_{\mathfrak{B}_1}^{\tilde{\gamma}_{\mathfrak{B}_1} -2}\tilde{U}_{6,0}\\
& + (1 -  \chi(\tilde{t}\tilde{\nu}') )[(\tilde{t} \partial_{\tilde{t}})^2, e^{i\phi - i\tilde{t}+i\tfrac{1}{\tilde{\nu}'}}  ] \rho_{\mathfrak{B}_1}^{\tilde{\gamma}_{\mathfrak{B}_1} -2}\tilde{U}_{6,0}  -   [(\tilde{t} \partial_{\tilde{t}})^2, \chi(\tilde{t}\tilde{\nu}')  ] e^{i\phi - i\tilde{t}+i\tfrac{1}{\tilde{\nu}'}}  \rho_{\mathfrak{B}_1}^{\tilde{\gamma}_{\mathfrak{B}_1} -2} \tilde{U}_{6,0}   \\
&+   [(\tilde{t} \partial_{\tilde{t}})^2, \chi(\tilde{t}\tilde{\nu}')  ] \rho_{\mathfrak{B}_1}^{\tilde{\gamma}_{\mathfrak{B}_1} -2} \tilde{U}_{6,0}
 +   \big[ (\tilde{t} \partial_{\tilde{t}})^2 +   (\lambda - \dfrac{n^2}{4})  + \tilde{t}^2 \big] \rho_{\mathfrak{B}_1}^{\tilde{\gamma}_{\mathfrak{B}_1} -2} \tilde{U}_{6,0} + \tilde{t}^2\tilde{\nu}' z'  \rho_{\mathfrak{B}_1}^{\tilde{\gamma}_{\mathfrak{B}_1} -2}U_{6,0}
 \end{align*}
And from the prep calculations \emph{placed at the end of the subsubsection}, we have :
$$\QQ_b \rho_{\mathfrak{B}_1}^{\tilde{\gamma}_{\mathfrak{B}_1} -2}  U_{6,0} + \rho_{\mathfrak{B}_1}^{\tilde{\gamma}_{\mathfrak{B}_1} } \mathsf{E}_{5,0} = \mathsf{O}(\rho_{\mathfrak{B}_1}^{\tilde{\gamma}_{\mathfrak{B}_1} -2+1}) $$

  \textbf{Equations satisfied by $U_{2,j}$ for $j > 0$} : we are dealing with the same type of inhomongeous ODE:
\begin{equation}
\begin{aligned}
&(\tilde{\nu}')^2 \left[(\tilde{t} \partial_{\tilde{t}})^2 +   (\lambda - \dfrac{n^2}{4}) + \tilde{t}^2\right] g_{\mathfrak{B}_1}^{\tilde{\gamma}_{\mathfrak{B}_1} -2+j}  U_{2,j}  =  - g_{\mathfrak{B}_1}^{\tilde{\gamma}_{\mathfrak{B}_1} -2+j}\mathsf{E}_{5,j} \\
&- \lim_{\rho_{\mathfrak{B}_1}\rightarrow 0} \rho_{\mathfrak{B}_1}^{-1}\Big[g_{\mathfrak{B}_1}^{\tilde{\gamma}_{\mathfrak{B}_1} -2+j-1} \mathsf{E}_{5,j-1} +  Q_b g_{\mathfrak{B}_1}^{\tilde{\gamma}_{\mathfrak{B}_1} -2+j-1}  U_{2,j-1}   \Big]  
\end{aligned}
\end{equation}
Since the the inhomogeneous part has the asymptotic we want (\emph{by the way we constructed $U_{2,0}$ and then by induction $U_{2,j-1}$ inherits the same property}), $\tilde{U}_{2,j}$ also satisfies the following properties:
\begin{itemize}
\item For $2\sqrt{\tfrac{n^2}{4}-\lambda} \notin \ZZ$ : 
at $\tilde{t}\sim 0$, $ g_{\mathfrak{B}_1}^{\tilde{\gamma}_{\mathfrak{B}_1} -2+j} U_{6,j} \stackrel{\text{asymptotic}}{\sim} \tilde{t}^{\sqrt{\tfrac{n^2}{4} - \lambda}}  (\tilde{\nu}')^{-\sqrt{\tfrac{n^2}{4} -\lambda} +\tilde{\gamma}_{\mathfrak{B}_1}-2+j}L^{\infty}$
 \item As $\tilde{t} \rightarrow \infty$,  $g_{\mathfrak{B}_1}^{\tilde{\gamma}_{\mathfrak{B}_1} -2+j} U_{6,j}\stackrel{\text{asymp}}{\sim}  \exp^{-i\tilde{t}} \tilde{t}^{-1/2} (\tilde{\nu}')^{-\sqrt{\tfrac{n^2}{4} -\lambda} +\tilde{\gamma}_{\mathfrak{B}_1}-2+j} S^0(\tilde{t})_{\tilde{\nu}'}$; 
  where $S^0(\tilde{t})_{\tilde{\nu}'}$ is symbol in $\tilde{t}$ with smooth parameter in $\tilde{\nu'}$.
 \end{itemize}
 Then we define $U_{6,j}$ as  \begin{equation}\label{functionOnB1}
 U_{6,j} = (1 -  \chi(\tilde{t}\tilde{\nu}') ) \exp(-i\varphi_{\text{lim}} + i\tilde{t}-i\tfrac{1}{\tilde{\nu}'}) \tilde{U}_{6,j} + \chi(\tilde{t}\tilde{\nu}')   \tilde{U}_{6,j}
 \end{equation}
 where in the local coordinates on the region $\tilde{t}\tilde{\nu}' > 1$
 $$\phi_{\text{lim}} =\phi_{\text{in}} \big|_{\tilde{t}\tilde{\nu}' > 1} = \tfrac{2}{3}(z'\tilde{\nu}')^{-1} \big[ (\tilde{t}\tilde{\nu}' z' + 1)^{3/2} - (z'+1)^{3/2}\big]$$
 thus the $U_{6,j}$ has only oscillatory behavior at $\mathfrak{F}_{1}\cap \mathfrak{B}_1$. Take asymptotic summation to construct $U_6$:
$$U_6^{\mathfrak{B}_1}  \stackrel{\text{borel summation}}{\sim} \rho_{\mathfrak{B}-1}^{\tilde{\gamma}_{\mathfrak{B}_1} -2} \sum_j \rho_{\mathfrak{B}-1}^j U_{6,j} ;\ \ \tilde{\gamma}_{\mathfrak{B}_1} = 0$$
$$\text{so}\ \  U_6^{\mathfrak{B}_1} = \exp(- i\varphi_{\text{lim}} )V_6^{\mathfrak{B}_1}+ \tilde{V}_6^{\mathfrak{B}_1}  $$
$$\text{where}\ \ 
 V_6^{\mathfrak{B}_1}  \in  \rho_{\mathcal{A}}^{\infty} \rho_{\mathfrak{F}_2}^{\infty} \rho_{\mathcal{F}}^{\infty} \rho_{\mathfrak{B}_1}^{\tilde{\gamma}_{\mathfrak{B}_1} -2} \rho_{\mathcal{R}}^{\infty}  \rho_{\mathcal{L}}^{\sqrt{\tfrac{n^2}{4}-\lambda}  }  \rho_{\mathfrak{F}_1}^{ \tilde{\gamma}_{\mathfrak{B}_1} -2 + \tfrac{1}{2}} $$
 $$  \tilde{V}_6^{\mathfrak{B}_1}\in  \rho_{\mathcal{A}}^{\infty} \rho_{\mathfrak{F}_2}^{\infty} \rho_{\mathcal{F}}^{\infty} \rho_{\mathfrak{B}_1}^{\tilde{\gamma}_{\mathfrak{B}_1} -2} \rho_{\mathcal{L}}^{\sqrt{\tfrac{n^2}{4}-\lambda}}  \rho_{\mathcal{R}}^{\sqrt{\tfrac{n^2}{4}-\lambda}  }  \rho_{\mathfrak{F}_1}^{\infty}   $$
\begin{remark}
To determine the asymptotics  of $V_6^{\mathfrak{B}_1}$ and $\tilde{V}_6^{\mathfrak{B}_1}$ :
\begin{enumerate}
\item At the corner of $\mathfrak{F}_1$ and $\mathcal{L}$, the local coordinates are
$$\tilde{\nu} = \tfrac{h}{z}, \tilde{\tilde{\nu}} = ( \mathsf{Z} \tilde{\nu})^{-1} , z$$
$$\rho_{\mathcal{L}} \sim  \tilde{\tilde{\nu}};\ \rho_{\mathfrak{F}_1} \sim \tilde{\nu} ; \tilde{\mathfrak{B}_1} = z$$
Away from $\mathcal{F}, \mathcal{R}$ and $\mathfrak{F}_2$, $g_{\mathfrak{B}_1} \sim  \mathsf{Z} = \tilde{\nu}^{-1} \tilde{\tilde{\nu}}^{-1}$. Also expressing $\tilde{\nu}'$ and $\tilde{t}$ in terms of the local coordinates: $\tilde{t} = \tilde{\nu}^{-1}$, $\tilde{\nu}' = \tilde{\tilde{\nu}}^{-1}$. Thus 
$$\exp(i\varphi_{\text{lim}}) \tilde{U}_{6,j}
 \sim \tilde{\nu}^{\tfrac{1}{2}} \tilde{\tilde{\nu}}^{\sqrt{\tfrac{n^2}{4} -\lambda} -\tilde{\gamma}_{\mathfrak{B}_1}+2}
 \tilde{\nu}^{\tilde{\gamma}_{\mathfrak{B}_1}-2} \tilde{\tilde{\nu}}^{\tilde{\gamma}_{\mathfrak{B}_1}-2 }
 = \tilde{\nu}^{\tfrac{1}{2} +\tilde{\gamma}_{\mathfrak{B}_1}-2  }\tilde{\tilde{\nu}}^{\sqrt{\tfrac{n^2}{4} -\lambda}}$$

\item At the corner of $\mathcal{L} $ and $\mathcal{F}$ the coordinates : $\mathsf{Z}^{-1}, h, \tilde{\mu} = \tfrac{z}{h}$. Away from $\mathfrak{F}_1, \mathcal{A}, \mathfrak{F}_2, \mathcal{R}$, $g \sim \mathsf{Z} \tilde{\nu} =\mathsf{Z} \tilde{\mu}^{-1}$ . Expressing $\tilde{\nu}'$ and $\tilde{t}$ in terms of :
$$\tilde{t} = \tilde{\mu} ; \tilde{\nu}' = \mathsf{Z} \tilde{\mu}^{-1}$$
$$  \tilde{\mu}^{\sqrt{\tfrac{n^2}{4} - \lambda}}     \mathsf{Z}^{-\tilde{\gamma}_{\mathfrak{B}_1}+2 }  \tilde{\mu}^{\tilde{\gamma}_{\mathfrak{B}_1}-2}  \mathsf{Z}^{-\sqrt{\tfrac{n^2}{4} -\lambda} +\tilde{\gamma}_{\mathfrak{B}_1}-2} \tilde{\mu}^{\sqrt{\tfrac{n^2}{4} -\lambda} -\tilde{\gamma}_{\mathfrak{B}_1}+2}
$$
Here the exponent in $\tilde{\mu}$ does not matter since we already have infinite order vanishing at $\mathcal{F}$ whose boundary defining function is $\tilde{\mu}$. Thus at the corner of $\mathcal{L}$ and $\mathcal{F}$, $$\tilde{U}_{6,j} \sim \rho_{\mathcal{L} }^{\sqrt{\tfrac{n^2}{4}-\lambda}} \rho_{\mathcal{F}}^{\infty} $$
\end{enumerate}
\end{remark}
Denote by $E_{6b}$ the resulting error, since $\tilde{\nu}' = h z^{-1} \mathsf{Z}^{-1}$ and account for a factor of $(\tilde{\nu}')^2$ we have
$$E_{6b} = \QQ_b U_6 + E_{\mathfrak{B}_1} $$
then 
$$E_{6b} = \exp(-i\varphi_{\text{in}}) F_{6b} $$
$$  F_{6b} \in \rho_{\mathcal{A}}^{\infty} \rho_{\mathfrak{F}_2}^{\infty} \rho_{\mathcal{F}}^{\infty} \rho_{\mathfrak{B}_1}^{\infty} \rho_{\mathcal{L}}^{\sqrt{\tfrac{n^2}{4}-\lambda}+ \tilde{\gamma}_{\mathfrak{B}_1} }  \rho_{\mathcal{R}}^{\sqrt{\tfrac{n^2}{4}-\lambda} +1 }  \rho_{\mathfrak{F}_1}^{ \sqrt{\tfrac{n^2}{4}-\lambda}  + \tilde{\gamma}_{\mathfrak{B}_1} -2 + \tfrac{1}{2}+1} ;  \tilde{\gamma}_{\mathfrak{B}_1} = 0$$
Note that the above process introduces error with non trivial behavior back at $\mathfrak{F}_1$ so we will have eliminate it in the following subsection 6b, using the same argument as in step 4. However this is time is easier since we will be only dealing with one conjugated operator since there is one phase function $\varphi_{\text{in}}$ that we are dealing with.

\subsubsection{Dealing with the error at $\tilde{\mathfrak{B}}_1$}

$$E_{\tilde{\mathfrak{B}}_1} =  E_6^{\tilde{\mathfrak{B}}_1} \in \rho_{\mathcal{F}}^{\infty}\rho_{\tilde{\mathfrak{B}}_1}^0 \rho_{\mathcal{R}}^{\sqrt{\tfrac{n^2}{4} - \lambda}} \rho_{\mathcal{L}}^{\sqrt{\tfrac{n^2}{4} - \lambda}}  \mathcal{C}^{\infty}(\mathfrak{M}_{b,h}) ; $$
 The projective coordinates away from $\mathcal{A}$  are
   $$ \tilde{t} = \dfrac{\mathsf{Z}}{\tilde{\nu}'}, \tilde{\nu}' , z'$$
  In these coordinates $\QQ_b$ has the form : 
     \begin{align*}
  &(z' \tilde{\nu}')^2 \left[ (\tilde{t} \partial_{\tilde{t}})^2 +   (\lambda - \dfrac{n^2}{4}) + \tilde{t}^3  \tilde{\nu}' z' - \tilde{t}^2 \right]
     \end{align*}
     The operator in the bracket  on $\tilde{\mathfrak{B}}_1$, $\mathsf{P}_{\tilde{\mathfrak{B}}_1}$ restricts to a Bessel equation of imaginary argument of order $(\tfrac{n^2}{4}-\lambda)^{1/2}$:
 $$(\tilde{t} \partial_{\tilde{t}})^2 +   (\lambda - \dfrac{n^2}{4})  - \tilde{t}^2    $$
 On the product structure $\tilde{B}_1 = \RR^+_{\tilde{t}} \times \RR^+_{\tilde{\nu}'}$, along each flow we can use the Macdonal functions to obtain the specific solution to the inhomogeneous equation with the following properties: 
\begin{itemize}
\item For $2\sqrt{\tfrac{n^2}{4}-\lambda} \notin \mathbb{Z}$ : 
at $\tilde{t}\sim 0$, $  \stackrel{\text{asymptotic}}{\sim} \tilde{t}^{\sqrt{\tfrac{n^2}{4} - \lambda}} L^{\infty}$
 \item  at $\tilde{t} \rightarrow \infty$,  $\stackrel{\text{asymp}}{\sim}  \exp(-\tilde{t}) \ \tilde{t}^{-1/2}\ S^0(\tilde{t})_{\tilde{\nu}'}$; ie rapidly deceasing, 
  where $S^0(\tilde{t})_{\tilde{\nu}'}$ is symbol in $\tilde{t}$ with parameter in $\tilde{\nu'}$.
 \end{itemize}
 Expand in taylor series around $\tilde{\mathfrak{B}}_1$.
  $$E_{\tilde{\mathfrak{B}}_1} \stackrel{\text{taylor expand}}{\sim} \rho_{\mathfrak{B}_1}^{\tilde{\gamma}_{\mathfrak{B}_1}} \sum_j \rho_{\mathfrak{B}_1}^j E_{\tilde{\mathfrak{B}}; j}  ; \ \ \tilde{\gamma}_{\tilde{\mathfrak{B}}_1} = 0 $$
   We solve for $U_6^{\tilde{\mathfrak{B}}}$ of the form
   $$U_6^{\tilde{\mathfrak{B}}} \sim  \rho_{\tilde{\mathfrak{B}}_1}^{-2} \sum_j \rho_{\tilde{\mathfrak{B}}_1}^j U_{6; j};\ \ \tilde{\gamma}_{\tilde{\mathfrak{B}}_1} = 0  $$

  Note that line (\ref{functionOnB1}) would be replaced by:
\begin{align*}
 U_{6,j}^{\tilde{\mathfrak{B}}_1} = (1 -  \chi(\tilde{t}\tilde{\nu}') ) \exp^{-\varphi_{\tilde{\mathfrak{B}}_1} + \tilde{t}-\tfrac{1}{\tilde{\nu}'}} \tilde{U}_{6,j}^{\tilde{\mathfrak{B}}_1} + \chi(\tilde{t}\tilde{\nu}')   \tilde{U}_{6,j}^{\tilde{\mathfrak{B}}_1}\\
 \varphi_{\tilde{\mathfrak{B}}_1} = \tfrac{2}{3} \big[- (1 - z)^{3/2} + (1 - z')^{3/2}  \big]
 \end{align*}
Then take asymptotic summation to construct $U_6$:
$$ U_6 = \exp(- \varphi_{\tilde{\mathfrak{B}}_1} ) \tilde{U}_6^{\tilde{\mathfrak{B}}_1}$$
$$\tilde{U}_6^{\tilde{\mathfrak{B}}_1} \in  \rho_{\mathcal{A}}^{\infty} \rho_{\tilde{\mathfrak{F}}_2}^{\infty}\rho_{\tilde{\mathfrak{F}}_1}^{\infty}  \rho_{\mathcal{F}}^{\infty} \rho_{\tilde{\mathfrak{B}}_1}^{\tilde{\gamma}_{\tilde{\mathfrak{B}}_1} -2} \rho_{\mathcal{L}}^{\sqrt{\tfrac{n^2}{4}-\lambda} }  \rho_{\mathcal{R}}^{\sqrt{\tfrac{n^2}{4}-\lambda} }  ; \  \tilde{\gamma}_{\tilde{\mathfrak{B}}_1} = 0 $$

\subsubsection{Summarizing}

$$ U_6 = U_6^{\mathfrak{B}_1}  + U_6^{\tilde{\mathfrak{B}}_1}$$
$$ U_6^{\tilde{\mathfrak{B}}_1} = \exp(- \varphi_{\tilde{\mathfrak{B}}_1} ) V_6^{\tilde{\mathfrak{B}}_1}+\tilde{V}_6^{\tilde{\mathfrak{B}}_1} $$
$$\tilde{U}_6^{\tilde{\mathfrak{B}}_1} \in  \rho_{\mathcal{A}}^{\infty} \rho_{\tilde{\mathfrak{F}}_2}^{\infty}\rho_{\tilde{\mathfrak{F}}_1}^{\infty}  \rho_{\mathcal{F}}^{\infty} \rho_{\tilde{\mathfrak{B}}_1}^{\tilde{\gamma}_{\tilde{\mathfrak{B}}_1} -2} \rho_{\mathcal{L}}^{\sqrt{\tfrac{n^2}{4}-\lambda}}  \rho_{\mathcal{R}}^{\sqrt{\tfrac{n^2}{4}-\lambda}  }  ; \  \tilde{\gamma}_{\tilde{\mathfrak{B}}_1} = 0 $$
$$\QQ_b U_6^{\tilde{\mathfrak{B}}_1}\in  \rho_{\mathcal{A}}^{\infty} \rho_{\tilde{\mathfrak{F}}_2}^{\infty}\rho_{\tilde{\mathfrak{F}}_1}^{\infty} \rho_{\mathcal{F}}^{\infty} \rho_{\tilde{\mathfrak{B}}_1}^{\infty} \rho_{\mathcal{L}}^{\sqrt{\tfrac{n^2}{4}-\lambda}}  \rho_{\mathcal{R}}^{\sqrt{\tfrac{n^2}{4}-\lambda}  +1 }   ;  \tilde{\gamma}_{\tilde{\mathfrak{B}}_1} = 0 $$

$$U_6^{\mathfrak{B}_1} = \exp(- i\varphi_{\text{lim}} ) V_6^{\mathfrak{B}_1} + \tilde{V}_6^{\mathfrak{B}_1}$$
$$V_6^{\mathfrak{B}_1}  \in  \rho_{\mathcal{A}}^{\infty} \rho_{\mathfrak{F}_2}^{\infty} \rho_{\mathcal{F}}^{\infty} \rho_{\mathfrak{B}_1}^{\tilde{\gamma}_{\mathfrak{B}_1} -2} \rho_{\mathcal{R}}^{\infty}  \rho_{\mathcal{L}}^{\sqrt{\tfrac{n^2}{4}-\lambda} }  \rho_{\mathfrak{F}_1}^{\tilde{\gamma}_{\mathfrak{B}_1} -2 + \tfrac{1}{2}} $$
$$  \tilde{V}_6^{\mathfrak{B}_1}\in  \rho_{\mathcal{A}}^{\infty} \rho_{\mathfrak{F}_2}^{\infty} \rho_{\mathcal{F}}^{\infty} \rho_{\mathfrak{B}_1}^{\tilde{\gamma}_{\mathfrak{B}_1} -2} \rho_{\mathcal{L}}^{\sqrt{\tfrac{n^2}{4}-\lambda} }  \rho_{\mathcal{R}}^{\sqrt{\tfrac{n^2}{4}-\lambda}  }  \rho_{\mathfrak{F}_1}^{\tilde{\gamma}_{\mathfrak{B}_1} -2 + \tfrac{1}{2}}   $$

Write $E_{6b} = \QQ_b U_6^{\mathfrak{B}_1} + E_{\mathfrak{B}_1} $ then
$$E_{6b}  \in \rho_{\mathcal{A}}^{\infty} \rho_{\mathfrak{F}_2}^{\infty} \rho_{\mathcal{F}}^{\infty} \rho_{\mathfrak{B}_1}^{\infty} \rho_{\mathcal{R}}^{\sqrt{\tfrac{n^2}{4}-\lambda}+1}  \rho_{\mathcal{L}}^{\sqrt{\tfrac{n^2}{4}-\lambda} + \tilde{\gamma}_{\mathfrak{B}_1} }  \rho_{\mathfrak{F}_1}^{ \sqrt{\tfrac{n^2}{4}-\lambda}  + \tilde{\gamma}_{\mathfrak{B}_1} -2 + \tfrac{1}{2}+1} ; \ \  \tilde{\gamma}_{\mathfrak{B}_1} = 0 $$

\textbf{PREP COMPUTATIONS used in calculating error}  
   The chosen phase function in current coordinates and on $\tilde{t}\tilde{\nu}'  >1$ 
$$\varphi_{\text{in}} = \dfrac{2}{3}\dfrac{1}{z' \tilde{\nu}'} \big[(\tilde{t}\tilde{\nu}' z' + 1)^{3/2} -(z'+1)^{3/2} \big]$$
  $$\lim_{z' \rightarrow 0} -\dfrac{2}{3} \dfrac{\big[(\tilde{t} \tilde{\nu}' z' + 1)^{3/2} - (z'+1)^{3/2} \big]}{z' \tilde{\nu}'}
   = -\dfrac{(\tilde{t} \tilde{\nu}' - 1)}{\tilde{\nu}'}$$
  
 \text{Prep calculations} for $\tilde{t}\rightarrow \infty$ so for $\tilde{\nu}' \neq 0$ , $\tilde{t} \tilde{\nu}' > 0$
 \begin{align*}
\tilde{t}\partial_{t} \left[  \dfrac{2}{3}\dfrac{1}{z' \tilde{\nu}'} \Big[(\tilde{t}\tilde{\nu}' z' + 1)^{3/2} -(z'+1)^{3/2} \Big] -  (\tilde{t} - \dfrac{1}{\tilde{\nu}'})\right]
&= \tilde{t}\left[ (\tilde{t} \tilde{\nu}' z' +1)^{1/2} + 1 \right]\\
(\tilde{t}\partial_{t})^2 \left[  \dfrac{2}{3}\dfrac{1}{z' \tilde{\nu}'} \Big[(\tilde{t}\tilde{\nu}' z' + 1)^{3/2} -(z'+1)^{3/2} \Big] - ( \tilde{t} - \dfrac{1}{\tilde{\nu}'}\right] &= \tilde{t}\left[ (\tilde{t} \tilde{\nu}' z' +1)^{1/2} - 1 \right] +\dfrac{1}{2} \dfrac{\tilde{t^2}\tilde{\nu}' z'} {(\tilde{t} \tilde{\nu}' z' +1)^{1/2}}\\
&= \dfrac{\tilde{t}^2 \tilde{\nu}' z' }{ 1 +  (\tilde{t} \tilde{\nu}' z' +1)^{1/2} } +\dfrac{1}{2} \dfrac{\tilde{t^2}\tilde{\nu}' z'} {(\tilde{t} \tilde{\nu}' z' +1)^{1/2}}
 \end{align*}
 Also on this region on which $\{\tilde{t}\tilde{\nu}' > 1 \}$ we have
 $$ \lim_{z' \rightarrow 0} \dfrac{2}{3} \dfrac{(\tilde{t} \tilde{\nu}' z' + 1)^{3/2} - (z+1)^{3/2} }{z' \tilde{\nu}'}
   = \dfrac{\tilde{t} \tilde{\nu}' - 1}{\tilde{\nu}'}
   \Rightarrow  \lim_{z' \rightarrow 0} \exp \Big(-i\varphi_{\text{lim}} + ( i\tilde{t} -\dfrac{1}{\tilde{\nu}'}) \Big) - 1 = 0$$
   $$\Rightarrow  \exp \Big(-i\varphi_{\text{lim}} + ( i\tilde{t} -\dfrac{1}{\tilde{\nu}'} )\Big) - 1 = \mathsf{O}(z')$$

We have similar computation for $\varphi_{\tilde{\mathfrak{B}}_1}$.

\subsection{Step 6 :  removing error at $\mathfrak{F}_1$}
  The construction from step 6 introduces error on $\mathfrak{F}_1$, which will be removed in this step.  
    We will need to work with the conjugated operator, conjugated by $\varphi_{\text{lim}}$ (which is $\varphi_{\text{in}}$ on this region).  As a reminder we write out the form of the phase function near $\mathfrak{F}_1$ 
    $$\varphi_{\text{lim}} = \tfrac{2}{3} (z\mathsf{Z}' \tilde{\tilde{\mu}})^{-1}\big[ (z+1)^{3/2} - (z\mathsf{Z}'+1)^{3/2}\big]$$
    
    \textbf{The coordinates in this neighborhood and product structrure} :Product structure 
    $$[0,\delta)_{\rho_{\mathfrak{F}_1}} \times ( \mathfrak{F}_1)_{\tilde{\tilde{\mu}}, z}$$
    $$\text{Coordinates}:\ \ \tilde{\tilde{\mu}} = \dfrac{\tilde{\nu}}{\mathsf{Z}'} , \mathsf{Z}', z $$
    $$\rho_{\mathfrak{F}_1} = \mathsf{Z}' \langle \tilde{\tilde{\mu}}\rangle$$
    By direct computation, the operator conjugated by $\exp(-\varphi_{\text{lim}})$ has the form :
    $$ z^2\dfrac{ \tilde{\tilde{\mu}}}{\langle \tilde{\tilde{\mu}}\rangle} \rho_{\mathfrak{F}_1}
    \Big[ \dfrac{ \tilde{\tilde{\mu}}}{\langle \tilde{\tilde{\mu}}\rangle} \rho_{\mathfrak{F}_1}(- \mathsf{Z}' \partial_{\mathsf{Z}'} + z\partial_z)^2 +\dfrac{ \tilde{\tilde{\mu}}}{\langle \tilde{\tilde{\mu}}\rangle} \rho_{\mathfrak{F}_1} (\lambda - \dfrac{n^2}{4}) + 2i (z+1)^{1/2}   (- \mathsf{Z}' \partial_{\mathsf{Z}'} + z\partial_z)    $$
    $$ +i(z+1)^{1/2} +\dfrac{1}{2}  i\dfrac{z}{(z+1)^{1/2}} \Big]$$
    
    We are removing  $E_{6b}^{\mathfrak{F}_1} = \exp(-i\varphi_{\text{lim}}) F_{6b}^{\mathfrak{F}_1}$, which comes from $ E_{6b} = E_{6b}^{\mathfrak{F}_1} + E_{6b}^{\mathcal{R}}$ with the index indicates the support of the function. 
$$  F_{6b} \in \rho_{\mathcal{A}}^{\infty} \rho_{\mathfrak{F}_2}^{\infty} \rho_{\mathcal{F}}^{\infty} \rho_{\mathfrak{B}_1}^{\infty} \rho_{\mathcal{R}}^{\infty}  \rho_{\mathcal{L}}^{\sqrt{\tfrac{n^2}{4}-\lambda}  }  \rho_{\mathfrak{F}_1}^{  \tilde{\gamma}_{\mathfrak{B}_1} -2 + \tfrac{1}{2}+1} ; \tilde{\gamma}_{\mathfrak{B}_1} = 0
$$
   We will construct $U_{6b}$ (supported close to $\mathfrak{F}_1$) with
 $$U_{6b} = \exp(-i\varphi_{\text{lim}}) \tilde{U}_{6b}$$
 $$\tilde{U}_{6b} \in \rho_{\mathfrak{F}_1}^{ \tilde{\gamma}_{\mathfrak{B}_1} -2 + \tfrac{1}{2} } \rho_{\mathfrak{B}_1}^{\infty} \rho_{\mathcal{A}}^{\infty} \rho_{\mathcal{L}}^{ \sqrt{\tfrac{n^2}{4}-\lambda}} ; \  \tilde{\gamma}_{\mathfrak{B}_1} = 0 $$ 
 Expand $F_{6b}$ at $\mathfrak{F}_1$
   $$F_{6b} \stackrel{\text{taylor expansion}}{\sim}  \rho_{\mathfrak{F}_1}^{\tilde{\gamma}_{\mathfrak{B}_1} -2 + \tfrac{1}{2}+1}
   \sum_k  \rho_{\mathfrak{F}_1}^k \tilde{F}_{2,k} $$

        \textbf{First transport equation}
   $$  \begin{cases}
     2i(z+1)^{1/2}  z^2\dfrac{\tilde{\tilde{\mu}}}{\langle \tilde{\tilde{\mu}}\rangle} \big[ z\partial_z - (\tilde{\gamma}_{\mathfrak{B}_1} -2 + \tfrac{1}{2}  +0) + \dfrac{1}{2} + \dfrac{1}{4} \dfrac{z}{z+1}  \big]  U_{6b;0} = - F_{6b;0}\\
     \text{IC} : z^4 \mathsf{U}_{5,0} |_{z=0} = 0  
     \end{cases}$$

     \textbf{Higher transport equation}
   $$   \begin{cases}
       2i(z+1)^{1/2}  z^2\dfrac{\tilde{\tilde{\mu}}}{\langle \tilde{\tilde{\mu}}\rangle}\big[ z\partial_z - (\tilde{\gamma}_{\mathfrak{B}_1} -2 + \tfrac{1}{2}  +j)+ \dfrac{1}{2} + \dfrac{1}{2} \dfrac{z}{z+1}  \big]  \tilde{U}_{6b;j} = - F_{5,0}\\
    \ \ \ \ \ \ \ \  \ \ \ \ \    - z^2\dfrac{\tilde{\tilde{\mu}}^2}{\langle \tilde{\tilde{\mu}}\rangle^2}\big[ \big(z\partial_z - (\tilde{\gamma}_{\mathfrak{B}_1} -2 + \tfrac{1}{2}  +j-1)\big)^2 + \lambda - \dfrac{n^2}{4}\big]\tilde{U}_{6b;j-1}  \\
     \text{IC} : z^4 \mathsf{U}_{5,0} = 0  
    \end{cases} $$
    
Take Borel summation to construct $\tilde{U}_{6b}$
    $$\tilde{U}_{6b} \stackrel{\text{borel sum}}{\sim} \rho_{\mathfrak{F}_1}^{\alpha_{\mathfrak{F}_1} -1} \sum_k \rho_{\mathfrak{F}_1}^j \tilde{U}_{6b;j}  \Rightarrow \tilde{U}_{6b}  \in \rho_{\mathfrak{F}_1}^{\alpha_{\mathfrak{F}_1} -1} \rho_{\mathfrak{B}_1}^{\alpha_{\mathfrak{B}_1}-2} \rho_{\mathcal{A}}^{\infty} \rho_{\mathcal{L}}^{\infty} $$

    Define
    $$ U_{6b}  = \exp(-i\varphi_{\text{lim}}) \tilde{U}_{6b} $$
  note this is supported close to $\mathfrak{F}_1$.

 The resulting error denote by $E_5$ supported near $\mathfrak{F}_1$ has the properties
  $$ E_5 \in \rho_{\mathcal{A}}^{\infty} \rho_{\mathfrak{F}_2}^{\infty} \rho_{\mathcal{F}}^{\infty} \rho_{\mathfrak{B}_1}^{\infty} \rho_{\mathcal{R}}^{\infty}  \rho_{\mathcal{L}}^{\sqrt{\tfrac{n^2}{4}-\lambda} }  \rho_{\mathfrak{F}_1}^{ \infty} ; \tilde{\gamma}_{\mathfrak{B}_1} = 0$$

  \subsection{Step 7 : remove the error at $\mathcal{R}$}
 
  We proceed to remove the error at $\mathcal{R}$, which we denote by $E_{\mathcal{R}}$ .
  $$E_{\mathcal{R}} =   E_{6b}^{\mathcal{R}}    + \QQ_b U_6^{\tilde{\mathfrak{B}}_1} 
   \in  \rho_{\mathcal{A}}^{\infty} \rho_{\mathfrak{F}_2}^{\infty}  \rho_{\mathfrak{B}_1}^{\infty} \rho_{\mathfrak{F}_1}^{ \infty}  \rho_{\tilde{\mathcal{A}}}^{\infty} \rho_{\tilde{\mathfrak{F}}_2}^{\infty} \rho_{\tilde{\mathfrak{B}}_1}^{\infty} \rho_{\tilde{\mathfrak{F}}_1}^{ \infty} \rho_{\mathcal{F}}^{\infty} \rho_{\mathcal{R}}^{\sqrt{\tfrac{n^2}{4}-\lambda}+1}  \rho_{\mathcal{L}}^{\sqrt{\tfrac{n^2}{4}-\lambda}  }  
$$
\emph{(Our convention : $\mathcal{R} = \{\mathsf{Z} = 0\}$)}. Since we have infinite order vanishing at other the other faces except $\mathcal{R}$ and $\mathcal{L}$, we can all blow them down and consider the error to live on $[0,1)_z \times [0,1)_{z'} \times \RR^+_h$. The coordinates in use are then :
 $$ h, z, z'$$
 In these coordinates property of $E_{\mathcal{R}}$ :
 $$E_{\mathcal{R}} \in h^{\infty} z^{\sqrt{\tfrac{n^2}{4}-\lambda}+1}  (z')^{\sqrt{\tfrac{n^2}{4}-\lambda}  }  \mathcal{C}^{\infty}(z,z')$$

 The operator in these coordinates
 $$\QQ (h,z)= h^2 (z\partial_z)^2 + h^2 (\lambda - \dfrac{n^2}{4})  + z^3 + z^2 $$
 One use power series argument to find $U_7$ of the form $U_7 \sim z^{\sqrt{\tfrac{n^2}{4}-\lambda}+1}  \sum_k z^k U_{7,k}$ such that 
 $$ \QQ U_7  - E_{\mathcal{R}} \in h^{\infty} z^{\infty}  (z')^{\sqrt{\tfrac{n^2}{4}-\lambda}  }  \mathcal{C}^{\infty}(z,z')$$
specifically we solve in series for the $U_{7;j}$ . 

Hence we can construct $U_7$ with
$$U_7 \in h^{\infty} z^{\sqrt{\tfrac{n^2}{4}-\lambda}+1} (z')^{\sqrt{\tfrac{n^2}{4}-\lambda}  } $$
with resulting error $E_8$
$$E_8 \in h^{\infty} z^{\infty} (z')^{\sqrt{\tfrac{n^2}{4}-\lambda}  }$$

\subsection{Concluding the construction}
  We have constructed $U$ such that 
  $$\tilde{U} = \tilde{U}_0 + U_1 + U_2 + U_3 + U_4+U_5 + U_6^{\mathfrak{B}_1} + U_6^{\tilde{\mathfrak{B}}_1}+ U_{6b}+ U_7 $$
  $$\QQ \tilde{U}  - \text{SK}_{\Id} \in    h^{\infty} z^{\infty} (z')^{\sqrt{\tfrac{n^2}{4}-\lambda}  }$$
 
   To compute the overall asymptotic property of $U$, we first relist the property of each of the component :
  \begin{align*}
   \alpha_{\mathfrak{F}_1} =  \alpha_{\mathfrak{B}_1} +\tfrac{1}{2}+1 ; \  \alpha_{\mathfrak{B}_1 } = \gamma_{\mathfrak{B}_1} +2 - 2; \ \gamma_{\mathfrak{B}_1} = -2 ; \  \gamma_{\mathcal{A}} = -1;\  \gamma_{\mathfrak{F}_2} = \gamma_{\mathcal{A}}-\tfrac{1}{2}\\
    \alpha_{\mathfrak{F}_2} = \gamma_{\mathfrak{F}_2} +1 ; \ \ 
\beta_{\mathcal{A}}  = \tfrac{1}{2} + \alpha_{\mathfrak{F}_2} + 1 ;\ \ 
  \beta_{\mathfrak{F}_1} =  \beta_{\mathfrak{B}_1} +\tfrac{1}{2}+1 ;\  \beta_{\mathfrak{B}_1}  = \alpha_{\mathfrak{B}_1}+2 - 2 \\
\tilde{\gamma}_{\mathfrak{B}_1} = 0 
 \end{align*}
   in a simplified version
    \begin{equation}\label{exponendef}
    \begin{aligned}
    \alpha_{\mathfrak{F}_1} =  \beta_{\mathfrak{F}_1}  = -2 +\tfrac{1}{2} + 1 = -\tfrac{1}{2}\\
     \beta_{\mathfrak{B}_1} = \alpha_{\mathfrak{B}_1} = \gamma_{\mathfrak{B}_1} = -2 \\
    \gamma_{\mathcal{A}} = -1;\ \  \beta_{\mathcal{A}} = 1\\ 
    \gamma_{\mathfrak{F}_2} = -1 -\tfrac{1}{2} = -\tfrac{3}{2} ; \ \alpha_{\mathfrak{F}_2} = -\tfrac{1}{2} \\
    \tilde{\gamma}_{\mathfrak{B}_1} = 0
     \end{aligned}
    \end{equation}

$$ U_0 \in \rho_{\mathfrak{B}_1}^{-1} \rho_{\tilde{\mathfrak{B}}_1}^{-1} \Psi^{-2}_{b,h}$$
 $$U_1 = \exp(-i\varphi_{\text{in}}) \tilde{U}_1 ,\ \  \tilde{U}_1 \in \rho_{\mathfrak{B}_1}^{\gamma_{\mathfrak{B}_1}} \rho_{\mathfrak{B}_2}^{-1}\rho_{\mathcal{A}}^{\gamma_{\mathcal{A}} } \mathcal{C}^{\infty}(\tilde{\mathfrak{M}}_{b,h});\ \  U_{1,\tilde{\mathfrak{B}}_2} \in \rho_{\tilde{\mathfrak{B}}_1}^{\gamma_{\tilde{\mathfrak{B}}_1}} \rho_{\tilde{\mathfrak{B}}_2}^{-1}\rho_{\tilde{\mathcal{A}}}^{\infty}\mathcal{C}^{\infty}(\tilde{\mathfrak{M}}_{b,h})$$
  $$U_2 = \exp(-i\varphi_{\text{in}}) \tilde{U}_2; \ \  \tilde{U}_2 \in \rho_{\mathcal{A}}^{\gamma_{\mathcal{A}}} \rho_{\mathfrak{F}_2}^{\gamma_{\mathfrak{F}_2} } \rho_{\mathfrak{B}_1}^{\alpha_{\mathfrak{B}_1}}  \rho_{\mathfrak{F}_1}^{\alpha_{\mathfrak{F}_1}} \mathcal{C}^{\infty}(\mathfrak{M}_{b,h})$$
   $$\varphi_{\text{in}} =   h^{-1}\lvert \varphi (z) - \varphi(z')\rvert; \ \ \varphi(z) = \tfrac{2}{3}\big[ (z+1)^{3/2} -1\big] \sgn\theta_n$$
  
  $$U_3 = \exp(-i\varphi_{\text{out}} ) \tilde{U}_3 + V_3  $$
  $$\varphi_{\text{out}} = h^{-1} \big[ \varphi(z) + \varphi(z') \big] \sgn \theta_n; \ \ \varphi(z) = \tfrac{2}{3}\big[ (z+1)^{3/2} -1\big] \sgn\theta_n$$
  $$\tilde{U}_3 \in \rho_{\mathfrak{B}_1}^{ \alpha_{\mathfrak{B}_1}} \rho_{\mathcal{A}}^{\tfrac{1}{2}+ \alpha_{\mathfrak{F}_2}}\rho_{\mathfrak{F}_2}^{\alpha_{\mathfrak{F}_2}} \mathcal{C}^{\infty}(\mathfrak{M}_{b,h});\ \ \supp\tilde{U}_3 \ \text{is close to}\  \mathcal{A} \ \text{and away from}\ \mathcal{R}$$
  $$\ V_3 \in  \rho_{\mathcal{R}}^{\sqrt{\tfrac{n^2}{4}-\lambda}}\rho_{\mathfrak{F}_2}^{\alpha_{\mathfrak{F}_2}}  \rho_{\mathfrak{B}_1}^{ \alpha_{\mathfrak{B}_1}};\ \ \supp V_3\  \text{is away from}\ \mathcal{A}$$
$$U_4 = \exp(i\varphi_{\text{out}}) \tilde{U}_4;\  \ \tilde{U}_4 \in \rho_{\mathcal{A}}^{\beta_{\mathcal{A}}} \rho_{\mathfrak{F}_2}^{\infty} \rho_{\mathfrak{B}_1}^{\beta_{\mathfrak{B}_1} } \rho_{\mathfrak{F}_1}^{\beta_{\mathfrak{F}_1}}$$

$$U_5 = U_5^{\mathfrak{B}_1} + U_5^{\tilde{\mathfrak{B}}_1} + U_5^{\text{interior}}$$
$$  U_5^{\mathfrak{B}_1} \in \rho_{\mathcal{F}}^{\gamma_{\mathcal{F}}}\rho_{\mathfrak{B}_1}^{-2} \rho_{\mathcal{R}}^{\sqrt{\tfrac{n^2}{4} - \lambda}} \rho_{\mathcal{L}}^{\sqrt{\tfrac{n^2}{4} - \lambda}}  \mathcal{C}^{\infty}(\mathfrak{M}_{b,h}); \ \ \gamma_{\mathcal{F}}  = 0$$
 $$U_5^{\tilde{\mathfrak{B}}_1} \in \rho_{\mathcal{F}}^{\gamma_{\mathcal{F}}}\rho_{\tilde{\mathfrak{B}}_1}^{-2} \rho_{\mathcal{R}}^{\sqrt{\tfrac{n^2}{4} - \lambda}} \rho_{\mathcal{L}}^{\sqrt{\tfrac{n^2}{4} - \lambda}}  \mathcal{C}^{\infty}(\mathfrak{M}_{b,h})$$
$$U_5^{\text{interior}} \in \rho_{\mathcal{F}}^{\gamma_{\mathcal{F}}} \rho_{\mathcal{R}}^{\sqrt{\tfrac{n^2}{4} - \lambda}} \rho_{\mathcal{L}}^{\sqrt{\tfrac{n^2}{4} - \lambda}}  \mathcal{C}^{\infty}(\mathfrak{M}_{b,h}) ; \ \ \supp U_5^{\text{interior}} \ \text{only intersects} \ \mathcal{F}, \mathcal{R}, \mathcal{L}$$

$$ U_6 = U_6^{\mathfrak{B}_1}  + U_6^{\tilde{\mathfrak{B}}_1}$$
$$ U_6^{\tilde{\mathfrak{B}}_1} = \exp(- \varphi_{\tilde{\mathfrak{B}}_1} ) V_6^{\tilde{\mathfrak{B}}_1}+\tilde{V}_6^{\tilde{\mathfrak{B}}_1} $$
  $$ \text{In coordinates close to $\tilde{\mathfrak{F}}_1$ and $\mathcal{L}$} : \varphi_{\tilde{\mathfrak{B}}_1} = \tfrac{2}{3} \big[- (1 - z)^{3/2} + (1 - z')^{3/2}  \big]$$
$$V_6^{\tilde{\mathfrak{B}}_1} \in  \rho_{\mathcal{A}}^{\infty} \rho_{\tilde{\mathfrak{F}}_2}^{\infty}\rho_{\tilde{\mathfrak{F}}_1}^{\infty}  \rho_{\mathcal{F}}^{\infty} \rho_{\tilde{\mathfrak{B}}_1}^{\tilde{\gamma}_{\tilde{\mathfrak{B}}_1} -2} \rho_{\mathcal{L}}^{\sqrt{\tfrac{n^2}{4}-\lambda}}  \rho_{\mathcal{R}}^{\sqrt{\tfrac{n^2}{4}-\lambda}  }  ; \  \tilde{\gamma}_{\tilde{\mathfrak{B}}_1} = 0 $$
$$ U_6 = U_6^{\mathfrak{B}_1}  + U_6^{\tilde{\mathfrak{B}}_1}$$
$$U_6^{\mathfrak{B}_1} = \exp(- i\varphi_{\text{lim}} ) V_6^{\mathfrak{B}_1} + \tilde{V}_6^{\mathfrak{B}_1}$$
$$ \text{Close to $\tilde{\mathfrak{F}}_1$ and $\mathcal{L}$} : \varphi_{\text{lim}} = \varphi_{\text{in}}$$
$$V_6^{\mathfrak{B}_1}  \in  \rho_{\mathcal{A}}^{\infty} \rho_{\mathfrak{F}_2}^{\infty} \rho_{\mathcal{F}}^{\infty} \rho_{\mathfrak{B}_1}^{\tilde{\gamma}_{\mathfrak{B}_1} -2} \rho_{\mathcal{R}}^{\infty}  \rho_{\mathcal{L}}^{\sqrt{\tfrac{n^2}{4}-\lambda}}  \rho_{\mathfrak{F}_1}^{\tilde{\gamma}_{\mathfrak{B}_1} - 2 + \tfrac{1}{2}} $$
$$  \tilde{V}_6^{\mathfrak{B}_1}\in  \rho_{\mathcal{A}}^{\infty} \rho_{\mathfrak{F}_2}^{\infty} \rho_{\mathcal{F}}^{\infty} \rho_{\mathfrak{B}_1}^{\tilde{\gamma}_{\mathfrak{B}_1} -2} \rho_{\mathcal{L}}^{\sqrt{\tfrac{n^2}{4}-\lambda} }  \rho_{\mathcal{R}}^{\sqrt{\tfrac{n^2}{4}-\lambda}  }  \rho_{\mathfrak{F}_1}^{\infty}   $$
$$U_{6b}  = \exp(-i\varphi_{\text{lim}}) \tilde{U}_{6b};\  \tilde{U}_{6b}  \in \rho_{\mathfrak{F}_1}^{\alpha_{\mathfrak{F}_1} -1} \rho_{\mathfrak{B}_1}^{\alpha_{\mathfrak{B}_1}-2} \rho_{\mathcal{A}}^{\infty} \rho_{\mathcal{L}}^{\infty} ;\ \supp U_{6b} \ \text{is close to} \ \mathfrak{F}_1$$
 $$U_7 \in h^{\infty} z^{\sqrt{\tfrac{n^2}{4}-\lambda}+1} (z')^{\sqrt{\tfrac{n^2}{4}-\lambda}  } $$

 $\varphi_{\text{in}}$ and $\varphi_{\text{out}}$ have support close to $\mathfrak{B}_1$ and $\mathcal{A}$, ie we only have oscillatory behavior at the $Z_0 > 0$ 
The term containing $\varphi_{\text{lim}}$ is supposed close to $\mathfrak{F}_1$.
  
\subsection{Summary of parametrix construction for $\tilde{\QQ}$}
  Reminder $\tilde{\QQ} = h^{-2}\QQ$ 
    $$\tilde{\QQ} U  - \text{SK}_{\Id} \in    h^{\infty} z^{\infty} (z')^{\sqrt{\tfrac{n^2}{4}-\lambda}  }$$
  \begin{equation}\label{parametrix}
U = U_0+\exp(-i\varphi_{\text{in}}) U_{\text{in}} +  \exp(-i\varphi_{\text{out}}) U_{\text{out}}  +  \exp(-i\varphi_{\text{lim}}) U_{\varphi_{\text{lim}}}  +U_{\textbf{bdy}} + U_{\tilde{\mathfrak{B}}_1}
\end{equation}
where
$$ U_0 \in \rho_{\mathfrak{B}_1}^1 \rho_{\tilde{\mathfrak{B}}_1}^1 \Psi^{-2}_{b,h}$$
$$U_{\text{in}} \in   
\rho_{\mathfrak{B}_1}^{\gamma_{\mathfrak{B}_1} +2} \rho_{\mathfrak{B}_2}^{-1+2} \rho_{\mathcal{A}}^{\gamma_{\mathcal{A}}+2}  \rho_{\mathfrak{F}_1}^{\alpha_{\mathfrak{F}_1}+2} \rho_{\mathfrak{F}_2}^{\gamma_{\mathfrak{F}_2}+2} \mathcal{C}^{\infty}(\mathfrak{M}_{b,h})  ;\ \supp U_{\text{in}}\  \text{is close to}\  \mathcal{A}  $$
$$ \gamma_{\mathfrak{B}_1}= -2; \gamma_{\mathcal{A}} = -1; \alpha_{\mathfrak{F}_1} = -\tfrac{1}{2} ; \gamma_{\mathfrak{F}_2} =  -\tfrac{3}{2}$$

 $$U_{\text{out}} \in \rho_{\mathfrak{B}_1}^{\alpha_{\mathfrak{B}_1}+2}  \rho_{\mathcal{A}}^2\rho_{\mathfrak{F}_2}^{\alpha_{\mathfrak{F}_2}+2}  \rho_{\mathfrak{F}_1}^{\beta_{\mathfrak{F}_1}+2}  \mathcal{C}^{\infty}(\mathfrak{M}_{b,h})$$
 $$\beta_{\mathfrak{F}_1} =  \alpha_{\mathfrak{F}_2}= -\tfrac{1}{2} ; \ \alpha_{\mathfrak{B}_1} = -2$$

 $$U_{\varphi_{\text{lim}}} \in \rho_{\mathcal{A}}^{\infty}\rho_{\mathcal{F}}^{\infty} \rho_{\mathfrak{B}_1}^{\tilde{\gamma}_{\mathfrak{B}_1} } \rho_{\mathcal{L}}^{\sqrt{\tfrac{n^2}{4}-\lambda}  }  \rho_{\mathfrak{F}_1}^{\tilde{\gamma}_{\mathfrak{B}_1}  + \tfrac{1}{2}}\mathcal{C}^{\infty}(\mathfrak{M}_{b,h}) ;\ \supp U_{\varphi_{\text{lim}}} \ \text{is close to}\ \mathfrak{F}_1 ; \ \tilde{\gamma}_{\mathfrak{B}_1} = 0$$

$$U_{\textbf{bdy}} \in \rho_{\mathcal{R}}^{\sqrt{\tfrac{n^2}{4}-\lambda}}\rho_{\mathcal{L}}^{\sqrt{\tfrac{n^2}{4}-\lambda}} \rho_{\mathfrak{F}_2}^{\infty} \rho_{\mathfrak{B}_1}^0 \rho_{\tilde{\mathfrak{B}}_1}^0 \rho_{\mathcal{F}}^0 \mathcal{C}^{\infty}(\mathfrak{M}_{b,h})\mathcal{C}^{\infty}(\mathfrak{M}_{b,h}); \ \supp U_{\mathfrak{B}_1} \text{is away from }\ \mathcal{A}, \mathfrak{F}_1$$
$$U_{\tilde{\mathfrak{B}}_1} \in \exp(- \varphi_{\tilde{\mathfrak{B}}_1} )  \rho_{\tilde{\mathcal{A}}}^{\infty} \rho_{\mathcal{R}}^{\sqrt{\tfrac{n^2}{4}-\lambda}}\rho_{\mathcal{L}}^{\sqrt{\tfrac{n^2}{4}-\lambda}} \rho_{\tilde{\mathfrak{F}}_2}^{\infty} \rho_{\tilde{\mathfrak{F}}_1}^{\infty}\rho_{\tilde{\mathfrak{B}}_1}^0 \rho_{\mathcal{F}}^0 \mathcal{C}^{\infty}(\mathfrak{M}_{b,h})$$

\section{The global resolvent fiberwise on $\mathbb{F}_C$}\label{globalResEst}
  As motivated in section \ref{mainIdeaApprox}, we want an exact solution to $\tilde{\QQ} u = 0$ on $\mathbb{F}_C$ as well as information on its asymptotic behavior. In order to obtain this, we first construct an approximate solution with an error that is trivial at every boundary face of $\mathbb{F}_C$ (see figure \ref{ffFC}) and in particular is semiclassically trivial i.e $\mathsf{O}(h^{\infty})$. We will need a global resolvent on $\mathbb{F}_C$ to solve completely this resulting error. In this section we show how such a global resolvent is constructed. \nl
  We will construct the global resolvent fiberwise. On the fibers where $\lvert Z_0\rvert$ is bounded, the problem is elliptic and is not semiclassical, so we can patch together by partition of unity the resolvent at $z = 0$ and that at infinity ie at $z^{-1}  = 0$. However, since the operator is not semiclassically elliptic, this method will not give uniform bound as $Z_0 \rightarrow \pm\infty$ and will result in error that is not semiclasslically trivial. When $Z_0 \rightarrow  \infty$, we use the gluing technique by Datchev-Vasy in \cite{Andras-Kiril} to combine the readily available resolvent estimate for the infinite region given by Vasy - Zworski in \cite{Andras-Zworksi} with an estimate we will obtain from the local parametrix constructed for the region close to $0$ in section \ref{ParametrixConstruction}. 
   The situation for $Z_0 \rightarrow -\infty$ is much simpler since the gluing is taken over an elliptic region, i.e. there will be no semiclassical singularity propagation.
   See remark (\ref{motivationResolvent}) for further discussion on the difference in methods between the two regions.\nl  The proof for the main lemma will contain 3 ingredients: estimate for the infinite region, estimate for the local region near $0$ and the gluing process.
    
  \begin{lemma}\label{globalresolvent}[Mapping property of the global resolvent on $\mathbb{F}_C$]\
  
  Partition $\mathbb{F}_0$ by $X_0$ and $X_1$ where $X_0 = \{ z > 1/4\} , X_1 = \{ z < 1 \}$, $X_1 $ (trapping region)
$\chi_1, \chi_0$ partition of unity, $\psi_j \sim 1$ near $\supp \chi_j$. 
 $$ \tilde{\QQ} G_1 = \Id + E_1 ,  \lVert E_1\rVert \in \mathsf{O}(h^{\infty}) , \lVert G_1\rVert \leq a_1 = c_1h^{\alpha_{\text{sml}}}$$
 $$ \tilde{\QQ}G_0 = \Id , \lVert G_0\rVert \leq a_0 = c_0h^{-1} $$
where $\alpha_{\text{sml}}$ is given by (\ref{weL2X1}). Here the norms are taken in some weighted\footnote{see estimate (\ref{weL2inX0}) for weighted norm for $G_0$ and see (\ref{weL2X1}) for $G_1$} $L^2$ spaces.\newline
Denote by $\text{Osc}_{\mathbb{F}_C}$ the set of oscillatory functions on $\mathbb{F}_C$ (see figure \ref{ffFC}) with the behavior:
\begin{equation}\label{defOsc}
z^{\sqrt{\tfrac{n^2}{4} - \lambda}} h^{\infty}  \mathcal{C}^{\infty}( [0,1)_z) 
+ \exp(\phi_{\text{osc}}) h^{\infty}  \mathcal{C}^{\infty}( [1,\infty)_z) \end{equation}
i.e. having boundary asymptotic prescribed by $\lambda$ at $z \sim 0$ and oscillatory behavior at $z \rightarrow \infty$, where in the corresponding coordinates used on each region we have 
$$\phi_{\text{osc}} = \begin{cases}  -i\tfrac{2}{3}h^{-1} \big[(z+1)^{3/2} - 1\big]\sgn\theta_n & Z_0 > 0 \\ -\tfrac{2}{3}h^{-1} \big[ i(z - 1)^{3/2} + 1\big] & Z_0 < 0\\
-i\tfrac{2}{3}Z^{3/2} \sgn\theta_n & \lvert Z_0\rvert \, \text{bounded} \end{cases}$$
 We obtain the mapping property of the global resolvent $G_{\tilde{\QQ}} $ on $\mathbb{F}_C$
 $$ G_{\tilde{\QQ}} :   \dot{\mathcal{C}}^{\infty}(\mathbb{F}_C)  \rightarrow \text{Osc}_{\mathbb{F}_C}; \ \ \lVert G\rVert \leq C a_1 $$
close to $z = 0$ the weighted $L^2$ norm is that same as that for $G_1$, while close to infinity we use that for $G_0$. \end{lemma}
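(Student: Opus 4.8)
\textbf{Proof plan for Lemma \ref{globalresolvent}.}

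The plan is to follow the Datchev--Vasy gluing scheme \cite{Andras-Kiril} fiberwise over $\mathbb{F}_C$, so the core of the argument is to assemble the two local pieces $G_0$ (near $z^{-1}=0$, supplied by the Vasy--Zworski resolvent estimate \cite{Andras-Zworksi} for asymptotically Euclidean scattering) and $G_1$ (near $z=0$, extracted from the local semiclassical parametrix of section \ref{ParametrixConstruction}) into a global approximate inverse with an $\mathsf{O}(h^\infty)$ error, and then invert that error by a Neumann series. First I would establish the two inputs precisely: on $X_0=\{z>1/4\}$ the operator $\tilde\QQ$ is a semiclassical scattering operator of the type covered by \cite{Andras-Zworksi}, which gives $G_0$ with $\lVert G_0\rVert\le c_0 h^{-1}$ in the outgoing weighted $L^2$ space of estimate (\ref{weL2inX0}); on $X_1=\{z<1\}$ I would show that the parametrix $U$ built on $\tilde{\mathfrak{M}}_{b,h}$, after restriction to the relevant fiber and composition with the error-removing Neumann series for the semiclassically-and-boundary-trivial remainder $h^\infty z^\infty (z')^{\sqrt{n^2/4-\lambda}}$, yields a genuine local resolvent $G_1$ with $\tilde\QQ G_1=\Id+E_1$, $\lVert E_1\rVert=\mathsf{O}(h^\infty)$ and $\lVert G_1\rVert\le c_1 h^{\alpha_{\text{sml}}}$ in the weighted norm of (\ref{weL2X1}). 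The exponent $\alpha_{\text{sml}}$ is read off from the polynomial-in-$h$ loss in the parametrix construction, and one checks that the constructed $G_1$ maps $\dot{\mathcal C}^\infty$ into functions with the boundary asymptotic $z^{\sqrt{n^2/4-\lambda}}$ at $z\sim0$ (forced by the indicial roots and the Mellin-transform step 4) and with the prescribed oscillatory behavior $\exp(\phi_{\text{osc}})$ where the flow is hyperbolic.

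Next I would carry out the gluing itself. Take the partition of unity $\chi_0+\chi_1=1$ subordinate to $X_0,X_1$, with cutoffs $\psi_j\equiv1$ near $\supp\chi_j$ and $\psi_j$ supported in $X_j$, and set $G=\psi_0 G_0\chi_0+\psi_1 G_1\chi_1$. A direct computation gives
\begin{equation}
\tilde\QQ G = \Id + [\tilde\QQ,\psi_0]G_0\chi_0 + [\tilde\QQ,\psi_1]G_1\chi_1 + \psi_1 E_1\chi_1.
\end{equation}
The commutator terms are supported in the overlap $\{1/4<z<1\}$, where $d\psi_j\neq0$; the key point of the Datchev--Vasy argument is that on the support of $d\psi_1$ one is in the region covered by $G_0$, so $[\tilde\QQ,\psi_1]G_1\chi_1 = [\tilde\QQ,\psi_1]\psi_0' G_1\chi_1$ can be composed on the left with $G_0$-type bounds, and symmetrically $[\tilde\QQ,\psi_0]G_0\chi_0$ sits in the region covered by $G_1$; since these commutators are first-order semiclassical differential operators times a bounded resolvent, while the subsequent application of the complementary resolvent gains back the derivatives, each commutator term is controlled with norm $\mathsf{O}(h^N a_1)$ for the relevant $N$ coming from the fact that the trapped set contributes only the $h^{\alpha_{\text{sml}}}$ loss and the transition region is non-trapping. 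Choosing the cutoffs and iterating (a finite Neumann series absorbing the commutator terms, exactly as in \cite{Andras-Kiril}, since the gluing region is non-trapping) produces $G_{\tilde\QQ}$ with $\tilde\QQ G_{\tilde\QQ}=\Id+E$, $\lVert E\rVert=\mathsf{O}(h^\infty)$, and $\lVert G_{\tilde\QQ}\rVert\le Ca_1$; inverting $\Id+E$ by Neumann series for small $h$ gives the exact resolvent with the same norm bound. The mapping property $\dot{\mathcal C}^\infty(\mathbb{F}_C)\to\text{Osc}_{\mathbb{F}_C}$ is then inherited: near $z=0$ it comes from $G_1$ (parametrix gives $z^{\sqrt{n^2/4-\lambda}}h^\infty\mathcal C^\infty$), near $z=\infty$ from $G_0$ (outgoing estimate gives $\exp(\phi_{\text{osc}})h^\infty\mathcal C^\infty$), and in the bounded-$\lvert Z_0\rvert$ regime from the elliptic patching, and one checks the two descriptions match across the transition since there $E$ is $\mathsf{O}(h^\infty)$ and the phase functions agree up to the chosen normalization.

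Finally I would address the $Z_0<0$ fibers, where the statement is easier: the would-be trapped region is elliptic (the semiclassical b-symbol is $-\xi^2+z^3-z^2$, nonvanishing for $z>0$ except the isolated point at $z=0$ which is handled by the b-calculus), so no singularities propagate and one can in fact glue across an elliptic region, giving $G_1$ with an $\mathsf{O}(h^\infty)$ rather than merely polynomially bounded contribution and $\exp(\phi_{\text{osc}})$ replaced by the exponentially decaying $\exp(-\tfrac23 h^{-1}[i(z-1)^{3/2}+1])$; the gluing and Neumann series go through verbatim. I expect the main obstacle to be the first bullet of the second paragraph, namely verifying cleanly that the parametrix $U$ of section \ref{ParametrixConstruction}, which lives on the blown-up double space $\tilde{\mathfrak{M}}_{b,h}$ with a delicate polyhomogeneous structure, actually defines a bounded operator on the correct weighted $L^2$ space with the stated power $h^{\alpha_{\text{sml}}}$ — i.e.\ translating the order data at $\mathfrak{B}_1,\mathfrak{B}_2,\mathcal A,\mathcal F,\mathfrak F_1,\mathfrak F_2,\mathcal R,\mathcal L$ into a Schur-test or Cotlar--Stein bound uniform in $h$ — and checking that the weight in (\ref{weL2X1}) is compatible with the indicial-root weight $(z')^{\sqrt{n^2/4-\lambda}}$ appearing in the parametrix error; the gluing combinatorics themselves are by now standard given \cite{Andras-Kiril}.
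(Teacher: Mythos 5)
Your proposal follows essentially the same route as the paper's proof: the Vasy--Zworski estimate (applied after extending $\QQ_{\pm}$ to a globally non-trapping operator on the line) supplies $G_0$ at the scattering end, a Schur-test weighted-$L^2$ bound extracted from the parametrix of section \ref{ParametrixConstruction} supplies $G_1$ with the exponent $\alpha_{\text{sml}}$ read off from the weights in (\ref{weL2X1}), and the Datchev--Vasy gluing with $F=\psi_0G_0\chi_0+\psi_1G_1\chi_1$ followed by a Neumann series gives the global resolvent, its mapping into $\text{Osc}_{\mathbb{F}_C}$, and the easier elliptic gluing for $Z_0<0$. One correction to your justification: the individual commutator terms $A_j=[\tilde\QQ,\psi_j]G_j\chi_j$ are not themselves $\mathsf{O}(h^N a_1)$ --- the gain comes only from the support arrangement ($A_0^2=A_1^2=0$) and from the compositions $A_0A_1$, $A_0A_1A_0$ being $\mathsf{O}(h^{\infty})$ by propagation of singularities as in \cite{Andras-Kiril}, which is precisely what the paper's finite correction $F-FA_0-FA_1+FA_1A_0$ exploits before inverting $\Id+\mathsf{E}$.
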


\subsection{$Z_0 > 0$}
\subsubsection{Ingredient 1: Semiclassical estimate at infinite}

$$\QQ_+  \stackrel{ \mathsf{t} = z^{-3/2}}{=} -\mathsf{t}^{-2} \Big[ -h^2(\mathsf{t}^2 \partial_{\mathsf{t}})^2 + \mathsf{t}^{2/3}V^+ - \tfrac{4}{9}    \Big]; 
  \ V^+ =   h^2 \mathsf{t}^{7/3} \partial_{\mathsf{t}} -\tfrac{4}{9}(\lambda - \tfrac{n^2}{4}) h^2\mathsf{t}^{4/3} - \tfrac{4}{9}    $$
\textbf{Notation: } In this subsection for matter of convenience we will work with variable $\sigma = z^{3/2}$ and $\mathsf{t} = \sigma^{-1}$ instead of $z$.\newline
 Vasy and Zworski in (\cite{Andras-Zworksi}) prove a semiclassical estimate for asymptotically euclidean scattering with long-range perturbation.
   \begin{theorem}[Vasy - Zworski (2000)]
  Let $X$ be a manifold with boundary. Let $\Delta$ be the Laplacian of a scattering metric on $X$. If $P = h^2 \Delta + V$ is a semi-classical long-range perturbation of $h^2 \Delta$ and $R(\lambda) = (P-\lambda)^{-1}$ its resolvent then for all $m\in \RR$
  $$\lVert R(\lambda + it) f\rVert_{H^{m, -\tfrac{1}{2}-\epsilon}_{\text{sc}}(X)}
  \leq C_0 h^{-1} \lVert f\rVert_{H^{m-2, \tfrac{1}{2}+\epsilon}_{\text{sc}}(X)};\ \epsilon > 0$$
  With $C_0$ independent of $t \neq 0$ real  $\lambda \in I, I\subset (0,+\infty)$ a compact interval in the set of non-trapping energies for $P$.\newline Here $H^{m,k}_{\text{sc}} (X)$ denotes Sobolev spaces adapted to the scattering calculus. The index $m$ indicates smoothness and $k$ the rate of decay at infinity.
  \end{theorem}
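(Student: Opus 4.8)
The plan is to establish this uniform semiclassical resolvent bound by a positive-commutator argument carried out in Melrose's (semiclassical) scattering pseudodifferential calculus, in the spirit of the classical non-trapping resolvent estimates but made uniform in $h$ through the scattering wavefront set. I would work in the algebra $\Psi^{m,k}_{\mathrm{sc},h}(X)$ on the compact manifold with boundary $X$, whose symbols live on the fibre-compactified scattering cotangent bundle ${}^{\mathrm{sc}}\overline{T}^{*}X$; the semiclassical principal symbol of $P-z$ there is $p-\lambda$, where $p$ is the principal symbol of $h^{2}\Delta+V$ — the long-range hypothesis is exactly what guarantees that $p$ and its boundary symbol are well defined and that $V$ contributes only at lower order at $\partial X$. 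Writing $z=\lambda+it$, I would treat $t>0$ first, the case $t<0$ following by taking adjoints. The characteristic set is $\Sigma=\{p=\lambda\}$, and the closure of $\Sigma$ in ${}^{\mathrm{sc}}\overline{T}^{*}X$ meets $\partial X$ along the incoming and outgoing radial sets $R_{\pm}$, which are a source and a sink for the rescaled Hamilton flow of $p$; the non-trapping hypothesis says that every maximally extended null bicharacteristic over the interior limits onto $R_{-}$ in the backward direction and onto $R_{+}$ in the forward direction.

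The argument then assembles three microlocal ingredients, each a semiclassical positive-commutator estimate with a commutant $A\in\Psi_{\mathrm{sc},h}$ built from a weight times a flow-monotone cutoff, arranged so that $\tfrac{i}{h}[P,A]$ has a definite sign modulo terms controlled by the output norm. First, an \emph{elliptic estimate}: off $\Sigma$ the operator $P-z$ is elliptic in the semiclassical-scattering sense, controlling $u=R(z)f$ by $f$ with a two-order gain and no loss in $h$ or in the decay weight. Second, \emph{real-principal-type propagation}: on $\Sigma$ over the interior one transports the $H^{m,k}_{\mathrm{sc},h}$ estimate along bicharacteristics with no loss of powers of $h$, exactly as in the Hörmander--Duistermaat commutator argument done semiclassically. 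Third, \emph{radial point estimates}: at $R_{-}$ one propagates an estimate \emph{into} the radial set provided the decay weight lies below the threshold $-\tfrac{1}{2}$, so $k=-\tfrac{1}{2}-\epsilon$ is admissible and $u$ near $R_{-}$ is controlled from $f$ alone; at $R_{+}$ one can propagate \emph{out of} the radial set only with an a priori bound above the dual threshold, and that bound is supplied by the sign of $\Img z=t>0$, which enters the commutator identity as a good term $+t\|u\|^{2}$ — this is precisely how the $+it$ regularization picks out the outgoing solution and yields a bound uniform as $t\to 0^{+}$.

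Chaining these together using non-trapping closes the estimate: the elliptic bound handles $\{p\neq\lambda\}$; the $R_{-}$ radial estimate gives control from $f$; real-principal-type propagation carries that control forward along every bicharacteristic to a neighbourhood of $R_{+}$; and the $R_{+}$ radial estimate, fed the $t>0$ a priori term, absorbs the last piece. A partition of unity on ${}^{\mathrm{sc}}\overline{T}^{*}X$ glues the microlocal pieces into the global bound $\|R(z)f\|_{H^{m,-1/2-\epsilon}_{\mathrm{sc}}}\le C h^{-1}\|f\|_{H^{m-2,1/2+\epsilon}_{\mathrm{sc}}}$, with $C$ independent of $t>0$ and of $\lambda$ in the compact non-trapping interval $I$; the net loss of a single power of $h$ is what the bookkeeping of the radial-point commutant forces, and it is the optimal non-trapping rate. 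Finally the estimate for general $m$ reduces to the case $m=0$ by conjugating with elliptic scattering operators of the appropriate order.

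I expect the radial-point analysis at $R_{\pm}$ to be the main obstacle. One must design the commutant near $R_{\pm}$ so that the principal part of $\tfrac{i}{h}[P,A]$ is a perfect square up to terms of the needed lower order, with the threshold weight $-\tfrac{1}{2}$ emerging as the value at which the subprincipal contribution changes sign — so the weights $\mp(\tfrac{1}{2}+\epsilon)$ in the statement are forced by this threshold — and one must verify that near $R_{+}$ the good term $+t\|u\|^{2}$ dominates all error terms \emph{uniformly} as $t\to 0^{+}$ and $h\to 0^{+}$ simultaneously. Keeping the bookkeeping of the two scales (the semiclassical $h$ and the boundary-defining parameter) consistent throughout, so that nothing is lost beyond the advertised single power of $h$, is the technical heart; the rest is the now-standard positive-commutator machinery transplanted to the scattering setting.
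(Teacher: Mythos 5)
This theorem is not proved in the paper at all: it is quoted verbatim from Vasy--Zworski \cite{Andras-Zworksi} and used as a black box in Section \ref{globalResEst}, so there is no internal proof to compare against; your proposal must be measured against the cited literature. Your overall strategy -- semiclassical scattering calculus, elliptic estimates off the characteristic set, real-principal-type propagation, radial-point estimates at the incoming and outgoing radial sets, all glued by the non-trapping hypothesis and regularized by $\Img z = t > 0$ -- is indeed the standard modern route to this bound (the original Vasy--Zworski argument is a closely related global positive-commutator/escape-function construction in Melrose's scattering calculus rather than the full source/sink decomposition you describe, but either can be made to work).

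The genuine gap is in your radial-point step: you have interchanged the roles of the two radial sets and of the two threshold regimes, and you misidentify what $t>0$ supplies. For the outgoing resolvent, $R_-$ is a source and $R_+$ a sink for the forward flow, and the correct assignment is: at the source $R_-$ one uses the \emph{above}-threshold estimate (decay weight $> -\tfrac12$), which controls $u$ microlocally by $h^{-1}\lVert (P-z)u\rVert$ with no quantitative a priori input along the flow but under the \emph{qualitative} a priori assumption that $u$ already lies above threshold near $R_-$ -- and it is exactly this qualitative membership that $t>0$ provides, since $R(\lambda+it)f \in L^2 = H^{0,0}_{\text{sc}}$, with all constants independent of $t$; propagation (which, like the radial estimates, carries the $h^{-1}$ in front of $\lVert (P-z)u\rVert$ -- the loss is not confined to the radial commutant as you assert) then transports this control to a punctured neighborhood of $R_+$; and at the sink $R_+$ one uses the \emph{below}-threshold estimate with weight $-\tfrac12-\epsilon$, whose required a priori control on the incoming punctured neighborhood is furnished by the propagation step, not by $t$. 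The term $t\lVert u\rVert^2$ generated by $\Img z$ merely has a favorable sign and is discarded. As written, your scheme does not close: a below-threshold estimate at $R_-$ giving control ``from $f$ alone'' is not available (at a source the below-threshold regime requires a priori control that nothing supplies), and an above-threshold estimate at $R_+$ is inapplicable at a sink and in any case could not produce the weight $-\tfrac12-\epsilon$ that appears on the left of the stated bound. Correcting this assignment, and making explicit that the a priori hypotheses are qualitative while the constants are uniform in $t$ and $h$, is precisely the limiting-absorption mechanism your sketch is missing.
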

  \begin{remark}[Long range perturbation]
   Long range potential $V$ are second order semiclassical operator which in local coordinates of the form $V = \sum_{\lvert \alpha\rvert \leq 2} v_{\alpha}(z,h) (hD_z)^{\alpha}$ and near the boundary $\partial Z$, in local coordinates $y\in \partial X$, 
  $$V = x^{\gamma} V_0 ,\ V_0 = \sum_{\lvert\alpha\rvert+k\leq 2} v_{k\alpha}(x,y,h) (hx^2 D_x)^k (h D_y)^{\alpha},$$
  $$ v_{k\alpha} - v^0_{k\alpha} \in h S^{0,0,0} (X) , v^0_{k\alpha} \in S^{0,0}(X) ; \ \gamma > 0.$$
In Euclidean setting the condition on the coefficients corresponds to assuming that the coefficients are symbols in the Euclidean base variables. The constraint on $\gamma$ is just $\gamma > 0$. For application in our case, $\gamma = \tfrac{2}{3}$, ie the pertubation goes to zero more slowly than Coulomb potential. 
 \end{remark}
 
 Denote by $$v^+ = V^+|_{h=0}  \Rightarrow v^+ =  - \tfrac{4}{9} \mathsf{t}^{2/3} = -\tfrac{4}{9} \sigma^{-2/3}.$$ 
 
 In order to apply the result we need find a global operator on $\RR_{\sigma}$ whose behavior coincides with that of $\QQ_+$ on $\sigma > 1$. $\QQ_+$ is extended to $\RR^+$ in a way so that: there is no extra boundary at $\sigma = 0$, the problem should be elliptic on $\sigma < 0 $ ie the negative infinity end, and most importantly the operator is globally nontrapping. The following extension satisfies these criteria:
 $$\QQ_{\text{global},+}  = \chi(\sigma) \bar{\QQ}_+ ; \ \bar{\QQ}_+ = \begin{cases}  - h^2\partial_{\sigma}^2 +V^+ - \tfrac{4}{9}  & \sigma > 1 \\   -h^2\partial_{\sigma}^2 + \tilde{V}^+ - \tfrac{4}{9} & \sigma \leq 1  \end{cases} $$
 $$\chi \in \mathcal{C}^{\infty} (\RR),\ \ \phi (\sigma) = \begin{cases}  \sigma^2  &\sigma \geq 1\\ \tfrac{1}{2} & \sigma  \leq \tfrac{1}{2} \end{cases}$$
  $\tilde{V}^+$ is such that  $\tilde{V}^+|_{\sigma  > 1} = V^+$ and on $h=0$, $\tilde{V}^+$ has the profile of $\tilde{v}^+$ in figure \ref{nontrappingPo1}. 
  \begin{figure}
  \centering
  \includegraphics[scale=0.85]{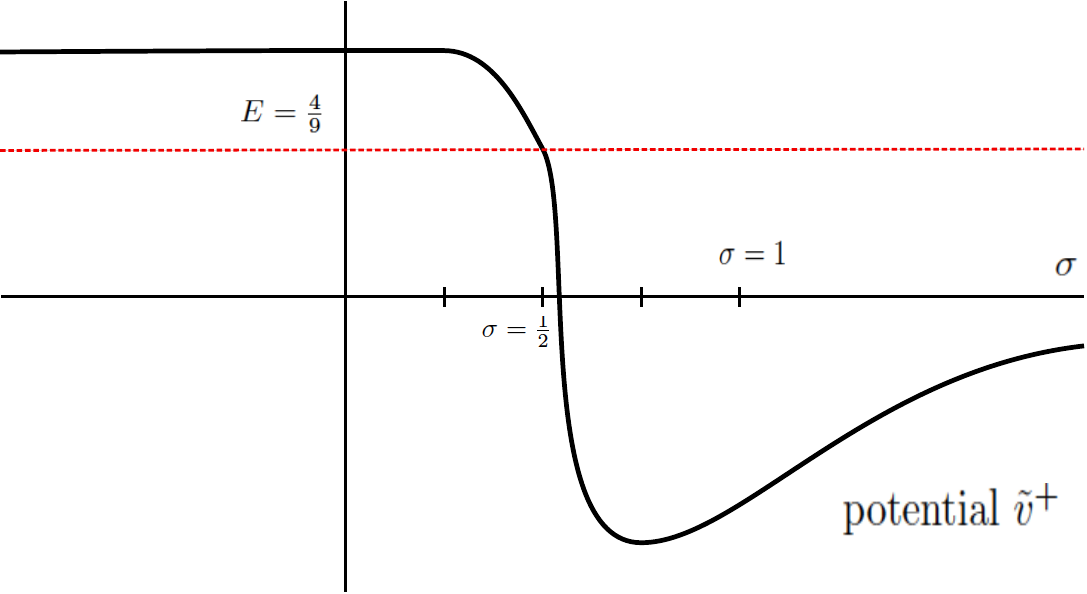}
  \caption{Extending the local problem of $\QQ_+$ at infinity to a global problem on $\sigma = z^{3/2}$ }
  \label{nontrappingPo1}
  \end{figure}
  In particular $\tilde{v}^+= c > \tfrac{4}{9}$ for $ \sigma < \tfrac{1}{4}$, $\tilde{v}^+ > \tfrac{4}{9}$ for $\sigma< \tfrac{1}{2}$ and $ < \tfrac{4}{9}$ for $\sigma > \tfrac{1}{2}$ and on $\sigma > 1$ we have $\tilde{v}^+ = v^+$. Next we need to verify that the semiclassical bicharacteristic of $\QQ_{\text{global},+}$ is globally nontrapping.\newline
   \textbf{Non-trapping condition verification:}\newline
   The semiclassical symbol of $\QQ_{\text{global},+}$
   $$\tilde{p}^+ = \chi(\sigma) \bar{p}^+ ;\ \ \bar{p}^+ = \xi^2 + \tilde{v} - \tfrac{4}{9} $$
   Since $\phi > 0$, on  the characteristic set $\tilde{p}^+$ we have $ \xi = \pm \sqrt{  \tfrac{4}{9} - \tilde{v}}$. $\chi \in\mathcal{C}^{\infty}$ so the bicharacteristic of $H_{\tilde{p}^+}$ is the same as that for $H_{\bar{p}^+}$. 
 $$\begin{cases} \dfrac{dx}{ds}  =  2\xi ;\ \dfrac{d\xi}{ds} = - \dfrac{\partial \tilde{v}}{\partial \sigma} ; \  \xi = \pm \sqrt{  \tfrac{4}{9} - \tilde{v}} \\ \sigma(0) = \tfrac{1}{2} \Rightarrow \xi(0) = 0\end{cases} $$
 When $\xi > 0$, $\sigma(t) $ increases. When $\xi < 0$, $\sigma(t)$ decreases.  Also the bicharacteristic set is a one dimensional submanifold, so away from $\xi = 0$ the bicharacteristic is non-trapping. 
 It remains to verify non-trapping condition for $\sigma < \tfrac{3}{4}$. \nl In fact, close to time $s =0$, we can work out directly the profile of the bicharacteristic set. $\sigma$ can be considered as a smooth function parametrized by $\xi$. This is seen as follows: $\tilde{v} = \tfrac{4}{9} - \xi^2$ and on $0 < \sigma < \tfrac{3}{4}$ so $\tilde{v}$ is a smooth strictly monotone function of $\sigma$ so $\sigma = \mathsf{f}(\tilde{v}) $ for some smooth strictly monotone function $f$ and hence $\sigma =  \mathsf{f}(\tfrac{4}{9} - \xi^2)$ is a smooth function of $\xi$. The profile of the characteristic set is as shown in figure (\ref{nontrappingBi1}). \nl
  \begin{figure}
  \centering
  \includegraphics{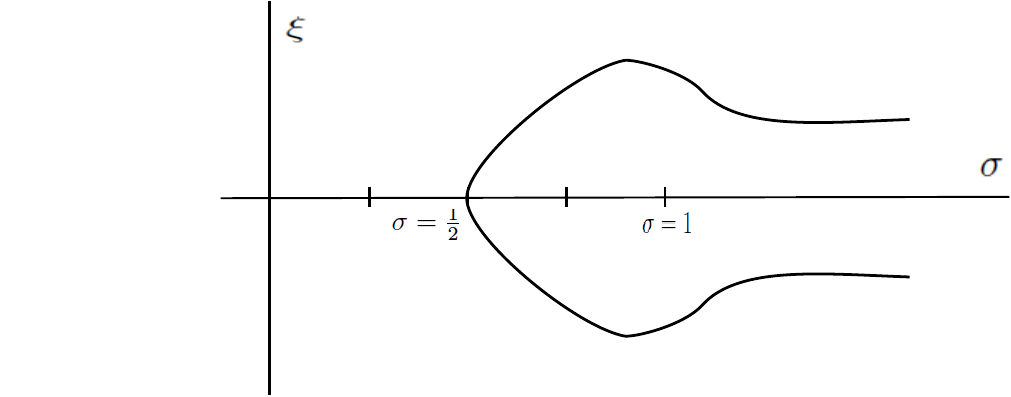}
  \caption{The bicharacteristic set of the new global operator $\QQ_{\text{global};+}$}
  \label{nontrappingBi1}
  \end{figure}
  Now we are ready to apply the result by Vasy-Zworski to $\QQ_{\text{global},+}$, denote by $ R(\lambda + it) = \QQ_{\text{global},+}^{-1}$ we have:
   $$\lVert R(\lambda + it) f\rVert_{H^{m, -\tfrac{1}{2}-\epsilon}_{\text{sc}} ( \RR_{\sigma})}
  \leq \tilde{C}_0 h^{-1} \lVert f\rVert_{H^{m-2, \tfrac{1}{2}+\epsilon}( \RR_{\sigma})};\ \epsilon > 0 ; \ \lambda = \tfrac{4}{9}$$
  Following the method in Melrose's paper \cite{Melrose03}, using these estimates, one can show that for fixed $h > 0$ the limits $R(\lambda \pm i0) f$ exist in $H^{m, -\tfrac{1}{2} - \epsilon}_{\text{sc}}( \RR_{\sigma})$ for $f \in H^{m-2, \tfrac{1}{2} + \epsilon}_{\text{sc}}(X_0)$, 
  $$\lVert R(\lambda + i0) f\rVert_{H^{m, -\tfrac{1}{2}-\epsilon}_{\text{sc}} ( \RR_{\sigma})}
  \leq \tilde{C}_0 h^{-1} \lVert f\rVert_{H^{m-2, \tfrac{1}{2}+\epsilon}_{\text{sc}} ( I_{\mathsf{t}})};\ \epsilon > 0 ;\  \lambda = \tfrac{4}{9} ;\  h \in (0,h_0)$$
  In terms of $z$ and $m = 2$
  \begin{equation}\label{weL2inX0}
  \lVert R(\lambda + i0)  f  \rVert_{\sigma^{(\tfrac{1}{2}+\epsilon )} L^2( \RR_{\sigma}) }
  \leq C_0 h^{-1} \lVert   f  \rVert_{\sigma^{-(\tfrac{1}{2}+\epsilon)}L^2( \RR_{\sigma})};\ \epsilon > 0 ;\  \lambda = \tfrac{4}{9} ;\  h \in (0,h_0)
  \end{equation}
$\tilde{\QQ} = h^{-2} \QQ$. Denote by 
$$G_0 = \psi R(\tfrac{4}{9} + i0) h^2\chi$$
where $\chi \equiv 1$ on $\sigma > 1$, $\psi \equiv 1$ on  $\supp \chi$, and $\supp \chi \subset \supp \psi \subset \{\sigma > \tfrac{3}{4}\}$.  Then, with the norms taken in corresponding weighted $L^2$ space, for some $a_0 (h) \geq h^1$ we have:
 \begin{equation}\label{estimateR_0}
 \lVert G_0 \rVert \leq c_0 h^1 = a_0
 \end{equation} 
 

\textbf{Mapping properties of} $G_0$ 
$$G_0 :    \dot{\mathcal{C}}^{\infty}(\mathbb{F}_C) \rightarrow h^{\infty}   \text{Osc}_{\infty}$$
where $h^{\infty}  {Osc}_{\infty}$ is the set of functions that are semiclassically trivial with trivial behavior at finite end (at $\sigma \sim 0$) and oscillatory behavior at the infinite end (as $\sigma\rightarrow \infty$). The oscillatory part is given by:
$$\exp(-i\phi)  \mathcal{C}^{\infty} (\mathbb{F}_{C,+}^{\infty});\ \  \phi (z) = \tfrac{2}{3}h^{-1} \big[(z+1)^{3/2} - 1\big]\sgn\theta_n.$$

\subsubsection{Ingredient 2: Estimate around $0$ from local parametrix of $\tilde{\QQ}$}\

 From the construction of the local parametrix near $z\sim 0$ in section (\ref{ParametrixConstruction}) we get: 
    $$\tilde{\QQ} U  - \text{SK}_{\Id} = E \in    h^{\infty} z^{\infty} (z')^{\sqrt{\tfrac{n^2}{4}-\lambda}  +\tilde{\gamma}_{\mathfrak{B}_1} -2}$$
 $$U = U_0+\exp(-i\varphi_{\text{in}}) U_{\text{in}} +  \exp(-i\varphi_{\text{out}}) U_{\text{out}}  +  \exp(-i\varphi_{\text{lim}}) U_{\varphi_{\text{lim}}}  +U_{\textbf{bdy}} + U_{\tilde{\mathfrak{B}}_1}$$
 where $U_{\tilde{\mathfrak{B}}_1}$ is the portion of the approximate solution near $\tilde{\mathfrak{B}}_1$ (where  $Z_0 < 0$). $U_0$ and $\text{SK}_{\Id}$ have support close to the lifted diagonal, away from $\mathcal{L}$ and $\mathcal{R}$. (See (\ref{parametrix}) for more details).  \newline
 For the purpose of weighted $L^2$ estimate computation, we write $U$ as
 $$U = U_{\Psi\text{do} } + U_{\text{osc}} $$
$U_{\Psi\text{do} }$ comes from the term $U_0$ which is supported close the diagonal and is conormal of order $-2$ to the lifted diagonal, while $U_{\text{osc}}$ come from: $\exp(-i\varphi_{\text{in}}) U_{\text{in}} +  \exp(-i\varphi_{\text{out}}) U_{\text{out}} + \exp(-i\varphi_{\text{lim}}) U_{\varphi_{\text{lim}}}$. Since we will only need a weak estimate and deal with $L^{\infty}$ norm, we can ignore the oscillatory terms.

 \begin{enumerate}
  \item  First we deal with the term $U_{\Psi\text{do} }$. Due to the cornormality of negative order at the diagonal we have
  $$\lvert U_{\Psi\text{do} }\rvert   \leq \lvert \dfrac{z-z'}{h}\rvert^{-1+2} \Rightarrow U_{\Psi\text{do} } \in L^1_{\text{loc}} (\RR)$$
  We rewrite $U_{\Psi\text{do} }$ as
   $$U_{\Psi\text{do} } = h\chi(z)\chi(z') f\big( \dfrac{z-z'}{h}\big) ,\ f\in L^{\infty}_c ;\ \supp \chi \subset [0,1]$$
 $$\sup_{z\in \RR} \int_{\RR} \lvert U_{\Psi\text{do} } \rvert \, dz' \ \ ; 
 \sup_{z'\in \RR} \int_{\RR} \lvert U_{\Psi\text{do} } \rvert \, dz  < \infty$$
 this is because
 $$ \int_{\RR} h \chi(z)\chi(z') \big| f\big( \dfrac{z-z'}{h}\big) \big| \, dz' 
 \stackrel{y = \tfrac{z-z'}{h}}{=} \int_{\RR} \chi(z)\chi(z-yh) \lvert f(y)\rvert \, dy  < \infty $$
 Similarly
 $$ \int_{\RR} h \chi(z)\chi(z') \big| f\big( \dfrac{z-z'}{h}\big) \big| \, dz
 \stackrel{y = \tfrac{z-z'}{h}}{=} \int_{\RR} \chi(z)\chi(z'+yh) \lvert f(y)\rvert \, dy  < \infty' $$

  \item We are going to compute which power of $h$ and $z'$ that will make $U_{\text{in}}, U_{\text{out}}$ and $U_{\varphi_{\text{lim}}}$ $L^{\infty}$ on $\mathfrak{M}_{b,h}$. In fact, to make sure the operator whose S.K of $U$ maps into $ z^{\sqrt{\tfrac{n^2}{4} -\lambda}} \mathcal{C}^{\infty}([0,1)_z)$, we find $c$ and $a$ such that:
  $$ h^c (z')^a z^{-\sqrt{\tfrac{n^2}{4}-\lambda}} U_{\text{in}}, \ h^c (z')^a z^{-\sqrt{\tfrac{n^2}{4}-\lambda}} U_{\text{out}} ,  h^c (z')^a z^{-\sqrt{\tfrac{n^2}{4}-\lambda}} U_{\varphi_{\text{lim}}} \in L^1_{\text{loc}}$$
 $(z')^a$ lifts to
$$\big(\rho_{\mathcal{L}} \rho_{\mathcal{F}} \rho_{\mathfrak{F}_1} \rho_{\mathfrak{B}_1} \big)^a\mathcal{C}^{\infty}(\mathfrak{M}_{b,h}) $$ 
$h^c$ lifts to
$$\big( \rho_{\mathfrak{B}_1} \rho_{\mathfrak{B}_2} \rho_{\mathfrak{F}_2} \rho_{\mathfrak{F}_1}  \rho_{\mathcal{A}}\big)^c\mathcal{C}^{\infty}(\mathfrak{M}_{b,h})$$
$z$ lifts to 
$$z = \rho_{\mathcal{R}} \rho_{\mathcal{F}} \rho_{\mathfrak{F}_2} \rho_{\mathfrak{B}_1} \mathcal{C}^{\infty}(\mathfrak{M}_{b,h})$$
From (\ref{parametrix}), $U_{\text{in}}$ has the following property:
$$U_{\text{in}} \in   
\rho_{\mathfrak{B}_1}^{\gamma_{\mathfrak{B}_1} +2} \rho_{\mathfrak{B}_2}^{-1+2} \rho_{\mathcal{A}}^{\gamma_{\mathcal{A}}+2}  \rho_{\mathfrak{F}_1}^{\alpha_{\mathfrak{F}_1}+2} \rho_{\mathfrak{F}_2}^{\gamma_{\mathfrak{F}_2}+2} \mathcal{C}^{\infty}(\mathfrak{M}_{b,h})  ;\ \supp U_{\text{in}}\  \text{is close to}\  \mathcal{A}  $$
$$ \gamma_{\mathfrak{B}_1}= -2; \gamma_{\mathcal{A}} = -1; \alpha_{\mathfrak{F}_1} = -\tfrac{1}{2} ; \gamma_{\mathfrak{F}_2} =  -\tfrac{3}{2}$$
To take care of the index at 
\begin{itemize}
\item $\mathfrak{B}_1$ : we require $a+c \geq  \ \sqrt{\tfrac{n^2}{4} -\lambda} $
\item $\mathfrak{F}_2$ : $ c  \geq  \sqrt{\tfrac{n^2}{4} -\lambda} -\tfrac{1}{2} $.
\end{itemize}
Similarly for $U_{\text{out}}$
$$U_{\text{out}} \in \rho_{\mathfrak{B}_1}^{\alpha_{\mathfrak{B}_1}+2}  \rho_{\mathcal{A}}^2\rho_{\mathfrak{F}_2}^{\alpha_{\mathfrak{F}_2}+2}  \rho_{\mathfrak{F}_1}^{\beta_{\mathfrak{F}_1}+2}  \mathcal{C}^{\infty}(\mathfrak{M}_{b,h});\ \ \beta_{\mathfrak{F}_1} =  \alpha_{\mathfrak{F}_2}= -\tfrac{1}{2} ; \ \alpha_{\mathfrak{B}_1} = -2$$
and for $U_{\varphi_{\text{lim}}}$ (note that the index of $\rho_{\mathcal{L}}$ is already positive)
 $$U_{\varphi_{\text{lim}}} \in \rho_{\mathcal{A}}^{\infty}\rho_{\mathcal{F}}^{\infty} \rho_{\mathfrak{B}_1}^{\tilde{\gamma}_{\mathfrak{B}_1} } \rho_{\mathcal{L}}^{\sqrt{\tfrac{n^2}{4}-\lambda}  }  \rho_{\mathfrak{F}_1}^{\tilde{\gamma}_{\mathfrak{B}_1}  + \tfrac{1}{2}}\mathcal{C}^{\infty}(\mathfrak{M}_{b,h}) ;\ \supp U_{\varphi_{\text{lim}}} \ \text{is close to}\ \mathfrak{F}_1 ; \ \tilde{\gamma}_{\mathfrak{B}_1} = 0$$
Choose
 $$a = \sqrt{\tfrac{n^2}{4} - \lambda}  ,\  c =  \sqrt{\tfrac{n^2}{4} - \lambda}$$
  Denote by $\dot{\mathcal{C}}^{\infty} (\mathbb{F}_C)$ the set of functions which vanish to infinite order at both the finite end (where $z=0$) and the infinite one ( where $z^{-1} = 0$) and is $\mathsf{O}(h^{\infty})$ at both end when $Z_0 \rightarrow \pm \infty$. Such a function has the form for any $N$ and $N'$
 $$e = h^{N'} (z')^N  \tilde{e}_{N', N}(h,z') ;\ \tilde{e}(z') \mathcal{C}^{\infty}(\RR^+_{z'} \times [0,1)_h)$$
$$\langle U , e(z') \rangle = \langle U h^c (z')^a , \tilde{e}_{c,a}(z') \rangle $$
with $a = \sqrt{\tfrac{n^2}{4} - \lambda} ,\  c = \sqrt{\tfrac{n^2}{4} - \lambda} $ we have $$ h^c (z')^a U   \in L^{\infty}( \mathfrak{M}_{b,h}) $$
The blown down of $ h^a  U   \in z^{\sqrt{\tfrac{n^2}{4} - \lambda}} L^{\infty}([0,1)_h \times [0,1)_z \times [0,1)_{z'}  )$ thus $L^1_{\text{loc}}$ and hence satisfies the requirement of Schur's lemma.  For $e \in \dot{\mathcal{C}}^{\infty} (\mathbb{F}_C)$ we have:
 $$\lVert  \langle  \exp(i\varphi_{\text{in}})U_{\text{in}},  \, e \rangle    \rVert_{z^{\sqrt{\tfrac{n^2}{4} - \lambda}}L^2([0,1]_z)} \leq  C h^{-c} \lVert  e \rVert_{z^a L^2} $$
 The same computation applies for $U_{\text{out}}$ and $U_{\varphi_{\text{lim}}}$. Thus the operator is bounded between the following weighted $L^2$ spaces :  
\begin{equation}\label{weL2X1}
\begin{aligned}
\big( \dot{\mathcal{C}}^{\infty} (\mathbb{F}_C), \lVert \cdot \rVert_{z^a L^2} \big)\rightarrow  z^{\sqrt{\tfrac{n^2}{4} - \lambda}} L^2([0,1]_z) \\
\lVert \cdot \rVert \leq h^{\alpha_{\text{sml}}} ;  \ \ a = \sqrt{\tfrac{n^2}{4} - \lambda}  ,\  -\alpha_{\text{sml}} =  c =  \sqrt{\tfrac{n^2}{4} - \lambda}
\end{aligned}
\end{equation}

 \end{enumerate}

Similarly we have the bound for the error term $E \in h^{\infty} z^{\infty} (z')^{\sqrt{\tfrac{n^2}{4} - \lambda} }$ with the same weighted space but with
    $$\lVert E\rVert = \mathsf{O}(h^{\infty})$$

 \subsubsection{Global resolvent estimate }
 Now we are ready to put together the two above estimates using the method of Datchev-Vasy in \cite{Andras-Kiril}. \nl Denote by $X_0 = \{ \sigma > 3/4\} , X_1 = \{ z < 1 \}$, $X_1 $ (trapping region). Let $\chi_1, \chi_0$ be a partition of unity, with $\psi_j \sim 1$ near $\supp \chi_j$. \nl Ingredient 1 (\ref{estimateR_0}) gives the estimate in the weighted $L^2$ bound $\mathsf{z}^{-\tfrac{3}{2}(\tfrac{1}{2}+\epsilon)}L^2\rightarrow z^{\tfrac{3}{2}(\tfrac{1}{2}+\epsilon )} L^2 $ and
 $$ \tilde{\QQ} G_0 = \Id ,\ \ \lVert G_0 z^{3/2} \rVert \leq c_0 h^{-1} = a_0$$
Ingredient 2 (\ref{weL2X1}) gives the weighted $L^2$ estimate from $z^NL^2  \rightarrow   x^{\alpha} L^2$ with
$$\tilde{\QQ} G_1 = \Id + E; \ \lVert G_1  \rVert \leq c_1 h^{\alpha_{\text{sml}}} = a_1 ;\    \lVert E \rVert = \mathsf{O}(h^{\infty}) $$
 Define
  $$ F := \psi_0 G_0 \chi_0 + \psi_1 G_1 \chi_1 $$
$$
 \tilde{\QQ} F = \Id + \psi_1 E \chi_1 + [\tilde{\QQ},\psi_0] G_0 \chi_0 + [\tilde{\QQ} , \psi_1] G_1 \chi_1= \Id + A_3 + A_0 + A_1$$
We have $$A_0^2 = A_1^1 =0$$
$$ \tilde{\QQ} ( F - FA_0) 
= \Id + A_3 - A_3A_0  + A_1 - A_1 A_0 $$
$$\tilde{\QQ}( F - FA_0 - FA_1) 
= \Id + A_3 - A_3(A_0 + A_1) - A_1 A_0 - A_0 A_1$$
$$
\tilde{\QQ}(F - FA_0 - FA_1 + F A_1 A_0) 
= \Id + A_3 - A_3 (A_0 + A_1 - A_1A_0) -A_0 A_1 + A_0A_1 A_0 \\
= \Id + \mathsf{E} $$
$ A_3 - A_3 (A_0 + A_1 - A_1A_0)$ is $\mathsf{O}(h^{\infty})$ and by the argument in the paper (\cite{Andras-Kiril}) $A_0 A_1$ is $\mathsf{O}(h^{\infty})$, so is $A_0A_1A_0$. Hence $\mathsf{E}\in \mathsf{O}(h^{\infty})$. For small enough $h$ the inverse for the term exists. The global resolvent is then
$$R = \big(F - FA_0 - FA_1 + F A_1 A_0\big)  ( \Id + \mathsf{E})^{-1} $$
Consider the bound for 
$$F - FA_0 - FA_1 + F A_1 A_0
 = F - \psi_1 G_1 \chi_1 A_0 + \psi_0 G_0 \chi_0 (-A_1 + A_1 A_0) $$
to get
$$\lVert R \rVert \leq C ( a_0 + a_1 + 2 h a_0 a_1 + h^2 a_0^2 a_1 )  \leq C h^2 a_0^2 a_1= Ca_1$$
with the norm near infinity is taken in the same weighted spaces as for those for $G_0$, while that near zero we use the weighted space for $G_1$. 

$\textbf{Mapping properties}$ : The mapping property of $R$ is given by that of $F=\psi_0 G_0 \chi_0 + \psi_1 G_1 \chi_1$. The oscillatory behavior at infinity of the image is given by $G_0$ while the boundary behavior prescribed by $\lambda$ at $z\sim 0$ is given by $G_1$. Hence $R$ maps to the space $\text{Osc}_{\mathbb{F}_C}$ as defined in (\ref{defOsc}).

\subsection{$Z_0 < 0$}

$$Q_-  \stackrel{ \mathsf{t} = \sigma^{-1}}{=} -\mathsf{t}^{-4} \Big[ -h^2(\mathsf{t}^2 \partial_{\mathsf{t}})^2 + \mathsf{t}^{2/3}\tilde{V}^- - \tfrac{4}{9}    \Big]; 
  \ \tilde{V}^- =   h^2 a\mathsf{t}^{7/3} \partial_{\mathsf{t}} - \tfrac{4}{9} \lambda h^2\mathsf{t}^{4/3} + \tfrac{4}{9}  $$
  The semiclassical scattering principal symbol is
  $$\xi^2 +   \tfrac{4}{9} \mathsf{t}^{2/3} - \tfrac{4}{9}$$
   
 \subsubsection{Ingredient 1}
  Denote by $$v^- = V^-|_{h=0}  \Rightarrow v^- =   \tfrac{4}{9} \mathsf{t}^{2/3} = \tfrac{4}{9} \sigma^{-2/3}.$$ 
  We find a global operator on $\RR_{\sigma}$ whose behavior on $\sigma > 1$ coincides with those of $\QQ_-$. Define
 $$\QQ_{\text{global};-}  = \chi(\sigma) \bar{\QQ}_- ; \ \bar{\QQ}_- = \begin{cases}  -h^2\partial_{\sigma}^2 + V^- - \tfrac{4}{9}  & \sigma > 1 \\   -h^2\partial_{\sigma}^2 + \tilde{V}^- - \tfrac{4}{9}   \end{cases} $$
 $$\chi \in \mathcal{C}^{\infty} (\RR),\ \ \phi (\sigma) = \begin{cases}  \sigma^4 & \sigma \geq 1\\ \tfrac{1}{2} & \sigma  \leq \tfrac{1}{2} \end{cases}$$
  $\tilde{V}^-$ is such that  $\tilde{V}^-|_{\sigma  > 1} = V^-$ and on $h=0$, $\tilde{V}^-$ has the profile of $\tilde{v}^-$ in figure (\ref{nontrappingPo2}). 
  \begin{figure}
  \centering
  \includegraphics[scale=0.85]{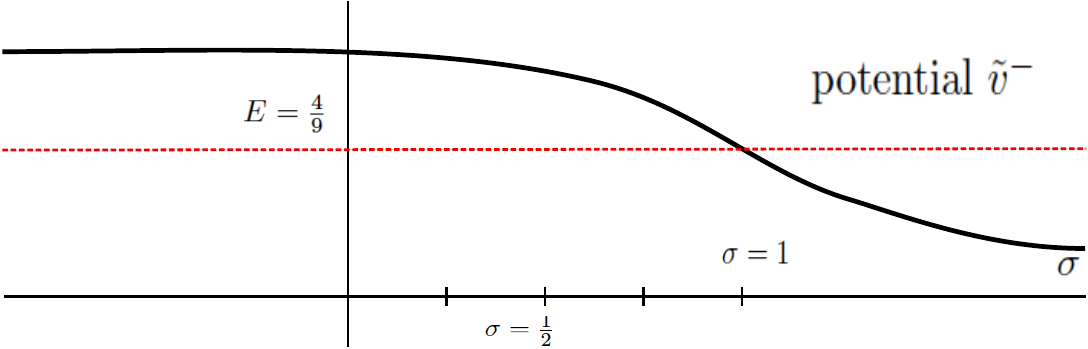}
  \caption{Extending the local problem of $\QQ_-$ at infinity to a global problem on $\sigma$}
  \label{nontrappingPo2}
  \end{figure}
  In particular the problem is semiclassical elliptic for $\sigma < 1$. $\QQ_{\text{global};-}$ satisfies global non-trapping with the profile of the characteristic set given by figure \ref{nontrappingBi2}. After this step we follow as in ingredient 1 for $Z_0 >0$. 
 \begin{figure}
  \centering
  \includegraphics{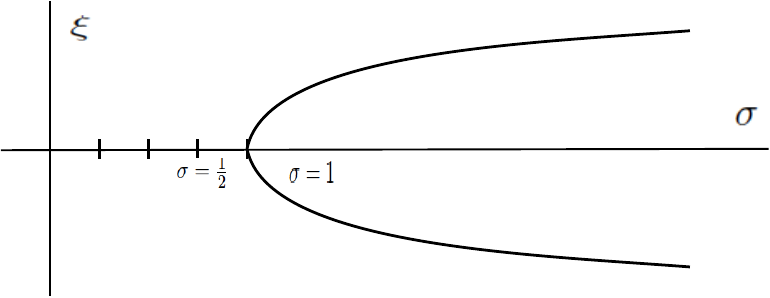}
  \caption{The bicharacteristic set of the global operator $\QQ_{\text{global};-}$}
  \label{nontrappingBi2}
  \end{figure}
\subsubsection{Ingredient 2}
 Similarly as in $Z_0>0$ but in this case it is even simpler since we have exponential decay at the semiclassical face. 

\subsubsection{ Gluing the two above estimates to get global resolvent estimate:} Since the gluing region is in $[\tfrac{1}{2}, \tfrac{3}{4}]$ where the problem is elliptic, we do not have propagation of singularities.\nl
Denote by $X_0 = \{ \sigma > 1/2\} , X_1 = \{ z < 3/4 \}$, $X_1 $ (trapping region). Let $\chi_1, \chi_0$ be a partition of unity, $\psi_j \sim 1$ near $\supp \chi_j$. Ingredient one gives the estimate in the weighted $L^2$ bound  
$$\mathsf{z}^{-\tfrac{3}{2}(\tfrac{1}{2}+\epsilon)}L^2(X_0) \rightarrow z^{\tfrac{3}{2}(\tfrac{1}{2}+\epsilon )} L^2(X_0) $$
  Define $$ F := \psi_0 G_0 \chi_0 + \psi_1 G_1 \chi_1 $$
   $$ \tilde{\QQ} F = \Id + \psi_1 E \chi_1 + [\tilde{\QQ} ,\psi_0] G_0 \chi_0 + [\tilde{\QQ}  , \psi_1] G_1 \chi_1= \Id + A_3 + A_0 + A_1$$
Since the problem is semiclassical elliptic, with $[\tilde{\QQ} ,\psi_0]$ and $\chi_0$ having disjoint supports,  $A_0 = \mathsf{O}(h^{\infty})$. Similarly for $A_1$. From here we procceed as in $Z_0 > 0$.

\subsection{$Z_0$ bounded}
Similar to $Z_0 < 0$.

\section{Construction of approximate solution on the blown-up space $C$: details}\label{detailofCon}

    In the following subsections, we fill out the details of section \ref{mainIdeaApprox} and construct the approximate solution to (\ref{originalProAfterF}) on the blown-up space $C$. We refer the readers to section \ref{mainIdeaApprox} for motivation and the main idea of the construction. See figure \ref{blowupspaceCnB} for blown-up space C.
 
\subsection{Step 1 - Approximate solution on $\mathbb{F}_C$ }
   
   \subsubsection{Step 1a : Getting the local approximate solution from the local parametrix}
 
  For convenience, we list the properties of interest for the local semiclassical parametrix constructed in (\ref{parametrix}) 
    $$\tilde{\QQ} U  - \text{SK}_{\Id} = E \in    h^{\infty} z^{\infty} (z')^{\sqrt{\tfrac{n^2}{4}-\lambda}  }$$
 $$U = U_0+\exp(-i\varphi_{\text{in}}) U_{\text{in}} +  \exp(-i\varphi_{\text{out}}) U_{\text{out}}  +  \exp(-i\varphi_{\text{lim}}) U_{\varphi_{\text{lim}}}  +U_{\textbf{bdy}} + U_{\tilde{\mathfrak{B}}_1}$$
  where $U_0, U_{\text{in}}$ and $U_{\text{out}}$ are supported away from $\mathcal{L}$, hence only these following terms are of interest for the purpose of (\ref{localApSo})
 $$U_{\varphi_{\text{lim}}} \in \rho_{\mathcal{A}}^{\infty} \rho_{\mathcal{F}}^{\infty}\rho_{\mathfrak{B}_1}^{\tilde{\gamma}_{\mathfrak{B}_1} } \rho_{\mathcal{L}}^{\sqrt{\tfrac{n^2}{4}-\lambda}}  \rho_{\mathfrak{F}_1}^{\tilde{\gamma}_{\mathfrak{B}_1}  + \tfrac{1}{2}} \mathcal{C}^{\infty}(\mathfrak{M}_{b,h});\ \supp U_{\varphi_{\text{lim}}} \ \text{is close to}\ \mathfrak{F}_1 ; \ \tilde{\gamma}_{\mathfrak{B}_1} = 0$$
 where close to $\mathfrak{F}_1$ and $\mathcal{L}$ : $\varphi_{\text{lim}} = \varphi_{\text{in}}$ with $\varphi_{\text{in}} =   h^{-1}\lvert \varphi (z) - \varphi(z')\rvert$,  $\varphi(z) = \tfrac{2}{3}\big[ (z+1)^{3/2} -1\big] \sgn\theta_n$.
%
    $$U_{\textbf{bdy}} \in \rho_{\mathcal{R}}^{\sqrt{\tfrac{n^2}{4}-\lambda}}\rho_{\mathcal{L}}^{\sqrt{\tfrac{n^2}{4}-\lambda}} \rho_{\mathfrak{F}_2}^{\infty} \rho_{\mathfrak{B}_1}^0 \rho_{\tilde{\mathfrak{B}}_1}^0\rho_{\mathcal{F}}^0\,\mathcal{C}^{\infty}(\mathfrak{M}_{b,h}) ; \ \supp U_{\mathfrak{B}_1} \text{is away from}\ \mathcal{A}, \mathfrak{F}_i, \tilde{\mathcal{A}}, \tilde{\mathfrak{F}}_i ; i =1,2$$
$$U_{\tilde{\mathfrak{B}}_1} \in \exp(- \varphi_{\tilde{\mathfrak{B}}_1} )  \rho_{\tilde{\mathcal{A}}}^{\infty} \rho_{\mathcal{R}}^{\sqrt{\tfrac{n^2}{4}-\lambda}}\rho_{\mathcal{L}}^{\sqrt{\tfrac{n^2}{4}-\lambda}} \rho_{\tilde{\mathfrak{F}}_2}^{\infty} \rho_{\tilde{\mathfrak{F}}_1}^{\infty}\rho_{\tilde{\mathfrak{B}}_1}^0 \rho_{\mathcal{F}}^0\, \mathcal{C}^{\infty}(\mathfrak{M}_{b,h})$$
    $$\varphi_{\tilde{\mathfrak{B}}_1} = h^{-1} \lvert  \tilde{\varphi}(z) - \tilde{\varphi}(z') \rvert ; \ \ \tilde{\varphi} = \tfrac{2}{3}\big[ 1 - (1 - z)^{3/2} \big]$$

We want to get an approximate solution $\tilde{u}_{\mathbb{F}_C}$ on $[0,1)_z\times \RR_{Z_0}$  such that 
\begin{equation}\label{localApSo}
 \begin{cases} \tilde{\QQ} \tilde{u}_{\mathbb{F}^0_C} = \tilde{e}_{\mathbb{F}^0_C}  \in h^{\infty} z^{\infty} \mathcal{C}^{\infty}(\mathbb{F}^0_C)\\
  \tilde{u}_{\mathbb{F}^0_C} = \rho_{\mathbb{F}_C^{(1)}}^{\sqrt{\tfrac{n^2}{4} - \lambda} }  \tilde{u}_{\mathbb{F}^0_C}^+ + \rho_{\mathbb{F}_C^{(1)}}^{-\sqrt{\tfrac{n^2}{4} - \lambda} }  \tilde{u}_{\mathbb{F}^0_C}^- ; \ \ \text{with}\  \tilde{u}_{\mathbb{F}^0_C}^-|_{\rho_{\mathbb{F}_C^{(1)}} = 0} = 1 ; \  \tilde{u}_{\mathbb{F}^0_C}^{\pm} \in \mathcal{C}^{\infty}(\mathbb{F}^0_C)  (*)\end{cases}
  \end{equation}
  See (\ref{defOft}) for the definition of $\rho_{\mathbb{F}_C^{(1)}}$ and (\ref{definitionOfz}) for the definition of $z$. 
 
  \begin{remark}
  If we modify $U$ by factor dependent only on $h$ and $z'$ then the resulting term still satisfies $\tilde{\QQ}u = 0$ up to a trivial smooth term.
  \end{remark}
 In order to to this, partition the front face $\mathbb{F}_C$ into $\mathbb{F}_{C,0}, \mathbb{F}_{C,+} =  \mathbb{F}_{C,+}^{\infty} \cup \mathbb{F}_{C,+}^0, \mathbb{F}_{C,-} =  \mathbb{F}_{C,-}^0 \cup  \mathbb{F}_{C,-}^{\infty} $ as pictured in figure (\ref{ffFC}). \begin{figure}
 \centering
  \includegraphics[scale=0.7]{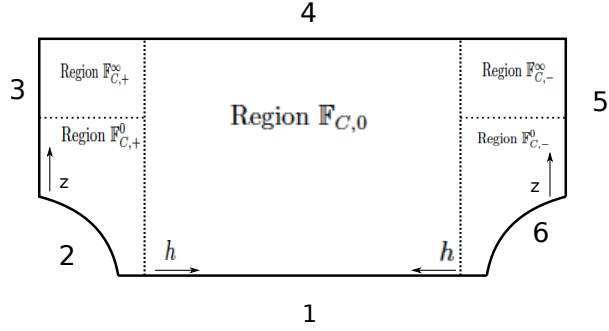}
  \caption{$\mathbb{F}_{C,+}^{0,0}$ denotes the neighborhood in region $\mathbb{F}_{C,+}^0$ close to the corner of $t = 0, h=0$ where $t$ is the projective blow-up with $t = \tfrac{z}{h}$, while $\mathbb{F}_{C,+}^{0,\infty}$ is the neighborhood close to $z = 0, t^{-1} = 0$. Similar notation for $\mathbb{F}_{C,-}^{0,\infty}$ and $\mathbb{F}_{C,-}^{0,0}$ for the other side where $Z_0 < 0$ of $\mathbb{F}_C$. For $i = 1,\cdots, 6$ the numbers indicate the label of the corresponding boundary face $\mathbb{F}_C^{(i)}$. Denote by $\mathbb{F}_C^0$ the region of $\mathbb{F}_C$ away from boundary face $\mathbb{F}_C^{(4)}$.\newline See (\ref{defOft}) for the definition of $\rho_{\mathbb{F}_C^{(1)}}$ and (\ref{definitionOfz}) for the definition of $z$. $\rho_{\mathbb{F}_C^{(4)}} = \sigma^{-1}$. See (\ref{definitionSig}) for definition of $\sigma$.}
  \label{ffFC}
\end{figure}

\begin{itemize}
\item Consider the portion of  $U_{\textbf{bdy}} = $  close to $\mathcal{L}$, denoted by $U_{\textbf{bdy}}^{\mathcal{L}}$. Close to $\mathcal{L} $
$$ \rho_{\mathcal{L}} \sim \mathsf{Z}; \ \mathsf{Z}^{-\sqrt{\tfrac{n^2}{4}-\lambda}} = z^{-\sqrt{\tfrac{n^2}{4}-\lambda}} (z')^{\sqrt{\tfrac{n^2}{4}-\lambda}}$$
$h$ lifts to $\rho_{\mathfrak{B}_1} \rho_{\mathfrak{F}_1} \rho_{\mathcal{A}} \rho_{\mathfrak{F}_2}\rho_{\tilde{\mathfrak{B}}_1}$.
$z'$ lifts to $\rho_{\mathcal{F}} \rho_{\mathfrak{B}_1} \rho_{\mathcal{L}} \rho_{\mathfrak{F}_1}\rho_{\tilde{\mathfrak{B}}_1}$. 
$z$ lifts to $\rho_{\mathcal{F}} \rho_{\mathfrak{B}_1} \rho_{\mathcal{R}} \rho_{\mathfrak{F}_2}\rho_{\tilde{\mathfrak{B}}_1}$. So we have:
$$h^m  (z')^{-\sqrt{\tfrac{n^2}{4}-\lambda}} U_{\textbf{bdy}} \in \rho_{\mathcal{F}}^{-\sqrt{\tfrac{n^2}{4}-\lambda}}\rho_{\mathfrak{B}_1}^{m-\sqrt{\tfrac{n^2}{4}-\lambda}}\rho_{\tilde{\mathfrak{B}}_1}^{m-\sqrt{\tfrac{n^2}{4}-\lambda}} \rho_{\mathcal{R}}^{\sqrt{\tfrac{n^2}{4}-\lambda}} \rho_{\mathcal{L}}^0\, \mathcal{C}^{\infty}(\mathfrak{M}_{b,h})$$
$$h^m  (z')^{-\sqrt{\tfrac{n^2}{4}-\lambda}} U_{\textbf{bdy}} =  \rho_{\mathcal{F}}^{-\sqrt{\tfrac{n^2}{4}-\lambda}} u_{\textbf{bdy}} $$
here we list only the exponents of faces that do not have infinite order vanishing. \newline If we choose $m =  \sqrt{\tfrac{n^2}{4}-\lambda}$, then $u_{\textbf{bdy}}$ is smooth down to $\mathcal{L}$ and $\mathcal{R}$ with value in distribution on $\mathfrak{M}_{b,h}$. Note $u_{\textbf{bdy}}|_{\mathcal{F}\cap \mathcal{L}}$ is smooth in $h$. 
Denote by $c_{\textbf{bdy}}$ a smooth function on $\mathfrak{M}_{b,h}$ such that $c_{\textbf{bdy}}$ is constant for $h$ constant and its restriction to $\mathcal{F}\cap \mathcal{L}$ equal $u_{\textbf{bdy}}|_{\mathcal{F}\cap \mathcal{L}}$. Constructed this way, $c_{\textbf{bdy}}$ only depends on $h$. The following term
$$\lim_{z' \rightarrow 0 }  h^m (z')^{-\sqrt{\tfrac{n^2}{4}-\lambda}} c_{\textbf{bdy}}^{-1} U_{\textbf{bdy}}^{\mathcal{L}}     $$
satisfies the boundary condition (*) in $(\ref{localApSo})$ ie at $\mathbb{F}_C^{(1)}$, smooth down to $\mathbb{F}_C^{(2)}$ and $\mathbb{F}_C^{(6)}$, and is supported away from $\mathbb{F}_C^{(3)}$ .\newline

\item With this choice of $m$, consider the asymptotic properties of 
$$\lim_{z'\rightarrow 0}  h^m (z')^{-\sqrt{\tfrac{n^2}{4}-\lambda}} c_{\textbf{bdy}}^{-1} U_{\varphi_{\text{lim}}} $$
We have:
$$ h^m (z')^{-\sqrt{\tfrac{n^2}{4}-\lambda}} U_{\varphi_{\text{lim}}} \in\rho_{\mathcal{A}}^{\infty} \rho_{\mathcal{F}}^{\infty}\rho_{\mathfrak{B}_1}^{m- \sqrt{\tfrac{n^2}{4}-\lambda}+ \tilde{\gamma}_{\mathfrak{B}_1} } \rho_{\mathcal{L}}^0 \rho_{\mathfrak{F}_1}^{m- \sqrt{\tfrac{n^2}{4}-\lambda}+ \tilde{\gamma}_{\mathfrak{B}_1}  + \tfrac{1}{2}} \mathcal{C}^{\infty}(\mathfrak{M}_{b,h}) $$
With the choice of $m = \sqrt{\tfrac{n^2}{4}-\lambda}$ we have
$$h^m (z')^{-\sqrt{\tfrac{n^2}{4}-\lambda}} U_{\varphi_{\text{lim}}} \in \rho_{\mathfrak{F}_1}^{\tilde{\gamma}_{\mathfrak{B}_1} + \tfrac{1}{2}} \rho_{\mathfrak{B}_1}^{\tilde{\gamma}_{\mathfrak{B}_1} }  \rho_{\mathcal{L}}^0 \rho_{\mathcal{A}}^{\infty} \rho_{\mathcal{F}}^{\infty}  ; \ \tilde{\gamma}_{\mathfrak{B}_1} = 0 $$ 
$$\Rightarrow \lim_{z'\rightarrow 0}  h^m (z')^{-\sqrt{\tfrac{n^2}{4}-\lambda}} c_{\textbf{bdy}}^{-1} U_{\varphi_{\text{lim}}}  $$
 is supported in region $\mathbb{F}^0_{C,+}$, away from $\mathbb{F}_C^{(1)}$ and $\mathbb{F}_C^{(4)}$ and at boundary faces $\mathbb{F}_C^{(3)}$ and $\mathbb{F}_C^{(2)}$ it has exponent $\tilde{\gamma}_{\mathfrak{B}_1} + \tfrac{1}{2} $ , $ \tilde{\gamma}_{\mathfrak{B}_1} =0 $ respectively.
\item Similarly, 
$$\lim_{z'\rightarrow 0}  h^m (z')^{-\sqrt{\tfrac{n^2}{4}-\lambda}} c_{\textbf{bdy}}^{-1} U_{\tilde{\mathfrak{B}}_1}$$
 is supported in region $\mathbb{F}^0_{C,-}$, away from $\mathbb{F}_C^{(1)}$ and $\mathbb{F}_C^{(4)}$, vanishes rapidly at $\mathbb{F}_C^{(5)}$ and has exponent $\tilde{\gamma}_{\mathfrak{B}_1} $ at $\mathbb{F}_C^{(6)}$.
 \end{itemize}
 
 \textbf{Summary :}
  $$ \lim_{z'\rightarrow 0}  h^{\sqrt{\tfrac{n^2}{4}-\lambda}}  (z')^{-\sqrt{\tfrac{n^2}{4}-\lambda}} c_{\textbf{bdy}}^{-1}U  = \tilde{u}_{\mathbb{F}_C^0}$$
 $ \tilde{u}_{\mathbb{F}_C^0} $ satisfies (\ref{localApSo}) and
  $$ \tilde{u}_{\mathbb{F}_C^0} =  u_{\mathbb{F}_{C, 0}^0} + u_{\mathbb{F}_{C, +}^0 } +  u_{\mathbb{F}_{C, -}^0 } $$
 with
 \begin{itemize}
 \item $u_{\mathbb{F}_{C,0}^0}$ lives on $[0,1)_z  \times \RR^+_{Z_0}$ 
 $$u_{\mathbb{F}_C}^{(2)} \in \rho_{\mathbb{F}_C^{(1)}}^{\sqrt{\tfrac{n^2}{4}-\lambda}}  \mathcal{C}^{\infty} (\mathbb{F}_{C,0}^0)  +   \rho_{\mathbb{F}_C^{(1)}}^{-\sqrt{\tfrac{n^2}{4}-\lambda}}  \mathcal{C}^{\infty} (\mathbb{F}_{C,0}^0) $$ 
  \item $u_{\mathbb{F}_{C,+}^0} $ is supported in $\mathbb{F}_{C,+}^0$, in more details 
 $$u_{\mathbb{F}_{C,+}^0 } \in  \exp(-i \phi )  \rho_{\mathbb{F}_C^{(2)}}^{\tilde{\gamma}_{\mathfrak{B}_1} } \rho_{\mathbb{F}_C^{(3)}}^{\tilde{\gamma}_{\mathfrak{B}_1}+\tfrac{1}{2}}  \mathcal{C}^{\infty} (\mathbb{F}_{C,+}^{0,\infty}) 
 \ +\  \rho_{\mathbb{F}_C^{(2)}}^0 \rho_{\mathbb{F}_C^{(1)}}^{\sqrt{\tfrac{n^2}{4}-\lambda}}  \mathcal{C}^{\infty} (\mathbb{F}_{C,+}^{0,0})  + \rho_{\mathbb{F}_C^{(2)}}^0 \rho_{\mathbb{F}_C^{(1)}}^{-\sqrt{\tfrac{n^2}{4}-\lambda}}  \mathcal{C}^{\infty} (\mathbb{F}_{C,+}^{0,0})  $$
where the phase function comes from $\lim_{z'\rightarrow 0} \varphi_{\text{lim}}$ and in coordinates $\sigma, h$ and $\nu, \sigma$ has the form:
 $$\phi = \dfrac{\varphi}{h} =   \dfrac{\varphi}{\sigma^{2/3}\nu^{2/3}} ; \ \ \varphi = \tfrac{2}{3} \big[ (\sigma^{2/3} + 1)^{3/2} - 1 \big]  $$
\item $u_{\mathbb{F}_{C,-}^0} $ is supported in $\mathbb{F}_{C,-}^0$, in more details:
 $$u_{\mathbb{F}_{C, - }^0 }\in  \exp(- \tfrac{ \tilde{\varphi}}{h}) \rho_{\mathbb{F}_C^{(6)}}^{\tilde{\gamma}_{\mathfrak{B}_1}}  \mathcal{C}^{\infty} (\mathbb{F}_{C,-}^{0,\infty})\   +\  \rho_{\mathbb{F}_C^{(6)}}^0 \rho_{\mathbb{F}_C^{(1)}}^{\sqrt{\tfrac{n^2}{4}-\lambda}}  \mathcal{C}^{\infty} (\mathbb{F}_{C,-}^{0,0})  + \rho_{\mathbb{F}_C^{(6)}}^0 \rho_{\mathbb{F}_C^{(1)}}^{-\sqrt{\tfrac{n^2}{4}-\lambda}}  \mathcal{C}^{\infty} (\mathbb{F}_{C,-}^{0,0})   $$
 $$\tilde{\varphi} = \tfrac{2}{3} \big[ 1 - (1 - \sigma^{2/3})^{3/2}\big];\  \tilde{\gamma}_{\tilde{\mathfrak{B}}_1} = 0$$

 \end{itemize}
 
 We have a local approximate solution $u_{\mathbb{F}_C^0} $ on this region with error $e_{\mathbb{F}_C^0}$ :  
\begin{equation}\label{localApSo2}
 \begin{cases} \tilde{\QQ} u_{\mathbb{F}^0_C} = e_{\mathbb{F}^0_C} ;  \ \ e_{\mathbb{F}^0_C}|_{\mathbb{F}_C^0} \in h^{\infty} z^{\infty} \mathcal{C}^{\infty}(\mathbb{F}^0_C)\\
  \tilde{u}_{\mathbb{F}^0_C} = \rho_{\mathbb{F}_C^{(1)}}^{\sqrt{\tfrac{n^2}{4} - \lambda} }  \tilde{u}_{\mathbb{F}^0_C}^+ + \rho_{\mathbb{F}_C^{(1)}}^{-\sqrt{\tfrac{n^2}{4} - \lambda} }  \tilde{u}_{\mathbb{F}^0_C}^- ; \ \ \text{with}\  \tilde{u}_{\mathbb{F}^0_C}^-|_{\rho_{\mathbb{F}_C^{(1)}} = 0} = 1 ; \  \tilde{u}_{\mathbb{F}^0_C}^{\pm} \in \mathcal{C}^{\infty}(\mathbb{F}^0_C)  (*)\end{cases}
  \end{equation}

 \subsubsection{Step 1b : solve away $e_{\mathbb{F}_C^0}$ along the global transport equation in $\sigma$ away to infinity}\
 
 
 \begin{remark}
 The computation in this section is carried out in $\sigma,h$ where $z = \sigma^{2/3}$ but could have been done in $z,h$.
 \end{remark}
\textbf{For the region} $Z_0 > 0$ : We deal with the portion of $e_{\mathbb{F}_C^0}$ that is supported close the corner of $\mathbb{F}_C^{(3)}$ and $\mathbb{F}_C^{(4)}$. To get the eikonal equation and the transport equations, we conjugate our operator by the oscillatory $\exp(-i\tfrac{\varphi}{h} )$:
\begin{align*}
e^{i\tfrac{\varphi}{h} } \tilde{\QQ} e^{-i\tfrac{\varphi}{h} }= &\  e^{i\tfrac{\varphi}{h} }
h^{-2}\sigma^2\left[ h^2\partial_{\sigma}^2 + b \dfrac{\partial_{\sigma}}{\sigma} h^2  + \dfrac{4}{9} \sigma^{-2/3} + \dfrac{4}{9}+\dfrac{4}{9}(\lambda-\tfrac{n^2}{4}) h^2\sigma^{-2}\right] e^{-i\tfrac{\varphi}{h} }u ; \ b = 1\\
= &\ \sigma^2\partial_{\sigma}^2 u + \left( 2 h^{-1}i ( \sigma\partial_{\sigma}\varphi) + b\right)  \sigma\partial_{\sigma} u\\
 &\ + \left[ih^{-1} \sigma^2\partial_{\sigma}^2 \varphi - h^{-2} \sigma^2(\partial_{\sigma}\varphi)^2 + \dfrac{4}{9}h^{-2}\sigma^{4/3} + \dfrac{4}{9}h^{-2} \sigma^2 +i b h^{-1} \sigma (\partial_{\sigma}\varphi) + \dfrac{4}{9}(\lambda-\tfrac{n^2}{4}) \right] u
 \end{align*}
 We chose
 $$\varphi = \tfrac{2}{3}\left[(\sigma^{2/3}+1)^{3/2} -1\right]\sgn\theta_n $$
 These computation will be useful in determining the behavior of the regular singular ODE at $\sigma^{-1} = 0$
\begin{align*}
\partial_{\sigma} \varphi &= (\sigma^{2/3}+1)^{1/2} \tfrac{2}{3}\sigma^{-1/3} \sgn\theta_n 
 \stackrel{\tilde{\sigma}=\sigma}{=}  (\tilde{\sigma}^{2/3}+1)^{1/2} \tfrac{2}{3}\tilde{\sigma}^{-1/3} \sgn\theta_n \\
\partial_{\sigma}^2 \varphi & 
=\tfrac{2}{9} (\sigma^{2/3}+1)^{-1/2} \sigma^{-2/3} \sgn\theta_n + \tfrac{2}{9} (\sigma^{2/3} + 1)^{1/2} \sigma^{-4/3} \sgn\theta_n\\
\Rightarrow \dfrac{\sigma\partial_{\sigma}^2 \varphi}{2\partial_{\sigma}\varphi} &= \dfrac{1}{6} (\sigma^{2/3}+1)^{-1} 
 \end{align*}
 We will look for a correction term $u_{\mathbb{F}_{C,+}^{\infty}}$ with
  $$u_{\mathbb{F}_{C,+}^{\infty}} = \exp(-i \phi) h^{n_3}\sum   h^j u_{\mathbb{F}_{C,+}^{\infty};j} ;\  n_3 = \tilde{\gamma}_{\mathfrak{B}_1}+\tfrac{1}{2},\ \tilde{\gamma}_{\mathfrak{B}_1} = 0 \ \text{and}$$
$$P u_{\mathbb{F}_{C,+}^{\infty}}  + e_{\mathbb{F}_C^0}\in \dot{\mathcal{C}}^{\infty} ( \mathbb{F}_{C,+})$$
Expand $e_{\mathbb{F}_C^0}$ at $\mathbb{F}_C^{(3)}$
$$e_{\mathbb{F}_C^0} \sim  \exp(-i \phi) h^{n_3}\sum   h^j e_{\mathbb{F}_C^0;j}$$
Apply the conjugated operator to $h^{n_3}\sum   h^j u_{\mathbb{F}_{C,+}^{\infty};j}$ and gather the terms according to power of $h$ (starting from lowest) and set them to zero :
\begin{itemize}
\item \textbf{Eikonal equation} is satisfied by our choice of $\varphi$ - coefficient of $h^{n_3}$, ie the term in the bracket vanishes
$$ u_{\mathbb{F}_{C,+}^{\infty};0} \left[ -(\partial_{\sigma}\varphi)^2 + \dfrac{4}{9}  +\dfrac{4}{9} \sigma^{-2/3}  \right] = 0$$
since
$$ -(\sigma^{2/3} +1) \dfrac{4}{9} \sigma^{-2/3} + \dfrac{4}{9}   +\dfrac{4}{9} \sigma^{-2/3} 
= -\dfrac{4}{9} - \dfrac{4}{9}\sigma^{-2/3} + \dfrac{4}{9}   +\dfrac{4}{9} \sigma^{-2/3}  =0$$

\item \textbf{First transport equation} - coefficient of $h^{n_3+1}$ : 
$$\begin{cases} \left[2i(\partial_{\sigma}\varphi) \partial_{\sigma}  +i \partial^2_{\sigma}\varphi  + ib \sigma^{-1} \partial_{\sigma} \varphi\right] u_{\mathbb{F}_{C,+}^{\infty};0} 
+ (-\partial_{\sigma}^2\varphi + \dfrac{4}{9}  +\dfrac{4}{9} \sigma^{-2/3})u_{\mathbb{F}_{C,+}^{\infty};1} = e_{\mathbb{F}_C^0;0}  \\ u_{\mathbb{F}_{C,+}^{\infty};0} |_{\sigma = 0}  = 0 \end{cases} $$
where $e_{\mathbb{F}_C^0;0}$ vanishes rapidly at both $\sigma = 0$ and $\sigma\rightarrow \infty$. Since $\varphi$ satisfies the eikonal equation, we obtain the first transport equation for $u_{\mathbb{F}_{C,+}^{\infty};0}$:
\begin{align*}
 &\left[2i(\sigma\partial_{\sigma}\varphi)\sigma \partial_{\sigma}  + i\sigma^2\partial^2_{\sigma}\varphi + ib \sigma \partial_{\sigma} \varphi\right] u_{\mathbb{F}_{C,+}^{\infty};0}  = e_{\mathbb{F}_C^0;0}\\
& \Leftrightarrow 2i\sigma\partial_{\sigma}\varphi\left[ \sigma \partial_{\sigma}  -\dfrac{1}{6}     (\sigma^{2/3} +1)^{-1}  + \dfrac{1}{2}b  \right] u_{\mathbb{F}_{C,+}^{\infty};0} = e_{\mathbb{F}_C^0;0}
\end{align*}
 the solution has the asymptotic property $u_{\mathbb{F}_{C,+}^{\infty};0} \sim \sigma^{-\tfrac{1}{2} b}$ with $b = 1$ as $\sigma\rightarrow +\infty$.
 
\item \textbf{Second transport equation} - coefficient of $h^{n_3+2}$:
$$\begin{cases} 
\left[2i ( \partial_{\sigma}\varphi ) \partial_{\sigma} + i\partial_{\sigma}^2 \varphi + ib \sigma^{-1} (\partial_{\sigma} \varphi) \right] u_{\mathbb{F}_{C,+}^{\infty};1}  +  ( \partial_{\sigma}^2 + b \sigma^{-1} \partial_{\sigma} + \tfrac{4}{9}(\lambda-\tfrac{n^2}{4})\sigma^{-2} ) u_{\mathbb{F}_{C,+}^{\infty};1} = e_{\mathbb{F}_C^0;1}\\ u_{\mathbb{F}_{C,+}^{\infty};1} |_{\sigma = 0}  = 0 \end{cases}$$
where $e_{\mathbb{F}_C^0;1}$ vanishes rapidly at both $\sigma = 0$ and $\sigma\rightarrow \infty$. 
Substituting in the expression for $\sigma\partial_{\sigma}\varphi$ we obtain
 $$\dfrac{2i\partial_{\sigma}\varphi}{\sigma}\left[\left( \sigma \partial_{\sigma}  -\dfrac{1}{6}     (\sigma^{2/3} +1)^{-1} + \dfrac{1}{2}b  \right) u_{\mathbb{F}_{C,+}^{\infty};1}  -i  \dfrac{ \left[(\sigma\partial_{\sigma})^2 + (b-1)  \sigma\partial_{\sigma}+\tfrac{4}{9}(\lambda-\tfrac{n^2}{4})\right] u_{\mathbb{F}_{C,+}^{\infty};0} }{2\sigma\partial_{\sigma} \varphi} \right]= e_{\mathbb{F}_C^0;1} $$
Since $\sigma\partial_{\sigma}\varphi \sim \sigma$ and $ \tfrac{ \left[(\sigma\partial_{\sigma})^2 + (b-1)  \sigma\partial_{\sigma}+\tfrac{4}{9}(\lambda-\tfrac{n^2}{4})\right] }{2\sigma\partial_{\sigma} \varphi}\sim \sigma^{-\tfrac{1}{2}b - 1}$
the asymptotic behavior for $u_{\mathbb{F}_{C,+}^{\infty};1}$ at $\sigma^{-1} = 0$ is 
$$u_{\mathbb{F}_{C,+}^{\infty};1}   \sim \sigma^{-\tfrac{1}{2}b-1} ; b = 1\ \ \sigma\rightarrow \infty $$

\item Similarly, we obtain higher transport equations :
$$\begin{cases} \dfrac{2i\partial_{\sigma}\varphi}{\sigma}\left[\left( \sigma \partial_{\sigma}  -\dfrac{1}{6}     (\sigma^{2/3} +1)^{-1} + \dfrac{1}{2}b  \right) u_{\mathbb{F}_{C,+}^{\infty};j+1}  -i  \dfrac{ \left[(\sigma\partial_{\sigma})^2 + (b-1)  \sigma\partial_{\sigma}+\tfrac{4}{9}(\lambda-\tfrac{n^2}{4})\right] u_{\mathbb{F}_{C,+}^{\infty};j} }{2\sigma\partial_{\sigma} \varphi} \right]= 0 \\ u_{\mathbb{F}_{C,+}^{\infty};j} |_{\sigma = 0}  = e_{\mathbb{F}_C^0;j} \end{cases}$$
where $e_{\mathbb{F}_C^0;j}$ vanishes rapidly at both $\sigma = 0$ and $\sigma\rightarrow \infty$. Similar computation to get 
$$u_{\mathbb{F}_{C,+}^{\infty};j}   \sim \sigma^{-\tfrac{1}{2}b-j}; b = 1 \ \ \text{as}\  \sigma\rightarrow \infty$$
\end{itemize}
   $u_{\mathbb{F}_{C,+}^{\infty}}$ is now obtained by borel summation and has the following asymptotic property:
  $$u_{\mathbb{F}_{C,+}^{\infty}} = \exp(-i \phi) h^{\tilde{\gamma}_{\mathfrak{B}_1} -2}\sum   h^j u_{\mathbb{F}_{C,+}^{\infty};j} ; \ \  u_{\mathbb{F}_{C,+};j} \sim \sigma^{-\tfrac{1}{2}b-j} \ \text{as}\ \sigma \rightarrow \infty.$$

\textbf{For the region} $Z_0 < 0$ : We deal with the portion of $e_{\mathbb{F}_C^0}$ that is supported close the corner of $\mathbb{F}_C^{(4)}$ and $\mathbb{F}_C^{(5)}$. With $\bar{\varphi} = \tfrac{2}{3} \left[ i(\sigma^{2/3}-1)^{3/2}+1 \right]$ we compute the conjugated operator 
\begin{align*}
e^{\tfrac{\bar{\varphi}}{h}} \tilde{\QQ}_- e^{-\tfrac{\bar{\varphi}}{h}} &e^{\tfrac{\bar{\varphi}}{h}}\left[ h^2\partial_{\sigma}^2 + b h^2\dfrac{\partial_{\sigma}}{\sigma} - \dfrac{4}{9}\sigma^{-2/3} + \dfrac{4}{9}+ \dfrac{4}{9} (\lambda-\tfrac{n^2}{4}) h^2\sigma^{-2}\right]( e^{-\tfrac{\bar{\varphi}}{h}} u ); \ b=1\\
&=\sigma^2 \partial_{\sigma}^2 u + \left(2h^{-1} \sigma\partial_{\sigma}\bar{\varphi} + b \right) \sigma\partial_{\sigma} u \\
&+ \left[ h^{-1}\sigma^2\partial_{\sigma}^2\bar{\varphi} +h^{-2}\sigma^2(\partial_{\sigma}\bar{\varphi})^2 +bh^{-1} \sigma\partial_{\sigma}\bar{\varphi} - \dfrac{4}{9}\sigma^{4/3}h^{-2} + \dfrac{4}{9}h^{-2}\sigma^2  + \dfrac{4}{9}(\lambda-\tfrac{n^2}{4})\right] u \\
 \end{align*}
  We will look for a correction term $u_{\mathbb{F}_{C,-}^{\infty}}$ with
  $$u_{\mathbb{F}_{C,-}^{\infty}} = \exp(- \tilde{\phi}) h^s \sum   h^j u_{\mathbb{F}_{C,-}^{\infty};j} ; \ \ \tilde{\phi} = \tfrac{2}{3} h^{-1} \left[ i(\sigma^{2/3}-1)^{3/2}+1 \right] $$
$$\text{and}\ \ P u_{\mathbb{F}_{C,-} ^{\infty}}  + e_{\mathbb{F}_C^0} \in \dot{\mathcal{C}}^{\infty} ( \mathbb{F}_{C,-})$$
note here since the previous error term is $h^{\infty}$, $s$ is arbitrarily large.

We obtain the eikonal equation and the transport equations:
\begin{itemize}
\item \textbf{Eikonal equation} : Our choice of $\bar{\varphi} = \tfrac{2}{3} \left[(1- \sigma^{2/3})^{3/2} - 1\right]$ satisfies the equation
\begin{equation}
  \sigma^{-4/3} \left[- (\partial_{\sigma}\bar{\varphi})^2 - \dfrac{4}{9}\sigma^{-2/3} + \dfrac{4}{9} \right]A_0 =0
 \end{equation}
 since
 $$-\partial_{\sigma} \tfrac{2}{3}\left[i(\sigma^{2/3}-1)^{3/2}+1\right]
  = -\tfrac{2}{3}i(\sigma^{2/3}-1)^{1/2} \sigma^{-1/3}$$
 $$\Big( \tfrac{2}{3}i(\sigma^{2/3}-1)^{1/2} \sigma^{-1/3} \Big)^2 - \dfrac{4}{9}\sigma^{-2/3} + \dfrac{4}{9}
 = -\tfrac{4}{9} (\sigma^{2/3}-1) \sigma^{2/3}  - \dfrac{4}{9}\sigma^{-2/3} + \dfrac{4}{9} = 0  $$
  
  \item \textbf{First order transport equation}
  $$\begin{cases} \left[\sigma\partial_{\sigma} +\tfrac{1}{6} (\sigma^{2/3}-1)^{-1} +\dfrac{1}{2}b \right] u_{\mathbb{F}_{C,-}^{\infty};0} = 0  \\   u_{\mathbb{F}_{C,-}^{\infty};0}|_{\sigma = 0} = 0\end{cases} $$
   the solution as the asymptotic property $u_{\mathbb{F}_{C,-}^{\infty};0}  \sim \sigma^{-\tfrac{1}{2} b}$ as $\sigma\rightarrow +\infty$.

 \item \textbf{Second order transport equation}
 $$\begin{cases} \left[ \sigma \partial_{\sigma}  +\dfrac{1}{6}     (\sigma^{2/3} -1)^{-1} + \dfrac{1}{2}b  \right] u_{\mathbb{F}_{C,-}^{\infty};1}  
+  \dfrac{( \sigma^2\partial_{\sigma}^2 -b  \sigma\partial_{\sigma} +\tfrac{4}{9}\lambda) u_{\mathbb{F}_{C,-}^{\infty};0} }{2\sigma\partial_{\sigma} \varphi} = 0   \\ u_{\mathbb{F}_{C,-}^{\infty};1}|_{\sigma = 0} = 0\end{cases}$$
  we have 
  $$\sigma\partial_{\sigma}\varphi \sim \sigma;\ \  \tfrac{( \sigma\partial_{\sigma}^2 -b  \partial_{\sigma} ) a_0}{2\partial_{\sigma} \varphi} = \tfrac{( (\sigma\partial_{\sigma})^2 (1-b)  \sigma\partial_{\sigma} ) a_0}{2\sigma\partial_{\sigma} \varphi} \sim \sigma^{-\tfrac{1}{2}b - 1}$$
  so $u_{\mathbb{F}_{C,-}^{\infty};1}   \sim \sigma^{-\tfrac{1}{2}b-1} $ as $\sigma\rightarrow \infty$. 

\item \textbf{Higher order the transport equation for} $s+2+j$
$$\begin{cases} \left[ \sigma \partial_{\sigma}  +\dfrac{1}{6}     (\sigma^{2/3} -1)^{-1} + \dfrac{1}{2}b  \right] u_{\mathbb{F}_{C,-}^{\infty};j+1}  
+  \dfrac{( \sigma^2\partial_{\sigma}^2 +b \sigma \partial_{\sigma} +\tfrac{4}{9}\lambda) u_{\mathbb{F}_{C,-}^{\infty};j} }{2\sigma\partial_{\sigma} \varphi} = 0 \\ u_{\mathbb{F}_{C,-}^{\infty};j}|_{\sigma = 0} = 0  \end{cases} $$
 Thus  $u_{\mathbb{F}_{C,-}^{\infty};j}   \sim \sigma^{-\tfrac{1}{2}b-j} $ as $\sigma\rightarrow \infty$.

\end{itemize}

Similarly for when $Z_0$ is bounded, this we work with 
$$ (S\partial_S)^2 + (\tilde{b} -1 ) S\partial_S + \lambda + S^3 + Z_0 S^2; \tilde{b} = 1$$
and conjugate the operator by $\exp( -i\tfrac{2}{3} (S+Z_0 )^{3/2} \sgn\theta_n)$ to obtain a regular singular ODE and whose solution has the asymptotic property as $S \rightarrow +\infty$
$$  \sim e^{-i\tfrac{2}{3}(S + Z_0)^{3/2}\sgn\theta_n}(S + Z_0)^{-\tfrac{3}{4} } \mathcal{C}^{\infty}(\mathbb{F}_C) ; \ $$

 \subsubsection{Summary of step 1}
 We have constructed an approximate solution supported in $\mathbb{F}_{C,+}$ that is polyhomongeous conormal to all boundaries of this face, with an error vanishing to infinite order at all boundaries of $\mathbb{F}_C$. The polyhomogeneous behavior of $u_{\mathbb{F}_{C,+}}$ in more details:
 \begin{equation}\tilde{\QQ} \hat{U}_{\mathbb{F}_C}  \in \dot{\mathcal{C}}^{\infty} (\mathbb{F}_C) ;\ \ \hat{U}_{\mathbb{F}_C} = \hat{U}^{\mathbb{F}_{C,+}}  + \hat{U}^{\mathbb{F}_{C,0}} +  \hat{U}^{\mathbb{F}_{C,-}} \end{equation}
 \begin{equation}\label{asymptotics}
 \begin{aligned}
  \hat{U}^{\mathbb{F}_{C,+} } \in \exp(-i \phi )  \rho_{\mathbb{F}_C^{(2)}}^{\tilde{\gamma}_{\mathfrak{B}_1} } \rho_{\mathbb{F}_C^{(3)}}^{\tilde{\gamma}_{\mathfrak{B}_1}+\tfrac{1}{2}}  \mathcal{C}^{\infty} (\mathbb{F}_{C,+}^{0,\infty}) 
 \ &+\  \rho_{\mathbb{F}_C^{(2)}}^0 \rho_{\mathbb{F}_C^{(1)}}^{\sqrt{\tfrac{n^2}{4}-\lambda}}  \mathcal{C}^{\infty} (\mathbb{F}_{C,+}^{0,0})  + \rho_{\mathbb{F}_C^{(2)}}^0 \rho_{\mathbb{F}_C^{(1)}}^{-\sqrt{\tfrac{n^2}{4}-\lambda}}  \mathcal{C}^{\infty} (\mathbb{F}_{C,+}^{0,0}) \\
 &+ \exp(-i \phi )  \rho_{\mathbb{F}_C^{(3)}}^{ \tilde{\gamma}_{\mathfrak{B}_1} +\tfrac{1}{2}}   \rho_{\mathbb{F}_C^{(4)}}^{\tfrac{1}{2}}  \mathcal{C}^{\infty} (\mathbb{F}_{C,+}^{\infty})\\
 \hat{U}^{\mathbb{F}_{C, - } }\in  \exp(- \tfrac{ \tilde{\varphi}}{h}) \rho_{\mathbb{F}_C^{(6)}}^{\tilde{\gamma}_{\mathfrak{B}_1}}  \mathcal{C}^{\infty} (\mathbb{F}_{C,-}^{0,\infty})\   &+\  \rho_{\mathbb{F}_C^{(6)}}^0 \rho_{\mathbb{F}_C^{(1)}}^{\sqrt{\tfrac{n^2}{4}-\lambda}}  \mathcal{C}^{\infty} (\mathbb{F}_{C,-}^{0,0})  + \rho_{\mathbb{F}_C^{(6)}}^0 \rho_{\mathbb{F}_C^{(1)}}^{-\sqrt{\tfrac{n^2}{4}-\lambda}}  \mathcal{C}^{\infty} (\mathbb{F}_{C,-}^{0,0})\ \\
 & +   \exp(- \tilde{\phi} )  \rho_{\mathbb{F}_C^{(5)}}^{\infty}   \rho_{\mathbb{F}_C^{(4)}}^{\tfrac{1}{2}}  \mathcal{C}^{\infty} (\mathbb{F}_{C,-}^{\infty})\\
  \hat{U}^{\mathbb{F}_{C, 0 } } \in \rho_{\mathbb{F}_C^{(1)}}^{\sqrt{\tfrac{n^2}{4}-\lambda}}  \mathcal{C}^{\infty} (\mathbb{F}_{C,0}^0)  &+   \rho_{\mathbb{F}_C^{(1)}}^{-\sqrt{\tfrac{n^2}{4}-\lambda}}  \mathcal{C}^{\infty} (\mathbb{F}_{C,0}^0)+\ e^{-i\tfrac{2}{3}(S + Z_0)^{3/2}\sgn\theta_n} \rho_{\mathbb{F}_C^{(4)}}^{\tfrac{1}{2}} \mathcal{C}^{\infty}(\mathbb{F}_{C,0}^{\infty}) 
  \end{aligned}
  \end{equation}
 
 where 
 $$\tilde{\varphi} = \tfrac{2}{3} \big[ 1 - (1 - z)^{3/2}\big]; \ \phi =  \tfrac{2}{3} h^{-1} \big[ (z + 1)^{3/2} - 1 \big] ;  \tilde{\phi} =  \tfrac{2}{3}h^{-1} \big[i (z -1)^{3/2} +1 \big]$$
\begin{remark}
The oscillatory behaviors agree on overlapping regions. The index at $\mathbb{F}_C^{(4)}$ corresponds to the choice $\rho_{\mathbb{F}_C^{(4)}} = \sigma^{-1}$. See (\ref{definitionSig}) for definition of $\sigma$.
\end{remark}

\subsection{Step 2 - Justification of the asymptotic properties of the exact solution on $\mathbb{F}_C$}
With $\hat{U}_{\mathbb{F}_C}$ being the approximate solution on $\mathbb{F}_C$ that we constructed in step 1, $\hat{\mathsf{U}}_{\mathbf{F}_C}$ the exact solution on $\mathbb{F}_C$, from Lemma \ref{globalresolvent} we have
 $$ \hat{U}_{\mathbb{F}_C} - \hat{\mathsf{U}}_{\mathbb{F}_C} =  G E_h$$
 where $GE_h$ lies in the space of oscillatory functions which have the same asymptotic behavior as $\hat{U}_{\mathbb{F}_C}$ and $GE_h = \mathsf{O}(h^{\infty})$.  
 
\subsection{Step 3 - Approximate solution to (\ref{originalProAfterF}) on blown-up space C }
\begin{enumerate}
\item We extend the exact solution $\hat{\mathsf{U}}_{\mathbb{F}_C}$ into the interior of the space for $\lvert\theta\rvert\geq 1$, using the fibration structure over front face $\mathbb{F}_B$ (see figure \ref{blowupspaceCnB}). 

\item We need to modify the extended function so that satisfy the boundary condition in  (\ref{originalProAfterF}) in terms of $x$. As defined in (\ref{definitionOfz}),
 $$  z = \chi_0 (Z_0) ( Z-Z_0) + \chi_+(Z_0)  \big( \dfrac{Z-Z_0}{Z_0}\big) + \chi_-(Z_0)  \big( \dfrac{Z-Z_0}{-Z_0}\big)$$$$ \rho_{ \mathbb{F}_C^{(1)}} =  \chi_0 (Z_0) ( Z-Z_0) + \chi_+(Z_0)  \big( \dfrac{Z-Z_0}{Z_0}\big)Z_0^{3/2} + \chi_-(Z_0)  \big( \dfrac{Z-Z_0}{-Z_0}\big)(-Z_0)^{3/2}$$
 $$Z_0 = \lvert\theta_n\rvert^{2/3} (1 - \lvert\hat{\theta}'\rvert^2) ; Z - Z_0 = x\lvert\theta_n\rvert^{2/3} ; \hat{\theta}' = \dfrac{\theta'}{\theta_n}$$
We have $\rho_{ \mathbb{F}_C^{(1)}} (x = 0, \theta)  = 0$ hence $\tilde{u}^-_{\mathbb{F}_C}|_{x=0} = 1$. See (\ref{localApSo2}) for definition of $\tilde{u}^-_{\mathbb{F}_C}$. \nl
 Define
 \begin{equation}\label{phiCorr}
 \begin{aligned}
   \phi_{\text{correction}} &:= c_0 \chi_0 (Z_0) \lvert\theta_n\rvert^{\tfrac{2}{3}\sqrt{\tfrac{n^2}{4}-\lambda}} + c_+\chi_+(Z_0)  \Big[\lvert\theta_n\rvert (1 - \lvert\hat{\theta}'\rvert^2)^{1/2}\Big]^{\sqrt{\tfrac{n^2}{4}-\lambda}} \\
  & + c_-\chi_-(Z_0) \Big[\lvert\theta_n\rvert (\lvert\hat{\theta}'\rvert^2-1)^{1/2}\Big]^{\sqrt{\tfrac{n^2}{4}-\lambda}}   
   \end{aligned}
   \end{equation}
   $c_0,c_-$ and $c_+$ are smooth bounded function of $Z_0$ (\emph{used to make up for the `piling up'  of the values of $\chi+,\chi_-, \chi_0$}). 
   \begin{remark}
   For singularity computation purpose, we will be concerned with the growth in terms of $\lvert\theta\rvert$ of the various terms in the definition of $\phi_{\text{correction}}$. On the the overlapping region between $\chi_0$ and $\chi_+$ both terms are both growing by $\lvert\theta_n\rvert^{-\tfrac{2}{3}(\tfrac{n}{2}-\sqrt{\tfrac{n^2}{4}-\lambda})}$. This is because on overlapping support of $\chi_0$ and $\chi_+$ $Z_0 > 0$ is bounded ie 
   $$  1 - \lvert\hat{\theta}'\rvert^2  \lesssim \lvert\theta_n\rvert^{-2/3}
   \Rightarrow \big(\lvert\theta_n\rvert (1 - \lvert\hat{\theta}'\rvert^2)^{1/2}\big)^{\sqrt{\tfrac{n^2}{4}-\lambda}}   \lesssim \lvert\theta_n\rvert^{\tfrac{2}{3}\sqrt{\tfrac{n^2}{4}-\lambda}}$$
   Similarly for the overlapping region of $\chi_-$ and $\chi_0$. 
   \end{remark}
   Define
\begin{equation}\label{slnInsing}
\hat{u}_x = \tilde{\chi}(\lvert\theta\rvert)\  \phi_{\text{correction}}\, x^{\tfrac{n}{2}}  \hat{U}_{\mathbb{F}_C} 
\end{equation}
 $\tilde{\chi}(\lvert\theta\rvert)\in \mathcal{C}^{\infty}$ with $\supp\tilde{\chi} \subset \{\lvert\theta\rvert\geq 1\}$. Hence $\tilde{\chi}$ is to cut off the exact solution away from finite region ($\lvert\theta\rvert\leq1$). So in fact $\hat{u}_x$ is an exact local solution near infinity $\lvert\theta\rvert\geq 1$ (however an approximate solution on the whole blown up space B)\footnote{Since we only care about the singularities behavior at infinity $\theta$, an approximate solution in this sense (cut off away from finite $\lvert\theta\rvert$) suffices}. We have $\hat{u}_x$ satisfies:
 $$ \begin{cases}
\hat{L} \hat{u} \in \dot{\mathcal{C}}^{\infty}(\mathbb{R}^{n+1}_+) \\
x^{-s_-} \hat{u} \ |_{x=0} = 1   & s_{\pm}(\lambda) = \dfrac{n}{2} \pm \sqrt{\dfrac{n^2}{4} - \lambda} \\
\hat{u} \in \exp(i\phi_{\text{in}})  \mathcal{L}_{ph}(C) & \phi_{\text{in}} = -\tfrac{2}{3} \theta_n^{-1} \big[ (1 +x - \lvert\hat{\theta}'\rvert^2)^{3/2} - (1 - \lvert\hat{\theta}'\rvert^2)^{3/2}\big] \text{sgn} \theta_n
\end{cases} $$

\end{enumerate}

\begin{remark}
 To get an approximate solution to the original problem (\ref{originalProblem}) we will take the inverse fourier transform of $\hat{\mathsf{U}}$. In the singularity computation section we will work with $\mathcal{F}^{-1} (\hat{\mathsf{U}})$.
\end{remark}

\section{Singularities computation}\label{SingComp}

In section \ref{detailofCon}, we constructed $\hat{u}_x$ that lives on the blownup space C (see figure \ref{blowupspaceCnB}) and satisfies$$
\begin{cases}
\hat{L} \hat{u}_x \in \dot{\mathcal{C}}^{\infty}(\mathbb{R}^{n+1}_+) \\
x^{-s_-} \hat{u}_x \ |_{x=0} = 1   & s_{\pm}(\lambda) = \dfrac{n}{2} \pm \sqrt{\dfrac{n^2}{4} - \lambda} \\
\hat{u}_x \in \exp(i\phi_{\text{in}})  \mathcal{L}_{ph}(C) & \phi_{\text{in}} = -\tfrac{2}{3} \theta_n^{-1} \big[ (1 +x - \lvert\hat{\theta}'\rvert^2)^{3/2} - (1 - \lvert\hat{\theta}'\rvert^2)^{3/2}\big] \text{sgn} \theta_n
\end{cases} $$
The function $\hat{u}_x$ lives on blownup space $C$ but due the reduction process in section \ref{reductionSml} and the construction in section \ref{detailofCon}, it has a complete description as a polyhomogeneous conormal function on the front face $\mathbb{F}_C$ of blowup space C (see figure \ref{blowupspaceCnB} and \ref{ffFC}). 
In this section, we will study the regularity / singularity properties in terms of conormality and wavefront set of the approximate solution $\mathsf{U} = \mathcal{F}^{-1}(\hat{u}_x)$ on each of the region which after Fourier transform corresponds to the partition of $\mathbb{F}_C$ (see figure \ref{ffFC}). Using these results, in section \ref{mainresultcomp}, we will draw conclusion on the singularity structure of the approximate solution $\mathsf{U}$ and then the exact solution $U$ to (\ref{originalProblem}) in section \ref{mainresultcomp}, which is the original goal of the paper. \nl
 \textbf{The main observation from the singularity computation}: The portion of of the solution in the interior of $\mathbb{F}_C$ under the fourier transform is `trivial' i.e. does not yield singularity. The structure of the solution away from $x=0$ is due to the asymptotic behavior at scattering end for different value of $h$ (near boundary face $\mathbb{F}_C^{(4)}$). In our case, due to the fact that we have a reduction to $\mathbb{F}_C$, singularities at the glancing point propagate into the interior (i.e away from $x=0$) and contributes to singularities at the hyperbolic points.
 \begin{center}
 \includegraphics[scale=0.5]{frontfaceFC-2dcase}
 \end{center}
For convenience we list the polyhomogeneity behavior of the constructed function $\hat{u}_x$ defined in (\ref{slnInsing}) at each boundary face of $\mathbb{F}_C$ (see figure \ref{ffFC}):
$$\hat{u}_x = \tilde{\chi}(\lvert\theta\rvert)\,  \phi_{\text{correction}} \, x^{\tfrac{n}{2}} \hat{\mathsf{U}}_{\mathbb{F}_C} $$
where
\begin{align*}
   \phi_{\text{correction}} &= c_0 \chi_0 (Z_0) \lvert\theta_n\rvert^{\tfrac{2}{3}\sqrt{\tfrac{n^2}{4}-\lambda}} + c_+\chi_+(Z_0) \big(\lvert\theta_n\rvert (1 - \lvert\hat{\theta}'\rvert^2)^{1/2}\big)^{\sqrt{\tfrac{n^2}{4}-\lambda}} \\
  & + c_-\chi_-(Z_0)  \big(\lvert\theta_n\rvert (\lvert\hat{\theta}'\rvert^2-1)^{1/2}\big)^{\sqrt{\tfrac{n^2}{4}-\lambda}}   
\end{align*}
We relist the asymptotic properties of $\hat{\mathsf{U}}_{\mathbb{F}_C}$ at each boundary faces of $\mathbb{F}_C$ (see also (\ref{asymptotics})) :
 $$\hat{U}^{\mathbb{F}_C} = \hat{U}^{\mathbb{F}_{C,+}}  + \hat{U}^{\mathbb{F}_{C,0}} +  \hat{U}^{\mathbb{F}_{C,-}} $$ 
 $$ \hat{U}^{\mathbb{F}_{C,+} } \in \exp(-i \phi )  \rho_{\mathbb{F}_C^{(2)}}^{\tilde{\gamma}_{\mathfrak{B}_1} } \rho_{\mathbb{F}_C^{(3)}}^{\tilde{\gamma}_{\mathfrak{B}_1}+\tfrac{1}{2}}  \mathcal{C}^{\infty} (\mathbb{F}_{C,+}^{0,\infty}) 
 \ +\  \rho_{\mathbb{F}_C^{(2)}}^0 \rho_{\mathbb{F}_C^{(1)}}^{\sqrt{\tfrac{n^2}{4}-\lambda}}  \mathcal{C}^{\infty} (\mathbb{F}_{C,+}^{0,0})  + \rho_{\mathbb{F}_C^{(2)}}^0 \rho_{\mathbb{F}_C^{(1)}}^{-\sqrt{\tfrac{n^2}{4}-\lambda}}  \mathcal{C}^{\infty} (\mathbb{F}_{C,+}^{0,0}) $$
 $$+ \exp(-i \phi )  \rho_{\mathbb{F}_C^{(3)}}^{ \tilde{\gamma}_{\mathfrak{B}_1} +\tfrac{1}{2}}   \rho_{\mathbb{F}_C^{(4)}}^{\tfrac{1}{2}}  \mathcal{C}^{\infty} (\mathbb{F}_{C,+}^{\infty})$$
 $$\hat{U}_{\mathbb{F}_{C, - } }\in  \exp(- \tfrac{ \tilde{\varphi}}{h}) \rho_{\mathbb{F}_C^{(6)}}^{\tilde{\gamma}_{\mathfrak{B}_1}}  \mathcal{C}^{\infty} (\mathbb{F}_{C,-}^{0,\infty})\   +\  \rho_{\mathbb{F}_C^{(6)}}^0 \rho_{\mathbb{F}_C^{(1)}}^{\sqrt{\tfrac{n^2}{4}-\lambda}}  \mathcal{C}^{\infty} (\mathbb{F}_{C,-}^{0,0})  + \rho_{\mathbb{F}_C^{(6)}}^0 \rho_{\mathbb{F}_C^{(1)}}^{-\sqrt{\tfrac{n^2}{4}-\lambda}}  \mathcal{C}^{\infty} (\mathbb{F}_{C,-}^{0,0})\ $$
 $$ +   \exp(- \tilde{\phi} )  \rho_{\mathbb{F}_C^{(5)}}^{\infty}   \rho_{\mathbb{F}_C^{(4)}}^{\tfrac{1}{2}}  \mathcal{C}^{\infty} (\mathbb{F}_{C,-}^{\infty})$$
  $$\hat{U}^{\mathbb{F}_{C, 0 } } \in \rho_{\mathbb{F}_C^{(1)}}^{\sqrt{\tfrac{n^2}{4}-\lambda}}  \mathcal{C}^{\infty} (\mathbb{F}_{C,0}^0)  +   \rho_{\mathbb{F}_C^{(1)}}^{-\sqrt{\tfrac{n^2}{4}-\lambda}}  \mathcal{C}^{\infty} (\mathbb{F}_{C,0}^0)+\ e^{-i\tfrac{2}{3}(S + Z_0)^{3/2}\sgn\theta_n} \rho_{\mathbb{F}_C^{(4)}}^{\tfrac{1}{2}} \mathcal{C}^{\infty}(\mathbb{F}_{C,0}^{\infty})  $$
 where 
 $$\tilde{\varphi} = \tfrac{2}{3} \big[ 1 - (1 - z)^{3/2}\big]; \ \phi =  \tfrac{2}{3} h^{-1} \big[ (z + 1)^{3/2} - 1 \big] ;  \tilde{\phi} =  \tfrac{2}{3}h^{-1} \big[i (z -1)^{3/2} +1 \big]$$
$$\rho_{\mathbb{F}_C^{(4)}} = \sigma^{-1}  ; z = \sigma^{2/3}$$

We will be consider the properties of the approximate solution $\hat{\mathsf{U}}$ on each of the following region: for $\delta_2 > \tilde{\delta}_2$ (see (\ref{definitionOfz}) for where $\tilde{\delta}_2$ appears):
   \begin{enumerate}
   \item $\lvert Z_0 \rvert < 2\delta_2, \lvert\theta\rvert \geq 1$ : denoted by region $\mathbb{F}_{C,0}$.
   \item $Z_0 > \delta_2 , \lvert\theta\rvert \geq 1$ : denoted by region $\mathbb{F}_{C,+}$ which will be subpartitioned into 
   \begin{itemize}
   \item $\mathbb{F}_{C,+}^0 = \mathbb{F}_{C,+}^{0,0} \cup \mathbb{F}_{C,+}^{0,\infty}$ localizing the two corners of the blown-up front face.
   \item  $\mathbb{F}_{C,+}^{\infty}$. 
   \end{itemize}
   
   \item $Z_0 < -\delta_2, \lvert\theta\rvert \geq 1$ : here we will need to consider
   $ \begin{cases} Z_0 < -\delta_2   \\ 
   \lvert \theta_n\rvert \geq (1 - 2\delta_1) \lvert\theta'\rvert\end{cases}
   $ denoted by $\mathbb{F}_{C,-,1}$ which is subpartitioned as above into 
   $$\mathbb{F}_{C,-,1}^{\infty}, \mathbb{F}_{C,-,1}^{0,\infty},\  \text{and}\   \mathbb{F}_{C,-,1}^{0,0}$$
   and
   $ \begin{cases} Z_0 < -\delta_2   \\ 
   \lvert \theta_n\rvert \leq (1 - \delta_1) \lvert\theta'\rvert\end{cases}$ denoted by $\mathbb{F}_{C,-,2}$ which is subpartitioned into 
   $$ \mathbb{F}_{C,-,2}^{0,\infty},\  \text{and}\   \mathbb{F}_{C,-,2}^{0,0}.$$
   
       \end{enumerate}
  To localize to each region we will use smooth compact function $\psi_{\mathbb{F}_{C,0}^0}$, $\psi_{\mathbb{F}_{C,0}^{\infty}}$, $\psi_{\mathbb{F}_{C,+}^{0,0}}$, $\psi_{\mathbb{F}_{C,+}^{0,\infty}}$ , $ \psi_{\mathbb{F}_{C,+}^{\infty}} $, $\psi_{\mathbb{F}_{C,-,1}^{0,0}}$, $\psi_{\mathbb{F}_{C,-,1}^{0,\infty}}$ , $\psi_{\mathbb{F}_{C,-,1}^{\infty}}$, $\psi_{\mathbb{F}_{C,-,2}^{0,0}}$ and $\psi_{\mathbb{F}_{C,-,2}^{0,\infty}}$.

\textbf{Notation} : \newline
Denote by $s_- = \tfrac{n}{2} - \sqrt{\tfrac{n^2}{4} - \lambda}$, $\tilde{a} = 1 - n$, $a = \tfrac{2}{3}\tilde{a} + \tfrac{1}{3} = -\tfrac{2}{3}n + 1$.\newline
We will use the projective blowup variable $s = t^{-1}, z$ near the corner of $\mathbb{F}_C^{(2)}$ and $\mathbb{F}_C^{(3)}$ and $t = \tfrac{z}{h}, h$ near the corner of $\mathbb{F}_C^{(1)}$ and $\mathbb{F}_C^{(2)}$ . Similarly for the other end where $Z_0 < 0$.
\subsection{On region $\mathbb{F}_{C,0}$}
In this region we have $\{ \lvert Z_0 \rvert \leq 2\delta_2 ;\  \lvert \theta\rvert\geq 1\} $ which lies in the conic set 
$ \{\lvert\theta\rvert \geq 1,   1 - \kappa_0(\delta_0) \leq \lvert\hat{\theta}'\rvert \leq 1 + \kappa_1(\delta_2)    \}$. 
 Hence $\lvert\theta_n\rvert$ is equivalent to $\lvert\theta'\rvert$ and thus to $\lvert\theta\rvert$. Thus also implies that $\lvert\theta_n\rvert, \lvert\theta'\rvert > 0$
 
  For the subregion $\mathbb{F}_{C,0}^0$ ie close to $\sigma = 0$, we will show
  \begin{enumerate}
  \item $\mathsf{U}^{\mathbb{F}_{C,0}^0}\in x^{s_-} \mathcal{C}^{\infty}_{x,y} + x^{s_+} \mathcal{C}^{\infty}_{x,y} $ on $y_n \neq 0$.
  \item $\mathsf{U}^{\mathbb{F}_{C,0}^0}$ is smooth on $x > \delta$ for any $\delta >0$
  \end{enumerate}

 For the subregion $\mathbb{F}_{C,0}^{\infty}$ ie close to $\sigma^{-1} = \infty$, we will show
 \begin{enumerate}
 \item $\mathsf{U}^{\mathbb{F}_{C,0}^{\infty}}$ is smooth for $y_n < 0$ and vanishes to infinite order at $x = 0$
 \item smoothness and and vanishes to infinite order at $x = 0$ for $y_n > 0$ if we are close enough to the boundary.
 \item Wavefront set statement away from $x = 0$ for $\mathsf{U}^{\mathbb{F}_{C,0}^{\infty}}$  : follows bicharacteristic strips

 \end{enumerate}

\subsubsection{On region $ \mathbb{F}_{C,0}^0$ }\
\begin{remark}
As defined in (\ref{definitionOfz}), and since $\delta_2 > \tilde{\delta}_2$, $z = \text{smooth function}(Z_0) S$ where this smooth function is $\equiv 1$ on $Z_0 < \tilde{\delta}_2$, $S = Z- Z_0 = x\lvert\theta_n\rvert^{2/3}$. On the current support $Z_0 < 2\delta_2$ so the properties of the solution in $S$ are the exactly the same in terms of $z$. 
\end{remark}
For $\chi(x,y) \in C^{\infty}_0(x,y)$
  \begin{align}\label{indicialF0}
  \langle \mathsf{U}^{\mathbb{F}_{C,0}^0}) , \chi(x,y) \rangle
   &= \lim_{\epsilon \rightarrow 0} (2\pi)^{-n} \int\int\int  \psi_{\mathbb{F}_{C,0}^0} \hat{u}_x \chi(x,y) \varpi(\epsilon\theta)  e^{iy\cdot \theta} dx dy d\theta
   \end{align}
   where we recall the form of $\hat{u}_x$ on the region currently considered.   $$\psi_{\mathbb{F}_{C,0}^0} \hat{u}_x =\lvert\theta_n\rvert^{\tfrac{2}{3}\sqrt{\tfrac{n^2}{4}-\lambda}}\hat{\mathsf{U}}^{\mathbb{F}_{C,0}^0} $$
   where  $ \hat{\mathsf{U}}^{\mathbb{F}_{C,0}^0}$ is conormal in $S$  . In more details:
\begin{align*}
 2\sqrt{\tfrac{n^2}{4}-\lambda}\notin \mathbb{Z} :&\  \hat{\mathsf{U}}^{\mathbb{F}_{C,+}^{0,0}} \in x^{\tfrac{n}{2}} S^{\sqrt{\tfrac{n^2}{4}-\lambda}} \mathcal{C}^{\infty}_{S,Z_0} + x^{\tfrac{n}{2}} S^{-\sqrt{\tfrac{n^2}{4}-\lambda}} \mathcal{C}^{\infty}_{S,Z_0}
  \end{align*}
  $$\lvert\theta_n\rvert^{\tfrac{2}{3}\sqrt{\tfrac{n^2}{4}-\lambda}} \hat{\mathsf{U}}^{\mathbb{F}_{C,+}^{0,0}} \in  \lvert\theta_n\rvert^{\tfrac{4}{3}\sqrt{\tfrac{n^2}{4}-\lambda}} x^{s_+} \mathcal{C}^{\infty}_{S,Z_0} + x^{s_-} \mathcal{C}^{\infty}_{S,Z_0}$$
   Using this form of $\hat{u}_x$ we rewrite (\ref{indicialF0}) as
  $$
  \langle \mathsf{U}^{\mathbb{F}_{C,0}^0} , \chi(x,y) \rangle
   = \lim_{\epsilon \rightarrow 0}(2\pi)^{-n}  \sum_{\pm}  x^{s_{\pm}}\int\int\int a_{\mathbb{F}_{C,0}^0;\pm} \chi(x,y) \varpi(\epsilon\theta)  e^{iy\cdot \theta} \,dx\, dy \,d\theta$$ 
 \begin{lemma}[Properties of $a_{\mathbb{F}_{C,0}^0;\pm} $]\label{FC00}\
 \begin{enumerate}
 \item For $L$ such that  $^t L = \dfrac{1}{\lvert y\rvert^2} \sum y_j \dfrac{\partial}{i\partial \theta_j}$, we have for $C_k$ is independent of $x$:
 $$\lvert D_x^l  L^k a_{\mathbb{F}_{C,0}^0;\pm}\rvert \leq C_k \lvert\theta\rvert^{-\tfrac{1}{3}k + {\tfrac{4}{3}\sqrt{\tfrac{n^2}{4}-\lambda}}+\tfrac{2}{3}l}.$$
 \item On $x \neq 0$, 
 $$a_{\mathbb{F}_{C,0}^0} = \sum_{\pm} x^{s_{\pm}} a_{\mathbb{F}_{C,0}^0;\pm} \in S^{-\infty}_{\tfrac{1}{3},0;\theta}.$$
\end{enumerate}
 \end{lemma}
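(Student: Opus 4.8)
\textbf{Proof proposal for Lemma \ref{FC00}.}

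The plan is to read off both statements directly from the known polyhomogeneous structure of $\hat{u}_x$ on $\mathbb{F}_{C,0}^0$, combined with the fact that on this region $\lvert\theta_n\rvert$, $\lvert\theta'\rvert$ and $\lvert\theta\rvert$ are all mutually equivalent (this was noted at the start of the subsection, and is what gives us a genuine conic neighborhood on which to do symbol estimates). First I would fix the relevant change of variables. Recall $S = Z - Z_0 = x\lvert\theta_n\rvert^{2/3}$, and on $\{\lvert Z_0\rvert \le 2\delta_2\}$ we have $z = (\text{smooth in }Z_0)\cdot S$ with that factor $\equiv 1$ for $Z_0 < \tilde\delta_2$; also $\sigma^{2/3} = z$, so $\rho_{\mathbb{F}_C^{(1)}} = t$ is comparable to $S$ (up to a positive smooth factor) near $\sigma = 0$. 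Then $a_{\mathbb{F}_{C,0}^0;\pm}$ is, up to the explicit prefactors $\phi_{\text{correction}}$, $\tilde\chi(\lvert\theta\rvert)$, $x^{n/2}$ and the power $x^{-s_\pm}$ that we have stripped off, exactly the coefficient of $S^{\pm\sqrt{n^2/4-\lambda}}$ in the expansion of $\hat{U}^{\mathbb{F}_{C,0}^0}$, which lies in $\mathcal{C}^\infty_{S,Z_0}$ (for $2\sqrt{n^2/4-\lambda}\notin\mathbb{Z}$; the integer case produces a harmless logarithmic factor which does not affect the estimates below).

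For part (1), I would estimate $D_x^l L^k a_{\mathbb{F}_{C,0}^0;\pm}$ by tracking powers of $\lvert\theta\rvert$. The operator $L$ (transpose of $\frac{1}{\lvert y\rvert^2}\sum y_j\partial_{\theta_j}/i$) differentiates in $\theta$; each $\theta$-derivative of a smooth function of $S = x\lvert\theta_n\rvert^{2/3}$ and $Z_0 = \lvert\theta_n\rvert^{2/3}(1-\lvert\hat\theta'\rvert^2)$ gains at most $\lvert\theta\rvert^{-1}\cdot\lvert\theta\rvert^{2/3} = \lvert\theta\rvert^{-1/3}$ (the worst case being differentiation hitting the $\lvert\theta_n\rvert^{2/3}$-type factors), so $L^k$ contributes $\lvert\theta\rvert^{-k/3}$. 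Each $\partial_x$ brings down a factor $\lvert\theta_n\rvert^{2/3}\sim\lvert\theta\rvert^{2/3}$ (since $x$ enters only through $S$), giving $\lvert\theta\rvert^{2l/3}$. The prefactor $\phi_{\text{correction}}$ is, on the overlap of the cutoffs, uniformly $\lesssim \lvert\theta_n\rvert^{\frac{2}{3}\sqrt{n^2/4-\lambda}}$ by the remark preceding \eqref{slnInsing}, and the $+$ branch carries the extra $\lvert\theta_n\rvert^{\frac{2}{3}\sqrt{n^2/4-\lambda}}$ coming from $\lvert\theta_n\rvert^{\frac{2}{3}\sqrt{n^2/4-\lambda}}\hat{\mathsf{U}}^{\mathbb{F}_{C,0}^{0,0}}\in \lvert\theta_n\rvert^{\frac{4}{3}\sqrt{n^2/4-\lambda}}x^{s_+}\mathcal{C}^\infty + x^{s_-}\mathcal{C}^\infty$; taking the worse of the two branches yields the total exponent $-\tfrac{k}{3} + \tfrac{4}{3}\sqrt{n^2/4-\lambda} + \tfrac{2}{3}l$, with $C_k$ depending on $k$ (and $l$) but not on $x$ because the $\mathcal{C}^\infty_{S,Z_0}$ coefficients are bounded with all derivatives on the compact support of $\psi_{\mathbb{F}_{C,0}^0}$ in $(S,Z_0)$ and $\tilde\chi$ localizes $\lvert\theta\rvert\ge 1$.

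Part (2) is then an immediate consequence of (1) once $x\neq 0$: there the variable $S$ is bounded away from $0$ only if $\lvert\theta\rvert$ is bounded, but more to the point, on $\{x\ge\delta\}$ one may apply $D_x^l$ freely and the full symbol $a_{\mathbb{F}_{C,0}^0} = \sum_\pm x^{s_\pm}a_{\mathbb{F}_{C,0}^0;\pm}$ satisfies $\lvert L^k a_{\mathbb{F}_{C,0}^0}\rvert\lesssim\lvert\theta\rvert^{-k/3 + C}$ for a fixed constant $C$ (absorbing the $\theta$-growth of $\phi_{\text{correction}}$ and the branch factors), with analogous bounds for mixed $x$- and $\theta$-derivatives; since $L^k$ gains $\lvert\theta\rvert^{-k/3}$ for every $k$, letting $k\to\infty$ shows $a_{\mathbb{F}_{C,0}^0}\in S^{-\infty}_{\frac13,0;\theta}$, i.e. it decays faster than any power of $\lvert\theta\rvert$. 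I expect the main obstacle to be purely bookkeeping: one must check carefully that every $\theta$-derivative really does gain $\lvert\theta\rvert^{-1/3}$ and not $\lvert\theta\rvert^{0}$ — this hinges on the homogeneity structure built into the change of variables $(Z_0,S)\mapsto\theta$ and on the fact that $\hat{U}^{\mathbb{F}_{C,0}^0}$ genuinely depends on $\theta$ only through $(S,Z_0)$ (no leftover bare $\theta$-dependence), which is where the reduction of Section \ref{reductionSml} and the construction of Section \ref{detailofCon} must be invoked.
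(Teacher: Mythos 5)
Your part (1) is essentially the paper's argument: you use the same change of variables, the bound $\lvert a_{\mathbb{F}_{C,0}^0;\pm}\rvert\lesssim\lvert\theta_n\rvert^{\tfrac{4}{3}\sqrt{\tfrac{n^2}{4}-\lambda}}$ coming from the $\lvert\theta_n\rvert^{\tfrac{2}{3}\sqrt{\cdot}}$ prefactor together with the extra $\lvert\theta_n\rvert^{\tfrac{2}{3}\sqrt{\cdot}}$ on the $+$ branch, the fact that each $\theta$-derivative acts through $S=x\lvert\theta_n\rvert^{2/3}$ and $Z_0=\lvert\theta_n\rvert^{2/3}(1-\lvert\hat\theta'\rvert^2)$ and so gains at least $\lvert\theta\rvert^{-1/3}$, and $\partial_x=\lvert\theta_n\rvert^{2/3}\partial_S$ costing $\lvert\theta\rvert^{2/3}$, with all constants uniform in $x$ because $S$, $Z_0$, $\lvert\hat\theta_j\rvert$ are bounded on the localized region. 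This is exactly the computation (\ref{derivativeFC00}) in the paper.

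Part (2), however, contains a genuine gap. You argue that since $\lvert L^k a_{\mathbb{F}_{C,0}^0}\rvert\lesssim\lvert\theta\rvert^{-k/3+C}$ for every $k$, ``letting $k\to\infty$'' gives $a_{\mathbb{F}_{C,0}^0}\in S^{-\infty}_{\tfrac13,0;\theta}$. That is a non-sequitur: bounds on $\theta$-derivatives of the symbol say nothing about the decay of the symbol itself, and what you have actually shown at that point is only $a_{\mathbb{F}_{C,0}^0}\in S^{C}_{\tfrac13,0;\theta}$ for the fixed order $C=\tfrac{4}{3}\sqrt{\tfrac{n^2}{4}-\lambda}$ (the paper records this as $S^0_{\tfrac13,0}$ after absorbing factors); membership in $S^{-\infty}$ requires $\lvert\theta\rvert^N\lvert\partial_x^l\partial_\theta^\alpha a\rvert$ bounded for \emph{all} $N$, i.e.\ rapid decay of the symbol, not improvement under repeated differentiation. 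The mechanism that actually lowers the order — and which you gesture at in your first clause about $S$ and $\lvert\theta\rvert$ but then abandon — is the identity $\lvert\theta_n\rvert^N=S^{\tfrac32 N}x^{-\tfrac32 N}$: on the region $\mathbb{F}_{C,0}^0$ the localization keeps $S$ bounded, the coefficient $G(S,Z_0)\in\mathcal{C}^\infty_{S,Z_0}$ is bounded there, and $x\geq\delta$, so $\lvert\theta_n\rvert^N G(x\lvert\theta_n\rvert^{2/3},Z_0)=S^{\tfrac32 N}x^{-\tfrac32 N}G(S,Z_0)\in L^\infty$ for every $N$ (equivalently, for fixed $x\geq\delta$ the support restriction on $S$ forces $\lvert\theta_n\rvert$, hence $\lvert\theta\rvert$, to be bounded). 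Combining this with $\lvert\theta_n\rvert\sim\lvert\theta\rvert$ on this region, and applying the same trade to the derivative bounds from part (1), is what yields $a_{\mathbb{F}_{C,0}^0}\in S^{-\infty}_{\tfrac13,0;\theta}$ on $x\neq 0$; without this step your proof of (2) does not go through.
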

 \begin{proof}\indent\par\noindent
 \begin{enumerate}
 \item First we consider the bound on $a_{\mathbb{F}_{C,0}^0;\pm}$. Note on the current region $\lvert\theta_n\rvert$ is equivalent to $\lvert\theta\rvert$.
   $$\psi_{\mathbb{F}_{C,0}^0} \hat{u}_x =\lvert\theta_n\rvert^{\tfrac{2}{3}\sqrt{\tfrac{n^2}{4}-\lambda}}\hat{\mathsf{U}}^{\mathbb{F}_{C,0}^0} $$
  $$\lvert\theta_n\rvert^{\tfrac{2}{3}\sqrt{\tfrac{n^2}{4}-\lambda}} \hat{\mathsf{U}}^{\mathbb{F}_{C,+}^{0,0}} \in  \lvert\theta_n\rvert^{\tfrac{4}{3}\sqrt{\tfrac{n^2}{4}-\lambda}} x^{s_+} \mathcal{C}^{\infty}_{S,Z_0} + x^{s_-} \mathcal{C}^{\infty}_{S,Z_0}$$
  $$\Rightarrow \lvert a_{\mathbb{F}_{C,0}^0;\pm} \rvert \leq \lvert \theta_n\rvert^{\tfrac{4}{3}\sqrt{\tfrac{n^2}{4}-\lambda}}$$
  To compute the bound for higher derivatives, we use :
\begin{equation}\label{derivativeFC00}
\begin{aligned}
\partial_{\theta_n}  = \lvert\theta_n\rvert^{-1} S \partial_S
  +   ( Z_0 \lvert\theta_n\rvert^{-1} - 2\lvert\theta_n\rvert^{-1/3} \lvert\hat{\theta}'\rvert^2   )\partial_{Z_0} \\
j  > n ;\ \partial_{\theta_j} = -2 \lvert\theta_n\rvert^{-1/3} \lvert\hat{\theta}_j\rvert\partial_{Z_0}\\
\partial_x  = \lvert\theta_n\rvert^{2/3}\partial_S
\end{aligned}
\end{equation}
Using the fact that $a_{\mathbb{F}_{C,0}^0;\pm}$ is smooth in $Z_0$ and $S$, on the current region $Z_0,S$ and $\lvert\hat{\theta}_j\rvert$ are bounded on the region currently considered and the computations in (\ref{derivativeFC00}), we have:
\begin{equation}\label{boundFC0}
\lvert \partial_x^l \partial_{\theta}^{\alpha} a_{\mathbb{F}_{C,0}^0;\pm}\rvert \lesssim  \lvert\theta\rvert^{-\tfrac{1}{3}k +\tfrac{4}{3}\sqrt{\tfrac{n^2}{4}-\lambda}+\tfrac{2}{3} l} 
\end{equation}

\item  On $x\neq 0$ we have smoothness in $x$ for $a_{\mathbb{F}_{C,0}^0} = \sum_{\pm} x^{s_{\pm}} a_{\mathbb{F}_{C,0}^0;\pm} $. Together with the bound in (\ref{boundFC0}), in fact since $\partial_x= x^{-1} S\partial_S$ we have:
$$a_{\mathbb{F}_{C,0}^0} \in S^0_{\tfrac{1}{3},0; \theta} $$
The order of the symbol can be lowered : $S = Z - Z_0 = x\lvert\theta_n\rvert^{2/3}$. On $x\neq 0$, for all $N\in \mathbb{N}$ on
 $$ \lvert\theta_n\rvert^N G(x\lvert\theta_n\rvert^{2/3}, Z_0) 
 = S^{\tfrac{3}{2}N} x^{-\tfrac{3}{2}N} G(x\lvert\theta_n\rvert^{2/3}, Z_0) \in L^{\infty}.$$
Since $\lvert\theta_n\rvert \sim \lvert\theta_n\rvert$, $ \lvert\theta\rvert^N G(x\lvert\theta_n\rvert^{2/3}, Z_0) \in L^{\infty}$.

\end{enumerate}

 \end{proof}

\begin{lemma}\label{conormalFC00}
 $\mathsf{U}^{\mathbb{F}_{C,0}^0}\in x^{s_-} \mathcal{C}^{\infty}_{x,y} + x^{s_+} \mathcal{C}^{\infty}_{x,y} $ on $y_n \neq 0$
\end{lemma}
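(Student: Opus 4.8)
\textbf{Proof plan for Lemma \ref{conormalFC00}.}

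The plan is to deduce the claim from the representation
$$\langle \mathsf{U}^{\mathbb{F}_{C,0}^0}, \chi(x,y)\rangle = \lim_{\epsilon\to 0}(2\pi)^{-n}\sum_{\pm} x^{s_{\pm}}\int\!\!\int\!\!\int a_{\mathbb{F}_{C,0}^0;\pm}\,\chi(x,y)\,\varpi(\epsilon\theta)\,e^{iy\cdot\theta}\,dx\,dy\,d\theta$$
together with the symbol bounds from Lemma \ref{FC00}. The key point is that away from $y_n=0$ we may exploit the extra decay: since on the support we have $\lvert\theta_n\rvert\sim\lvert\theta'\rvert\sim\lvert\theta\rvert$ and $\supp\psi_{\mathbb{F}_{C,0}^0}\hat u_x$ is conic, the transpose operator $^tL=\lvert y\rvert^{-2}\sum y_j\partial_{\theta_j}/i$ is well-defined for $y\ne 0$ (the angular variable is bounded away from the degenerate direction), and each application of $L$ gains a factor $\lvert\theta\rvert^{-1/3}$ by part (1) of Lemma \ref{FC00}. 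First I would fix a compactly supported $\chi$ with $\chi$ vanishing near $\{y_n=0\}$, write $e^{iy\cdot\theta}= {}^t L^k e^{iy\cdot\theta}$ after cutting $\theta$ away from a neighborhood of the origin (the low-frequency part is already smooth since $\hat u_x$ has a polyhomogeneous conormal description and the cutoff $\tilde\chi(\lvert\theta\rvert)$ localizes to $\lvert\theta\rvert\ge 1$), and integrate by parts $k$ times in $\theta$. This converts the integral into one whose integrand is bounded by $C_k\lvert\theta\rvert^{-k/3+\frac{4}{3}\sqrt{n^2/4-\lambda}}$ times powers of $x$ from differentiation, which is absolutely convergent for $k$ large and uniform in $\epsilon$; hence the $\epsilon\to 0$ limit exists and equals the integral with $\varpi\equiv 1$.

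Next I would upgrade absolute convergence to smoothness in $(x,y)$ for $x>0$ on $\{y_n\ne 0\}$: differentiating under the integral sign in $y$ brings down factors of $\theta$, and differentiating in $x$ brings down, via $\partial_x=\lvert\theta_n\rvert^{2/3}\partial_S$, factors of $\lvert\theta\rvert^{2/3}$ together with $x^{s_\pm - l}$; by part (1) of Lemma \ref{FC00} these are all absorbed by taking $k$ correspondingly larger in the integration by parts, so every derivative of the integral is again given by an absolutely convergent integral. This shows $x^{-s_\pm}\mathsf{U}^{\mathbb{F}_{C,0}^0}$ restricted to $\{x>0,\ y_n\ne 0\}$ is given by $\mathcal{C}^\infty$ functions, and the two terms combine to $\mathsf{U}^{\mathbb{F}_{C,0}^0}\in x^{s_-}\mathcal{C}^\infty_{x,y}+x^{s_+}\mathcal{C}^\infty_{x,y}$ there. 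To get this smoothness \emph{up to} $x=0$ I would use that $a_{\mathbb{F}_{C,0}^0;\pm}$ is smooth in $S=x\lvert\theta_n\rvert^{2/3}$ and $Z_0$ and Taylor-expand in $S$ about $S=0$: each term $S^m = x^m\lvert\theta_n\rvert^{2m/3}$ contributes $x^{s_\pm+m}$ times a symbol of lower order, giving an asymptotic expansion whose coefficients are smooth in $y$ (for $y_n\ne 0$) by the same integration-by-parts argument applied term by term, with the remainder controlled uniformly down to $x=0$.

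The main obstacle I anticipate is the uniformity of the integration-by-parts estimate near $x=0$ combined with the endpoint $\epsilon\to 0$: one must check that the number of integrations by parts needed to make the $\theta$-integral converge does not blow up as $x\to 0$, i.e. that the negative powers $x^{s_\pm-l}$ produced by $x$-differentiation are matched by the fixed gain $\lvert\theta\rvert^{-1/3}$ per $L$ without needing $l$-dependent adjustments that interact badly with the $x^{s_\pm}$ prefactor — this is where part (1) of Lemma \ref{FC00}, which bounds $D_x^l L^k a_{\mathbb{F}_{C,0}^0;\pm}$ with constants $C_k$ \emph{independent of $x$}, is essential. A secondary technical point is ensuring the cutoff away from $\{y_n=0\}$ is genuinely needed only because $^tL$ degenerates there and not because of any singularity of $\hat u_x$ itself; since $\hat u_x$ is polyhomogeneous conormal on $\mathbb{F}_C$ this is automatic, and the lemma is accordingly stated only on $y_n\ne 0$. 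Once these are in place the conclusion follows, and indeed the Taylor expansion in $S$ shows that on $y_n\ne 0$ near $x=0$ the $x^{s_+}$-piece is the one that survives once the boundary data is subtracted, anticipating Theorem \ref{resultOnCorClass1}.
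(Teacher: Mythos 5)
Your proposal is correct and follows essentially the same route as the paper's own proof: the same oscillatory-integral representation $\sum_{\pm}x^{s_{\pm}}\int a_{\mathbb{F}_{C,0}^0;\pm}e^{iy\cdot\theta}\,d\theta$, the same operator $L$ with $^tL=\lvert y\rvert^{-2}\sum y_j\partial_{\theta_j}/i$ (nondegenerate since $\lvert y\rvert\geq\lvert y_n\rvert>0$ on $\supp\chi$), $k$-fold integration by parts, passage of $\epsilon\to 0$ under the integral, and then absorption of the $\lvert\theta\rvert$-growth from $D_y^{\alpha}D_x^l$ using the bounds of Lemma \ref{FC00} with constants independent of $x$. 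The concluding Taylor expansion in $S$ is superfluous: because those constants are uniform down to $x=0$, the boundedness of all derivatives of $\mathsf{U}^{\mathbb{F}_{C,0}^0;\pm}$ already yields smoothness up to the boundary, which is exactly how the paper concludes.
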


\begin{proof}
  Let $\chi(x,y) \in \mathcal{C}^{\infty}_0$ with either $\supp \chi \subset \{y_n > 0\}$ or $\supp \chi \subset \{y_n < 0\}$ 
 \begin{equation}
 \begin{aligned}
    ( \mathsf{U}^{\mathbb{F}_{C,0}^0} , \chi)
    &= \lim_{\epsilon \rightarrow 0} (2\pi)^{-n} \int\int\int \psi_{\mathbb{F}_{C,0}^0} \hat{u}_x \chi(x,y) \varpi(\epsilon\theta)  e^{iy\cdot \theta} dx dy d\theta\\
    &=  \lim_{\epsilon \rightarrow 0} (2\pi)^{-n} \int\int\int \sum_{\pm} x^{s_-} a_{\mathbb{F}_{C,0}^0;\pm}   \varpi\, \chi e^{iy\cdot\theta} dx dy d\theta
 \end{aligned}
    \end{equation}
         $\lvert d_{\theta} (y\cdot \theta)\rvert^2$ is bounded away from zero on $\supp \chi \times \RR^n$ since the $n$-th component of the derivative of the phase function,  which is $y_n$, is never zero on this region currently considered. Consider a first order differential operator $L$ whose formal adjoint is :
    \begin{equation}
    ^t L = \dfrac{1}{\lvert y\rvert^2} \sum y_j \dfrac{\partial}{i\partial \theta_j}
    \end{equation}
 so $^tL e^{iy\cdot \theta} = e^{iy\cdot\theta}$. $k$-fold integration by parts gives
   $$( \mathsf{U}^{\mathbb{F}_{C,0}^0} , \chi)
   =  \lim_{\epsilon \rightarrow 0} \sum_{\pm} (2\pi)^{-n} x^{s_{\pm}} \int\int\int  L^k( a_{\mathbb{F}_{C,0}^0;\pm} \varpi) \chi e^{iy\cdot\theta} dx dy d\theta$$
  
 \begin{enumerate}
\item For $k > 3(n+ \tfrac{4}{3}\sqrt{\tfrac{n^2}{4}-\lambda})$ , $\theta \mapsto  L^k( a_{\mathbb{F}_{C,0}^0;\pm} \varpi)\in L^1(\RR^n)$. We can let $\epsilon\rightarrow 0$ under the integral sign :
 $$\mathsf{U}^{\mathbb{F}_{C,0}^0} = \sum_{\pm}(2\pi)^{-n} x^{s_{\pm}} \int L^k(a_{\mathbb{F}_{C,0}^0;\pm}) e^{i y \cdot\theta} \, d\theta = \sum_{\pm} x^{s_{\pm}} \mathsf{U}^{\mathbb{F}_{C,0}^0;\pm} $$
 $$\Rightarrow \lvert \mathsf{U}^{\mathbb{F}_{C,0}^0;\pm}\rvert \leq (2\pi)^{-n} \int \lvert L^k(a_{\mathbb{F}_{C,0}^0;\pm})\rvert  \, d\theta
 \leq  C_k(2\pi)^{-n} \int_{\lvert\theta\rvert \geq 1} \lvert \theta\rvert^{-\tfrac{1}{3}k+\tfrac{4}{3}\sqrt{\tfrac{n^2}{4}-\lambda}} \, d\theta $$
 The last estimate follows from the fact by Lemma \ref{FC00}, 
 $$\lvert L^k a_{\mathbb{F}_{C,0}^0;\pm}\rvert  \leq C_k \lvert\theta\rvert^{-\tfrac{1}{3}k+ \tfrac{4}{3}\sqrt{\tfrac{n^2}{4}-\lambda}};\ \ C_k\ \text{independent of}\  x$$ 
 Thus we have
 $$\mathsf{U}^{\mathbb{F}_{C,0}^0;\pm} \in L^{\infty}(x,y)$$

\item Fix $k$ with $k > 3( n  + \lvert\beta\rvert +\tfrac{4}{3}\sqrt{\tfrac{n^2}{4}-\lambda})$ so
  $$\mathsf{U}^{\mathbb{F}_{C,0}^0} = \sum_{\pm}(2\pi)^{-n} x^{s_{\pm}} \int L^k(a_{\mathbb{F}_{C,0}^0;\pm}) e^{i y \cdot\theta} \, d\theta = \sum_{\pm} x^{s_{\pm}} \mathsf{U}^{\mathbb{F}_{C,0}^0;\pm} $$
  Consider $D_y^{\beta} \mathsf{U}^{\mathbb{F}_{C,0}^0,\pm}$:
$$ D_y^{\beta} \mathsf{U}^{\mathbb{F}_{C,0}^0,\pm}=  (2\pi)^{-n}  \int L^k(a_{\mathbb{F}_{C,0}^0;\pm}) \theta^{\beta} e^{i y \cdot\theta} \, d\theta $$
$$\Rightarrow \lvert D_y^{\beta}  \mathsf{U}^{\mathbb{F}_{C,0}^0;\pm}\rvert\leq 
(2\pi)^{-n} \int \lvert L^k(a_{\mathbb{F}_{C,0}^0;\pm}) \theta^{\beta} \rvert \, d\theta
\stackrel{\text{Lemma \ref{FC00}}}{\leq} (2\pi)^{-n} \int_{\lvert\theta\rvert\geq 1} \lvert\theta\rvert^{-\tfrac{1}{3}k+ \tfrac{4}{3}\sqrt{\tfrac{n^2}{4}-\lambda}+\lvert\beta\rvert}\, d\theta $$
With the current choice of $k$ we have $\theta \mapsto L^k(a_{\mathbb{F}_{C,0}^0;\pm}) \theta^{\beta} \in L^1$. Thus
$$D_y^{\beta} \mathsf{U}^{\mathbb{F}_{C,0}^0;\pm} \in L^{\infty}(x,y)$$

\item Fix $k$ with $k > 3(n +\tfrac{4}{3}\sqrt{\tfrac{n^2}{4}-\lambda} + \tfrac{2}{3}l)$ so that:
  $$\mathsf{U}^{\mathbb{F}_{C,0}^0} = \sum_{\pm}(2\pi)^{-n} x^{s_{\pm}} \int L^k(a_{\mathbb{F}_{C,0}^0;\pm}) e^{i y \cdot\theta} \, d\theta = \sum_{\pm} x^{s_{\pm}} \mathsf{U}^{\mathbb{F}_{C,0}^0;\pm} $$
Consider $D_x^l \mathsf{U}^{\mathbb{F}_{C,0}^0;\pm}$:
$$D_x^l \mathsf{U}^{\mathbb{F}_{C,0}^0;\pm} = (2\pi)^{-n} \int D_x^l L^k(a_{\mathbb{F}_{C,0}^0;\pm}) e^{i y \cdot\theta} \, d\theta$$
From Lemma \ref{FC00}, $ D_x^l L^ka_{\mathbb{F}_{C,0}^0} \lesssim \lvert\theta\rvert^{-\tfrac{1}{3}k+  \tfrac{4}{3}\sqrt{\tfrac{n^2}{4}-\lambda} + \tfrac{2}{3}l}$. With the current choice of $k$ we have:
  $$D_x^l  \mathsf{U}^{\mathbb{F}_{C,0}^0;\pm} \in L^{\infty}$$


\item Same argument is used to deal with mixed derivatives $D_y^{\alpha} D_x^l$. In this case one needs to choose $k$ satisfying $k > 3(n +\tfrac{4}{3}\sqrt{\tfrac{n^2}{4}-\lambda} + \tfrac{2}{3}l+ \lvert\alpha\vert)$ so that
$$D_y^{\alpha} D_x^l\mathsf{U}^{\mathbb{F}_{C,0}^0;\pm} \in L^{\infty}_{x,y}$$

\item We have shown that $ \mathsf{U}^{\mathbb{F}_{C,0}^0}$ can be rewritten as $\sum_{\pm} x^{s_{\pm}}  \mathsf{U}^{\mathbb{F}_{C,0}^0}$ where $D_y^{\alpha} D_x^l\mathsf{U}^{\mathbb{F}_{C,0}^0;\pm} \in L^{\infty}_{x,y}$ for any $\alpha$ and $l$. Thus:
$$\mathsf{U}^{\mathbb{F}_{C,0}^0} \in x^{s_-}\mathcal{C}^{\infty}_{x,y} + x^{s_+}\mathcal{C}^{\infty}_{x,y}$$
\end{enumerate}

\end{proof}

 \begin{lemma}\label{C00x>delta}
    $\mathsf{U}^{\mathbb{F}_{C,0}^0}$ is smooth on $x >\delta$ for any $\delta > 0$.
\end{lemma}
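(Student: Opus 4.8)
The plan is to exploit the representation of $\mathsf{U}^{\mathbb{F}_{C,0}^0}$ as the inverse Fourier transform in $\theta$ of the symbol $a_{\mathbb{F}_{C,0}^0}(x,\theta)=\sum_{\pm}x^{s_{\pm}}a_{\mathbb{F}_{C,0}^0;\pm}(x,\theta)$ produced in the preceding computation, together with the symbol estimates of Lemma \ref{FC00}. The key geometric observation is that the localizer $\psi_{\mathbb{F}_{C,0}^0}$ cuts off to $\{\sigma\ \text{small}\}$, and since on $\mathbb{F}_{C,0}$ one has $\sigma\sim S^{3/2}=(x\lvert\theta_n\rvert^{2/3})^{3/2}=x^{3/2}\lvert\theta_n\rvert$ with $\lvert\theta_n\rvert$ comparable to $\lvert\theta\rvert$, the set $\supp\psi_{\mathbb{F}_{C,0}^0}\cap\{x\geq\delta\}$ projects into a bounded frequency region $\{1\leq\lvert\theta\rvert\leq C_{\delta}\}$ (the lower bound coming from $\supp\tilde{\chi}(\lvert\theta\rvert)\subset\{\lvert\theta\rvert\geq1\}$).

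First I would fix $\delta>0$ and, for $x\geq\delta$, rewrite $\mathsf{U}^{\mathbb{F}_{C,0}^0}(x,y)=(2\pi)^{-n}\int_{1\leq\lvert\theta\rvert\leq C_{\delta}}a_{\mathbb{F}_{C,0}^0}(x,\theta)\,e^{iy\cdot\theta}\,d\theta$, where now the integral is over a compact $\theta$-set. On $x>0$, $\theta\neq0$ the functions $S=x\lvert\theta_n\rvert^{2/3}$ and $Z_0=\lvert\theta_n\rvert^{2/3}(1-\lvert\hat{\theta}'\rvert^2)$ are smooth, so $a_{\mathbb{F}_{C,0}^0;\pm}$, being smooth in $(S,Z_0)$, is smooth in $(x,\theta)$ there; together with the factor $x^{s_{\pm}}$, smooth on $x>0$, the whole integrand is smooth in $(x,y,\theta)$ on $\{x\geq\delta\}\times\RR^n\times\{1\leq\lvert\theta\rvert\leq C_{\delta}\}$. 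By Lemma \ref{FC00}(1) the $x$-derivatives $D_x^l a_{\mathbb{F}_{C,0}^0;\pm}$ obey bounds that are locally uniform in $x$, and multiplication by powers $\theta^{\alpha}$ (for $y$-derivatives) is harmless over a bounded $\theta$-domain. Hence differentiation under the integral sign is justified to all orders, giving $\mathsf{U}^{\mathbb{F}_{C,0}^0}\in\mathcal{C}^{\infty}(\{x>\delta\}_x\times\RR^n_y)$; since $\delta>0$ is arbitrary, the lemma follows.

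There is no genuinely hard step: the substance was already carried out in Lemma \ref{FC00}, where the symbol was shown to drop to order $-\infty$ away from $x=0$. The only points needing a little care are (i) checking that the frequency support of $\psi_{\mathbb{F}_{C,0}^0}\hat{u}_x$ is compact once $x$ is bounded below, which is immediate from $\sigma\sim x^{3/2}\lvert\theta_n\rvert$ on $\mathbb{F}_{C,0}$ and $\lvert\theta_n\rvert\sim\lvert\theta\rvert$ there, and (ii) noting that the symbol estimates, and those for all its $x$- and $y$-derivatives, are uniform on compact subsets of $\{x>0\}$ — which holds because $a_{\mathbb{F}_{C,0}^0;\pm}$ depends on $(x,\theta)$ only through the bounded variables $S,Z_0$ and the smooth factor $x^{s_{\pm}}$. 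Alternatively, one can skip the compactness observation and invoke directly the membership $a_{\mathbb{F}_{C,0}^0}\in S^{-\infty}_{\frac13,0;\theta}$ of Lemma \ref{FC00}(2), uniform for $x\geq\delta$, to get absolute convergence of $\int D_x^l D_y^{\alpha}\bigl(a_{\mathbb{F}_{C,0}^0}\,e^{iy\cdot\theta}\bigr)\,d\theta$ for every $l,\alpha$, the standard criterion for smoothness of the oscillatory integral.
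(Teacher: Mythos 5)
Your proposal is correct and takes essentially the same approach as the paper: the paper also writes $\mathsf{U}^{\mathbb{F}_{C,0}^0}_{\delta}$ as a $\theta$-integral of $a_{\mathbb{F}_{C,0}^0}$ and invokes Lemma \ref{FC00} ($a_{\mathbb{F}_{C,0}^0}\in S^{-\infty}_{\tfrac{1}{3},0;\theta}$ on $x>\delta$, with $\partial_x=x^{-1}S\partial_S$ preserving the rapid decay) to justify differentiating under the integral, which is exactly your ``alternative'' argument. Your compact-frequency-support observation ($\sigma$ bounded on the cutoff together with $x\geq\delta$ forces $\lvert\theta\rvert\leq C_{\delta}$) is merely a repackaging of the mechanism behind that symbol estimate, so it does not constitute a genuinely different route.
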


 \begin{proof}
 Denote $\mathsf{U}^{\mathbb{F}_{C,0}^0}_{\delta}$ the restrictions of $\mathsf{U}^{\mathbb{F}_{C,0}^0}$ to $\mathcal{D}'(X_{\delta})$. For $x >\delta$ we have :
 \begin{equation}
   \begin{aligned}
    ( \mathsf{U}^{\mathbb{F}_{C,0}^0}_{\delta} , \chi)
    &= \lim_{\epsilon \rightarrow 0} (2\pi)^{-n} \int\int\int  \psi_{\mathbb{F}_{C,0}^0} \hat{u}_x \chi(x,y) \varpi(\epsilon\theta)  e^{iy\cdot \theta} dx dy d\theta\\
     &= \lim_{\epsilon \rightarrow 0} (2\pi)^{-n} \int\int\int  a_{\mathbb{F}_{C,0}^0} \chi(x,y) \varpi(\epsilon\theta)  e^{iy\cdot \theta} dx dy d\theta
    \end{aligned}
        \end{equation}
  where $a_{\mathbb{F}_{C,0}^0}\in S^{-\infty}_{\tfrac{1}{3},0}$ for $x >\delta$ by Lemma \ref{FC00}. In fact since on $X_{\delta}$ we have rapidly decreasing in $\lvert\theta\rvert$
   $$\mathsf{U}^{\mathbb{F}_{C,0}^0}_{\delta}  = (2\pi)^{-n} \int\int\int  a_{\mathbb{F}_{C,0}^0} e^{iy\cdot\theta} d\theta$$
Note $\mathsf{U}^{\mathbb{F}_{C,0}^0}_{\delta}$ is smooth in $y$. We can similarly consider $\partial_x $ since $\partial_x = x^{-1} S\partial_S$ and  $S\partial_S a_{\mathbb{F}_{C,0}^0} $ is still rapidly decreasing in $\theta$ .
 \end{proof}

\subsubsection{On region $ \mathbb{F}_{C,0}^{\infty}$ }
 \begin{align*}
 \chi \in \mathcal{C}^{\infty}_c ;  \ ( \mathsf{U}^{\mathbb{F}_{C,0}^{\infty}} , \chi)
    &= \lim_{\epsilon \rightarrow 0} (2\pi)^{-n} \int\int\int \psi_{\mathbb{F}_{C,0}^{\infty}} \hat{u}_x \chi(x,y) \varpi(\epsilon\theta)  e^{iy\cdot \theta+i\tilde{\phi}} \, dx dy d\theta\\
     &=\lim_{\epsilon \rightarrow 0} (2\pi)^{-n} \int\int\int a_{\mathbb{F}_{C,0}^{\infty}} \varpi(\epsilon\theta)  \chi(x,y)e^{iy\cdot \theta+i\tilde{\phi}} \, dx dy d\theta\\ 
    \end{align*}
    
 \begin{lemma}[Properties of $a_{\mathbb{F}_{C,0}^{\infty}}$]\label{FC0in}\
 \begin{enumerate}
 \item For $\mathsf{L}$ such that $  ^t \mathsf{L} = \lvert d_{\theta}(y\cdot \theta +\tilde{\phi}) \rvert^{-2} \sum_{j=1}^n \dfrac{\partial ( y \cdot \theta+ \tilde{\phi}) }{\partial \theta_j} \partial_{\theta_j}$, we have
$$\lvert (xD_x)\mathsf{L}^k a_{\mathbb{F}_{C,0}^{\infty}}\rvert  \leq C_k \lvert\theta\rvert^{-\tfrac{1}{3}k-\tfrac{4}{3}\sqrt{\tfrac{n^2}{4}-\lambda}}$$
 where $C_k$ is independent of $x$

 \item  on $x\neq 0$ $a_{\mathbb{F}_{C,0}^{\infty}}\in S^{\tfrac{4}{3}\sqrt{\tfrac{n^2}{4}-\lambda}}_{\tfrac{1}{3}, \tfrac{2}{3}}$. 
\end{enumerate}
 
 \end{lemma}
 \begin{proof}\indent\par\noindent
 \begin{enumerate}
 \item On this region $S > 1, \lvert Z_0 \rvert \leq 2\delta_2$. Choose $\delta_2$ so that $Z= Z_0 + x\lvert\theta_n\rvert^{2/3} > \tfrac{1}{2}$. We have :
 $$\psi_{\mathbb{F}_{C,0}^{\infty}} \hat{u}_x = \exp(-i\tfrac{2}{3} Z^{3/2} \sgn\theta_n) \lvert\theta_n\rvert^{\tfrac{4}{3}\sqrt{\tfrac{n^2}{4}-\lambda}} Z^{-\tfrac{3}{4}}\hat{\mathsf{V}}^{\mathbb{F}_{C,0}^{\infty}}$$
where  $\hat{\mathsf{V}}^{\mathbb{F}_{C,0}^{\infty}}$ is smooth in $Z^{-1/2}$ and $Z_0$. Since $\tilde{a} = -(n-1) < 0$,  
$$\lvert Z^{-\tfrac{1}{4}-\tfrac{\tilde{a}}{2}}\hat{\mathsf{V}}^{\mathbb{F}_{C,0}^{\infty}} \rvert \leq C \lvert\theta_n\rvert^{\tfrac{4}{3}\sqrt{\tfrac{n^2}{4}-\lambda}}.$$

We proceed to consider the bounds on higher derivatives:
Denote $T = Z^{-1/2}$
$$T = \left[\lvert\theta_n\rvert^{2/3} (1 + x - \lvert\hat{\theta}'\rvert^2)\right]^{-1/2}
= \lvert\theta_n\rvert^{-1/3}(1 + x - \lvert\hat{\theta}'\rvert^2)^{-1/2}$$
Using the following computation
\begin{equation}\label{derivativeFinf+}
\begin{aligned}
\partial_{\theta_n} &= 
\left( -\tfrac{1}{3}  \lvert\theta_n\rvert^{-1}   +  T^2  \lvert\hat{\theta}'\rvert^2 \lvert\theta_n\rvert^{-1/3}  \right) T\partial_T + ( Z_0 \lvert\theta_n\rvert^{-1} - 2\lvert\theta_n\rvert^{-1/3} \lvert\hat{\theta}'\rvert^2   )\partial_{Z_0} \\
j  > n ;\ \partial_{\theta_j} & =  T^2  \lvert\hat{\theta}_j\rvert \lvert\theta_n\rvert^{-1/3}   T\partial_T - 2 \lvert\theta_n\rvert^{-1/3} \lvert\hat{\theta}_j\rvert\partial_{Z_0}\\
x\partial_x &= -\tfrac{1}{2} ( T^2 Z_0 + 1) T\partial_T
\end{aligned}
\end{equation}
 and the fact that $T$ and $Z_0$ are bounded on the region currently considered  we have:  
 $$\lvert (xD_x)^l \mathsf{L}^k a_{\mathbb{F}_{C,0}^{\infty}}\rvert  \leq C_k \lvert\theta\rvert^{-\tfrac{1}{3}k+ \tfrac{4}{3}\sqrt{\tfrac{n^2}{4}-\lambda}};\ \ C_k\ \text{independent of}\  x$$
 
 \item On $x\neq 0$ we have smoothness in $x$ of $a_{\mathbb{F}_{C,0}^{\infty}}$ and as a result of the computation in (\ref{derivativeFinf+}) we have:
 $$a_{\mathbb{F}_{C,0}^{\infty}}\in S^{\tfrac{4}{3}\sqrt{\tfrac{n^2}{4}-\lambda}}_{\tfrac{1}{3}, \tfrac{2}{3}}$$
\end{enumerate}
 \end{proof}

\begin{lemma}\label{y<0C0inf}
    For  $y_n < 0$, $\mathsf{U}^{\mathbb{F}_{C,0}^{\infty}}$ is smooth and vanishes to infinite order at $x = 0$.
\end{lemma}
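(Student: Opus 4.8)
The plan is to test $\mathsf{U}^{\mathbb{F}_{C,0}^{\infty}}$ against $\chi\in\mathcal{C}^{\infty}_c$ with $\supp\chi\subset\{y_n<0\}$ and to show that the oscillatory integral
$$
\langle \mathsf{U}^{\mathbb{F}_{C,0}^{\infty}},\chi\rangle=\lim_{\epsilon\to0}(2\pi)^{-n}\int\!\!\int\!\!\int a_{\mathbb{F}_{C,0}^{\infty}}\,\varpi(\epsilon\theta)\,\chi(x,y)\,e^{i(y\cdot\theta+\tilde\phi)}\,dx\,dy\,d\theta
$$
has total phase $\Phi:=y\cdot\theta+\tilde\phi$, with $\tilde\phi=-\tfrac23 Z^{3/2}\sgn\theta_n$ and $Z=|\theta_n|^{2/3}(1+x)-|\theta_n|^{-4/3}|\theta'|^2$, which is non-stationary in $\theta$ on $\supp\chi\times\RR^n$. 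The first step is the sign computation: differentiating $\tilde\phi$ in $\theta_n$ (handling the cases $\theta_n>0$ and $\theta_n<0$ separately, using $\partial_{\theta_n}|\theta_n|^{2/3}$ and $\partial_{\theta_n}|\theta_n|^{-4/3}$) shows that on the region $Z>\tfrac12$ defining $\mathbb{F}_{C,0}^{\infty}$ one has $\partial_{\theta_n}\tilde\phi<0$ in both cases; this is exactly the sign that survives the subtraction precisely when $y_n<0$, which is why the hypothesis appears. Hence $\partial_{\theta_n}\Phi=y_n+\partial_{\theta_n}\tilde\phi<y_n<0$ and $|d_\theta\Phi|\ge|\partial_{\theta_n}\Phi|\ge|y_n|\ge c_\chi>0$ uniformly in $x$.

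With the phase non-stationary, I would use the operator $\mathsf{L}$ of Lemma \ref{FC0in} (so that ${}^t\mathsf{L}\,e^{i\Phi}=e^{i\Phi}$), integrate by parts $k$ times in $\theta$, and pass to the limit $\epsilon\to0$ under the integral once $k$ is large enough that $\theta\mapsto\mathsf{L}^k(a_{\mathbb{F}_{C,0}^{\infty}}\varpi)\in L^1$. By the symbol estimate of Lemma \ref{FC0in}, whose constants are independent of $x$, for any $N$ one can choose $k$ so that $|\mathsf{L}^k a_{\mathbb{F}_{C,0}^{\infty}}|\le C_k\langle\theta\rangle^{-N}$. Differentiating under the integral then gives smoothness: each $\partial_{y_j}$ costs a factor $\theta_j$, and each $\partial_x$ costs $\partial_x\tilde\phi=-Z^{1/2}|\theta_n|^{2/3}\sgn\theta_n$ together with an application of $xD_x$ to $a_{\mathbb{F}_{C,0}^{\infty}}$ (again controlled by Lemma \ref{FC0in}), both of size $\lesssim\langle\theta\rangle$; absorbing these costs into a larger choice of $k$ yields $D_y^\alpha D_x^l\mathsf{U}^{\mathbb{F}_{C,0}^{\infty}}\in L^\infty_{x,y}$ for all $\alpha,l$.

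For the vanishing to infinite order at $x=0$, I would exploit that on $\supp\psi_{\mathbb{F}_{C,0}^{\infty}}$ one has $S=x|\theta_n|^{2/3}\gtrsim1$, equivalently $|\theta|\sim|\theta_n|\gtrsim x^{-3/2}$, so the $\theta$-integral is effectively over $\{|\theta|\gtrsim x^{-3/2}\}$. Since $\int_{|\theta|\gtrsim x^{-3/2}}\langle\theta\rangle^{-N}\,d\theta\lesssim x^{\frac32(N-n)}$, combining this with the $\lesssim\langle\theta\rangle$ cost of each $x$-derivative and letting $N\to\infty$ shows that $\partial_x^l\mathsf{U}^{\mathbb{F}_{C,0}^{\infty}}$ vanishes to infinite order at $x=0$ for every $l$. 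The main obstacle is the bookkeeping in the middle step: one must check that all decay estimates are genuinely uniform in $x$ (so that the $x$-localization in the last step applies) and that the cost of each $x$-derivative — a factor of order $x^{1/2}|\theta_n|$ on a set where $x\gtrsim|\theta_n|^{-2/3}$ — is consistently dominated by the gain from integration by parts. This is the same kind of care needed in the proofs of Lemma \ref{conormalFC00} and Lemma \ref{C00x>delta}, now adapted to a nontrivial phase.
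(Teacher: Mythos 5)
Your proposal is correct and follows essentially the same route as the paper: non-stationarity of the phase in $\theta_n$ because $\partial_{\theta_n}\tilde\phi$ is negative (so the derivative is $y_n$ minus a positive term), $k$-fold integration by parts with the operator $\mathsf{L}$ of Lemma \ref{FC0in} whose symbol bounds are uniform in $x$, and absorption of the polynomial cost of $y$- and $x$-derivatives by taking $k$ large. The only cosmetic difference is in the vanishing step: the paper bounds $x^{-N}\leq C_N|\theta_n|^{2N/3}$ pointwise on the support (where $x|\theta_n|^{2/3}\gtrsim 1$) and shows $x^{-N}D_y^{\alpha}(xD_x)^{\beta}\mathsf{U}\in L^{\infty}$, whereas you restrict the $\theta$-integration to $|\theta|\gtrsim x^{-3/2}$ and integrate the decaying symbol — the same mechanism.
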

\begin{proof}
Consider $\chi(x,y)$ with $\supp \chi \subset \{ y_n < 0\}$
 \begin{equation}
    ( \mathsf{U}^{\mathbb{F}_{C,0}^{\infty}} , \chi)
    = \lim_{\epsilon \rightarrow 0} (2\pi)^{-n} \int\int\int \psi_{\mathbb{F}_{C,0}^{\infty}} \hat{u}_x \chi(x,y) \varpi(\epsilon\theta)  e^{iy\cdot \theta+i\tilde{\phi}} \, dx dy d\theta
    \end{equation}

We have $\lvert d_{\theta} (y\cdot \theta)\rvert^2$ is bounded away from zero on  $\supp\chi \times \RR^n$ since the $n$-th component of the derivative of the phase function is $y_n - \text{a positive term}$ as follow:       \begin{equation}
  \begin{aligned}
  &y_n - \tfrac{2}{3}\partial_{\theta_n} Z^{3/2} \sgn\theta_n\\
  &= y_n -\tfrac{2}{3}(1 + x-\lvert\hat{\theta}'\rvert^2)^{3/2} 
   -   (1 + x-\lvert\hat{\theta}'\rvert^2)^{1/2} 2\lvert\hat{\theta}'\rvert^2    \\
   &= y_n -  \text{positive term}
  \end{aligned}
  \end{equation}
 If $y_n < 0$, the above expression is never zero. On that region we can consider a first order differential operator $\mathsf{L}$ whose formal adjoint is :
     \begin{equation}
    ^t \mathsf{L} = \lvert d_{\theta}(y\cdot \theta +\tilde{\phi}) \rvert^{-2} \sum_{j=1}^n \dfrac{\partial ( y \cdot \theta+ \tilde{\phi}) }{\partial \theta_j} \partial_{\theta_j}
    \end{equation}
 thus $^t\mathsf{L}  e^{iy\cdot \theta+i\tilde{\phi}} = e^{iy\cdot\theta+i\tilde{\phi}}$.

 $k$-fold integration by parts gives
   $$( \mathsf{U}^{\mathbb{F}_{C,0}^{\infty}} , \chi)
   =  \lim_{\epsilon \rightarrow 0} (2\pi)^{-n} \int\int\int \mathsf{L}^k( a_{\mathbb{F}_{C,0}^{\infty}} \varpi) \chi e^{iy\cdot\theta+i\tilde{\phi}} dx dy d\theta$$
  \begin{enumerate}
\item For $k > 3( n +\tfrac{4}{3}\sqrt{\tfrac{n^2}{4}-\lambda}) $ , $\theta \mapsto \mathsf{L}^k( a_{\mathbb{F}_{C,0}^{\infty}} \varpi)$ is in $L^1(\RR^n)$. We can let $\epsilon\rightarrow 0$ under the integral sign :
 $$\mathsf{U}^{\mathbb{F}_{C,0}^{\infty}} = (2\pi)^{-n} \int \mathsf{L}^k(a_{\mathbb{F}_{C,0}^{\infty}}) e^{iy+ i\tilde{\phi} } \, d\theta$$
 $$\Rightarrow \lvert  \mathsf{U}^{\mathbb{F}_{C,0}^{\infty}}\rvert \leq (2\pi)^{-n}  \int \lvert \mathsf{L}^k (a_{\mathbb{F}_{C,0}^{\infty}})\rvert  \, d\theta
 \stackrel{(\ref{FC0in})}{\leq}  C_k(2\pi)^{-n} \int_{\lvert\theta \rvert\geq 1} \lvert \theta\rvert^{-\tfrac{1}{3}k} \, d\theta $$
  Hence $\mathsf{U}^{\mathbb{F}_{C,0}^{\infty}}\in L^{\infty}(x,y)$

\item Consider $D_y^{\beta} \mathsf{U}^{\mathbb{F}_{C,0}^{\infty}}$. Fix $k$ with $k > 3( n+\tfrac{4}{3}\sqrt{\tfrac{n^2}{4}-\lambda}  + \lvert\beta\rvert)$ so
 $$\mathsf{U}^{\mathbb{F}_{C,0}^{\infty}} = (2\pi)^{-n} \int \mathsf{L}^k (a_{\mathbb{F}_{C,0}^{\infty}}) e^{i y \cdot\theta+i\tilde{\phi}} \, d\theta$$
$$\Rightarrow D_y^{\beta} \mathsf{U}^{\mathbb{F}_{C,0}^{\infty}} = (2\pi)^{-n} \int \mathsf{L}^k(a_{\mathbb{F}_{C,0}^{\infty}}) \theta^{\beta} e^{i y \cdot\theta+i\tilde{\phi}} \, d\theta$$
$$\Rightarrow \lvert D_y^{\beta} \mathsf{U}^{\mathbb{F}_{C,0}^{\infty}}\rvert \leq 
(2\pi)^{-n} \int \lvert \mathsf{L}^k (a_{\mathbb{F}_{C,0}^{\infty}}) \theta^{\beta} \rvert \, d\theta
\leq (2\pi)^{-n} \int_{\lvert\theta\rvert\geq 1} \lvert\theta\rvert^{-\tfrac{1}{3}k +\tfrac{4}{3}\sqrt{\tfrac{n^2}{4}-\lambda}+ \lvert\beta\rvert}\, d\theta$$
With the choice of $k$ we have $\theta \mapsto \mathsf{L}^k(a_{\mathbb{F}_{C,0}^{\infty}}) \theta^{\beta} $ in in $L^1$. Since the constant is independent of $x$
$$D_y^{\beta} \mathsf{U}^{\mathbb{F}_{C,0}^{\infty}} \in L^{\infty}$$

\item Consider $xD_x \mathsf{U}^{\mathbb{F}_{C,0}^{\infty}}$. Fix $k$ with $k > 3( n +\tfrac{4}{3}\sqrt{\tfrac{n^2}{4}-\lambda}+ 1)$
 $$\mathsf{U}^{\mathbb{F}_{C,0}^{\infty}} = (2\pi)^{-n} \int \mathsf{L}^k(a_{\mathbb{F}_{C,0}^{\infty}}) e^{i y \cdot\theta+i\tilde{\phi}} \, d\theta$$
$$\Rightarrow xD_x \mathsf{U}^{\mathbb{F}_{C,0}^{\infty}} = (2\pi)^{-n} \int \left[  xD_x \mathsf{L}^k(a_{\mathbb{F}_{C,0}^{\infty}}) +
 (\mathsf{L}^k a_{\mathbb{F}_{C,0}^{\infty}}) xD_x \tilde{\phi} \right] e^{i y \cdot\theta+i\tilde{\phi}} \, d\theta$$
Consider the second term in the bracket: $x\partial_x \tfrac{2}{3}Z^{3/2} = x \lvert \theta_n\rvert ( 1 + x - \lvert\hat{\theta}'\rvert^2)^{1/2}$. We have $ \lvert\hat{\theta}'\rvert < (1+\kappa_1(\delta_2))$ by (\ref{conicFC0}) and $x <C$, so for  $C$ is independent of $x$:
$$\lvert x \partial_x \tilde{\phi} \rvert \leq C \lvert\theta_n\rvert$$
 From Lemma \ref{FC0in}, $ xD_x L^k(a_{\mathbb{F}_{C,0}^{\infty}}) \lesssim \lvert\theta\rvert^{-\tfrac{1}{3}k+\tfrac{4}{3}\sqrt{\tfrac{n^2}{4}-\lambda}+1}$. With the chosen $k$ we have:
  $$xD_x  \mathsf{U}^{\mathbb{F}_{C,0}^{\infty}} \in L^{\infty}$$
  For higher derivative $(xD_x)^N \mathsf{U}^{\mathbb{F}_{C,0}^{\infty}}$, we simply choose $k$ such that $k > 3( n +\tfrac{4}{3}\sqrt{\tfrac{n^2}{4}-\lambda} + N)$ so
$$(xD_x)^N L^k(a_{\mathbb{F}_{C,0}^{\infty}}) \leq \tilde{C}_N\lvert\theta\rvert^{-\tfrac{1}{3}k+\tfrac{4}{3}\sqrt{\tfrac{n^2}{4}-\lambda}+N}$$
 $$\Rightarrow (xD_x)^N L^k(a_{\mathbb{F}_{C,0}^{\infty}}) \in L^{\infty}$$

\item Consider $x^{-N} \mathsf{U}^{\mathbb{F}_{C,0}^{\infty}}$
$$x^{-1} =  (Z-Z_0)^{-1} \lvert\theta_n\rvert^{2/3}
= T^2 ( 1 - Z_0 T^2)^{-1} \lvert\theta_n\rvert^{2/3}$$
since $\lvert Z_0 \rvert < 2\delta_2$ and $0 \leq T = Z^{-1} < 2$. So for $C_N$ is independent of $x$:
$$x^{-N} \leq C_N \lvert\theta_n\rvert^{\tfrac{2}{3}N}$$
 Fix $k$ with $k > 3( n  +N +\tfrac{4}{3}\sqrt{\tfrac{n^2}{4}-\lambda})$ so
 $$\mathsf{U}^{\mathbb{F}_{C,0}^{\infty}} = (2\pi)^{-n} \int \mathsf{L}^k(a_{\mathbb{F}_{C,0}^{\infty}}) e^{i y \cdot\theta+i\tilde{\phi}} \, d\theta$$
$$\Rightarrow x^{-N} \mathsf{U}^{\mathbb{F}_{C,0}^{\infty}} = (2\pi)^{-n} \int \mathsf{L}^k (a_{\mathbb{F}_{C,0}^{\infty}}) \theta^{\beta} e^{i y \cdot\theta+i\tilde{\phi}} \, d\theta$$
$$\Rightarrow \lvert x^{-N} \mathsf{U}^{\mathbb{F}_{C,0}^{\infty}}\rvert \leq 
(2\pi)^{-n} \int \lvert x^{-N} \mathsf{L}^k(a_{\mathbb{F}_{C,0}^{\infty}}) \theta^{\beta} \rvert \, d\theta
\leq \tilde{C}_N (2\pi)^{-n} \int_{\lvert\theta\rvert\geq 1} \lvert\theta\rvert^{-\tfrac{1}{3}k +\tfrac{4}{3}\sqrt{\tfrac{n^2}{4}-\lambda}+N}\, d\theta$$
with the choice of $k$ then have
$$ x^{-N} \mathsf{U}^{\mathbb{F}_{C,0}^{\infty}} \in L^{\infty}$$
The same reasoning apply if we replace $\mathsf{U}^{\mathbb{F}_{C,0}^{\infty}} $ by $(xD_x)^l \mathsf{U}^{\mathbb{F}_{C,0}^{\infty}}$.

\item Consider $x^{-N} D_y^{\alpha}(xD_x)^{\beta} \mathsf{U}^{\mathbb{F}_{C,0}^{\infty}}
 = D_y^{\alpha} x^{-N} (xD_x)^{\beta} \mathsf{U}^{\mathbb{F}_{C,0}^{\infty}}$. Derivatives in $y$ introduce growth in $\lvert\theta\rvert$, while $x^{-N}$ is polynomial bounded in $\theta$. As above, by the same integration by parts technique on $y_n<0$ we show that we can arbitrarily reduce the growth in $\lvert\theta\rvert$ thus 
$$x^{-N} D_y^{\alpha}(xD_x)^{\beta} \mathsf{U}^{\mathbb{F}_{C,0}^{\infty}} \in L^{\infty}$$

\end{enumerate}

\end{proof}

 \begin{lemma}\label{cornormalFC0inf}[On $y_n > 0$ ]\
 
   For every $\beta>0$, there exists $\alpha > 0$ such that $\mathsf{U}^{\mathbb{F}_{C,0}^{\infty}} $ is smooth and vanishes to infinite order on $y_n >\beta$ and $x < \alpha^2$. In fact $\alpha$ is given by (\ref{inequalityonalphaIII2}) explicitly.
 

 \end{lemma}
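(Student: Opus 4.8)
\textbf{Plan of proof for Lemma \ref{cornormalFC0inf}.}

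The plan is to run the same stationary phase / non-stationary phase argument as in Lemma \ref{y<0C0inf}, but now on the region $y_n>\beta>0$, where the relevant phase has a critical point only for a controlled (bounded, $\beta$-dependent) range of $x$, so that after restricting to $x$ small enough the phase is again non-stationary in $\theta$. Recall that on $\mathbb{F}_{C,0}^\infty$ the piece of the approximate solution has the oscillatory form
$$\mathsf{U}^{\mathbb{F}_{C,0}^{\infty}}(x,y) = \lim_{\epsilon\to 0}(2\pi)^{-n}\int\!\!\int\!\!\int \psi_{\mathbb{F}_{C,0}^{\infty}}\,\hat u_x\,\chi(x,y)\,\varpi(\epsilon\theta)\,e^{i(y\cdot\theta+\tilde\phi)}\,dx\,dy\,d\theta,$$
with $\tilde\phi = -\tfrac{2}{3}Z^{3/2}\sgn\theta_n$ and $a_{\mathbb{F}_{C,0}^{\infty}}$ enjoying the symbol bounds of Lemma \ref{FC0in}. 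The $n$-th component of $d_\theta(y\cdot\theta+\tilde\phi)$ was computed in the proof of Lemma \ref{y<0C0inf} to be $y_n$ minus a positive quantity of the form $\tfrac{2}{3}(1+x-\lvert\hat\theta'\rvert^2)^{3/2}+2\lvert\hat\theta'\rvert^2(1+x-\lvert\hat\theta'\rvert^2)^{1/2}$. On the support of $\psi_{\mathbb{F}_{C,0}^{\infty}}$ we have $\lvert\hat\theta'\rvert$ close to $1$ and $Z>\tfrac12$, so this positive quantity is bounded above by $C x^{3/2}+C'\sqrt{x}+C''$ — wait, more precisely it is $O(1)$ uniformly, and the key point is that it is a continuous function of $x$ vanishing structure we can track: as $x\to 0$ on this support the quantity $1+x-\lvert\hat\theta'\rvert^2$ is small (it is comparable to $Z_0\lvert\theta_n\rvert^{-2/3}+x$), so the positive term is small. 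Hence for $y_n>\beta$ there is $\alpha=\alpha(\beta)>0$ such that on $\{x<\alpha^2\}$ the phase derivative in $\theta_n$ has modulus $\geq \beta/2$, so $\lvert d_\theta(y\cdot\theta+\tilde\phi)\rvert$ is bounded below uniformly.

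Granting that lower bound, I would introduce the first-order operator $\mathsf{L}$ with $^t\mathsf{L}=\lvert d_\theta(y\cdot\theta+\tilde\phi)\rvert^{-2}\sum_{j=1}^n\partial_{\theta_j}(y\cdot\theta+\tilde\phi)\,\partial_{\theta_j}$, exactly as in Lemma \ref{y<0C0inf}, so that $^t\mathsf{L}\,e^{i(y\cdot\theta+\tilde\phi)}=e^{i(y\cdot\theta+\tilde\phi)}$, and integrate by parts $k$ times. Each application of $\mathsf{L}$ gains a factor $\lvert\theta\rvert^{-1/3}$ (using the Kohn–Nirenberg type bounds of Lemma \ref{FC0in}, where $T=Z^{-1/2}\sim\lvert\theta_n\rvert^{-1/3}$ and the $\theta$-derivatives of the phase are handled via the chain rule formulas (\ref{derivativeFinf+})), at the cost of the symbol order $\tfrac{4}{3}\sqrt{\tfrac{n^2}{4}-\lambda}$. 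Then:
\begin{enumerate}
\item taking $k>3(n+\tfrac{4}{3}\sqrt{\tfrac{n^2}{4}-\lambda})$ makes $\theta\mapsto \mathsf{L}^k(a_{\mathbb{F}_{C,0}^{\infty}}\varpi)\in L^1$, so $\epsilon\to0$ passes inside and $\mathsf{U}^{\mathbb{F}_{C,0}^{\infty}}\in L^\infty$;
\item for $D_y^\alpha D_x^l$ derivatives and for multiplication by $x^{-N}$ (using $x^{-1}\leq C_N\lvert\theta_n\rvert^{2/3}$ on this support, since $Z>\tfrac12$ forces $x\lvert\theta_n\rvert^{2/3}=Z-Z_0\gtrsim 1$), each extra derivative or negative power of $x$ costs only a fixed polynomial power of $\lvert\theta\rvert$, absorbed by choosing $k$ correspondingly large; note $xD_x\tilde\phi$ is bounded by $C\lvert\theta_n\rvert$ on the support as in Lemma \ref{y<0C0inf}.
\end{enumerate}
Combining these gives, for every $N$ and every multi-index, $x^{-N}D_y^\alpha(xD_x)^l\mathsf{U}^{\mathbb{F}_{C,0}^{\infty}}\in L^\infty$ on $\{y_n>\beta,\ x<\alpha^2\}$, which is exactly smoothness together with infinite-order vanishing at $x=0$.

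\textbf{Main obstacle.} The genuinely new point compared with Lemma \ref{y<0C0inf} is the quantitative choice of $\alpha$ in terms of $\beta$: I must show that the positive term $\tfrac{2}{3}(1+x-\lvert\hat\theta'\rvert^2)^{3/2}+2\lvert\hat\theta'\rvert^2(1+x-\lvert\hat\theta'\rvert^2)^{1/2}$ stays $<\beta$ uniformly over the cone $\{\psi_{\mathbb{F}_{C,0}^{\infty}}\neq 0\}$ once $x<\alpha^2$. On this support $\lvert Z_0\rvert\leq 2\delta_2$ means $1-\lvert\hat\theta'\rvert^2 = Z_0\lvert\theta_n\rvert^{-2/3}$ is small but can have either sign, and $Z=Z_0+x\lvert\theta_n\rvert^{2/3}>\tfrac12$ controls the combination from below; the careful bookkeeping needed to extract the explicit inequality (\ref{inequalityonalphaIII2}) relating $\alpha$ and $\beta$ — making sure the bound is uniform as $\lvert\theta\rvert\to\infty$ and as $\lvert\hat\theta'\rvert\to 1$ — is the part that requires attention, everything after that being a routine repetition of the integration-by-parts scheme already used twice in this section.
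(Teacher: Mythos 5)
Your proposal is correct and is essentially the paper's own proof: establish a uniform lower bound for $d_{\theta_n}(y\cdot\theta+\tilde\phi)$ on $\{y_n>\beta,\ x<\alpha^2\}$ intersected with the support of $\psi_{\mathbb{F}_{C,0}^{\infty}}$, and then repeat the integration-by-parts scheme of Lemma \ref{y<0C0inf}, using the symbol bounds of Lemma \ref{FC0in} together with $x^{-1}\lesssim|\theta_n|^{2/3}$ to get boundedness of $x^{-N}D_y^{\alpha}(xD_x)^{l}\mathsf{U}^{\mathbb{F}_{C,0}^{\infty}}$. The bookkeeping you flag as the main obstacle follows in two lines from the constraints you already listed: on the support, $x|\theta_n|^{2/3}=Z-Z_0>1-2\delta_2$, so $|\theta_n|^{-2/3}<x/(1-2\delta_2)$ and hence $\bigl|1-|\hat\theta'|^2\bigr|=|Z_0|\,|\theta_n|^{-2/3}<\alpha^2$ (taking $\delta_2<\tfrac14$), giving $0<1+x-|\hat\theta'|^2<2\alpha^2$ uniformly in $\theta$; since also $|\hat\theta'|<2$, the positive term in $d_{\theta_n}(y\cdot\theta+\tilde\phi)$ is at most $\tfrac{2}{3}2^{3/2}\alpha^3+12\sqrt{2}\,\alpha$, and (\ref{inequalityonalphaIII2}) is precisely the requirement that this quantity be less than $\beta/2$, so $d_{\theta_n}(y\cdot\theta+\tilde\phi)>\beta/2$ on $\supp\chi$, uniformly as $|\theta|\to\infty$ and $|\hat\theta'|\to1$.
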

 
  \begin{proof}
 
 For $\beta,\alpha >0$ whose relation is to be determined, consider $\chi(y)$ with $\supp \chi \subset \{y_n > \beta , x_n < \alpha^2 \}$. 
 
Together with the constraints on the region currently considered  ie $ \begin{cases} x < \alpha^2 \\
Z_0 + x \lvert\theta_n\rvert^{2/3} > 1 \\
\lvert Z_0\rvert \leq 2\delta_2 \end{cases}$ we have  ( can assume $\delta_2 <\tfrac{1}{4}$)
$$x \lvert\theta_n\rvert^{2/3} =1 - Z_0 >1- \lvert Z_0 \rvert > 1 - 2\delta_2
 \Rightarrow \lvert\theta_n\rvert^{2/3} > \dfrac{1- 2\delta_2}{x}
 \Rightarrow  \lvert\theta_n\rvert >  \left(\dfrac{1-2\delta_2}{\alpha^2}\right)^{3/2} >( \dfrac{2\delta_2}{\alpha^2 })^{3/2}  $$
 $$\Rightarrow \big| 1 - \lvert\hat{\theta}'\rvert^2\big| = \dfrac{\lvert Z_0\rvert}{\lvert\theta_n\rvert^{2/3}}
< \alpha^2\Rightarrow 1 - \lvert\hat{\theta}'\rvert^2 + x< 2  \alpha^2  \  \ ( \text{since}\  x <\alpha^2 )$$
 Using this inequality and 
 $$ 0< 1 - \kappa_0(\delta_2) \leq  \lvert\hat{\theta}'\rvert \leq 1 +\kappa_1(\delta_2) < 2
\Rightarrow \lvert\hat{\theta}'\rvert <2 $$ 
 we obtain a lower bound for:
$$
 d_{\theta_n} (y\cdot\theta + \tilde{\phi}) = y_n - \tfrac{2}{3} (1 + x - \lvert\hat{\theta}'\rvert^2)^{3/2} - 3(1 + x- \lvert\hat{\theta}'\rvert^2)^{1/2} \lvert\hat{\theta}'\rvert^2 
 \geq y_n - \dfrac{2}{3} 2^{3/2} \alpha^3 - 12\times 2^{1/2}\alpha $$
 
 Now choose $\alpha $ such that
 \begin{equation}\label{inequalityonalphaIII2}
  \dfrac{2}{3} 2^{3/2} \alpha^3 + 12\times 2^{1/2}\alpha < \dfrac{\beta}{2}
  \end{equation}
 Thus on $\supp ( \chi(x,y) ($ we have
 $$d_{\theta_n} (y\cdot\theta + \tilde{\phi}) > \beta - \dfrac{1}{2}\beta = \dfrac{1}{2} \beta > 0$$
ie  the above expression is not zero. 
%
%

 Consider $\mathcal{L}$ with
$$ \mathcal{L}^t = \chi(x,y) \lvert d_{\theta}(y\cdot\theta +\tilde{\phi})\rvert \sum_{j=1}^n \dfrac{\partial \tilde{\phi} + y\cdot\theta}{\partial \theta_j} \partial_{\theta_j}$$
  $$ \Rightarrow\   ^t\mathcal{L} e^{iy\cdot \theta+i\tilde{\phi}} = \chi(x,y)e^{iy\cdot\theta+i\tilde{\phi}} $$
  By Lemma \ref{FC0in} we also have
  $$\mathcal{L}^k a_{\mathbb{F}_{C,0}^{\infty}} \leq C_k \lvert\theta_n\rvert^{\tfrac{4}{3}\sqrt{\tfrac{n^2}{4}-\lambda} -\tfrac{1}{3}k } $$
 From this point, the proof uses the same argument as those of Lemma \ref{y<0C0inf} to show on the region currently considered of $\chi$ the boundedness of 
 $$x^{-N} D_y^{\alpha} (xD_x)^{\beta} \mathsf{U}^{\mathbb{F}_{C,0}^{\infty}}.$$
 \end{proof}


 \begin{lemma}[Statement about wavefront set on $x  > 0$]\label{wfC0inf}

   The wavefront set of $\mathsf{U}^{\mathbb{F}_{C,0}^{\infty}}$ is contained in $\Sigma(\delta_2) $ where
   $$\Sigma(\delta_2,\delta) = 
   \{(x,y,\xi,\eta) : \xi = \partial_x \left( y\cdot\theta -\tilde{\phi}\right) , \eta = d_y \left( y\cdot\theta -\tilde{\phi}\right) , d_{\theta} \left( y\cdot \theta-\tilde{\phi}\right) = 0 , \theta \in \text{conesupp}\, a_{\mathbb{F}_{C,0}^{\infty}} , x> \delta \}   $$
   with $\tilde{\phi} = \tfrac{2}{3} Z^{3/2} \sgn\theta_n$ and $$\text{conesupp}\  a_{\mathbb{F}_{C,0}^{\infty}}= \{\lvert\theta\rvert  >0 , \left(1 - \kappa_0(\delta_2)\right) \lvert\theta_n\rvert \leq
   \lvert\theta'\rvert\leq \left(1 + \kappa_1(\delta_2)\right) \lvert\theta_n\rvert   \}.$$
$\kappa_0(\delta_2), \kappa_1(\delta_2)$ are defined in the calculation of $\text{conesupp}\  a_{\mathbb{F}_{C,0}^{\infty}}$ at the end of the subsection.
 \end{lemma}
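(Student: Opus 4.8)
The plan is to realise $\mathsf{U}^{\mathbb{F}_{C,0}^{\infty}}$, restricted to any slab $\{x>\delta\}$, as an oscillatory integral with a homogeneous phase and to run the standard non-stationary phase (integration-by-parts) argument for the wavefront set of such an integral. First I would record, using the setup preceding the lemma together with Lemma~\ref{FC0in}(2), that on $\{x>\delta\}$
$$\mathsf{U}^{\mathbb{F}_{C,0}^{\infty}}(x,y)=(2\pi)^{-n}\int a_{\mathbb{F}_{C,0}^{\infty}}(x,\theta)\,e^{i(y\cdot\theta+\tilde{\phi}(x,\theta))}\,d\theta,\qquad \tilde{\phi}=\tfrac{2}{3}Z^{3/2}\sgn\theta_n,$$
the integral being the $\epsilon\to0$ limit of the regularised one. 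On $\text{conesupp}\,a_{\mathbb{F}_{C,0}^{\infty}}$ one has $\theta_n\neq0$ and $|\theta_n|\sim|\theta'|\sim|\theta|$, so $Z=-|\theta_n|^{-4/3}|\theta'|^2+(1+x)|\theta_n|^{2/3}$ is smooth, homogeneous of degree $2/3$ in $\theta$, and (using the constraints $|Z_0|\le 2\delta_2$, $Z>\tfrac12$ that cut out this region) bounded below; hence $\tilde{\phi}$ is a genuine $C^\infty$ phase, homogeneous of degree $1$ in $\theta$. A preliminary step is to translate $|Z_0|\le 2\delta_2$ into the cone $\{(1-\kappa_0(\delta_2))|\theta_n|\le|\theta'|\le(1+\kappa_1(\delta_2))|\theta_n|\}$ appearing in the statement — this is the computation referred to at the end of the subsection — and to note that $\tilde{\phi}$ and the amplitude are smooth in $x$ for $x>0$ with $xD_x$-derivatives controlled by Lemma~\ref{FC0in}.

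Then I would fix $P_0=(x_0,y_0,\xi_0,\eta_0)$ with $x_0>\delta$ and $P_0\notin\Sigma(\delta_2,\delta)$, take $\varphi_0\in\mathcal C^\infty_c(\{x>\delta\})$ supported near $(x_0,y_0)$, and bound $\mathcal F\big(\varphi_0\mathsf{U}^{\mathbb{F}_{C,0}^{\infty}}\big)(\zeta)$ for $\zeta=(\xi,\eta)$ in a small open cone about $\zeta_0=(\xi_0,\eta_0)$, with total phase $\Psi=y\cdot\theta+\tilde{\phi}(x,\theta)-x\xi-y\eta$. I would split the $\theta$-integral according to whether $|y_0+d_\theta\tilde{\phi}(x_0,\theta)|$ is bounded below or small. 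On the first piece (away from the critical set $\{d_\theta(y\cdot\theta+\tilde{\phi})=0\}$) I integrate by parts with the operator $\mathsf{L}$ adapted to the phase as in Lemma~\ref{FC0in}(1); each application gains a factor $|\theta|^{-1/3}$ on $a_{\mathbb{F}_{C,0}^{\infty}}$, the extra $x$- and $y$-derivatives it produces being harmless, so after enough steps the integrand is $L^1$ in $\theta$ with $O(|\zeta|^{-N})$ bounds. On the second piece $\theta/|\theta|$ lies near the cone of critical directions; for such $\theta$, $\big(\partial_x\tilde{\phi}(x_0,\theta),\theta\big)$ is, up to scaling, the corresponding point of $\Sigma(\delta_2,\delta)$, so $P_0\notin\Sigma(\delta_2,\delta)$ forces $\zeta_0$ to be non-proportional to it, whence $|\partial_x\Psi|+|\partial_y\Psi|=|\partial_x\tilde{\phi}-\xi|+|\theta-\eta|\gtrsim|\theta|+|\zeta|$ uniformly for $(x,y)$ near $(x_0,y_0)$ and $\zeta$ near $\zeta_0$; integrating by parts in $(x,y)$ — routing $x$-differentiation through $xD_x$ and using $x^{-1}\lesssim|\theta_n|^{2/3}$ only finitely many times — again yields rapid decay in $|\zeta|$. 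Combining the two estimates gives $\zeta_0\notin\WF_{(x_0,y_0)}\big(\mathsf{U}^{\mathbb{F}_{C,0}^{\infty}}\big)$, hence $\WF\big(\mathsf{U}^{\mathbb{F}_{C,0}^{\infty}}|_{x>\delta}\big)\subset\Sigma(\delta_2,\delta)$ for every $\delta>0$.

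The hard part will be handling the non-classical amplitude: by Lemma~\ref{FC0in}, $a_{\mathbb{F}_{C,0}^{\infty}}$ gains only $|\theta|^{-1/3}$ per $\theta$-derivative and loses up to $|\theta|^{2/3}$ per plain $x$-derivative, so the integration-by-parts scheme must be arranged so that $x$-differentiation occurs only in the benign combination $xD_x$ and so that the decay produced by $(x,y)$-integration by parts on the critical piece — a factor $\lesssim(|\theta|+|\zeta|)^{-1}$ per step — is expended against only a bounded number of $|\theta|^{2/3}$-losses, the remaining $\theta$-decay being supplied by the $\mathsf{L}$-operator. The other, purely bookkeeping, difficulty is pinning down $\text{conesupp}\,a_{\mathbb{F}_{C,0}^{\infty}}$ and the constants $\kappa_0(\delta_2),\kappa_1(\delta_2)$; once that is done the critical-set and co-direction analysis is the usual H\"ormander argument, the phase $y\cdot\theta+\tilde{\phi}$ being automatically non-degenerate because it is linear in $y$, so no Hessian computation is needed.
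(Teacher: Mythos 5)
Your proposal is correct and is essentially the paper's argument: the paper likewise observes that on $x>\delta$ the amplitude $a_{\mathbb{F}_{C,0}^{\infty}}$ is a symbol of type $(\tfrac13,\cdot)$ (Lemma \ref{FC0in}) and that $y\cdot\theta+\tilde{\phi}$ is a smooth phase, homogeneous of degree $1$ in $\theta$ on $\text{conesupp}\,a_{\mathbb{F}_{C,0}^{\infty}}$, so that $\mathsf{U}^{\mathbb{F}_{C,0}^{\infty}}_{\delta}$ is an oscillatory integral in H\"ormander's sense. The only difference is that the paper then simply cites Proposition 2.5.7 of H\"ormander's FIO I (following Friedlander's Theorem 6.1), whereas you unpack that proposition by the standard non-stationary-phase/integration-by-parts argument, which is the same underlying proof.
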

 
\begin{proof}

$a_{\mathbb{F}_{C,0}^{\infty}} \in S^{\tfrac{4}{3}\sqrt{\tfrac{n^2}{4}-\lambda}}_{\tfrac{1}{3}, 0}$ by Lemma \ref{FC0in}. In addition, since we have the same type of symbol and same phase function as in Friedlander Theorem 6.1 \cite{Fried01}, we employ the same argument to prove our results. The main idea is that on $x >\delta$, $\tilde{\Phi}$ is smooth on $\text{conesupp}\   a_{\mathbb{F}_{C,0}^{\infty}}$ and is homogeneous degree $1$ with respect to $\theta$, thus is a phase function.  
$$( \mathsf{U}^{\mathbb{F}_{C,0}^{\infty}}_{\delta} , \chi) = \lim_{\epsilon\rightarrow 0} (2\pi)^{-n} \int\int\int a_{\mathbb{F}_{C,0}^{\infty}}\chi(x,y) \varpi(\epsilon \theta) e^{i\tilde{\Phi}} \, dx dy d\theta$$
is an oscillatory integral in the sense of Hormander. The wavefront set statement now follows from proposition 2.5.7 in FIO I by H\"ormander \cite{Hor03}.

\end{proof}


  \textbf{Calculation of conic neighborhood}
  \begin{equation}\label{conicFC0}
  \{ \lvert\theta\rvert \geq 1 , \lvert Z_0\rvert \leq 2 \delta_2 \}
 \subset \{\lvert\theta\rvert > 0,   1 - \kappa_0(\delta_2) \leq \lvert\hat{\theta}'\rvert \leq 1 + \kappa_1(\delta_2)    \}
 \end{equation} 
  for some positive constants $\kappa_0$ and $\kappa_1$ depending on $\delta_2$.
  \begin{proof}
  Consider the set $\{ \lvert\theta\rvert \geq 1, \lvert Z_0 \rvert \leq 2\delta_2\}$  
  $$Z_0 = \lvert\theta_n\rvert^{2/3} (1 - \lvert\hat{\theta}'\rvert^2)
   = \lvert\theta\rvert^{\tfrac{2}{3}} \dfrac{1-a^2}{(1+a^2)^{1/3}} ,\  a = \lvert \hat{\theta}'\rvert = \dfrac{\lvert\theta'\rvert}{\lvert\theta_n\rvert}   $$
 
  \begin{enumerate}
\item Consider the case in which $Z_0 \geq 0$ ie $\begin{cases}\lvert\hat{\theta}'\rvert \leq 1  \\ 
 \lvert\theta\rvert^{\tfrac{2}{3}} \dfrac{1-a^2}{(1+a^2)^{1/3}}   \leq 2\delta_2 \end{cases}$
 
Denote $A = (1 + a^2)^{1/3}$ so $a^2 = A^3 - 1$
 $$ f(A) = \dfrac{1-a^2}{(1+a^2)^{1/3}} = \dfrac{2-A^3}{A}
 \Rightarrow f'(A) = \dfrac{-2(A^3+1)}{A^2} $$
 $0 \leq a \leq 1$ so $1\leq A \leq 2^{1/3}$ . On this region $f'(A) < 0$, $f(A)$ is monotone decreasing from $1$ to $0$. Hence for $\delta_2$ such that $0 < 2\delta_2 <1$ , there exists a unique $A$ in this interval such that 
 $$f(A) = 2\delta_2$$
 and since $a > 0$ there exists a unique $\kappa_0(\delta_2)$ such that with $a = 1- \kappa_0(\delta_2)$
 $$\dfrac{1-a^2}{(1+a^2)^{1/3}}   = 2\delta_2$$
 This is the value for $\lvert\hat{\theta}'\rvert$ for $\lvert\theta\rvert = 1$. 
 
 If $\lvert\theta\rvert$ increases from $1$, want $\dfrac{1-a^2}{(1+a^2)^{1/3}} $ to decrease,which happens if $a$ increases. So we have:
 $$\{\lvert\theta\rvert \geq 1 ,0 \leq \lvert\hat{\theta}'\rvert < 1 , Z_0 \leq 2\delta_2\}
  \subset \{ \lvert\theta\rvert\geq 1, 1+\kappa_0(\delta_2) \leq \lvert\hat{\theta}'\rvert < 1\}$$
Note that  $1 - \kappa_0(\delta_2)<1$ so $\kappa_0(\delta_2) >0$ and as $\delta_2 \rightarrow 0$ $1+ \kappa_0(\delta_2)\rightarrow 1$ ie $\kappa_0(\delta_0) \rightarrow 0$

\item  Similarly we consider the case for $Z_0 < 0$ ie $\begin{cases}\lvert\hat{\theta}'\rvert \geq 1  \\ 
 \lvert\theta\rvert^{\tfrac{2}{3}} \dfrac{a^2-1}{(1+a^2)^{1/3}}   \leq 2\delta_2 \end{cases}$

Denote $A = (1 + a^2)^{1/3} \Rightarrow a^2 = A^3 - 1$
 $$ f(A) = \dfrac{a^2-1}{(1+a^2)^{1/3}} = \dfrac{A^3-2}{A}
 \Rightarrow f'(A) = \dfrac{2A^3 +2}{A^2}$$
 $a\geq 1$ so $A\geq 2^{2/3}$ hence $f'(A) >0$. $f(A)$ is monotone, increases from zero. There exists a unique $A$ so that 
 $$f(A) = 2\delta_2$$
 and thus there is a unique $1+ \kappa_1(\delta_2)$ so that for $a = 1+ \kappa_1(\delta_2)$ we have
 $$\dfrac{a^2-1}{(1+a^2)^{1/3}} = 2\delta_2$$
 
 If $\lvert\theta\rvert$ increases from $1$, want $\dfrac{1-a^2}{(1+a^2)^{1/3}} $ to decrease, which happens if $A$ decreases so 
  $$\{\lvert\theta\rvert \geq 1, \lvert\hat{\theta}'\rvert \geq 1  , 0 < -Z_0 < 2\delta_2\}
   \subset \{\lvert\theta\rvert \geq 1 , 1 \leq \lvert\hat{\theta}'\rvert \leq 1 + \kappa_1(\delta_2)\}$$
 Note that $\kappa_1(\delta_2) >0$ and as $\delta_2 \rightarrow 0$ $A \searrow 2^3$ ie $a\searrow 1$ ie $\kappa_1(\delta_2) \searrow 0$
 
 \end{enumerate} 
In short we have:
 $$\{ \lvert\theta\rvert \geq 1 , \lvert Z_0 \rvert \leq 2 \delta_2\}
 \subset \{\lvert\theta\rvert > 0,   1 - \kappa_0(\delta_0) \leq \lvert\hat{\theta}'\rvert \leq 1 + \kappa_1(\delta_2)    \}$$

 \begin{remark}\indent\par\noindent
 \begin{itemize}
 \item The condition $Z_0 < 2\delta_2$ and $\lvert\hat{\theta}'\rvert \leq (1 - 2\delta_1)^{-1}$ does not imply that $Z_0$ is bounded below ie does not imply that $\lvert Z_0 \rvert $ is bounded.
 
 \item The difference between region currently considered and coneregion currently considered : in the region currently considered , with constraint $0 < Z_0 < 2\delta_2$ (correspondingly $0<-Z_0 < 2\delta_2$) , when $\lvert\theta_n\rvert$ increases, this forces $\tfrac{1-a^2}{(1+a^2)^{1/3}}$ (correspondingly $\tfrac{a^2-1}{(1+a^2)^{1/3}}$ to decrease) , while in the coneregion currently considered, we do not have this constraint, hence $\lvert Z_0\rvert$ will not be necessarily bounded.

 \item not the same situation with $Z_0 > \delta_2$ or $-Z_0 > \delta_2$. 
 \end{itemize}
 \end{remark}

  \end{proof}
  
  
\subsection{On region $\mathbb{F}_{C,+}$}
 On the current region we have $\{\lvert\theta\rvert \geq 1, Z_0 \geq \delta_2\}$, whose cone region currently considered is $\{\lvert\theta\rvert  > 0 , \lvert\theta_n\rvert\geq \lvert\theta'\rvert\}$. 
 On region $\mathbb{F}_{C,+}^{\infty}$ we have  $\sigma^{-1} \sim 0$, while we subpartition $\mathbb{F}_{C,+}^0$ into $\mathbb{F}_{C,+}^{0,0}$ (corresponds to $\mu \sim 0$) and $\mathbb{F}_{C,+}^{0,\infty}$ (corresponds to $\mu^{-1} \sim 0$), where $\mu$ is the blow-up variable according to the blow-up at the corner of face $\mathbb{F}_C$ at $\sigma=0$ and $h=0$ . 
 
 For  subregion $\mathbb{F}_{C,+}^{0,0}$ we show
 \begin{enumerate}
\item $\mathsf{U}^{\mathbb{F}_{C,+}^{0,0}}$ is smooth on $x >\delta$ for any $\delta > 0$ ( and any $\delta_2 >0$).
\item $ \mathsf{U}^{\mathbb{F}^{0,0}_{C,+}}\in x^{s_-} \mathcal{C}^{\infty}_{x,y} + x^{s_+} \mathcal{C}^{\infty}_{x,y} $  on $y_n \neq 0$.
\end{enumerate}

For region $\mathbb{F}_{C,+}^{\infty}$ and  $\mathbb{F}_{C,+}^{0,\infty}$ we will show
 \begin{enumerate}
\item  On $y_n < 0$, $ \mathsf{U}^{\mathbb{F}^{\infty}_{C,+}},  \mathsf{U}^{\mathbb{F}_{C,+}^{0,\infty}}$ are smooth and vanishes to infinite order at $x = 0$. 
\item On $y_n>0$ $\mathsf{U}^{\mathbb{F}^{\infty}_{C,+}},  \mathsf{U}^{\mathbb{F}_{C,+}^{0,\infty}}$ are smooth and vanishes to infinite order if we are close enough to the boundary for $y_n > 0$. 
\item wavefront set statement : contained in the forward bicharacteristics from $x = y = 0$.
\end{enumerate}
\subsubsection{On $\mathbb{F}_{C,+}^{0,0}$}
\begin{align}\label{indicialFC+}
    ( \mathsf{U}^{\mathbb{F}_{C,+}^{0,0}}_{\delta} , \chi)
    &= \lim_{\epsilon \rightarrow 0} (2\pi)^{-n} \int\int\int \psi_{\mathbb{F}_{C,+}^{0,0}} \hat{u}_x \chi(x,y) \varpi(\epsilon\theta)  e^{iy\cdot \theta} \, dx\,dy\, d\theta
    \end{align}
    We recall the properties of the approximate solution on the region currently considered: 
    $$\psi_{\mathbb{F}_{C,+}^{0,0}}\hat{u}_x =  \big(\lvert\theta_n\rvert (1 - \lvert\hat{\theta}'\rvert^2)^{1/2}\big)^{\sqrt{\tfrac{n^2}{4}-\lambda}} \hat{\mathsf{U}}^{\mathbb{F}_{C,+}^{0,0}} $$
    where  $ \hat{\mathsf{U}}^{\mathbb{F}_{C,+}^{0,0}}$ is conormal in $t ( = \tfrac{z}{h})$ and smooth in $h$, where $t = x \lvert\theta_n\rvert( 1 - \lvert\hat{\theta}'\rvert^2)^{1/2}$ and $h = \lvert\theta_n\rvert^{-1} ( 1 - \lvert \hat{\theta}'\rvert^2)^{-3/2}$. In more details:
\begin{align*}
 2\sqrt{\tfrac{n^2}{4}-\lambda}\notin \mathbb{Z} :&\  \hat{\mathsf{U}}^{\mathbb{F}_{C,+}^{0,0}} \in x^{\tfrac{n}{2}} t^{\sqrt{\tfrac{n^2}{4}-\lambda}} \mathcal{C}^{\infty}_{t,h} + x^{\tfrac{n}{2}} t^{-\sqrt{\tfrac{n^2}{4}-\lambda}} \mathcal{C}^{\infty}_{t,h}
  \end{align*}
  $$ (1 - \lvert\hat{\theta}'\rvert^2)^{\tfrac{n}{2}} \big[\lvert\theta_n\rvert (1 - \lvert\hat{\theta}'\rvert^2)^{1/2}\big]^{\sqrt{\tfrac{n^2}{4}-\lambda}} \hat{\mathsf{U}}^{\mathbb{F}_{C,+}^{0,0}} \in  \big[\lvert\theta_n\rvert (1 - \lvert\hat{\theta}'\rvert^2)^{1/2} \big]^{2\sqrt{\tfrac{n^2}{4}-\lambda}} x^{s_+} \mathcal{C}^{\infty}_{t,h} + x^{s_-} \mathcal{C}^{\infty}_{t,h}$$
Using this form of $\hat{u}_x$ we rewrite (\ref{indicialFC+}) as
 $$( \mathsf{U}^{\mathbb{F}_{C,+}^{0,0}}_{\delta} , \chi) = \lim_{\epsilon \rightarrow 0} \sum_{\pm} (2\pi)^{-n} x^{s_{\pm}} \int\int\int a_{\mathbb{F}_{C,+}^{0,0};\pm} \chi(x,y) \varpi(\epsilon\theta)  e^{iy\cdot \theta} \,dx\,dy\, d\theta$$

 \begin{lemma}\label{lemssymFC+00}[Properties of $a_{\mathbb{F}_{C,+}^{0,0};\pm}$]\
 
 \begin{enumerate}
 
 \item Let $L$ be the differential operator whose formal adjoint is $^t L = \dfrac{1}{\lvert y\rvert^2} \sum y_j \dfrac{\partial}{i\partial \theta_j}$, then for $C_k$ is independent of $x$
  $$ \lvert D_x^m L^k(a_{\mathbb{F}_{C,+}^{0,0}} ) \rvert\leq C_k \lvert\theta\rvert^{-\tfrac{1}{3}k + 2 \sqrt{\tfrac{n^2}{4}-\lambda}+m} $$

 \item On $x\neq 0$, 
 $$a_{\mathbb{F}_{C,+}^{0,0}} = \sum_{\pm} x^{s_{\pm}} a_{\mathbb{F}_{C,+}^{0,0};\pm }\in S^{-\infty }_{\tfrac{1}{3},0;\theta}$$

 \end{enumerate}

 \end{lemma}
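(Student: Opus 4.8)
The plan is to proceed exactly as in the analysis of the region $\mathbb{F}_{C,0}^0$ (Lemma \ref{FC00} and Lemma \ref{conormalFC00}), since the structure is the same: the only substantive change is the weight $\phi_{\text{correction}}$ picks up the factor $\big(\lvert\theta_n\rvert(1-\lvert\hat{\theta}'\rvert^2)^{1/2}\big)^{\sqrt{n^2/4-\lambda}}$ rather than $\lvert\theta_n\rvert^{\frac23\sqrt{n^2/4-\lambda}}$, and the relevant conormal variable is $t=\tfrac{z}{h}=x\lvert\theta_n\rvert(1-\lvert\hat{\theta}'\rvert^2)^{1/2}$ together with the smooth variable $h=\lvert\theta_n\rvert^{-1}(1-\lvert\hat{\theta}'\rvert^2)^{-3/2}$. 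First I would record that on the cone region currently considered, namely $Z_0\geq\delta_2$, $\lvert\theta\rvert\geq1$, we have $\lvert\theta_n\rvert\geq\lvert\theta'\rvert$ and $1-\lvert\hat{\theta}'\rvert^2$ is bounded above by $1$ and bounded below away from $0$ once $Z_0$ is bounded below; more precisely $1-\lvert\hat{\theta}'\rvert^2\sim Z_0\lvert\theta_n\rvert^{-2/3}$. This gives $\lvert\theta_n\rvert$ equivalent to $\lvert\theta\rvert$ and allows all the factors $(1-\lvert\hat{\theta}'\rvert^2)^{\pm1}$ to be absorbed into constants when estimating the leading size of the symbol, yielding the bound $\lvert a_{\mathbb{F}_{C,+}^{0,0};\pm}\rvert\lesssim\lvert\theta\rvert^{2\sqrt{n^2/4-\lambda}}$ (the doubled exponent coming from the product of the $\phi_{\text{correction}}$ weight with the $x^{s_+}$ coefficient, just as in Lemma \ref{FC00}).

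For the derivative bounds, the key step is to write out the chain-rule expressions for $\partial_{\theta_n}$, $\partial_{\theta_j}$ ($j>n$), and $\partial_x$ in terms of $t\partial_t$ and $\partial_h$ (or $h\partial_h$), analogous to the formulas (\ref{derivativeFC00}) used on $\mathbb{F}_{C,0}^0$. Since $a_{\mathbb{F}_{C,+}^{0,0};\pm}$ is conormal in $t$ (hence $t^{-j}(t\partial_t)^j$ acts boundedly after peeling off the $t^{\pm\sqrt{n^2/4-\lambda}}$ prefactor) and smooth in $h$, and since $t$, $h$, $1-\lvert\hat{\theta}'\rvert^2$ and $\lvert\hat{\theta}_j\rvert$ are all bounded on the region, each $\theta$-derivative costs at most $\lvert\theta\rvert^{-1/3}$ (because $h\sim\lvert\theta\rvert^{-1}$ while $t$ stays bounded, so the worst term is $\partial_\theta t\sim\lvert\theta\rvert^{2/3}$ against $t^{-1}\sim$ const... one tracks this carefully) and each $x$-derivative costs $\lvert\theta\rvert^{2/3}$ via $\partial_x=x^{-1}t\partial_t$. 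Applying $L^k$, where $^tL=\lvert y\rvert^{-2}\sum y_j\partial_{\theta_j}/i$ is exactly Friedlander's operator, then gives $\lvert D_x^m L^k a_{\mathbb{F}_{C,+}^{0,0}}\rvert\leq C_k\lvert\theta\rvert^{-k/3+2\sqrt{n^2/4-\lambda}+m}$ with $C_k$ independent of $x$, which is statement (1).

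For statement (2), away from $x=0$ the prefactor $x^{\pm\sqrt{n^2/4-\lambda}}$ is smooth, so $a_{\mathbb{F}_{C,+}^{0,0}}=\sum_\pm x^{s_\pm}a_{\mathbb{F}_{C,+}^{0,0};\pm}$ is smooth in $x$; combined with the derivative bounds above this shows $a_{\mathbb{F}_{C,+}^{0,0}}\in S^{2\sqrt{n^2/4-\lambda}}_{1/3,0;\theta}$ initially. To improve the order to $-\infty$ one repeats the trick from Lemma \ref{FC00}(2): on $x\neq0$, $t=x\lvert\theta_n\rvert(1-\lvert\hat{\theta}'\rvert^2)^{1/2}$ so any positive power $\lvert\theta\rvert^N$ can be traded for $t^{N}x^{-N}$ (up to the bounded factor $(1-\lvert\hat{\theta}'\rvert^2)^{-N/2}$ and $\lvert\theta_n\rvert\sim\lvert\theta\rvert$), and since the conormal function decays to infinite order... actually here one uses that the conormal bound in $t$ together with boundedness of $t$ on $\mathbb{F}_{C,+}^{0,0}$ and the oscillation-free structure forces rapid decay, exactly as in the cited lemma, giving $a_{\mathbb{F}_{C,+}^{0,0}}\in S^{-\infty}_{1/3,0;\theta}$ for $x>\delta$. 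The main obstacle I anticipate is bookkeeping the exponents of $(1-\lvert\hat{\theta}'\rvert^2)$ correctly through the chain-rule substitutions — these factors are harmless (bounded above and below) on the region $Z_0\geq\delta_2$ but one must verify that no negative power of $(1-\lvert\hat{\theta}'\rvert^2)$ that is unbounded as $Z_0\downarrow\delta_2$ sneaks in; since $\delta_2$ is a fixed positive constant this is a matter of care rather than a genuine difficulty, and everything else is a transcription of the $\mathbb{F}_{C,0}^0$ argument with $\lvert\theta\rvert^{4/3\sqrt{n^2/4-\lambda}}$ replaced by $\lvert\theta\rvert^{2\sqrt{n^2/4-\lambda}}$.
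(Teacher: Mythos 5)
Your overall plan (transcribe the $\mathbb{F}_{C,0}^0$ argument into the projective variables $t=x\lvert\theta_n\rvert(1-\lvert\hat{\theta}'\rvert^2)^{1/2}$, $h=\lvert\theta_n\rvert^{-1}(1-\lvert\hat{\theta}'\rvert^2)^{-3/2}$, use the same operator $L$, and trade powers of $\lvert\theta\rvert$ for powers of $x^{-1}$ on $x>\delta$) is exactly the paper's route, but the key quantitative input you invoke is false, and this creates a genuine gap. You assert that on $Z_0\geq\delta_2$ the quantity $1-\lvert\hat{\theta}'\rvert^2$ is ``bounded below away from $0$,'' so that all factors $(1-\lvert\hat{\theta}'\rvert^2)^{\pm1}$ can be absorbed into constants, and you locate the only danger at $Z_0\downarrow\delta_2$. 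But $1-\lvert\hat{\theta}'\rvert^2=Z_0\lvert\theta_n\rvert^{-2/3}$, so with $Z_0$ bounded and $\lvert\theta_n\rvert\to\infty$ this quantity tends to $0$; negative powers of it grow like positive powers of $\lvert\theta_n\rvert$ and cannot be swallowed into constants. The same false premise makes your derivative counts inconsistent: $\partial_x=\lvert\theta_n\rvert(1-\lvert\hat{\theta}'\rvert^2)^{1/2}\partial_t$ costs a factor of order $\lvert\theta_n\rvert$ (since $Z_0$ is unbounded above on $\mathbb{F}_{C,+}$), not $\lvert\theta\rvert^{2/3}$ as you claim — which is precisely why the lemma's exponent is $+m$ rather than $+\tfrac{2}{3}m$. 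What actually closes the argument (and what the paper uses) is that on $\mathbb{F}_{C,+}^{0,0}$ the variables $t$, $h$ and $\lvert\hat{\theta}'\rvert$ are bounded; when the chain rule for $\partial_{\theta_n}$, $\partial_{\theta_j}$ is written out, the coefficients appear as bounded functions of $(t,h,\lvert\hat{\theta}'\rvert)$ times explicit powers $\lvert\theta_n\rvert^{-1/3}$, $\lvert\theta_n\rvert^{-1}$, so each $\theta$-derivative costs $\lvert\theta\rvert^{-1/3}$ with no stray negative powers of $1-\lvert\hat{\theta}'\rvert^2$ surviving.

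The same issue infects your part (2). The trade you write, $\lvert\theta\rvert^N\mapsto t^N x^{-N}$ ``up to the bounded factor $(1-\lvert\hat{\theta}'\rvert^2)^{-N/2}$,'' does not work because that factor is unbounded on the region. The correct identity is $\lvert\theta_n\rvert=x^{-3/2}\,t^{3/2}h^{1/2}$, which uses both bounded variables $t$ and $h$ simultaneously; then on $x>\delta$ one gets $\lvert\theta_n\rvert^N\,G(t,h)\leq C_N$ for $G$ bounded, and since $\lvert\theta_n\rvert\sim\lvert\theta\rvert$ on $Z_0\geq\delta_2$ this gives $a_{\mathbb{F}_{C,+}^{0,0}}\in S^{-\infty}_{\tfrac13,0;\theta}$. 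So the repair is not ``more careful bookkeeping near $Z_0=\delta_2$'' but replacing your boundedness claim for $1-\lvert\hat{\theta}'\rvert^2$ by boundedness of $h$ (equivalently $Z_0\geq\delta_2$ read as $h\leq\delta_2^{-3/2}$) and of $t$, and carrying the exact chain-rule coefficients through.
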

 
 \begin{proof}\indent\par\noindent
 \begin{enumerate}
 \item First we consider a bound on $a_{\mathbb{F}_{C,+}^{0,0};\pm} $.  
    $$\psi_{\mathbb{F}_{C,+}^{0,0}}\hat{u}_x =  \big(\lvert\theta_n\rvert (1 - \lvert\hat{\theta}'\rvert^2)^{1/2}\big)^{\sqrt{\tfrac{n^2}{4}-\lambda}} \hat{\mathsf{U}}^{\mathbb{F}_{C,+}^{0,0}} $$
    $$ t = x \lvert\theta_n\rvert( 1 - \lvert\hat{\theta}'\rvert^2)^{1/2} ;\    h = \lvert\theta_n\rvert^{-1} ( 1 - \lvert \hat{\theta}'\rvert^2)^{-3/2} $$
  $$ (1 - \lvert\hat{\theta}'\rvert^2)^{\tfrac{n}{2}} \big[\lvert\theta_n\rvert (1 - \lvert\hat{\theta}'\rvert^2)^{1/2}\big]^{\sqrt{\tfrac{n^2}{4}-\lambda}} \hat{\mathsf{U}}^{\mathbb{F}_{C,+}^{0,0}} \in  \big[\lvert\theta_n\rvert (1 - \lvert\hat{\theta}'\rvert^2)^{1/2} \big]^{2\sqrt{\tfrac{n^2}{4}-\lambda}} x^{s_+} \mathcal{C}^{\infty}_{t,h} + x^{s_-} \mathcal{C}^{\infty}_{t,h}$$
  In the region currently considered on which $h, t$ and $\lvert\hat{\theta}'\rvert$ are bounded 
$$ \Rightarrow a_{\mathbb{F}_{C,+}^{0,0};\pm} \leq  \lvert \theta_n\rvert^{2 \sqrt{\tfrac{n^2}{4}-\lambda}} $$
%
    
    
  To compute the bound for higher derivatives, we use:
       \begin{align*}
   \lvert\theta_n\rvert\partial_{\theta_n} &=  \left(1-2 \lvert\hat{\theta}'\rvert^2 h^{2/3} \lvert\theta_n\rvert^{-1/3} \right) t\partial_t
   +\left( - \lvert\theta_n\rvert^{-1}   - 3 h^{2/3} \lvert\theta_n\rvert^{-1/3} \lvert\hat{\theta}'\rvert^2  \right)h\partial_h   \\
\lvert\theta_n\rvert \partial_{\theta_j} &= 
 \left(- h^{3/2} \lvert\theta_n\rvert^{2/3} \lvert\theta_n \rvert^{2/3} \lvert\hat{\theta}'\rvert \right)t \partial_t   - 3 h^{2/3} \lvert\theta_n\rvert^{-1/3} \lvert\hat{\theta}'\rvert h\partial_h \\
    x\partial_x &= t\partial_t ;  \partial_x = \lvert\theta_n\rvert (1 -\lvert\hat{\theta}'\rvert^2)^{1/2} \partial_t
\end{align*}
Using the above computation, the fact that $a_{\mathbb{F}_{C,+}^{0,0};\pm}$ is smooth in $t,h$ and on the current support $t,h,\lvert\hat{\theta}'\rvert$ are bounded, we obtain:
\begin{equation}\label{boundFC+00}
\lvert D_x^m L^k(a_{\mathbb{F}_{C,+}^{0,0};\pm} ) \rvert\leq C_k \lvert\theta\rvert^{-\tfrac{1}{3}k + 2 \sqrt{\tfrac{n^2}{4}-\lambda} +m }
\end{equation}

 \item On $x\neq 0$ we have smoothness in $x$ for $a_{\mathbb{F}_{C,+}^{0,0}} = \sum_{\pm} x^{s_{\pm}} a_{\mathbb{F}_{C,+}^{0,0};\pm} $. Together with the bound in (\ref{boundFC+00}), in fact since $\partial_x= x^{-1} t\partial_t$ we have:
$$a_{\mathbb{F}_{C,+}^{0,0}} \in S^{2 \sqrt{\tfrac{n^2}{4}-\lambda}}_{\tfrac{1}{3},0; \theta} $$
 In fact, we can improve the order of the symbol in this case with we are away $\{x =0\}$ : in this region currently considered since $Z >\delta_2$ ie $\lvert\theta_n\rvert > \lvert\theta'\rvert$ so $\lvert\theta_n\rvert $ is equivalent to $\lvert\theta\rvert$ thus it suffices to compare with $\lvert\theta_n\rvert$. Since $\lvert\theta_n\rvert = x^{-\tfrac{3}{2}} t^{3/2} h^{1/2}$,  on $x >\delta$ for all $N$ we have : 
   $$ \lvert\theta_n\rvert^N \hat{\mathsf{U}}^{\mathbb{F}_{C,+}^{0,0}}
    = x^{-\tfrac{3}{2}N} t^{\tfrac{3}{2}N} h^{\tfrac{1}{2}N} \leq C_N$$ 
 so $a_{\mathbb{F}_{C,+}^{0,0}} \in S^{-\infty }_{\tfrac{1}{3},0;\theta}$. 

 \end{enumerate}
 \end{proof}

\begin{lemma}\label{x>deltaC+00}
$\mathsf{U}^{\mathbb{F}_{C,+}^{0,0}}$ is smooth on $x > \delta$ for any $\delta > 0$.
\end{lemma}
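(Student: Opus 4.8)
This lemma is an immediate analogue of Lemma \ref{C00x>delta}, and the plan is to mimic that proof using the symbol estimates from Lemma \ref{lemssymFC+00} rather than those from Lemma \ref{FC00}.

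The first step is to restrict $\mathsf{U}^{\mathbb{F}_{C,+}^{0,0}}$ to $\mathcal{D}'(X_\delta)$, writing $\mathsf{U}^{\mathbb{F}_{C,+}^{0,0}}_\delta$ for the restriction, and to write out the pairing against a test function $\chi(x,y)\in\mathcal{C}^\infty_0(X_\delta)$ as the regularized oscillatory integral
$$( \mathsf{U}^{\mathbb{F}_{C,+}^{0,0}}_{\delta} , \chi)
= \lim_{\epsilon\to 0}(2\pi)^{-n}\int\!\!\int\!\!\int a_{\mathbb{F}_{C,+}^{0,0}}\,\chi(x,y)\,\varpi(\epsilon\theta)\,e^{iy\cdot\theta}\,dx\,dy\,d\theta,$$
exactly as in the display preceding Lemma \ref{lemssymFC+00}, with $a_{\mathbb{F}_{C,+}^{0,0}}=\sum_\pm x^{s_\pm}a_{\mathbb{F}_{C,+}^{0,0};\pm}$. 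The key input is the second part of Lemma \ref{lemssymFC+00}: for $x>\delta$ we have $a_{\mathbb{F}_{C,+}^{0,0}}\in S^{-\infty}_{\frac13,0;\theta}$, i.e.\ it decays rapidly in $|\theta|$ together with all its $\theta$- and $x$-derivatives, uniformly on $X_\delta$. Hence the integral in $\theta$ converges absolutely without the need for the cutoff $\varpi(\epsilon\theta)$, one may pass $\epsilon\to 0$ under the integral sign by dominated convergence, and one obtains the pointwise formula
$$\mathsf{U}^{\mathbb{F}_{C,+}^{0,0}}_{\delta}(x,y) = (2\pi)^{-n}\int a_{\mathbb{F}_{C,+}^{0,0}}\,e^{iy\cdot\theta}\,d\theta,$$
a genuine (non-distributional) function.

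Next I would establish smoothness. Differentiating in $y$ brings down factors of $\theta^\beta$, and since $a_{\mathbb{F}_{C,+}^{0,0}}$ is still rapidly decreasing in $\theta$ after multiplication by any polynomial, $D_y^\beta\mathsf{U}^{\mathbb{F}_{C,+}^{0,0}}_\delta$ is again given by an absolutely convergent integral, hence continuous; so $\mathsf{U}^{\mathbb{F}_{C,+}^{0,0}}_\delta$ is smooth in $y$. For $x$-derivatives, I would use (as in the proof of Lemma \ref{C00x>delta}) the identity $\partial_x = x^{-1} t\partial_t$ together with the fact that $t\partial_t$ acting on $a_{\mathbb{F}_{C,+}^{0,0}}$ preserves the class $S^{-\infty}_{\frac13,0;\theta}$ on $X_\delta$ (this is part of the derivative bounds in Lemma \ref{lemssymFC+00}, where $x D_x = t\partial_t$); iterating, all $D_x^m D_y^\beta$-derivatives are represented by absolutely convergent oscillatory integrals with rapidly decaying amplitude, so $\mathsf{U}^{\mathbb{F}_{C,+}^{0,0}}$ is $\mathcal{C}^\infty$ on $\{x>\delta\}$. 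Since $\delta>0$ is arbitrary, this gives the claim.

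I do not expect any genuine obstacle here; the entire argument is a transcription of Lemma \ref{C00x>delta}. The only point that requires a moment's care — and the step I would single out as the one to state explicitly — is the verification that $a_{\mathbb{F}_{C,+}^{0,0}}$ lies in $S^{-\infty}$ (rather than merely in $S^{2\sqrt{n^2/4-\lambda}}_{\frac13,0}$) once $x>\delta$; but this is precisely what the computation $|\theta_n|^N\hat{\mathsf{U}}^{\mathbb{F}_{C,+}^{0,0}} = x^{-\frac32 N} t^{\frac32 N} h^{\frac12 N}\le C_N$ at the end of the proof of Lemma \ref{lemssymFC+00} supplies, using that $|\theta_n|\sim|\theta|$ on the region $Z_0>\delta_2$. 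With that in hand the rest is routine.
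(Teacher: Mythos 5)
Your proof is correct and follows essentially the same route as the paper: restrict to $X_\delta$, invoke the $S^{-\infty}_{\frac13,0;\theta}$ statement from Lemma \ref{lemssymFC+00} to pass to an absolutely convergent integral, and handle $x$-derivatives via $\partial_x = x^{-1}t\partial_t$, which preserves the rapid decay in $\theta$. The paper's argument is just a more compressed version of exactly this.
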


\begin{proof}

Denote by $\mathsf{U}^{\mathbb{F}_{C,+}^{0,0}}_{\delta}$ the restrictions of $\mathsf{U}^{\mathbb{F}_{C,+}^{0,0}}$ to $\mathcal{D}'(X_{\delta})$. For $x >\delta$ we have :
   \begin{align*}
    ( \mathsf{U}^{\mathbb{F}_{C,+}^{0,0}}_{\delta} , \chi)
    &= \lim_{\epsilon \rightarrow 0} (2\pi)^{-n} \int\int\int \psi_{\mathbb{F}_{C,+}^{0,0}} \hat{u}_x \chi(x,y) \varpi(\epsilon\theta)  e^{iy\cdot \theta} \, dx\,dy\, d\theta\\
     &= \lim_{\epsilon \rightarrow 0} (2\pi)^{-n} \int\int\int a_{\mathbb{F}_{C,+}^{0,0}} \chi(x,y) \varpi(\epsilon\theta)  e^{iy\cdot \theta} \,dx\,dy\, d\theta
    \end{align*}
 From lemma (\ref{lemssymFC+00})  on $x> \delta$  we have $a_{\mathbb{F}_{C,+}^{0,0}}\in S^{-\infty}_{\tfrac{1}{3},0}$
hence $\mathsf{U}^{\mathbb{F}_{C,+}^{0,0}}_{\delta}, D_y^{\beta} \mathsf{U}^{\mathbb{F}_{C,+}^{0,0}}_{\delta} \in L^{\infty}$.
  The same situation remains when we consider $\partial_x \mathsf{U}^{\mathbb{F}_{C,+}^{0,0}}_{\delta}$.  Since $\partial_x = x^{-1} t\partial_t$, on $x >\delta$, $t\partial_t ( a_{\mathbb{F}_{C,+}^{0,0}}) $ still rapidly decreases in $\theta$.

\end{proof}

\begin{lemma}\label{conormalC+00}
$ \mathsf{U}^{\mathbb{F}_{C,+}^{0,0}} \in x^{s_-} \mathcal{C}^{\infty}_{x,y} + x^{s_+} \mathcal{C}^{s_+} $ on $y_n \neq 0$.

\end{lemma}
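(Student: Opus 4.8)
The plan is to prove Lemma \ref{conormalC+00} by essentially repeating the argument already used for the analogous Lemma \ref{conormalFC00} on region $\mathbb{F}_{C,0}^0$, since the structure is identical: we have written $\mathsf{U}^{\mathbb{F}_{C,+}^{0,0}}$ as $\sum_{\pm} x^{s_{\pm}}$ times an oscillatory integral whose symbol $a_{\mathbb{F}_{C,+}^{0,0};\pm}$ has the bounds recorded in Lemma \ref{lemssymFC+00}, and the phase is simply $y\cdot\theta$ (there is no oscillatory factor on this region). First I would fix $\chi(x,y)\in\mathcal{C}^{\infty}_0$ with $\supp\chi$ contained in either $\{y_n>0\}$ or $\{y_n<0\}$, so that the $n$-th component of $d_{\theta}(y\cdot\theta)$, namely $y_n$, is bounded away from zero on $\supp\chi\times\RR^n$. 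Hence $\lvert d_\theta(y\cdot\theta)\rvert^2$ is bounded below there, and the first-order operator $L$ with $^tL = \tfrac{1}{\lvert y\rvert^2}\sum y_j\,\tfrac{\partial}{i\partial\theta_j}$ satisfies $^tL\,e^{iy\cdot\theta}=e^{iy\cdot\theta}$.

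Next I would carry out $k$-fold integration by parts in $\theta$ to move $L^k$ onto $a_{\mathbb{F}_{C,+}^{0,0};\pm}\varpi(\epsilon\theta)$, giving
$$(\mathsf{U}^{\mathbb{F}_{C,+}^{0,0}},\chi) = \lim_{\epsilon\to 0}\sum_{\pm}(2\pi)^{-n}x^{s_{\pm}}\int\!\!\int\!\!\int L^k(a_{\mathbb{F}_{C,+}^{0,0};\pm}\varpi)\,\chi\,e^{iy\cdot\theta}\,dx\,dy\,d\theta.$$
By Lemma \ref{lemssymFC+00}(1), $\lvert L^k a_{\mathbb{F}_{C,+}^{0,0};\pm}\rvert\leq C_k\lvert\theta\rvert^{-k/3+2\sqrt{n^2/4-\lambda}}$ with $C_k$ independent of $x$, so for $k>3(n+2\sqrt{n^2/4-\lambda})$ the $\theta$-integrand is in $L^1(\RR^n)$ uniformly in $\epsilon$; dominated convergence lets me pass $\epsilon\to 0$ under the integral and identify $\mathsf{U}^{\mathbb{F}_{C,+}^{0,0}}=\sum_{\pm}x^{s_{\pm}}\mathsf{U}^{\mathbb{F}_{C,+}^{0,0};\pm}$ with $\mathsf{U}^{\mathbb{F}_{C,+}^{0,0};\pm}\in L^{\infty}(x,y)$. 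Then, exactly as in the proof of Lemma \ref{conormalFC00}, I would handle $D_y^{\beta}$, $(xD_x)^{l}$ (using $\partial_x = x^{-1}t\partial_t$, which is accounted for by the extra factor $\lvert\theta\rvert^{m}$ in the symbol bound and by the fact that $L^k a$ remains a symbol of the same type after applying $xD_x$), and mixed derivatives $D_y^{\alpha}(xD_x)^{l}$, in each case choosing $k$ large enough — $k>3(n+2\sqrt{n^2/4-\lambda}+l+\lvert\alpha\rvert)$ — that the corresponding $\theta$-integrand is integrable and the resulting function is bounded on $\{x>0\}\times\RR^n$, with constants independent of $x$. Since this holds for every $\alpha$ and $l$, each $\mathsf{U}^{\mathbb{F}_{C,+}^{0,0};\pm}$ lies in $\mathcal{C}^{\infty}_{x,y}$, and therefore $\mathsf{U}^{\mathbb{F}_{C,+}^{0,0}}\in x^{s_-}\mathcal{C}^{\infty}_{x,y}+x^{s_+}\mathcal{C}^{\infty}_{x,y}$ on $y_n\neq 0$, which is the claim. (Note the statement's `$\mathcal{C}^{s_+}$' should read `$\mathcal{C}^{\infty}_{x,y}$'.)

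The argument is almost entirely routine once Lemma \ref{lemssymFC+00} is in hand; the one point that requires a little care — and which I'd regard as the main (minor) obstacle — is bookkeeping the action of $x\partial_x = t\partial_t$. Because $x\partial_x$ acting on $a_{\mathbb{F}_{C,+}^{0,0}}$ produces a function still smooth in $(t,h)$ with the same symbol order (this is built into the derivative formulas $x\partial_x = t\partial_t$ listed before Lemma \ref{lemssymFC+00} and into the bound in Lemma \ref{lemssymFC+00}(1)), the $(xD_x)^l$-derivatives cost only a polynomial power of $\lvert\theta\rvert$, which can always be absorbed by increasing $k$; there is no genuine loss. The key structural inputs one must not lose sight of are: (i) $C_k$ in the symbol estimates is \emph{independent of $x$}, which is what gives uniform $L^\infty$ bounds down to $x=0$; and (ii) on the conic support here $\lvert\theta_n\rvert$ is comparable to $\lvert\theta\rvert$, so the growth exponent $2\sqrt{n^2/4-\lambda}$ is a fixed finite number and the integrability threshold for $k$ is finite. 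With these in place the proof goes through verbatim as for Lemma \ref{conormalFC00}, so I would simply write ``the proof is identical to that of Lemma \ref{conormalFC00}, using Lemma \ref{lemssymFC+00} in place of Lemma \ref{FC00}'' and indicate the one changed exponent.
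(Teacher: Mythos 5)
Your proposal is correct and follows essentially the same route as the paper's own proof: the paper likewise reduces Lemma \ref{conormalC+00} to the argument of Lemma \ref{conormalFC00}, using the non-stationary phase operator $L$ with $^tL=\tfrac{1}{\lvert y\rvert^2}\sum y_j\tfrac{\partial}{i\partial\theta_j}$, the $x$-uniform symbol bounds of Lemma \ref{lemssymFC+00}, and the same choices of $k$ to handle $D_y^\beta$, $D_x^l$ and mixed derivatives. Your remark that the stated $\mathcal{C}^{s_+}$ should read $\mathcal{C}^{\infty}_{x,y}$ is also consistent with the paper's proof and conclusion.
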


\begin{proof}
 The proof is the same that for Lemma \ref{conormalFC00}. Consider $\chi(x,y)\in \mathcal{C}^{\infty}_{(x,y)}$ with either $\supp \chi \subset \{ y_n < 0\}$ or $\supp \chi \subset \{ y_n > 0\}$
 \begin{align*}
    ( \mathsf{U}^{\mathbb{F}_{C,+}^{0,0}} , \chi)
    &= \lim_{\epsilon \rightarrow 0} (2\pi)^{-n} \int\int\int \psi_{\mathbb{F}_{C,+}^{0,0}}\hat{u}_x  \chi(x,y) \varpi(\epsilon\theta)  e^{iy\cdot \theta} dx dy d\theta\\
    &=  \lim_{\epsilon \rightarrow 0} (2\pi)^{-n} \int\int\int \sum_{\pm} x^{s_{\pm}} a_{\mathbb{F}_{C,+}^{0,0};\pm}  \varpi(\epsilon\theta) \chi e^{iy\cdot\theta} dx dy d\theta
 \end{align*}
    
     $\lvert d_{\theta} (y\cdot \theta)\rvert^2$ is bounded away from zero on region currently considered of $\chi \times \RR^n$ since the $n$-th component of the phase function,  which is $y_n$, is never zero on this region currently considered. Consider a first order differential operator $L$ whose formal adjoint is :
 $$^t L = \dfrac{1}{\lvert y\rvert^2} \sum y_j \dfrac{\partial}{i\partial \theta_j}
 \Rightarrow L e^{iy\cdot \theta} = e^{iy\cdot\theta}$$
 Do $k$-fold integration by parts to give :
   \begin{equation}\label{eqnF+00}
   ( \mathsf{U}^{\mathbb{F}_{C,+}^{0,0}}  , \chi)
   =  \lim_{\epsilon \rightarrow 0} \sum_{\pm} (2\pi)^{-n} x^{s_{\pm}} \int\int\int L^k( a_{\mathbb{F}_{C,+}^{0,0};\pm}  \varpi) \chi e^{iy\cdot\theta} \,dx \,dy \,d\theta
   \end{equation}
 
   \begin{enumerate}
   \item For $k > 3( n + 2 \sqrt{\tfrac{n^2}{4}-\lambda}) $, $ \theta \mapsto L^k_{\mathbb{F}_{C,+}^{\infty}} a_{\mathbb{F}_{C,+}^{\infty}} \in L^1(\RR^n)$ by Lemma \ref{lemssymFC+00}. We can let $\epsilon\rightarrow 0$ under the integral sign in (\ref{eqnF+00}) to obtain:
 $$\mathsf{U}^{\mathbb{F}_{C,+}^{0,0}}  = \sum_{\pm} (2\pi)^{-n} x^{s_{\pm}}\int  (L^ka_{\mathbb{F}_{C,+}^{0,0};\pm}  ) e^{i y \cdot\theta} \, d\theta = \sum_{\pm} x^{s_{\pm}}\mathsf{U}^{\mathbb{F}_{C,+}^{0,0};\pm} $$
By Lemma \ref{lemssymFC+00}, $L^ka_{\mathbb{F}_{C,+}^{0,0};\pm} \lesssim \lvert\theta\rvert^{-\tfrac{1}{3}k+2 \sqrt{\tfrac{n^2}{4}-\lambda}+l}$. With the current choice of $k$ we have:
  $$ \mathsf{U}^{\mathbb{F}_{C,+}^{0,0};\pm} \in L^{\infty}_{x,y}$$

 \item Fix $k$ with $k > 3( n  + \lvert\beta\rvert+ 2 \sqrt{\tfrac{n^2}{4}-\lambda})$ in (\ref{eqnF+00}). Let $\epsilon\rightarrow 0$ under the integral sign to obtain:
 $$\mathsf{U}^{\mathbb{F}_{C,+}^{0,0}}  = \sum_{\pm}(2\pi)^{-n} x^{s_{\pm}}\int L^k(a_{\mathbb{F}_{C,+}^{0,0};\pm} ) e^{i y \cdot\theta} \, d\theta = \sum_{\pm} x^{s_{\pm}}\mathsf{U}^{\mathbb{F}_{C,+}^{0,0};\pm} $$
 Consider $D_y^{\beta} \mathsf{U}^{\mathbb{F}_{C,+}^{0,0};\pm} $. 
 $$D_y^{\beta} \mathsf{U}^{\mathbb{F}_{C,+}^{0,0};\pm}  = (2\pi)^{-n} \int   L^k(a_{\mathbb{F}_{C,+}^{0,0};\pm} ) \theta^{\beta} e^{i y \cdot\theta} \, d\theta$$
$$\Rightarrow \lvert D_y^{\beta}  \mathsf{U}^{\mathbb{F}_{C,+}^{0,0}} \rvert \leq 
(2\pi)^{-n} \int \lvert L^k(a_{\mathbb{F}_{C,+}^{0,0};\pm} ) \theta^{\beta} \rvert \, d\theta
\leq (2\pi)^{-n} \int_{\lvert\theta\rvert\geq 1} \lvert\theta\rvert^{-\tfrac{1}{3}k+2 \sqrt{\tfrac{n^2}{4}-\lambda}+  \lvert\beta\rvert}\, d\theta$$
With the current choice of $k$, $\theta \mapsto L^k(a_{\mathbb{F}_{C,+}^{0,0}} ) \theta^{\beta} \in L^1$ by lemma (\ref{lemssymFC+00}), so we have:
$$D_y^{\beta}  \mathsf{U}^{\mathbb{F}_{C,+}^{0,0};\pm}  \in L^{\infty}_{x,y}$$

\item Fix $k$ with $k > 3( n +2 \sqrt{\tfrac{n^2}{4}-\lambda}+l) $ in (\ref{eqnF+00})
 $$\mathsf{U}^{\mathbb{F}_{C,+}^{0,0}}  =\sum_{\pm} (2\pi)^{-n} x^{s_{\pm}}\int L^k(a_{\mathbb{F}_{C,+}^{0,0}} ) e^{i y \cdot\theta} \, d\theta = \sum_{\pm} x^{s_{\pm}} \mathsf{U}^{\mathbb{F}_{C,+}^{0,0};\pm}$$
Consider $D_x^l \mathsf{U}^{\mathbb{F}_{C,+}^{0,0};\pm} $:
$$D_x^l \mathsf{U}^{\mathbb{F}_{C,+}^{0,0}}  = (2\pi)^{-n} \int ( D_x^l L^ka_{\mathbb{F}_{C,+}^{0,0}} ) e^{i y \cdot\theta} \, d\theta$$
 From lemma (\ref{lemssymFC+00}), $D_x^l L^ka_{\mathbb{F}_{C,+}^{0,0};\pm} \lesssim \lvert\theta\rvert^{-\tfrac{1}{3}k+2 \sqrt{\tfrac{n^2}{4}-\lambda}+l}$. With the current choice of $k$, we have:
 $$D_x^l  \mathsf{U}^{\mathbb{F}_{C,+}^{0,0};\pm} \in L^{\infty}.$$

\item Same argument is used to deal with mixed derivatives $D_y^{\alpha} D_x^l$. In this case one needs to choose $k$ satisfying 
$k > 3(n +2\sqrt{\tfrac{n^2}{4}-\lambda} + l + \lvert\alpha\vert)$ so that
$$D_y^{\alpha} D_x^l\mathsf{U}^{\mathbb{F}_{C,+}^{0,0};\pm} \in L^{\infty}_{x,y}$$

\item We have shown that $ \mathsf{U}^{\mathbb{F}_{C,+}^{0,0}}$ can be rewritten as $\sum_{\pm} x^{s_{\pm}}  \mathsf{U}^{\mathbb{F}_{C,+}^{0,0};\pm}$ where $D_y^{\alpha} D_x^l\mathsf{U}^{\mathbb{F}_{C,+}^{0,0};\pm} \in L^{\infty}_{x,y}$ for any $\alpha$ and $l$. This means:
$$\mathsf{U}^{\mathbb{F}_{C,+}^{0,0};\pm} \in x^{s_-}\mathcal{C}^{\infty}_{x,y} + x^{s_+}\mathcal{C}^{\infty}_{x,y}$$

\end{enumerate}

\end{proof}

\

\subsubsection{For subregion $\mathbb{F}_{C,+}^{\infty}$}\
 \begin{align*}
     ( \mathsf{U}^{\mathbb{F}_{C,+}^{\infty}} , \chi)
    &= \lim_{\epsilon \rightarrow 0} (2\pi)^{-n} \int\int\int  \psi_{\mathbb{F}_{C,+}^{\infty}} \hat{u}_x \chi(x,y) \varpi(\epsilon\theta)  e^{iy\cdot \theta} \,dx \,dy\, d\theta\\
    &=  \lim_{\epsilon \rightarrow 0} (2\pi)^{-n} \int\int\int   a_{\mathbb{F}_{C,+}^{\infty}} \varpi \chi e^{iy\cdot\theta+i\phi} \,dx \,dy \, d\theta
    \end{align*}
 
 We list the properties of $a_{\mathbb{F}_{C,+}^{\infty}}$ that will be of later use:
    \begin{lemma}\label{F+inf}\indent\par\noindent
  \begin{enumerate}
  \item $L_{\mathbb{F}_{C,+}^{\infty}}$ whose formal adjoint is $   ^t L_{\mathbb{F}_{C,+}^{\infty}} = \lvert d_{\theta}(y\cdot\theta + \phi))\rvert^{-2} \sum_{j=1}^n \tfrac{\partial (y\cdot\theta+\phi)}{\partial \theta_j} \partial_{\theta_j}$, $\phi = - \tfrac{2}{3}(Z^{3/2} - Z_0^{3/2})\sgn\theta_n$, then: 
 $$ (xD_x)^l L_{\mathbb{F}_{C,+}^{\infty}} ^k a_{\mathbb{F}_{C,+}^{\infty}} \leq\tilde{C}_k\lvert\theta\rvert^{-\tfrac{1}{3}k+ \sqrt{\tfrac{n^2}{4}-\lambda}}$$
 
 \item On $x >\delta$, $a_{\mathbb{F}_{C,+}^{\infty}} \in S^{ \sqrt{\tfrac{n^2}{4}-\lambda}}_{\tfrac{1}{3},0; \theta}$.
\end{enumerate}
  \end{lemma}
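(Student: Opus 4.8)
The plan is to proceed exactly as in the proofs of Lemma \ref{FC0in} and Lemma \ref{lemssymFC+00}: write down the explicit form of $\hat u_x$ on the piece cut out by $\psi_{\mathbb{F}_{C,+}^{\infty}}$, extract $a_{\mathbb{F}_{C,+}^{\infty}}$ by stripping off the oscillatory factor $e^{-i\phi}$, identify the bounded projective coordinates in which the remaining amplitude is smooth, and then bound its derivatives by a change of variables.

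First I would recall from \eqref{slnInsing} together with the asymptotics \eqref{asymptotics} that on this region
\[
\psi_{\mathbb{F}_{C,+}^{\infty}}\hat u_x \;=\; e^{-i\phi}\,\phi_{\text{correction}}\,x^{\tfrac n2}\,\rho_{\mathbb{F}_C^{(4)}}^{1/2}\,\rho_{\mathbb{F}_C^{(3)}}^{\tilde\gamma_{\mathfrak{B}_1}+\tfrac12}\,\hat{\mathsf V}^{\mathbb{F}_{C,+}^{\infty}},\qquad\tilde\gamma_{\mathfrak{B}_1}=0,
\]
with $\phi=-\tfrac23(Z^{3/2}-Z_0^{3/2})\sgn\theta_n$, $\phi_{\text{correction}}\sim\big(\lvert\theta_n\rvert(1-\lvert\hat\theta'\rvert^2)^{1/2}\big)^{\sqrt{n^2/4-\lambda}}$ and $\hat{\mathsf V}^{\mathbb{F}_{C,+}^{\infty}}\in\mathcal C^{\infty}(\mathbb{F}_{C,+}^{\infty})$. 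On $Z_0>\delta_2$ one has $\lvert\theta_n\rvert\sim\lvert\theta\rvert$, $1-\lvert\hat\theta'\rvert^2=Z_0\lvert\theta_n\rvert^{-2/3}\in(0,1]$, $\sigma=x^{3/2}(1-\lvert\hat\theta'\rvert^2)^{-3/2}$, $z=\sigma^{2/3}=x(1-\lvert\hat\theta'\rvert^2)^{-1}$ and $Z=\lvert\theta_n\rvert^{2/3}(1+x-\lvert\hat\theta'\rvert^2)$; thus $\rho_{\mathbb{F}_C^{(4)}}=\sigma^{-1}$, a multiple of $h=Z_0^{-3/2}$ serving as $\rho_{\mathbb{F}_C^{(3)}}$, and the fibre coordinate $z^{-1/2}$ are all bounded on $\mathbb{F}_{C,+}^{\infty}$. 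Calling $a_{\mathbb{F}_{C,+}^{\infty}}$ the product of everything after $e^{-i\phi}$, the crude bounds $\phi_{\text{correction}}\lesssim\lvert\theta\rvert^{\sqrt{n^2/4-\lambda}}$ (using $1-\lvert\hat\theta'\rvert^2\le1$), $x^{n/2}\rho_{\mathbb{F}_C^{(4)}}^{1/2}\lesssim1$ and $\hat{\mathsf V}^{\mathbb{F}_{C,+}^{\infty}}\in L^{\infty}$ give $\lvert a_{\mathbb{F}_{C,+}^{\infty}}\rvert\lesssim\lvert\theta\rvert^{\sqrt{n^2/4-\lambda}}$.

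Next I would express $\lvert\theta_n\rvert\partial_{\theta_n}$, $\lvert\theta_n\rvert\partial_{\theta_j}$ for $j>n$, and $x\partial_x$ in the coordinates $(z^{-1/2},Z_0,x)$, mirroring \eqref{derivativeFinf+}: each turns out to be a combination of the bounded vector fields $z^{-1/2}\partial_{z^{-1/2}}$, $Z_0\partial_{Z_0}=-\tfrac32h\partial_h$ and of multiplication by functions bounded on the region, with the coefficients in the $\partial_{\theta_j}$-formulas decaying like $\lvert\theta_n\rvert^{-1/3}$ (this $1/3$ being the homogeneity of the $\tfrac23$-homogeneous blow-up variable $Z_0$). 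Hence each $\partial_{\theta_j}$ applied to $a_{\mathbb{F}_{C,+}^{\infty}}$ gains $\lvert\theta\rvert^{-1/3}$, while $x\partial_x$ is of order $0$. The operator $L_{\mathbb{F}_{C,+}^{\infty}}$ is, up to the bounded factor $\lvert d_\theta(y\cdot\theta+\phi)\rvert^{-2}$, a first-order $\theta$-differential operator whose coefficients $\partial_{\theta_j}(y\cdot\theta+\phi)$ are smooth and $O(\lvert\theta\rvert^{1/3})$ on the cone $\lvert\theta_n\rvert\ge\lvert\theta'\rvert$ — here I would check, as in Friedlander's Theorem 6.1 in \cite{Fried01}, that $d_\theta\phi$ is smooth and homogeneous of degree $1/3$ there — so $L_{\mathbb{F}_{C,+}^{\infty}}^k$ improves decay by $\lvert\theta\rvert^{-k/3}$, giving
\[
\big\lvert(xD_x)^l L_{\mathbb{F}_{C,+}^{\infty}}^k a_{\mathbb{F}_{C,+}^{\infty}}\big\rvert\;\lesssim\;\lvert\theta\rvert^{-k/3+\sqrt{n^2/4-\lambda}}
\]
with constants independent of $x$, which is (1). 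For (2), on $x>\delta$ the amplitude is smooth in $x$ and $\partial_x=x^{-1}(x\partial_x)$ with $x^{-1}\le\delta^{-1}$ bounded, so the same bookkeeping with bare $\partial_{\theta_j}$ and $\partial_x$ yields $\lvert\partial_x^l\partial_\theta^\alpha a_{\mathbb{F}_{C,+}^{\infty}}\rvert\lesssim\lvert\theta\rvert^{\sqrt{n^2/4-\lambda}-\lvert\alpha\rvert/3}$, i.e. $a_{\mathbb{F}_{C,+}^{\infty}}\in S^{\sqrt{n^2/4-\lambda}}_{1/3,0;\theta}$.

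The step I expect to be the main obstacle is the change-of-variables bookkeeping — confirming that after removing $e^{-i\phi}$ the leftover amplitude really is a \emph{smooth} (not merely conormal) function of the bounded projective coordinates near $\mathbb{F}_C^{(3)}$ and $\mathbb{F}_C^{(4)}$, which forces one to reconcile $z=\sigma^{2/3}$, the $\chi_+$-normalization $z=x(1-\lvert\hat\theta'\rvert^2)^{-1}$ and the oscillatory factor appearing in \eqref{asymptotics}, and verifying that $\theta_j$-derivatives of $\phi$ produce only the claimed $O(\lvert\theta\rvert^{1/3})$ growth while $x$-derivatives of $\phi$ — tamed by the $x\partial_x$ on the left-hand side of the estimate — produce $O(\lvert\theta\rvert)$. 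It is precisely here that the hypothesis $Z_0>\delta_2$ (rather than $\lvert Z_0\rvert$ bounded, as in Lemma \ref{FC0in}) is needed, to keep $h=Z_0^{-3/2}$ and $1-\lvert\hat\theta'\rvert^2$ under control on the region.
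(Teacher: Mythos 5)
Your proposal follows essentially the same route as the paper's proof: factor $\psi_{\mathbb{F}_{C,+}^{\infty}}\hat u_x$ into the oscillatory term times an amplitude, bound the amplitude by $\lvert\theta\rvert^{\sqrt{n^2/4-\lambda}}$ using boundedness of $h$, $\sigma^{-1}$ and $1-\lvert\hat{\theta}'\rvert^2$ on the region, and then express $\partial_{\theta_n}$, $\partial_{\theta_j}$ and $x\partial_x$ in bounded projective coordinates (the paper uses $h$ and $\sigma^{-1/6}$; your $Z_0$, $z^{-1/2}$ are equivalent) so that each $\theta$-derivative of the amplitude gains $\lvert\theta\rvert^{-1/3}$ while $x\partial_x$ costs nothing; part (2) is the same observation with $\partial_x=x^{-1}(x\partial_x)$ on $x>\delta$. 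This matches the paper's argument.

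One correction to your bookkeeping: $\phi=\tfrac{2}{3}(Z^{3/2}-Z_0^{3/2})\sgn\theta_n$ is homogeneous of degree $1$ in $\theta$ (since $Z$ and $Z_0$ are homogeneous of degree $\tfrac{2}{3}$), so $d_\theta\phi$ is homogeneous of degree $0$, not $\tfrac{1}{3}$; the coefficients $\partial_{\theta_j}(y\cdot\theta+\phi)$ of $L_{\mathbb{F}_{C,+}^{\infty}}$ are therefore bounded on the relevant support, exactly as in the explicit computation appearing in the proof of Lemma \ref{y<0C+inf}. Note that if those coefficients really were $O(\lvert\theta\rvert^{1/3})$ as you assert, your own accounting would yield no net gain per application of $L_{\mathbb{F}_{C,+}^{\infty}}$ and the claimed $\lvert\theta\rvert^{-k/3}$ improvement would not follow; with the correct degree-$0$ coefficients the gain of $\lvert\theta\rvert^{-1/3}$ per application comes entirely from $\partial_{\theta_j}$ falling on the amplitude (terms where derivatives fall on the coefficients are even better), and then part (1), with constants independent of $x$, follows exactly as in the paper.
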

  
\begin{proof}\indent\par\noindent
\begin{enumerate}
\item Recall the structure of the approximate solution from the construction
  $$\psi_{\mathbb{F}_{C,+}^{\infty}} \hat{u}_x = \big(\lvert\theta_n\rvert (1 - \lvert\hat{\theta}'\rvert^2)^{1/2}\big)^{\sqrt{\tfrac{n^2}{4}-\lambda}}\hat{\mathsf{U}}^{\mathbb{F}_{C,+}^{\infty}}  $$
  $$ \hat{\mathsf{U}}^{\mathbb{F}_{C,+}^{\infty}}
  = \exp(i\phi) h^{\tilde{\gamma}_{\mathfrak{B}_1}+\tfrac{1}{2}} \sigma^{-\tfrac{1}{2}} 
  \hat{\mathsf{V}}^{\mathbb{F}_{C,+}^{\infty}}$$
  where $\hat{\mathsf{V}}^{\mathbb{F}_{C,+}^{\infty}} \sim \sum_j h^j A_j (\sigma^{-1/6})$ with $A_j$ smooth, $\sigma^{-\tfrac{1}{2}} = x^{-\tfrac{3}{4}} (1 -\lvert\hat{\theta}'\rvert^2)^{\tfrac{3}{4}} $. 
 On the region currently considered $h, \sigma^{-1}$ and $\lvert\hat{\theta}'\rvert$ are bounded, $\tilde{a} < 0$ , and $0 \leq 1 -\lvert\hat{\theta}'\rvert^2\leq 1$ we have
 $$ a_{\mathbb{F}_{C,+}^{\infty}} \leq \lvert\theta_n\rvert^{\sqrt{\tfrac{n^2}{4}-\lambda}}$$

  For estimate on the higher derivatives : from direct computation we have
 \begin{align*}
 \theta_n\partial_{\theta_n}&= \left(- 1 - 3h^{2/3} \lvert\theta_n\rvert^{2/3} \lvert\hat{\theta}'\rvert^2\right)h \partial_h + \tfrac{1}{2} h^{2/3} \lvert\theta_n\rvert^{2/3} \lvert\hat{\theta}'\rvert^2 \sigma^{-1/6} \partial_{\sigma^{-1/6}}\\
 \lvert \theta_n\rvert\partial_{\theta_j}
 &= -3 h^{2/3} \lvert\theta_n\rvert^{2/3} \lvert\hat{\theta_j}\rvert h\partial_h
  + \tfrac{1}{2} h^{2/3} \lvert\theta_n\rvert^{2/3} \hat{\theta}_j \sigma^{-1/6}\partial_{\sigma^{-1/6}}\\
  x\partial_x &= -\tfrac{3}{2}\sigma^{-1}\partial_{\sigma^{-1}} = -\tfrac{1}{4} \sigma^{-1/6} \partial_{\sigma^{-1/6}}
 \end{align*}
 Since $h, \sigma^{-1}$ are bounded (independent of $x$) we have: with $\tilde{C}_k $ is independent of $x$,
  $$ (xD_x)^l L^k(a_{F_{C,+}^{\infty}}) \leq \tilde{C}_k\lvert\theta\rvert^{-\tfrac{1}{3}k+ \sqrt{\tfrac{n^2}{4}-\lambda}}.$$ 
 
\item From the same computation, and on $x \neq 0$ $\partial_x = -  \tfrac{1}{4}x^{-1} \sigma^{-1/6} \partial_{\sigma^{-1/6}}$ we have  
$$a_{\mathbb{F}_{C,+}^{\infty}} \in S^{\sqrt{\tfrac{n^2}{4}-\lambda}}_{\tfrac{1}{3},0; \theta}$$
\end{enumerate}
 \end{proof}

\begin{lemma}\label{y<0C+inf}
    For $y_n <0$ $\mathsf{U}^{\mathbb{F}_{C,+}^{\infty}}$ is smooth and vanishes to infinite order  at $x = 0$.
\end{lemma}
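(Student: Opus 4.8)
The plan is to run, essentially verbatim, the oscillatory-integral argument already used for Lemma \ref{y<0C0inf}, now with the phase $y\cdot\theta+\phi$, $\phi=-\tfrac{2}{3}(Z^{3/2}-Z_0^{3/2})\sgn\theta_n$, and the symbol estimates from Lemma \ref{F+inf}. Starting from
$$(\mathsf{U}^{\mathbb{F}_{C,+}^{\infty}},\chi)=\lim_{\epsilon\to 0}(2\pi)^{-n}\iiint a_{\mathbb{F}_{C,+}^{\infty}}\varpi(\epsilon\theta)\,\chi(x,y)\,e^{i(y\cdot\theta+\phi)}\,dx\,dy\,d\theta$$
with $\supp\chi\subset\{y_n<0\}$, the first step is to verify that the phase is non-stationary in $\theta$ on $\supp\chi\times\RR^n$. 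Since on this region $Z=Z_0+x\lvert\theta_n\rvert^{2/3}$ with $x\ge 0$ and $Z_0\ge\delta_2>0$, one has $Z\ge Z_0>0$ and $\partial_{\theta_n}Z-\partial_{\theta_n}Z_0=\tfrac{2}{3}x\lvert\theta_n\rvert^{-1/3}\sgn\theta_n$, so that $Z^{1/2}\partial_{\theta_n}Z-Z_0^{1/2}\partial_{\theta_n}Z_0$ has the sign of $\sgn\theta_n$ and $\partial_{\theta_n}\phi\le 0$; hence $\partial_{\theta_n}(y\cdot\theta+\phi)=y_n+\partial_{\theta_n}\phi\le y_n<0$ there. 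Consequently $\lvert d_\theta(y\cdot\theta+\phi)\rvert$ is bounded below by $-y_n>0$ on $\supp\chi$, and $L_{\mathbb{F}_{C,+}^{\infty}}$ of Lemma \ref{F+inf} is a bona fide first-order differential operator with ${}^tL_{\mathbb{F}_{C,+}^{\infty}}e^{i(y\cdot\theta+\phi)}=e^{i(y\cdot\theta+\phi)}$.

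Next I would integrate by parts $k$ times in $\theta$ and use the bound $\lvert(xD_x)^lL_{\mathbb{F}_{C,+}^{\infty}}^k a_{\mathbb{F}_{C,+}^{\infty}}\rvert\le \tilde C_k\lvert\theta\rvert^{-k/3+\sqrt{n^2/4-\lambda}}$ from Lemma \ref{F+inf}, with $\tilde C_k$ independent of $x$. Taking $k$ large makes the $\theta$-integral converge absolutely, uniformly in $x$, so $\epsilon\to 0$ may be taken under the integral sign; this identifies $\mathsf{U}^{\mathbb{F}_{C,+}^{\infty}}$ with an explicit integral and gives $\mathsf{U}^{\mathbb{F}_{C,+}^{\infty}}\in L^{\infty}_{x,y}$. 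Inserting $\theta^\beta$ before integrating by parts handles $D_y^\beta$ once $k>3(n+\sqrt{n^2/4-\lambda}+\lvert\beta\rvert)$, and each application of $x\partial_x$ contributes at worst the extra factor $x\partial_x\phi=-x\lvert\theta_n\rvert(1+x-\lvert\hat{\theta}'\rvert^2)^{1/2}\lesssim\lvert\theta_n\rvert$ (since $x$ and $1+x-\lvert\hat{\theta}'\rvert^2$ are bounded on this region), again absorbed by enlarging $k$.

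For the infinite-order vanishing at $x=0$, the point is that on $\mathbb{F}_{C,+}^{\infty}$ the boundary defining function $\sigma^{-1}=z^{-3/2}=x^{-3/2}\big(\lvert\theta_n\rvert(1-\lvert\hat{\theta}'\rvert^2)^{1/2}\big)^{-3/2}$ is bounded, so $x^{-1}\lesssim\lvert\theta_n\rvert(1-\lvert\hat{\theta}'\rvert^2)^{1/2}\lesssim\lvert\theta_n\rvert$ with constant independent of $x$, and $x^{-N}$ is therefore a polynomial weight in $\theta$. Inserting $x^{-N}$ and choosing $k$ correspondingly large, the same integration-by-parts on $\{y_n<0\}$ yields $x^{-N}D_y^\alpha(xD_x)^\beta\mathsf{U}^{\mathbb{F}_{C,+}^{\infty}}\in L^{\infty}_{x,y}$ for all $N,\alpha,\beta$, which is precisely smoothness of $\mathsf{U}^{\mathbb{F}_{C,+}^{\infty}}$ together with vanishing to infinite order at $x=0$ on $\{y_n<0\}$.

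I do not anticipate a real obstacle: the argument is a transcription of Lemma \ref{y<0C0inf} with the symbol class and phase of the present region. The only step needing genuine (if short) verification is the sign computation of Step 1 — that $\partial_{\theta_n}\phi\le 0$, so $y_n<0$ forces the phase to be non-stationary. This is exactly where the hypothesis $Z_0\ge\delta_2$ is used: it guarantees $Z\ge Z_0>0$ and that the two Airy-type exponents $Z^{3/2}$ and $Z_0^{3/2}$ cannot conspire to cancel the contribution of $y_n$ in the $\theta_n$-derivative of the phase.
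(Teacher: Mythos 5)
Your proposal is correct and is essentially the paper's own proof of this lemma: non-stationarity of the phase in $\theta_n$ on $\{y_n<0\}$ (the paper writes $d_{\theta_n}(y\cdot\theta+\phi)$ out explicitly as $y_n$ minus a positive term), $k$-fold integration by parts with the bounds of Lemma \ref{F+inf}, control of $D_y^\beta$ and $(xD_x)^l$ by enlarging $k$, and a polynomial-in-$\lvert\theta\rvert$ bound on $x^{-N}$ to conclude infinite-order vanishing at $x=0$. The only (harmless) slip is that the quantity you write as $\sigma^{-1}=z^{-3/2}$, namely $x^{-3/2}\big(\lvert\theta_n\rvert(1-\lvert\hat{\theta}'\rvert^2)^{1/2}\big)^{-3/2}$, is actually $t^{-3/2}=(z/h)^{-3/2}$; since $h$ is bounded and $z\geq 1$ on $\mathbb{F}_{C,+}^{\infty}$ this is indeed bounded, so your bound $x^{-1}\lesssim\lvert\theta_n\rvert$ still holds (the paper instead uses $x^{-1}=\sigma^{-2/3}h^{2/3}\lvert\theta_n\rvert^{2/3}\lesssim\lvert\theta_n\rvert^{2/3}$).
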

\begin{proof}

 Consider $\chi(x,y)$ with $\supp \chi \subset \{ y_n < 0\}$
 \begin{align*}
    ( \mathsf{U}^{\mathbb{F}_{C,+}^{\infty}} , \chi)
    &= \lim_{\epsilon \rightarrow 0} (2\pi)^{-n} \int\int\int  \psi_{\mathbb{F}_{C,+}^{\infty}} \hat{u}_x \chi(x,y) \varpi(\epsilon\theta)  e^{iy\cdot \theta} dx dy d\theta\\
    &=  \lim_{\epsilon \rightarrow 0} (2\pi)^{-n} \int\int\int   a_{\mathbb{F}_{C,+}^{\infty}} \varpi \chi e^{iy\cdot\theta+i\phi} dx dy d\theta
   \end{align*}
    
     We have $\lvert d_{\theta} (y\cdot \theta)\rvert^2$ is bounded away from zero on region currently considered of $\chi \times \RR^n$ since the $n$-th component of the phase function is:  $y_n - \text{a negative term}$. This is seen as follows:
  \begin{align*}
  &y_n - \tfrac{2}{3}\partial_{\theta_n} ((Z^{3/2} - Z_0^{3/2})\sgn\theta_n)\\
  &=y_n -\tfrac{2}{3}\left[(1 + x-\lvert\hat{\theta}'\rvert^2)^{3/2} - (1 - \lvert\hat{\theta}'\rvert^2)^{3/2}\right]
   - \lvert\theta_n\rvert \left[  (1 + x-\lvert\hat{\theta}'\rvert^2)^{1/2} 2\lvert\hat{\theta}'\rvert^2 \lvert\theta_n\rvert^{-1}    - (1 -\lvert\hat{\theta}'\rvert^2)^{1/2} 2\lvert\hat{\theta}'\rvert^2 \lvert\theta_n\rvert^{-1}   \right]\\
   &= y_n -  \text{positive term}
  \end{align*}
 If $y_n < 0$ then the above expression is never zero. Denote $\bar{\phi} = y \cdot \theta + \phi$. We consider a first order differential operator $L_{\mathbb{F}_{C,+}^{\infty}}$ whose formal adjoint is:
$$^t L_{\mathbb{F}_{C,+}^{\infty}} = \lvert d_{\theta}(y\cdot\theta+ \phi)\rvert^{-2} \sum_{j=1}^n \dfrac{\partial (y\cdot\theta + \phi)}{\partial \theta_j} \partial_{\theta_j}$$
 thus $L_{\mathbb{F}_{C,+}^{\infty}} e^{iy\cdot \theta + i\phi} = e^{iy\cdot\theta+i\phi}$.
 Do $k$-fold integration by parts to give:
   \begin{equation}\label{eqnF+inf}
   ( \mathsf{U}^{\mathbb{F}_{C,+}^{\infty}} , \chi)
   =  \lim_{\epsilon \rightarrow 0} (2\pi)^{-n} \int\int\int L_{\mathbb{F}_{C,+}^{\infty}}^k( a_{\mathbb{F}_{C,+}^{\infty}} \varpi) x^{-\tfrac{1}{2}\tilde{a}}\chi e^{iy\cdot\theta} dx dy d\theta\end{equation}

  \begin{enumerate}
\item   Consider $k > 3( n + \sqrt{\tfrac{n^2}{4}-\lambda} )$ then $ \theta \mapsto L^k_{\mathbb{F}_{C,+}^{\infty}} a_{\mathbb{F}_{C,+}^{\infty}} \in L^1(\RR^n)$. We can let $\epsilon\rightarrow 0$ under the integral sign in (\ref{eqnF+inf}) to obtain
 $$\mathsf{U}^{F_{C,+}^{\infty}} = (2\pi)^{-n} \int L^k_{\mathbb{F}_{C,+}^{\infty}}(a_{\mathbb{F}_{C,+}^{\infty}}) e^{i y \cdot\theta+i\tilde{\phi}} \, d\theta$$
  $$\Rightarrow \lvert  \mathsf{U}^{\mathbb{F}_{C,+}^{\infty}}\rvert \leq (2\pi)^{-n} \int  \lvert L^k_{\mathbb{F}_{C,+}^{\infty}}(a_{\mathbb{F}_{C,+}^{\infty}})\rvert  \, d\theta
 \leq  C_k(2\pi)^{-n} \int_{\lvert\theta\rvert \geq 1} \lvert \theta\rvert^{-\tfrac{1}{3}k} \, d\theta $$
$$\Rightarrow \mathsf{U}^{\mathbb{F}_{C,+}^{\infty}}\in L^{\infty}_{x,y}$$

\item Consider $D_y^{\beta} \mathsf{U}^{\mathbb{F}_{C,+}^{\infty}}$. Fix $k$ with $k > 3( n+ \sqrt{\tfrac{n^2}{4}-\lambda}  + \lvert\beta\rvert)$ so
 $$\mathsf{U}^{\mathbb{F}_{C,+}^{\infty}} = (2\pi)^{-n} \int L^k_{\mathbb{F}_{C,+}^{\infty}}(a_{\mathbb{F}_{C,+}^{\infty}}) e^{i y \cdot\theta+i\tilde{\phi}} \, d\theta$$
$$\Rightarrow D_y^{\beta} \mathsf{U}^{\mathbb{F}_{C,+}^{\infty}} = (2\pi)^{-n}\int L^k_{\mathbb{F}_{C,+}^{\infty}}(a_{\mathbb{F}_{C,+}^{\infty}}) \theta^{\beta} e^{i y \cdot\theta+i\tilde{\phi}} \, d\theta$$
$$\Rightarrow \lvert D_y^{\beta} \mathsf{U}^{\mathbb{F}_{C,+}^{\infty}}\rvert \leq 
(2\pi)^{-n} \int \lvert L^k_{\mathbb{F}_{C,+}^{\infty}}(a_{\mathbb{F}_{C,+}^{\infty}}) \theta^{\beta} \rvert \, d\theta
\leq (2\pi)^{-n} \int_{\lvert\theta\rvert\geq 1} \lvert\theta\rvert^{-\tfrac{1}{3}k+\sqrt{\tfrac{n^2}{4}-\lambda} + \lvert\beta\rvert}\, d\theta$$
With the choice of $k$,  $\theta \mapsto L^k_{\mathbb{F}_{C,+}^{\infty}}(a_{\mathbb{F}_{C,+}^{\infty}}) \theta^{\beta} \in L^1$. Since the constant is independent of $x$
$$D_y^{\beta} \mathsf{U}^{\mathbb{F}_{C,+}^{\infty}} \in L^{\infty}_{x,y}$$

\item Consider $xD_x \mathsf{U}^{\mathbb{F}_{C,+}^{\infty}}$. Fix $k$ with $k > 3( n + \sqrt{\tfrac{n^2}{4}-\lambda} + 1)$
 $$\mathsf{U}^{\mathbb{F}_{C,+}^{\infty}} = (2\pi)^{-n}\int L_{\mathbb{F}_{C,+}^{\infty}}^k(a_{\mathbb{F}_{C,+}^{\infty}}) e^{i y \cdot\theta+i\tilde{\phi}} \, d\theta$$
$$\Rightarrow xD_x \mathsf{U}^{\mathbb{F}_{C,+}^{\infty}} = (2\pi)^{-n} \int \left[ xD_x L_{\mathbb{F}_{C,+}^{\infty}}^k a_{\mathbb{F}_{C,+}^{\infty}} + (L^k a_{\mathbb{F}_{C,+}^{\infty}}) xD_x \phi \right] e^{i y \cdot\theta+i\phi} \, d\theta$$

$$x\partial_x \tfrac{2}{3}(Z^{3/2}-Z_0^{3/2})  = x \lvert \theta_n\rvert ( 1 + x - \lvert\hat{\theta}'\rvert^2)^{1/2}$$

Since $ \lvert\hat{\theta}'\rvert < 1$ and $x <C$  we have $\lvert x \partial_x \tilde{\phi} \rvert \leq C \lvert\theta_n\rvert$ where $C$ is independent of $x$. This together with the bound on $ xD_x L^ka_{\mathbb{F}_{C,+}^{\infty}} \lesssim \tilde{C}_k\lvert\theta\rvert^{-\tfrac{1}{3}k+ \sqrt{\tfrac{n^2}{4}-\lambda}}$ given by Lemma \ref{F+inf}, and the current choice $k$, we have :
  $$xD_x  \mathsf{U}^{\mathbb{F}_{C,+}^{\infty}} \in L^{\infty}$$
Similarly reasoning applies for $(xD_x)^N \mathsf{U}^{\mathbb{F}_{C,+}^{\infty}}$. In that case we need to choose $k$ such that $k > 3( n + \sqrt{\tfrac{n^2}{4}-\lambda}+ N)$ so
$$(xD_x)^N L^k(a_{\mathbb{F}_{C,+}^{\infty}}) \leq \tilde{C}_N\lvert\theta\rvert^{-\tfrac{1}{3}k+ \sqrt{\tfrac{n^2}{4}-\lambda} +N}$$
$$\Rightarrow (xD_x)^N L^k(a_{\mathbb{F}_{C,+}^{\infty}}) \in L^{\infty}(x,y)$$

\item Consider $x^{-N} \mathsf{U}^{\mathbb{F}_{C,+}^{\infty}}$ : We have
$$x^{-1} =  \sigma^{-2/3} h^{2/3} \lvert\theta_n\rvert^{2/3}$$
Since $h < \delta_2^{3/2}$ and $0 \leq \sigma^{-1} < 1$ we have for $C_N$ is independent of $x$
$$x^{-N} \leq C_N \lvert\theta_n\rvert^{\tfrac{2}{3}N}$$
Fix $k$ with $k > 3( n +  \sqrt{\tfrac{n^2}{4}-\lambda} +\tfrac{2}{3}N)$ . 
 $$\mathsf{U}^{\mathbb{F}_{C,+}^{\infty}} = (2\pi)^{-n} \int  L^k_{\mathbb{F}_{C,+}^{\infty}}(a_{\mathbb{F}_{C,+}^{\infty}}) e^{i y \cdot\theta+i\tilde{\phi}} \, d\theta$$
$$\Rightarrow x^{-N} \mathsf{U}^{{\mathbb{F}_{C,+}^{\infty}}} = (2\pi)^{-n}  \int L^k_{{\mathbb{F}_{C,+}^{\infty}}}(a_{\mathbb{F}_{C,+}^{\infty}}) e^{i y \cdot\theta+i\tilde{\phi}} \, d\theta$$
$$\Rightarrow \lvert x^{-N} \mathsf{U}^{\mathbb{F}_{C,+}^{\infty}}\rvert \leq 
(2\pi)^{-n} \int \lvert x^{-N} L^k_{\mathbb{F}_{C,+}^{\infty}}(a_{\mathbb{F}_{C,+}^{\infty}})  \rvert \, d\theta
\leq \tilde{C}_N (2\pi)^{-n} \int_{\lvert\theta\rvert\geq 1} \lvert\theta\rvert^{-\tfrac{1}{3}k + \sqrt{\tfrac{n^2}{4}-\lambda}+\tfrac{2}{3}N}\, d\theta$$
with the choice of $k$ then have
$$ x^{-N} \mathsf{U}^{\mathbb{F}_{C,+}^{\infty}}\in L^{\infty}_{x,y}$$
The same argument goes through if we replace $\mathsf{U}^{\mathbb{F}_{C,+}^{\infty}}$ by $D_y^{\alpha}(xD_x)^N\mathsf{U}^{\mathbb{F}_{C,+}^{\infty}}$. So in fact from here we have
$$x^{-N}D_y^{\alpha} (xD_x)^{\beta} \mathsf{U}^{\mathbb{F}_{C,+}^{\infty}}\in L^{\infty}_{x,y} $$

\end{enumerate}
\end{proof}

\begin{lemma}[Properties of $\mathsf{U}^{\mathbb{F}_{C,+}^{\infty}}$  on $y_n > 0$]\label{y>0C+inf}\

 For every $\beta>0$, there exists $\alpha > 0$ such that $\mathsf{U}^{\mathbb{F}_{C,+}^{\infty}}$ is smooth and vanishes to infinite order at $x = 0$ on $\{y_n >\beta,\ x < \alpha^2\}$. In fact $\alpha$ is given by (\ref{inequalityonalphaC+inf}) explicitly.
 
\end{lemma}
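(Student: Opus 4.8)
The plan is to argue exactly as in Lemma~\ref{y<0C+inf} (and Lemma~\ref{cornormalFC0inf}), the only genuinely new ingredient being a non-stationary phase estimate valid on $\{y_n>\beta\}$; once that is available, the integration-by-parts bootstrap is verbatim. First I would fix $\beta>0$ and take $\chi\in\mathcal C^\infty_c$ with $\supp\chi\subset\{y_n>\beta,\ x<\alpha^2\}$, the radius $\alpha>0$ to be chosen below. On the conic support of $a_{\mathbb{F}_{C,+}^{\infty}}$ one has $\lvert\theta_n\rvert\ge\lvert\theta'\rvert>0$, so I may work on $\theta_n>0$ and set $a:=1-\lvert\hat\theta'\rvert^2\in[0,1)$; on $\mathbb{F}_{C,+}$ moreover $Z_0\ge\delta_2>0$, so $Z_0^{3/2}=\lvert\theta_n\rvert\,a^{3/2}$ is a smooth degree-one function of $\theta$ there and $\phi=-\tfrac23(Z^{3/2}-Z_0^{3/2})\sgn\theta_n$ is a legitimate phase. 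Computing $d_{\theta_n}(y\cdot\theta+\phi)$ as in Lemma~\ref{y<0C+inf}, the $\lvert\theta_n\rvert$ and $\lvert\theta_n\rvert^{-1}$ factors cancel and one gets
\[
  d_{\theta_n}(y\cdot\theta+\phi)=y_n-\tfrac23\big[(a+x)^{3/2}-a^{3/2}\big]-2\lvert\hat\theta'\rvert^2\big[(a+x)^{1/2}-a^{1/2}\big].
\]
For $0\le a\le1$ and $0\le x<\alpha^2$ the mean value theorem gives $(a+x)^{3/2}-a^{3/2}\le(1+x)^{3/2}-1\le\tfrac32(1+\alpha^2)^{1/2}\alpha^2$, while $a\mapsto(a+x)^{1/2}-a^{1/2}$ is decreasing on $(0,\infty)$, so $(a+x)^{1/2}-a^{1/2}\le x^{1/2}\le\alpha$; hence the last two terms above are $O(\alpha)$ \emph{uniformly in $\theta$}.

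Choosing $\alpha$ small enough that this explicit $O(\alpha)$ bound is $<\beta/2$ — which is precisely inequality~(\ref{inequalityonalphaC+inf}) — yields $\lvert d_\theta(y\cdot\theta+\phi)\rvert\ge\lvert d_{\theta_n}(y\cdot\theta+\phi)\rvert>\beta-\tfrac\beta2=\tfrac\beta2>0$ on $\supp\chi\times\{\lvert\theta\rvert\ge1\}$. With the phase non-stationary, I would introduce $L_{\mathbb{F}_{C,+}^{\infty}}$ with $^tL_{\mathbb{F}_{C,+}^{\infty}}=\chi\,\lvert d_\theta(y\cdot\theta+\phi)\rvert^{-2}\sum_{j=1}^n\partial_{\theta_j}(y\cdot\theta+\phi)\,\partial_{\theta_j}$, so that $^tL_{\mathbb{F}_{C,+}^{\infty}}e^{i(y\cdot\theta+\phi)}=\chi\,e^{i(y\cdot\theta+\phi)}$, and integrate by parts $k$ times in $\theta$ in the oscillatory integral for $(\mathsf U^{\mathbb{F}_{C,+}^{\infty}},\chi)$. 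By Lemma~\ref{F+inf}(1), $(xD_x)^l L_{\mathbb{F}_{C,+}^{\infty}}^k a_{\mathbb{F}_{C,+}^{\infty}}\lesssim\lvert\theta\rvert^{-k/3+\sqrt{n^2/4-\lambda}}$ with constant independent of $x$, and on $\mathbb{F}_{C,+}^{\infty}$ one has $x^{-1}=\sigma^{-2/3}h^{2/3}\lvert\theta_n\rvert^{2/3}\le C\lvert\theta_n\rvert^{2/3}$ since $h$ and $\sigma^{-1}$ are bounded there — exactly the ingredients already used in Lemma~\ref{y<0C+inf}.

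The remainder of the proof is then the same as Lemma~\ref{y<0C+inf}: given $N$, a $y$-multi-index $\alpha'$, and an integer $\gamma\ge0$, picking $k>3\big(n+\sqrt{n^2/4-\lambda}+\tfrac23N+\lvert\alpha'\rvert+\gamma\big)$ renders $\theta\mapsto L_{\mathbb{F}_{C,+}^{\infty}}^k a_{\mathbb{F}_{C,+}^{\infty}}$ times the polynomial-in-$\theta$ factors coming from $x^{-N}D_y^{\alpha'}(xD_x)^\gamma$ integrable on $\{\lvert\theta\rvert\ge1\}$, one passes $\epsilon\to0$ under the integral, and reads off $x^{-N}D_y^{\alpha'}(xD_x)^\gamma\mathsf U^{\mathbb{F}_{C,+}^{\infty}}\in L^\infty_{x,y}$ on $\{y_n>\beta,\ x<\alpha^2\}$ for all $N,\alpha',\gamma$. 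This is precisely the asserted smoothness and infinite-order vanishing at $x=0$.

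The main obstacle — indeed the only step requiring any care — is the non-stationary phase estimate: one must verify that the positive correction in $d_{\theta_n}(y\cdot\theta+\phi)$ is bounded, \emph{uniformly in $\theta$}, by a quantity tending to $0$ as $\alpha\to0$. This rests on $Z_0>0$, which makes $a=1-\lvert\hat\theta'\rvert^2\ge0$ and puts the two bracketed differences in the benign ``$(a+x)^s-a^s$'' form, together with the localization $x<\alpha^2$; it is this requirement that produces the explicit constraint~(\ref{inequalityonalphaC+inf}) on $\alpha$. Everything after it is a repetition of the symbol-bound plus integration-by-parts scheme already carried out for $y_n<0$ and on $\mathbb{F}_{C,0}^{\infty}$.
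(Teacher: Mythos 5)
Your proposal is correct and follows essentially the same route as the paper's proof: you establish non-stationarity of the phase on $\{y_n>\beta,\ x<\alpha^2\}$ via an explicit smallness condition on $\alpha$, and then rerun the integration-by-parts scheme of Lemma~\ref{y<0C+inf}, using the symbol bounds of Lemma~\ref{F+inf} and $x^{-1}\le C\lvert\theta_n\rvert^{2/3}$ to control $x^{-N}D_y^{\alpha'}(xD_x)^{\gamma}\mathsf{U}^{\mathbb{F}_{C,+}^{\infty}}$. The only (harmless) deviation is that you bound the correction term by monotonicity in $a=1-\lvert\hat{\theta}'\rvert^2$ over $[0,1]$, whereas the paper uses $\sigma>1$ to get $a\le x<\alpha^2$, so your explicit constant is not literally the one in (\ref{inequalityonalphaC+inf}); since for small $\alpha$ the paper's condition is the more stringent one, the $\alpha$ given by (\ref{inequalityonalphaC+inf}) still satisfies your requirement and the conclusion is unaffected.
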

\begin{proof}
 \begin{align*}
    ( \mathsf{U}^{\mathbb{F}_{C,+}^{\infty}} , \chi)
    &= \lim_{\epsilon \rightarrow 0} (2\pi)^{-n} \int\int\int \psi_{\mathbb{F}_{C,+}^{\infty}}  \hat{u}_x \chi(x,y) \varpi(\epsilon\theta)  e^{iy\cdot \theta} dx dy d\theta\\
    &=  \lim_{\epsilon \rightarrow 0} (2\pi)^{-n} \int\int\int a_{\mathbb{F}_{C,+}^{\infty}}\chi(x,y) \varpi(\epsilon\theta)  e^{iy\cdot \theta+i\phi} \, dx dy d\theta
    \end{align*}

 Denote 
 $$\Sigma_0 = \{(x,y) : d_{\theta} (y\cdot \theta+ \phi) = 0 ,\ \text{for some}\  \theta \in \text{conesupp} \ a_{\mathbb{F}_{C,+}^{\infty}} \}$$
For $\beta > 0$, consider $\chi(x,y)$ with $\supp \chi \subset \{y_n > \beta, x < \alpha^2\}$ where 
the relation of $\alpha$ and $\beta$ is to be determined and such that $\supp \chi $ is disjoint from $\Sigma_0$. We show below that if $\alpha$ satisfies (\ref{inequalityonalphaC+inf}) then we have $\lvert d_{\theta} \phi\rvert^2$ is bounded away from zero on region currently considered of $\chi \times \RR^n$.  Once this is achieved, we follow the same argument as those for $y_n  < 0$ in Lemma \ref{y<0C+inf} to show that on the region currently considered of $\chi$, $\mathsf{U}^{\mathbb{F}_{C,+}^{\infty}}$ is smooth and vanishes to infinite order at $x = 0$. \nl In particular, on the support of $\chi$ (described above), we consider the first order differential operator $L$ whose formal adjoint is :
     \begin{equation}
    ^t L_{\mathbb{F}_{C,+}^{\infty}}= \lvert d_{\theta}(y\cdot\theta+\phi))\rvert^{-2} \sum_{j=1}^n \dfrac{\partial (y\cdot\theta+\phi)}{\partial \theta_j} \partial_{\theta_j} \Rightarrow\ \  L_{\mathbb{F}_{C,+}^{\infty}} e^{iy\cdot \theta+i\phi} = e^{iy\cdot\theta+i\phi}
    \end{equation}
\end{proof}
 It remains to show that: \textbf{for $\beta > 0$ if $y_n >\beta$ and if we are close enough the boundary ie $x < \alpha^2$ for some $\alpha$, then $d_{\theta}( y\cdot\theta + \phi)\neq 0$} :
\begin{proof}
  \begin{align*}
  &y_n - \tfrac{2}{3}\partial_{\theta_n} ((Z^{3/2} - Z_0^{3/2})\sgn\theta_n)\\
  &=y_n -\tfrac{2}{3}\left[(1 + x-\lvert\hat{\theta}'\rvert^2)^{3/2} - (1 - \lvert\hat{\theta}'\rvert^2)^{3/2}\right]
   -  \left[  (1 + x-\lvert\hat{\theta}'\rvert^2)^{1/2} 2\lvert\hat{\theta}'\rvert^2    - (1 -\lvert\hat{\theta}'\rvert^2)^{1/2} 2\lvert\hat{\theta}'\rvert^2   \right]
  \end{align*}

Consider the region $\{x < \alpha^2\}$, where $\alpha$ is to be determined.
 
Denote by $G(x)$ the following expression:
 $$G(x) = \tfrac{2}{3}\left[(1 + x-\lvert\hat{\theta}'\rvert^2)^{3/2} - (1 - \lvert\hat{\theta}'\rvert^2)^{3/2}\right]
   + \left[  (1 + x-\lvert\hat{\theta}'\rvert^2)^{1/2} 2\lvert\hat{\theta}'\rvert^2    - (1 -\lvert\hat{\theta}'\rvert^2)^{1/2} 2\lvert\hat{\theta}'\rvert^2 \right]$$

On the region currently considered  we have $\sigma = x^{3/2} (1 - \lvert\hat{\theta}'\rvert^2)^{-3/2}> 1$ so $1 - \lvert\hat{\theta}'\rvert^2 \leq x$. Since $x < \alpha^2$,  $0 < 1 - \lvert\hat{\theta}'\rvert^2 < \alpha^2$. Use this to estimate $\lvert G(x)\rvert$:
$$\lvert G(x) \rvert \leq \dfrac{2}{3}\alpha^3 (2^{3/2} + 1) + 2 \alpha (2^{1/2} + 1) $$
Choose $\alpha < 1$ such that 
\begin{equation}\label{inequalityonalphaC+inf}
\dfrac{2}{3}\alpha^3 (2^{3/2} + 1) + 2 \alpha (2^{1/2} + 1)< \dfrac{1}{2}\beta 
\end{equation}
$$ \Rightarrow y - G(x) > y - \lvert G(x) \rvert > 0$$

\end{proof}

As a corollary we have :
\begin{lemma}\label{SisuC+inf}
  $\text{Singsupp} \ \mathsf{U}^{\mathbb{F}_{C,+}^{\infty}}$ is contained in the union of the forward bicharacteristic from $x=y=0$
\end{lemma}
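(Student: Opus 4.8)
The plan is to patch together three regularity statements covering the regions $\{y_n<0\}$, a neighborhood of $\{x=0\}\cap\{y_n>0\}$, and $\{x>\delta\}$. By Lemma \ref{y<0C+inf}, $\mathsf{U}^{\mathbb{F}_{C,+}^{\infty}}$ is $\mathcal{C}^{\infty}$ on $\{y_n<0\}$, so no point with $y_n<0$ lies in $\text{Singsupp}\,\mathsf{U}^{\mathbb{F}_{C,+}^{\infty}}$. By Lemma \ref{y>0C+inf}, for each $\beta>0$ there is $\alpha(\beta)>0$ with $\mathsf{U}^{\mathbb{F}_{C,+}^{\infty}}$ smooth on $\{y_n>\beta,\ x<\alpha(\beta)^2\}$; letting $\beta\downarrow 0$ this exhausts a punctured neighborhood of the boundary inside $\{y_n>0\}$, so the only point of $\{x=0\}$ that can be singular is the origin $x=y=0$, which is admissible since the forward bicharacteristics issue from exactly that point. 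It therefore remains to control the singular support on $X_\delta=\{x>\delta\}$ for every $\delta>0$.

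On $X_\delta$ I would exploit the oscillatory-integral representation
$$\mathsf{U}^{\mathbb{F}_{C,+}^{\infty}}_\delta=(2\pi)^{-n}\int a_{\mathbb{F}_{C,+}^{\infty}}(x,y,\theta)\,e^{i(y\cdot\theta+\phi)}\,d\theta,\qquad \phi=-\tfrac{2}{3}(Z^{3/2}-Z_0^{3/2})\sgn\theta_n,$$
with $Z=Z_0+x|\theta_n|^{2/3}$ and $Z_0=|\theta_n|^{2/3}(1-|\hat{\theta}'|^2)$. By Lemma \ref{F+inf} the amplitude lies in $S^{\sqrt{n^2/4-\lambda}}_{1/3,0;\theta}$, and on $\text{conesupp}\,a_{\mathbb{F}_{C,+}^{\infty}}\subset\{|\theta_n|\geq|\theta'|>0\}$ one has $1-|\hat{\theta}'|^2\geq 0$, hence $Z\geq x|\theta_n|^{2/3}>0$ throughout $X_\delta$; thus $\phi$ is smooth and positively homogeneous of degree one in $\theta$ there, so $y\cdot\theta+\phi$ is a non-degenerate phase function. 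Since this is the same amplitude class and the same type of phase as in Friedlander's Theorem~6.1 and in Lemma \ref{wfC0inf}, the argument of Lemma \ref{wfC0inf} (via Proposition~2.5.7 of \cite{Hor03}) applies verbatim and gives that $\mathsf{U}^{\mathbb{F}_{C,+}^{\infty}}_\delta$ is a Lagrangian distribution with
$$\WF\big(\mathsf{U}^{\mathbb{F}_{C,+}^{\infty}}_\delta\big)\subset\Big\{(x,y,\xi,\eta):\ \xi=\partial_x(y\cdot\theta+\phi),\ \eta=d_y(y\cdot\theta+\phi),\ d_\theta(y\cdot\theta+\phi)=0,\ \theta\in\text{conesupp}\,a_{\mathbb{F}_{C,+}^{\infty}}\Big\},$$
which is exactly the set $\Sigma$ of Theorem \ref{maintheorem}. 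Projecting to the base, the singular support of $\mathsf{U}^{\mathbb{F}_{C,+}^{\infty}}$ in $\{x>\delta\}$ is contained in $\{(x,y):\ d_\theta(y\cdot\theta+\phi)=0\text{ for some admissible }\theta\}$.

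To conclude I would identify this projected critical set with the forward bicharacteristics through $x=y=0$. The principal symbol of $x^{-2}\hat{L}$ is $\hat{l}=-\xi^2+[(1+x)\theta_n^2-|\theta'|^2]$, whose Hamilton flow is the one parametrized on pp.~146--148 of \cite{Fried01}; the stationary-phase relations $d_\theta(y\cdot\theta+\phi)=0$, $\xi=\partial_x(y\cdot\theta+\phi)$, $\eta=d_y(y\cdot\theta+\phi)$ say precisely that $(x,y,\xi,\eta)$ arises by flowing along $H_{\hat{l}}$ out of the conormal bundle of the origin, with the $\sgn\theta_n$ branch together with the choice of $\varphi_{\text{in}}$ picking out the forward direction. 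Combining the three cases — smoothness on $\{y_n<0\}$, smoothness near $\{x=0\}\cap\{y_n>0\}$, and the flow-out inclusion on $\{x>\delta\}$ for every $\delta>0$ — then gives that $\text{Singsupp}\,\mathsf{U}^{\mathbb{F}_{C,+}^{\infty}}$ lies in the union of the forward bicharacteristics from $x=y=0$.

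The hard part will be the final geometric identification: matching the analytic critical set $\{d_\theta(y\cdot\theta+\phi)=0\}$ and the associated Lagrangian to the Hamilton flow-out of the origin, keeping track of the $\sgn\theta_n$ branches and the behavior near the glancing set $|\theta_n|=|\theta'|$, which forces one to invoke Friedlander's explicit bicharacteristic parametrization and to check that the substitution $(x,\theta)\mapsto(Z,Z_0)$ respects it. A secondary technical point is confirming the non-degeneracy of $y\cdot\theta+\phi$ uniformly up to the edge of $\text{conesupp}\,a_{\mathbb{F}_{C,+}^{\infty}}$, so that the oscillatory-integral machinery genuinely applies there.
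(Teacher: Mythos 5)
Your proposal is correct and follows essentially the same route as the paper: the paper states this lemma as an immediate corollary of Lemma \ref{y<0C+inf} (smoothness on $y_n<0$) and Lemma \ref{y>0C+inf} (smoothness near $\{x=0\}$ for $y_n>0$), with the interior ($x>\delta$) part handled exactly as you propose, by the oscillatory-integral/stationary-phase description with amplitude in $S^{\sqrt{n^2/4-\lambda}}_{1/3,0;\theta}$ and the Friedlander-type identification of the critical set of $y\cdot\theta+\phi$ with the forward flow-out of the origin (this is what the adjacent Lemma \ref{WFC+inf} records, citing Friedlander). Your added detail on the non-degeneracy of the phase up to the glancing edge of $\text{conesupp}\,a_{\mathbb{F}_{C,+}^{\infty}}$ is the same uniformity point the paper delegates to the $S_{1/3,0}$ symbol estimates, so there is no substantive difference in approach.
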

 
\begin{lemma}[Wavefront set statement]\label{WFC+inf}
   The wavefront set of $\mathsf{U}^{\mathbb{F}_{C,+}^{\infty}}$ is contained in $\Sigma(\delta)$ , where
   $$\Sigma (\delta)= \Big\{(x,y,\xi,\eta) : \xi = \partial_x \left(y\cdot\theta - \phi \right), \eta = d_y \left(y\cdot\theta-\phi\right) , d_{\theta}\left(y\cdot\theta -\phi\right)  = 0 , x > \delta , \lvert\theta_n\rvert \geq \lvert\theta'\rvert > 0 \Big\}$$
   $$\phi = \tfrac{2}{3}( Z^{3/2} - Z_0^{3/2})\sgn\theta_n. $$

\end{lemma}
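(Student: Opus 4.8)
Throughout write $\Phi(x,y,\theta)=y\cdot\theta-\phi$ with $\phi=\tfrac23\big(Z^{3/2}-Z_0^{3/2}\big)\sgn\theta_n$, so that the representation of $\mathsf{U}^{\mathbb{F}_{C,+}^{\infty}}$ obtained in the computation above reads, after restriction to $\mathcal{D}'(X_\delta)$,
$$\mathsf{U}^{\mathbb{F}_{C,+}^{\infty}}_{\delta}=(2\pi)^{-n}\int a_{\mathbb{F}_{C,+}^{\infty}}(x,y,\theta)\,e^{i\Phi(x,y,\theta)}\,d\theta .$$
The plan is to recognize the right-hand side as an oscillatory integral in H\"ormander's sense and then quote the standard wavefront bound for such integrals, exactly as in Lemma \ref{wfC0inf} and in Friedlander's Theorem 6.1 of \cite{Fried01}. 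Once it is checked that (i) $a_{\mathbb{F}_{C,+}^{\infty}}$ is an admissible amplitude with cone support in $\{|\theta_n|\geq|\theta'|>0\}$ and (ii) $\Phi$ is a non-degenerate phase function on that cone support, Proposition 2.5.7 of \cite{Hor03} yields
$$\WF\big(\mathsf{U}^{\mathbb{F}_{C,+}^{\infty}}_{\delta}\big)\subset\big\{(x,y,d_x\Phi,d_y\Phi):(x,y,\theta)\in\supp a_{\mathbb{F}_{C,+}^{\infty}},\ d_\theta\Phi=0\big\};$$
writing out $d_x\Phi=\partial_x\big(y\cdot\theta-\phi\big)$, $d_y\Phi=d_y\big(y\cdot\theta-\phi\big)=\theta$, $d_\theta\Phi=d_\theta\big(y\cdot\theta-\phi\big)$, and recalling that $\supp a_{\mathbb{F}_{C,+}^{\infty}}\subset\{|\theta_n|\geq|\theta'|>0\}$, identifies this set with $\Sigma(\delta)$. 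Since $\mathsf{U}^{\mathbb{F}_{C,+}^{\infty}}$ and $\mathsf{U}^{\mathbb{F}_{C,+}^{\infty}}_{\delta}$ agree over $\{x>\delta\}$, this is the lemma; letting $\delta\downarrow 0$ afterwards supplies the $\{x>0\}$ part of $\Sigma$ in Theorem \ref{maintheorem}.

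The first verification is already in hand: Lemma \ref{F+inf} gives $a_{\mathbb{F}_{C,+}^{\infty}}\in S^{\sqrt{n^2/4-\lambda}}_{1/3,0;\theta}$ on $\{x>\delta\}$, with $(x,y)$ playing the role of base variables, and since the type parameters obey $0<1/3$ this is an admissible FIO amplitude; the cone support lies in $\{|\theta_n|\geq|\theta'|>0\}$ by construction of the cutoffs $\psi_{\mathbb{F}_{C,+}^{\infty}}$ and $\chi_+$. For the second, note that the cone support avoids $\{\theta_n=0\}$, so one splits $a_{\mathbb{F}_{C,+}^{\infty}}=a^++a^-$ into pieces supported in $\{\pm\theta_n>0\}$; on each half-cone $\sgn\theta_n$ is constant, $Z_0=|\theta_n|^{2/3}(1-|\hat{\theta}'|^2)$ and $Z=Z_0+x|\theta_n|^{2/3}$ are homogeneous of degree $2/3$ in $\theta$, and on the amplitude's support one has $Z_0\geq\delta_2>0$ (from $\chi_+$) and $Z\geq\delta\,|\theta_n|^{2/3}>0$ (from $x>\delta$), so $Z^{3/2}$ and $Z_0^{3/2}$ — hence $\phi$ and $\Phi$ — are smooth there and $\Phi$ is homogeneous of degree $1$ in $\theta$. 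Since $d_y\Phi=\theta\neq0$ on the cone support, $d_{(x,y,\theta)}\Phi$ never vanishes, so $\Phi$ is a phase function in the sense required by \cite{Hor03}.

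The one step needing care — and it is precisely the point Friedlander dispatches in Theorem 6.1 — is the smoothness of the $Z_0^{3/2}$ factor near the glancing directions $\{|\theta_n|=|\theta'|\}$, where $Z_0$ degenerates and $Z_0^{3/2}$ fails to be smooth. This is handled as above: $\supp a_{\mathbb{F}_{C,+}^{\infty}}$ stays inside the open set $\{Z_0>\delta_2/2,\ x>\delta,\ \theta\neq0\}$, on which $\Phi$ is $C^\infty$, and that is all the non-stationary phase integrations by parts in the proof of the oscillatory-integral wavefront bound actually use (one never needs $\Phi$ smooth on a full conic neighborhood). Granting this observation, the argument is a verbatim repetition of the proof of Lemma \ref{wfC0inf}, now with symbol $a_{\mathbb{F}_{C,+}^{\infty}}$, cone $\{|\theta_n|\geq|\theta'|>0\}$, and phase $\phi=\tfrac23\big(Z^{3/2}-Z_0^{3/2}\big)\sgn\theta_n$ in place of $\tilde\phi=\tfrac23 Z^{3/2}\sgn\theta_n$.
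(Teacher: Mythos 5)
Your proposal is correct and follows essentially the same route as the paper: both reduce the matter to the oscillatory-integral representation with amplitude $a_{\mathbb{F}_{C,+}^{\infty}}\in S^{\sqrt{n^2/4-\lambda}}_{1/3,0;\theta}$ on $x>\delta$ (Lemma \ref{F+inf}) and phase $y\cdot\theta-\phi$, and then invoke the H\"ormander/Friedlander wavefront bound for such integrals (Proposition 2.5.7 of \cite{Hor03}, as in Lemma \ref{wfC0inf}, respectively Friedlander's argument), which is precisely how the paper concludes. Your extra discussion of the degeneracy of $Z_0^{3/2}$ near glancing is the same point the paper delegates to Friedlander's treatment, with the relevant uniform control already encoded in the type-$(1/3,0)$ estimates of Lemma \ref{F+inf}.
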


\begin{proof}
 By Lemma  \ref{F+inf} on $x >\delta$, $\hat{\mathsf{U}}^{\mathbb{F}_{C,+}^{\infty}} \in S^{-\tfrac{1}{3}\tilde{a}+ \sqrt{\tfrac{n^2}{4}-\lambda}}_{\tfrac{1}{3},0; \theta}$. Compared to Lemma 5.2 in Friedlander paper, we have the same type of symbol, the same phase function on the same coneregion currently considered, as well as the same result on the singular region currently considered of $\mathsf{U}^{\mathbb{F}_{C,+}^{\infty}}$. Hence we have the same conclusion on $\text{WF} \,\mathsf{U}^{\mathbb{F}_{C,+}^{\infty}}$. 
\end{proof}

\subsubsection{On region $\mathbb{F}_{C,+}^{0,\infty}$}
  \begin{align*}
    ( \mathsf{U}^{\mathbb{F}_{C,+}^{0,\infty}} , \chi)
    &= \lim_{\epsilon \rightarrow 0} (2\pi)^{-n} \int\int\int  \psi_{\mathbb{F}_{C,+}^{0,\infty}}  \hat{u}_x \chi(x,y) \varpi(\epsilon\theta)  e^{iy\cdot \theta} dx dy d\theta\\
    &=  \lim_{\epsilon \rightarrow 0} (2\pi)^{-n} \int\int\int  a_{\mathbb{F}_{C,+}^{0,\infty}}\chi(x,y) \varpi(\epsilon\theta)  e^{iy\cdot \theta+i\phi} \, dx dy d\theta\\ 
    \end{align*}

\begin{lemma}\indent\par\noindent
  \begin{enumerate}
  \item $L_{\mathbb{F}_{C,+}^{0,\infty}}$ whose formal adjoint is $   ^t L_{\mathbb{F}_{C,+}^{0,\infty}} = \lvert d_{\theta}( y\cdot + \phi)\rvert^{-2} \sum_{j=1}^n \tfrac{\partial(y\cdot + \phi)}{\partial \theta_j} \partial_{\theta_j}$
 then
 $$ (xD_x)^l L_{\mathbb{F}_{C,+}^{0,\infty}}^k a_{\mathbb{F}_{C,+}^{0,\infty}} \leq\tilde{C}_k\lvert\theta\rvert^{-\tfrac{1}{3}k-\tfrac{2}{3} s_-}$$
 
 \item On $x >\delta$, $a_{\mathbb{F}_{C,+}^{0,\infty}} \in S^{-\tfrac{2}{3} s_-}_{\tfrac{1}{3},0; \theta}$.
\end{enumerate}

\end{lemma}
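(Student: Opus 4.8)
The plan is to treat this as the $\mathbb{F}_{C,+}^{0,\infty}$-analogue of Lemmas \ref{lemssymFC+00} and \ref{F+inf}, arguing by the same three moves: read off the polyhomogeneous structure of $\hat{u}_x$ near this corner, strip the oscillatory factor $e^{i\phi}$ to isolate the amplitude $a_{\mathbb{F}_{C,+}^{0,\infty}}$, bound it pointwise, and then bound its $\partial_\theta$- and $\partial_x$-derivatives by rewriting $\partial_{\theta_j}$, $\partial_x$ in terms of the blow-up vector fields. Concretely, from $(\ref{slnInsing})$ and the Step 1 summary $(\ref{asymptotics})$ one has, near $\mathbb{F}_{C,+}^{0,\infty}$, $\hat{u}_x=\tilde\chi(\lvert\theta\rvert)\,\phi_{\text{correction}}\,x^{n/2}\hat{U}_{\mathbb{F}_C}$ with $\hat{U}_{\mathbb{F}_C}|_{\mathbb{F}_{C,+}^{0,\infty}}\in\exp(-i\phi)\,\rho_{\mathbb{F}_C^{(2)}}^{\tilde\gamma_{\mathfrak{B}_1}}\rho_{\mathbb{F}_C^{(3)}}^{\tilde\gamma_{\mathfrak{B}_1}+\tfrac12}\mathcal{C}^\infty$, $\tilde\gamma_{\mathfrak{B}_1}=0$, $\phi=-\tfrac23(Z^{3/2}-Z_0^{3/2})\sgn\theta_n$; in the projective coordinates $s=t^{-1}$, $z$ adapted to the corner of $\mathbb{F}_C^{(2)}$ and $\mathbb{F}_C^{(3)}$ this presents $a_{\mathbb{F}_{C,+}^{0,\infty}}:=e^{i\phi}\psi_{\mathbb{F}_{C,+}^{0,\infty}}\hat{u}_x$ as $\phi_{\text{correction}}\,x^{n/2}$ times a function polyhomogeneous conormal (hence smooth and bounded) in $(s,z)$ on the region considered, and smooth in $x$ away from $x=0$.

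For the pointwise bound, I would use that on $\mathbb{F}_{C,+}$ the defining constraint $Z_0=\lvert\theta_n\rvert^{2/3}(1-\lvert\hat{\theta}'\rvert^2)\ge\delta_2$ forces $\lvert\theta_n\rvert\sim\lvert\theta\rvert$ and $\delta_2\lvert\theta_n\rvert^{-2/3}\le 1-\lvert\hat{\theta}'\rvert^2\le 1$; feeding this, together with $z=x(1-\lvert\hat{\theta}'\rvert^2)^{-1}$, $h=\lvert\theta_n\rvert^{-1}(1-\lvert\hat{\theta}'\rvert^2)^{-3/2}$, $t=z/h$, into $\phi_{\text{correction}}\sim(\lvert\theta_n\rvert(1-\lvert\hat{\theta}'\rvert^2)^{1/2})^{\sqrt{n^2/4-\lambda}}$ and the $\rho_{\mathbb{F}_C^{(3)}}^{1/2}$ amplitude weight, a bookkeeping of the powers of $\lvert\theta_n\rvert$ and of $1-\lvert\hat{\theta}'\rvert^2$ (bounding the negative powers of the latter by $\lvert\theta_n\rvert^{2/3}$ and the positive ones by $1$) yields $\lvert a_{\mathbb{F}_{C,+}^{0,\infty}}\rvert\lesssim\lvert\theta\rvert^{-\tfrac23 s_-}$, with constant independent of $x$ on compacts. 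This power-counting — in particular getting the exponent to collapse to exactly $-\tfrac23 s_-=\tfrac23\sqrt{n^2/4-\lambda}-\tfrac n3$, which hinges on the cancellation between $x^{n/2}$, the $\mathbb{F}_C^{(3)}$-weight and the change of variables — is the step I expect to be the main obstacle; everything else is mechanical.

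For the derivative estimates I would derive, exactly as in $(\ref{derivativeFC00})$ and its counterparts in the $\mathbb{F}_{C,+}^{0,0}$ and $\mathbb{F}_{C,+}^\infty$ lemmas, the formulas expressing $\theta_n\partial_{\theta_n}$, $\lvert\theta_n\rvert\partial_{\theta_j}$ ($j>n$), $x\partial_x$ and $\partial_x$ as combinations of $s\partial_s$, $z\partial_z$ with coefficients bounded on $\{Z_0>\delta_2,\ s,z\ \text{small}\}$. Since $a_{\mathbb{F}_{C,+}^{0,\infty}}$ is smooth in $(s,z)$ there and each $\partial_\theta$ costs at most a factor $\lvert\theta\rvert^{-1/3}$ (homogeneity $-1$ of $\partial_\theta$ against $(1-\lvert\hat{\theta}'\rvert^2)^{-1}\lesssim\lvert\theta_n\rvert^{2/3}$), applying $L_{\mathbb{F}_{C,+}^{0,\infty}}^k$ (which acts through the phase gradient and $\partial_\theta$) and $(xD_x)^l$ keeps the bound at $\tilde C_k\lvert\theta\rvert^{-\tfrac13 k-\tfrac23 s_-}$ with $\tilde C_k$ independent of $x$ (no $l$-growth, as $xD_x$ is a bounded combination of $s\partial_s,z\partial_z$). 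Combining with the pointwise bound and the smoothness in $x$ for $x>\delta$ — where $\partial_x=x^{-1}\cdot(\text{bounded vf})$ introduces no extra $\theta$-growth — gives $a_{\mathbb{F}_{C,+}^{0,\infty}}\in S^{-\tfrac23 s_-}_{\tfrac13,0;\theta}$, which is the lemma.
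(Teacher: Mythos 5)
Your skeleton is the same as the paper's: recall the polyhomogeneous structure of $\psi_{\mathbb{F}_{C,+}^{0,\infty}}\hat{u}_x$ from the parametrix/Step~1 summary, factor out $\exp(i\phi)$, bound the amplitude pointwise, and then convert $\partial_{\theta_n},\partial_{\theta_j},\partial_x$ into $s\partial_s,z\partial_z$ with coefficients controlled by $\lvert\theta_n\rvert^{2/3}$ so that each application of $L_{\mathbb{F}_{C,+}^{0,\infty}}$ costs $\lvert\theta\rvert^{-1/3}$ and $xD_x$ (resp.\ $\partial_x=x^{-1}(\cdot)$ on $x>\delta$) costs nothing in $\theta$. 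That part of your argument matches the paper's proof essentially line by line.

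The genuine gap is exactly the step you defer as ``the main obstacle'': the pointwise bound with exponent $-\tfrac{2}{3}s_-$. The paper does not prove that exponent either; its proof simply bounds the conormal factor by a constant and uses $(1-\lvert\hat{\theta}'\rvert^2)^{1/2}\leq 1$, obtaining $\lvert a_{\mathbb{F}_{C,+}^{0,\infty}}\rvert\lesssim\lvert\theta_n\rvert^{\sqrt{\tfrac{n^2}{4}-\lambda}}$ and concluding $a_{\mathbb{F}_{C,+}^{0,\infty}}\in S^{\sqrt{\tfrac{n^2}{4}-\lambda}}_{\tfrac{1}{3},0;\theta}$, which is the weaker exponent actually quoted and used in the subsequent lemmas (``same result \ldots compared to $a_{\mathbb{F}_{C,+}^{\infty}}$''). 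Your proposed cancellation does not close as stated: writing $x^{n/2}=z^{n/2}(1-\lvert\hat{\theta}'\rvert^2)^{n/2}$, the product $\phi_{\text{correction}}\,x^{n/2}$ equals $\lvert\theta_n\rvert^{\sqrt{\tfrac{n^2}{4}-\lambda}}(1-\lvert\hat{\theta}'\rvert^2)^{\tfrac{n}{2}+\tfrac{1}{2}\sqrt{\tfrac{n^2}{4}-\lambda}}z^{n/2}$, and trading the positive powers of $1-\lvert\hat{\theta}'\rvert^2$ for negative powers of $\lvert\theta_n\rvert$ requires $1-\lvert\hat{\theta}'\rvert^2\lesssim\lvert\theta_n\rvert^{-2/3}$, i.e.\ $Z_0$ bounded. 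On $\mathbb{F}_{C,+}^{0,\infty}$ the parameter $Z_0$ is only bounded below (it tends to $\infty$ as $h\to 0$, e.g.\ for $\theta'=0$), so the uniform upper bound you can extract this way is precisely the paper's $\lvert\theta_n\rvert^{\sqrt{\tfrac{n^2}{4}-\lambda}}$, not $\lvert\theta\rvert^{-\tfrac{2}{3}s_-}$; getting the sharper exponent would require exploiting the exact vanishing orders of the amplitude at $\mathbb{F}_C^{(2)}$ and $\mathbb{F}_C^{(3)}$ (the $\rho^{\tilde{\gamma}_{\mathfrak{B}_1}}$, $\rho^{\tilde{\gamma}_{\mathfrak{B}_1}+\tfrac12}$ weights expressed through $s=h/z$ and $z$), a computation you have not supplied. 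So either carry that bookkeeping out explicitly, or do what the paper's proof (and its later use of the lemma) actually does and prove the statement with $\sqrt{\tfrac{n^2}{4}-\lambda}$ in place of $-\tfrac{2}{3}s_-$; with that exponent your derivative and $x>\delta$ arguments go through unchanged.
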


\begin{proof}\indent\par\noindent
\begin{enumerate}
\item Recall the structure of the approximate solutions:
$$\psi_{\mathbb{F}_{C,+}^{0,\infty}} \hat{u}_x =  \big(\lvert\theta_n\rvert (1 - \lvert\hat{\theta}'\rvert^2)^{1/2}\big)^{\sqrt{\tfrac{n^2}{4}-\lambda}} \hat{\mathsf{U}}^{\mathbb{F}_{C,+}^{0,\infty}} $$
$$\hat{\mathsf{U}}^{\mathbb{F}_{C,+}^{0,\infty}} = \exp(i\phi) \nu^{\tilde{\gamma}_{\mathfrak{B}_1} +\tfrac{1}{2}} \sigma^{\tilde{\gamma}_{\mathfrak{B}_1}}\hat{\mathsf{V}}^{\mathbb{F}_{C,+}^{0,\infty}};\ \ \tilde{\gamma}_{\mathfrak{B}_1} =0 $$
 where $\hat{\mathsf{V}}^{\mathbb{F}_{C,+}^{0,\infty}}$ is conormal in $z$ and $s (= \tfrac{h}{z}) $. In fact it is polyhomogeneous with index set $\{j\}$. On the region currently considered  $z$ and $s$ are bounded.\newline
  On the region currently being considered, 
  $$ \lvert a_{\mathbb{F}_{C,+}^{0,\infty}} \rvert \leq \lvert\theta_n\rvert^{ \sqrt{\tfrac{n^2}{4}-\lambda}} $$

  To compute the bound on higher derivatives, we use the following computation and the conormality of $\hat{\mathsf{V}}^{\mathbb{F}_{C,+}^{0,\infty}}$.

\begin{align*}
\theta_n \partial_{\theta_n}
 &= -\left(1+ s^{\tfrac{2}{3}}z^{\tfrac{2}{3}} \lvert\theta_n\rvert^{\tfrac{2}{3}} \lvert\hat{\theta}'\rvert^2 \right)s\partial_s + 2 s^{\tfrac{2}{3}}z^{\tfrac{2}{3}} \lvert\theta_n\rvert^{\tfrac{2}{3}} \lvert\hat{\theta}'\rvert^2 z\partial_z\\
\lvert\theta_n\rvert \partial_{\theta_j} 
&= - s^{\tfrac{2}{3}}z^{\tfrac{2}{3}} \lvert\theta_n\rvert^{\tfrac{2}{3}} \lvert\hat{\theta}'\rvert s\partial_s
+ 2 s^{\tfrac{2}{3}}z^{\tfrac{2}{3}} \hat{\theta_j} \lvert\theta_n\rvert^{\tfrac{2}{3}}z\partial_z\\
\partial_x 
&
=  s^{9/4} \lvert\theta_n\rvert(1 - \lvert\hat{\theta}'\rvert^2)^{\tfrac{1}{2}} \left(-s\partial_s+z\partial_z\right)
\end{align*}
to conclude that 
$$ (xD_x)^l L_{\mathbb{F}_{C,+}^{0,\infty}}^k a_{\mathbb{F}_{C,+}^{0,\infty}} \leq\tilde{C}_k\lvert\theta\rvert^{-\tfrac{1}{3}k +\sqrt{\tfrac{n^2}{4}-\lambda}}$$

\item We use the same computation and the fact that $\partial_x = -x^{-1}s\partial_s + x^{-1} z\partial_z$, to conclude that on $x > \delta$:
$$a_{\mathbb{F}_{C,+}^{0,\infty}} \in S^{\sqrt{\tfrac{n^2}{4}-\lambda}}_{\tfrac{1}{3},0; \theta}$$

\end{enumerate}

\end{proof}

 Consider $\chi$ smooth in $x,y$ with $\supp \chi \subset \{y_n < 0\}$
 \begin{align*}
    ( \mathsf{U}^{\mathbb{F}_{C,+}^{0,\infty}} , \chi)
    &= \lim_{\epsilon \rightarrow 0} (2\pi)^{-n} \int\int\int  \psi_{\mathbb{F}_{C,+}^{0,\infty}}  \hat{u}_x \chi(x,y) \varpi(\epsilon\theta)  e^{iy\cdot \theta} dx dy d\theta\\
    &=  \lim_{\epsilon \rightarrow 0} (2\pi)^{-n} \int\int\int  a_{\mathbb{F}_{C,+}^{0,\infty}}\chi(x,y) \varpi(\epsilon\theta)  e^{iy\cdot \theta+i\phi} \, dx dy d\theta\\ 
    \end{align*}

 Since we have the exact same phase function, same result on the properties of $a_{\mathbb{F}_{C,+}^{0,\infty}}$ (compared to $a_{\mathbb{F}_{C,+}^{\infty}}$) and  $$x^{-1} = s \lvert\theta_n\rvert (1 - \lvert\hat{\theta}'\rvert^2)^{1/2} \leq \text{constant} \ \lvert\theta_n\rvert$$ we use the same method as in Lemma \ref{y<0C+inf}, \ref{y>0C+inf}, \ref{SisuC+inf} and \ref{WFC+inf} to get the following results:
 \begin{lemma}
  On $y_n <0$ $\mathsf{U}^{\mathbb{F}_{C,+}^{0,\infty}}$ is smooth and vanishes to infinite order at $x = 0$. 
\end{lemma}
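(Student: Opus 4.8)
The plan is to obtain both conclusions — smoothness of $\mathsf{U}^{\mathbb{F}_{C,+}^{0,\infty}}$ on $\{y_n<0\}$ and its infinite-order vanishing at $x=0$ there — by the non-stationary phase / integration-by-parts argument already carried out for the neighbouring region $\mathbb{F}_{C,+}^{\infty}$ in Lemma \ref{y<0C+inf}. Starting from the oscillatory-integral representation
$$( \mathsf{U}^{\mathbb{F}_{C,+}^{0,\infty}} , \chi) = \lim_{\epsilon \rightarrow 0} (2\pi)^{-n} \int\int\int a_{\mathbb{F}_{C,+}^{0,\infty}}\,\chi(x,y)\,\varpi(\epsilon\theta)\, e^{i(y\cdot\theta+\phi)}\,dx\,dy\,d\theta ,$$
with $\phi$ the same phase as in the $\mathbb{F}_{C,+}^{\infty}$ case and $\chi\in\mathcal{C}^\infty_c$ supported in $\{y_n<0\}$, the first point is that $\phi$ is exactly the phase whose $\theta_n$-derivative was shown in the $\mathbb{F}_{C,+}^{\infty}$ analysis to equal $y_n$ minus a strictly positive quantity; hence on $\supp\chi\times\RR^n$ the gradient $d_\theta(y\cdot\theta+\phi)$ is bounded away from zero, and the differential operator $L_{\mathbb{F}_{C,+}^{0,\infty}}$ with $^tL_{\mathbb{F}_{C,+}^{0,\infty}} = \lvert d_\theta(y\cdot\theta+\phi)\rvert^{-2}\sum_j \partial_{\theta_j}(y\cdot\theta+\phi)\,\partial_{\theta_j}$, which reproduces $e^{i(y\cdot\theta+\phi)}$, is available together with the symbolic bounds recorded in the lemma immediately preceding.

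Then I would integrate by parts $k$ times in $\theta$. By the estimate $(xD_x)^l L_{\mathbb{F}_{C,+}^{0,\infty}}^{k} a_{\mathbb{F}_{C,+}^{0,\infty}} \lesssim |\theta|^{-k/3+\sqrt{n^2/4-\lambda}}$, choosing $k>3(n+\sqrt{n^2/4-\lambda})$ makes the integrand $L^1$ in $\theta$ uniformly in $(x,y)$, so $\epsilon\to0$ may be taken inside the integral and $\mathsf{U}^{\mathbb{F}_{C,+}^{0,\infty}}\in L^\infty_{x,y}$. Tangential derivatives $D_y^\beta$ bring in factors $\theta^\beta$, handled by enlarging $k$ by $3|\beta|$; the operators $(xD_x)^N$ act through the symbol and through $xD_x\phi$, which on this region satisfies $xD_x\phi\lesssim|\theta_n|$, costing one extra power of $|\theta|$ per application and handled by enlarging $k$ by $3N$; and the negative powers $x^{-N}$ are controlled using $x^{-1} = s\,|\theta_n|(1-\lvert\hat\theta'\rvert^2)^{1/2}\lesssim|\theta_n|\lesssim|\theta|$ on $\mathbb{F}_{C,+}^{0,\infty}$, so $x^{-N}$ costs at most $|\theta|^N$, absorbed by taking $k>3(n+\sqrt{n^2/4-\lambda}+N)$. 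Iterating these, $x^{-N}D_y^\alpha(xD_x)^\beta\mathsf{U}^{\mathbb{F}_{C,+}^{0,\infty}}\in L^\infty_{x,y}$ for all multi-indices $\alpha,\beta$ and all $N$, which is precisely smoothness together with infinite-order vanishing at $x=0$.

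I do not expect a genuine obstacle here: all three inputs — the lower bound on the phase gradient over $\{y_n<0\}$, the symbol class of $a_{\mathbb{F}_{C,+}^{0,\infty}}$, and the polynomial bound $x^{-1}\lesssim|\theta|$ — have already been established in the excerpt, and the argument is verbatim that of Lemma \ref{y<0C+inf}. The only item warranting a sentence of care is verifying that on $\mathbb{F}_{C,+}^{0,\infty}$ one has $Z_0>\delta_2$, hence $|\theta_n|$ comparable to $|\theta|$, so that bounds phrased in $|\theta_n|$ pass to bounds in $|\theta|$; this is immediate from the definition of the region.
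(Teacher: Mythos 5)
Your proposal is correct and is essentially the paper's own argument: the paper proves this lemma by observing that the phase function, the symbol estimates for $a_{\mathbb{F}_{C,+}^{0,\infty}}$, and the bound $x^{-1}=s\,\lvert\theta_n\rvert(1-\lvert\hat\theta'\rvert^2)^{1/2}\lesssim\lvert\theta_n\rvert$ are the same as (or analogous to) those in the $\mathbb{F}_{C,+}^{\infty}$ case, and then invokes verbatim the integration-by-parts argument of Lemma \ref{y<0C+inf}, exactly as you do.
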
 
 
\begin{lemma}
  $\text{Sing supp}\  \mathsf{U}^{\mathbb{F}_{C,+}^{0,\infty}}$ is contained in the union of the forward bicharacteristic from $x=y=0$. 
\end{lemma}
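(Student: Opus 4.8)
The plan is to argue exactly as in the proof of Lemma~\ref{SisuC+inf}, now applied to $\mathsf{U}^{\mathbb{F}_{C,+}^{0,\infty}}$, using the immediately preceding lemma together with the $y_n>0$ statement obtained by the same argument. Write
$$\mathsf{U}^{\mathbb{F}_{C,+}^{0,\infty}} = (2\pi)^{-n}\int a_{\mathbb{F}_{C,+}^{0,\infty}}\, e^{i(y\cdot\theta+\phi)}\,d\theta,\qquad \phi = -\tfrac{2}{3}(Z^{3/2}-Z_0^{3/2})\sgn\theta_n,$$
with $a_{\mathbb{F}_{C,+}^{0,\infty}}\in S^{\sqrt{n^2/4-\lambda}}_{1/3,0;\theta}$ on $x>\delta$, supported in the cone $\{\,\lvert\theta_n\rvert\ge\lvert\theta'\rvert>0\,\}$. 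First I would run the non-stationary phase estimate: near any $(x_0,y_0)$ with $x_0>0$ at which $d_\theta(y\cdot\theta+\phi)\ne 0$ for all $\theta$ in the cone support, repeated integration by parts with $L_{\mathbb{F}_{C,+}^{0,\infty}}$ (formal adjoint $\lvert d_\theta(y\cdot\theta+\phi)\rvert^{-2}\sum_j\partial_{\theta_j}(y\cdot\theta+\phi)\,\partial_{\theta_j}$) lowers the order of the amplitude by $\tfrac13$ each time, while $x^{-1}\lesssim\lvert\theta_n\rvert$ keeps all $x$-derivatives controlled, so $\mathsf{U}^{\mathbb{F}_{C,+}^{0,\infty}}$ is $C^\infty$ there, exactly as in Lemmas~\ref{y<0C+inf} and \ref{y>0C+inf}. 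Hence $\operatorname{Singsupp}\mathsf{U}^{\mathbb{F}_{C,+}^{0,\infty}}\subset\{x=0\}\cup\Sigma_0$, where $\Sigma_0=\{(x,y):\exists\,\theta\in\operatorname{conesupp}a_{\mathbb{F}_{C,+}^{0,\infty}},\ d_\theta(y\cdot\theta+\phi)=0\}$.

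The part on $\{x=0\}$ is then removed: since $Z=Z_0$ at $x=0$ one has $\phi|_{x=0}=0$, so $d_\theta(y\cdot\theta+\phi)|_{x=0}=y$ and therefore $\Sigma_0\cap\{x=0\}=\{(0,0)\}$; moreover the preceding lemma gives smoothness (and infinite-order vanishing) at $x=0$ for $y_n<0$, and the $\mathbb{F}_{C,+}^{0,\infty}$-analogue of Lemma~\ref{y>0C+inf} gives smoothness near $x=0$ for $y_n>\beta$. The origin $x=y=0$ lies on every forward bicharacteristic issuing from $N^*\{0\}$, so it only remains to show $\Sigma_0\subset\bigcup\{\text{forward bicharacteristics from }x=y=0\}$.

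For that last step I would invoke Friedlander's description of the bicharacteristic flow (pp.~146--148 of \cite{Fried01}), precisely as in the proof of Lemma~\ref{WFC+inf}: $-\phi$ solves the eikonal equation of the conformal operator $x^2P$, which has the same principal symbol $-\xi^2+q(x,y,\theta)$ as Friedlander's operator up to leading order, so the Lagrangian $\Lambda=\{(x,y,\xi,\eta):\xi=\partial_x(y\cdot\theta-\phi),\ \eta=d_y(y\cdot\theta-\phi),\ d_\theta(y\cdot\theta-\phi)=0\}$, parametrised by $(x,\theta)$, is exactly the Hamilton flowout of $N^*\{(0,0)\}$ over $\{x\ge0\}$, and $\Sigma_0$ is its base projection. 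The cone restriction $\lvert\theta_n\rvert\ge\lvert\theta'\rvert>0$ picks out the glancing initial covectors at the origin, i.e.\ those whose bicharacteristics actually enter $x>0$, and the support condition $\supp u\subset\{y_n\ge0\}$ together with the chosen branch $\sgn\theta_n$ of $\phi$ selects the forward half of the flowout. Combining this identification with the two paragraphs above proves the lemma.

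The main obstacle is this last identification: one must check that the explicit phase $\phi$ produced in Section~\ref{detailofCon} from the change of variables $x\mapsto Z$ and from solving the eikonal equation coincides, in the region $\mathbb{F}_{C,+}^{0,\infty}$ (written in the projective coordinates $z$, $s=h/z$), with Friedlander's generating function, so that his bicharacteristic computation may be quoted verbatim, and that the cone support of $a_{\mathbb{F}_{C,+}^{0,\infty}}$ cannot produce bicharacteristics emanating from base points other than the origin. The rest — the non-stationary phase bookkeeping and the $x$- and $y$-derivative estimates — is routine and is the same as in Lemmas~\ref{y<0C+inf} and \ref{y>0C+inf}.
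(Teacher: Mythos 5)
Your proposal is correct and follows essentially the same route as the paper, which proves this lemma simply by invoking the same method used for the $\mathbb{F}_{C,+}^{\infty}$ region (the non-stationary phase argument with $L_{\mathbb{F}_{C,+}^{0,\infty}}$, the symbol bounds, the estimate $x^{-1}\leq C\lvert\theta_n\rvert$, and the $y_n<0$ and $y_n>\beta$ lemmas), with the identification of the stationary set with the forward flowout delegated to Friedlander's description of the bicharacteristics. Your write-up merely makes explicit the bookkeeping that the paper leaves implicit, so no substantive difference.
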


\begin{lemma}
For every $\beta>0$, there exists $\alpha > 0$ such that  $\mathsf{U}^{\mathbb{F}_{C,+}^{0,\infty}}$ is smooth and vanishes to infinite order on $y_n >\beta$ and $x < \alpha^2$. This means that we have smoothness  for $y_n > 0$ if we are close enough to the boundary, ie we have conormality for $y_n  > 0$. 
\end{lemma}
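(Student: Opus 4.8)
The plan is to repeat, \emph{mutatis mutandis}, the proof of Lemma~\ref{y>0C+inf} (the corresponding statement on $\mathbb{F}_{C,+}^{\infty}$), exploiting the three facts recorded immediately above: on $\mathbb{F}_{C,+}^{0,\infty}$ the oscillatory factor of $\hat u_x$ carries the \emph{same} phase $\phi=\tfrac23(Z^{3/2}-Z_0^{3/2})\sgn\theta_n$; the amplitude obeys the same symbol bounds as $a_{\mathbb{F}_{C,+}^{\infty}}$, namely
$$(xD_x)^l L_{\mathbb{F}_{C,+}^{0,\infty}}^k a_{\mathbb{F}_{C,+}^{0,\infty}}\le \tilde C_k\,\lvert\theta\rvert^{-\tfrac13 k+\sqrt{\tfrac{n^2}{4}-\lambda}};$$
and the coordinate relation $x^{-1}=s\,\lvert\theta_n\rvert(1-\lvert\hat\theta'\rvert^2)^{1/2}$, which together with the boundedness of $s$ and of $1-\lvert\hat\theta'\rvert^2$ on the current region yields $x^{-1}\le C\lvert\theta_n\rvert$.

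First I would fix $\beta>0$ and, exactly as in Lemma~\ref{y>0C+inf}, pick a cutoff $\chi\in\mathcal C^\infty_c$ with $\supp\chi\subset\{y_n>\beta,\ x<\alpha^2\}$ disjoint from $\Sigma_0=\{(x,y):d_\theta(y\cdot\theta+\phi)=0\text{ for some }\theta\in\text{conesupp}\,a_{\mathbb{F}_{C,+}^{0,\infty}}\}$, with $\alpha$ to be chosen. The central point is nonstationary phase: the $\theta_n$-component of $d_\theta(y\cdot\theta+\phi)$ equals $y_n-G(x)$ with
$$G(x)=\tfrac23\big[(1+x-\lvert\hat\theta'\rvert^2)^{3/2}-(1-\lvert\hat\theta'\rvert^2)^{3/2}\big]+2\lvert\hat\theta'\rvert^2\big[(1+x-\lvert\hat\theta'\rvert^2)^{1/2}-(1-\lvert\hat\theta'\rvert^2)^{1/2}\big],$$
and since $\lvert\hat\theta'\rvert^2\le1$ and $1+x-\lvert\hat\theta'\rvert^2\ge x$ one gets $\lvert G(x)\rvert\le C'(\alpha^2+\alpha)$ on $\{x<\alpha^2\}$. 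Hence, choosing $\alpha$ small enough (an explicit inequality of the type~(\ref{inequalityonalphaC+inf})), $d_{\theta_n}(y\cdot\theta+\phi)>\tfrac12\beta>0$ on $\supp\chi\times\RR^n$, so $\lvert d_\theta(y\cdot\theta+\phi)\rvert$ is bounded below there and $\mathsf L$ with $^t\mathsf L=\lvert d_\theta(y\cdot\theta+\phi)\rvert^{-2}\sum_j\partial_{\theta_j}(y\cdot\theta+\phi)\,\partial_{\theta_j}$ satisfies $^t\mathsf L\,e^{i(y\cdot\theta+\phi)}=e^{i(y\cdot\theta+\phi)}$.

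Then I would run the integration-by-parts scheme of Lemma~\ref{y<0C+inf}: $k$-fold integration by parts rewrites $(\mathsf U^{\mathbb{F}_{C,+}^{0,\infty}},\chi)$ as an integral of $\mathsf L^k(a_{\mathbb{F}_{C,+}^{0,\infty}}\varpi)$; combining the symbol estimate above with $x^{-1}\le C\lvert\theta_n\rvert$ and with $\lvert xD_x\phi\rvert\le C\lvert\theta_n\rvert$, for each $N,\alpha',l$ the choice $k>3\big(n+\sqrt{\tfrac{n^2}{4}-\lambda}+\tfrac23 N+l+\lvert\alpha'\rvert\big)$ makes the integrand $L^1$ in $\theta$ with an $x$-independent bound, giving $x^{-N}D_y^{\alpha'}(xD_x)^l\mathsf U^{\mathbb{F}_{C,+}^{0,\infty}}\in L^\infty$ on $\supp\chi$. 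Letting $N,\alpha',l$ range freely shows $\mathsf U^{\mathbb{F}_{C,+}^{0,\infty}}$ is $\mathcal C^\infty$ and lies in $x^\infty\mathcal C^\infty_{x,y}$ on $\{y_n>\beta,\ x<\alpha^2\}$; as $\beta>0$ was arbitrary, this is precisely the asserted smoothness and infinite-order vanishing at $x=0$ for $y_n>0$ close to the boundary, i.e. conormality relative to $H^1_0$ there.

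I expect the only genuinely delicate point to be the nonstationary-phase step: checking that on the support of $\psi_{\mathbb{F}_{C,+}^{0,\infty}}\hat u_x$ the quantity $G(x)$ is indeed controlled by a power of $x$ uniformly, so that $\alpha$ can be chosen depending on $\beta$ alone — this is where the region constraints on $s$ and on $1-\lvert\hat\theta'\rvert^2$ (and the bound $1+x-\lvert\hat\theta'\rvert^2\ge x$) enter. Everything after that is a routine transcription of the estimates already carried out for $\mathbb{F}_{C,+}^{\infty}$ in Lemmas~\ref{F+inf}, \ref{y<0C+inf} and \ref{y>0C+inf}.
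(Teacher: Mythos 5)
Your proposal is correct and follows essentially the same route as the paper, which proves this lemma precisely by re-running the $\mathbb{F}_{C,+}^{\infty}$ arguments (Lemmas \ref{F+inf}, \ref{y<0C+inf}, \ref{y>0C+inf}) with the identical phase $\phi$, the identical symbol bounds for $a_{\mathbb{F}_{C,+}^{0,\infty}}$, and the bound $x^{-1}=s\lvert\theta_n\rvert(1-\lvert\hat\theta'\rvert^2)^{1/2}\leq C\lvert\theta_n\rvert$. Your only addition — rebounding $G(x)$ via the difference-quotient estimate using $1+x-\lvert\hat\theta'\rvert^2\geq x$ rather than the $\sigma>1$ constraint $1-\lvert\hat\theta'\rvert^2\leq x$ (which is unavailable on this region) — is exactly the adjustment the paper's "same method" citation implicitly requires, and it is carried out correctly.
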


\begin{lemma}\label{wfC+0inf}
   The wavefront set of $\mathsf{U}^{\mathbb{F}_{C,+}^{0,\infty}}$ is contained in $\Sigma(\delta)$, where
   $$\Sigma (\delta)= \Big\{(x,y,\xi,\eta) : \xi = \partial_x \left(y\cdot\theta - \phi \right), \eta = d_y \left(y\cdot\theta-\phi\right) , d_{\theta}\left(y\cdot\theta -\phi\right)  = 0 , x > \delta , \lvert\theta_n\rvert \geq \lvert\theta'\rvert > 0 \Big\}$$
   $$\phi = \tfrac{2}{3}( Z^{3/2} - Z_0^{3/2})\sgn\theta_n. $$
\end{lemma}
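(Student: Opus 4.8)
The plan is to reduce the wavefront set statement for $\mathsf{U}^{\mathbb{F}_{C,+}^{0,\infty}}$ to exactly the situation treated in Friedlander's paper (Theorem~5.2 / Proposition~2.5.7 of H\"ormander), by first establishing that away from $x=0$ the relevant amplitude is a symbol of the right order and the phase is honest. Concretely, restrict $\mathsf{U}^{\mathbb{F}_{C,+}^{0,\infty}}$ to $X_\delta = \{x > \delta\}$ and write the pairing against a test function $\chi$ as an oscillatory integral
\begin{equation*}
(\mathsf{U}^{\mathbb{F}_{C,+}^{0,\infty}}_\delta, \chi) = \lim_{\eps\to 0}(2\pi)^{-n}\int\!\!\int\!\!\int a_{\mathbb{F}_{C,+}^{0,\infty}}\,\chi(x,y)\,\varpi(\eps\theta)\,e^{i(y\cdot\theta+\phi)}\,dx\,dy\,d\theta,
\end{equation*}
with $\phi = \tfrac{2}{3}(Z^{3/2} - Z_0^{3/2})\sgn\theta_n$. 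The first step is therefore to invoke the preceding lemma in this subsection, which already gives $a_{\mathbb{F}_{C,+}^{0,\infty}} \in S^{\sqrt{n^2/4-\lambda}}_{1/3,0;\theta}$ on $x>\delta$ (so in particular $S^{-\tfrac{1}{3}\tilde a + \sqrt{n^2/4-\lambda}}_{1/3,0}$ after accounting for the prefactor exponent, in parallel with Lemma~\ref{F+inf}), and its conic support is contained in $\{\lvert\theta\rvert>0,\ \lvert\theta_n\rvert\geq\lvert\theta'\rvert>0\}$. The second step is to check that on this conic support and on $x>\delta$, the function $\Phi = y\cdot\theta + \phi$ is smooth and homogeneous of degree $1$ in $\theta$ — this holds because $Z = Z_0 + x\lvert\theta_n\rvert^{2/3}$ stays bounded away from the values where $(\cdot)^{3/2}$ is non-smooth, exactly as in the $\mathbb{F}_{C,+}^{\infty}$ case — so $\Phi$ is a genuine phase function in the sense of H\"ormander.

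Once these two ingredients are in place, the third step is purely a citation: the expression above is an oscillatory integral of H\"ormander type with amplitude in $S^m_{1/3,0}$ and nondegenerate phase $\Phi$, so by Proposition~2.5.7 of \cite{Hor03} (the same proposition used to prove Lemma~\ref{wfC0inf} and Lemma~\ref{WFC+inf}),
\begin{equation*}
\WF\,\mathsf{U}^{\mathbb{F}_{C,+}^{0,\infty}}_\delta \subset \Big\{(x,y,\,d_x\Phi,\,d_y\Phi) : d_\theta\Phi = 0,\ \theta\in\operatorname{conesupp} a_{\mathbb{F}_{C,+}^{0,\infty}},\ x>\delta\Big\},
\end{equation*}
and substituting $d_x\Phi = \partial_x(y\cdot\theta - \phi)$ (with the sign bookkeeping on $\phi$ matching the statement), $d_y\Phi = d_y(y\cdot\theta-\phi)$, $d_\theta\Phi = 0 \Leftrightarrow d_\theta(y\cdot\theta - \phi) = 0$, and the conic support inclusion $\operatorname{conesupp} a_{\mathbb{F}_{C,+}^{0,\infty}} \subset \{\lvert\theta_n\rvert\geq\lvert\theta'\rvert>0\}$, we land exactly on $\Sigma(\delta)$. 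Since $\delta>0$ is arbitrary and $\mathsf{U}^{\mathbb{F}_{C,+}^{0,\infty}}$ is already known to be conormal (hence smooth near $x=0$ on $y_n\neq 0$ and, by the previous lemma, smooth and rapidly vanishing near $x=0$ for $y_n>0$ close enough to the boundary), the wavefront set over $\{x=0\}$ contributes nothing beyond what the conormality statement already controls, and the union over $\delta$ gives the claim.

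The main obstacle is the second step: one must verify carefully that after undoing the blow-up (the variables $z$, $s = h/z$, $\sigma$, $\nu$ all expressed in terms of $(x,\theta)$ via $Z_0 = \lvert\theta_n\rvert^{2/3}(1-\lvert\hat\theta'\rvert^2)$, $Z - Z_0 = x\lvert\theta_n\rvert^{2/3}$), the amplitude $a_{\mathbb{F}_{C,+}^{0,\infty}}$ genuinely satisfies the symbol estimates uniformly down to the glancing ray $\lvert\theta_n\rvert = \lvert\theta'\rvert$ — i.e. that no spurious singularity in $\theta$ is introduced by the coordinate change near $Z_0 = 0$. This is handled by the derivative computations already displayed in the lemma preceding this one (the expressions for $\theta_n\partial_{\theta_n}$, $\lvert\theta_n\rvert\partial_{\theta_j}$, $\partial_x$ in terms of $s\partial_s$, $z\partial_z$), together with the observation that $z$ and $s$ are bounded on the region in question, so each $\theta$-derivative costs at most $\lvert\theta\rvert^{-1/3}$; I would simply point to that computation rather than repeat it. The remainder — matching signs of $\phi$, checking $\lvert\theta_n\rvert\geq\lvert\theta'\rvert>0$ on the conic support, and citing H\"ormander — is routine and identical to the treatment of $\mathbb{F}_{C,+}^{\infty}$ in Lemma~\ref{WFC+inf}.
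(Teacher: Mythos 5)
Your proposal is correct and takes essentially the same route as the paper: the paper dispatches this lemma in a single remark preceding the block of four lemmas (same phase $\phi$, same symbol class as in the $\mathbb{F}_{C,+}^{\infty}$ case, $x^{-1}\lesssim\lvert\theta_n\rvert$), which in turn reduces to the proof of Lemma~\ref{WFC+inf} — namely, on $x>\delta$ the amplitude lies in $S^{\cdot}_{1/3,0}$, the phase $y\cdot\theta+\phi$ is smooth and homogeneous of degree one in $\theta$, and one cites Friedlander's Lemma~5.2 / H\"ormander's Proposition~2.5.7 for the wavefront bound of the resulting oscillatory integral. You have simply unpacked the same steps the paper invokes by reference.
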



\subsection{On region $\mathbb{F}_{C,-,1}$}
 On the current region $\{Z_0 < -\delta_2 ,\ \lvert \theta_n\rvert \geq (1 - 2\delta_1) \lvert\theta'\rvert\} $ so $\lvert\theta_n\rvert \sim \lvert\theta\rvert$, hence it suffices to consider and compare with the growth of $\lvert\theta_n\rvert$.
We will show:
\begin{enumerate}
\item On $\mathbb{F}_{C,-,1}^{0,0}$: $\mathsf{U}^{\mathbb{F}_{C,-,1}^{0,0}}$ is smooth on $x >\delta$ for any $\delta > 0$ while $\mathsf{U}^{\mathbb{F}_{C,-,1}^{0,0}}\in x^{s_-} \mathcal{C}^{\infty}_{x,y} + x^{s_+} \mathcal{C}^{\infty}_{x,y}$ on $y_n \neq 0$.
\item On $\mathbb{F}_{C,-,1}^{\infty}$
 \begin{enumerate}
 \item $\mathsf{U}^{\mathbb{F}_{C,-,1}^{\infty}}$ is smooth  and vanishes to infinite order at $x = 0$ for $y_n < 0$.
 \item smooth and vanishes to infinite order at $x =0$  for $y_n > 0$ if we are close enough to the boundary.
 \item Wavefront set statement away from $x = 0$ .
 \end{enumerate}
 \item On $\mathbb{F}_{C,-,1}^{0,\infty}$: $\mathsf{U}^{\mathbb{F}_{C,-,1}^{0,\infty}}$ is smooth on $x >\delta$ for any $\delta > 0$. $\mathsf{U}^{\mathbb{F}_{C,-,1}^{0,\infty}}$ is smooth and vanishes to infinite order at $x = 0$ for $y_n \neq 0$.
\end{enumerate}

\subsubsection{Subregion $\mathbb{F}_{C,-,1}^{0,0}$}\
         
    \begin{align}\label{indicialFC-10}
    ( \mathsf{U}^{\mathbb{F}_{C,-,1}^{0,0}} , \chi)
   & = \lim_{\epsilon \rightarrow 0} (2\pi)^{-n} \int\int\int  \psi_{\mathbb{F}_{C,-,1}^{0,0}} \hat{\mathsf{U}}^{\mathbb{F}_{C,-,1}^{0,0}} \chi(x,y) \varpi(\epsilon\theta)  e^{iy\cdot \theta} \,dx\, dy \, d\theta 
   \end{align}
  We recall the structure of $\hat{u}_x$ on the region currently considered:
         $$\psi_{\mathbb{F}_{C,-,1}^{0,0}} \hat{u}_x=  \big[\lvert\theta_n\rvert (\lvert\hat{\theta}'\rvert^2-1)^{1/2}\big]^{\sqrt{\tfrac{n^2}{4}-\lambda}} \hat{\mathsf{U}}^{\mathbb{F}_{C,-,1}^{0,0}} $$ 
      where  $ \hat{\mathsf{U}}^{\mathbb{F}_{C,-,1}^{0,0}}$ is smooth in $t$ and smooth in $h$. Here $t = x\lvert\theta_n\rvert (\lvert\hat{\theta}'\rvert^2-1)^{1/2}, h = \lvert\theta_n\rvert^{-1}  (\lvert\hat{\theta}'\rvert^2-1)^{1/2}$. In more details:
\begin{align*}
  2\sqrt{\tfrac{n^2}{4}-\lambda} \notin \mathbb{Z} :&\  \hat{\mathsf{U}}^{\mathbb{F}_{C,-,1}^{0,0}} \in z^{\tfrac{n}{2}} t^{\sqrt{\tfrac{n^2}{4}-\lambda}} \mathcal{C}^{\infty}_{t,h} + z^{\tfrac{n}{2}} t^{-\sqrt{\tfrac{n^2}{4}-\lambda}} \mathcal{C}^{\infty}_{t,h}
  \end{align*}
  $$\big[\lvert\theta_n\rvert (\lvert\hat{\theta}'\rvert^2-1)^{1/2}\big]^{\sqrt{\tfrac{n^2}{4}-\lambda}} \hat{\mathsf{U}}^{\mathbb{F}_{C,-,1}^{0,0}} \in  \big[\lvert\theta_n\rvert (\lvert\hat{\theta}'\rvert^2-1)^{1/2} \big]^{2\sqrt{\tfrac{n^2}{4}-\lambda}} x^{s_+} \mathcal{C}^{\infty}_{t,h} + x^{s_-} \mathcal{C}^{\infty}_{t,h}$$
    Using this form of $\hat{u}_x$ we rewrite (\ref{indicialFC-10}) as
 $$ ( \mathsf{U}^{\mathbb{F}_{C,-,1}^{0,0}} , \chi)
   = \lim_{\epsilon \rightarrow 0} \sum_{\pm} (2\pi)^{-n} x^{s_{\pm}} \int\int\int  a_{\mathbb{F}_{C,-,1}^{0,0};\pm} \chi(x,y) \varpi(\epsilon\theta)  e^{iy\cdot \theta} \,dx\, dy\, d\theta$$
     
     \begin{lemma}\label{symbolC-100}\indent\par\noindent
     \begin{enumerate}
     \item   $L$ whose formal adjoint is $   ^t L= \lvert y \rvert^{-2} \sum_{j=1}^n \tfrac{\partial \bar{\phi}}{\partial \theta_j} \partial_{\theta_j}$
     $$D_x^l L^k a_{\mathbb{F}_{C,-,1}^{0,0};\pm}  \leq\tilde{C}_k\lvert\theta\rvert^{-\tfrac{1}{3}k+ 2 \sqrt{\tfrac{n^2}{4}-\lambda}+l}.$$
     \item On $x > \delta$, $a_{\mathbb{F}_{C,-,1}^{0,0}} = \sum_{\pm} x^{s_{\pm}}a_{\mathbb{F}_{C,-,1}^{0,0};\pm}  \in S^{-\infty}_{\tfrac{1}{3},0;\theta}$.
     \end{enumerate}
    
        \end{lemma}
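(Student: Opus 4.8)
The plan is to prove Lemma \ref{symbolC-100} by transcribing, essentially verbatim, the argument of Lemma \ref{lemssymFC+00} (the analogous statement on the region $\mathbb{F}_{C,+}^{0,0}$), with the only change being that we are now on the $Z_0<-\delta_2$ side, where $\lvert\hat{\theta}'\rvert>1$ and the desingularizing factor is $(\lvert\hat{\theta}'\rvert^2-1)^{1/2}$ in place of $(1-\lvert\hat{\theta}'\rvert^2)^{1/2}$. First I would record the structure of the approximate solution on this region, namely
$$\psi_{\mathbb{F}_{C,-,1}^{0,0}}\,\hat{u}_x=\big[\lvert\theta_n\rvert(\lvert\hat{\theta}'\rvert^2-1)^{1/2}\big]^{\sqrt{\tfrac{n^2}{4}-\lambda}}\,\hat{\mathsf{U}}^{\mathbb{F}_{C,-,1}^{0,0}},$$
with $\hat{\mathsf{U}}^{\mathbb{F}_{C,-,1}^{0,0}}$ conormal in $t$ and smooth in $h$, where $t,h$ are the blow-up coordinates listed in the statement. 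The key preliminary observation is that on $\mathbb{F}_{C,-,1}^{0,0}$ the quantities $t$, $h$ and $\lvert\hat{\theta}'\rvert$ are all bounded: the constraints $Z_0<-\delta_2$ together with $\lvert\theta_n\rvert\geq(1-2\delta_1)\lvert\theta'\rvert$ force $\lvert\theta_n\rvert$ to be equivalent to $\lvert\theta\rvert$ and keep $\lvert\hat{\theta}'\rvert$ in a compact set, so after pulling out the homogeneity one gets the crude bound $\lvert a_{\mathbb{F}_{C,-,1}^{0,0};\pm}\rvert\lesssim\lvert\theta_n\rvert^{2\sqrt{n^2/4-\lambda}}$.

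For the derivative estimates of part (1), I would write out $\partial_{\theta_n}$, the tangential derivatives $\partial_{\theta_j}$ with $j\neq n$, and $\partial_x$ in terms of the vector fields $t\partial_t$ and $h\partial_h$ (exactly the change-of-variables step performed in Lemma \ref{lemssymFC+00}), observing that the coefficients of these vector fields are bounded functions of $t,h,\lvert\hat{\theta}'\rvert$ times the powers of $\lvert\theta_n\rvert$ forced by homogeneity. Since $\hat{\mathsf{U}}^{\mathbb{F}_{C,-,1}^{0,0}}$ is conormal in $t$ and smooth in $h$, repeated application of $L$ (whose adjoint $^tL=\lvert y\rvert^{-2}\sum_j y_j\partial_{\theta_j}/i$ produces one factor $\lvert\theta\rvert^{-1/3}$ per application on symbols of the given type), together with $D_x$, yields the stated bound $\lvert D_x^lL^k a_{\mathbb{F}_{C,-,1}^{0,0};\pm}\rvert\leq C_k\lvert\theta\rvert^{-k/3+2\sqrt{n^2/4-\lambda}+l}$; the constant $C_k$ is independent of $x$ because $x\partial_x=t\partial_t$ is a bounded vector field and the $x^{-1}$ entering from $D_x=x^{-1}(x\partial_x)$ is polynomially controlled by $\lvert\theta\rvert$ on the support.

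For part (2), on $x>\delta$ the $z^{n/2}$-prefactor of $\hat{\mathsf{U}}^{\mathbb{F}_{C,-,1}^{0,0}}$ makes it smooth in $x$ away from $\{x=0\}$, and combining this with the estimates of part (1) gives $a_{\mathbb{F}_{C,-,1}^{0,0}}=\sum_{\pm}x^{s_{\pm}}a_{\mathbb{F}_{C,-,1}^{0,0};\pm}\in S^{2\sqrt{n^2/4-\lambda}}_{1/3,0;\theta}$. To improve this to $S^{-\infty}_{1/3,0;\theta}$ I would solve the two defining relations for $\lvert\theta_n\rvert$ in terms of $t$, $h$ and $x$, obtaining a monomial expression $\lvert\theta_n\rvert\sim t^{a}h^{b}x^{-c}$ with $c>0$; then for every $N$ the quantity $\lvert\theta_n\rvert^N\hat{\mathsf{U}}^{\mathbb{F}_{C,-,1}^{0,0}}$ is bounded on $x>\delta$ (and stays bounded under all $t,h$-derivatives), so $a_{\mathbb{F}_{C,-,1}^{0,0}}$ decays faster than any power of $\lvert\theta\rvert$ there, using $\lvert\theta_n\rvert\sim\lvert\theta\rvert$. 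The only real work is the elementary algebra of the change of variables and verifying the exact exponents; I expect that bookkeeping — keeping the powers of $(\lvert\hat{\theta}'\rvert^2-1)$ and $\lvert\theta_n\rvert$ straight through the coordinate change — to be the main (and essentially the only) obstacle, the rest being a line-by-line transcription of Lemma \ref{lemssymFC+00}.
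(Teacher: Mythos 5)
Your proposal follows the same route as the paper's own proof: the paper likewise bounds $a_{\mathbb{F}_{C,-,1}^{0,0};\pm}$ by $\lvert\theta_n\rvert^{2\sqrt{\tfrac{n^2}{4}-\lambda}}$ using boundedness of $t,h,\lvert\hat{\theta}'\rvert$ on this region, expresses $\partial_{\theta_n},\partial_{\theta_j},\partial_x$ through $t\partial_t$ and $h\partial_h$ exactly as in Lemma \ref{lemssymFC+00}, and for part (2) uses $\lvert\theta_n\rvert = x^{-3/2}t^{3/2}h^{1/2}$ together with $\lvert\theta_n\rvert\sim\lvert\theta\rvert$ on $\{\lvert\theta_n\rvert\geq(1-2\delta_1)\lvert\theta'\rvert\}$ to lower the symbol order to $-\infty$ on $x>\delta$. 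The only cosmetic difference is your aside that $D_x$ can be handled via $D_x=x^{-1}(x\partial_x)$ with $x^{-1}$ polynomially controlled (not literally true near the corner $t=0$); the cleaner bookkeeping, which the paper uses and which you also invoke, is $\partial_x=\lvert\theta_n\rvert(\lvert\hat{\theta}'\rvert^2-1)^{1/2}\partial_t$, giving one factor of $\lvert\theta\rvert$ per $x$-derivative, so the argument is unchanged.
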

       \begin{proof}\indent\par\noindent
       \begin{enumerate}
       \item First we consider the bound on $a_{\mathbb{F}_{C,-,1}^{0,0};\pm}$
                $$\psi_{\mathbb{F}_{C,-,1}^{0,0}} \hat{u}_x=  \big(\lvert\theta_n\rvert (\lvert\hat{\theta}'\rvert^2-1)^{1/2}\big)^{\sqrt{\tfrac{n^2}{4}-\lambda}} \hat{\mathsf{U}}^{\mathbb{F}_{C,-,1}^{0,0}} $$ 
  $$\big[\lvert\theta_n\rvert (\lvert\hat{\theta}'\rvert^2-1)^{1/2}\big]^{\sqrt{\tfrac{n^2}{4}-\lambda}} \hat{\mathsf{U}}^{\mathbb{F}_{C,-,1}^{0,0}} \in  \big[\lvert\theta_n\rvert (\lvert\hat{\theta}'\rvert^2-1)^{1/2} \big]^{2\sqrt{\tfrac{n^2}{4}-\lambda}} x^{s_+} \mathcal{C}^{\infty}_{t,h} + x^{s_-} \mathcal{C}^{\infty}_{t,h}$$
  In the region currently considered on which $h, t$ and $\lvert\hat{\theta}'\rvert$ are bounded and since $s_- \geq 0$, so 
$$a_{\mathbb{F}_{C,-,1}^{0,0};\pm}  \lesssim  \lvert \theta_n\rvert^{2 \sqrt{\tfrac{n^2}{4}-\lambda}} $$
To compute the bound for higher derivatives we use the following computations:
    \begin{align*}
  \lvert \theta_n\rvert \partial_{\theta_n} &= \left(-1 -3(-Z_0)^{-1}\lvert\theta_n\rvert^{2/3} \lvert\hat{\theta}'\rvert^2\right)h \partial_h + \left(1 -  (-Z_0)^{-1} \lvert\theta_n\rvert^{2/3}\lvert\hat{\theta}'\rvert^2\right) t\partial_t\\
  \lvert\theta_n\rvert \partial_{\theta_j} &= 3  (-Z_0)^{-1} \lvert\theta_n\rvert^{-1/3} \lvert\hat{\theta}'\rvert  h\partial_h +  (-Z_0)^{-1} \lvert\theta_n\rvert^{-1/3}\lvert\hat{\theta}_j\rvert t\partial_t\\
   x\partial_x &= t\partial_t  ;\ \partial_x = \lvert\theta_n\rvert( \lvert\hat{\theta}'\rvert^2-1)^{1/2} \partial_t
  \end{align*}
Using the fact that $a_{\mathbb{F}_{C,-,1}^{0,0};\pm}$ is smooth in $t,h$, on the current region $t,h$ and $\lvert\hat{\theta}'\rvert$ are bounded, and the computations above to get:
$$D_x^l L^k a_{\mathbb{F}_{C,-,1}^{0,0}} \leq\tilde{C}_k\lvert\theta\rvert^{-\tfrac{1}{3}k+ 2 \sqrt{\tfrac{n^2}{4}-\lambda} +l }$$


  \
  \
  
  \item We have $\sigma_2(Z_0) , \ \sigma_0(\lvert\theta\rvert) \in S^0_{\tfrac{1}{3},0}$
 . From the prep calculation, on $x\neq 0$, we have
 $$a_{\mathbb{F}_{C,-,1}^{0,0}} \in S^{2 \sqrt{\tfrac{n^2}{4}-\lambda} }_{\tfrac{1}{3},1;\theta}$$
   
  In fact we can improve the order of the symbol 
   $\lvert\theta_n\rvert = x^{-\tfrac{3}{2}} t^{3/2} h^{1/2}$. Thus on $x >\delta$ for all $N$
   $$ \lvert\theta_n\rvert^N \hat{\mathsf{U}}^{\mathbb{F}_{C,-,1}^{0,0}}
    = x^{-\tfrac{3}{2}N} t^{\tfrac{3}{2}N} h^{\tfrac{1}{2}N} \leq C_N$$

    From the previous remark and using the fact that $\lvert\theta_n\rvert \sim \lvert\theta'\rvert$,  on $x > \delta$ we can lower the order of the symbol ie $a_{\mathbb{F}_{C,-,1}^{0,0}} \in S^{-\infty}_{\tfrac{1}{3},0;\theta}$.

  \end{enumerate}
  \end{proof}

\begin{lemma}
    $\mathsf{U}^{\mathbb{F}_{C,-,1}^{0,0}}$ is smooth on $x >\delta$ for any $\delta > 0$.
\end{lemma}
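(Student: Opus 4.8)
The plan is to mirror the argument already used for the analogous statement on region $\mathbb{F}_{C,0}^0$ (Lemma \ref{C00x>delta}) and on region $\mathbb{F}_{C,+}^{0,0}$ (Lemma \ref{x>deltaC+00}). Concretely, one pairs $\mathsf{U}^{\mathbb{F}_{C,-,1}^{0,0}}$ with a test function $\chi(x,y)\in\mathcal{C}^\infty_0(X_\delta)$, writes the pairing as the (regularized) oscillatory integral
$$
(\mathsf{U}^{\mathbb{F}_{C,-,1}^{0,0}}_\delta,\chi)=\lim_{\epsilon\to 0}(2\pi)^{-n}\int\!\!\int\!\!\int a_{\mathbb{F}_{C,-,1}^{0,0}}\,\chi(x,y)\,\varpi(\epsilon\theta)\,e^{iy\cdot\theta}\,dx\,dy\,d\theta,
$$
and then invokes part (2) of Lemma \ref{symbolC-100}, which says exactly that on $x>\delta$ the symbol $a_{\mathbb{F}_{C,-,1}^{0,0}}$ lies in $S^{-\infty}_{\frac13,0;\theta}$. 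The point of the improvement of the symbol order from $S^{2\sqrt{n^2/4-\lambda}}_{1/3,1}$ to $S^{-\infty}$ is precisely the relation $\lvert\theta_n\rvert=x^{-3/2}t^{3/2}h^{1/2}$ together with $\lvert\theta_n\rvert\sim\lvert\theta\rvert$ on the current cone; since $t,h$ are bounded there, for $x>\delta$ one has $\lvert\theta\rvert^N\hat{\mathsf{U}}^{\mathbb{F}_{C,-,1}^{0,0}}$ bounded for every $N$.

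Given this, I would argue as follows. Because $a_{\mathbb{F}_{C,-,1}^{0,0}}$ is rapidly decreasing in $\theta$ uniformly on $X_\delta$, the integral in $\theta$ converges absolutely and one may pass $\epsilon\to 0$ under the integral sign, so that
$$
\mathsf{U}^{\mathbb{F}_{C,-,1}^{0,0}}_\delta(x,y)=(2\pi)^{-n}\int a_{\mathbb{F}_{C,-,1}^{0,0}}(x,\theta)\,e^{iy\cdot\theta}\,d\theta
$$
is a genuine (not merely distributional) function, smooth in $y$ by differentiation under the integral sign: each $D_y^\beta$ brings down a factor $\theta^\beta$, and $\theta^\beta a_{\mathbb{F}_{C,-,1}^{0,0}}$ is still Schwartz in $\theta$. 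For derivatives in $x$ one uses $\partial_x=x^{-1}\,t\partial_t$, valid on $x>\delta$; since $t\partial_t$ preserves the symbol class (it is one of the vector fields appearing in the derivative formulas in the proof of Lemma \ref{symbolC-100}), $D_x^l D_y^\beta a_{\mathbb{F}_{C,-,1}^{0,0}}$ remains rapidly decreasing in $\theta$, uniformly for $x$ in any compact subset of $(\delta,\infty)$. Hence all mixed derivatives $D_x^l D_y^\beta\mathsf{U}^{\mathbb{F}_{C,-,1}^{0,0}}_\delta$ exist and are continuous, i.e. $\mathsf{U}^{\mathbb{F}_{C,-,1}^{0,0}}$ is smooth on $x>\delta$. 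Since $\delta>0$ was arbitrary, this proves the lemma.

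There is essentially no new obstacle here: the substantive work — establishing the decay estimates for $a_{\mathbb{F}_{C,-,1}^{0,0}}$ and its derivatives, and in particular the symbol-order improvement away from $\{x=0\}$ — was already carried out in Lemma \ref{symbolC-100}. The only mild point to be careful about is uniformity of the Schwartz seminorms of $D_x^l a_{\mathbb{F}_{C,-,1}^{0,0}}$ as $x$ ranges over a compact subset of $(\delta,\infty)$, which follows because the derivative formulas are polynomial in the bounded quantities $t,h,\lvert\hat\theta'\rvert$ with coefficients involving only negative powers of $x$ bounded on $x>\delta$; and ensuring that the passage to the limit $\epsilon\to 0$ and the differentiations under the integral sign are all justified by dominated convergence, which is immediate from $a_{\mathbb{F}_{C,-,1}^{0,0}}\in S^{-\infty}_{1/3,0;\theta}$. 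The proof is thus a verbatim adaptation of the proofs of Lemma \ref{C00x>delta} and Lemma \ref{x>deltaC+00}.
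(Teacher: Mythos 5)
Your proposal is correct and follows essentially the same route as the paper: the paper's proof also writes the pairing as the regularized oscillatory integral, cites part (2) of Lemma \ref{symbolC-100} to get $a_{\mathbb{F}_{C,-,1}^{0,0}}\in S^{-\infty}_{\frac13,0}$ on $x>\delta$, and then refers to the argument of Lemma \ref{x>deltaC+00} (rapid decay in $\theta$, differentiation under the integral, $\partial_x=x^{-1}t\partial_t$). Nothing in your write-up diverges from that in substance.
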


 \begin{proof}
 
 Denote $\mathsf{U}^{\mathbb{F}_{C,-,1}^{0,0}}_{\delta}$ the restrictions of $\mathsf{U}^{\mathbb{F}_{C,-,1}^{0,0}}$ to $\mathcal{D}'(X_{\delta})$ .For $x >\delta$ we have :
   \begin{align*}
    ( \mathsf{U}^{\mathbb{F}_{C,-,1}^{0,0}}_{\delta} , \chi)
    &= \lim_{\epsilon \rightarrow 0} (2\pi)^{-n} \int\int\int \psi_{\mathbb{F}_{C,-,1}^{0,0}} \hat{u}_x \chi(x,y) \varpi(\epsilon\theta)  e^{iy\cdot \theta} \,dx\, dy\, d\theta\\
     &= \lim_{\epsilon \rightarrow 0} (2\pi)^{-n} \int\int\int  a_{\mathbb{F}_{C,-,1}^{0,0}} \chi(x,y) \varpi(\epsilon\theta)  e^{iy\cdot \theta} \,dx\, dy\, d\theta
    \end{align*}
  where $a_{\mathbb{F}_{C,-,1}^{0,0}} \in S^{-\infty}_{\tfrac{1}{3},0}$ for $x >\delta$.  The proof is the same as that for Lemma \ref{x>deltaC+00}.
   \end{proof}

\begin{lemma}\label{conormalFC-1}
     $\mathsf{U}^{\mathbb{F}_{C,-,1}^{0,0}}\in x^{s_-} \mathcal{C}^{\infty}_{x,y} + x^{s_+} \mathcal{C}^{\infty}_{x,y}$ on  $y_n \neq 0$
\end{lemma}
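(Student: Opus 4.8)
The plan is to mirror the proof of Lemma~\ref{conormalFC00} verbatim, since the structure of $\mathsf{U}^{\mathbb{F}_{C,-,1}^{0,0}}$ on this region is identical: after the corrective factor it is a superposition $\sum_\pm x^{s_\pm} a_{\mathbb{F}_{C,-,1}^{0,0};\pm}$ where the symbols $a_{\mathbb{F}_{C,-,1}^{0,0};\pm}$ satisfy the derivative bounds recorded in Lemma~\ref{symbolC-100}. First I would fix $\chi(x,y)\in\mathcal{C}^\infty_0$ with either $\supp\chi\subset\{y_n>0\}$ or $\supp\chi\subset\{y_n<0\}$, so that $|d_\theta(y\cdot\theta)|^2=|y|^2$ is bounded away from zero on $\supp\chi\times\RR^n$ (the $n$-th component $y_n$ of the derivative of the phase never vanishes there). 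Then introduce the first-order operator $L$ with ${}^tL=\tfrac{1}{|y|^2}\sum y_j\partial_{\theta_j}/i$, so that ${}^tL e^{iy\cdot\theta}=e^{iy\cdot\theta}$, and perform $k$-fold integration by parts in
$$
(\mathsf{U}^{\mathbb{F}_{C,-,1}^{0,0}},\chi)=\lim_{\epsilon\to0}\sum_\pm(2\pi)^{-n}x^{s_\pm}\int\!\!\int\!\!\int L^k\!\big(a_{\mathbb{F}_{C,-,1}^{0,0};\pm}\varpi\big)\,\chi\, e^{iy\cdot\theta}\,dx\,dy\,d\theta .
$$

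Next I would run the same four-case bootstrap as in Lemma~\ref{conormalFC00}. By Lemma~\ref{symbolC-100}, $|D_x^l L^k a_{\mathbb{F}_{C,-,1}^{0,0};\pm}|\leq C_k|\theta|^{-k/3+2\sqrt{n^2/4-\lambda}+l}$ with $C_k$ independent of $x$. For $k>3(n+2\sqrt{n^2/4-\lambda})$ the $\theta$-integrand is in $L^1(\RR^n)$, so one may pass $\epsilon\to0$ under the integral and conclude $\mathsf{U}^{\mathbb{F}_{C,-,1}^{0,0};\pm}\in L^\infty_{x,y}$. Choosing $k$ large enough depending additionally on $|\beta|$, on $l$, and on $|\alpha|+l$ respectively handles $D_y^\beta$, $D_x^l$, and mixed derivatives $D_y^\alpha D_x^l$, each time producing an $L^\infty_{x,y}$ bound. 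Since $D_y^\alpha D_x^l \mathsf{U}^{\mathbb{F}_{C,-,1}^{0,0};\pm}\in L^\infty_{x,y}$ for all $\alpha,l$, we get $\mathsf{U}^{\mathbb{F}_{C,-,1}^{0,0};\pm}\in\mathcal{C}^\infty_{x,y}$ away from $y_n=0$, hence $\mathsf{U}^{\mathbb{F}_{C,-,1}^{0,0}}\in x^{s_-}\mathcal{C}^\infty_{x,y}+x^{s_+}\mathcal{C}^\infty_{x,y}$ on $y_n\neq0$.

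There is essentially no obstacle here: the nonstationarity of the phase on $\{y_n\neq0\}$ is purely from the $y$-pairing and does not see the oscillatory factor (which is absent on this region, $\mathbb{F}_{C,-,1}^{0,0}$ being near $z=0$, $h=0$ with no oscillation), and the symbol estimates have already been established in Lemma~\ref{symbolC-100}. The only point requiring a little care — and the closest thing to a ``main obstacle'' — is bookkeeping of the exponent $2\sqrt{n^2/4-\lambda}$ growth in each of the four derivative cases, to make sure $k$ is chosen large enough in each step that the integrand lands in $L^1(\RR^n)$ uniformly in $x$; this is exactly the same accounting as in Lemma~\ref{conormalFC00} and Lemma~\ref{conormalC+00}, so I would simply state it and refer back to those proofs rather than rewrite the estimates in full.
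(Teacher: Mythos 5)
Your proposal is correct and matches the paper exactly: the paper's proof of this lemma is simply the remark ``Same proof as that Lemma \ref{conormalC+00}'' (which in turn refers back to the argument of Lemma \ref{conormalFC00}), and you have reproduced precisely that argument, using the symbol bounds of Lemma \ref{symbolC-100} together with the nonstationary-phase integration by parts in $\theta$ and the four-case derivative bootstrap.
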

  \begin{proof}
  Same proof as that Lemma \ref{conormalC+00}  \end{proof}
\
\

\subsubsection{Region $\mathbb{F}_{C,-,1}^{\infty}$}\

   On the region currently considered  we have: $\sigma > 1$ (so $Z >0$), $Z_0 \leq-\delta_2$, $1<\hat{\theta}'<(1-2\delta_1)^{-1}$
   $$\psi_{\mathbb{F}_{C,-,1}^{\infty}} \hat{u}_x =  \big(\lvert\theta_n\rvert (\lvert\hat{\theta}'\rvert^2-1)^{1/2}\big)^{\sqrt{\tfrac{n^2}{4}-\lambda}} \hat{\mathsf{U}}^{\mathbb{F}_{C,-,1}^{\infty}}  $$
   $$\hat{\mathsf{U}}^{\mathbb{F}_{C,-,1}^{\infty}} =  \exp(-i\tfrac{2}{3}Z^{3/2}\sgn\theta_n) \exp(-\tfrac{2}{3}(-Z_0)^{3/2}) h^N \sigma^{-\tfrac{1}{2}} \hat{\mathsf{V}}^{\mathbb{F}_{C,-,1}^{\infty}}_N;\ \  \text{for any}\  N$$
 where $\hat{\mathsf{V}}^{\mathbb{F}_{C,-,1}^{\infty}}$ is smooth in $\sigma^{-2/3}$ and $h$. Denote by $\tilde{\phi} = \tfrac{2}{3}Z^{3/2}\sgn\theta_n$.
 \begin{align*}
    ( \mathsf{U}^{\mathbb{F}_{C,-,1}^{\infty}} , \chi)
    &= \lim_{\epsilon \rightarrow 0} (2\pi)^{-n} \int\int\int  \psi_{\mathbb{F}_{C,-,1}^{\infty}}  \hat{u}_x \chi(x,y) \varpi(\epsilon\theta)  e^{iy\cdot \theta} \,dx\, dy\, d\theta\\
    &=  \lim_{\epsilon \rightarrow 0} (2\pi)^{-n} \int\int\int  a_{\mathbb{F}_{C,-,1}^{\infty}}\chi(x,y) \varpi(\epsilon\theta)  e^{iy\cdot \theta+i\tilde{\phi}} \, dx \,dy\, d\theta
    \end{align*}\

 \begin{lemma}\indent\par\noindent
 \begin{enumerate}
\item  For $L_{\mathbb{F}_{C,-,1}^{\infty}}$ whose formal adjoint is $   ^t L_{\mathbb{F}_{C,-,1}^{\infty}} = \lvert d_{\theta}(y\cdot\theta + \tilde{\phi}))\rvert^{-2} \sum_{j=1}^n \tfrac{\partial (y\cdot\theta+ \tilde{\phi})}{\partial \theta_j} \partial_{\theta_j}$ we have
$$(x\partial_x)^l L_{\mathbb{F}_{C,-,1}^{\infty}}^k a_{\mathbb{F}_{C,-,1}^{\infty}} \leq C_k \lvert\theta_n\rvert^{-\tfrac{1}{3}k +  \sqrt{\tfrac{n^2}{4}-\lambda}}.$$
 \item  $\exp\left(-\tfrac{2}{3}(-Z_0)^{3/2}\right)\in S^0_{\tfrac{1}{3},0;\theta}$ and $a_{\mathbb{F}_{C,-,1}^{0,\infty}} \in S^{ \sqrt{\tfrac{n^2}{4}-\lambda}}_{\tfrac{1}{3},0; \theta}$.
 \end{enumerate}
 \end{lemma}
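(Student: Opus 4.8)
The plan is to proceed exactly as in the proofs of Lemma~\ref{F+inf}, Lemma~\ref{FC0in} and Lemma~\ref{symbolC-100}: read off the amplitude $a_{\mathbb{F}_{C,-,1}^{\infty}}$ explicitly from the construction, then bound it and its $\theta$- and $x$-derivatives by rewriting all derivatives in terms of the bounded ``good'' variables available on this region. First I would extract the oscillatory factor. Since $\tilde{\phi} = \tfrac{2}{3}Z^{3/2}\sgn\theta_n$ and
$$\psi_{\mathbb{F}_{C,-,1}^{\infty}}\hat{u}_x = \big(\lvert\theta_n\rvert(\lvert\hat\theta'\rvert^2-1)^{1/2}\big)^{\sqrt{\tfrac{n^2}{4}-\lambda}}\, e^{-i\tfrac{2}{3}Z^{3/2}\sgn\theta_n}\, e^{-\tfrac{2}{3}(-Z_0)^{3/2}}\, h^N\,\sigma^{-1/2}\,\hat{\mathsf{V}}^{\mathbb{F}_{C,-,1}^{\infty}}_N ,$$
we obtain $a_{\mathbb{F}_{C,-,1}^{\infty}} = \big(\lvert\theta_n\rvert(\lvert\hat\theta'\rvert^2-1)^{1/2}\big)^{\sqrt{\tfrac{n^2}{4}-\lambda}} e^{-\tfrac{2}{3}(-Z_0)^{3/2}} h^N \sigma^{-1/2}\hat{\mathsf{V}}^{\mathbb{F}_{C,-,1}^{\infty}}_N$, with $\hat{\mathsf{V}}$ smooth in $\sigma^{-2/3}$ and $h$. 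On the region currently considered one has $Z_0 \le -\delta_2$ and $(1-2\delta_1)\lvert\theta_n\rvert \le \lvert\theta'\rvert$, so $\lvert\theta_n\rvert\sim\lvert\theta'\rvert\sim\lvert\theta\rvert$ and $1 < \lvert\hat\theta'\rvert \le (1-2\delta_1)^{-1}$ is bounded; moreover $h = (-Z_0)^{-3/2}\le \delta_2^{-3/2}$, $\sigma^{-1}$ is bounded, and $e^{-\tfrac{2}{3}(-Z_0)^{3/2}} \le e^{-\tfrac{2}{3}\delta_2^{3/2}} < 1$. Hence the only growth is the factor $\big(\lvert\theta_n\rvert(\lvert\hat\theta'\rvert^2-1)^{1/2}\big)^{\sqrt{\tfrac{n^2}{4}-\lambda}} \lesssim \lvert\theta_n\rvert^{\sqrt{\tfrac{n^2}{4}-\lambda}}$ (the exponent being positive, smallness of $\lvert\hat\theta'\rvert^2-1$ only helps), which gives the zeroth-order bound $\lvert a_{\mathbb{F}_{C,-,1}^{\infty}}\rvert \lesssim \lvert\theta_n\rvert^{\sqrt{\tfrac{n^2}{4}-\lambda}}$.

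Next I would record the chain-rule identities expressing $\partial_{\theta_n}$, $\partial_{\theta_j}$ $(j>n)$ and $x\partial_x$ as combinations of $h\partial_h$, $\sigma^{-2/3}\partial_{\sigma^{-2/3}}$ and $\partial_{Z_0}$ with coefficients bounded in $\theta$ on this region --- the analogues of the displays (\ref{derivativeFinf+}) and of those in the proof of Lemma~\ref{F+inf}. Since $x\partial_x$ is, up to a constant, $\sigma^{-2/3}\partial_{\sigma^{-2/3}}$, it preserves the order; and since on the localization region (where $y_n<0$, or $x<\alpha^2$ and $y_n>\beta$) the phase $y\cdot\theta+\tilde\phi$ has $\lvert d_\theta(y\cdot\theta+\tilde\phi)\rvert$ bounded below while $d_{\theta_j}\tilde\phi$ is homogeneous of degree $0$ hence bounded, the coefficients of $\partial_{\theta_j}$ in $^tL_{\mathbb{F}_{C,-,1}^{\infty}}$ are bounded, so each application of $L_{\mathbb{F}_{C,-,1}^{\infty}}$ lowers the order by at least $\tfrac13$. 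Iterating these two facts together with the zeroth-order bound gives $(x\partial_x)^l L_{\mathbb{F}_{C,-,1}^{\infty}}^k a_{\mathbb{F}_{C,-,1}^{\infty}} \le C_k \lvert\theta_n\rvert^{-\tfrac{1}{3}k+\sqrt{\tfrac{n^2}{4}-\lambda}}$, which is part (1).

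For part (2) I would first check that $e^{-\tfrac{2}{3}(-Z_0)^{3/2}}$ is a classical symbol of order $0$ of type $(\tfrac13,0)$: $-Z_0 = \lvert\theta_n\rvert^{2/3}(\lvert\hat\theta'\rvert^2-1)$ is itself a symbol, each $\theta$-derivative of $(-Z_0)^{3/2}$ produces a bounded coefficient times $(-Z_0)$ (up to the $\lvert\theta\rvert^{-1/3}$ rescaling characteristic of these spaces), so applying $(\theta\partial_\theta)^\alpha$ to the exponential yields a polynomial in $(-Z_0)^{3/2}$ times $e^{-\tfrac{2}{3}(-Z_0)^{3/2}}$, which is uniformly bounded because $\sup_{t\ge0}t^p e^{-\tfrac{2}{3}t^{3/2}} < \infty$; on $x>\delta$ everything is in addition smooth in $x$. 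Then $a_{\mathbb{F}_{C,-,1}^{\infty}}$, being a product of $\big(\lvert\theta_n\rvert(\lvert\hat\theta'\rvert^2-1)^{1/2}\big)^{\sqrt{\tfrac{n^2}{4}-\lambda}} \in S^{\sqrt{\tfrac{n^2}{4}-\lambda}}_{\tfrac13,0;\theta}$, the order-zero symbol $e^{-\tfrac{2}{3}(-Z_0)^{3/2}}$, and the smooth lower-order factor $h^N\sigma^{-1/2}\hat{\mathsf{V}}$, lies in $S^{\sqrt{\tfrac{n^2}{4}-\lambda}}_{\tfrac13,0;\theta}$ away from $x=0$.

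The routine part is the explicit chain-rule bookkeeping, which is dictated by the earlier lemmas. The one point needing a little care --- and the only genuine obstacle --- is the Gaussian-type factor $e^{-\tfrac{2}{3}(-Z_0)^{3/2}}$: although $-Z_0$ is unbounded on the region (it is bounded below but can tend to $+\infty$ in directions where $\lvert\hat\theta'\rvert\to1^{+}$ and $\lvert\theta_n\rvert\to\infty$), one must observe that the exponential decay absorbs all powers of $-Z_0$, so this factor is a legitimate symbol of order zero and does not interfere with (nor, in those directions, needs to improve) the $\theta$-estimates. Verifying this, together with confirming that the stationary-phase localization keeps $\lvert d_\theta(y\cdot\theta+\tilde\phi)\rvert$ bounded below so that $^tL_{\mathbb{F}_{C,-,1}^{\infty}}$ has bounded coefficients, is where the real content of the proof lies.
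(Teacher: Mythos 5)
Your proposal is correct and follows essentially the same route as the paper: the paper's proof simply notes that $\exp\left(-\tfrac{2}{3}(-Z_0)^{3/2}\right)$ is handled by the Friedlander Lemma 5.1 argument (exponential decay absorbing the polynomial growth in $-Z_0$, with the $\lvert\theta\rvert^{-1/3}$ gain per derivative coming from differentiating $Z_0$), and that the rest is the same direct chain-rule computation as in Lemma \ref{F+inf} — exactly the two ingredients you spell out. Your version just makes explicit the bookkeeping the paper leaves implicit.
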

 \begin{proof}
 On the region currently considered 
 $\exp(-\tfrac{2}{3}(-Z_0)^{3/2})$ is a smooth function of $(-Z_0)^{3/2}$ whose derivatives are bounded. Using the same argument as in Friedlander lemma 5.1, we have $\exp\left(-\tfrac{2}{3}(-Z_0)^{3/2}\right)\in S^0_{\tfrac{1}{3},0;\theta}$.
The proof of the statement follows from direct computation similar to those in Lemma \ref{F+inf}.
 
\end{proof}

\begin{lemma}
    $\mathsf{U}^{\mathbb{F}_{C,-,1}^{\infty}}$ is smooth and vanishes to infinite order at $x=0$ for $y_n < 0$.
\end{lemma}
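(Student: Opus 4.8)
The proof follows exactly the integration-by-parts scheme already carried out for $\mathbb{F}_{C,+}^{\infty}$ in Lemma \ref{y<0C+inf} and for $\mathbb{F}_{C,0}^{\infty}$ in Lemma \ref{y<0C0inf}, since on the present region the symbol $a_{\mathbb{F}_{C,-,1}^{\infty}}$ lies in the same class and the oscillatory factor has the same structure. First I would fix $\chi(x,y)\in\mathcal{C}^{\infty}_c$ with $\supp\chi\subset\{y_n<0\}$ and write
\[
(\mathsf{U}^{\mathbb{F}_{C,-,1}^{\infty}},\chi)=\lim_{\epsilon\to0}(2\pi)^{-n}\int\!\!\int\!\!\int a_{\mathbb{F}_{C,-,1}^{\infty}}\,\varpi(\epsilon\theta)\,\chi(x,y)\,e^{i(y\cdot\theta+\tilde\phi)}\,dx\,dy\,d\theta,\qquad \tilde\phi=\tfrac{2}{3}Z^{3/2}\sgn\theta_n.
\]
The geometric input is the sign of the $\theta_n$-component of $d_\theta(y\cdot\theta+\tilde\phi)$: by the same computation as in the $\mathbb{F}_{C,0}^{\infty}$ case, and using that $Z>0$ on this region (so $1+x-\lvert\hat\theta'\rvert^2>0$), one has
\[
d_{\theta_n}(y\cdot\theta+\tilde\phi)=y_n-\tfrac{2}{3}(1+x-\lvert\hat\theta'\rvert^2)^{3/2}-2\lvert\hat\theta'\rvert^2(1+x-\lvert\hat\theta'\rvert^2)^{1/2}=y_n-(\text{positive term}),
\]
which for $y_n<0$ is bounded away from zero uniformly for $x$ small on $\supp\chi$. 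Hence $\lvert d_\theta(y\cdot\theta+\tilde\phi)\rvert$ is bounded below, so the operator $L_{\mathbb{F}_{C,-,1}^{\infty}}$ with ${}^tL_{\mathbb{F}_{C,-,1}^{\infty}}=\lvert d_\theta(y\cdot\theta+\tilde\phi)\rvert^{-2}\sum_{j=1}^n\partial_{\theta_j}(y\cdot\theta+\tilde\phi)\,\partial_{\theta_j}$ is well defined and reproduces $e^{i(y\cdot\theta+\tilde\phi)}$.

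Then I would integrate by parts $k$ times and estimate, using the symbol bound from the preceding lemma, $(x\partial_x)^lL_{\mathbb{F}_{C,-,1}^{\infty}}^k a_{\mathbb{F}_{C,-,1}^{\infty}}\lesssim\lvert\theta_n\rvert^{-k/3+\sqrt{n^2/4-\lambda}}$ (the factor $\exp(-\tfrac{2}{3}(-Z_0)^{3/2})\in S^0_{\tfrac13,0}$ only improves matters), together with the elementary bounds $\lvert x\partial_x\tilde\phi\rvert\lesssim\lvert\theta_n\rvert$ and $x^{-1}\lesssim\lvert\theta_n\rvert^{2/3}$. Choosing $k$ large enough, depending on $N,\alpha,\beta$, so that the $\theta$-integrand lies in $L^1(\mathbb{R}^n_\theta)$ uniformly in $(x,y)$, I pass $\epsilon\to0$ under the integral sign and conclude
\[
x^{-N}D_y^{\alpha}(xD_x)^{\beta}\mathsf{U}^{\mathbb{F}_{C,-,1}^{\infty}}\in L^{\infty}_{x,y}\qquad\text{for all }N,\alpha,\beta,
\]
which is precisely smoothness of $\mathsf{U}^{\mathbb{F}_{C,-,1}^{\infty}}$ together with infinite-order vanishing at $x=0$ on $\{y_n<0\}$.

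The one point needing care — and the main, though minor, obstacle — is the uniform polynomial bound $x^{-1}\lesssim\lvert\theta_n\rvert^{2/3}$ on this piece of the front face: in the coordinates used near $\mathbb{F}_C^{(4)}$ one has $x^{-1}=\sigma^{-2/3}(\lvert\hat\theta'\rvert^2-1)^{-1}$ with $\sigma^{-1}$ bounded, and $(\lvert\hat\theta'\rvert^2-1)^{-1}=\lvert\theta_n\rvert^{2/3}(-Z_0)^{-1}\lesssim\lvert\theta_n\rvert^{2/3}$ is controlled precisely by the defining inequality $Z_0<-\delta_2$ of the region, which prevents the apparent blow-up as $\lvert\hat\theta'\rvert\to1$. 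Everything else is a routine repetition of the arguments already carried out for $\mathbb{F}_{C,+}^{\infty}$ and $\mathbb{F}_{C,0}^{\infty}$.
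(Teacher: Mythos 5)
Your proposal is correct and follows essentially the same route as the paper: the paper's proof likewise reduces to the integration-by-parts scheme of the $\mathbb{F}_{C,+}^{\infty}$ case, with the two key ingredients being that $d_{\theta_n}(y\cdot\theta+\tilde\phi)=y_n-(\text{positive term})$ (so the phase is non-stationary for $y_n<0$) and the polynomial bound $x^{-1}\lesssim\lvert\theta_n\rvert^{2/3}$ coming from $\sigma^{-1}$ and $h$ being bounded on this region. Your slightly more explicit justification of the $x^{-1}$ bound via $Z_0<-\delta_2$ is consistent with the paper's formula $x^{-1}=\sigma^{-1}h^{2/3}\lvert\theta_n\rvert^{2/3}$.
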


 \begin{proof}
  The proof is similar to that for Lemma \ref{y<0C+inf}. The first ingredient we need is the bound of $x^{-1}$ in terms of $\lvert\theta\rvert$ using $x^{-1} = \sigma^{-1} h^{2/3} \lvert\theta_n\rvert^{2/3}$.  The second is the fact that the $n$-th component of the derivative of the phase function is $y_n -$ positive term as in Lemma \ref{y<0C+inf}:
  \begin{align*}
  &y_n - \tfrac{2}{3}\partial_{\theta_n} Z^{3/2} \sgn\theta_n\\
  &= y_n -\tfrac{2}{3}(1 + x-\lvert\hat{\theta}'\rvert^2)^{3/2} 
   -   (1 + x-\lvert\hat{\theta}'\rvert^2)^{1/2} 2\lvert\hat{\theta}'\rvert^2 \theta_n\sgn\theta_n   \\
   &= y_n -  \text{positive term}
  \end{align*}

 \end{proof}

\begin{lemma}\label{conormalFC-1inf}
    For every $\beta>0$, there exists $\alpha > 0$ such that $\mathsf{U}^{\mathbb{F}_{C,-,1}^{\infty}}$ is smooth and vanishes to infinite order on $y_n >\beta$ and $x < \alpha^2$. In fact $\alpha$ is given by (\ref{ineqOnAlphaC-1inf}) explicitly.
 
\end{lemma}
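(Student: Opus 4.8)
The plan is to run, essentially verbatim, the argument already used for Lemma~\ref{y<0C+inf} and Lemma~\ref{y>0C+inf}: the only genuinely new point is a non-stationary phase estimate valid near the boundary $\{x=0\}$. We have already recorded that on the current region the symbol $a_{\mathbb{F}_{C,-,1}^{\infty}}$, together with the Gaussian factor $\exp(-\tfrac{2}{3}(-Z_0)^{3/2})\in S^0_{\tfrac13,0;\theta}$, lies in $S^{\sqrt{n^2/4-\lambda}}_{\tfrac13,0;\theta}$ and satisfies $\lvert(x\partial_x)^l L_{\mathbb{F}_{C,-,1}^{\infty}}^k a_{\mathbb{F}_{C,-,1}^{\infty}}\rvert\le C_k\langle\theta\rangle^{-k/3+\sqrt{n^2/4-\lambda}}$, where $L_{\mathbb{F}_{C,-,1}^{\infty}}$ is the integration-by-parts operator with $^tL_{\mathbb{F}_{C,-,1}^{\infty}}=\lvert d_\theta(y\cdot\theta+\tilde\phi)\rvert^{-2}\sum_j\partial_{\theta_j}(y\cdot\theta+\tilde\phi)\,\partial_{\theta_j}$ and $\tilde\phi=\tfrac23 Z^{3/2}\sgn\theta_n$. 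Thus, once we know that $L_{\mathbb{F}_{C,-,1}^{\infty}}$ is legitimately defined on $\supp\chi$, i.e.\ that $\lvert d_\theta(y\cdot\theta+\tilde\phi)\rvert$ is bounded below there, the conclusion will follow by exactly the same repeated integration by parts as in the $y_n<0$ case.

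First I would pin down the geometry of the piece $\mathbb{F}_{C,-,1}^{\infty}$. On it $\sigma=x^{3/2}(\lvert\hat\theta'\rvert^2-1)^{-3/2}>1$ forces $0<\lvert\hat\theta'\rvert^2-1<x$, and on the region $\lvert\theta_n\rvert\ge(1-2\delta_1)\lvert\theta'\rvert$ one has $1<\lvert\hat\theta'\rvert<(1-2\delta_1)^{-1}<2$ after shrinking $\delta_1$. Hence, restricting further to $\{x<\alpha^2\}$, we obtain $0<1+x-\lvert\hat\theta'\rvert^2<\alpha^2$. Then, exactly as in the lemma preceding this one, the $n$-th component of the derivative of the phase function equals $y_n$ minus the positive quantity $\tfrac23(1+x-\lvert\hat\theta'\rvert^2)^{3/2}+2(1+x-\lvert\hat\theta'\rvert^2)^{1/2}\lvert\hat\theta'\rvert^2$, which by the bounds just noted is at most $\tfrac23\alpha^3+8\alpha$. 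Choosing $\alpha<1$ small enough that $\tfrac23\alpha^3+8\alpha<\tfrac{\beta}{2}$ --- this is inequality (\ref{ineqOnAlphaC-1inf}) --- we get, for $\chi$ supported in $\{y_n>\beta,\ x<\alpha^2\}$, that the $n$-th component of $d_\theta(y\cdot\theta+\tilde\phi)$ exceeds $\beta-\tfrac{\beta}{2}=\tfrac{\beta}{2}>0$ on $\supp\chi$, so $\lvert d_\theta(y\cdot\theta+\tilde\phi)\rvert^2\ge\tfrac{\beta^2}{4}>0$ uniformly on $\supp\chi\times\RR^n$. (The half $\theta_n<0$ is even easier, since there the positive and $y_n$ contributions add.)

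With $L_{\mathbb{F}_{C,-,1}^{\infty}}$ now well defined on $\supp\chi$, I would perform $k$-fold integration by parts and, exactly as in Lemma~\ref{y<0C+inf}, combine the symbol bound above with the two elementary estimates $x^{-1}=\sigma^{-1}h^{2/3}\lvert\theta_n\rvert^{2/3}\lesssim\lvert\theta_n\rvert^{2/3}$ (valid since $\sigma^{-1}<1$ and $h<\delta_2^{3/2}$ on this region) and $\lvert x\partial_x\tilde\phi\rvert=x\lvert\theta_n\rvert(1+x-\lvert\hat\theta'\rvert^2)^{1/2}\lesssim\lvert\theta_n\rvert$. For each $N$ and each pair of multi-indices, choosing $k>3\bigl(n+\sqrt{n^2/4-\lambda}+\tfrac23 N+\lvert\alpha'\rvert+\beta'\bigr)$ makes the map $\theta\mapsto x^{-N}D_y^{\alpha'}(xD_x)^{\beta'}\bigl(L_{\mathbb{F}_{C,-,1}^{\infty}}^k a_{\mathbb{F}_{C,-,1}^{\infty}}\bigr)$ integrable on $\RR^n_\theta$ uniformly in $(x,y)$, whence $x^{-N}D_y^{\alpha'}(xD_x)^{\beta'}\mathsf{U}^{\mathbb{F}_{C,-,1}^{\infty}}\in L^\infty$ on $\supp\chi$; this is precisely smoothness together with infinite-order vanishing at $x=0$ on $\{y_n>\beta,\ x<\alpha^2\}$. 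The main obstacle is not analytic but geometric: one must observe that the defining constraint $\sigma>1$ of this subregion, combined with $x<\alpha^2$, forces $1+x-\lvert\hat\theta'\rvert^2$ to be small, which is exactly what allows the non-stationary phase estimate to persist up to the boundary and yields the explicit bound (\ref{ineqOnAlphaC-1inf}); once that is in place the remainder is an exact repetition of the integration-by-parts arguments of the $y_n<0$ and $\mathbb{F}_{C,+}^{\infty}$ cases.
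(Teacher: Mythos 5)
Your proposal is correct and follows essentially the same route as the paper: use $\sigma>1$ together with $x<\alpha^2$ to force $0<1+x-\lvert\hat{\theta}'\rvert^2<\alpha^2$, bound the $\theta_n$-component of the phase gradient below by $y_n$ minus a small positive term so that (\ref{ineqOnAlphaC-1inf}) guarantees non-stationarity on $\supp\chi$, and then repeat the $k$-fold integration-by-parts argument of Lemma \ref{y<0C+inf} with the bounds $x^{-1}\lesssim\lvert\theta_n\rvert^{2/3}$ and the symbol estimates for $a_{\mathbb{F}_{C,-,1}^{\infty}}$. The only differences are immaterial constants (you bound $\lvert\hat{\theta}'\rvert^2$ by $4$ after shrinking $\delta_1$, the paper keeps $(1-2\delta_1)^{-2}$), so your version of the smallness condition plays exactly the role of (\ref{ineqOnAlphaC-1inf}).
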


\begin{proof}
 \begin{align*}
    ( \mathsf{U}^{\mathbb{F}_{C,-,1}^{\infty}} , \chi)
    &= \lim_{\epsilon \rightarrow 0} (2\pi)^{-n} \int\int\int  \psi_{\mathbb{F}_{C,-,1}^{\infty}}  \hat{u}_x \chi(x,y) \varpi(\epsilon\theta)  e^{iy\cdot \theta} \,dx\, dy\, d\theta\\
    &=  \lim_{\epsilon \rightarrow 0} (2\pi)^{-n} \int\int\int  a_{\mathbb{F}_{C,-,1}^{\infty}}\chi(x,y) \varpi(\epsilon\theta)  e^{iy\cdot \theta+i\tilde{\phi}} \, dx \,dy\, d\theta
    \end{align*}

 Denote by
 $$\tilde{\Sigma}_0 = \{(x,y) : d_{\theta} (y\cdot \theta+ \tilde{\phi}) = 0 ,\ \text{for some} \ \theta \in \text{conesupp}\  a_{\mathbb{F}_{C,-,1}^{\infty}} \}$$
For $\beta > 0$, consider $\chi(x,y)$ with $\supp \chi \subset \{y_n > \beta, x < \alpha^2\}$ where the relation of $\alpha$ and $\beta$ is to be determined and such that $\supp \chi $ is disjoint from $\tilde{\Sigma}_0$. We show below that if $\alpha$ satisfies (\ref{ineqOnAlphaC-1inf}) then we have $\lvert d_{\theta} (y\cdot\theta +\tilde{\phi})\rvert^2$ is bounded away from zero on region currently considered of $\chi \times \RR^n$.  Once this is achieved, we follow the same argument as those in Lemma \ref{y<0C+inf} to show that on the region currently considered of $\chi$, $\mathsf{U}^{\mathbb{F}_{C,-,1}^{\infty}}$ is smooth. In particular, on the support of $\chi$, we can consider the first order differential operator $L_{\mathbb{F}_{C,-,1}^{\infty}}$ whose formal adjoint is:
$$^t L_{\mathbb{F}_{C,-,1}^{\infty}} = \lvert d_{\theta} (y\cdot\theta+\tilde{\phi})\rvert^{-2} \sum_{j=1}^n \dfrac{\partial (y\cdot\theta+\tilde{\phi})}{\partial \theta_j} \partial_{\theta_j} \Rightarrow L_{\mathbb{F}_{C,-,1}^{\infty}} e^{iy\cdot \theta+i\tilde{\phi}} = e^{iy\cdot\theta+i\tilde{\phi}}$$
 thus $L_{\mathbb{F}_{C,-,1}^{\infty}} e^{iy\cdot \theta+i\tilde{\phi}} = e^{iy\cdot\theta+i\tilde{\phi}}$. 

\textbf{Show that for $y_n >0$ , if we are close enough the boundary then $d_{\theta}( y\cdot\theta + \tilde{\phi})\neq 0$}:
  \begin{proof}
 $$1\leq \lvert\hat{\theta}'^2 \rvert \leq (1 - 2\delta_1)^{-2}
 \Rightarrow   1 - (1 - 2\delta_1)^{-2} \leq 1 - \lvert\hat{\theta}'^2 \rvert^2  \leq 0$$
 $$\Rightarrow x + 1 - (1 - 2\delta_1)^{-2}\leq x + 1 - \lvert\hat{\theta}'^2 \rvert^2  \leq x$$
 
 Since $\sigma > 1$, $ x (\lvert\hat{\theta}'\rvert^2-1)^{-1} > 1$ so $ x + 1 - \lvert\hat{\theta}'^2>0 \Rightarrow 0 <   x + 1 - \lvert\hat{\theta}'\rvert^2  < x$. 
 
 Using this, we obtain a lower bound for:
\begin{align*} 
 d_{\theta_n} (y\cdot\theta + \tilde{\phi}) &= y_n - \tfrac{2}{3} (1 + x - \lvert\hat{\theta}'\rvert^2)^{3/2} - 3(1 + x- \lvert\hat{\theta}'\rvert^2)^{1/2} \lvert\hat{\theta}'\rvert^2 \sgn\theta_n\\
& \geq y_n - \dfrac{2}{3} x^{3/2} -3 x^{1/2} (1 - 2\delta_1)^{-2} \geq y_n - \dfrac{2}{3} \alpha^3 -3 \alpha (1 - 2\delta_1)^{-2}
\end{align*}
Choose $\alpha$ so that
 \begin{equation}\label{ineqOnAlphaC-1inf}
 \dfrac{2}{3} \alpha^3 +3 \alpha (1 - 2\delta_1)^{-2}<\dfrac{1}{2}\beta
 \end{equation}
 \end{proof}
\end{proof}

\begin{lemma}\label{wfC-1inf}
 Wavefront set of  $\mathsf{U}^{\mathbb{F}_{C,-,1}^{\infty}}$ is contained in
$$
  \Sigma(\delta_1,\delta) = 
   \Big\{(x,y,\xi,\eta) : \xi = \partial_x \left( y\cdot\theta -\tilde{\phi}\right) , \eta = d_y \left( y\cdot\theta -\tilde{\phi}\right) , d_{\theta} \left( y\cdot \theta-\tilde{\phi}\right) = 0 , \theta \in \text{conesupp}\, a_{\mathbb{F}_{C,-,1}^{\infty}} , x> \delta \Big\}  $$
   where $\tilde{\phi} = \tfrac{2}{3} Z^{3/2} \sgn\theta_n$ and 
$$\text{conesupp}\  a_{\mathbb{F}_{C,-,1}^{\infty}} = \{\lvert\theta\rvert > 0 ,  (1-2\delta_1) \lvert\theta'\rvert\leq \lvert\theta_n\rvert \leq \lvert\theta'\rvert \} $$
\end{lemma}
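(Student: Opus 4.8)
The plan is to recognize $\mathsf{U}^{\mathbb{F}_{C,-,1}^{\infty}}$, restricted to $\{x>\delta\}$, as an oscillatory integral in the sense of H\"ormander and to read its wavefront set off the phase, exactly as was done in Lemmas \ref{wfC0inf}, \ref{WFC+inf} and \ref{wfC+0inf}. From the computation preceding this lemma we have, for $\chi$ supported in $\{x>\delta\}$,
$$\langle \mathsf{U}^{\mathbb{F}_{C,-,1}^{\infty}} , \chi\rangle = \lim_{\epsilon\to 0}(2\pi)^{-n}\int\!\!\int\!\!\int a_{\mathbb{F}_{C,-,1}^{\infty}}\,\chi(x,y)\,\varpi(\epsilon\theta)\,e^{i(y\cdot\theta-\tilde{\phi})}\,dx\,dy\,d\theta ,$$
with $\tilde{\phi}=\tfrac23 Z^{3/2}\sgn\theta_n$, and by the preceding lemma $a_{\mathbb{F}_{C,-,1}^{\infty}}\in S^{\sqrt{n^2/4-\lambda}}_{1/3,0;\theta}$ on $\{x>\delta\}$, the factor $\exp(-\tfrac23(-Z_0)^{3/2})$ being harmless since it lies in $S^0_{1/3,0;\theta}$. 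This is the same symbol class and the same shape of phase as in the earlier lemmas, differing only in the conic support, here $\text{conesupp}\,a_{\mathbb{F}_{C,-,1}^{\infty}}=\{\lvert\theta\rvert>0,\ (1-2\delta_1)\lvert\theta'\rvert\le\lvert\theta_n\rvert\le\lvert\theta'\rvert\}$; so the argument runs verbatim, appealing to Proposition 2.5.7 of \cite{Hor03} (equivalently Friedlander's Theorem~6.1 in \cite{Fried01}).

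First I would check that $\Phi(x,y,\theta)=y\cdot\theta-\tilde{\phi}$ is a genuine phase function on $\text{conesupp}\,a_{\mathbb{F}_{C,-,1}^{\infty}}$ over $\{x>\delta\}$: it is positively homogeneous of degree $1$ in $\theta$ and smooth there. Homogeneity is clear after writing $\tilde{\phi}=\tfrac23\lvert\theta_n\rvert(1+x-\lvert\hat{\theta}'\rvert^2)^{3/2}\sgn\theta_n$ with $\hat{\theta}'=\theta'/\theta_n$ homogeneous of degree $0$; smoothness needs $\theta_n\ne 0$, which is automatic on the cone since $\lvert\theta_n\rvert\ge(1-2\delta_1)\lvert\theta'\rvert>0$, and needs $Z=\lvert\theta_n\rvert^{2/3}(1+x-\lvert\hat{\theta}'\rvert^2)>0$ so that $Z^{3/2}$ is smooth. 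The latter holds on the support of the amplitude because $a_{\mathbb{F}_{C,-,1}^{\infty}}$ was produced only where $\sigma=x^{3/2}(\lvert\hat{\theta}'\rvert^2-1)^{-3/2}>1$, i.e. $x>\lvert\hat{\theta}'\rvert^2-1$, hence $Z=\lvert\theta_n\rvert^{2/3}\sigma^{2/3}>0$.

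With $\Phi$ in hand, the wavefront set theorem for oscillatory integrals yields
$$\text{WF}\big(\mathsf{U}^{\mathbb{F}_{C,-,1}^{\infty}}\big)\subset\big\{(x,y,\xi,\eta):\xi=\partial_x(y\cdot\theta-\tilde{\phi}),\ \eta=d_y(y\cdot\theta-\tilde{\phi}),\ d_\theta(y\cdot\theta-\tilde{\phi})=0,\ \theta\in\text{conesupp}\,a_{\mathbb{F}_{C,-,1}^{\infty}},\ x>\delta\big\},$$
which is exactly $\Sigma(\delta_1,\delta)$. I should also note that restricting to $x>\delta$ loses nothing: the companion lemmas for this region already established that $\mathsf{U}^{\mathbb{F}_{C,-,1}^{\infty}}$ is smooth up to $x=0$ on $y_n<0$ and, near $x=0$, on $y_n>\beta$, so no singularities are being missed at the boundary.

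I do not expect a genuine obstacle here, since this is the last of a family of structurally identical computations; the only point needing a moment's care is the verification that $Z>0$ on the relevant conic set so that the phase is smooth and H\"ormander's machinery applies. Everything analytically substantial — the symbol estimates and the $L^\infty$/conormality bounds — was carried out in the preceding lemmas, and in particular the key input $a_{\mathbb{F}_{C,-,1}^{\infty}}\in S^{\sqrt{n^2/4-\lambda}}_{1/3,0;\theta}$ on $\{x>\delta\}$ is already available, so the proof reduces to invoking the cited oscillatory-integral result.
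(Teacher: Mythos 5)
Your proposal is correct and follows essentially the same route as the paper: the paper's proof simply appeals to the argument of Friedlander's Theorem 6.1 (equivalently, the oscillatory-integral wavefront bound via H\"ormander's Proposition 2.5.7, as in Lemma \ref{wfC0inf}), using the symbol estimate $a_{\mathbb{F}_{C,-,1}^{\infty}}\in S^{\sqrt{n^2/4-\lambda}}_{\tfrac{1}{3},0;\theta}$ on $x>\delta$ and the smoothness and homogeneity of the phase on the stated conic support. Your additional check that $Z>0$ on the relevant cone is a sensible verification but does not change the argument.
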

\begin{proof}

The proof follows the same as that for Lemma 6.1 in Friedlander
\end{proof}

\subsubsection{Region $\mathbb{F}_{C,-,1}^{0,\infty}$}
 Recall the structure of the approximate solution on this region : 
  $$\psi_{\mathbb{F}_{C,-,1}^{0,\infty}} \hat{u}_x= \big(\lvert\theta_n\rvert (\lvert\hat{\theta}'\rvert^2-1)^{1/2}\big)^{\sqrt{\tfrac{n^2}{4}-\lambda}} \hat{\mathsf{U}}^{\mathbb{F}_{C,-,1}^{0,\infty}} $$
  $$\hat{\mathsf{U}}^{\mathbb{F}_{C,-,1}^{0,\infty}} = \exp(\tilde{\phi})s^{1/2} z^{\tilde{\gamma}_{\mathfrak{B}_1}} \tilde{\mathsf{U}}^{\mathbb{F}_{C,-,1}^{0,\infty}};\ \tilde{\gamma}_{\mathfrak{B}_1} = 0$$
  where $\tilde{\phi}  = -\dfrac{2}{3}\nu^{-2/3}\sigma^{-2/3} \left[ 1 - (1 - \sigma^{2/3})^{3/2}\right]$ and $\hat{\mathsf{U}}^{\mathbb{F}_{C,-,1}^{0,\infty}}$ is smooth in $z$ and $s$. 
 \begin{align*}
   ( \mathsf{U}^{\mathbb{F}_{C,-,1}^{0,\infty}}_{\delta} , \chi)
    &= \lim_{\epsilon \rightarrow 0} (2\pi)^{-n} \int\int\int \psi_{\mathbb{F}_{C,-,1}^{0,0}} \hat{u}_x \chi(x,y) \varpi(\epsilon\theta)  e^{iy\cdot \theta} \,dx\, dy\, d\theta\\
     &= \lim_{\epsilon \rightarrow 0} (2\pi)^{-n} \int\int\int  a_{\mathbb{F}_{C,-,1}^{0,\infty}} \chi(x,y) \varpi(\epsilon\theta)  e^{iy\cdot \theta} \,dx\, dy\, d\theta
    \end{align*}

   \begin{lemma}\label{upperbound}
     $$e^{- \lvert\theta_n\rvert ( \lvert\hat{\theta}'\rvert^2 - 1)^{3/2} \left(1 - \left[1 -z\right]^{3/2}\right)}   \leq e^{- \lvert\theta_n\rvert x^{3/2} g(x)}  $$
  with $g\geq 0$, $x^{3/2} g(x) \neq 0$ when $x \neq 0$.
   \end{lemma}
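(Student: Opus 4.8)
The plan is to reduce the claimed exponential inequality to an elementary one–variable estimate. Since $\lvert\theta_n\rvert>0$, the bound $e^{-A}\le e^{-B}$ is equivalent to $A\ge B$, so writing $w:=\lvert\hat\theta'\rvert^2-1$ it suffices to prove
$$w^{3/2}\bigl(1-(1-z)^{3/2}\bigr)\ \ge\ x^{3/2}g(x).$$
On the region $\mathbb{F}_{C,-,1}^{0,\infty}$ we have $Z_0<-\delta_2<0$, hence $\lvert\hat\theta'\rvert>1$ and $w>0$; and from the definition of $z$ in (\ref{definitionOfz}), on the band $Z_0\le-2\tilde\delta_2$ one has the exact identity $x=zw$, so $x^{3/2}=z^{3/2}w^{3/2}$. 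First I would substitute this and divide through by the positive factor $w^{3/2}$, reducing the desired inequality to
$$1-(1-z)^{3/2}\ \ge\ z^{3/2}\,g(x).$$

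Next I would use that $\mathbb{F}_{C,-,1}^{0,\infty}$ lies in the finite regime, so $0\le z<1$; on $[0,1)$ the trivial inequalities $(1-z)^{3/2}\le 1-z$ and $z^{3/2}\le z$ give
$$1-(1-z)^{3/2}\ \ge\ 1-(1-z)\ =\ z\ \ge\ z^{3/2}.$$
Hence $g\equiv 1$ works: it is nonnegative, and $x^{3/2}g(x)=x^{3/2}$ is nonzero whenever $x\ne 0$, which is exactly the claim. For the transitional band $-2\tilde\delta_2\le Z_0\le-\delta_2$ (if nonempty) the formula for $z$ is $x$ times a bounded smooth positive function of $Z_0$ (coming from the cutoffs $\chi_0,\chi_-$ and the normalizing factors in (\ref{definitionOfz}), (\ref{phiCorr})), so the same substitution applies after absorbing that bounded factor into $g$, and the scalar inequality on $[0,1)$ is untouched; if desired one may instead keep the localisation $z\le z_0<1$ on this corner and take any constant $g\equiv c$ with $c\ge z_0^{-1/2}$.

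The only point requiring care — and it is bookkeeping rather than a genuine obstacle — is ensuring that $g$ depends on $x$ alone and not on $\hat\theta'$. This is exactly what the substitution $x=zw$ accomplishes, since it cancels all $\hat\theta'$-dependence out of the comparison before the scalar estimate is invoked; after that, one merely needs the two elementary inequalities on $[0,1)$ recorded above.
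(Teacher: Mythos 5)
Your proof is correct, and it takes a more elementary route than the paper's. The paper does not cancel the $\hat{\theta}'$-dependence via the identity $x = z(\lvert\hat{\theta}'\rvert^2-1)$; instead it uses the defining constraint of $\mathbb{F}_{C,-,1}$, namely $\lvert\hat{\theta}'\rvert\le(1-2\delta_1)^{-1}$, to get $z\ \ge\ x\,[(1-2\delta_1)^{-2}-1]^{-1}$ and hence $1-(1-z)^{3/2}\ \ge\ g(x):=1-\bigl(1-x[(1-2\delta_1)^{-2}-1]^{-1}\bigr)^{3/2}$, and separately uses $z<1$ to get $(\lvert\hat{\theta}'\rvert^2-1)^{3/2}\ge x^{3/2}$; multiplying these two bounds gives the lemma with this $\delta_1$-dependent $g$ (the displayed relations $z=x(\lvert\hat{\theta}'\rvert^2-1)$ and the exponent $-3/2$ in the paper's proof are typos for the reciprocal relation you use). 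Your substitution $x^{3/2}=z^{3/2}(\lvert\hat{\theta}'\rvert^2-1)^{3/2}$ combined with $1-(1-z)^{3/2}\ge z\ge z^{3/2}$ on $[0,1]$ gives the cleaner and in fact stronger conclusion $g\equiv 1$, i.e.\ decay $e^{-\lvert\theta_n\rvert x^{3/2}}$, without ever invoking the $(1-2\delta_1)$ bound; this serves the lemma's only later use (rapid decrease in $\lvert\theta_n\rvert$ for $x\neq 0$ in Lemma \ref{symbolC-10inf}) just as well.

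One caveat on your aside about the transitional band $-2\tilde{\delta}_2\le Z_0\le -\delta_2$: there the definition (\ref{definitionOfz}) gives $z$ equal to $x/(\lvert\hat{\theta}'\rvert^2-1)$ (equivalently $x\lvert\theta_n\rvert^{2/3}$) times a bounded positive function of $Z_0$, not $x$ times such a function, and your suggested constant $c\ge z_0^{-1/2}$ points the wrong way (enlarging $c$ makes the required inequality harder, not easier; a sufficiently small constant is what works, using $z<1$ again). This does not affect the substance: the exponent in the lemma is written with $h^{-1}=\lvert\theta_n\rvert(\lvert\hat{\theta}'\rvert^2-1)^{3/2}$, which presupposes the regime where $\chi_-\equiv 1$, and the paper's own proof likewise only treats that regime, exactly as your main argument does.
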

   \begin{proof}
  We look for a lower bound for $1 - ( 1 - z)^{3/2}$. We have 
  $$(\lvert\hat{\theta}'\rvert^2  - 1)^{-1} \geq \left[(1 - 2\delta_1)^{-2} - 1\right]^{-3/2}$$ 
  Plugging this into the expression for $\sigma$ we get
  $$z = x ( \lvert\hat{\theta}'\rvert^2 - 1)
  > x \left[(1 - 2\delta_1)^{-2} - 1\right]^{-3/2}$$
 $$\Rightarrow \left[1 -z\right]^{3/2} \leq \left(1- x \left[(1 - 2\delta_1)^{-2} - 1\right]^{-1}\right)^{3/2}$$
  $$\Rightarrow 1 - \left[1 -z\right]^{3/2} \geq 1 - \left[1- x \left[(1 - 2\delta_1)^{-2} - 1\right]^{-1}\right]^{3/2} = g(x) \geq 0$$
    Note that $g(x) \neq 0$ when $x\neq 0$. 

  In the region currently being considered, we have $z < 1$, so we get a lower bound for $(\lvert\hat{\theta}'^2\rvert^2-1)$:
  $$x^{3/2} ( \lvert\hat{\theta}'\rvert^2 - 1)^{-3/2} < 1 \Rightarrow  x^{3/2} <( \lvert\hat{\theta}'\rvert^2 - 1)^{3/2}$$
  
  Using these two lower bounds we obtain:
  $$e^{- \lvert\theta_n\rvert ( \lvert\hat{\theta}'\rvert^2 - 1)^{3/2} \left(1 - \left[1 -z\right]^{3/2}\right)}
  \leq e^{- \lvert\theta_n\rvert ( \lvert\hat{\theta}'\rvert^2 - 1)^{3/2} h(x)}
    \leq e^{- \lvert\theta_n\rvert x^{3/2} g(x)}  $$
  with $x^{3/2} g(x) \neq 0$ when $x \neq 0$
\end{proof}

\begin{lemma}\label{symbolC-10inf}\indent\par\noindent
\begin{enumerate}
\item  For $L$ such that $^t L = \dfrac{1}{\lvert y\rvert^2} \sum y_j \dfrac{\partial}{i\partial \theta_j}$ we have: 
$$\lvert xD_x L^k a_{\mathbb{F}_{C,-,1}^{0,\infty}} \rvert  \leq C_k \lvert\theta\rvert^{\tfrac{1}{3}k+  \sqrt{\tfrac{n^2}{4}-\lambda}}$$
\item On $x\neq 0$, $a_{\mathbb{F}_{C,-,1}^{0,\infty}} \in S^{-\infty}_{\tfrac{1}{3},0}$.
  \end{enumerate}

   \end{lemma}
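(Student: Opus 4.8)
The plan is to follow the template established in the preceding symbol lemmas — in particular Lemma~\ref{F+inf}, Lemma~\ref{symbolC-100}, and the treatment of $\mathbb{F}_{C,-,1}^{\infty}$ — now transcribed into the projective coordinates $z$ and $s=h/z$ valid near the corner $\mathbb{F}_{C,-,1}^{0,\infty}$. First I would record the structure of the amplitude on this region. From $\hat{u}_x = \tilde{\chi}(\lvert\theta\rvert)\,\phi_{\text{correction}}\,x^{n/2}\hat{\mathsf{U}}_{\mathbb{F}_C}$ together with $\hat{\mathsf{U}}^{\mathbb{F}_{C,-,1}^{0,\infty}} = \exp(\tilde{\phi})\,s^{1/2}z^{\tilde{\gamma}_{\mathfrak{B}_1}}\tilde{\mathsf{U}}^{\mathbb{F}_{C,-,1}^{0,\infty}}$, where $\tilde{\mathsf{U}}^{\mathbb{F}_{C,-,1}^{0,\infty}}$ is smooth (polyhomogeneous) in $z$ and $s$, and using that on the cone of the region currently considered $\lvert\theta_n\rvert\sim\lvert\theta'\rvert\sim\lvert\theta\rvert$, I obtain the basic bound $\lvert a_{\mathbb{F}_{C,-,1}^{0,\infty}}\rvert\lesssim\lvert\theta\rvert^{\sqrt{n^2/4-\lambda}}$: the factor $s^{1/2}z^{\tilde{\gamma}_{\mathfrak{B}_1}}\tilde{\mathsf{U}}^{\mathbb{F}_{C,-,1}^{0,\infty}}$ is bounded on the region, and the remaining power of $\lvert\theta_n\rvert$ comes from $\phi_{\text{correction}}$.

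Next I would compute the coordinate derivatives. As in the earlier regions I would express $\theta_n\partial_{\theta_n}$, $\lvert\theta_n\rvert\partial_{\theta_j}$ for $j>n$, and $\partial_x$ in terms of the generating vector fields $z\partial_z$ and $s\partial_s$ of the blow-up, with coefficients that are either bounded on the region or carry a controlled power of $\lvert\theta\rvert$; in particular $\partial_x = x^{-1}(z\partial_z - s\partial_s)$ up to bounded factors. Differentiating the exponential $\exp(\tilde{\phi})$ produces factors $\partial_{\theta_j}\tilde{\phi}$ that grow polynomially in $\lvert\theta\rvert$; the point — and this is where Lemma~\ref{upperbound} enters — is that $\lvert\exp(\tilde{\phi})\rvert\leq e^{-\lvert\theta_n\rvert x^{3/2}g(x)}$ with $g\geq 0$, so any product of a polynomial in $\lvert\theta\rvert$ with $\exp(\tilde{\phi})$ stays controlled. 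An induction on $k$ combining these ingredients with the Leibniz rule then yields $\lvert xD_x L^k a_{\mathbb{F}_{C,-,1}^{0,\infty}}\rvert \leq C_k\lvert\theta\rvert^{-\tfrac13 k+\sqrt{n^2/4-\lambda}}$ with $C_k$ independent of $x$ (each application of $L$ is one $\theta$-derivative, which after being passed to $z\partial_z,s\partial_s$ costs at most $\lvert\theta\rvert^{-1/3}$ thanks to the weights of the blow-up), which a fortiori gives the bound of part~(1).

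For part~(2), on $x>\delta$ I would first note that $\tilde{\mathsf{U}}^{\mathbb{F}_{C,-,1}^{0,\infty}}$ is smooth in all variables and $\partial_x = x^{-1}(z\partial_z - s\partial_s)$ up to bounded factors, so the $x$-derivatives do not worsen the estimates and the amplitude lies in $S^{\sqrt{n^2/4-\lambda}}_{1/3,0;\theta}$ a priori; then I would invoke that, on $x\neq 0$, the factor $e^{-\lvert\theta_n\rvert x^{3/2}g(x)}$ with $x^{3/2}g(x)>0$ forces rapid decay in $\lvert\theta_n\rvert$, hence in $\lvert\theta\rvert$, so multiplication by any polynomial power of $\lvert\theta\rvert$ still leaves a bounded, rapidly decreasing amplitude — this is the same argument used for $\exp(-\tfrac23(-Z_0)^{3/2})$ in the region $\mathbb{F}_{C,-,1}^{\infty}$. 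Hence $a_{\mathbb{F}_{C,-,1}^{0,\infty}}\in S^{-\infty}_{1/3,0}$ on $x\neq 0$.

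The main obstacle I anticipate is purely one of bookkeeping: writing down the exact change-of-variables formulas for $\partial_{\theta_j}$ in the $(z,s)$ coordinates near $\mathbb{F}_{C,-,1}^{0,\infty}$ and checking that every coefficient appearing is either bounded on the region or has growth dominated by the exponential decay from Lemma~\ref{upperbound}. Conceptually there is nothing new beyond what was done for $\mathbb{F}_{C,+}^{0,\infty}$ and $\mathbb{F}_{C,-,1}^{\infty}$ — the decay of $\exp(\tilde{\phi})$ plays the role there played by $\exp(-\tfrac23(-Z_0)^{3/2})$ — so the argument is essentially a transcription of those proofs.
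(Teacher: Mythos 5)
Your proof is correct and follows essentially the paper's own route: the pointwise bound $\lvert a_{\mathbb{F}_{C,-,1}^{0,\infty}}\rvert\lesssim\lvert\theta\rvert^{\sqrt{\tfrac{n^2}{4}-\lambda}}$ and the derivative estimates come from the polyhomogeneous structure of $\hat{\mathsf{U}}^{\mathbb{F}_{C,-,1}^{0,\infty}}$ together with the change-of-variables computations modelled on Lemma \ref{symbolC-100} (and the $\mathbb{F}_{C,+}^{0,\infty}$ computation), while part (2) is exactly the paper's appeal to Lemma \ref{upperbound} to convert the factor $\exp(\tilde{\phi})$ into rapid decay in $\lvert\theta\rvert$ once $x\neq 0$. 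One caution: since the decay $e^{-\lvert\theta_n\rvert x^{3/2}g(x)}$ degenerates at $x=0$, the blanket claim that $\exp(\tilde{\phi})$ absorbs arbitrary polynomial growth in $\lvert\theta\rvert$ is not uniform in $x$; what makes your uniform-in-$x$ estimate (with the gain $\lvert\theta\rvert^{-1/3}$ per application of $L$, which a fortiori gives the stated bound — the exponent $+\tfrac{1}{3}k$ there is a sign typo in the paper) go through is that each $\theta$-derivative of $\tilde{\phi}$ carries a factor vanishing at $x=0$, so its product with $\exp(\tilde{\phi})$ is still $O(\lvert\theta\rvert^{-1/3})$, a point the paper's one-line proof likewise leaves implicit.
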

   
   \begin{proof}\indent\par\noindent
   \begin{itemize}
   \item 
   On the region currently considered  $\lvert\theta_n\rvert \geq (1 -2 \delta_1) \lvert\theta'\rvert$ so $\lvert\theta_n\rvert \sim \lvert\theta\rvert$. We have :
   $$ a_{\mathbb{F}_{C,-,1}^{0,\infty}} \leq \lvert\theta_n\rvert^{\sqrt{\tfrac{n^2}{4}-\lambda}} $$
   Similarly to the proof of Lemma \ref{symbolC-100} we have: 
   $$\lvert xD_x L^k a_{\mathbb{F}_{C,-,1}^{0,\infty}} \rvert  \leq C_k \lvert\theta\rvert^{\tfrac{1}{3}k-\tfrac{2}{3} s_-}$$
 \item  From the Lemma \ref{upperbound} on $x\neq 0$ we have rapidly decreasing in order of $\lvert\theta_n\rvert $ given by $e^{- \lvert\theta_n\rvert ( \lvert\hat{\theta}'\rvert^2 - 1)^{3/2} \left(1 - \left[1 -\sigma^{2/3}\right]^{3/2}\right)}$  so for all $N$ we have:
$$ \lvert\theta_n\rvert^N a_{\mathbb{F}_{C,-,1}^{0,\infty}}  \leq C_N$$
and the same holds for derivatives in $\theta_j$ and $\partial_x$. Hence on $x\neq 0$ 
$$a_{\mathbb{F}_{C,-,1}^{0,\infty}} \in S^{-\infty}_{\tfrac{1}{3},0}$$

\end{itemize}
   \end{proof}

\begin{lemma}
  $\mathsf{U}^{\mathbb{F}_{C,-,1}^{0,\infty}} $ is smooth on $x\neq 0$.
\end{lemma}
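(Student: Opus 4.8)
The plan is to follow the template of Lemma \ref{C00x>delta} and Lemma \ref{x>deltaC+00}: for an arbitrary $\delta > 0$ I would show that the restriction $\mathsf{U}^{\mathbb{F}_{C,-,1}^{0,\infty}}_{\delta}$ of $\mathsf{U}^{\mathbb{F}_{C,-,1}^{0,\infty}}$ to $\mathcal{D}'(X_\delta)$, where $X_\delta = \{x > \delta\}$, is a smooth function, and then let $\delta \downarrow 0$ to conclude smoothness on $\{x \neq 0\}$. Pairing against $\chi \in \mathcal{C}^\infty_c(X_\delta \times \mathbb{R}^n_y)$ gives
$$( \mathsf{U}^{\mathbb{F}_{C,-,1}^{0,\infty}}_{\delta} , \chi) = \lim_{\epsilon \to 0} (2\pi)^{-n} \int\int\int a_{\mathbb{F}_{C,-,1}^{0,\infty}}\, \chi(x,y)\, \varpi(\epsilon\theta)\, e^{iy\cdot\theta}\, dx\,dy\,d\theta ,$$
the oscillatory and exponential structure of $\hat{u}_x$ on this region having been absorbed into the amplitude $a_{\mathbb{F}_{C,-,1}^{0,\infty}}$.

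The essential input is part (2) of Lemma \ref{symbolC-10inf}: on $\{x \neq 0\}$ one has $a_{\mathbb{F}_{C,-,1}^{0,\infty}} \in S^{-\infty}_{\tfrac{1}{3},0;\theta}$, i.e. $a_{\mathbb{F}_{C,-,1}^{0,\infty}}$ together with all of its $x$- and $\theta$-derivatives decays faster than any power of $\lvert\theta\rvert$, uniformly on $X_\delta$. This rapid decay is exactly what the genuine exponential factor $e^{-\lvert\theta_n\rvert x^{3/2} g(x)}$ of Lemma \ref{upperbound} provides, $x^{3/2}g(x)$ being strictly positive for $x \neq 0$. Given this, the $\theta$-integral is absolutely convergent, so by dominated convergence the regularizer $\varpi(\epsilon\theta)$ may be removed and one obtains the honest representation $\mathsf{U}^{\mathbb{F}_{C,-,1}^{0,\infty}}_{\delta}(x,y) = (2\pi)^{-n}\int a_{\mathbb{F}_{C,-,1}^{0,\infty}}(x,\theta)\, e^{iy\cdot\theta}\, d\theta$.

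It then remains to differentiate under the integral sign. A $y$-derivative brings down a factor $\theta^\beta$, absorbed at once by the $S^{-\infty}$ decay; an $x$-derivative, expressed in the $(z,s)$ variables in which $\hat{\mathsf{U}}^{\mathbb{F}_{C,-,1}^{0,\infty}}$ is polyhomogeneous (as in the computation behind Lemma \ref{symbolC-10inf}), again produces an amplitude lying in $S^{-\infty}_{\tfrac{1}{3},0}$ on $X_\delta$ — including the contribution of $\partial_x\tilde\phi$, which is only polynomially bounded in $\lvert\theta\rvert$ but is multiplied by the exponentially decaying factor. Iterating, every $\partial_y^\alpha (x\partial_x)^l \mathsf{U}^{\mathbb{F}_{C,-,1}^{0,\infty}}_{\delta}$ is represented by an absolutely convergent integral and is bounded; hence $\mathsf{U}^{\mathbb{F}_{C,-,1}^{0,\infty}} \in \mathcal{C}^\infty(X_\delta)$, and since $\delta > 0$ is arbitrary, $\mathsf{U}^{\mathbb{F}_{C,-,1}^{0,\infty}}$ is smooth on $\{x \neq 0\}$.

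The argument is essentially bookkeeping once Lemma \ref{symbolC-10inf} is available; there is no substantive obstacle. The only point that deserves a line of checking is that differentiating the amplitude in $x$ does not spoil the $S^{-\infty}$ class — i.e. that the polynomial-in-$\theta$ growth coming from $(x\partial_x)^l\tilde\phi$, with $\tilde\phi = -\tfrac{2}{3}\nu^{-2/3}\sigma^{-2/3}\big[1 - (1-\sigma^{2/3})^{3/2}\big]$, is always beaten by the decay of $e^{-\lvert\theta_n\rvert x^{3/2}g(x)}$ on $X_\delta$; this is immediate since on $x > \delta$ the quantities $\sigma$, $\nu$, $\lvert\hat\theta'\rvert$ are controlled and $g(x)$ is bounded below.
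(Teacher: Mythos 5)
Your proposal is correct and follows essentially the same route as the paper: restrict to $X_\delta=\{x>\delta\}$, invoke Lemma \ref{symbolC-10inf} (whose $S^{-\infty}_{\tfrac{1}{3},0}$ statement rests on the exponential decay $e^{-\lvert\theta_n\rvert x^{3/2}g(x)}$ of Lemma \ref{upperbound}), and then differentiate under the now absolutely convergent $\theta$-integral, which is exactly the argument the paper abbreviates by referring back to Lemma \ref{x>deltaC+00}. No gap; you have merely written out the bookkeeping the paper leaves implicit.
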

  \begin{proof}
  Denote $\mathsf{U}^{\mathbb{F}_{C,-,1}^{0,\infty}}_{\delta}$ the restrictions of $\mathsf{U}^{\mathbb{F}_{C,-,1}^{0,\infty}}$ to $\mathcal{D}'(X_{\delta})$. For $x >\delta$ we have :
  \begin{align*}
   ( \mathsf{U}^{\mathbb{F}_{C,-,1}^{0,\infty}}_{\delta} , \chi)
    &= \lim_{\epsilon \rightarrow 0} (2\pi)^{-n} \int\int\int \psi_{\mathbb{F}_{C,-,1}^{0,0}} \hat{u}_x \chi(x,y) \varpi(\epsilon\theta)  e^{iy\cdot \theta} \,dx\, dy\, d\theta\\
     &= \lim_{\epsilon \rightarrow 0} (2\pi)^{-n} \int\int\int  a_{\mathbb{F}_{C,-,1}^{0,\infty}} \chi(x,y) \varpi(\epsilon\theta)  e^{iy\cdot \theta} \,dx\, dy\, d\theta
    \end{align*}
  where $a_{\mathbb{F}_{C,-,1}^{0,\infty}} \in S^{-\infty}_{\tfrac{1}{3},0}$ for $x >\delta$ by Lemma \ref{symbolC-10inf}. The rest of the proof is the same as that for Lemma \ref{x>deltaC+00}.
\end{proof}

\begin{lemma}\label{C-10inf}
  $\mathsf{U}^{\mathbb{F}_{C,-,1}^{0,\infty}} $ is smooth and vanishes to infinite order at $x=0$ for $y_n \neq 0$. 
\end{lemma}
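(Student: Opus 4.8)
The plan is to treat $\mathsf{U}^{\mathbb{F}_{C,-,1}^{0,\infty}}$ on $\{y_n\neq 0\}$ exactly by the non‑stationary‑phase scheme used for Lemma \ref{y<0C+inf} and Lemma \ref{x>deltaC+00}, feeding in the two structural inputs already available: the symbol estimates of Lemma \ref{symbolC-10inf} and the genuine exponential decay of Lemma \ref{upperbound}. First I would note that on this region the only oscillatory factor carried by $\hat u_x$ is $e^{iy\cdot\theta}$: the factor $\exp(\tilde\phi)$ with $\tilde\phi=-\tfrac23\nu^{-2/3}\sigma^{-2/3}\bigl[1-(1-\sigma^{2/3})^{3/2}\bigr]$ is real and negative (since $\sigma^{2/3}=z<1$ here), so it has been absorbed into the amplitude $a_{\mathbb{F}_{C,-,1}^{0,\infty}}$. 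Thus for $\chi\in\mathcal{C}^\infty_c$ with $\supp\chi\subset\{y_n\neq 0\}$ one has $\langle\mathsf{U}^{\mathbb{F}_{C,-,1}^{0,\infty}},\chi\rangle=\lim_{\epsilon\to0}(2\pi)^{-n}\int\!\!\int\!\!\int a_{\mathbb{F}_{C,-,1}^{0,\infty}}\,\varpi(\epsilon\theta)\,\chi(x,y)\,e^{iy\cdot\theta}\,dx\,dy\,d\theta$, with phase $y\cdot\theta$ whose $\theta$–gradient is $y$, non‑vanishing on $\supp\chi$ because $y_n\neq 0$ there.

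Next I would introduce the operator $L$ with $^tL=\lvert y\rvert^{-2}\sum_j y_j\,\partial/(i\partial\theta_j)$, so that $^tL\,e^{iy\cdot\theta}=e^{iy\cdot\theta}$, and integrate by parts $k$ times to get $\langle\mathsf{U}^{\mathbb{F}_{C,-,1}^{0,\infty}},\chi\rangle=\lim_{\epsilon\to0}(2\pi)^{-n}\int\!\!\int\!\!\int L^k\!\bigl(a_{\mathbb{F}_{C,-,1}^{0,\infty}}\varpi\bigr)\chi\,e^{iy\cdot\theta}\,dx\,dy\,d\theta$. By Lemma \ref{symbolC-10inf}, $\lvert D_x^l L^k a_{\mathbb{F}_{C,-,1}^{0,\infty}}\rvert\leq C_k\lvert\theta\rvert^{-k/3+\sqrt{n^2/4-\lambda}+l}$ with $C_k$ independent of $x$, and on this region $x^{-1}$ is bounded by a fixed power of $\lvert\theta_n\rvert$ (the identity $x^{-1}=s\,\lvert\theta_n\rvert(\lvert\hat\theta'\rvert^2-1)^{1/2}$ with $s$ and $\lvert\hat\theta'\rvert$ bounded), so $x^{-N}\lesssim\lvert\theta_n\rvert^N$. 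To estimate $x^{-N}D_y^\alpha(xD_x)^\beta\mathsf{U}^{\mathbb{F}_{C,-,1}^{0,\infty}}$ one therefore replaces $x^{-N}$ by $\lvert\theta_n\rvert^N$, each $D_y$ produces a factor of $\theta$, and $(xD_x)^\beta$ raises the $x$‑derivative order by $\beta$; choosing $k>3\bigl(n+\sqrt{n^2/4-\lambda}+N+\lvert\alpha\rvert+\beta\bigr)$ makes $\theta\mapsto x^{-N}\theta^\alpha D_x^\beta L^k a_{\mathbb{F}_{C,-,1}^{0,\infty}}$ absolutely integrable with bound uniform in $x$, which lets us pass $\epsilon\to0$ under the integral and conclude $x^{-N}D_y^\alpha(xD_x)^\beta\mathsf{U}^{\mathbb{F}_{C,-,1}^{0,\infty}}\in L^\infty_{x,y}$. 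As $N,\alpha,\beta$ are arbitrary this is precisely the claim that $\mathsf{U}^{\mathbb{F}_{C,-,1}^{0,\infty}}$ is $\mathcal{C}^\infty$ and vanishes to infinite order at $x=0$ on $\{y_n\neq 0\}$.

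The point to handle carefully — and the only reason this is not immediate from the preceding lemma (which gave smoothness merely on $\{x>\delta\}$) — is that the estimates must be uniform as $x\to0$. The exponential decay $e^{\tilde\phi}\leq e^{-\lvert\theta_n\rvert x^{3/2}g(x)}$ from Lemma \ref{upperbound} degenerates at $x=0$ (because $g(0)=0$), so it cannot on its own absorb the factors $x^{-N}$; what saves the argument is that this region carries the genuinely non‑oscillatory phase $y\cdot\theta$, so for $y_n\neq 0$ pure non‑stationary phase already yields arbitrarily fast decay in $\lvert\theta\rvert$, and the crude polynomial bounds on $a$ together with $x^{-1}\lesssim\lvert\theta_n\rvert$ then close the estimate. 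The remaining work is the routine bookkeeping of checking that every mixed derivative $x^{-N}D_y^\alpha(xD_x)^\beta$ can be absorbed by enlarging $k$, for which the two cited lemmas supply all the needed bounds.
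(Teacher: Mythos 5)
Your proposal is correct and follows essentially the same route as the paper's proof: non-stationary phase in $y\cdot\theta$ on $\{y_n\neq 0\}$ via the operator $L$ with $^tL=\lvert y\rvert^{-2}\sum y_j\,\partial/(i\partial\theta_j)$, the symbol bounds of Lemma \ref{symbolC-10inf}, and the bound $x^{-1}=s\,\lvert\theta_n\rvert(\lvert\hat{\theta}'\rvert^2-1)^{1/2}\lesssim\lvert\theta_n\rvert$ to absorb the weights $x^{-N}$, choosing $k$ large to control $x^{-N}D_y^{\alpha}(xD_x)^{\beta}\mathsf{U}^{\mathbb{F}_{C,-,1}^{0,\infty}}$. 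Your additional observation that the exponential decay of Lemma \ref{upperbound} degenerates at $x=0$ and is not what drives the estimate is accurate and consistent with how the paper uses that lemma only for smoothness on $x\neq 0$.
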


 \begin{proof}
  
 Consider $\chi(x,y)$ with either $\supp \chi \subset \{ y_n < 0\}$ or $\supp \chi \subset \{ y_n > 0\}$
 \begin{equation}
 \begin{aligned}
    ( \mathsf{U}^{\mathbb{F}_{C,-,1}^{0,\infty}} , \chi)
    &= \lim_{\epsilon \rightarrow 0} (2\pi)^{-n} \int\int\int \psi_{\mathbb{F}_{C,-,1}^{0,\infty}} \hat{u}_x \chi(x,y) \varpi(\epsilon\theta)  e^{iy\cdot \theta} \, dx\,  dy\,  d\theta\\
    &=  \lim_{\epsilon \rightarrow 0} (2\pi)^{-n} \int\int\int a_{\mathbb{F}_{C,-,1}^{0,\infty}} \varpi \chi e^{iy\cdot\theta} \, dx\,  dy\, d\theta
    \end{aligned}
    \end{equation} 
    
    We have $\lvert d_{\theta} (y\cdot \theta)\rvert^2$ is bounded away from zero on the region currently considered of $\chi \times \RR^n$ since the $n$-th component of the phase function is $y_n$, which is never zero there.\nl We use the first order differential operator $L$ whose formal adjoint is 
    \begin{equation}
    ^t L = \dfrac{1}{\lvert y\rvert^2} \sum y_j \dfrac{\partial}{i\partial \theta_j}
    \end{equation}
 thus $^tL e^{iy\cdot \theta} = e^{iy\cdot\theta}$, and is used for the $k$-fold integration as in Lemma \ref{conormalC+00}
$$( \mathsf{U}^{\mathbb{F}_{C,-,1}^{0,\infty}} , \chi)
      =  \lim_{\epsilon \rightarrow 0} (2\pi)^{-n} \int\int\int L^k (a_{\mathbb{F}_{C,-,1}^{0,\infty}}  \varpi) \chi e^{iy\cdot\theta} \,dx \,dy \, d\theta$$
   To show rapid decay in $x$, we use the boundedness of $x^{-1}$ as follows
  $$x^{-1} = s \lvert\theta_n\rvert ( \lvert\hat{\theta}'\rvert^2- 1)^{1/2}  \leq s \lvert\theta_n\rvert  \left[(1 - 2\delta_1)^{-1} -1 \right]$$
 The rest of the proof follows as in that for Lemma \ref{conormalC+00}, on $y_n < 0$ to show the boundedness for
  $$ x^{-N} D_y^{\alpha} (xD_x)^{\beta} \mathsf{U}^{\mathbb{F}_{C,-,1}^{0,\infty}}.$$

  \end{proof}

\


\subsection{On region $\mathbb{F}_{C,-,2}$}
 On the current region $\{Z_0 < -\delta_2, \lvert\theta_n\rvert \leq (1 - \delta_1) \lvert\theta'\rvert\}$. \newline
  Since $ \lvert\hat{\theta}'\rvert \geq (1 - \delta_1)^{-1}$, $z = x (\lvert\hat{\theta}'\rvert^2 - 1 )^{-1} < x \left(( 1- \delta_1)^2 - 1\right)^{-1}$ ie bounded.

 We will show
 \begin{enumerate}
 \item $\mathsf{U}^{\mathbb{F}_{C,-,2}^{0,0}}$ is smooth on $x\neq 0$ and $ \mathsf{U}^{\mathbb{F}_{C,-,2}^{0,0}}\in x^{s_-} \mathcal{C}^{\infty}_{x,y} + x^{s_+}\mathcal{C}^{\infty}_{x,y}$ on $y_n \neq 0$. 
 \item $\mathsf{U}^{\mathbb{F}_{C,-,2}^{0,\infty}}$ is smooth on $x\neq 0$ and is smooth and vanishes to infinite order at $x = 0 $ for $y_n \neq 0$. 
 \end{enumerate}

\subsubsection{$\mathbb{F}_{C,-,2}^{0,0}$}
\begin{align}\label{indicialFC-2}
   ( \mathsf{U}^{\mathbb{F}_{C,-,2}^{0,0}} , \chi)
    &= \lim_{\epsilon \rightarrow 0} (2\pi)^{-n} \int\int\int  \psi_{\mathbb{F}_{C,-,2}^{0,0}} \hat{u}_x \chi(x,y) \varpi(\epsilon\theta)  e^{iy\cdot \theta} \,dx\,dy\, d\theta 
    \end{align}
    We recall the structure of $\hat{u}_x$ on the region currently considered:
         $$\psi_{\mathbb{F}_{C,-,2}^{0,0}} \hat{u}_x=  \big(\lvert\theta_n\rvert (\lvert\hat{\theta}'\rvert^2-1)^{1/2}\big)^{\sqrt{\tfrac{n^2}{4}-\lambda}} \hat{\mathsf{U}}^{\mathbb{F}_{C,-,2}^{0,0}} $$ 
 where  $ \hat{\mathsf{U}}^{\mathbb{F}_{C,-,2}^{0,0}}$   is conormal in $t$ and smooth in $h$ . In more details:
\begin{align*}
  2\sqrt{\tfrac{n^2}{4}-\lambda} \notin \mathbb{Z} :&\  \hat{\mathsf{U}}^{\mathbb{F}_{C,-,2}^{0,0}} \in z^{\tfrac{n}{2}} t^{\sqrt{\tfrac{n^2}{4}-\lambda}} \mathcal{C}^{\infty}_{t,h} + z^{\tfrac{n}{2}} t^{-\sqrt{\tfrac{n^2}{4}-\lambda}} \mathcal{C}^{\infty}_{t,h}
  \end{align*}
  $$\big[\lvert\theta_n\rvert (\lvert\hat{\theta}'\rvert^2-1)^{1/2}\big]^{\sqrt{\tfrac{n^2}{4}-\lambda}} \hat{\mathsf{U}}^{\mathbb{F}_{C,-,2}^{0,0}} \in  \big[ (\lvert\theta'\rvert^2-\lvert\theta_n\rvert)^{1/2} \big]^{2\sqrt{\tfrac{n^2}{4}-\lambda}} x^{s_+} \mathcal{C}^{\infty}_{t,h} + x^{s_-} \mathcal{C}^{\infty}_{t,h}$$
Using this form of $\hat{u}_x$ we rewrite (\ref{indicialFC-2}) as
$$ ( \mathsf{U}^{\mathbb{F}_{C,-,2}^{0,0}} , \chi)
     =  \lim_{\epsilon \rightarrow 0} \sum_{\pm} (2\pi)^{-n} x^{s_{\pm}} \int\int\int a_{\mathbb{F}_{C,-,2}^{0,0};\pm} \varpi \chi e^{iy\cdot\theta} \, dx\,  dy\, d\theta$$

%
   \begin{lemma}\label{00c-2}\indent\par\noindent
    \begin{enumerate}
 \item Let $L$ be the differential operator whose formal adjoint is $^t L = \dfrac{1}{\lvert y\rvert^2} \sum y_j \dfrac{\partial}{i\partial \theta_j}$, then for $C_k$ is independent of $x$
   $$\lvert D_x^m L^k(a_{\mathbb{F}_{C,-,2}^{0,0};\pm} ) \rvert\leq C_k \lvert\theta\rvert^{-\tfrac{1}{3}k + 2 \sqrt{\tfrac{n^2}{4}-\lambda}+m} $$

 \item  Denote by $a_{\mathbb{F}_{C,-,2}^{0,0}} = \sum_{\pm} x^{s_{\pm}} a_{\mathbb{F}_{C,-,2}^{0,0};\pm}$. On $x > \delta$, for all $N$ and all $\alpha$
 $$\Big| \lvert\theta\rvert^N (xD_x)^m\partial_{\theta}^{\alpha}a_{\mathbb{F}_{C,-,2}^{0,0}} \Big|\leq C_{N,m,\alpha}$$
 \end{enumerate}

   \end{lemma}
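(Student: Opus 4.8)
The plan is to follow the template already used for the analogous symbol lemmas in the other pieces of $\mathbb{F}_C$ --- Lemma \ref{symbolC-100} for $\mathbb{F}_{C,-,1}^{0,0}$ and Lemma \ref{lemssymFC+00} for $\mathbb{F}_{C,+}^{0,0}$ --- the one new feature being that on $\mathbb{F}_{C,-,2}$ the quantity $\lvert\hat{\theta}'\rvert$ is bounded \emph{below} (by $(1-\delta_1)^{-1}$) rather than above. Recall that on this region the natural blow-up coordinates are
$$t = x\lvert\theta_n\rvert(\lvert\hat{\theta}'\rvert^2-1)^{1/2},\qquad h=\lvert\theta_n\rvert^{-1}(\lvert\hat{\theta}'\rvert^2-1)^{-3/2},$$
both bounded near the corner $\{t=0,h=0\}$ of $\mathbb{F}_C$ (indeed $h<\delta_2^{-3/2}$ automatically from $-Z_0>\delta_2$), that $\lvert\hat{\theta}'\rvert^2-1\ge (1-\delta_1)^{-2}-1>0$, that $\lvert\theta_n\rvert(\lvert\hat{\theta}'\rvert^2-1)^{1/2}=(\lvert\theta'\rvert^2-\lvert\theta_n\rvert^2)^{1/2}$ is comparable to $\lvert\theta\rvert$ on this region, and that $\hat{\mathsf{U}}^{\mathbb{F}_{C,-,2}^{0,0}}$ is polyhomogeneous conormal in $t$ and smooth in $h$ with leading exponents $\pm\sqrt{\tfrac{n^2}{4}-\lambda}$ at $t=0$. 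Writing $\psi_{\mathbb{F}_{C,-,2}^{0,0}}\hat{u}_x=\sum_{\pm}x^{s_\pm}a_{\mathbb{F}_{C,-,2}^{0,0};\pm}$ and using $z=x(\lvert\hat{\theta}'\rvert^2-1)^{-1}$, each $a_{\pm}$ equals, up to the bounded factor $(\lvert\hat{\theta}'\rvert^2-1)^{-n/2}$ and a function smooth in $(t,h)$, a power of $(\lvert\theta'\rvert^2-\lvert\theta_n\rvert^2)^{1/2}$ (the power being $2\sqrt{\tfrac{n^2}{4}-\lambda}$ for $a_+$ and $0$ for $a_-$).

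For part (1) I would first record the crude bound $\lvert a_{\mathbb{F}_{C,-,2}^{0,0};\pm}\rvert\lesssim\lvert\theta\rvert^{2\sqrt{\tfrac{n^2}{4}-\lambda}}$, immediate from the previous representation together with the boundedness of $t,h,(\lvert\hat{\theta}'\rvert^2-1)^{-1}$ and the inequality $(\lvert\theta'\rvert^2-\lvert\theta_n\rvert^2)^{1/2}\le\lvert\theta\rvert$ (using $\lambda<\tfrac{n^2}{4}$). Then I would compute the lifts of the coordinate vector fields to $(t,h)$ exactly as in the proof of Lemma \ref{symbolC-100}, obtaining formulas of the shape
$$\lvert\theta_n\rvert\partial_{\theta_n}=\bigl(-1-3(\lvert\hat{\theta}'\rvert^2-1)^{-1}\lvert\hat{\theta}'\rvert^2\bigr)h\partial_h+\bigl(1-(\lvert\hat{\theta}'\rvert^2-1)^{-1}\lvert\hat{\theta}'\rvert^2\bigr)t\partial_t,$$
$\lvert\theta_n\rvert\partial_{\theta_j}=(\cdots)h\partial_h+(\cdots)t\partial_t$ for $j>n$, $x\partial_x=t\partial_t$, and $\partial_x=(\lvert\theta'\rvert^2-\lvert\theta_n\rvert^2)^{1/2}\partial_t$. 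The point to check is that all coefficients appearing are under control on $\mathbb{F}_{C,-,2}^{0,0}$: the combinations $(\lvert\hat{\theta}'\rvert^2-1)^{-1}\lvert\hat{\theta}'\rvert^2=1+(\lvert\hat{\theta}'\rvert^2-1)^{-1}$ and their analogues in the $\partial_{\theta_j}$ formulas are bounded thanks to the uniform lower bound on $\lvert\hat{\theta}'\rvert^2-1$, while $\partial_{\theta_j}t$ is $O\bigl(x(\lvert\hat{\theta}'\rvert^2-1)^{-1/2}\bigr)=O(\lvert\theta\rvert^{-1})$ because $t\sim x\lvert\theta\rvert$ is bounded on this region. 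Consequently each $\partial_{\theta_j}$ applied to $a_{\pm}$ costs at least a factor $\lvert\theta\rvert^{-1/3}$ and leaves a function still conormal in $t$ and smooth in $h$, so $L^k$ --- a $\theta$-differential operator of order $k$ with coefficients $y_j/\lvert y\rvert^2$ bounded in $\theta$ --- gains $\lvert\theta\rvert^{-k/3}$, whereas each $D_x=\partial_x$ costs at most $(\lvert\theta'\rvert^2-\lvert\theta_n\rvert^2)^{1/2}\le\lvert\theta\rvert$. Combining with the crude bound yields $\lvert D_x^m L^k a_{\mathbb{F}_{C,-,2}^{0,0};\pm}\rvert\le C_k\lvert\theta\rvert^{-k/3+2\sqrt{\tfrac{n^2}{4}-\lambda}+m}$ with $C_k$ independent of $x$.

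For part (2), on $x>\delta$ I would use the algebraic identity $\lvert\theta_n\rvert=x^{-3/2}t^{3/2}h^{1/2}$ --- a direct consequence of the definitions of $t$ and $h$ --- together with the boundedness of $t,h$ to get $\lvert\theta_n\rvert\lesssim\delta^{-3/2}$, and then, since $(\lvert\theta'\rvert^2-\lvert\theta_n\rvert^2)^{1/2}=x^{-1}t\lesssim\delta^{-1}$, conclude that $\lvert\theta'\rvert$ and hence $\lvert\theta\rvert$ is bounded on $\{x>\delta\}\cap\supp\psi_{\mathbb{F}_{C,-,2}^{0,0}}$. Thus $a_{\mathbb{F}_{C,-,2}^{0,0}}$ is supported in a fixed compact set of $\theta$ there, and combining this with part (1) (and with $xD_x=t\partial_t$, which preserves conormality in $t$) gives $\bigl\lvert\,\lvert\theta\rvert^N(xD_x)^m\partial_\theta^\alpha a_{\mathbb{F}_{C,-,2}^{0,0}}\bigr\rvert\le C_{N,m,\alpha}$ for all $N,m,\alpha$, which is part (2).

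I expect the only genuine --- though mild --- obstacle to be the coefficient-boundedness check in the vector-field computation over a region in which $\lvert\hat{\theta}'\rvert$ is not bounded above: one must observe that the only way such a coefficient can become large is through a factor $(\lvert\hat{\theta}'\rvert^2-1)^{-1}$, which is controlled precisely by the hypothesis $\lvert\theta_n\rvert\le(1-\delta_1)\lvert\theta'\rvert$ defining $\mathbb{F}_{C,-,2}$. Once this is in hand, everything reduces to the computations already carried out for $\mathbb{F}_{C,-,1}^{0,0}$ and $\mathbb{F}_{C,+}^{0,0}$.
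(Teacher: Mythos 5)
Your overall route is the same as the paper's: for part (2) the paper likewise writes $\lvert\theta'\rvert$ in terms of $x,t,h$ and uses boundedness of $t,h$ on $x>\delta$ (your version, via $\lvert\theta_n\rvert=x^{-3/2}t^{3/2}h^{1/2}$ and $(\lvert\theta'\rvert^2-\theta_n^2)^{1/2}=x^{-1}t$, is the same computation), and for part (1) the paper simply records the crude bound $\lvert a_{\pm}\rvert\lesssim\lvert\theta'\rvert^{2\sqrt{\tfrac{n^2}{4}-\lambda}}$ and defers to the method of Lemma \ref{symbolC-100}, which is exactly the template you follow. Part (2) of your argument is fine.

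There is, however, a gap in the way you justify the $\lvert\theta\rvert^{-1/3}$ gain per $\theta$-derivative in part (1). Writing $\lvert\theta_n\rvert\partial_{\theta_n}=(\text{bdd})\,h\partial_h+(\text{bdd})\,t\partial_t$ and checking that the coefficients are bounded (via the lower bound on $\lvert\hat{\theta}'\rvert^2-1$) only yields $\partial_{\theta_n}(\text{symbol})=O(\lvert\theta_n\rvert^{-1})\cdot(\text{symbol-type quantity})$. On $\mathbb{F}_{C,-,1}^{0,0}$ and $\mathbb{F}_{C,+}^{0,0}$ this suffices because there $\lvert\theta_n\rvert\sim\lvert\theta\rvert$; but on $\mathbb{F}_{C,-,2}$ only $\lvert\theta'\rvert\sim\lvert\theta\rvert$, and $\lvert\theta_n\rvert$ has no lower bound in terms of $\lvert\theta\rvert$: for instance $\theta_n=1$, $\lvert\theta'\rvert=R\to\infty$, $x\sim R^{-1}$ satisfies $-Z_0>\delta_2$, $\lvert\theta_n\rvert\le(1-\delta_1)\lvert\theta'\rvert$, with $t,h$ bounded, so it lies in the region, and there $\lvert\theta_n\rvert^{-1}=1$ gives no decay at all — taken literally your step would produce $\lvert L^k a_{\pm}\rvert\lesssim\lvert\theta\rvert^{2\sqrt{\tfrac{n^2}{4}-\lambda}}$ with no $-\tfrac{1}{3}k$ improvement, which is what the later integration-by-parts arguments need. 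The repair stays inside your framework but must use the large parameter $(\lvert\theta'\rvert^2-\theta_n^2)^{1/2}\sim\lvert\theta\rvert$ directly: for the $t$-dependence you already have $\partial_{\theta_j}t/t=O(\lvert\theta\rvert^{-1})$, which is the right statement since the $t$-dependence is only conormal; for the $h$-dependence one should exploit smoothness in $h$ (so $\partial_h$ is harmless) together with $h=\theta_n^2(\lvert\theta'\rvert^2-\theta_n^2)^{-3/2}$, giving $\partial_{\theta_j}h=O(\lvert\theta\rvert^{-2})$ on this region; and the explicit prefactor $(\lvert\theta'\rvert^2-\theta_n^2)^{\sqrt{\tfrac{n^2}{4}-\lambda}}$ has logarithmic $\theta$-derivatives of size $O(\lvert\theta\rvert^{-1})$. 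With these three observations each $\theta$-derivative in fact gains a full power $\lvert\theta\rvert^{-1}$ (more than the $\lvert\theta\rvert^{-1/3}$ claimed in the lemma), and the rest of your argument, including the count for $D_x^m$ via $\partial_x=(\lvert\theta'\rvert^2-\theta_n^2)^{1/2}\partial_t\le\lvert\theta\rvert\,\partial_t$, goes through unchanged. (To be fair, the paper's one-line ``same method as Lemma \ref{symbolC-100}'' is subject to the same caveat; but since you singled out the coefficient check as the crux, it should be the corrected version of it.)
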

   
   \begin{proof}\indent\par\noindent
    \begin{enumerate}
\item  First we consider the bound on $a_{\mathbb{F}_{C,-,2}^{0,0};\pm}$
         $$\psi_{\mathbb{F}_{C,-,2}^{0,0}} \hat{u}_x=  \big(\lvert\theta_n\rvert (\lvert\hat{\theta}'\rvert^2-1)^{1/2}\big)^{\sqrt{\tfrac{n^2}{4}-\lambda}} \hat{\mathsf{U}}^{\mathbb{F}_{C,-,2}^{0,0}} $$ 
  $$\big[\lvert\theta_n\rvert (\lvert\hat{\theta}'\rvert^2-1)^{1/2}\big]^{\sqrt{\tfrac{n^2}{4}-\lambda}} \hat{\mathsf{U}}^{\mathbb{F}_{C,-,2}^{0,0}} \in  \big[ (\lvert\theta'\rvert^2-\lvert\theta_n\rvert)^{1/2} \big]^{2\sqrt{\tfrac{n^2}{4}-\lambda}} x^{s_+} \mathcal{C}^{\infty}_{t,h} + x^{s_-} \mathcal{C}^{\infty}_{t,h}$$
  On the region currently considered, $h, t$ and $\lvert\theta_n\rvert$ are bounded so 
$$ \big[\lvert\theta_n\rvert (\lvert\hat{\theta}'\rvert^2-1)^{1/2}\big]^{\sqrt{\tfrac{n^2}{4}-\lambda}} \hat{\mathsf{U}}^{\mathbb{F}_{C,-,2}^{0,0}} \leq  \lvert \theta'\rvert^{2 \sqrt{\tfrac{n^2}{4}-\lambda}} $$
$$\Rightarrow  \lvert a_{\mathbb{F}_{C,-,2}^{0,0};\pm}\rvert \leq \lvert \theta'\rvert^{2 \sqrt{\tfrac{n^2}{4}-\lambda}} $$
For bound on higher derivatives we follow the same method for example in Lemma \ref{symbolC-100} and get
$$\lvert D_x^m L^k(a_{\mathbb{F}_{C,-,2}^{0,0}} ) \rvert\leq C_k \lvert\theta\rvert^{-\tfrac{1}{3}k + 2 \sqrt{\tfrac{n^2}{4}-\lambda}+m}. $$

\item On the region currently considered, $\lvert\theta'\rvert $ is equivalent to $\lvert\theta\rvert$ so it suffices to compare with $\lvert\theta'\rvert$. 
    $$\lvert\theta'\rvert = x^{-1} t ( x^{-1}ht - 1)^{1/2}$$   
  $$\Rightarrow \lvert\theta'\rvert^N G(t, h)=   x^{-N} t^N ( x^{-1}ht - 1)^{\tfrac{1}{2}N}G(t, h)$$
   Note on the region currently considered  we have $x >\delta ,(-Z_0) > \delta_2$ and $h,t$ bounded so the above expression is bounded for $G$ bounded.  Thus we have $x > \delta$: 
   for all $N$ and all $\alpha$
 $$\Big| \lvert\theta\rvert^N (xD_x)^m\partial_{\theta}^{\alpha}a_{\mathbb{F}_{C,-,2}^{0,0}} \Big|\leq C_{N,m,\alpha}$$

 \end{enumerate}

   \end{proof}
As an immediate result of this we have:
   \begin{lemma}
   $\mathsf{U}^{\mathbb{F}_{C,-,2}^{0,0}}$ is smooth on $x\neq 0$.
   \end{lemma}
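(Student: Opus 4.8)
The statement to prove is that $\mathsf{U}^{\mathbb{F}_{C,-,2}^{0,0}}$ is smooth on $x\neq 0$, as an immediate consequence of Lemma~\ref{00c-2}. The plan is to run the same oscillatory-integral argument that was already used for the analogous regions, specializing to the simple fact that on $\{x\geq\delta\}$ the amplitude is a symbol of order $-\infty$ in $\theta$.

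First I would recall from the definition of $\hat{u}_x$ on this region that, after extracting the explicit factor $\bigl(\lvert\theta_n\rvert(\lvert\hat\theta'\rvert^2-1)^{1/2}\bigr)^{\sqrt{n^2/4-\lambda}}$ and the power $x^{s_\pm}$, the remaining amplitude $a_{\mathbb{F}_{C,-,2}^{0,0}} = \sum_\pm x^{s_\pm} a_{\mathbb{F}_{C,-,2}^{0,0};\pm}$ is, on $x>\delta$, an element of $S^{-\infty}_{1/3,0;\theta}$: this is exactly part (2) of Lemma~\ref{00c-2}, which says that for every $N$, $m$, $\alpha$ one has $\lvert \lvert\theta\rvert^N (xD_x)^m \partial_\theta^\alpha a_{\mathbb{F}_{C,-,2}^{0,0}}\rvert \le C_{N,m,\alpha}$ uniformly on $x>\delta$. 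Hence for $\chi\in\mathcal{C}^\infty_c(X_\delta)$,
\[
\langle \mathsf{U}^{\mathbb{F}_{C,-,2}^{0,0}}_\delta, \chi\rangle = \lim_{\epsilon\to 0}(2\pi)^{-n}\int\!\!\int\!\!\int a_{\mathbb{F}_{C,-,2}^{0,0}}\,\chi(x,y)\,\varpi(\epsilon\theta)\,e^{iy\cdot\theta}\,dx\,dy\,d\theta,
\]
and because $\theta\mapsto a_{\mathbb{F}_{C,-,2}^{0,0}}(x,y,\theta)$ is rapidly decreasing uniformly in $(x,y)$ on $X_\delta$, the integrand is absolutely integrable, the limit $\epsilon\to 0$ can be taken under the integral sign, and one gets the pointwise representation $\mathsf{U}^{\mathbb{F}_{C,-,2}^{0,0}}_\delta(x,y) = (2\pi)^{-n}\int a_{\mathbb{F}_{C,-,2}^{0,0}}\,e^{iy\cdot\theta}\,d\theta$.

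Next I would differentiate this representation: applying $D_y^\alpha$ brings down a factor $\theta^\alpha$, and applying $(xD_x)^m$ hits the amplitude (the phase $y\cdot\theta$ carries no $x$-dependence here). By part (2) of Lemma~\ref{00c-2} each of $\theta^\alpha (xD_x)^m a_{\mathbb{F}_{C,-,2}^{0,0}}$ is again rapidly decreasing in $\theta$ uniformly on $X_\delta$, so $D_y^\alpha(xD_x)^m \mathsf{U}^{\mathbb{F}_{C,-,2}^{0,0}}_\delta$ is a continuous (indeed bounded) function on $X_\delta$ for every $\alpha, m$. Since $\partial_x$ and $xD_x$ generate the same algebra of vector fields away from $\{x=0\}$, this shows all mixed derivatives $\partial_x^l D_y^\alpha \mathsf{U}^{\mathbb{F}_{C,-,2}^{0,0}}$ are continuous on $\{x>\delta\}$; as $\delta>0$ is arbitrary, $\mathsf{U}^{\mathbb{F}_{C,-,2}^{0,0}}\in\mathcal{C}^\infty$ on $x\neq 0$.

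There is essentially no obstacle here: the entire content has been packaged into Lemma~\ref{00c-2}(2), and the remaining work is the standard fact that a phase-space amplitude that is Schwartz in $\theta$ (with uniform bounds in the base variables, and for all base derivatives) produces a smooth function via oscillatory integration. The only mild point to be careful about is that the cutoff $\psi_{\mathbb{F}_{C,-,2}^{0,0}}$ restricts to the conic region $\{\lvert\theta_n\rvert\le (1-\delta_1)\lvert\theta'\rvert\}$ where $\lvert\theta'\rvert\sim\lvert\theta\rvert$, so the order estimates stated in terms of $\lvert\theta'\rvert$ are equivalent to estimates in $\lvert\theta\rvert$ — which is exactly why the $S^{-\infty}$ conclusion on $x>\delta$ holds. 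This is the step I would state explicitly but not belabor.
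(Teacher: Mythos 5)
Your proof is correct and coincides with the paper's intended argument: the paper simply states this lemma as an immediate consequence of Lemma~\ref{00c-2} (no proof given), and the details you spell out are exactly the ones the paper writes down for the analogous cases (Lemma~\ref{C00x>delta} and Lemma~\ref{x>deltaC+00}), namely that the $S^{-\infty}$ bound on $x>\delta$ from Lemma~\ref{00c-2}(2) lets one pass to the limit under the integral, differentiate in $y$ and $x$ under the integral (using $\partial_x$ in terms of $xD_x$ and $x^{-1}$ bounded away from $x=0$), and conclude boundedness of all derivatives. Your remark that $\lvert\theta'\rvert\sim\lvert\theta\rvert$ on the conic support is the correct justification that the $S^{-\infty}$ estimates, stated in $\lvert\theta'\rvert$, are genuine symbol estimates in $\lvert\theta\rvert$.
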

   
   \begin{lemma}\label{conormalFC-200}
    $ \mathsf{U}^{\mathbb{F}_{C,-,2}^{0,0}}\in x^{s_-}\mathcal{C}^{\infty}_{x,y} + x^{s_+} \mathcal{C}^{\infty}_{x,y}$  for  $y_n \neq 0$. 
   \end{lemma}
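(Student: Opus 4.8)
The plan is to follow verbatim the argument already used for Lemma~\ref{conormalC+00} and reused for Lemma~\ref{conormalFC-1}, since on the region $\mathbb{F}_{C,-,2}^{0,0}$ the structural setup is identical: after pairing with a test function $\chi\in\mathcal{C}^{\infty}_0(x,y)$ whose support lies entirely in $\{y_n>0\}$ or in $\{y_n<0\}$, the distribution $\mathsf{U}^{\mathbb{F}_{C,-,2}^{0,0}}$ is represented, as in~\eqref{indicialFC-2}, in the form $\lim_{\epsilon\to 0}\sum_{\pm}(2\pi)^{-n}x^{s_{\pm}}\int\int\int a_{\mathbb{F}_{C,-,2}^{0,0};\pm}\,\varpi(\epsilon\theta)\,\chi\,e^{iy\cdot\theta}\,dx\,dy\,d\theta$, where the amplitudes $a_{\mathbb{F}_{C,-,2}^{0,0};\pm}$ are smooth in the scaled variables $t,h$ (both bounded on $\mathbb{F}_{C,-,2}^{0,0}$) and their derivatives are controlled by Lemma~\ref{00c-2}.

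First I would observe that on $\supp\chi$ the phase $y\cdot\theta$ has $d_\theta(y\cdot\theta)=y$ with $\lvert y\rvert$ bounded away from $0$, because its $n$-th component $y_n$ never vanishes there. Hence the first-order operator $L$ with $^tL=\tfrac{1}{\lvert y\rvert^2}\sum_j y_j\tfrac{\partial}{i\partial\theta_j}$ satisfies $^tL\,e^{iy\cdot\theta}=e^{iy\cdot\theta}$, and $k$-fold integration by parts moves $L^k$ onto $a_{\mathbb{F}_{C,-,2}^{0,0};\pm}\varpi$. By Lemma~\ref{00c-2}, $\lvert D_x^m L^k a_{\mathbb{F}_{C,-,2}^{0,0};\pm}\rvert\le C_k\lvert\theta\rvert^{-\frac{1}{3}k+2\sqrt{\frac{n^2}{4}-\lambda}+m}$ with $C_k$ independent of $x$; since $\lvert\theta'\rvert$ is equivalent to $\lvert\theta\rvert$ on this region, for $k$ large enough (here $k>3(n+2\sqrt{\frac{n^2}{4}-\lambda})$ suffices) the $\theta$-integrand is in $L^1(\mathbb{R}^n)$, uniformly in $(x,y)$ on compact sets. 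Letting $\epsilon\to 0$ under the integral sign then gives $\mathsf{U}^{\mathbb{F}_{C,-,2}^{0,0}}=\sum_{\pm}x^{s_{\pm}}\mathsf{U}^{\mathbb{F}_{C,-,2}^{0,0};\pm}$ with each $\mathsf{U}^{\mathbb{F}_{C,-,2}^{0,0};\pm}\in L^\infty_{x,y}$.

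Next I would repeat the estimate with extra derivatives. To bound $D_y^\alpha D_x^l\mathsf{U}^{\mathbb{F}_{C,-,2}^{0,0};\pm}$ one differentiates under the integral, producing an extra factor $\theta^\alpha$ and replacing $L^k a$ by $D_x^l L^k a$, and then chooses $k>3(n+2\sqrt{\frac{n^2}{4}-\lambda}+l+\lvert\alpha\rvert)$ so that, again by Lemma~\ref{00c-2}, the integrand is $L^1$ with an $x$-independent bound; this yields $D_y^\alpha D_x^l\mathsf{U}^{\mathbb{F}_{C,-,2}^{0,0};\pm}\in L^\infty_{x,y}$ for every $\alpha$ and $l$. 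Since $s_\pm$ are the indicial roots and the two amplitude branches are genuinely smooth down to $x=0$, this is precisely $\mathsf{U}^{\mathbb{F}_{C,-,2}^{0,0}}\in x^{s_-}\mathcal{C}^{\infty}_{x,y}+x^{s_+}\mathcal{C}^{\infty}_{x,y}$ on $y_n\neq 0$.

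The proof has no genuine obstacle: the only point requiring care is that the symbol estimates in Lemma~\ref{00c-2} hold with constants independent of $x$, so that the resulting $L^\infty$ bounds are uniform up to the boundary $x=0$ — this is exactly why the explicit factor $\bigl(\lvert\theta_n\rvert(\lvert\hat\theta'\rvert^2-1)^{1/2}\bigr)^{\sqrt{\frac{n^2}{4}-\lambda}}$ was inserted into the amplitude via $\phi_{\text{correction}}$ in~\eqref{phiCorr}. Everything else is the standard non-stationary-phase integration-by-parts argument that has already been carried out several times above, so the write-up can legitimately be reduced to ``same proof as that for Lemma~\ref{conormalC+00}.''
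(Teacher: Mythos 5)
Your proof is correct and follows essentially the same route the paper takes: the paper's own argument simply says ``Same proof as for Lemma~\ref{conormalC+00}, the main idea is that the integrands in consideration have growth in $\lvert\theta'\rvert \sim \lvert\theta\rvert$ but we can reduce arbitrarily the growth in $\lvert\theta\rvert$ when $y_n\neq 0$,'' which is precisely the non-stationary-phase integration-by-parts argument with the operator $L$ and the symbol bounds from Lemma~\ref{00c-2} that you spelled out. You have supplied the details that the paper leaves implicit, but nothing in the strategy differs.
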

   \begin{proof}
   Same proof as for Lemma \ref{conormalC+00}, the main idea is that the integrands in consideration have growth in $\lvert\theta'\rvert \sim \lvert\theta\rvert$ but we can reduce arbitrarily the growth in $\lvert\theta\rvert$ when $y_n\neq 0$. 
   \end{proof}

\subsubsection{Region $\mathbb{F}_{C,-,2}^{0,\infty}$}\
\begin{align*}
   ( \mathsf{U}^{\mathbb{F}_{C,-,2}^{0,\infty}} , \chi)
    &= \lim_{\epsilon \rightarrow 0} (2\pi)^{-n} \int\int\int  \psi_{\mathbb{F}_{C,-,2}^{0,\infty}} \hat{u}_x \chi(x,y) \varpi(\epsilon\theta)  e^{iy\cdot \theta} \,dx\,dy\, d\theta \\
    &=  \lim_{\epsilon \rightarrow 0} (2\pi)^{-n} \int\int\int a_{\mathbb{F}_{C,-,2}^{0,\infty}} \varpi \chi e^{iy\cdot\theta} \, dx\,  dy\, d\theta
    \end{align*}
 Recall the structure of the approximate solution on this region : 
  $$\psi_{\mathbb{F}_{C,-,2}^{0,\infty}} \hat{u}_x= \big(\lvert\theta_n\rvert (\lvert\hat{\theta}'\rvert^2-1)^{1/2}\big)^{\sqrt{\tfrac{n^2}{4}-\lambda}} \hat{\mathsf{U}}^{\mathbb{F}_{C,-,2}^{0,\infty}}$$
  $$\hat{\mathsf{U}}^{\mathbb{F}_{C,-,2}^{0,\infty}} = \exp(\tilde{\phi})s^{1/2} z^{\tilde{\gamma}_{\mathfrak{B}_1}} \tilde{\mathsf{U}}^{\mathbb{F}_{C,-,2}^{0,\infty}} ; \ \tilde{\gamma}_{\mathfrak{B}_1} = 0$$
  where $\tilde{\phi}  = -\tfrac{2}{3}s^{-1}z^{-1} \left[ 1 - (1 - z)^{3/2}\right]$ and $\hat{\mathsf{U}}^{\mathbb{F}_{C,-,1}^{0,\infty}}$ is smooth in $z$ and $s$.

\begin{lemma}
$\mathsf{U}^{\mathbb{F}_{C,-,2}^{0,\infty}} $ is smooth on $x\neq 0$
\end{lemma}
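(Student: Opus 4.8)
The plan is to mirror, essentially verbatim, the argument already used for the closely analogous regions, in particular the proof of Lemma~\ref{x>deltaC+00} and Lemma~\ref{C-10inf}. The statement is that $\mathsf{U}^{\mathbb{F}_{C,-,2}^{0,\infty}}$ is smooth on $x\neq 0$, and the natural route is to show that, after restricting to $X_\delta=\{x>\delta\}$, the relevant amplitude lies in $S^{-\infty}_{\frac13,0;\theta}$, so that the oscillatory integral representing $\mathsf{U}^{\mathbb{F}_{C,-,2}^{0,\infty}}_\delta$ converges absolutely along with all of its $x$- and $y$-derivatives.

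First I would write out the oscillatory-integral representation
$$
(\mathsf{U}^{\mathbb{F}_{C,-,2}^{0,\infty}}_\delta,\chi)
= \lim_{\epsilon\to 0}(2\pi)^{-n}\int\!\!\int\!\!\int a_{\mathbb{F}_{C,-,2}^{0,\infty}}\,\chi(x,y)\,\varpi(\epsilon\theta)\,e^{iy\cdot\theta}\,dx\,dy\,d\theta,
$$
using that in this region there is no genuine oscillatory phase beyond $y\cdot\theta$ (the factor $\exp(\tilde\phi)$ with $\tilde\phi=-\tfrac23 s^{-1}z^{-1}[1-(1-z)^{3/2}]$ is real and, by the bound of Lemma~\ref{upperbound} adapted to the present cone $\{|\theta_n|\le(1-\delta_1)|\theta'|\}$, provides rapid decay in $|\theta|$ once $x\neq 0$). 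I would then record the symbol estimates: on $\{x>\delta\}$ one has $z=x(|\hat\theta'|^2-1)^{-1}$ and $s=h/z$ bounded, $|\theta'|\sim|\theta|$, and the exponential gives a gain $e^{-c|\theta|x^{3/2}g(x)}$ with $g\geq0$ and $x^{3/2}g(x)\neq0$ for $x\neq0$; combining this with the polynomial bounds $|D_x^m L^k a|\lesssim_k |\theta|^{-k/3+2\sqrt{n^2/4-\lambda}+m}$ (obtained exactly as in Lemma~\ref{00c-2} and Lemma~\ref{symbolC-10inf}) shows $a_{\mathbb{F}_{C,-,2}^{0,\infty}}\in S^{-\infty}_{\frac13,0;\theta}$ on $X_\delta$.

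With that in hand, the conclusion is immediate: since $a_{\mathbb{F}_{C,-,2}^{0,\infty}}$ is Schwartz in $\theta$ on $X_\delta$, one may pass to the limit $\epsilon\to0$ under the integral sign, obtaining
$$
\mathsf{U}^{\mathbb{F}_{C,-,2}^{0,\infty}}_\delta = (2\pi)^{-n}\int a_{\mathbb{F}_{C,-,2}^{0,\infty}}\,e^{iy\cdot\theta}\,d\theta,
$$
which is smooth in $y$; applying $\partial_x=-x^{-1}s\partial_s+x^{-1}z\partial_z$ repeatedly preserves the rapid decay in $\theta$, so all mixed derivatives $D_y^\alpha D_x^l\mathsf{U}^{\mathbb{F}_{C,-,2}^{0,\infty}}_\delta$ are bounded as well. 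Since $\delta>0$ was arbitrary, $\mathsf{U}^{\mathbb{F}_{C,-,2}^{0,\infty}}$ is smooth on $\{x\neq0\}$. The only point requiring a little care — the ``main obstacle'', though it is mild — is verifying that the exponential factor indeed yields decay rather than growth on the cone $\{|\theta_n|\le(1-\delta_1)|\theta'|\}$ together with $z<1$; here one uses $|\hat\theta'|\ge(1-\delta_1)^{-1}>1$ to bound $z=x(|\hat\theta'|^2-1)^{-1}$ above, extract a lower bound for $1-(1-z)^{3/2}$ as in Lemma~\ref{upperbound}, and convert $\exp(\tilde\phi)$ into a bound of the form $e^{-c|\theta_n|x^{3/2}g(x)}$ with $g$ vanishing only at $x=0$, which is precisely what makes the amplitude Schwartz away from the boundary.
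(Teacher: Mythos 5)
There is a genuine gap at the one point you yourself flag as the ``main obstacle''. Your final bound on the exponential factor has the form $e^{-c\lvert\theta_n\rvert x^{3/2}g(x)}$, i.e.\ decay measured in $\lvert\theta_n\rvert$. But on the region $\mathbb{F}_{C,-,2}$ the cone is $\{\lvert\theta_n\rvert\le(1-\delta_1)\lvert\theta'\rvert\}$, so it is $\lvert\theta'\rvert$, not $\lvert\theta_n\rvert$, that is comparable to $\lvert\theta\rvert$; one can have $\lvert\theta_n\rvert$ bounded while $\lvert\theta'\rvert\to\infty$, and then $e^{-c\lvert\theta_n\rvert x^{3/2}g(x)}$ stays bounded below while the amplitude prefactor still grows like $\lvert\theta'\rvert^{2\sqrt{n^2/4-\lambda}}$. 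So the bound you propose does not make the amplitude rapidly decreasing in $\lvert\theta\rvert$, and membership in $S^{-\infty}_{\frac13,0;\theta}$ on $x>\delta$ does not follow. Moreover, Lemma \ref{upperbound} does not ``adapt'' to this cone in the way you suggest: its lower bound $1-(1-z)^{3/2}\ge g(x)$ with $g$ independent of $\theta$ uses that $\lvert\hat\theta'\rvert^2-1$ is bounded \emph{above} (true on $\mathbb{F}_{C,-,1}$, where $1\le\lvert\hat\theta'\rvert\le(1-2\delta_1)^{-1}$), whereas on $\mathbb{F}_{C,-,2}$ one only has $\lvert\hat\theta'\rvert^2-1$ bounded \emph{below}; for fixed $x$ the variable $z=x(\lvert\hat\theta'\rvert^2-1)^{-1}$ tends to $0$ as $\lvert\hat\theta'\rvert\to\infty$, so no such $\theta$-independent lower bound exists.

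The paper's proof closes this by a different elementary estimate: use $1-(1-z)^{3/2}\ge cz$ on $[0,1]$, so that
\begin{equation*}
\lvert\theta_n\rvert\bigl(\lvert\hat\theta'\rvert^2-1\bigr)^{3/2}\bigl(1-(1-z)^{3/2}\bigr)
\;\ge\; c\,\lvert\theta_n\rvert\bigl(\lvert\hat\theta'\rvert^2-1\bigr)^{1/2}x
\;=\;c\,\bigl(\lvert\theta'\rvert^2-\lvert\theta_n\rvert^2\bigr)^{1/2}x
\;\ge\;c'\,\delta_1^{1/2}\,\lvert\theta'\rvert\,x ,
\end{equation*}
where the last step uses $\lvert\theta_n\rvert\le(1-\delta_1)\lvert\theta'\rvert$. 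This converts $\exp(\tilde\phi)$ into a factor bounded by $e^{-c'' x\lvert\theta'\rvert}$, which is rapid decay in $\lvert\theta\rvert$ precisely because $\lvert\theta'\rvert\sim\lvert\theta\rvert$ on this region; combined with the polynomial bound $\lvert\theta'\rvert^{2\sqrt{n^2/4-\lambda}}$ on the prefactor (and the analogous statements for $\partial_{\theta}$ and $\partial_x$ derivatives), it yields $\lvert\theta\rvert^N$-boundedness for every $N$ on $x>\delta$, after which your concluding steps (passing to the limit under the integral and differentiating) are fine. So the overall scheme you describe is the right one, but you must replace the $\lvert\theta_n\rvert$-decay bound by the $\lvert\theta'\rvert$-decay bound above; as written, the key decay claim fails on the part of the cone where $\theta_n$ stays bounded.
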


\begin{proof}\indent\par\noindent
%
  $$ \big(\lvert\theta_n\rvert (\lvert\hat{\theta}'\rvert^2-1)^{1/2}\big)^{\sqrt{\tfrac{n^2}{4}-\lambda}} =  \big[ \lvert\hat{\theta}'\rvert^2-\lvert\theta_n\rvert^2\big]^{1/2\sqrt{\tfrac{n^2}{4}-\lambda}}
  \stackrel{<}{\sim} \lvert \theta' \rvert^{2 \sqrt{\tfrac{n^2}{4}-\lambda}} $$
so 
$$(1 -\tilde{\chi}) (\theta_n) \exp(-\tilde{\phi}) a_{\mathbb{F}_{C,-,2}^{0,\infty}} \leq \lvert\theta'\rvert^{2 \sqrt{\tfrac{n^2}{4}-\lambda} }$$
  On the other hand on $x\neq 0$ we have rapid decreasing in $\lvert \theta'\rvert$ coming from the term $\exp(\tilde{\phi})$:   Using $1 - (1 - \mathsf{x})^{3/2} \geq c \mathsf{x}$ for $\mathsf{x} \in [0,1]$ for some $c>0$ to obtain a lower bound for:
  \begin{align*}
 \lvert\theta_n\rvert ( \lvert \hat{\theta}'\rvert^2 - 1)^{3/2} \left(1 - (1 - z)^{3/2}\right) 
 &\geq \lvert\theta_n\rvert ( \lvert \hat{\theta}'\rvert^2 - 1)^{3/2} z\\
 &= \lvert\theta_n\rvert ( \lvert \hat{\theta}'\rvert^2 - 1)^{3/2} x ( \lvert \hat{\theta}'\rvert^2 - 1)^{-1}\\
 &= \lvert\theta_n\rvert ( \lvert \hat{\theta}'\rvert^2 - 1)^{1/2} x
  = ( \lvert\theta'\rvert^2 - \lvert \theta_n\rvert^2 ) x\\
  &\geq  ( \lvert\theta'\rvert^2 - (1 - \delta_1)^2 \lvert \theta'\rvert^2 ) x  = \delta_1  \lvert\theta'\rvert x 
 \end{align*}
   $$\Rightarrow e^{- \lvert\theta_n\rvert ( \lvert\hat{\theta}'\rvert^2 - 1)^{3/2} \left(1 - \left[1 -\sigma^{2/3}\right]^{3/2}\right)}
 \leq e^{ - x \delta_1 \lvert\theta'\rvert}$$
  $$\Rightarrow \lvert\theta'\rvert^N a_{\mathbb{F}_{C,-,2}^{0,\infty}} \leq C_N;\  \forall N \ \in \mathbb{N}$$
Same result if we consider derivatives in $\theta_j$ and $\partial_x$. From here we get the smoothness on $x \neq 0$ (see e.g Lemma \ref{C00x>delta}) 

\end{proof}

\begin{lemma}
 $\mathsf{U}^{\mathbb{F}_{C,-,2}^{0,\infty}} $ is smooth $y_n \neq0$ and in fact vanishes to infinite order at $x=0$ for $y_n \neq 0$.
\end{lemma}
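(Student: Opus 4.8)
## Proof proposal for: $\mathsf{U}^{\mathbb{F}_{C,-,2}^{0,\infty}}$ is smooth on $y_n\neq 0$ and vanishes to infinite order at $x=0$ for $y_n\neq 0$

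The plan is to run the now-familiar oscillatory-integral / integration-by-parts argument, exploiting the two structural facts established in this subregion: first, that on $\mathbb{F}_{C,-,2}$ the cone condition $\lvert\theta_n\rvert \leq (1-\delta_1)\lvert\theta'\rvert$ makes $\lvert\theta'\rvert$ equivalent to $\lvert\theta\rvert$, so all symbol bounds may be stated in terms of $\lvert\theta'\rvert$; second, that the phase of $\hat{u}_x$ on this region has \emph{no} oscillatory factor depending on the $\theta$-variables in an essential way — the only exponential present is $\exp(\tilde\phi)$ with $\tilde\phi = -\tfrac{2}{3}s^{-1}z^{-1}[1-(1-z)^{3/2}]$, which as shown in the preceding lemma produces, on $x\neq 0$, rapid decay $e^{-x\delta_1\lvert\theta'\rvert}$, i.e. $a_{\mathbb{F}_{C,-,2}^{0,\infty}}\in S^{-\infty}_{1/3,0}$ away from $x=0$. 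The remaining issue is purely the behavior at and near $x=0$, where that decay degenerates.

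First I would write, for $\chi\in\mathcal{C}^\infty_c$ with $\operatorname{supp}\chi\subset\{y_n<0\}$ (or $\{y_n>0\}$),
\begin{equation*}
(\mathsf{U}^{\mathbb{F}_{C,-,2}^{0,\infty}},\chi) = \lim_{\epsilon\to 0}(2\pi)^{-n}\int\!\!\int\!\!\int a_{\mathbb{F}_{C,-,2}^{0,\infty}}\,\varpi(\epsilon\theta)\,\chi(x,y)\,e^{iy\cdot\theta}\,dx\,dy\,d\theta,
\end{equation*}
noting there is no $\theta$-dependent oscillatory factor in the exponent (the $\exp(\tilde\phi)$ is absorbed into $a_{\mathbb{F}_{C,-,2}^{0,\infty}}$ as a symbol of order $2\sqrt{n^2/4-\lambda}$ on the whole region, by the bound $\lvert a_{\mathbb{F}_{C,-,2}^{0,\infty}}\rvert\lesssim\lvert\theta'\rvert^{2\sqrt{n^2/4-\lambda}}$). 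On $\operatorname{supp}\chi$ the $n$-th component of $d_\theta(y\cdot\theta)$ is $y_n$, which is bounded away from $0$, so I introduce the first-order operator $L$ with $^tL=\lvert y\rvert^{-2}\sum y_j\,\partial/(i\partial\theta_j)$, satisfying $^tL e^{iy\cdot\theta}=e^{iy\cdot\theta}$, and integrate by parts $k$ times. The key new point for vanishing at $x=0$ is the bound for $x^{-1}$: on this region $x^{-1}=s^{-1}z^{-1}$ and, since $z = x(\lvert\hat\theta'\rvert^2-1)^{-1}$ with $\lvert\hat\theta'\rvert\geq(1-\delta_1)^{-1}$ and $h=z s$, one gets $x^{-1}=s\lvert\theta_n\rvert(\lvert\hat\theta'\rvert^2-1)^{1/2}\lesssim s\lvert\theta'\rvert$; with $s$ bounded on the region this gives $x^{-N}\lesssim_N\lvert\theta'\rvert^N$, polynomially bounded in $\theta$. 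Then, exactly as in the proof of Lemma \ref{C-10inf} (or Lemma \ref{conormalC+00}), for $k$ chosen large relative to $n$, $\sqrt{n^2/4-\lambda}$, $\lvert\alpha\rvert$, $\beta$ and $N$, the integrand $\theta\mapsto L^k(a_{\mathbb{F}_{C,-,2}^{0,\infty}}\varpi)$ decays fast enough in $\lvert\theta'\rvert\sim\lvert\theta\rvert$ that $\epsilon\to 0$ may be taken under the integral and the resulting $\theta$-integral converges absolutely; differentiating under the integral and using $\partial_x = -x^{-1}s\partial_s + x^{-1}z\partial_z$ together with the symbol bounds from Lemma \ref{00c-2}-type computations shows $x^{-N}D_y^\alpha(xD_x)^\beta\mathsf{U}^{\mathbb{F}_{C,-,2}^{0,\infty}}\in L^\infty_{x,y}$ for every $N,\alpha,\beta$, which is exactly smoothness plus infinite-order vanishing at $x=0$ on $y_n\neq 0$.

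I expect the main obstacle — really the only nontrivial one — to be bookkeeping the change-of-variables formulas for $\partial_{\theta_n},\partial_{\theta_j},\partial_x$ in the $(z,s)$-coordinates and verifying that each application of $L$ and each $x$-derivative costs at most one power of $\lvert\theta'\rvert$ while the smoothness of $\tilde{\mathsf{U}}^{\mathbb{F}_{C,-,2}^{0,\infty}}$ in $(z,s)$ is preserved; this is entirely analogous to the computations already carried out for $\mathbb{F}_{C,-,1}^{0,\infty}$ in Lemma \ref{symbolC-10inf} and \ref{C-10inf}, the only differences being that here $z$ is bounded (not small) away from a full blow-up and that $\lvert\theta'\rvert$, not $\lvert\theta_n\rvert$, is the reference homogeneity. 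Since the phase contributes no stationary points in $\theta$ (there is no $\theta$-oscillation), there is no wavefront-set statement to prove here and no delicate choice of $\alpha=\alpha(\beta)$ as in the hyperbolic subregions — the argument terminates with the clean conclusion above.
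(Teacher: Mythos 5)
Your proposal is correct and follows essentially the same route as the paper: the same operator $L$ with $^tL=\lvert y\rvert^{-2}\sum y_j\,\partial/(i\partial\theta_j)$, $k$-fold integration by parts exploiting $y_n\neq 0$, the symbol bounds with growth measured against $\lvert\theta'\rvert\sim\lvert\theta\rvert$ (as in the $\mathbb{F}_{C,-,1}^{0,\infty}$ lemmas), and the polynomial bound on $x^{-1}$ via $x^{-1}=s\lvert\theta_n\rvert(\lvert\hat{\theta}'\rvert^2-1)^{1/2}$ to get infinite-order vanishing at $x=0$. Your estimate $x^{-1}\lesssim s\lvert\theta'\rvert$ is in fact the sharper form of the bound the paper records, and the observation that $\exp(\tilde\phi)$ is non-oscillatory and can be absorbed into the symbol matches the paper's treatment.
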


\begin{proof}

Same reasoning as in Lemma \ref{symbolC-10inf} to get for $L$ such that $^t L = \dfrac{1}{\lvert y\rvert^2} \sum y_j \dfrac{\partial}{i\partial \theta_j}$ we have: 
$$(xD_x)^l\lvert L^k a_{\mathbb{F}_{C,-,2}^{0,\infty}} \rvert  \leq C_k \lvert\theta'\rvert^{2 \sqrt{\tfrac{n^2}{4}-\lambda}}$$
 The proof then is the same in (\ref{C-10inf}). The main idea is that the integrand's growth is in terms $\lvert\theta'\rvert \sim \lvert\theta\rvert$ which we have control over using method of stationary phase when $y_n\neq 0$. For the rapid decay in $x$ we use the boundedness of $x^{-1}$ by 
  $$x^{-1} = s \lvert\theta_n\rvert ( \lvert\hat{\theta}'\rvert^2- 1)^{1/2}  
\leq  s \lvert\theta'\rvert^2$$

 
\end{proof}


\section{Main results on wavefront set and conormality}\label{mainresultcomp}

 In this section we prove the main result for the paper which is Theorem \ref{maintheorem} and Corollary \ref{resultOnCorClass1} and Corollary \ref{resultOnCorClass2} which give a description of the singularity behavior of the solution to (\ref{originalProblem})
   $$\begin{cases} PU = 0   \\
 \supp U \subset \{ y_n \geq 0\} \\
  U =  x^{s_-} \delta(y) +  x^{s_-+1} g(x,y) + h(x,y) ; & g(x,y)\in \mathcal{C}^{\infty}(\RR_x, \mathcal{D}'_y); \supp g \subset\{y_n=0\} \\ & h\in H^{1,\alpha}_{0,b,\text{loc}}; \text{for some}\ \alpha\in \RR
 \end{cases}$$
Here $ s_-(\lambda) = \tfrac{n}{2} - \sqrt{\tfrac{n^2}{4} - \lambda}$ are the indicial roots of the normal operator for $P$. The existence and uniqueness for such a solution is given by Lemma \ref{ExistenceNBdy}. Theorem \ref{maintheorem}, Corollary \ref{resultOnCorClass1} and Corollary \ref{resultOnCorClass2} describe singularities of the solution in the interior in terms of wavefront and at the boundary in terms of conormality. These results are used to arrive at Corollary \ref{mainCor2} to address the original goal of the paper: whether or not there are singularities in the `shadow region'.\nl
In this section, we put together all of the lemmas in section \ref{SingComp}, to get results for the `approximate' solution $\mathsf{U} = \mathcal{F}^{-1} ( \hat{u}_x)$, then for the exact solution $U$. Note that from its construction and the computation in section \ref{SingComp}, $\mathsf{U} = \mathcal{F}^{-1}(\hat{u}_x) \in x^{s_-} \mathcal{C}^0(\RR^+_x, \mathcal{D}'_y)$ satisfies (see Theorem \ref{conormality}):
  $$\begin{cases}  P \mathsf{U} = \mathsf{f} ;\ \mathsf{f} \in \dot{\mathcal{C}}^{\infty}( \RR^+_x \times \RR^n_y)   \\  x^{-s_-} \mathsf{U} \, \big|_{x=0} = \delta(y)  \\  \mathsf{U}\, |_{y_n < 0} \in x^{s_-} \mathcal{C}^{\infty}_{x,y} + x^{s_+} \mathcal{C}^{\infty}_{x,y} \end{cases}$$
 
\subsection{Wavefront set statement for the approximate solution $\mathsf{U}$}
The proof follows exactly from that for Theorem 4.1 in Friedlander. Wavefront of $\mathsf{U}_{\delta}$ (which is the restriction on $\mathsf{U}$ to $x > \delta$) is a union of the following wavefront sets:
\begin{itemize}
\item From Lemma \ref{wfC+0inf}, the wavefront set of $\mathsf{U}^{\mathbb{F}_{C,+}^{0,\infty}}$ is contained in $\Sigma(\delta)$, where
   $$\Sigma (\delta)= \Big\{(x,y,\xi,\eta) : \xi = \partial_x \left(y\cdot\theta - \phi \right), \eta = d_y \left(y\cdot\theta-\phi\right) , d_{\theta}\left(y\cdot\theta -\phi\right)  = 0 , x > \delta , \lvert\theta_n\rvert \geq \lvert\theta'\rvert > 0 \Big\}$$
   $$\phi = \tfrac{2}{3}( Z^{3/2} - Z_0^{3/2})\sgn\theta_n. $$

\item From Lemma \ref{WFC+inf}, the wavefront set of $\mathsf{U}^{\mathbb{F}_{C,+}^{\infty}}$ is contained in $\Sigma(\delta)$, where
   $$\Sigma (\delta)= \Big\{(x,y,\xi,\eta) : \xi = \partial_x \left(y\cdot\theta - \phi \right), \eta = d_y \left(y\cdot\theta-\phi\right) , d_{\theta}\left(y\cdot\theta -\phi\right)  = 0 , x > \delta , \lvert\theta_n\rvert \geq \lvert\theta'\rvert > 0 \Big\}$$
   $$\phi = \tfrac{2}{3}( Z^{3/2} - Z_0^{3/2})\sgn\theta_n. $$
\item From Lemma \ref{wfC0inf}, the wavefront set of $\mathsf{U}^{\mathbb{F}_{C,0}^{\infty}}$ is contained in $\Sigma(\delta_2, \delta)$, where
   $$\Sigma(\delta_2,\delta) = 
   \Big\{(x,y,\xi,\eta) : \xi = \partial_x \left( y\cdot\theta -\tilde{\phi}\right) , \eta = d_y \left( y\cdot\theta -\tilde{\phi}\right) , d_{\theta} \left( y\cdot \theta-\tilde{\phi}\right) = 0 , \theta \in \text{conesupp}\, a_{\mathbb{F}_{C,0}^{\infty}} , x> \delta \Big\}   $$
with $\tilde{\phi} = \tfrac{2}{3} Z^{3/2} \sgn\theta_n$ and $$\text{conesupp}\  a_{\mathbb{F}_{C,0}^{\infty}}= \{\lvert\theta\rvert  >0 , \left(1 - \kappa_0(\delta_2)\right) \lvert\theta_n\rvert \leq
   \lvert\theta'\rvert\leq \left(1 + \kappa_1(\delta_2)\right) \lvert\theta_n\rvert   \}.$$

\item From Lemma \ref{wfC-1inf}, wavefront set of  $\mathsf{U}^{\mathbb{F}_{C,-,1}^{\infty}}$ is contained in $\Sigma(\delta_1,\delta)$, where
$$ \Sigma(\delta_1,\delta) = 
   \Big\{(x,y,\xi,\eta) : \xi = \partial_x \left( y\cdot\theta -\tilde{\phi}\right) , \eta = d_y \left( y\cdot\theta -\tilde{\phi}\right) , d_{\theta} \left( y\cdot \theta-\tilde{\phi}\right) = 0 , \theta \in \text{conesupp}\, a_{\mathbb{F}_{C,-,1}^{\infty}} , x> \delta \Big\}  $$
with $\tilde{\phi} = \tfrac{2}{3} Z^{3/2} \sgn\theta_n$ and $$
\text{conesupp} \ a_{\mathbb{F}_{C,-,1}^{\infty}} = \{\lvert\theta\rvert > 0 ,  (1-2\delta_1) \lvert\theta'\rvert \leq \lvert\theta_n\rvert \leq \lvert\theta'\rvert\}.$$
\end{itemize}
In short 
\begin{equation}\label{wfsum1}
\text{WF}\, (\mathsf{U}_{\delta}) \subset \Sigma(\delta) \cup \Sigma(\delta_1, \delta) \cup \Sigma(\delta_2, \delta)
\end{equation}
 $\Sigma(\delta)$ is independent of $\delta_1$ and $\delta_2$. Denote
 $$(\delta_1, \delta_2) \in \Delta = \{(\mathsf{s},\mathsf{t} ) : 0 < s , t < \tfrac{1}{2}\}$$
 We have as $\delta_2 \rightarrow 0$ then $\kappa_1(\delta_2), \kappa_2(\delta_2) \rightarrow 0$ and 
 $$\bigcap_\Delta \ \Sigma(\,\delta_1, \delta\,) = \bigcap_\Delta\  \Sigma(\,\delta_2, \delta\,) $$
 $$=
 \Big\{(x,y,\xi,\eta) : \xi = \partial_x \left( y\cdot\theta -\tilde{\phi}\right) , \eta = d_y \left( y\cdot\theta -\tilde{\phi}\right) , d_{\theta} \left( y\cdot \theta-\tilde{\phi}\right) = 0 ,\lvert\theta'\rvert = \lvert\theta_n\rvert , \lvert\theta\rvert > 0, x> \delta \Big\} 
  $$ 
All the first order derivatives of $\tfrac{2}{3}Z_0^{3/2} = \phi - \tilde{\phi}$ when $\lvert\theta_n\rvert = \lvert\theta'\rvert$ so 
$$\bigcap_\Delta \ \Sigma(\,\delta_1, \delta\,) = \bigcap_\Delta\  \Sigma(\,\delta_2, \delta\,)  \subset \Sigma(\delta)$$
This is in fact the boundary of $\Sigma(\delta)$ which projects on the glancing bicharacteristics. Thus (\ref{wfsum1}) can be replaced by
$$ \text{WF}\, (\mathsf{U}_{\delta}) \subset \Sigma(\delta). $$
Since this holds for all $\delta > 0$ we have
  \begin{theorem}
    With $\phi = \tfrac{2}{3}( Z^{3/2} - Z_0^{3/2})\sgn\theta_n$, 
  $$\text{WF}\,(\mathsf{U} ) \subset \Sigma = \Big\{(x,y,\xi,\eta) : \xi = \partial_x \left(y\cdot\theta - \phi \right), \eta = d_y \left(y\cdot\theta-\phi\right) , d_{\theta}\left(y\cdot\theta -\phi\right)  = 0 , x > 0 , \lvert\theta_n\rvert \geq \lvert\theta'\rvert > 0 \Big\}.$$
     \end{theorem}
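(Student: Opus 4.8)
The plan is to assemble the theorem directly from the region-by-region wavefront set lemmas proven in Section \ref{SingComp}, following the template of Theorem 4.1 in Friedlander \cite{Fried01}. First I would recall that the partition of $\mathbb{F}_C$ induces, after Fourier transform, a decomposition $\mathsf{U} = \sum \mathsf{U}^{\mathbb{F}_{C,\bullet}}$ where the sum runs over the regions $\mathbb{F}_{C,0}^0$, $\mathbb{F}_{C,0}^{\infty}$, $\mathbb{F}_{C,+}^{0,0}$, $\mathbb{F}_{C,+}^{0,\infty}$, $\mathbb{F}_{C,+}^{\infty}$, $\mathbb{F}_{C,-,1}^{0,0}$, $\mathbb{F}_{C,-,1}^{0,\infty}$, $\mathbb{F}_{C,-,1}^{\infty}$, $\mathbb{F}_{C,-,2}^{0,0}$, $\mathbb{F}_{C,-,2}^{0,\infty}$ together with the smooth remainder coming from $GE_h = \mathsf{O}(h^{\infty})$ and the $\dot{\mathcal C}^\infty$ error. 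Since $\mathrm{WF}$ of a finite sum is contained in the union of the $\mathrm{WF}$'s, it suffices to control each piece. The pieces associated to $\mathbb{F}_{C,\bullet}^{0,0}$ and $\mathbb{F}_{C,0}^0$ were shown to lie in $x^{s_-}\mathcal{C}^\infty_{x,y}+x^{s_+}\mathcal{C}^\infty_{x,y}$ near $x=0$ (Lemmas \ref{conormalFC00}, \ref{conormalC+00}, \ref{conormalFC-1}, \ref{conormalFC-200}) and smooth on $x>\delta$, so they carry no wavefront set in $x>0$; likewise the $h^\infty$ remainder is smooth. Hence only $\mathbb{F}_{C,0}^{\infty}$, $\mathbb{F}_{C,+}^{\infty}$, $\mathbb{F}_{C,+}^{0,\infty}$, $\mathbb{F}_{C,-,1}^{\infty}$, $\mathbb{F}_{C,-,1}^{0,\infty}$, $\mathbb{F}_{C,-,2}^{0,\infty}$ contribute, and the last three of these are smooth for $y_n\neq 0$ and vanish to infinite order at $x=0$, so in $x>0$ their wavefront is empty. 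This leaves exactly the three oscillatory contributions bounded in Lemmas \ref{wfC0inf}, \ref{WFC+inf}, \ref{wfC+0inf}, \ref{wfC-1inf}.

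Next, I would fix $\delta>0$ and write $\mathsf{U}_\delta$ for the restriction of $\mathsf{U}$ to $\{x>\delta\}$. Combining the cited lemmas gives
\begin{equation}\label{wfsum1plan}
\mathrm{WF}(\mathsf{U}_\delta)\subset \Sigma(\delta)\cup\Sigma(\delta_1,\delta)\cup\Sigma(\delta_2,\delta),
\end{equation}
where $\Sigma(\delta)$ is the set associated with the phase $\phi=\tfrac23(Z^{3/2}-Z_0^{3/2})\sgn\theta_n$ on the full glancing-to-hyperbolic cone $\lvert\theta_n\rvert\ge\lvert\theta'\rvert>0$ from Lemmas \ref{WFC+inf} and \ref{wfC+0inf}, while $\Sigma(\delta_2,\delta)$ and $\Sigma(\delta_1,\delta)$ are associated with the phase $\tilde\phi=\tfrac23 Z^{3/2}\sgn\theta_n$ on the shrinking conic neighbourhoods $\mathrm{conesupp}\,a_{\mathbb{F}_{C,0}^{\infty}}$, $\mathrm{conesupp}\,a_{\mathbb{F}_{C,-,1}^{\infty}}$ of the glancing set $\lvert\theta_n\rvert=\lvert\theta'\rvert$. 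The key point is that $\Sigma(\delta)$ does not depend on $\delta_1,\delta_2$, so I may intersect \eqref{wfsum1plan} over all $(\delta_1,\delta_2)$ in $\Delta=\{0<\mathsf s,\mathsf t<\tfrac12\}$. As $\delta_2\to 0$ the constants $\kappa_0(\delta_2),\kappa_1(\delta_2)\to0$, so the conic supports collapse to $\{\lvert\theta_n\rvert=\lvert\theta'\rvert\}$; on that set $d_\theta(\tfrac23 Z_0^{3/2})=0$ (all first derivatives of $Z_0^{3/2}$ vanish there, since $Z_0=\lvert\theta_n\rvert^{2/3}(1-\lvert\hat\theta'\rvert^2)$ vanishes to order $\ge 1$), hence $\tilde\phi$ and $\phi=\tilde\phi+\tfrac23 Z_0^{3/2}\sgn\theta_n$ have the same differential there, so $\bigcap_\Delta\Sigma(\delta_1,\delta)=\bigcap_\Delta\Sigma(\delta_2,\delta)$ is precisely the boundary stratum of $\Sigma(\delta)$ lying over the glancing bicharacteristics, and is contained in $\Sigma(\delta)$. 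Therefore \eqref{wfsum1plan} simplifies to $\mathrm{WF}(\mathsf{U}_\delta)\subset\Sigma(\delta)$.

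Finally, since $\delta>0$ was arbitrary and $\Sigma=\bigcup_{\delta>0}\Sigma(\delta)$, letting $\delta\downarrow 0$ gives $\mathrm{WF}(\mathsf{U})\subset\Sigma$ with $\Sigma$ as stated; this is the content of the displayed theorem for the approximate solution $\mathsf U=\mathcal F^{-1}(\hat u_x)$. (The passage from $\mathsf U$ to the exact solution $U$ of \eqref{originalProblem} — absorbing the $\dot{\mathcal C}^\infty$ error via Vasy's propagation-of-singularities theorem on asymptotically AdS space and the well-posedness Theorem \ref{AdsBdyOrig} — is carried out separately and yields Theorem \ref{maintheorem}.) The step I expect to be the main obstacle is not any single estimate but the bookkeeping in the contour/conic-support argument: one must verify carefully that the shrinking conic neighbourhoods from the two distinct ``$\infty$'' regions ($\mathbb{F}_{C,0}^\infty$ with its two-sided cone $1-\kappa_0\le\lvert\hat\theta'\rvert\le1+\kappa_1$, and $\mathbb{F}_{C,-,1}^\infty$ with its one-sided cone $(1-2\delta_1)\lvert\theta'\rvert\le\lvert\theta_n\rvert\le\lvert\theta'\rvert$) both limit onto the glancing set, that the two phases $\phi$ and $\tilde\phi$ genuinely agree to first order there, and that no wavefront escapes in the transition region where the cutoffs $\psi_{\mathbb{F}_{C,\bullet}}$ overlap — this last being handled by the remark in Section \ref{detailofCon} that the oscillatory behaviours agree on overlapping regions, so the $\psi$'s glue without creating spurious singularities.
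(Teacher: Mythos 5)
Your proposal is correct and follows essentially the same route as the paper: combine the region-by-region wavefront lemmas (Lemmas \ref{wfC0inf}, \ref{WFC+inf}, \ref{wfC+0inf}, \ref{wfC-1inf}) with the smoothness/conormality of the remaining pieces to get $\mathrm{WF}(\mathsf{U}_\delta)\subset\Sigma(\delta)\cup\Sigma(\delta_1,\delta)\cup\Sigma(\delta_2,\delta)$, then intersect over $(\delta_1,\delta_2)\in\Delta$, using that $\kappa_0,\kappa_1\to0$ and that the first derivatives of $\tfrac{2}{3}Z_0^{3/2}=\phi-\tilde\phi$ vanish on $\lvert\theta_n\rvert=\lvert\theta'\rvert$, so the intersection is the glancing boundary of $\Sigma(\delta)$ and the union collapses to $\Sigma(\delta)$, and finally let $\delta\downarrow0$. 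This is exactly the paper's argument (modeled on Friedlander's Theorem 4.1), with your extra bookkeeping about the non-oscillatory pieces consistent with the paper's lemmas.
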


\subsection{Conormality statement for the approximate solution $\mathsf{U}$}

\begin{theorem}\label{conormality}\

On $y_n< 0$, $\mathsf{U} \in x^{s_-} \mathcal{C}^{\infty}_{x,y} + x^{s_+} \mathcal{C}^{\infty}_{x,y}$.\nl
On $y_n>0$ and if we are close enough to the boundary $\{x=0\}$, then we have the same conclusion i.e. $\mathsf{U} \in x^{s_-} \mathcal{C}^{\infty}_{x,y} + x^{s_+} \mathcal{C}^{\infty}_{x,y}$
\end{theorem}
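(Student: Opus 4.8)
The plan is to assemble Theorem \ref{conormality} directly from the regional conormality lemmas proved in Section \ref{SingComp}, since every lemma there already gives the statement $\mathsf{U}^{(\text{region})}\in x^{s_-}\mathcal{C}^\infty_{x,y}+x^{s_+}\mathcal{C}^\infty_{x,y}$ (or a stronger ``smooth and vanishing to infinite order at $x=0$'' statement, which is certainly contained in that space) on the relevant part of $y_n<0$ or, near the boundary, on $y_n>0$. First I would recall that $\mathsf{U}=\mathcal{F}^{-1}(\hat u_x)$ decomposes via the partition of the front face $\mathbb{F}_C$ into the finitely many pieces $\mathsf{U}^{\mathbb{F}_{C,0}^0},\mathsf{U}^{\mathbb{F}_{C,0}^\infty},\mathsf{U}^{\mathbb{F}_{C,+}^{0,0}},\mathsf{U}^{\mathbb{F}_{C,+}^{0,\infty}},\mathsf{U}^{\mathbb{F}_{C,+}^\infty},\mathsf{U}^{\mathbb{F}_{C,-,1}^{0,0}},\mathsf{U}^{\mathbb{F}_{C,-,1}^\infty},\mathsf{U}^{\mathbb{F}_{C,-,1}^{0,\infty}},\mathsf{U}^{\mathbb{F}_{C,-,2}^{0,0}},\mathsf{U}^{\mathbb{F}_{C,-,2}^{0,\infty}}$, cut out by the smooth compactly supported multipliers $\psi_{(\cdot)}$ introduced in that section, and that $\mathsf{U}$ is the finite sum of these pieces modulo $\mathcal{F}^{-1}$ of a function in $\dot{\mathcal{C}}^\infty(\mathbb{R}^{n+1}_+)$, which is itself in $x^\infty\mathcal{C}^\infty_{x,y}$. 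Since $x^{s_-}\mathcal{C}^\infty_{x,y}+x^{s_+}\mathcal{C}^\infty_{x,y}$ is a vector space closed under finite sums, it suffices to check each summand.

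For the statement on $y_n<0$: the relevant lemmas are Lemma \ref{conormalFC00} (region $\mathbb{F}_{C,0}^0$), Lemma \ref{y<0C0inf} (region $\mathbb{F}_{C,0}^\infty$, which gives smoothness and infinite-order vanishing at $x=0$, hence membership in $x^{s_+}\mathcal{C}^\infty_{x,y}$ in particular), Lemma \ref{conormalC+00} (region $\mathbb{F}_{C,+}^{0,0}$), Lemma \ref{y<0C+inf} together with the $\mathbb{F}_{C,+}^{0,\infty}$ analogue (regions $\mathbb{F}_{C,+}^\infty$, $\mathbb{F}_{C,+}^{0,\infty}$), Lemma \ref{conormalFC-1} (region $\mathbb{F}_{C,-,1}^{0,0}$), the lemma giving smoothness and infinite-order vanishing for $\mathsf{U}^{\mathbb{F}_{C,-,1}^\infty}$ on $y_n<0$, Lemma \ref{C-10inf} (region $\mathbb{F}_{C,-,1}^{0,\infty}$), Lemma \ref{conormalFC-200} (region $\mathbb{F}_{C,-,2}^{0,0}$), and the corresponding lemma for $\mathbb{F}_{C,-,2}^{0,\infty}$. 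Each of these lemmas covers $y_n<0$ (some only $y_n\neq 0$), so summing over the finite partition gives $\mathsf{U}|_{y_n<0}\in x^{s_-}\mathcal{C}^\infty_{x,y}+x^{s_+}\mathcal{C}^\infty_{x,y}$. For the statement on $y_n>0$ near the boundary: the ``conormality away from the boundary'' content is supplied by Lemma \ref{conormalFC00}, Lemma \ref{conormalC+00}, Lemma \ref{conormalFC-1}, Lemma \ref{conormalFC-200} (which give the indicial-root form on all of $y_n\neq 0$), plus Lemma \ref{cornormalFC0inf}, Lemma \ref{y>0C+inf} and its $\mathbb{F}_{C,+}^{0,\infty}$ analogue, Lemma \ref{conormalFC-1inf}, and the $\mathbb{F}_{C,-,2}$ near-boundary lemmas, each of which produces a $\beta\mapsto\alpha(\beta)$ so that the corresponding piece is smooth and infinite-order vanishing on $\{y_n>\beta,\ x<\alpha^2\}$. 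The only subtlety is that each of these finitely many oscillatory pieces carries its own $\alpha_{(j)}(\beta)$; I would fix $\beta>0$, take $\alpha=\alpha(\beta):=\min_j\alpha_{(j)}(\beta)>0$ over the finite index set, and conclude that on $\{y_n>\beta,\ x<\alpha^2\}$ every piece lies in $x^{s_-}\mathcal{C}^\infty_{x,y}+x^{s_+}\mathcal{C}^\infty_{x,y}$, hence so does their finite sum $\mathsf{U}$.

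I do not anticipate a genuine obstacle here, since all the analytic work has been done in the regional lemmas; the proof is essentially bookkeeping. The one point that needs care, and which I would state explicitly, is the precise meaning of ``close enough to the boundary'': it is not a single neighborhood of $\{x=0\}$ uniform in $y_n$, but rather the region $\{y_n>\beta,\ x<\alpha(\beta)^2\}$ for each $\beta>0$, with $\alpha(\beta)$ the finite minimum of the explicit constants appearing in \eqref{inequalityonalphaIII2}, \eqref{inequalityonalphaC+inf}, \eqref{ineqOnAlphaC-1inf} and the $\mathbb{F}_{C,-,2}$ analogues. I would also note, for the sake of cleanliness, that the reduction to the front face $\mathbb{F}_C$ (Section \ref{detailofCon}) together with the cutoff $\tilde\chi(|\theta|)$ in \eqref{slnInsing} means that the only contribution of the finite region $|\theta|\le 1$ to $\mathsf{U}$ is smooth in $(x,y)$, and after a further application of the well-posedness results it decays like $x^{s_+}$; this is subsumed in the $x^{s_+}\mathcal{C}^\infty_{x,y}$ term, so it does not affect the conclusion. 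Assembling these observations gives Theorem \ref{conormality}.
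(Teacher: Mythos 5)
Your proposal is correct and follows essentially the same route as the paper: Theorem \ref{conormality} is proved there simply by combining the regional results of Section \ref{SingComp}, noting that every region gives conormality on $y_n<0$ and that on $y_n>0$ each region gives either smoothness with infinite-order vanishing at $x=0$ or the classical conormal form when sufficiently close to the boundary. Your added bookkeeping (taking the minimum of the finitely many constants $\alpha_{(j)}(\beta)$ and noting the smooth contribution of the $|\theta|\le 1$ cutoff) only makes explicit what the paper leaves implicit.
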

\begin{proof}
The proof is a combination of all the results on `classical conormality' (i.e in $ x^{s_-} \mathcal{C}^{\infty}_{x,y} + x^{s_+} \mathcal{C}^{\infty}_{x,y}$) proved in section \ref{SingComp}. \nl
 On every region considered in section \ref{SingComp} we have conormality on $y_n <0$. \nl For $y_n > 0$, on each region in section \ref{SingComp}, either we have smoothness (with infinite order vanishing at $x=0$) or `classical cornomality' behavior when we are close enough to the boundary $\{x=0\}$.
\end{proof}
As a result of Theorem \ref{conormality}, $\mathsf{U} = \mathcal{F}^{-1}(\hat{u}_x) \in x^{s_-} \mathcal{C}^0(\RR^+_x, \mathcal{D}'_y)$ satisfies 
  $$\begin{cases}  P \mathsf{U} = \mathsf{f} ;\ \mathsf{f} \in \dot{\mathcal{C}}^{\infty}( \RR^+_x \times \RR^n_y)   \\  x^{-s_-} \mathsf{U} \, \big|_{x=0} = \delta(y)  \\  \mathsf{U}\, |_{y_n < 0} \in  x^{s_-} \mathcal{C}^{\infty}_{x,y} + x^{s_+} \mathcal{C}^{\infty}_{x,y} \end{cases}$$

  \subsection{Conormality and wavefront set statement for the exact solution $U$}\label{proofOfmainResult}
  \begin{remark}
Conormal in this proof refers to `classical cornormality' i.e $x^{s_-} \mathcal{C}^{\infty}_{x,y} + x^{s_+} \mathcal{C}^{\infty}_{x,y}$ \end{remark}
  \begin{proof}[\textbf{Proof for Theorem \ref{maintheorem} and Theorem \ref{resultOnCorClass1}}]\
  
   Let $U$ solve (i.e the exact solution to) the original problem (\ref{originalProblem}) (See Lemma \ref{ExistenceNBdy} for the more information on boundary condition). 
 $$\begin{cases} PU = 0   \\
 \supp U \subset \{ y_n \geq 0\} \\
  U =  x^{s_-} \delta(y) +  x^{s_-+1} g(x,y) + h(x,y) ; & g(x,y)\in \mathcal{C}^{\infty}(\RR_x, \mathcal{D}'_y); \supp g \subset\{y_n=0\} \\ & h\in H^{1,\alpha}_{0,b,\text{loc}}; \text{for some}\ \alpha\in \RR
 \end{cases}$$
  
  From its construction and Lemma \ref{conormality}, we have $\mathsf{U} = \mathcal{F}^{-1}(\hat{u}_x)\in x^{s_-} \mathcal{C}^0(\RR^+_x, \mathcal{D}'_y)$ solves
  $$\begin{cases}  P \mathsf{U} = \mathsf{f} ;\ \mathsf{f} \in \dot{\mathcal{C}}^{\infty}( \RR^+_x \times \RR^n_y)   \\  x^{-s_-} \mathsf{U} \, \big|_{x=0} = \delta(y)   \\ \mathsf{U}\, |_{y_n < 0} \in  x^{s_-} \mathcal{C}^{\infty}_{x,y} + x^{s_+} \mathcal{C}^{\infty}_{x,y} \end{cases}$$
  We would like to to consider the cut-off of the approximate solution as follows: Let $\chi$ be a smooth function such that $\chi(\mathsf{t}) = 1$ for $\mathsf{t} \leq -1$ and $=0$ for $\mathsf{t} \geq -\tfrac{1}{2}$.  We have 
  $$P \big( \chi(y_n) \mathsf{U} \big) = \chi(y_n) P \mathsf{U} + [P,\chi(y_n)] \mathsf{U}$$
  Note $\supp \, [P, \chi(y_n)] \subset \{-1 \leq \mathsf{t} \leq -\tfrac{1}{2}\}$ i.e away from $y_n=0$. On this region ($y_n < 0$) we have $[P,\chi(y_n)] \mathsf{U}$ is conormal.  Hence
  $$P \Big( \left(1 - \chi(y_n) \right) \mathsf{U} \Big)= P \mathsf{U} - P \left(\chi(y_n) \mathsf{U}\right) \in x^{s_-} \mathcal{C}^{\infty}_{x,y} + x^{s_+} \mathcal{C}^{\infty}_{x,y} $$
  In fact $\left(1 - \chi(y_n)\right) \mathsf{U}$ solves $\begin{cases}  P \Big(\left(1 - \chi(y_n) \right) \mathsf{U} \Big) = \tilde{\mathsf{f}} \in x^{s_-} \mathcal{C}^{\infty}_{x,y} + x^{s_+} \mathcal{C}^{\infty}_{x,y} \\  x^{-s_-} \left(1 - \chi(y_n)\right)\mathsf{U} \, \big|_{x=0} = \delta(y)  \\  \left(1 - \chi(y_n)\right)\mathsf{U}\, |_{y_n < -1} = 0 \end{cases}$.\nl
  Consider the difference $$W = U -  \left(1 - \chi(y_n) \right) \mathsf{U}$$ 
   $W$ solves $$\begin{cases}  P W= - \tilde{\mathsf{f}} \in  x^{s_-} \mathcal{C}^{\infty}_{x,y} + x^{s_+} \mathcal{C}^{\infty}_{x,y}  \\  x^{-s_-} W\, \big|_{x=0} = 0  \\ W |_{y_n < -1} = 0 \end{cases}$$
  By the well-posedness result of Vasy [Theorem 8.11 \cite{Andras02}], $W$ is also conormal. On the other hand, by Theorem \ref{conormality}, $\mathsf{U}$ is conormal on $y_n > 0$ and if close enough the boundary $\{x=0\}$. Thus we have proved theorem \ref{resultOnCorClass1}, which says on $y_n > 0$ and if we are close enough the boundary $\{x=0\}$ then
  $$U \in x^{s_-} \mathcal{C}^{\infty}_{x,y} + x^{s_+} \mathcal{C}^{\infty}_{x,y}  $$
  On the other hand, from the form of boundary condition satisfied by $U$ in (\ref{originalProblem}), on $y_n \neq 0$,  $U\in H^{1,\alpha}_{0,b}$ for some $\alpha \in \RR$. This means that we cannot have the term with $x^{s_-}$ in the expression of $U$ on this region, therefore on $y_n > 0$ and close enough to the boundary $\{x=0\}$ we have:
 $$ U \in x^{s_+} \mathcal{C}^{\infty}_{x,y}$$
  hence we have proved Theorem \ref{resultOnCorClass2}. \nl
  Also from this, since $W$ is smooth on $x\neq 0$, away from the boundary the singularities $U$ are the same as those of $\mathsf{U}$; hence we have proved Theorem \ref{maintheorem}.
\end{proof}
  
  As an immediate consequence of Theorem \ref{resultOnCorClass1} we have Corollary \ref{resultOnCorClass2}:
    \begin{proof}[\textbf{Proof for Corollary \ref{resultOnCorClass2}}]
  For $y_n > 0$ and close enough the boundary $\{x=0\}$, $U \in x^{s_-} \mathcal{C}^{\infty}_{x,y} + x^{s_+} \mathcal{C}^{\infty}_{x,y}$. On the other hand, from the form of boundary condition satisfied by $U$ in (\ref{originalProblem}), on $y_n \neq 0$,  $U\in H^{1,\alpha}_{0,b}$ for some $\alpha \in \RR$. This means that on $y_n \neq 0$ when we are close enough to the boundary, 
  $$U \in H^{1,\infty}_{0,b} $$    \end{proof}

\end{document}